\documentclass[10pt,amssymb,amsfonts,psfig]{amsart}
\bibliographystyle{amsalpha}
\baselineskip=24pt 
\usepackage{mathrsfs,epsfig,amsfonts,amssymb,color,amsmath,amscd,url,bm}
\usepackage[small,nohug,heads=vee]{diagrams} 
\diagramstyle[labelstyle=\scriptstyle]

\newtheorem{thm}{Theorem}[section]
\newtheorem{cor}[thm]{Corollary}
\newtheorem{lem}[thm]{Lemma}
\newtheorem{prop}[thm]{Proposition}
\newtheorem{obs}[thm]{Observation}
\theoremstyle{remark}
\newtheorem{rem}{Remark}[section]

\newtheorem{problem}[thm]{Problem}

\theoremstyle{definition}

\theoremstyle{plain}
\newtheorem*{thmA}{Main Theorem (physical version)}
\newtheorem*{thmB}{Main Theorem (dynamical version)}
\newtheorem*{thmC}{Theorem (Global Lee-Yang-Fisher Current)}

\newtheorem*{LY Theorem}{Lee-Yang Theorem}
\newtheorem*{LY Theorem2}{General Lee-Yang Theorem}
\newtheorem*{LY TheoremBC}{Lee-Yang Theorem with Boundary Conditions}
\newtheorem*{LY Theorem2BC}{General Lee-Yang Theorem with Boundary Conditions}

\numberwithin{equation}{section}
\numberwithin{figure}{section}

\newcommand{\Rmig}{R} 
\newcommand{\Cmig}{C} 
\newcommand{\Cmigbl}{C_0}
\newcommand{\TOPmig}{\mathrm{T}} 
\newcommand{\BOTTOMmig}{\mathrm{B}} 
\newcommand{\FIXmig}{b} 
\newcommand{\CFIXmig}{e} 
\newcommand{\INDmig}{a} 
\newcommand{\Smig}{S} 

\newcommand{\Rphys}{\mathcal R} 
\newcommand{\Cphys}{\mathcal C} 
\newcommand{\Solid}{\mathcal{SC}}

\newcommand{\Secphys}{\Pi}
\newcommand{\Secmig}{P}
\newcommand{\TOPphys}{\mathcal T} 
\newcommand{\Cphystl}{{\mathcal C}_1}
\newcommand{\Cphysbl}{{\mathcal C}_0}
\newcommand{\Cphyslow}{{\mathcal C}_*}
\newcommand{\BOTTOMphys}{\mathcal B} 
\newcommand{\FIXphys}{\beta} 
\newcommand{\CFIXphys}{\eta} 
\newcommand{\INDphys}{\alpha} 
\newcommand{\Sphys}{\mathcal S} 
\newcommand{\Icurve}{\mathcal G} 
\newcommand{\Imig}{G} 
\newcommand{\PI}{{\mathcal A}}
\newcommand{\crittemps}{{\mathscr C}} 
\newcommand{\epoints}{{\mathscr E}} 

\newcommand{\hexp}{{{\la}^h_{\rm min}}}
\newcommand{\hexpmin}{{{\la}^h_{\rm min}}}

\newcommand{\horexpmin}{{{\la}^\hor_{\rm min}}}
\newcommand{\horexpmax}{{{\la}^\hor_{\rm max}}}

\newcommand{\rh}{{r^h}}
\newcommand{\rhmin}{{r^h_{\rm min}}}
\newcommand{\rhmax}{{r^h_{\rm max}}}

\newcommand{\xis}{{\xi_0}}

\newcommand{\Line}{L}
\newcommand{\Lzero}{L_0}
\newcommand{\Lone}{L_1}
\newcommand{\Ltwo}{L_2}
\newcommand{\Lthree}{L_3}
\newcommand{\Lfour}{L_4}

\newcommand{\LLzero}{\LL_0}
\newcommand{\LLone}{\LL_1}

\newcommand{\Tongue}{\Upsilon}

\newcommand{\Par}{{{\mathcal P}}}
\newcommand{\inv}{{\iota}}

\newcommand{\Mconv}{{\rm M}}
\newcommand{\Hconv}{{\rm H}}
\newcommand{\tconv}{{\rm t}}

\newcommand{\Weight}{W}

\newcommand{\KSQRT}{\widehat \KK}

\newcommand{\ex}{{\mathrm{exc}}}

\newcommand{\vertbondpp}{\, \underset{\oplus}{\overset{\oplus}{|}} \, }
\newcommand{\vertbondpm}{\, \underset{\ominus}{\overset{\oplus}{|}} \,}
\newcommand{\vertbondmp}{\, \underset{\oplus}{\overset{\ominus}{|}} \, }
\newcommand{\vertbondmm}{\, \underset{\ominus}{\overset{\ominus}{|}} \, }




\def\note#1
{
}

\newcommand{\bignote}[1]{}


\newcommand{\QED}{\rlap{$\sqcup$}$\sqcap$\smallskip}

\def\sss{\subsubsection}

\newcommand{\correspond}{\Psi}

\newcommand{\rank}{\rm rank}
\newcommand{\di}{\partial}
\newcommand{\dibar}{\bar\partial}

\newcommand{\ra}{\rightarrow}

\newcommand{\imply}{\Rightarrow}

\newcommand{\hor}{{ah}}
\newcommand{\ver}{{av}}

\def\ssk{\smallskip}
\def\msk{\medskip}
\def\bsk{\bigskip}

\def\nin{\noindent}

\def\sm{\smallsetminus}

\def\tr{{\text{tr}}}

\newcommand{\ctg}{\operatorname{ctg}}
\newcommand{\diam}{\operatorname{diam}}
\newcommand{\dist}{\operatorname{dist}}

\newcommand{\cl}{\operatorname{cl}}
\newcommand{\inter}{\operatorname{int}}
\renewcommand{\mod}{\operatorname{mod}}

\newcommand{\tl}{\tilde}

\renewcommand{\Re}{\operatorname{Re}}
\renewcommand{\Im}{\operatorname{Im}}

\newcommand{\orb}{\operatorname{orb}}

\newcommand{\supp}{\operatorname{supp}}
\newcommand{\id}{\operatorname{id}}

\newcommand{\area}{\operatorname{area}}

\renewcommand{\Im}{\operatorname{Im}}

\newcommand{\Ker}{\operatorname{Ker}}
\newcommand{\tg}{\operatorname{tg}}

\newcommand{\isom}{\approx}

\newcommand{\Conf}{{\mathrm{Conf}}}

\newcommand{\Bottom}{{\mathrm{B}}}

\newcommand{\const}{\mathrm{const}}
\def\loc{{\mathrm{loc}}}

\newcommand{\eps}{{\epsilon}}

\newcommand{\De}{{\Delta}}
\newcommand{\de}{{\delta}}
\newcommand{\la}{{\lambda}}
\newcommand{\La}{{\Lambda}}
\newcommand{\si}{{\sigma}}

\newcommand{\Om}{{\Omega}}
\newcommand{\om}{{\omega}}

\newcommand{\al}{{\alpha}}

\newcommand{\AAA}{{\mathcal A}}
\newcommand{\BB}{{\mathcal B}}
\newcommand{\CC}{{\mathcal C}}
\newcommand{\DD}{{\mathcal D}}
\newcommand{\EE}{{\mathcal E}}

\newcommand{\II}{{\mathcal I}}
\newcommand{\FF}{{\mathcal F}}

\newcommand{\HH}{{\mathcal H}}
\newcommand{\KK}{{\mathcal K}}
\newcommand{\LL}{{\mathcal L}}

\newcommand{\NN}{{\mathcal N}}

\newcommand{\PP}{{\mathcal P}}
\newcommand{\QQ}{{\mathcal Q}}

\newcommand{\RR}{{\mathcal R}}
\newcommand{\SSS}{{\mathcal S}}
\newcommand{\TT}{{\mathcal T}}

\newcommand{\UU}{{\mathcal U}}
\newcommand{\VV}{{\mathcal V}}
\newcommand{\WW}{{\mathcal W}}

\newcommand{\YY}{{\mathcal Y}}

\newcommand{\C}{{\Bbb C}}

\newcommand{\D}{{\Bbb D}}

\newcommand{\N}{{\Bbb N}}

\newcommand{\R}{{\Bbb R}}
\newcommand{\T}{{\Bbb T}}

\newcommand{\Z}{{\Bbb Z}}

\newcommand{\LLINV}{{\mathcal L}_{\rm inv}}

\def\Bh{{\mathbf{h}}}

\def\BJ{{\mathbf{J}}}

\def\BJ{{\mathbf{J}}}

\def\B0{{\mathbf{0}}}

\newcommand{\Jac}{\operatorname{Jac}}

\newcommand{\CP}{ {\Bbb{CP}}   }



\catcode`\@=12

\def\Empty{}
\newcommand\oplabel[1]{
  \def\OpArg{#1} \ifx \OpArg\Empty {} \else
  	\label{#1}
  \fi}
		
%

%

\newcommand{\comm}[1]{}
\newcommand{\comment}[1]{}

\begin{document}

\bigskip\bigskip

\title[Lee-Yang zeros ]{Lee-Yang zeros for DHL \\  
         and 2D rational dynamics, \\
        {\tiny I. Foliation of the physical cylinder.}}
\author {Pavel Bleher, Mikhail Lyubich and Roland Roeder}
\date{\today}

\def\IMSmarkvadjust{0 pt}
\def\IMSmarkhadjust{0 pt}
\def\IMSmarkhpadding{0 pt}
\def\IMSpubltext{Published in modified form:}
\def\SBIMSMark#1#2#3{
 \font\SBF=cmss10 at 10 true pt
 \font\SBI=cmssi10 at 10 true pt
 \setbox0=\hbox{\SBF \hbox to \IMSmarkhpadding{\relax}
                Stony Brook IMS Preprint \##1}
 \setbox2=\hbox to \wd0{\hfil \SBI #2}
 \setbox4=\hbox to \wd0{\hfil \SBI #3}
 \setbox6=\hbox to \wd0{\hss
             \vbox{\hsize=\wd0 \parskip=0pt \baselineskip=10 true pt
                   \copy0 \break%
                   \copy2 \break%
                   \copy4 \break}}
 \dimen0=\ht6   \advance\dimen0 by \vsize \advance\dimen0 by 8 true pt
                \advance\dimen0 by -\pagetotal
	        \advance\dimen0 by \IMSmarkvadjust
 \dimen2=\hsize \advance\dimen2 by .25 true in
	        \advance\dimen2 by \IMSmarkhadjust

%
%
  \openin2=publishd.tex
  \ifeof2\setbox0=\hbox to 0pt{}
  \else 
     \setbox0=\hbox to 3.1 true in{
                \vbox to \ht6{\hsize=3 true in \parskip=0pt  \noindent  
                {\SBI \IMSpubltext}\hfil\break
                \input publishd.tex 
                \vfill}}
  \fi
  \closein2
  \ht0=0pt \dp0=0pt
 \ht6=0pt \dp6=0pt
 \setbox8=\vbox to \dimen0{\vfill \hbox to \dimen2{\copy0 \hss \copy6}}
 \ht8=0pt \dp8=0pt \wd8=0pt
 \copy8
 \message{*** Stony Brook IMS Preprint #1, #2. #3 ***}
}

\SBIMSMark{2010/4}{September 2010}{}

 \begin{abstract} 
  In a classical work of the 1950's, Lee and Yang proved that the zeros of the partition functions of a
ferromagnetic Ising models always lie on the unit circle. Distribution of these zeros is physically important 
as it controls phase transitions in the model. We study this distribution for 
the  Migdal-Kadanoff  Diamond Hierarchical Lattice (DHL). In this case, it can be described in terms
of the dynamics of an explicit rational function $\RR$ in two variables (the renormalization transformation).
We prove that $\RR$ is partially hyperbolic on an invariant cylinder $\CC$. 
The Lee-Yang zeros are organized in a transverse measure for the central-stable foliation of $\RR|\, \CC$. 
Their distribution is absolutely continuous. 
Its density is $C^\infty$ (and non-vanishing) below the critical temperature.
Above the critical temperature, it is  $C^\infty$ on a open dense subset,
but it vanishes on the complementary Cantor set of positive measure. 
This seems to be the first occasion of a complete rigorous description of the Lee-Yang distributions
beyond 1D models. 
\end{abstract}

\setcounter{tocdepth}{1}
 
\maketitle
\tableofcontents

\comm{ \section{Plan}

-- Introduction;

-- Description of the model;

-- Structure of the RG transformation:
   - invariant cylinder,
   - in $\CP^2$ (indeterminacy and critical loci in various coordinates),
   - preimages of lines and degrees,
   - complex fixed point; Fatou and Julia sets,
   - solid cylinder theorem;

-- Pluri-potential theory;

-- Low temperature dynamics; 

-- High temperature dynamics;

-- Invariant cone field and dominated splitting;

-- Partial hyperbolicity (horizontal expansion);

-- Vertical foliation and distribution of zeros;

-- Intertwined basins;

-- Thermodynamical consequences: critical exponents etc.}

\section{Introduction}

\subsection{Phenomenology of Lee-Yang zeros}
The Ising model is designed to describe magnetic matter and, in particular, to
explain the appearance of spontaneous magnetization in ferromagnets
and transitions between ferromagnetic and paramagnetic phases as the temperature $T$ varies. 

The matter in a certain scale is represented by a graph $\Gamma$.
Let $\VV$ and $\EE$ stand respectively for the set of its vertices 
(representing atoms) and edges (representing magnetic bonds between the atoms). 

 A magnetic state of the matter is represented by 
 {\it spin configuration} $\si: \VV\ra \{\pm 1\}$  on $\Gamma$. 
The spin $\si(v)$ represents a magnetic momentum of an atom  $v\in \VV$. 
The total magnetic momentum of the configuration is equal to
\begin{equation}\label{M conventional}
    {\Mconv}(\si)=  \sum_{v\in \VV} \si(v).
\end{equation}
  Each configuration $\si$ has energy $H(\si)$ depending 
on the interactions $J(v,w)$ between the atoms and the external magnetic field $h(v)$.
In the simplest isotropic case, $J$ and $h$ are constants, 
and the Hamiltonian assumes the form:   
\begin{equation}\label{Hamiltonian}
       \Hconv(\si) = -J \sum_{\{v,w\} \in \EE}  \si(v)\si(w) -  h \Mconv (\si),
\end{equation}
where the first sum accounts for the energy of interactions between the atoms
while the second one accounts to the 
energy of interactions of the matter with the external field.

 By the
Gibbs Principle, the spin configurations are distributed according to the
Gibbs measure that assigns to configuration $\si$ a probability proportional to
its {\it Gibbs weight} $\Weight(\si)= \exp (-H(\si)/T)$, where $T$ is the
temperature.  Various observable magnetic quantities (e.g., magnetization $M$)   
are calculated by averaging of the corresponding functionals (e.g., $\Mconv(\si)$) 
over the Gibbs distribution.    

The total Gibbs weight $Z= \sum \Weight(\si)$ is called the {\it partition function}.
It is a Laurent polynomial in two
variables $(z,\tconv)$, where $z=e^{-h/T}$ is a ``field-like'' variable and
$\tconv =e^{-J/T}$ is ``temperature-like''.%
\footnote{We will often refer to them as just ``field'' and ``temperature''.} 
For a fixed $\tconv$, the complex zeros of $Z(z,\tconv)$ in $z$ are called the
{\it Lee-Yang zeros}.  Their role comes from the fact that some important observable
quantities can be calculated as electrostatic-like potentials of the equally
charged particles located at the Lee-Yang zeros.  (For instance, the free
energy is equal to the logarithmic potential of such a family of particles.)

A celebrated  Lee-Yang Theorem \cite{YL,LY} asserts that for the
ferromagnetic\footnote{i.e., with $J>0$, which favors the same orientation of neighboring spins} 
Ising model on any graph, for any real temperature $T>0$, 
{\it the Lee-Yang zeros lie on the unit circle $\T$} in the complex plane (corresponding
to  purely imaginary  magnetic field $h=-iT\phi$).  \footnote{ We will take a
liberty to use either $z$-coordinate or the angular coordinate $\phi=\arg z
\in\R/2\pi Z$ on $\T$ without a comment.  }

Magnetic matter in various scales can be modeled by a hierarchy of graphs
$\Gamma_n$ of increasing size (corresponding to finer and finer scales of matter).  For
suitable models,  the Lee-Yang zeros of the partition functions $Z_n$ will have an asymptotic
distribution $d\mu_\tconv = \rho_\tconv d\phi/2\pi$ on the unit circle.
This distribution  supports singularities of the magnetic observables (or
rather, their  thermodynamical limits), and hence it captures phase transitions
in the model.  For instance, Lee and Yang showed that the spontaneous magnetization of the matter (as
the external field vanishes) is equal to  $\rho_\tconv(0)$.  So, the matter is
ferromagnetic (meaning that it exhibits non-zero spontaneous magnetization) at
temperature $\tconv$  if and only if $\rho_\tconv(0)>0$.

The Lee-Yang zeros for the 1D Ising model with periodic boundary conditions
(corresponding to the hierarchy of cyclic lattices $\Gamma_n=\Z/n\Z$) can be
explicitly calculated using the transfer matrix technique (see e.g., \cite{Baxter}):
\begin{equation}\label{in11a}
z_k^\pm=e^{i\phi_k^\pm},\quad \phi_k^\pm=
\pm \arccos \left[\sqrt{1-\tconv^4}\,\cos\left(\frac{\pi(k+1/2)}{n}\right)\right]\,;\qquad
k=0,1,\ldots,n-1;
\end{equation}
\noindent
see Appendix \ref{APP:STATMECH}.
Their asymptotic distribution is supported on two symmetric intervals, 
$I^+=[\phi^*,\pi-\phi^*]$ and $I^-=-I^+$, where $\cos \phi^* = \sqrt{1-\tconv^4}$,  
and its density is equal to
\begin{equation}\label{in11}
\rho_\tconv (\phi)=\frac{|\sin\phi|} {2\pi\sqrt{1-\tconv^4-\cos^2 \phi}} .
\end{equation}
We see that for positive temperature, the support $I^+\cup I^-$ does not
contain point $\phi=0$, and so the matter is paramagnetic and there are no
phase transitions.  As $T\to 0$ the gap between $I^+$ and $I^-$   closes up and
the Lee-Yang zeros get equidistributed on the unit circle (so, in this model,
the matter becomes ferromagnetic only at the zero-temperature limit).  Note
that $\rho_\tconv$ is real-analytic on $I^\pm$ and has power-like singularities
with exponent $(-1/2)$ at the end-points.


For the ferromagnetic Ising model on lattices $\Z^d$ with $d\ge 2$, a similar
picture is believed to be true for high temperatures (above some critical
temperature  $T_c>0$), while below $T_c$ the Lee-Yang distributions are
conjectured to have full support with positive density. 
This scenario would lead to a second-order phase transition: a
ferromagnet for  $T<T_c$ turns into a paramagnet for $T> T_c$.  However, these
conjectures are hard to prove rigorously as no exact formulas for the Lee-Yang
zeros are available.  

For the two-dimensional lattice, the phase transitions can be rigorously
justified by means of the Onsager exact solution, see \cite{Baxter}.  In all
dimensions $d>1$, it was proven that for high temperatures, the Lee-Yang zeros
do not accumulate on the point $\phi=0$ (no spontaneous magnetization)
\cite{GMR,Rue1}, while for low temperatures, they have positive density at
$\phi=0$
(the spontaneous magnetization is observed) \cite{Peierls,GRIFFITHS}.  
However, unlike the one-dimensional Ising model, 
for sufficiently low temperatures, $\rho_\tconv(\phi)$ is not 
real-analytic at $\phi = 0$  \cite{ISAKOV}, see Remark \ref{REM:NON_ANALYTICITY}.

In a recent breakthrough, it was proven in \cite{BBCKK} that the Lee-Yang zeros
$\phi_k^n(\tconv)\in \T$ for the $\Z^d$ Ising model with periodic boundary
conditions, $\Gamma_n = \Z^d/(n\Z)^d$, can be
calculated at sufficiently low temperature $\tconv$ as
\begin{equation}\label{in12}
       \phi_k^n(\tconv)= g_\tconv\left( \frac{\pi k}{n^d}  + \frac{\pi}{2n^d} \right)+O(\la^{-n}) \quad k=0,1,\dots, 2n^d-1
\end{equation}
\noindent
where  $\la>1$ and  $g_t(\phi)$ is a $C^2$-diffeomorphism of the circle
smoothly depending on $\tconv$. 
In particular, for
sufficiently low temperatures, the limiting density $\rho_\tconv (\phi)$ is $C^2$.

At high temperatures, a quantum field theory interpretation  gives a prediction
of the power exponents of the densities $\rho_\tconv$ near the end-points of
$I^\pm$, see Fisher \cite{Fis1} and Cardy \cite{Car}. For instance, for $d=2$
the exponent is predicted to be  $(-1/6)$, while for $d>6$ it is predicted to be
$1/2$.

Study of the Lee-Yang zeros is an active direction of research in contemporary
statistical mechanics, see \cite{MSh}, \cite{BB}, \cite{Ruelle_ANN} and references therein for recent developments.

\subsection{Diamond hierarchical model}\label{DHL intro}

The Ising model on hierarchical lattices was introduced by Berker and Ostlund \cite{BO} 
and further studied by  Bleher \& \v Zalys \cite{BZ1,BZ2,BZ3}  
and Kaufman \& Griffiths \cite {KG1}).                          

Let $\Gamma$ be an oriented graph with two  vertices marked and ordered.  
The corresponding {\it hierarchical lattice} is a sequence of graphs $\Gamma_n$ with two marked
and ordered vertices such that $\Gamma_0$ is an interval, $\Gamma_1=\Gamma$, and
$\Gamma_{n+1}$ is obtained from $\Gamma_n$ by replacing each edge of $\Gamma_n$
with $\Gamma$ so that the marked vertices of $\Gamma$  match with the  vertices
of $\Gamma_n$ and their order matches with the orientation of the corresponding edges of $\Gamma_n$. 
We then mark two vertices in $\Gamma_{n+1}$ so that they match with
the two  marked vertices of $\Gamma_n$. 

 For instance,  the {\it diamond
hierarchical lattice} (DHL) illustrated on Figure~\ref{FIG:DIAMOND GRAPHS}
corresponds to the diamond graph $\Gamma$.%
\footnote{ In fact, in this case, we do not need to orient $\Gamma$ and order the marked vertices
since the diamond is symmetric with respect to a reflection interchanging the marked vertices.} 
Our paper is fully  
devoted to this lattice.  

\begin{rem} The definition of the total magnetic momentum that we will use for the DHL 
will be slightly different from (\ref{M conventional}) 
(see (\ref{M}) and Appendix \ref{reg lattice}
for a motivation). Also, we will use  $t: =\tconv^2= e^{-2J/T}$ 
for the temperature-like variable as it  makes formulas nicer. 
\end{rem}

\begin{figure}
\begin{center}
\input{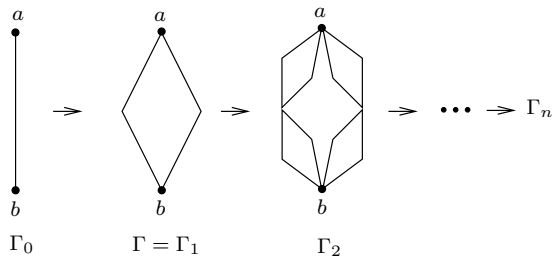}
\end{center}
\caption{\label{FIG:DIAMOND GRAPHS} {Diamond hierarchical lattice (DHL).}}
\end{figure}

 It was  shown in \cite{BZ3} that for any temperature $t \in [0,1]$, the
Lee-Yang zeros for the Ising model on the DHL are dense on the unit circle.
In this paper, we will describe the asymptotic distributions $d\mu_t=\rho_t\,d\phi/2\pi$
  of the Lee-Yang zeros for various temperatures $t\in I=[0,1]$. 
These distributions are illustrated on Figure~\ref{FIG:CYLINDER_BASINS}.  
It shows the cylinder $\CC=\T\times I$  in the
angular coordinate  $ \phi \in [0, 2\pi]$ on the circle $\T$.  In the blue  
region the density $\rho_t$ of the  Lee-Yang distributions is a positive $C^\infty$
function, while in the orange 
region it vanishes. We can see blue 
``tongues" going from the bottom to the top of the cylinder and  orange  
``hairs'' sticking  from the top.  The tongues fill the cylinder densely.
However, the hairs fill a set of positive area -- in fact,  
of almost full area near the  top.  This creates a false impression that
everything is orange 
near the top of Figure \ref{FIG:CYLINDER_BASINS}.   One can also see that
the lowest temperature is reached by hairs for zero field $h$  ($\phi=0$): this
is the critical temperature $t_c$  that separates the ferromagnetic and
paramagnetic phases.

\begin{figure}
\begin{center}
\input{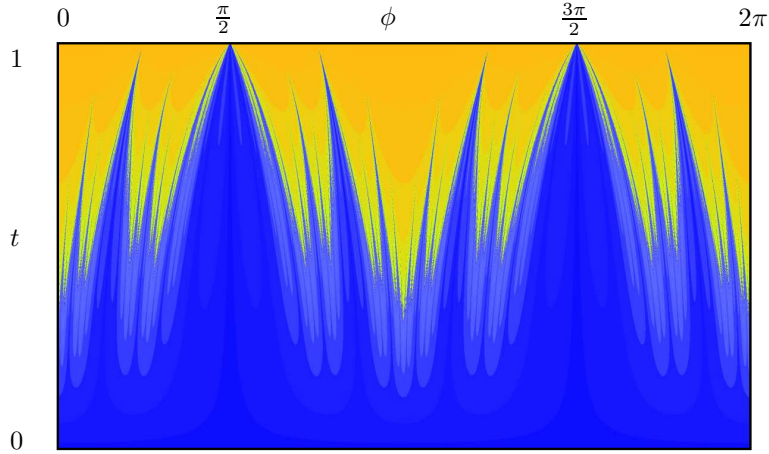}
\end{center}
\caption{\label{FIG:CYLINDER_BASINS}
{Distribution of Lee-Yang zeros and RG dynamics. 
Blue is the region where the Lee-Yang distributions have positive $C^\infty$ density.
Dynamically, it is the basin of attraction of the bottom.}} 
\end{figure}

\msk

Here is a precise statement:

 \begin{thmA} \label{global}
\it{
For any temperature $t\in [0,1)$ the limiting distribution $\mu_t(\phi)$ of the Lee-Yang zeros 
exists and it is absolutely  continuous with respect to the Lebesgue measure on $\T_t \equiv  \T\times \{t\}$:
$d\mu_t=\rho_t(\phi)\, d\phi$. 
It has the following properties: 
\begin{enumerate}
  \item  For $0\leq t<t_c$, the density $\rho_t(\phi)$ is a positive $C^\infty$ function on the circle 
    $\T_t$. Moreover, $\mu_0$ is the Lebesgue measure on $\T_0$ (i.e., $\rho_0(\phi)\equiv 1$).

  \item For $t=t_c$, the density $\rho_t(\phi)$ is a positive $C^\infty$ function on 
  $\T_{t_c} \setminus\{0, \pi\}$ with a power singularity at $\phi=0, \pi$:
$$
      \rho_t(\phi)\asymp |\phi|^\sigma \ \mbox{near $0$}, 
       \quad \rho_t(\phi)\asymp |\phi-\pi|^\sigma \ \mbox{near $\pi$},
$$ 
with some exponent $\sigma=0.064...\in (0,1)$.  

\item For $1>t>t_c$,  
the density $\rho_t$ vanishes on  a Cantor set $K_t \supset \{0,\pi\}$ of positive Lebesgue measure. 
Moreover, its measure tends to $2\pi$ as $t\to 1$.
On each component of the complementary set $O_t = \T_t \sm K_t$,
the density $\rho_t$ is $C^\infty$.
\item For $t=1$, the distribution $d\mu_t$ becomes purely atomic:
  it is supported on a countable dense subset  of $\T_1$. 
\end{enumerate}
Moreover, there is a family of homeomorphisms $g_t: \T_0\ra \T_t$,
$t\in [0,1)$,  such that $g_t(\phi)$
is smooth in $t\in [0,1)$ for any $\phi\in \T$,  $h_0=\id$, and  $\rho_t= (g_t^{-1})'(\phi)$ a.e. on $\T$. 
For $t<t_c$, the family $g_t(\phi)$ is $C^\infty$ in two variables. 

}
\end{thmA}

\comment{***************
``Smooth/analytic dependence on $t$'' above means that for any $\phi \in
[0,2\pi]$, the curve $t\mapsto g_t(\phi)$, $t\in [0,1)$,  is smooth/analytic.
Such a family of homeomorphisms $g_t: \T\ra \T$ is called  a {\it
smooth/analytic motion} on the circle.   
******************}

\ssk

We see, in particular,  that $\rho_t(0)>0$ below $t_c$ and it vanishes above
$t_c$, so we observe at $t_c$ the ferromagnetic-paramagnetic phase transition.
However, unlike the scenario described above for the standard lattices, the
Lee-Yang zeros do accumulate on $\phi= 0$ even in the paramagnetic phase
$t>t_c$.  So, though  for $t>t_c$,  the magnetization $M(\phi,t)$  vanishes at
$\phi=0$, it is not analytic nearby.   

\ssk

The distributions $\mu_t(\phi)$ described above for the $\Z^d$ and the DHL are
examples of {\em global distributions}.  One can obtain 
{\em tangent distributions} 
as follows.  We fix an arbitrary point $\tl \phi$ inside the
support of $\mu_t(\phi)$ as a reference point and 
rescale the zeros near $\tl \phi$, by the affine map 
\begin{eqnarray}
    \phi\mapsto \frac{L_n}{2\pi} \ \rho_\tconv (\tl \phi) \cdot (\phi - \tl \phi^n),
\end{eqnarray}
\noindent
where $L_n$ is the total number of Lee-Yang zeros at level $n$
and $\tl\phi^n$ is the one that is closest to $\tl \phi$.  
We say that the Lee-Yang zeros are {\em locally rigid at $\tl\phi$} 
if  the  
rescaled zeros converge locally uniformly to the $\Z$ lattice, as $n \ra \infty$.  

It follows directly from (\ref{in11a}) that the Lee-Yang zeros for the $\Z^1$
lattice are locally rigid everywhere.  For the $\Z^d$ lattice (with periodic boundary
conditions), infinitesimal rigidity follows at sufficiently low temperatures from
(\ref{in12}), since $L_n = 2n^d$.

Because there are $2\cdot4^n$ Lee-Yang zeros at level $n$ for the DHL, an
expression of the form (\ref{in12}) is not sufficient to show their local
rigidity.  Instead, we will show that the LY zeros $\phi_k^n(t) \in O_t$
can be expressed as $g^n_t(\phi_k^n(0))$,
where the $g^n_t$  are diffeomorphisms locally $C^1$  converging to
the  maps $g_t$.  
This is sufficient for the local rigidity, 
 see  Proposition \ref{PROP:LOCAL_RIGIDITY}.

\comment{
However, for the diamond lattice we can express the $2\cdot 4^n$ distinct Lee-Yang zeros
$\phi_k^n (t)\in \T_t$ at level $n$ 
as
\begin{eqnarray}\label{EQN:GN}
    \phi_k^n(t)= g^n_t\left(\frac{\pi k}{4^n}\right) \quad k=0,1,\dots, 2 \cdot 4^n-1,
\end{eqnarray}
where the $g^n_t: \T \ra \T$ are diffeomorphisms depending smoothly on $t \in [0,t_c)$, see \S \ref{SEC:BOTTOM BASIN}.  
It follows that the Lee-Yang zeros for
the DHL are infinitesimally rigid everywhere 
in the region $\{|t|< t_c\}$
(and in fact, in the whole low temperature basin $\{\rho(t)>0\}$).
}

\msk
Below we will re-interpret the above results in terms of the renorm-group.

\subsection{Migdal-Kadanoff RG equations}\label{MK RG eq-s}

There is a general physical principle that the values of physical quantities
depend on the scale where the measurement is taken.
The corresponding quantities are called {\it renormalized}, and the
(semi-group) of transformations relating them at various scales is called {\it
renorm-group (RG)}. However, it is usually hard to justify rigorously existence
of RG, let alone to find exact formulas for RG transformations.  The beauty of
hierarchical models is that all this can actually be accomplished.
 
In  \cite{Mig1}, \cite{Mig2}, Migdal suggested approximations to  RG for the
classical Ising model on $\Z^d$.  They were further developed by Kadanoff
\cite{Kad}, and became known as the {\it Migdal-Kadanoff approximate RG
equations}.  It was then  noticed by Berker and Ostlund \cite{BO} that these
equations become exact for suitable hierarchical Ising models 
(see also  \cite{BZ1}  and  \cite {KG1}).  In particular,
the DHL corresponds to the 2D lattice $\Z^2$.  The Migdal-Kadanoff RG equations in
this case assume the form:
\begin{equation}\label{R-intro}
    (z_{n+1},t_{n+1}) = \left( \frac{z_n^2+t_n^2}{z_n^{-2}+t_n^2}, \ \frac{z_n^2+z_n^{-2}+2}{z_n^2+z_n^{-2}+t_n^2+t_n^{-2}}\right):=
                       \Rphys(z_n,t_n).
\end{equation}
where $z_n$ and $t_n$ are the renormalized field-like and temperature-like
variables on $\Gamma_n$.  The map $\Rphys$ that relates these quantities is
also called the {\it renormalization transformation}.

The Lee-Yang zeros for $\Gamma_n$ are solutions of the algebraic equation
$Z_n(z, t)=0$, so they form a real algebraic curve $\SSS_n$ on the cylinder
$\CC$ ({\it the Lee-Yang locus of level $n$}), see Figure \ref{FIG:LEE_YANG_ZEROS}.  Equation
(\ref{R-intro}) shows that  $\SSS_n$ is the pullback of $\SSS_0$ under the
$n$-fold iterate of $\Rphys$, i.e.,  $\SSS_n= (\Rphys^n)^*\SSS_0$.  In this way, the
problem of asymptotical distribution of the Lee-Yang zeros is turned into a
dynamical one.

\begin{figure}
\begin{center}
\input{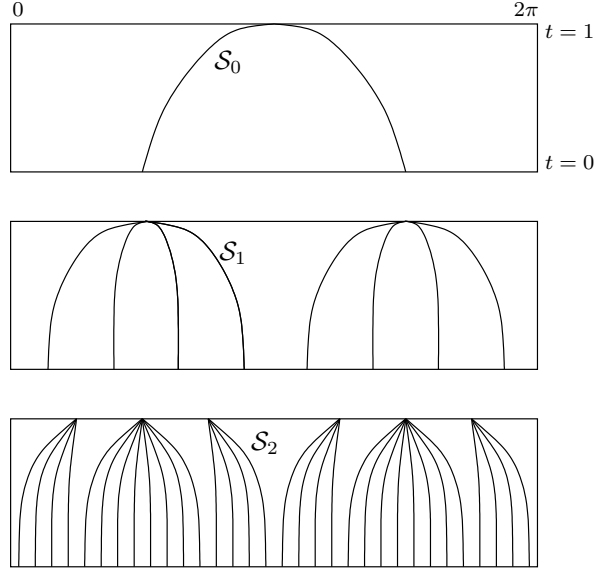}
\end{center}
\caption{\label{FIG:LEE_YANG_ZEROS} {The level $n$ Lee-Yang zeros $\SSS_n$ for $n=0,\,1,$ and $2$.}}
\end{figure}


\subsection{Renormalization dynamics on the cylinder}

\comm{
The partition function $Z_n$ of the Ising model on $\Gamma_n$, $n\geq 1$,  
is a Laurent  polynomial in two variables,  field-like variable $z$ and  temperature-like variable $t$. 
According to the Lee-Yang Theorem, in the ferromagnetic case,  for any fixed $t\in [0,1]$,
$z$-zeros of $Z_n(z, t)$ lie on the unit circle $\T$, so they form a real algebraic curve $\SSS_n$ on the cylinder
$\CC=\T\times [0,1]$ (``the Lee-Yang locus'' of $Z_n$). 
We study the asymptotic distribution of the Lee-Yang loci as $n\to \infty$.

We prove that these loci converge to a foliation $\FF$ on $\CC$ (with the top removed) by smooth vertical curves
(connecting the bottom to the top). Moreover, the Lee-Yang distributions converge to the 
holonomy invariant family of measures $\mu_t$, $t\in [0,1)$,  coinciding with the Lebesgue measure on the bottom. 
All these measures are absolutely continuous, 
but they degenerate to a discrete measure (with dense support) as $t\to 1$. 
The densities of the $\mu_t$ are $C^\infty$ (and non-vanishing) below the critical temperature $t_c$.
Above $t_c$, they are $C^\infty$ on a open dense subset,
but they vanish on the complementary Cantor sets of positive measure.
These Cantor sets form a ``comb'' of curves stick out of the top of the cylinder.    
On Figure \ref{FIG:CYLINDER_BASINS}, the support of $\mu$ is shadowed in dark,
while the complementary Cantor comb is shadowed in light grey.  
}

\comm{
These results come from the dynamical study of the renormalization transformation $\RR$ of the model,
which turns out to be a rational function in $(z,t)$. 
It relates the renormalized  field and temperature on $\Gamma_{n+1}$  to those on $\Gamma_n$.  
}

The first observation is  that the cylinder $\CC$ is $\Rphys$-invariant. 
Next, its bottom $\BOTTOMphys$ is $\RR$-invariant as well, and $\RR$ restricts to $z\mapsto z^4$ on $\BB$.   
Moreover, $\BB$ is superattracting, so there is an open  basin $\WW^s(\BB)$ where
the orbits converge to $\BOTTOMphys$: this is exactly the blue region on Figure~\ref{FIG:CYLINDER_BASINS}. 

The top $\TOPphys$ of $\CC$ is also invariant except for two indeterminacy
points $\alpha_\pm=(\pm \pi/2, 1)$ that ``blow up'' to a curve $\Icurve$ going
across the cylinder (see Figure~\ref{FIG:R_ON_CYLINDER} below).  Because of
this phenomenon, the degree of $\RR$ on the top drops to 2, (namely, $\RR:
z\mapsto z^2$ on $\TT$), and its basin $\WW^s(\TT)$  (roughly, the orange region
on Figure~\ref{FIG:CYLINDER_BASINS}) is not open, but rather  a  ``Cantor
bouquet'' of hairs sticking from $\TOPphys$.  

Despite this,  $\Rphys$ acts in a surprisingly nice way on the proper curves
(i.e., curves connecting the bottom to the top) --  namely,
a proper curve in
$\Cphys$ crossing $\Icurve$ only once lifts to four proper curves,
compare Figure \ref{FIG:LEE_YANG_ZEROS}.  
In this sense, the action of $\Rphys$ on proper curves has degree four.

Our main dynamical result asserts that $\RR$ is {\it partially hyperbolic} on
the cylinder $\Cphystl:= \Cphys\sm \TOPphys$.  This means that $\RR$ admits an
invariant horizontal tangent cone field $\KK^h(x)\subset T_x\CC$ such that the
horizontal tangent vectors $v\in \KK^h(x)$ get exponentially stretched under
iterates of $\RR$. 

Let us also consider the complementary vertical cone field $\KK^v(x)=T_x \CC\sm \KK^h(x)$.
A smooth curve $\gamma(t)$ in $\Cphystl$ going though this cone field 
 is called {\it vertical}.
A {\it vertical foliation} on $\Cphystl$ is a foliation
whose leaves are proper vertical curves.

Given a vertical foliation $\FF$, the {\it holonomy} transformations
$g_t: \BOTTOMphys\ra \T\times \{t\}$, $t\in [0,1)$, are defined by the property
that $x$ and $g_t(x)$ belong to the same leaf of $\FF$. 

A  {\it central foliation} for $\Rphys$ is an invariant vertical
foliation.

Recall that a measurable map $g: \T\ra \T$ is called {\it absolutely continuous}
if preimages $g^{-1}(X)$ of null-sets $X\subset \T$ are null-sets 
(where a {\it null-set} means a set of zero Lebesgue  measure). 
Note that this is not a symmetric notion:
it may happen that a homeomorphism $g$ is absolutely continuous while the inverse one, $g^{-1}$, is not
(and this is what actually happens below).   

\begin{thmB}\label{Theorem B}
 \it{ The renormalization transformation $\Rphys$ is partially hyperbolic on $\Cphystl$,
and it has a unique  central foliation $\FF^s$.  
This foliation is $C^\infty$ on  $\WW^s(\BOTTOMphys)$
but is not  absolutely continuous on $\WW^s(\TOPphys)$.%
\footnote{Meaning that restrictions of the  homeomorphism $g_t^{-1}$ to $\WW^s(\BOTTOMphys)$ are $C^\infty$
but their restrictions to $\WW^s(\TOPphys)$  are not absolutely continuous.}

Given any proper vertical curve $\gamma$ on $\CC$, the pullback $(R^n)^*\gamma$ comprises $4^n$ 
proper vertical curves, and $(R^n)^*\gamma\to \FF$ exponentially fast (away from the top).  

The basin $\WW^s(\BOTTOMphys)$ is open and dense in $\CC$. The basin $\WW(\TOPphys)$ has positive area,
with density 1 at the top. The union of the two basins has full area in $\CC$. 
}
\end{thmB}
 
Our proof of this geometric result is based upon  Counting Arguments (making use of Bezout's Theorem).
We call this method the ``Enumerative Dynamics''.
 
 A {\it transverse invariant measure} $\mu$ for $\FF$ is a family of measures
$\mu_t$, $t\in [0,1)$,  such that $\mu_t= (g_t)_*(\mu_0)$.  It is uniquely
determined by $\mu_0$.  The Lee-Yang distributions $\mu_t$ form a transverse
invariant measure for $\FF$ equal to the Lebesgue measure on $\BOTTOMphys$.

This fact  makes a connection between the physical and dynamical versions of
the Main Theorem that will allow us to derive (easily) the former from the
latter.  


Let us also mention that the dynamical picture described in the last part of
the Main Theorem gives one more illustration  of the ``intertwined basins''
phenomenon studied by Kan, Yorke et al  \cite{Kan,AYYK}, and more recently by Bonifant and Milnor \cite{BM}. 

\comm{*************************
\subsection{Lee-Yang-Fisher zeros and a dynamical current in $\CP^2$.}

Instead of freezing temperature $T$, one can freeze the external field $h$, and
study  zeros of $Z(z,t)$ in $t$-variable.  They are called {\it Fisher zeros}
as they were first studied by Fisher   for the regular two-dimensional lattice, 
see \cite{Fis0,JON:PCF,Brascam_and_Kunz}.  Similarly to the Lee-Yang zeros,
asymptotic distribution of the Fisher zeros is supported on singularities of
magnetic observables, and  is thus related to phase transitions in the model.
However, Fisher zeros do not lie on the unit circle any more.  
For instance, for the regular 2D lattice at $h=0$, they lie on the union of two
{\it Fisher circles}, see Figure \ref{FIG:FISHER_CIRCLES}. 

\begin{figure}[htp]
\begin{center}
\input{figures/Fisher_circles.pstex_t}
\caption{\label{FIG:FISHER_CIRCLES}
The Fisher circles: $|t\pm 1| = \sqrt{2}$.}
\end{center}
\end{figure}

In the hierarchical case, these zeros can be also studied by means of the RG
dynamics.  It works particularly well for zero field, $h=0$, since the
corresponding complex line $\LLINV=\{z=1\}$ in $\CP^2$ is invariant under
$\Rphys$, and $\Rphys: \LLINV\ra \LLINV$ reduces to a fairly simple
one-dimensional rational map
$\displaystyle{
        \Rphys:  t\mapsto \left(\frac {2t}{t^2+1}\right)^2.
}$
Moreover,  the set of critical temperatures becomes the Julia set of $\Rphys|\,
\Line$.  This Julia set was studied in \cite{DDI,DIL,BL,Ish} and others.  As
shown in \cite{BL}, the limiting distribution of the Fisher zeros in this case
exists and it coincides with the measure  of maximal entropy (see
\cite{BROLIN,LYUBICH:MAX ENT}) of $\Rphys|\, \Line$. 

\begin{figure}[htp]
\begin{center}
\input{figures/invariant_line_julia.pstex_t}
\caption{\label{FIG:INVARIANT_LINE_JULIA}
On the left is the Julia
set for $\Rphys|\, \LLINV$.  On the right is a zoomed-in view of a
boxed region around the critical point $t_c$. 
The invariant interval $[0,1]$ corresponds to the states with 
real temperatures $T\in [0,\infty]$ and vanishing field $h=0$.}
\end{center}
\end{figure}

Even more generally, one  can study  distribution of zeros of $Z(z,t)$ on other
complex line in $\CP^2$.  It turns out that all these distributions exist and
get organized  in a {\it global $(1,1)$-current} $\mu^c$ in $\CP^2$.

A $(1,1)$-current  $\nu$  on $\CP^2$ is a linear functional on the space of
$(1,1)$-forms (see Appendix \ref{Green potential sec}).  A basic example is the current $[X]$ of integration over a
complex variety $X$.  A plurisubharmonic function $G$ is called a  {\it
pluripotential} of $\nu$ if $ i \di\dibar G=\nu $ in the
sense of distributions.  (Informally, this means that $\De(G|\, L)=\nu|\, L$
for almost any   \note{OK?} complex line $\Line$, so $G|\, \Line$ is an
electrostatic potential of the charge distribution $\nu|\, \Line$.)

Currents are a powerful tool of higher dimensional holomorphic dynamics:
see Bedford-Smillie \cite{BS}, Fornaess-Sibony \cite{FS}, and others
(see \cite{S_PANORAME} for an introductory survey to this subject).

Let $\SSS^c_n=\{(z,t)\in \CP^2:\ Z_n(z,t)=0\}$ stand for the Lee-Yang-Fisher locus in $\CP^2$.

\begin{thmC}\label{Global Current Thm}
  The currents $[\SSS^c_n]$ weakly converge to some $(1,1)$-current $\mu^c$ on $\CP^2$ 
whose Green pluripotential coincides with the free energy $F(z,t)$ of the system.
\end{thmC}

In this way, the classical Lee-Yang-Fisher theory gets linked  to the
contemporary dynamical pluripotential theory. 

An important conceptual ingredient of this story is interpretation of the
partition ``function'' $Z$ as a {\it section of the co-tautological line bundle
over $\CP^2$} rather than a function.  The partition functions $Z_n$
get interpreted as sections of the tensor powers of this bundle. 

\comm{
Instead of taking horizontal slices $\{t=\const\in [0,1]\}$,
one can study  Lee-Yang distribution on other complex lines in $\CP^2$. 
For instance, on the invariant line $\Line = \{z=1\}$ (corresponding to zero field)
this distribution coincides with the measure of maximal entropy for  $\RR|\, \Line$. 
(For a discussion of the dynamics of this rational function in one variable, 
see \cite{BLEHER_LYUBICH}.)  \note{picture of the Julia set?}
In fact, all these distributions get organized in a global $(1,1)$-current $\mu^c$ in $\CP^2$.
Its Green pluripotential coincides with the free energy $Z$ of the system.
In this way, the classical Lee-Yang theory gets linked  to the contemporary
dynamical pluripotential theory developed by Bedford-Smillie, Fornaess-Sibony, and others
(see \cite{S_PANORAME} for an introductory survey to this subject).} 

***************}

In the upcoming Part II of this work, 
we will study  the global structure of the
renormalization transformation and  zeros of the partition function  ({\it Lee-Yang-Fisher zeros})
in the complex projective space $\CP^2$.
The distribution of the zeros will be interpreted as the dynamical {\it (1,1)-current} of $\RR$,
while the partition function itself will become the {\it  potential} of this current.
In this way the classical Lee-Yang-Fisher Theory gets tightly linked to the contemporary
Dynamical Pluripotential Theory.

\subsection{Structure of  the paper}
Let us now outline the structure of the paper indicating ideas of the proofs.
Since this paper is naturally placed on the borderline of three fields
(statistical mechanics, dynamics, and complex geometry) we have attempted to
make exposition reader-friendly for a non-expert in any one of these fields, by
motivating the problems and supplying needed background and basic references.

We begin in \S \ref{SEC:MODEL} with  relevant background material in
statistical mechanics: description of the Ising model on graphs, formulation of
the Lee-Yang Theorem, and comments on physical significance of the Lee-Yang
zeros.  In particular, we supply explicit formulas for the free energy and
spontaneous magnetization in terms of the asymptotic  distributions of these
zeroes.  Then we pass to the diamond hierarchical model and derive the
Migdal-Kadanoff  Renorm-Group (RG) Equations.  They lead to the renormalization
transformation $\Rphys$.    

In \S \ref{SEC:STRUCTURE} we describe the structure of $\Rphys$ on the invariant
cylinder $\Cphys$.  It is strongly influenced by the presence of two indeterminacy
points $\INDphys\pm=(\pm i, 1)$ on the top $\TT$ that blow up to the curve
$\Icurve$.  (On Figure \ref{FIG:CYLINDER_BASINS}, these points are clearly seen
as the tips of the two main tongues of the blue region.) Because of them, $\RR$
does not evenly cover the cylinder: the region below $\Icurve$ is covered four
times while its complement is covered only twice. 
However, we show that $\Rphys$ acts properly with degree
four on the space of proper vertical curves.
We will derive from here by a Counting Argument  
the Lee-Yang Theorem for the DHL (\S \ref{LY for DHL sec}).

In \S \ref{SEC:GLOBAL STRUCTURE} we follow up  this discussion  with a
description of the global features of $\RR$ on the complex projective space
$\CP^2$: its critical and indeterminacy loci,  superattracting fixed points 
and their separatrices. (In fact, here it is more convenient to deal with the map $R$ that
comes directly from the Migdal-Kadanoff RG Equations, without passing to the
``physical'' $(z,t)$-coordinates.  This map is semi-conjugate to $\RR$ by a
degree two rational change of variable $\CP^2\ra \CP^2$.)

In \S \ref{sec: alg cone field} we prove that $\RR$ admits a horizontal
invariant cone field $\KK^\hor(x)$ on $\CC$.  We construct it explicitly by taking
the principal Lee-Yang locus $S\equiv S_0$ (which comprises two vertical
segments) and translating it around the cylinder. It gives us two transverse
vertical foliations on $\CC$.  Then we define  $\KK^\hor(x)$  as the horizontal
cone tangent to these foliations at $x$.  Using Bezout's Theorem, we check
invariance of this cone field. 
Unfortunately,  this cone field degenerates at the top. 
We partially fix this problem 
by  modifying $\KK^\hor(x)$ near the top in such a way that the new field
 $\KK^h(x)$  degenerates only at the indeterminacy points $\INDphys_\pm$.

In \S \ref{SUBSEC:DOMINATED} we prove that $\RR|\, \CC$ admits a {\it dominated splitting}.
This means that there exists a ``vertical'' tangent line field $\LL^c(x)$
and constants $C>0$, $\la>1$ such that
\begin{equation}\label{domination def intro}
      \| D\RR^n(x)\,  w \| \leq C \la^{-n} \| DR^n(x)\, v\| \quad
      \mbox{for any}\ \ x\in \CC\sm \UU, \ w\in \LL^c(x), \ v\in \KK^h(x)
\end{equation}
(where $\UU$ is a neighborhood of the indeterminacy points),
so the ``horizontal'' vectors get stretched exponentially faster than the
``vertical'' ones.%
\footnote{This property is also referred to as {\it
projective hyperbolicity} of $\RR$.} Integrating the line field $\LL^c(x)$, we
obtain an invariant family of smooth vertical curves filling in the whole
cylinder. However, at this stage of the discussion we do not know yet that the
integration is unique, so the integral curves may not form a foliation.

In \S \ref{SEC:EXPANSION} we prove our main dynamical result  that the map $\RR|\,
\CC$ is {\it horizontally expanding}  (and thus partially hyperbolic).
This means that under iterates the horizontal vectors get stretched exponentially fast:
\begin{equation}\label{hor exp def intro}
    \| D\RR^n(x)\, v \|\geq c \la^n\|v\|, \quad \quad x\in \CC, \ v\in \KK^h(x),
\end{equation}
where $c>0$, $\la>1$. 
To establish this property, we consider a central projection $\pi$ in $\CP^2$ onto the line at infinity.
By a Counting Argument, we show that $\pi\circ R^n$ restricted to the horizontal sections of
the solid cylinder is a Blyschke product $B_n$ (in  appropriate natural coordinates) vanishing at the origin
to order $2^{n+2}$. Such a $B_n$  expands the circle metric at least by $2^{n+2}$,
which gives us (\ref{hor exp def intro}) with $\la = 2$.  
We then provide a second proof of this expanding property that exploits the combinatorics 
of the DHL partition functions and a variant of the Lee-Yang Theorem that we call the {\it LY Theorem with Boundary Conditions}.

\comm{*******
   The proof is based on the Schwarz Lemma applied to complex horizontal
curves.  To make it work, we need to know that these curves enjoy some
topological expansion.  It comes from two properties: 

\nin
a) The action of $\RR$ on real closed horizontal curves on $\CC$ has  degree 4; 

\nin
b) Nearby imaginary points escape to the fixed points 
    (by the result on the dynamics in the solid cylinder of \S \ref{SEC:GLOBAL STRUCTURE}).

\noindent An extra difficulty arises near the indeterminate points $\INDphys_\pm$.
***********}

In \S \ref{SEC:BOTTOM BASIN} we discuss the basin $\WW^s(\BB)$ of the bottom
$\BB$ (the blue region of Figure~\ref{FIG:CYLINDER_BASINS}).  We explain where
the tongues observed on this picture come from and prove that
$\WW^s(\BB)$ supports a $C^\infty$ foliation, the stable foliation of $\BB$. 


In \S \ref{SEC:HIGH_TEMP}, we turn our attention to the top
$\TT$ of $\CC$.  We prove that its basin $\WW^s(\TT)$ contains a comb of curves
of positive measure.  Moreover, the density of its slices by  horizontal
circles  $\T\times \{t\}$ goes to 1 as $t\to 1$. 

We then derive in \S \ref{SEC:TWO BASINS}, applying standard distortion
techniques  to horizontal curves, that almost any orbit on the cylinder
converges either to the bottom $\BB$ or to the top $\TT$ (see
\cite{BLOKH_LYUBICH} for the one-dimensional prototype of this method).

In the next section, \S \ref{SEC:VERTICAL_FOLIATION}, we use horizontal expansion to prove {\it
unique} integrability of the invariant vertical line field $\LL^c$ yielding the
desired invariant central foliation $\FF^c$.  We then collect in \S \ref{SEC:THERM} consequences
about regularity of the the Lee-Yang distributions (as formulated above)  and
calculate various critical exponents.

In the last section, \S \ref{SEC:PERIODIC_LEAVES},
we analyze smoothness  of  periodic leaves that terminate at periodic
points on the top.
Such leaves are real analytic near the bottom and the top,
but we show that they must loose analyticity somewhere in the middle
 (in fact, generically they can have only finite smoothness).
This is another manifestation of the phase transitions in this model.



We finish with several Appendices.  
 In Appendix \ref{APP:COMPLEX GEOM} we    
 collect needed background in complex geometry: 
rational maps, indeterminacy points and their blow-ups, degrees and divisors. 

In Appendix \ref{APP:R_NEAR_ALPHA} we  supply some calculations on the cylinder $\Cphys$, 
particularly, near its top $\TOPphys$ and the indeterminacy points $\alpha_\pm$  
(performing their ``blow-ups'' in various coordinates).

In Appendix \ref{SUBSEC:COMPLEXIFICATION_CONES} we construct an extension of $\KK^h(x)$ that is invariant on an
appropriate complex neighborhood of $\Cphys \sm \{\INDphys_\pm\}$.  
It gives us a supply of complex horizontal curves that are used in  \S \ref{SEC:TWO BASINS}
to obtain the Koebe distortion estimates. 

In Appendix \ref{App: crit locus} we describe the global critical locus of the map $\Rmig$
in $\CP^2$.  

In Appendix \ref{SEC:PARTIAL_HYPERBOLICITY} we give a computational verification that
$\Rphys|\,\Cphys$ is  horizontally expanding using the explicit formula for $\Rphys$.
Formally speaking, this can substitute the conceptual proof of \S \ref{SEC:EXPANSION}.
However, this would not provide any insight into the nature of $\Rphys$,
neither could it be useful in another related situation. 

In Appendix \ref{APP:STATMECH} we re-prove the classical Lee-Yang Theorem and
extend it to the LY Theorem with Boundary Conditions
used in \S \ref{SEC:EXPANSION}.  We then describe the Lee-Yang zeros in
the one-dimensional model, and explain in what sense the hierarchical lattices
give an approximation to the standard lattices $\Z^d$.  

 In Appendix
\ref{APP:PROBLEMS} we collect several open problems.  
Finally, in Appendix
\ref{APP:NOTATION} we provide a list of notation that are frequently  used
throughout the paper.


\subsection{Basic notation and terminology}
  $\II=[0,1]$,  
$\C^*=\C\sm \{0\}$, $\T=\{|z|=1\}$, $\D_r=\{|z|<r\}$, $\D\equiv \D_1$, $\D^*=\D \sm \{0\}$, 
 $\N=\{0,1,2\dots\}$. 

Given two variables $x$ and $y$, 
$x\asymp y$ means that $c \leq |x/y|\leq C$ for some constants $C>c>0$. 

A {\it path} (in some topological space) is an embedded interval.

\vbox{
\msk {\bf Acknowledgment.}
This project was designed by the first two authors in May 1991 during Pavel
Bleher's visit to the IMS at  Stony Brook.  It was resumed  in the fall 2005 in
Toronto during the Fields Institute Program on Renormalization in Dynamics and
Mathematical Physics.  The work of the first author is supported in part by the
NSF grants DMS-0652005 and DMS-0969254.  The work of the second author has
been partially supported by NSF, NSERC and CRC funds.

 We thank Robert Shrock for interesting discussions and comments.
(In particular, formula (\ref{vert exp}) answers one of Shrock's
questions.) 
}

\section{Description of the model}
\label{SEC:MODEL}

\subsection{Background: Ising models on graphs}\label{background}
   Let $\Gamma$ be a graph   
representing a magnetic matter in  a certain scale. 
Let $\VV$ and $\EE$ stand respectively for the set of its vertices 
(representing atoms) and edges (representing magnetic bonds between the atoms). 
Two vertices, $v$ and $w$, connected by an edge are called neighbors:
we respectively write $(v,w)\in \EE$ or  $\{v,w\}\in \EE$ for the corresponding 
oriented or unoriented edge, respectively.

A {\it spin configuration} on $\Gamma$ is a function $\si: \VV\ra \{\pm 1\}$. 
The spin $\si(v)$ represents a magnetic momentum of an atom  $v\in \VV$. 
The total magnetic momentum of the configuration is equal to%
\footnote{As we have mentioned in the introduction, 
this definition is different from (\ref{M conventional})
as the summation here is taken over the bonds rather than the atoms:
see Appendix \ref{reg lattice} for a motivation for this unconventional definition.}   
\begin{equation}\label{M}
    M(\si)= \frac 1{2} \sum_{(v,w)\in \EE} (\si(v)+\si(w))=n_+(\si)-n_-(\si),
\end{equation}
where $n_+(\si)$ and $n_-(\si)$ stand respectively for the number of $\{++\}$ and $\{--\}$ bonds. 

The Ising model depends on three physical parameters:

\begin{itemize}

\item
$J$ -- the coupling constant (strength of the magnetic bonds between the atoms);

\item
$h$ -- strength of the external magnetic field;

\item 
$T$ -- temperature.
\end{itemize}

The total energy of the configuration $\si$ is given by the Hamiltonian
$$
 H(\si) = - J I(\si) -  h M(\si),
$$
%
where 
\begin{eqnarray}\label{I}
  I(\si):= \sum_{\{v,w\}\in \EE} \si(v)\si(w)= n_+(\si)+ n_-(\si)-n_0(\si),
\end{eqnarray}
with   $n_0(\si)$ being the number of $\{+-\}$ bonds in $\si$.  

Let $\Conf = \Conf(\Gamma)$ be the {\it configuration space}, i.e., the space of all
spin configurations. The {\it Gibbs weight} of a configuration $\si$ is equal
to%
\footnote{We let the Boltzmann constant $k=1$.} 
\begin{equation}\label{Gibbs weights}
         \Weight (\si)\equiv \Weight (\si;  J/T,  h/T) =  e^{\frac {-H(\si)}{T}}= t^{-I(\si)/2} z^{-M(\si)},
\end{equation}
where   $z= e^{-h/T}$ and  $t = e^{-2J/T}$ are field-like and
temperature-like  variables.
In the physical ferromagnetic  region 
 we have $0<z<\infty$, $0<t<1$. 
However,  it is insightful to extend magnetic observables beyond this region. 
Let $\Par = \{(z,t)\}\subset \C^2$ stand for a relevant parameter space.

The Gibbs weights are invariant under simultaneous
change of sign of the external field and the spins:
$$
    \Weight(-\si;  J/T, - h/T)= \Weight(\si;  J/T,  h/T ).     
$$
This is the {\it basic symmetry} of the Ising model.  
It can be also formulated as follows. 
Consider the ``total configuration space'' $\widehat \Conf= \Conf\times \Par$   
fibered over $\Par$.
The  Gibbs weights $W(\si;z,t)$ endow it with the fibered Gibbs measure.
The basic symmetry translates into invariance of this measure under the involution
\begin{equation}\label{total involution}
    \hat\inv: \widehat\Conf\ra \widehat \Conf;\quad    \hat\inv: (\si; z,t)\mapsto (-\si; z^{-1}, t). 
\end{equation}

The {\it partition function} (or the {\it statistical sum})
is the total Gibbs weight of the space:
$$
  Z_\Gamma =Z_\Gamma(z, t)  = \sum_{\si\in \Conf} \Weight(\si).
$$
 It is a Laurent polynomial in $z$ and $t$.
Moreover, as a consequence of the basic symmetry, $Z$ is invariant under the 
involution  $\inv: (z,t)\mapsto (z^{-1},t)$,
so it has a form 
\begin{equation}\label{symmetric Laurent}
   Z_\Gamma = \sum_{n=0}^d a_n(t) (z^n+z^{-n}), \quad \mbox{where}\ d=|\EE|. 
\end{equation}
Moreover, $\ a_d=t^{-d/2}$. 
Thus, for any given $t\in \C^*$, $Z_\Gamma(t,z)$ has $2|\EE|$ roots $z_i(t)\in \C$. 
They are called {\it Lee-Yang zeros}. 

The {\it Gibbs distribution} is the probability measure on $\Conf$
with probabilities of the configurations proportional to the
Gibbs weights:
$$
         P(\si)= \frac{\Weight (\si)}{Z}. 
$$ 
Note that it gives a bigger weight to less energetic configurations.  

The entropy  of a configuration $\si$ is defined as $S(\si) = -\log P(\si)=\log Z + H(\si)/T$. 
The {\it  free energy} is defined as  
\begin{equation}\label{def F_Gamma}
      F_\Gamma =   H(\si) - T S(\si)  =  - T\log Z_\Gamma. 
\end{equation} 
It is independent of the configuration $\si$ (in the Gibbs state)
and hence coincides with its average over $\Conf$.  
\begin{rem} One can define in the same way the entropy and the free energy for an arbitrary
probability distribution on $\Conf$. Then the Gibbs distribution is singled out by one of two
equivalent properties: 
(i)  {\it it minimizes the free energy}; (ii) {\it the free energy is evenly distributed over configurations}.
\end{rem}

  Equations (\ref{def F_Gamma})  and (\ref{symmetric Laurent}) imply:
\begin{equation}\label{F_Gamma}
     F_\Gamma  =   - T \, \sum \log |z-z_i(t)|+ |\EE|\,T\, (\,\log|z|+\frac 1{2} \log |t|\,) 
\end{equation}
where the summation is taken over the $2|\EE|$ Lee-Yang zeros $z_i(t)$ of   $Z(\cdot, t)$ 
(here $\log|z|$- and  $\log |t|$-terms account respectively for the denominator 
 and the leading coefficient of $Z(\cdot, t)$). 
 
\begin{rem}
  In the physical region, the variables $z$, $t$ and $Z$ are positive, so no absolute values are needed in (\ref{F_Gamma}).
They are introduced in order to extend $F_\Gamma$ to the complex plane 
in such a way  that $F_\Gamma-|\EE|\,T\, \log |z|$ is superharmonic.    
We will still refer to this extension as the ``free energy''. 
\end{rem}

The {\it magnetization} of the matter is the average of the the magnetic momentum over the Gibbs distribution:
\begin{equation}\label{M_Gamma}
   M_\Gamma = \sum M(\si) P(\si) =   -\frac {\di  F_\Gamma} {\di h}   =  
                 - z \sum \frac{1}{z-z_i}+|\EE| .  
\end{equation}    

\begin{rem}
  Note that this expression gives a meromorphic continuation of $M_\Gamma$ to the complex plane.
It will still be referred as the ``magnetization''.   
\end{rem}

  The Ising model is called {\it ferromagnetic} if $J>0$, and {\it anti-ferromagnetic} otherwise.
The Gibbs distribution of a ferromagnetic model favors neighboring spins 
with the same orientation.  

Notice that for a ferromagnetic model,  $t= e^{-J/T} \in [0,1]$,
where $t=0$ and $t=1$ correspond respectively to zero and infinite temperature. 

\begin{LY Theorem}[\cite{YL,LY}] \label{THM:YANG_LEE} For a ferromagnetic Ising model,
 for any temperature  $t\in (0,1)$,  the Lee-Yang zeros $z_i(t)$ lie on the unit circle $\T$.
\end{LY Theorem}

This is a fundamental theorem of statistical mechanics. 
In Appendix \ref{APP:STATMECH} we will provide a proof of it in this general form
(in fact,  even in a slightly more general one).
In \S \ref{LY for DHL sec} we will prove it for  DHL using 
dynamics of  the Migdal-Kadanoff renormalization. 

Given a subsystem of atoms,  
$\UU \subset \VV$, and a partial configuration
$\si_\UU : \UU \ra \{ \pm 1\} $, we can define  {\it conditional configurations}
as all configurations $\si: \VV \ra \{ \pm\} $ that agree with $\si_\UU$ on $\UU$.
Let $\Conf(\Gamma|\, \si_\UU)$ stand for the space of all such  configurations.
The {\it conditional partition function} is defined as the total weight of this space:
$$
     Z_{\Gamma|\, \si_\UU} =  \sum_{\si\in \Conf (\Gamma|\si_\UU) } \Weight(\si).
$$

\begin{LY TheoremBC}\label{THM:LY_BC}Consider a ferromagnetic Ising model on a
connected graph $\Gamma$ and let $\sigma_\UU \equiv 1$ on a nonempty
$\UU \subsetneq \VV$.  Then, for any temperature $t \in (0,1)$ the Lee-Yang zeros $z_i^+(t)$ of the conditional
partition function $Z_{\Gamma|\, \si_\UU}$ lie outside the closed disc $\overline \D$.
\end{LY TheoremBC}

\noindent 
This interpretation follows directly from
the proof of the classical Lee-Yang Theorem; see Appendix \ref{APP:STATMECH}.
From the Basic Symmetry of the Ising model we get that for
$\sigma_\UU \equiv -1$ and $t \in (0,1)$ the Lee-Yang zeros $z_i(t)$ lie in
the open disc $\D$.

\subsection{Multiplicativity of the partition function}
For a subgraph $\Gamma'\subset \Gamma$, let $\bar \Gamma'$ stand for its {\it
closure} obtained by adding to $\Gamma'$ 
all of the vertices adjacent to $\Gamma'$ and all of the edges connecting them to $\Gamma'$.
Let $\di \Gamma'=
\bar \Gamma'\sm \Gamma'$.

\begin{lem}\label{multiplicativity}
  Let $\si_\UU$ be a conditional configuration 
and let $\Gamma_i$ be the connected components 
obtained after removing the vertices $\UU$ and all of the edges
ending at them from $\Gamma$.
Then 
$$
     Z_{\Gamma|\, \si_\UU} = \prod_i Z_{\bar{\Gamma}_i|\, \si_{\di \Gamma_i }}. 
$$
\end{lem}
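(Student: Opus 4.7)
The plan is to exploit the multiplicative structure of the Gibbs weight: since both $I(\sigma)$ and $M(\sigma)$ are sums over edges, the weight $\Weight(\sigma)$ itself factors as a product over edges, and the sum defining $Z_{\Gamma|\,\sigma_\UU}$ --- a sum over free spin values on the disconnected pieces $\VV(\Gamma_i)$ with $\sigma_\UU$ held fixed --- then splits into independent sums, one per component.

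More precisely, I would start by writing, via (\ref{M}), (\ref{I}) and (\ref{Gibbs weights}),
\begin{equation*}
  \Weight(\sigma) \;=\; \prod_{e = \{v,w\}\,\in\, \EE} w_{e}\bigl(\sigma(v),\sigma(w)\bigr), \qquad
  w_{e}(s,s') \,:=\, t^{-ss'/2}\,z^{-(s+s')/2}.
\end{equation*}
I would then classify each edge $e \in \EE$ according to the positions of its endpoints: (a) both in $\VV\setminus \UU$, in which case $e$ sits in a unique component $\Gamma_i$; (b) one in $\UU$ and one in $\VV\setminus \UU$, in which case $e$ joins some $\partial\Gamma_i$ to $\Gamma_i$; (c) both in $\UU$. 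By construction, the edges of types (a) and (b) partition disjointly the edge sets of the $\bar\Gamma_i$, and each $\bar\Gamma_i$ consists precisely of such edges together with the vertices they touch.

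With this bookkeeping, the main step is the elementary observation that for a fixed $\sigma_\UU$ the free spin values on distinct $\Gamma_i$ vary independently. Distributing the product and interchanging it with the sum gives
\begin{equation*}
 Z_{\Gamma|\,\sigma_\UU} \;=\; \sum_{\sigma\,\in\,\Conf(\Gamma|\,\sigma_\UU)}\,\prod_{e\,\in\,\EE} w_e \;=\; \prod_i\,\Bigl(\,\sum_{\sigma_i}\,\prod_{e\,\in\,\bar \Gamma_i}\!w_e\,\Bigr) \;=\; \prod_i Z_{\bar\Gamma_i|\,\sigma_{\partial\Gamma_i}},
\end{equation*}
where $\sigma_i$ ranges over extensions of $\sigma_{\partial\Gamma_i}$ to all of $\VV(\bar\Gamma_i)$. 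The only real subtlety is the contribution of type (c) edges, whose weights depend only on $\sigma_\UU$ and are therefore not absorbed by any $\bar\Gamma_i$; in the DHL applications of the lemma the set $\UU$ will always be independent in $\Gamma$, so no such edges occur. Beyond this accounting there is no genuine obstacle: the whole argument is a bookkeeping exercise built on the edgewise factorization of $\Weight$ and the independence of the component sums.
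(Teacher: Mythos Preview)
Your proof is correct and follows essentially the same approach as the paper's: both exploit the product decomposition of the configuration space and the additivity of the Hamiltonian (equivalently, the edgewise multiplicativity of the Gibbs weight) over the components $\bar\Gamma_i$. Your observation about type (c) edges is well taken---the lemma as stated tacitly assumes $\UU$ contains no internal edges, which the paper's proof passes over silently but which, as you note, holds in all the DHL applications.
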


\begin{proof}
  Clearly, 
\begin{equation}\label{partial confs}
  \Conf (\Gamma|\, \si_\UU)\isom \prod \Conf(\bar \Gamma_i|\, \si_{\di \Gamma_i}).  
\end{equation}
Since there are no interactions between the partial configurations $\si|\, \Gamma_i$, 
we have  the additivity property for the energy:
$$
    H(\si|\, \si_\UU) = \sum H(\si_{\bar \Gamma_i} | \si_{\di \Gamma_i}).   
$$
This implies multiplicativity for the corresponding  Gibbs weights and 
(together with (\ref{partial confs})) for the conditional partition functions.
\end{proof}

\subsection{Thermodynamic limit}

For finite graphs, the partition function $Z$ is a Laurent polynomial with non-negative coefficients,
so the free energy $F=-T\log Z$ is real analytic in the physical region -- 
 there are no phase transitions.  To observe phase transitions,
one should pass to a thermodynamic limit. Already in the original paper by Lee \& Yang \cite{LY},
the phase transitions were explicitly related to the asymptotic distribution of the zeros of the partition functions.  
In this section we will give a more rigorous account of these classical results.

Assume that we have a ``lattice'' given by a ``hierarchy'' of graphs $\Gamma_n$ of increasing size
(corresponding to finer and finer scales of the matter)%
\footnote{At this moment,  the terms ``lattice'' and ``hierarchy'' are used in a purely heuristic sense.
Formally speaking, we just have a sequence of graphs with $|\Gamma_n|\to \infty$.} 
with partition functions $Z_n$,
free energies $F_n$ and magnetizations $M_n$. To pass to the thermodynamic limit
we normalize these quantities {\it per bond}.%
\footnote{Viewing $|\EE|$ as the ``volume'' of the system, the normalized quantities get interpreted as
``specific''  free energy and magnetization.}
Let us say that our hierarchy of graphs has a {\it thermodynamic limit} if
\begin{equation}\label{L1 assumption}
     \displaystyle{\frac 1{|\EE_n| }F_n(z ,t)\to F(z,t)} \quad \mbox {for any} \ z\in \R_+, \ t\in (0,1).
\end{equation}
In this case, the function $F$ is called  the {\it free energy} of the lattice.

\begin{prop}\label{G and M}
  Assume that a hierarchy of graphs has a thermodynamic limit. Then for any $t\in (0,1)$, the limit (\ref{L1 assumption})  
in $z$ exists in $L^1_\loc(\C)$ and  the zeros of the partition functions $Z_n$ are asymptotically
equidistributed with respect to some measure $\mu_t$ on the unit circle $\T$.
Moreover, the limiting free energy  $F(z, t)$ admits the following electrostatic representation:
\begin{equation}\label{electrostat rep}
    F(z,t)= - 2 T \int_\T \log |z-\zeta|\,  d\mu_t (\zeta)  +T(\, \log |z| + \frac 1{2} {\log|t|}\, )
             \quad \mbox{for a.e. $z\in \C$}, 
\end{equation}
so $F(z,t)-T\log |z|$ is superharmonic in $z$ on the whole plane $\C$, and  is harmonic
on $\C\sm \supp\mu_t$. 

Furthermore, the magnetizations $\displaystyle{\frac 1{|\EE_n|} M_n}$ converge locally uniformly on $\C\sm \supp \mu_t$,
and the limiting magnetization $M$ admits the following Cauchy integral representation:
\begin{equation}\label{Cauchy rep}
     M(z,t) = -\frac{\di F}{\di h} =   -2 z \int_\T  \frac {d\mu_t (\zeta) }{z-\zeta}+1 \quad \mbox{for $z\in \C\sm \supp \mu_t$}, 
\end{equation}
so $M$(z,t) is holomorphic in $z$ 
on $\C\sm \supp \mu_t$. 
\end{prop}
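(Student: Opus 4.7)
The plan is to encode the free energy and magnetization as the logarithmic potential and Cauchy transform of the normalized empirical measure of Lee-Yang zeros, and then pass to the limit using potential theory. Combining (\ref{F_Gamma}) with the Lee-Yang Theorem, I would introduce the probability measure
\[
\nu_n^t := \frac{1}{2|\EE_n|}\sum_{i}\delta_{z_i^{(n)}(t)}
\]
on $\T$, so that
\[
\frac{F_n(z,t)}{|\EE_n|} = -2T\int_{\T}\log|z-\zeta|\, d\nu_n^t(\zeta) + T\bigl(\log|z|+\tfrac{1}{2}\log|t|\bigr).
\]
Since $\T$ is compact, $\{\nu_n^t\}$ is tight, so by Prokhorov it has weakly convergent subsequences. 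First I would show that any two subsequential weak limits coincide, so the entire sequence converges weakly to a single probability measure $\mu_t$. Uniqueness uses the hypothesis $F_n/|\EE_n| \to F$ on $\R_+$: for $z \notin \T$ the kernel $\zeta\mapsto\log|z-\zeta|$ is continuous and bounded on $\T$, so weak convergence gives pointwise convergence of the log-potentials on $\C\setminus\T$; the log-potential of any subsequential limit is therefore determined on $\R_+\setminus\{1\}$, which has accumulation points in both components of $\C\setminus\T$, and harmonicity of the log-potential on $\C\setminus\T$ propagates this to the whole complement; the measure is recovered as its distributional Laplacian divided by $2\pi$.

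Next I would upgrade pointwise convergence on $\R_+$ to $L^1_\loc(\C)$ convergence. Setting $v_n(z) := \int_\T\log|z-\zeta|\, d\nu_n^t(\zeta)$, the $v_n$ are subharmonic on $\C$, uniformly bounded above on compact sets, and converge pointwise on $\R_+\setminus\T$ to $v := \int \log|z-\zeta|\, d\mu_t$. The classical compactness theorem for subharmonic functions (e.g.\ Hörmander \S3.2 or Ransford \S3.2) yields $v_n \to v$ in $L^1_\loc(\C)$, hence $F_n/|\EE_n| \to F$ in $L^1_\loc$ and the electrostatic representation (\ref{electrostat rep}) follows. Superharmonicity of $F - T\log|z|$ is immediate from that representation, and its harmonicity on $\C\setminus\supp\mu_t$ follows since the distributional Laplacian of $v$ equals $2\pi\mu_t$, supported on $\supp\mu_t$.

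For the magnetization, formula (\ref{M_Gamma}) gives
\[
\frac{M_n(z,t)}{|\EE_n|} = -2z\int_{\T}\frac{d\nu_n^t(\zeta)}{z-\zeta} + 1.
\]
On any compact subset of $\C\setminus\T$, these are holomorphic functions uniformly bounded in modulus (by $2|z|/\dist(z,\T)+1$) that converge pointwise to the Cauchy integral against $\mu_t$ by weak convergence against the continuous kernel; Montel's theorem upgrades this to locally uniform convergence on $\C\setminus\T$, giving (\ref{Cauchy rep}) and holomorphicity on that open set. To propagate the result to arcs of $\T\setminus\supp\mu_t$, I would invoke the $L^1_\loc$ convergence $v_n\to v$ together with the fact that $v$ is real-analytic on the open set $\C\setminus\supp\mu_t$: a standard refinement of Hartogs' lemma asserts that $L^1_\loc$-convergence of subharmonic functions to a continuous limit is locally uniform on any open set where the limit is continuous, and when the limit is moreover harmonic one obtains locally uniform convergence of the Wirtinger derivatives $\di_z v_n\to\di_z v$. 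Up to the factors in the formula above this is precisely $M_n/|\EE_n|\to M$ locally uniformly on $\C\setminus\supp\mu_t$.

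The main obstacle I expect is this last step, upgrading locally uniform convergence from $\C\setminus\T$ to arcs of $\T\setminus\supp\mu_t$. The subtlety is that a priori $\nu_n^t$ may carry Lee-Yang zeros in compact neighborhoods of such arcs, so that $M_n/|\EE_n|$ has poles there obstructing a direct uniform bound; the key is that each individual zero has mass only $O(1/|\EE_n|)$, so the $-\infty$ singularities of $v_n$ become mild, and harmonicity of the limit $v$ forces the Riesz masses in any compact subset of $\C\setminus\supp\mu_t$ to vanish in the limit. Combining these two facts with the standard control of Wirtinger derivatives of convergent subharmonic sequences yields the required locally uniform convergence, after which the remaining claims of the proposition are immediate consequences of the electrostatic and Cauchy representations.
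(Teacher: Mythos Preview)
Your approach is correct and reaches the same conclusions, but the organization differs from the paper's in a way worth noting. You proceed measure-first: introduce the empirical measures $\nu_n^t$, extract weak subsequential limits by Prokhorov, and then argue uniqueness by observing that the log-potential of any limit is pinned down on $\R_+\setminus\{1\}$ and propagates by harmonicity. The paper instead goes function-first: it forms the holomorphic roots $\phi_n=\tilde Z_n^{1/d_n}$ on $\D$ (well-defined since the $\tilde Z_n$ are zero-free there by Lee--Yang), bounds them uniformly using positivity of the coefficients, and applies Montel directly to the $\phi_n$. Convergence on $(0,1)$ then forces locally uniform convergence of $\phi_n$ on all of $\D$, and the basic symmetry $z\mapsto 1/z$ handles the exterior. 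Only afterwards is $\mu_t$ defined as the distributional Laplacian of the limit. This buys uniqueness for free (no separate identity-theorem argument needed) and yields locally uniform convergence of the free energy on $\C\setminus\T$ immediately, before invoking H\"ormander's compactness theorem to glue across $\T$ in $L^1_{\loc}$.

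On the magnetization, you correctly flag the passage from $\C\setminus\T$ to $\C\setminus\supp\mu_t$ as the delicate point; the paper is in fact quite terse here, simply differentiating the electrostatic representation to obtain the Cauchy formula without dwelling on the uniform convergence of $M_n/|\EE_n|$ across arcs of $\T\setminus\supp\mu_t$. Your sketch via Hartogs-type control of subharmonic sequences converging to a harmonic limit is a reasonable way to fill that in, and is more explicit than what the paper provides.
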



\begin{proof}
We will fix some $t\in (0,1)$ and will consider all the functions in $z$-variable only.
Let $z_i^n$ stand for the zeros of the $Z_n$.  
Let us clear up the denominators of the Laurent polynomials $Z_n$ to obtain ordinary polynomials
$\tl Z_n = z^{d_n} Z_n$ (where $d_n=|\EE_n|$).  They have the same zeros as the $Z_n$, 
so by the Lee-Yang Theorem, they do not vanish on $\D$. Hence they admit  well defined roots
$\phi_n:= \tl Z_n^{1/d_n}$ on $\D$ that  are positive on the real line. 

Since the polynomials $\tl Z_n$  have positive coefficients, we have: 
\begin{equation}\label{C-bound}
     |\phi_n(z)| \leq \phi_n(1)\leq \exp \left\{ - \frac 1{T d_n} F_n(1) \right \}\leq C \quad \mbox{for any}\ z\in \bar \D,  
\end{equation}
where the last bound follows from existence of the thermodynamic limit. 

By Montel's Theorem, 
the sequence of functions $\phi_n$ is normal on $\D$.
Since it converges on $(0,1)$, it converges locally  uniformly on $\D$ to a holomorphic function $\phi$. 
Hence the free energies $d_m^{-1} F_n= -T(\log |\phi_n|-\log |z|)$ converge locally uniformly on $\D^*$ 
to the harmonic function $F: = -T(\log | \phi | - \log |z|)$. 

By the basic symmetry $z\mapsto 1/z$, we have $d_m^{-1} F_n\to F$ locally uniformly on $\C\sm \bar\D$.
Moreover, by the same symmetry and (\ref{C-bound}),  
the functions $d_m^{-1} F_n$  are uniformly bounded from above globally on $\C$.
By the Compactness Theorem for superharmonic functions (see \cite[Thm 4.1.9]{Ho}),
the function $F$ admits a superharmonic extension to the whole plane $\C$ and
\begin{equation}\label{L1 convergence}
            \frac 1{d_n}  F_n \to F \  \mbox{in}\  L^1_\loc(\C).
\end{equation}  

Let $\de_z$ stand for the unit mass located at $z$, and let
$\mu^n\equiv \displaystyle{\mu^n_t= \frac 1{2|\EE|}\sum \de_{z_i^n}}$.
  Since $\displaystyle{ \frac 1{2\pi} \log|\zeta|}$ is the fundamental solution of the Laplace 
equation, (\ref{F_Gamma}) implies that
\begin{equation}\label{De Fn}
    -\frac 1{4\pi T} \frac {\De F_n}{d_n} =  \frac 1{2 d_n}\sum \de_{z_i^n}-\frac 1{2}\de_0, 
\end{equation}
where  the Laplacian $\De$ is understood in the sense of distributions.

Since the distributional Laplacian is a continuous operator,
 (\ref{L1 convergence}) implies that 
$
  d_n^{-1} \De F_n (\cdot, t)\to \De F(\cdot, t)
$
in the weak topology on the space of measures. Together with  (\ref{De Fn}),
this implies that the Lee-Yang zeros are
equidistributed with respect to measure
$$
   \mu\equiv \mu_t:= -\frac 1{4\pi T}\De F(\cdot, t) +\frac 1{2} \de_0. 
$$ 

Let $u^n(z)\equiv u_t^n(z)$ and $u(z)\equiv u_t(z)$ 
stand for the electrostatic potentials of $\mu^n$ and $\mu$ respectively. 
For $z\in \C\sm \T$, the kernel 
$\zeta \mapsto \log  |z-\zeta|$ is a continuous function on $\T$ depending continuously (in the uniform topology)
on $z$. It follows  that $u^n(z)\to u(z)$ locally uniformly on $\C\sm \T$ . But by (\ref{F_Gamma}),
$$
 \frac 1{d_n} F_n= -2 T\, u^n(z) +  T(\log |z|+\frac 1{2} \log |t|).
$$  
Since (\ref{L1 convergence}) implies that  for some subsequence $n_k$, 
$\displaystyle{\frac 1{|d_{n_k}|} F_{n_k}(z)\to F(z)}$ a.e., representation (\ref{electrostat rep}) follows. 

Taking its $\di/\di h$-derivative (where $h=-T\log z$), we obtain representation (\ref{Cauchy rep}).

\comment{
Let us check that $L^2$-norms of the $u_^n$ are on $\D_2$ uniformly bounded. Indeed, by the Cauchy-Schwarz Inequality,
$$
    |u_n(z)|^2 \leq \int (\log |z-\zeta|)^2\, d\mu(\zeta),
$$
 so 
$$
  \|u_n\|^2\leq \int d\mu(\zeta) \int_{\D_2}  (\log |z-\zeta|)^2 d\area(z)\leq C.
$$
By Lemma \ref{}, $u_n\to u$ in $L^2$. 
}

\end{proof}

The basic symmetry of the Ising model implies that  
the free energy  is an even function of the field $h$,
while the magnetization is odd. 
In terms of the $(z,t)$-variables, $F$ \& $M$ are respectively even \& odd   under the involution $\iota$
(which is also clear from explicit representations (\ref{electrostat rep}) and (\ref{Cauchy rep})). 
If the magnetization has different limits at $z=1$ from above and below,
 $M^+(1)>0 $ and $M^-(1)=-M^+(1)<0$,  
then one says that the {\it first order phase transition} occurs (at $h=0$) , 
and call $M^+(1)$ the {\it spontaneous magnetization} of the model. 
The following statement makes physical relevance of the Lee-Yang zeros
particularly clear: 

\begin{cor}
  Assume the distribution $\mu_t$ is absolutely continuous on the unit circle
and its  density $\rho_t(\phi)= 2\pi d\mu_t(\phi)/d\phi$ is H\"older continuous at $\phi=0$. 
Then the first order phase transition  at $h=0$ occurs if and only if  $\rho_t(1)\not=0$,
and the corresponding spontaneous magnetization $M^+(1)$ is equal to $\rho_t(0)$.  
\end{cor}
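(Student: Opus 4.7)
The plan is to compute the jump of the magnetization $M(z,t)$ across the unit circle at $z=1$ using the Cauchy-integral representation (\ref{Cauchy rep}) together with the Plemelj--Sokhotski boundary-value formulas, and then to exploit the basic symmetry $\iota\colon z\mapsto z^{-1}$ to split this jump evenly between $M^+(1)$ and $M^-(1)$.

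First I would rewrite (\ref{Cauchy rep}) as an honest Cauchy-type integral. Parametrizing $\zeta=e^{i\phi}$ and using $d\mu_t(\zeta)=\rho_t(\phi)\,d\phi/(2\pi) = \rho_t(\arg\zeta)\,d\zeta/(2\pi i\zeta)$, one gets
\begin{equation*}
  M(z,t)=1+2z\cdot\frac{1}{2\pi i}\int_\T \frac{f(\zeta)}{\zeta-z}\,d\zeta,
  \qquad f(\zeta):=\frac{\rho_t(\arg\zeta)}{\zeta}.
\end{equation*}
Since $\rho_t$ is H\"older continuous at $\phi=0$, the density $f$ is H\"older continuous at $\zeta=1$, with $f(1)=\rho_t(0)$. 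The Plemelj--Sokhotski formula for Cauchy integrals of H\"older densities then provides the non-tangential boundary limits $F^{\pm}(1)$ of $F(z)=\frac{1}{2\pi i}\int_\T\frac{f(\zeta)}{\zeta-z}\,d\zeta$ from inside and outside the unit disc, with jump $F^{+}(1)-F^{-}(1)=f(1)=\rho_t(0)$. Multiplying by $2z$ and letting $z\to 1$ yields the magnetization jump
\begin{equation*}
  M^{+}(1)-M^{-}(1)=2\rho_t(0).
\end{equation*}

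Next I would invoke the basic symmetry of the Ising model. The involution $\iota$ swaps the interior and exterior of $\T$ and fixes $z=1$; since $M(\iota z,t)=-M(z,t)$, passing to the limit gives $M^{-}(1)=-M^{+}(1)$. Combining with the jump relation yields $2M^{+}(1)=2\rho_t(0)$, i.e.\ $M^{+}(1)=\rho_t(0)$. By definition the first-order phase transition at $h=0$ occurs iff $M^{+}(1)>0$, which is therefore equivalent to $\rho_t(0)\ne 0$.

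The only non-routine point is justifying the pointwise boundary values of the Cauchy integral at $z=1$: this is where the H\"older hypothesis on $\rho_t$ is essential (mere absolute continuity would not suffice). With that classical fact in hand, the rest is a short manipulation of (\ref{Cauchy rep}) and the symmetry $\iota$.
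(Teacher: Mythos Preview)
Your proof is correct and follows essentially the same route as the paper: rewrite the Cauchy representation (\ref{Cauchy rep}) so that the Plemelj--Sokhotski (Sokhotsky) theorem applies to the H\"older density, read off the jump of $M$ across $\T$ at $z=1$, and use the odd symmetry under $\iota$ to conclude $M^+(1)=\rho_t(0)$. The only cosmetic difference is that the paper separates the kernel via a partial-fraction step before invoking Sokhotsky, whereas you absorb the factor $1/\zeta$ into the density $f$; and you make the symmetry step explicit, which the paper leaves implicit.
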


\begin{proof}
 Formula (\ref{Cauchy rep}) with $d \mu_t = \rho_t d\phi/2\pi$ can be also written as follows:  
$$
   M(z,t)=  -2\left(\frac 1{2\pi i} \int_\T \frac {\rho_t(\zeta) d\zeta}{\zeta-z}- 
     \frac 1{2\pi i} \int_\T \frac{\rho_t(\zeta)d\zeta}{\zeta}\right) - 1.
$$
By the Sokhotsky Theorem  (see \cite[Theorem 7.6]{KRESS}),  the jump  at $z=1$  
(from inside to outside of $\T$) of the Cauchy integral 
in parentheses is equal to $\rho(1)$. 
Hence the jump of $M$ (from inside to outside) is equal to $-2\rho_t(1)$.
On the other hand, it is equal to $-2 M^+(0)$. 
\end{proof}

\begin{rem}\label{REM:NON_ANALYTICITY}
If the limiting distribution $\mu_t$ has a density $\rho_t(\phi)$ that is
real-analytic in a neighborhood of $\phi = 0$, then (\ref{electrostat rep})
allows for a analytic continuation of $F(z,t)$ in a neighborhood of $z=1$ by a
deformation of contours.

For the $\Z^d$ Ising model, $d > 1$, Isakov \cite{ISAKOV} has shown
that no such analytic continuation exists at sufficiently low temperatures.
Thus, for these models $\rho_t(\phi)$ cannot be real-analytic $\phi=0$ for low temperatures.
\end{rem}

\subsection{Diamond Hierarchical Lattice (DHL) and Migdal-Kadanoff renormalization}  

   Let us start with the simplest possible graph $\Gamma_0$: just two vertices, $a$ and $b$,
connected with one edge.
The space $\Conf_{\Gamma_0}$ consists of four configurations with the following energies:
$$
   H\left(\vertbondpp \right)= -J - h, \quad  H\left( \vertbondpm \right)=H\left(\vertbondmp \right)= J, \quad H\left(\vertbondmm \right)= -J + h, 
$$ 
and the following Gibbs weights:
\begin{eqnarray}\label{UVW-zt}
  U &=& \Weight\left( \vertbondpp  \right)=  z^{-1} t^{-1/2}, \nonumber \\
  V &=& \Weight\left(\vertbondpm \right)=\Weight\left(\vertbondmp \right)= t^{1/2},   \\ 
  W &=& \Weight\left(\vertbondmm \right)=  z t^{-1/2}.  \nonumber
\end{eqnarray}                                        
They sum up to  the following partition function:
\begin{equation}\label{Z-0}
  Z\equiv Z_{\Gamma_0}= U+2V+W =  \frac {z^2+2tz +1}{z\sqrt{t}}.
\end{equation}

Let us now replace the interval $\Gamma_0$ with a diamond $\Gamma_1$
with vertices $a,b,c,d$
(so that it  shares with $\Gamma_0$ the vertices $a$ and $b$), see Figure \ref{FIG:DIAMOND GRAPHS}. 
Restricting that the spins at $a$ and $b$ are both $+$ and summing over the
four spin configurations $(+,+)$  $(+,-)$ \& $(-,+)$, and $(-,-)$ at the
vertices $c$ and $d$ yields a sum of four conditional partition functions (two
of which are equal):
$$
  U_1 := Z_{\Gamma_1 | \, {++}} = U^4 +  2U^2 V^2 + U^4 = (U^2 + V^2)^2
$$
\noindent
Similarly
$$
   V_1 := Z_{\Gamma_1 | \, +-}=Z_{\Gamma_1 | \, -+}=  U^2V^2 + 2U V^2 W  + V^2 W^2 = V^2 ( U + W)^2, 
$$
$$
   W_1 :=  Z_{\Gamma_1 | \, --} =  V^4 +  2V^2 W^2 + W^4 = (W^2+V^2)^2. 
$$
The full partition function of $\Gamma_1$ is equal $Z_{\Gamma_1}=U_1+2V_1+W_1$.

Replacing each edge of the diamond with $\Gamma_1$, 
we obtain a lattice $\Gamma_2$ with 16 edges.
Inductively, replacing each edge of the diamond with the lattice $\Gamma_{n-1}$,
we obtain the lattice $\Gamma_n$ with $4^n$ edges,%
\footnote{This description of the DHL is ``dual'' to the one given in the Introduction, \S \ref{DHL intro}.}
 see Figure \ref{FIG:DIAMOND}. 


\begin{figure}
\begin{center}
\input{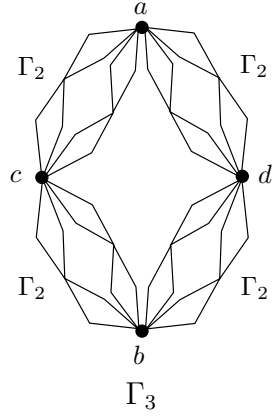}
\end{center}
\caption{\label{FIG:DIAMOND} Graph $\Gamma_3$ built from four copies of $\Gamma_2$.}
\end{figure}

All lattices $\Gamma_n$  share four vertices, 
$a$, $b$, $c$ and $d$, with the original diamond.
Restricting the spins at $\{a, b\}$ 
we obtain three conditional partition functions, $U_n$, $V_n$ and $W_n$ as follows:\\
\begin{center}
\input{figures/definition_UVW.pstex_t}
\end{center}
\noindent
The total partition function is equal to 
$$
   Z_n= Z_{\Gamma_n}= U_n+2V_n+W_n. 
$$

Similarly to the above formulas for $U_1$, $V_1$ and $W_1$, we have:

\msk
\begin{center}
{\bf Migdal-Kadanoff RG Equations: }
\end{center}
\ssk
$$
  U_{n+1}= (U_n^2+V_n^2)^2, \quad V_{n+1}= V_n^2 (U_n+W_n)^2, \quad W_{n+1}= (V_n^2+W_n^2)^2 .   
$$

\begin{proof}
    Let us check the first equation (the others are similar).   
There are four spin configurations at the vertices $(c, d)$: 
$(+,+)$, $(+,-)$ \& $(-,+)$ and $(-,-)$, as shown in Figure \ref{FIG:MK_derivation}.   
By the multiplicativity of the partition function (Lemma \ref{multiplicativity}),
the corresponding conditional partition functions are equal respectively  to
$ U_n^4$, $U_n^2 V_n^2$ (twice) and $V_n^4$. 
Summing these up, we obtain the desired equations.

\begin{figure}[h]
\begin{center}
\input{figures/MK_derivation_small.pstex_t}
\end{center}
\caption{\label{FIG:MK_derivation} Derivation of the Migdal-Kadanoff Equations.}
\end{figure}

\end{proof}


Let us consider the following homogeneous degree 4 polynomial map $\hat \Rmig: \C^3\ra \C^3$ 
\begin{equation}\label{hat R}
  \hat \Rmig : (U, V, W)\mapsto ( (U^2+ V^2)^2, \  V^2(U + W)^2, \ (V^2+W^2)^2)
\end{equation}
called the {\it Migdal-Kadanoff Renormalization}. 
By the Migdal-Kadanoff RG Equations, the conditional partition functions of $\Gamma_n$ 
 are given by the orbits of this 
map,  $(U_n, V_n, W_n) = \hat \Rmig^n (U,V,W)$. 
The full partition function is obtained by the $\hat R^n$-pullback of the linear form $Z = U+2V+W$:  
\begin{equation}\label{Z_n}
   Z_n  =  Z \circ \hat \Rmig^n. 
\end{equation}
  The renormalization operator $\hat \Rmig$ descends to a rational transformation $\Rmig: \CP^2\ra \CP^2$.
In the affine coordinates $u= U/V$, $w= W/V$, it assumes the form:
\begin{equation}\label{uv coord}
   \Rmig: (u,w)\mapsto \left( \frac{u^2+1}{ u+w},\ \frac{w^2+1}{u+w} \right)^2, 
\end{equation}                                                              
where the external squaring stands for the squaring of both coordinates,
$ (u,w)^2 = (u^2, w^2)$. 
According to  (\ref{UVW-zt}), these coordinates are related to 
 the  ``physical'' $(z,t)$-coordinates as follows:
\begin{eqnarray}\label{zt-uv}
(u,w) = \correspond(z,t) = \left(\frac{1}{zt},\frac{z}{t}\right).
\end{eqnarray}
\comment{***********
\begin{equation}\label{zt-uv}
          z^2= \frac W{U}=\frac w{u},\quad t^2= \frac {V^2}{UW}= \frac 1{uw}.
\end{equation}
**************}
In  $(z,t)$-coordinates, the renormalization transformation assumes the form:
\begin{equation}\label{R}
    \Rphys: (z,t)\mapsto  \left( \frac{z^2+t^2}{z^{-2}+t^2}, \ \frac{z^2+z^{-2}+2}{z^2+z^{-2}+t^2+t^{-2}}\right).
\end{equation} 
     The iterates $(z_n, t_n)= \Rphys^n (z,t)$ are related to $(U_n,V_n,W_n)$ by means of  (\ref{UVW-zt}):
$U_n= z_n^{-1} t_n^{-1/2}$, etc.
Physically, they are interpreted as  the {\it renormalized}  field-like and temperature-like variables.

\subsection{Basic Symmetries}\label{sec: basic symmetries}
By the basic symmetry of the Ising model, the change of sign of $h$ interchanges
the conditional partition functions $U_n$ and $W_n$ keeping $V_n$ and
the total sum $Z_n$ invariant. Consequently, the RG transformation $\hat \Rmig$
commutes with the involution $(U,V,W)\mapsto (W,V,U)$, 
which is also obvious from the explicit formula (\ref{hat R}).   
Accordingly, the transformation $\Rmig$ commutes with the permutation $(u,w)\mapsto (w,u)$,
while $\Rphys$ commutes with $(z,t)\mapsto (z^{-1}, t)$. 

All these transformations have real coefficients, 
so all of them commute with the corresponding complex conjugacies:
$(U,V,W)\mapsto (\bar U, \bar V, \bar W)$, etc. 

Finally, there is an extra ``accidental'' symmetry of the DHL:
the generating diamond $\Gamma_1$ is symmetric under reflection across
the vertical axis.  
It results in the squared terms in the Migdal-Kadanoff RG Equations that makes the LY distributions $\mu_t$
symmetric under the half-period  translation $\phi\mapsto\phi +\pi$. 
It will play an important role in \S \ref{SUBSEC:COMBINATORIAL_EXPANSION}.


\section{Structure of the RG transformation I: Invariant cylinder}  
\label{SEC:STRUCTURE}

We will now begin to explore systematically the RG dynamics. 
Its generator was represented above in several coordinate systems, in particular,  
as a transformation $\Rmig$ (\ref{uv coord}) in the affine coordinates $(u,w)$
and as a transformation $\Rphys$ (\ref{R}) in the physical coordinates $(z,t)$.
From a physical point of view we are primarily interested in the latter.  
However, $\Rmig$ possess  better global dynamical properties. 
For these reasons we will treat both mappings in parallel.

Since $\Rmig$ and $\Rphys$ represent the same map in different coordinates,
the corresponding change of variables (\ref{zt-uv}) should be equivariant, 
i.e., the following diagram must commute:

\begin{eqnarray}\label{comm diagram}
\begin{CD}
\CP^2 @>\Rphys>> \CP^2 \\
@VV\correspond V 	@VV\correspond V\\
\CP^2 @>\Rmig>> \CP^2
\end{CD}
\end{eqnarray}
wherever the maps are well defined.
And this is indeed true and  can be verified directly using the explicit formulas for the maps.

\comm{***
\subsection{Relationship between $\Rmig$ and $\Rphys$:}
\label{SUBSEC:SEMICONJUGACY}
The mapping $\correspond:\C^2 \ra \C^2$ given by (\ref{zt-uv}) extends to a
rational map $\correspond:\CP^2 \ra \CP^2$.  

Writing homogeneous coordinates $[Z:T:Y]$ in the domain (with $(z,t)$ corresponding to
$[z:t:1]$) and $[U:V:W]$ in the codomain (with $(u,w)$ corresponding to
$[u:1:w]$), the extension is given by
\begin{eqnarray*}
[U:V:W] = \correspond([Z:T:Y]) = [Y^2:ZT:Z^2].
\end{eqnarray*}
\note{$W$-notation}
Generic points $[U:V:W]$ have two preimages under $\correspond$ and the critical locus of $\correspond$
is the union of the two lines $Z=0$ and $Y=0$, with the former collapsing under $\correspond$ to $[1:0:0]$.
On this collapsing line lies $\gamma=[0:1:0]$ which is the only indeterminacy point for $\correspond$.

\begin{prop}\label{PROP:SEMI_CONJUGACY}The mapping $\correspond$ induces a semi-conjugacy
between $\Rphys$ and $\Rmig$ so that the following diagram commutes:
\begin{eqnarray}
\begin{CD}
\CP^2 @>\Rphys>> \CP^2 \\
@VV\correspond V 	@VV\correspond V\\
\CP^2 @>\Rmig>> \CP^2
\end{CD}
\end{eqnarray}
\end{prop}

\noindent
The proof is a direct calculation and we omit it.
***}

In this section we will describe basic features of $\Rphys$ (viewed statically) on the invariant cylinder
(that supports the Lee-Yang zeros) and of $\Rmig$ on the corresponding invariant M\"obius band. 

\subsection{Invariant cylinder and M\"obius band.}
\label{SUBSEC:MAP_ON_CYL}
Let us consider the round cylinder $\Cphys = \T\times\II$  naturally sitting in $\C^2$.
It is obvious from (\ref{R}) that $\Cphys$ is invariant under $\Rphys$.

\begin{rem} Note that the whole bi-infinite cylinder $\hat \CC= \T\times \R$ is also invariant under $\RR$,
and in fact, $\RR(\hat \CC)\subset \CC$. (Here the upper cylinder ($t>1$) corresponds 
to the {\it anti-ferromagnetic} region.) 
\end{rem}


We keep identifying the unit circle $\T$ with $\R/2\pi\Z$, 
so that points $(z,t)$ ( $z\in \T$)  of the cylinder  $\Cphys$ will often be written
in the angular coordinate as $(\phi, t)$, where  $z=e^{i\phi}$, $\phi\in \R/2\pi\Z$. 
Let $\T_t= \T\times \{\phi\}$ be the horizontal sections of the cylinder.
We will use special notation for the bottom and the top sections: 
$$
  \BOTTOMphys=\T_0\quad \TOPphys=\T_1. 
$$
We will also use  special notation $\Cphystl:= \Cphys\sm \TOPphys$ and  $\Cphysbl:= \Cphys\sm \BOTTOMphys$
respectively for the topless and the bottomless cylinder.                            

The cylinder is foliated by the vertical intervals
$$
    \II_\phi= \{ (\phi, t):\ 0\leq t \leq 1\}.
$$
The interval $\II_0 = \{\phi=0\}$ plays a distinguished role, both physically and dynamically.
Physically, it corresponds to the {\it vanishing} magnetic field.
Dynamically, it is singled out by the property of being {\it invariant} under $\Rphys$.
Its endpoints $\FIXphys_0=(0,0)\in \BOTTOMphys $ and $\FIXphys_1 = (0,1)\in \TOPphys$ 
are {\it superattracting fixed points} for $\Rphys|\, \II_0$,
and there is a unique {\it repelling fixed point} $\FIXphys_c=(0,t_c)\in \II_0$.
This is exactly the {\it critical point of the Ising model}%
\footnote{We hope it will be clear from the context whether the term ``critical'' is used in 
                            the physical or dynamical sense.}
 mentioned in the introduction
and marked on Figure \ref{FIG:CYLINDER_BASINS}.  
All points $\FIXphys \in \II_0$ below $\FIXphys_c$ converge to $\FIXphys_0$,
while all points above it converge to $\FIXphys_1$.
  
\msk
Let us now switch to the affine coordinates $(u,w)=\Psi(z,t)$ from (\ref{zt-uv}).
Consider a topological  annulus
\begin{equation}\label{C-D}
     \Cmigbl= \{(u,w)\in \C^2:\ w=\bar u, \ |u|\geq 1\}.
\end{equation}
in $\C^2$, and let $\Cmig$ stand for its closure in $\CP^2$.
Let $\TOPmig= \{(u,\bar u): \ |u|=1\} $ be the  ``top'' circle of $\Cmig$,
while  $\BOTTOMmig$ be the slice of $\Cmig$ at infinity. 

Though the change of variables $\Psi$ is not globally invertible,
it is nearly such on the cylinder $\Cphys$:

\begin{prop}\label{cylinder-band conjugacy}
\begin{itemize}
\item [(i)] $\correspond$ restricts to a diffeomorphism $\Cphysbl\ra \Cmigbl$;
\item [(ii)]   $\Cmig=\correspond(\Cphys)$ is an $\Rmig$-invariant M\"obius band,
         and $\BOTTOMmig$ is an $\Rmig$-invariant circle,
         a ``median'' of the band
         (given as the unit circle  in the coordinate $\zeta= w/u$). 
\item[(iii)]     $R$ acts on $\Cmigbl$ as
$\displaystyle{  u\ra \left( \frac{u^2+1}{2\Re u}\right)^2}$,
and it acts on $\BOTTOMmig$  as $\zeta\mapsto \zeta^4$; 
\item [(iv)]  The map $\Psi: \BOTTOMphys\ra \BOTTOMmig$ is 2-to-1, and
 $\correspond : \Cphys \ra \Cmig$ continuously semiconjugates $\Rphys$ to $\Rmig$.
\end{itemize}
\end{prop}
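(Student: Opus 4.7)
The plan is to reduce everything to direct calculation in explicit coordinates, using $\correspond(z,t) = (1/(zt), z/t)$ from (\ref{zt-uv}). For (i), substitute $(z,t) = (e^{i\phi}, t)$ with $t \in (0,1]$: this gives $u = e^{-i\phi}/t$, $w = e^{i\phi}/t = \bar u$, and $|u| = 1/t \geq 1$. The inverse map $(u,\bar u) \mapsto (e^{-i\arg u}, 1/|u|)$ is smooth on $\{|u|\geq 1\}$, so $\correspond$ restricts to a diffeomorphism $\Cphysbl \to \Cmigbl$.

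For the first formula in (iii), I would substitute $w = \bar u$ directly into (\ref{uv coord}). Since $u+w = 2\Re u$ is real, the two components of $R(u,\bar u)$ are complex conjugates of each other, which simultaneously confirms $R$-invariance of $\Cmigbl$ and yields the stated formula $u \mapsto ((u^2+1)/(2\Re u))^2$. For the action on $\BOTTOMmig$, I would use the coordinate $\zeta = w/u = z^2$, which is constant along the fibers of $\correspond|\Cphys$ and extends continuously to the line at infinity. Either from a direct homogeneous-coordinate computation or by invoking the commutative diagram (\ref{comm diagram}) together with the easy observation that $\Rphys|\BOTTOMphys$ is $z\mapsto z^4$ (read off from (\ref{R}) in the limit $t\to 0$), one concludes $\zeta \mapsto \zeta^4$.

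The conceptually most delicate step is (ii): identifying the closure of $\Cmigbl$ in $\CP^2$ as a M\"obius band. The plan is to write points of $\Cmigbl$ in homogeneous coordinates as $[u:1:\bar u]$ and examine the limit as $|u|\to\infty$. Rescaling by $1/u$ gives $[1:1/u:\bar u / u]$, and since $\bar u / u = e^{-2i\arg u}$ has period $\pi$ in $\arg u$, the two antipodal radial rays ($\arg u = \theta$ and $\arg u = \theta + \pi$) converge in $\CP^2$ to the \emph{same} point $[1:0:e^{-2i\theta}]$. Hence the closure glues the boundary-at-infinity of the annulus $\{|u|\geq 1\}$ by the antipodal map, producing a M\"obius band whose single boundary circle is $\TOPmig = \{|u|=1\}$ and whose core circle (the median) is $\BOTTOMmig$, parameterized by $\zeta = e^{-2i\theta}$. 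Its $\Rmig$-invariance then follows from (iii).

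For (iv), the map $\correspond$ takes the projective form $(z,t) \mapsto [1:zt:z^2]$ after clearing the common factor $zt$ from $[1/(zt):1:z/t]$, and this is manifestly continuous on all of $\Cphys$. Restricted to $\BOTTOMphys$ ($t=0$) it becomes $(z,0) \mapsto [1:0:z^2]$, visibly 2-to-1 since $\pm z$ have the same image. The semiconjugacy $\correspond \circ \Rphys = \Rmig \circ \correspond$ is exactly the content of the commutative diagram (\ref{comm diagram}) already recorded in the excerpt, and extends by continuity to $\BOTTOMphys$. The main obstacle throughout is the bookkeeping of boundary behavior in $\CP^2$; once the projective form of $\correspond$ and the $e^{-2i\theta}$ identification at radial infinity are pinned down, every assertion reduces to routine verification.
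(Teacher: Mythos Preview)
Your proposal is correct and follows essentially the same approach as the paper: both reduce everything to direct coordinate computation, use the chart $(\xi,\zeta)=(1/u,\,w/u)$ near the line at infinity (your rescaling $[u:1:\bar u]\to[1:1/u:\bar u/u]$ is exactly this), and identify the M\"obius band via the antipodal gluing coming from $\zeta=z^2$ (equivalently, from $\bar u/u=e^{-2i\arg u}$ having period $\pi$). The only cosmetic difference is that the paper phrases the M\"obius identification from the $(z,t)$ side (antipodal $z$ on $\BOTTOMphys$ collapse under $\zeta=z^2$) while you phrase it from the $(u,w)$ side (antipodal radial directions in $u$ meet at infinity); since $u=e^{-i\phi}/t$ these are the same identification.
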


\begin{proof}
It is obvious from  (\ref{zt-uv}) that  $\correspond$ 
maps $\Cphysbl$ smoothly to $\Cmigbl$.
Moreover,  $(\phi,t)$   
can be recovered from $(u,v)=\correspond (\phi,t)$ as the polar coordinates of $u^{-1}= te^{i\phi}$.
This yields (i). 

Let us use coordinates $(\xi =1/u, \zeta=w/u)$ near the line at infinity $\{\xi=0\}$. 
In these coordinates, the map $\correspond$ assumes the form $\xi=tz,\ \zeta= z^2$,
which makes obvious its continuity. Hence it maps $\Cphys$ onto $\Cmig$. 

Moreover, the circle $\BOTTOMphys$ is mapped to the circle $\BOTTOMmig=\{\xi=0,\, |\zeta|=1\}$
by $\zeta= z^2$. So, topologically $\Cmig$ is obtained from the cylinder $\Cphys$ by 
identifying the antipodal points $z$ and $-z$ on  $\BOTTOMphys$. This makes the M\"obius band.

Invariance of $\Cmig$ and $\BOTTOMmig$ follow from the semi-conjugacy or directly from (\ref{uv coord}).
Expressions (iii)  are also straightforward.
\end{proof}


In the coordinate $u$ on $\Cmig$, the interval $\II_0$ becomes the real ray $I_0= \{u\in \R, u\geq 1\}$
and the fixed points $\FIXphys_1, \FIXphys_c$, $\FIXphys_0$ on $\II_0$
become fixed points $\FIXmig_1=\{u=1\} , \FIXmig_c$,  $\FIXmig_0=\{\zeta=1\}$ for $\Rmig$.  

\subsection{Decomposition $\Rphys=f\circ Q$ and structure of $f$ on the cylinder}\label{f-sec}  

To understand further the geometric structure (of the first iterate) of  the renormalization $\Rphys$, 
it is convenient to decompose it as $f\circ Q$, where $Q(z,t)= (z^2, t^2)$ and
\begin{equation}\label{f}
   f(z, t) = \left( \frac{z+t}{z^{-1}+t}, \  \frac{\cos\phi + 1 } {\cos\phi+ s }\right),\quad s=\frac{1}{2}(t+t^{-1}).
\end{equation} 
As a reflection of the basic symmetry of the Ising model, 
 $f$ commutes with the involution $\si: (\phi, t)\mapsto (-\phi,t)$.%

The cylinder $\Cphys$ is invariant under both $Q$ and $f$.  However, we should be
careful: $f$ is not well defined on the whole cylinder; it has a {\it point of
indeterminacy} $\INDphys = (\pi ,1)\in \TOPphys \cap \II_\pi $ which decisively
influences the dynamics.  When we approach $\INDphys$ from inside the cylinder at
angle $\om\in [-\pi/2, \pi/2]$ with the leaf $\II_\pi$, the map $f$ converges
to the point 
\begin{equation}\label{Icurve}
    (\phi, t) = ( 2\om, \sin^2\om):=  \Icurve(\om) \in \Cphys
\end{equation} 
(see \cite[p. 419]{BZ3} and also calculations in Appendix \ref{APP:BLOW_UPS}).  Thus,
the point $\INDphys$ {\it blows up} to the  {\it blow-up locus} 
\begin{equation}\label{Icurve-2}
     \Icurve= \{t=\sin^2 \phi/2 \equiv \frac {1- \cos \phi}{2}\}.
\end{equation}
Note that $\Icurve$ touches the top $\TOPphys$ at $\INDphys$
itself, and touches the bottom $\BOTTOMphys$ at $\FIXphys_0$ 
(see Figures \ref{FIG:CYLINDER_MAP} and \ref{FIG:F}).
It divides the cylinder into two pieces: $\Cphys_-$ 
(below $\Gamma$) and $\Cphys_+$ (above it).

The interval $\II_\pi$ (that ends at the point of indeterminacy $\INDphys$) is 
{\it critical}: it collapses under $f$ to the fixed point $\FIXphys_0$. 

In the angular coordinate, we have:
\begin{equation}\label{Df}
   Df = \frac{2}{\eta^2} \left
                         (\begin{array}{cc}  
                                                   \eta & 0 \\
                                                    0 & 1- t 
                         \end{array}   \right) 
        \cdot
                         \left
                          (\begin{array}{cc} 
                                                  1+t \cos\phi & - \sin\phi \\
                                                 -t(1-t) \sin\phi & (1+t)(1+\cos\phi))     
                          \end{array} \right),     
\end{equation}
where $\eta= 1+2t\cos\phi+ t^2\in [0,4] $. 
It follows that
\begin{equation}\label{det}
   \Jac f = \frac{4 (1-t)(1+\cos\phi)}{\eta^2} \geq 0,
\end{equation}
and $\Jac f = 0$ only on the critical interval $\II_\pi$. 
Thus, $f$ is an {\it orientation preserving local diffeomorphism} on $\Cphys\sm \II_\pi$.

\comment{*****
\bignote{Should we replace the above formula with the one in $(u,v,)$-coordinates?}
******}

Note  that $f|\BOTTOMphys \, : z \mapsto z^2$ while $f|\TOPphys = \id$. 
This drop in the degree is caused by the point of indeterminacy
in the following way.
Let us consider the zero level Lee-Yang locus $\SSS=\{Z=0\}$,
where $Z$ is the partition function (\ref{Z-0}):
\begin{equation}\label{S}
     \SSS = \{ z^2+2tz+1=0\} = \{ t=-\cos\phi:\ \phi \in [\pi/2, 3\pi/2] \, \} .
\end{equation}
It has two branches over $I$ (symmetric with respect to $\II_\pi$)
each of which is mapped diffeomorphically  onto $\II_\pi$   (see Figure \ref{FIG:CYLINDER_MAP}).

\begin{figure}
\begin{center}
\input{figures/cylinder_map.pstex_t}
\end{center}
\caption{\label{FIG:CYLINDER_MAP} The map $f:\Cphys \rightarrow \Cphys$.}
\end{figure}

The curve $S$ divides $\Cphys$ into two domains: $\La^s$ (containing $\II_\pi\sm \{\INDphys\}$)
and $\La^r$ (containing $\II_0$). 
The domain $\La^s\sm \II_\pi$ is composed of two topological triangles mapped diffeomorphically 
onto the corresponding triangles of  $\Cphys_-\sm \II_\pi$,
namely, the right-hand side triangle of $\La_s\sm \II_\pi$ is mapped onto the left-hand side triangle of $\CC_-\sm \II_\pi$,%
\footnote{The map is quite peculiar on the boundary of $\La^s\sm \II_\pi$ 
 as it blows up $\INDphys$ to the $\Icurve$-boundary
of $(\Cphys_-\sm \II_\pi)$ while collapses $\II_\pi$ to $\FIXphys_0$.} see Figure \ref{FIG:F}.
On the other hand,  $\La^r$ is mapped diffeomorphically onto the whole $\Cphys\sm \II_\pi$. 
Accordingly, we have two diffeomorphic branches of the inverse map,
the ``singular'' branch $f^{-1}_s :  \Cphys_- \sm \II_\pi \ra \La^s$ and the 
``regular'' one, $f^{-1}_r: \Cphys\sm \II_\pi \ra \La^r$. 

In particular, we conclude that
{\it the map $f$ has degree 2 over $\inter \Cphys_-$ and degree 1 over $\inter \Cphys_+$}.
So, $f$ {\it is not a proper map}. 

 A path $\gamma: [0,1]\ra \Cphys$ is called {\it proper} 
if it connects the bottom of the cylinder to its  top without passing through $\di \Cphys$ in between.%
\footnote{An open path  $\gamma_1: (0,1): \ra \inter\Cphystl$ or a half-open path   $\gamma_1: [0,1): \ra \Cphystl$
 that extends to a proper path  $\gamma: [0,1]\ra \Cphys$ will also be called ``proper''. } 
A crucial property of the cylinder dynamics is that  $f^{-1}$ acts  properly on proper paths:

\begin{lem}\label{vertical lifts}
  If $\gamma:[0,1]\ra \Cphys$ is a proper  path then the full preimage 
$f^{-1}\gamma$ contains two proper  paths, $\de_1$ and $\de_2$. 
These two paths can meet only at $\INDphys$. Moreover, if $\gamma$ crosses $\Icurve$ only once,
then $f^{-1}\gamma= \de_1\cup \de_2:=\de_r\cup \de_s$, where $\de_r=f_r^{-1} \gamma$ is the
``regular'' lift of of $\gamma$ while $\de_s= f_s^{-1}\gamma$  is the ``singular'' lift
ending at $\INDphys$.%
\footnote{In case $\gamma(1)=\INDphys$,  both lifts end at $\INDphys$. }
\end{lem}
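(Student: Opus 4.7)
The plan is to construct $\delta_1$ and $\delta_2$ explicitly as the two local lifts of $\gamma$: the full \emph{regular lift} using $f^{-1}_r$, and the \emph{singular lift} using $f^{-1}_s$ up to the first crossing of $\Icurve$, extended by terminating at $\INDphys$.

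Setting $\delta_r := f^{-1}_r(\gamma)$, I use that $f^{-1}_r: \Cphys \setminus \II_\pi \to \La^r$ is a diffeomorphism and that $\gamma$ generically avoids $\II_\pi$, to conclude that $\delta_r$ is an embedded arc in $\overline{\La^r}$. Its bottom endpoint is the square root of $\gamma(0)$ in $\BOTTOMphys \cap \La^r$ (the arc containing $\phi=0$), since $f|\BOTTOMphys: z \mapsto z^2$. Its top endpoint is $\gamma(1)$ itself, since $f|\TOPphys = \id$ and $\TOPphys \setminus \{\INDphys\} \subset \La^r$. So $\delta_r$ is always a proper path.

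For $\delta_s$, let $t_* = \inf\{t : \gamma(t) \in \Icurve\}$, which is positive and finite since $\gamma$ starts in $\overline{\Cphys_-}$ and ends in $\overline{\Cphys_+}$. Apply $f^{-1}_s: \Cphys_- \setminus \II_\pi \to \La^s$ to $\gamma|_{[0,t_*)}$, choosing the branch whose bottom endpoint is the other square root of $\gamma(0)$, in $\BOTTOMphys \cap \La^s$. The crucial step is to show that $f^{-1}_s(\gamma(t)) \to \INDphys$ as $t \to t_*^-$: by the blow-up formula (\ref{Icurve}), an approach to $\INDphys$ from inside the cylinder at angle $\omega$ with $\II_\pi$ yields $f$-image $\Icurve(\omega)$; inverting this, $f^{-1}_s$ extends continuously across $\Icurve$ by sending every point of $\Icurve$ to $\INDphys$. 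Completing $\delta_s$ by this limit produces an embedded proper path from $\BOTTOMphys$ to $\INDphys \in \TOPphys$.

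Finally, $\delta_r \subset \overline{\La^r}$ and $\delta_s \subset \overline{\La^s} \cup \{\INDphys\}$ intersect only in $\SSS \cup \{\INDphys\}$; their bottom endpoints are the distinct (antipodal) square roots, and neither lift meets $\SSS$ in the interior of the cylinder because $f(\SSS) = \II_\pi$ is generically disjoint from $\gamma$. So the two lifts meet at most at $\INDphys$. If $\gamma$ crosses $\Icurve$ exactly once, the degree count --- two preimages over $\inter\Cphys_-$ (one in each region), one preimage over $\inter\Cphys_+$ (in $\La^r$), and the singular sheet collapsed to $\INDphys$ over $\Icurve \setminus \{\INDphys\}$ --- gives $f^{-1}(\gamma) = \delta_r \cup \delta_s$. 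The main obstacle is the blow-up limit above, which amounts to inverting the local analysis of $f$ near $\INDphys$ and is essentially a calculation of the sort in Appendix \ref{APP:BLOW_UPS}; the degenerate cases ($\gamma(1) = \INDphys$, $\gamma(t_*) = \INDphys$, or $\gamma$ passing through $\FIXphys_0$) are handled by small continuity or genericity arguments.
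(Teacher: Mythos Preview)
Your overall strategy—construct the two lifts explicitly and use the degree count over $\Cphys_\pm$—matches the paper's, but your implementation via the global inverse branches $f_r^{-1}$ and $f_s^{-1}$ has a real gap that the paper avoids by using path lifting instead.

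The issue is that $f_r^{-1}:\Cphys\sm\II_\pi\to\La^r$ does \emph{not} extend continuously across $\II_\pi$. Both branches of $\SSS$ map diffeomorphically onto $\II_\pi$, so as a point approaches $\II_\pi$ from opposite sides, its $f_r^{-1}$-image approaches two \emph{different} points on $\SSS$. Hence if $\gamma$ crosses $\II_\pi$ in the interior (which a proper path is allowed to do), your $\de_r:=f_r^{-1}(\gamma)$ is disconnected, and likewise for $\de_s$. Your genericity remark does not fix this: a limiting argument would have to produce a single connected lift out of the disconnected branch-images, and in fact what happens is that the two \emph{connected} lifts each cross $\SSS$ and swap between $\La^r$ and $\La^s$—so they are not contained in $\overline{\La^r}$ and $\overline{\La^s}$ respectively, contrary to your disjointness argument.

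The paper proceeds differently: it takes each preimage $p$ of $\gamma(0)$ on $\BOTTOMphys$ and lifts $\gamma$ as a path, using only that $f$ is a local diffeomorphism away from the critical interval $\II_\pi$. Since $\II_\pi$ collapses to $\FIXphys_0\in\BOTTOMphys$, the lift never meets $\II_\pi\sm\{\INDphys\}$ (unless $\gamma(0)=\FIXphys_0$, handled separately by taking $\de=\II_\pi$), so the local-diffeomorphism property is always available. The only genuine obstruction to extending the lift is $\gamma$ hitting $\TOPphys\cup\Icurve$, and the paper analyzes those cases to show the lift terminates on $\TOPphys$ (at $\INDphys$ when the singular branch is hit). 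Your identification of the blow-up limit $f_s^{-1}\to\INDphys$ over $\Icurve$ is correct and is exactly the key observation in the paper's case $x\in\Icurve\sm\{\INDphys\}$; it is the global branch decomposition, not this local step, that needs repair.
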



\begin{figure}
\begin{center}
\input{figures/F.pstex_t}
\end{center}
\caption{\label{FIG:F} Cylinder $\Cphys$ shown in $(\phi,t)$ coordinates.
The left-hand side triangle of $\La_s\sm \II_\pi$ is mapped onto the right-hand side triangle of $\CC_-\sm \II_\pi$.   
The proper path $\gamma$ is lifted by $f$ to two paths, 
the regular lift $\de_r$ and the singular lift $\de_s$.  
The singular lift $\de_s$ reaches $\TOPphys$ at $\INDphys$.}
\end{figure}


\begin{proof}
Since the endpoints of $\gamma$ belong to different components of $\di \Cphys$, 
we can orient it so that $\gamma(0)\in \BOTTOMphys$. 
This initial point has two preimages on $\BOTTOMphys$; let $p$ be either of them.
We will show that there is a proper  path $\de \subset f^{-1}\gamma$ that begins at $p$. 
 
Let $\INDphys_0= (\pi,0)\in \BOTTOMphys$ stand for the bottom point of the critical interval $\II_\pi$
(which collapses to $\FIXphys_0$).
If $p=\INDphys_0$ then $\gamma(0)=\FIXphys_0$
and the interval $\II_\pi$ is a desired path $\de$.%
\footnote{If an initial piece of $\gamma$ lies in $\CC_-$ then there is a lift $\de'$ 
of $\gamma$ that begins at $\alpha_0$. This possible extra lift is disregarded in our discussion.} 

So, assume $p\not=\INDphys_0$. Then $f: \Cphys\ra \Cphys$ is a local diffeomorphism near $a$, 
so there is a local lift $\de$ of $\gamma$ that begins at $a$. 
Continuing lifting it as far as possible, we obtain a lift
$\de(\la)$, $0\leq \la<\la_*\in (0,1]$,  that cannot be extended further.


What can go wrong at $\la_*$? If $x := \gamma(\la_*)\not \in \TOPphys\cup \Icurve$ then 
$x$ would have a disk neighborhood $U\subset \inter \Cphys$ such that $f: f^{-1}(U)\ra U$ were a covering map (of degree 1 or 2),
and the lift would admit a further extension. So, $x\in \TOPphys\cup \Icurve$.

If $x\in \TOPphys\sm \{\INDphys\}$ then $\la_*=1$ and $x$ has a relative half-disk neighborhood $U\subset \Cphys$ such that
$f^{-1}(U)$ is a half-disk neighborhood of $f^{-1} x= x$ mapped homeomorphically onto $U$. 
In this case we let $\de(1)=x$ and obtain the desired lift.

If $x\in \Icurve\sm \{\INDphys\}$ then $x$ has a  neighborhood $U\subset \inter \Cphys$
such that $f^{-1}U=U^r\sqcup U^s$   where $U^r=  f_r^{-1}(U)$
is a disk homeomorphically mapped onto $U$,
while  $U^s= f_s^{-1}(U\cap \Cphys_-)$ is a ``wedge centered at $\INDphys$'' homeomorphically mapped onto $U\cap \Cphys_-$.  
Then for all $\la$ near $\la_*$,
\begin{equation}\label{reg and sing branches}
\mbox{ either $\de(\la)= f_r^{-1}(\gamma(\la)) \subset U^r$  
or $\de(\la)= f_s^{-1}(\gamma(\la)) \subset U^s$.}%
\footnote{Note that the curve $\gamma(\la)$ can cross $\Icurve$ infinitely many times at $\la\to \la_*$.
  If this happens then $\de(\la)=f_r^{-1}(\gamma(\la))$ for $\la$ near $\la_*$.}
\end{equation} %
But in the former case, $\de(\la)$ can  be extended beyond $\la_*$,
contrary to our assumption.
In the latter case,  $\de(\la)$ is forced to converge (as $\la\to \la_*$) to the center  $\INDphys$ of the wedge $U^s$. 
This gives us the desired proper path terminating at $\alpha$. 

Finally, if $x=\INDphys$ then $\la_*=1$ and $\INDphys$ can be the only accumulation point for $\de(\la)$ as $\la\to 1$ 
(for, if $y\in \Cphys\sm\{\INDphys\}$ is another accumulation point then $\gamma(\la)$ would accumulate on $f(y)\not=\INDphys$
as $\la \to 1$). Thus, $\de(\la)\to \INDphys$ as $\la\to 1$, and we obtain a proper path again.     

Remark that if $\de\not=\II_\pi$  then the path $\de(\la)$ cannot meet $\II_\pi\sm \{\INDphys\}$. 
Indeed, under this assumption, $\FIXphys_0\not\in \gamma$,   
while $\II_\pi$ collapses to $\FIXphys_0$. 

\ssk So, we have constructed two proper paths, $\de_1$ and $\de_2$.
They cannot meet at any point of $\Cphys\sm \II_\pi$ since $f$ is a local homeomorphism over there. 
By the above remark,  they cannot meet at any point of $\II_\pi\sm \{\INDphys\}$ either.
Hence  $\INDphys$ is their only possible meeting point.

\ssk    
Assume now that  $\gamma$ crosses $\Icurve$ only once, and let $\gamma(\la_*)\in \Icurve$ be this intersection point.
Assume first that $\gamma(0)\not=\FIXphys_0$.
Then by the previous argument, the arc $\gamma(\la)$, $0\leq \la < \la_*$,
has two lifts $\de_r(\la)$ and $\de_s(\la)$ as in (\ref{reg and sing branches}).
Then $\de_s(\la)$, $0\leq \la\leq \la_*$, is a proper  path 
terminating at $\INDphys$ (the singular lift of $\gamma$),
while $\de_r(\la)$ extends further to a proper  path parameterized by the full interval $[0, 1]$
(the regular lift of $\gamma$). 

Thus, for $\la<\la_*$, both preimages of  $\gamma(\la)$  are captured by the above lifts  $\de_r(\la)$ and  $\de_s(\la)$,   
while for $\la>\la_*$, the only preimage of $\gamma(\la)$ is $\de_r(\la)$.  We conclude that $f^{-1}(\gamma)=\de_r\cup\de_s$. 

Finally, if $\gamma(0)=\FIXphys_0$ then $\gamma(\la)\in \CC^+$ for $\la>0$,
so such $\gamma(\la)$ has only one preimage.
This preimage is captured by the lift $\de_r$ that begins at $\FIXphys_0$. 
It follows that
$$
   f^{-1}(\gamma)= \de_r \cup  f^{-1} (\FIXphys_0)=\de_r\cup \II_\pi:=\de_r\cup \de_s.
$$
\end{proof}

Figure \ref{FIG:F} shows $\Cphys$ in $(\phi,t)$ coordinates, a
proper path  $\gamma$ that crosses $\Icurve$ in only one point, and the
regular and singular lifts $\de_r$ and $\de_s$ of $\gamma$ under $f$.

If $\eta$ is not a full proper path, but merely a proper path in $\Cphys_-$
(connecting  $\BOTTOMphys$ to $\Icurve$ without passing through $\di \Cphys_-$ in
between) we will also call the lift of $\eta$ under $f_s^{-1}$ the 
``singular lift'' of $\eta$.


\subsection{Structure of $\Rphys$ on the cylinder}\label{str on CC} 
  The above properties of $f$ immediately translate into the following properties 
of the renormalization operator $\Rphys$: 

\begin{itemize}
\item[(P1)] {\it  Symmetries}:
As we have already mentioned in \S \ref{sec: basic symmetries},
the Basic Symmetry of the Ising model implies that $R$ commutes with the involution
$\iota: (z,t)\mapsto (z^{-1}, t)$. 
On the other hand,  since $R=f\circ Q$, we have: $R\circ \rho=R$, where $\rho: (z,t)\mapsto (-z,t)$.
It follows that the basins of the top and the bottom of $\Cphys$ are invariant under the Klein
group $(\Z/2\Z)\times (\Z/2\Z)$ comprising $\id$ and three involutions, $\iota$, $\rho$ and $\iota\circ \rho$. 
These symmetries are clearly visible on Figure \ref{FIG:CYLINDER_BASINS} as it is%
\footnote{or rather, its $2\pi$-periodic unfolding} 
$\pi$-periodic and is invariant under reflections in the axes $\phi=0,\pi$, $\phi=\pm \pi/2$.    

\item[(P2)] $\Rphys$ has {\it two points of indeterminacy}, $\INDphys_{\pm} =(\pm\pi/2, 1)\in \TOPphys$.
    Each of them blows up onto the singular curve $\Icurve$.  (See Appendix \ref{APP:BLOW_UPS} for detailed formulae.) 

\item[(P3)] 
Formula (\ref{det}) implies that the {\it critical locus} of $\Rphys|\Cphys$ comprises the bottom $\BOTTOMphys$,
the top $\TOPphys$, and  two vertical intervals, $\II_{\pm \pi/2}$, 
terminating at the points of indeterminacy.  These intervals collapse
under $\Rphys$ to the fixed point $\FIXphys_0\in \BOTTOMphys$.   $\Rphys$ is an orientation
preserving local diffeomorphism on the complement of the critical set, $\Cphys
\setminus \left(\II_{\pm \pi/2} \cup \TOPphys \cup \BOTTOMphys\right).$
   
\item[(P4)] 
$\Rphys$ is {\em postcritically finite} in the following sense: 
$\Cphys \setminus \left(\II_{\pm \pi/2} \cup \TOPphys \cup \BOTTOMphys\right)$ is backward invariant, and $\Rphys$ is a local
diffeomorphism on this set.  (Note: while postcritically finite maps typically have rather simple dynamics,  
$\Rphys$ is not {\em postsingularly finite}, since images of the curve $\Icurve$
are not eventually periodic,  leading to dynamical complexity of $\Rphys$.) 

\item[(P5)]

  $\Rphys|\, \BOTTOMphys: z\mapsto z^4$, while $\Rphys|\, \TOPphys: z\mapsto z^2$. Moreover, the
bottom circle is {\it uniformly superattracting}, namely, letting
$\Rphys(z,t)=(z',t')$, we have: $t' = O(t^2)$ for $t$ near $0$. The top circle is
{\it non-uniformly superattracting}, namely, near the top we have
$$
     \tau' = O\left(\frac{\tau^2}{\cos^2\phi}\right) = O\left( \frac{\tau^2}{\eps^2}\right) , 
     \quad \tau=1-t,\ 
        \eps=\pi/2-\phi \ \mod \pi Z, 
$$ 
    so that, the superattraction rate explodes near the points of indeterminacy.  
    See Figure \ref{FIG:CYLINDER_BASINS} for a computer image of the basins of attraction for $\BOTTOMphys$ and $\TOPphys$.


\item[(P6)] The preimage $Q^{-1}(\Sphys)$ comprises two curves $S_{\pm}$ 
that are tangent to $\TOPphys$ at the indeterminacy points $\INDphys_{\pm}$
    and are symmetric with respect to $\II_{\pm}$ respectively. 
The domains below them are called the {\it (primary) central tongues} $\La_{\pm}$, see Figure \ref{FIG:R_ON_CYLINDER}.
The vertical intervals $\II_{\pm \pi/2}$ cut the corresponding tongues $\La_\pm\sm $ into  two topological triangles. 
Each of these (open) triangles  is mapped diffeomorphically by $\Rphys$ onto the appropriate triangle of $\Cphys_-\sm \II_\pi$. 
The inverse diffeomorphisms are called  {\it singular branches} of $\Rphys^{-1}$.

\begin{figure}
\begin{center}
\input{figures/R.pstex_t}
\end{center}
\caption{\label{FIG:R_ON_CYLINDER}
The mapping $\Rphys:\Cphys \rightarrow \Cphys$, the regions $\La_{\pm}$ in grey,
and the region $\Pi = \Cphys \sm \La_{\pm}$ in white (above).}
\end{figure}
 
\item[(P7)]
The  complement $\Pi := \Cphys \sm \La_+\cup \La_-$
 consists of two domains each of which is mapped diffeomorphically onto the cut rectangle
    $\Cphys\sm \II_\pi$.  The inverse diffeomorphisms are called {\it regular branches} of $\Rphys^{-1}$. 

\item[(P8)] $\Rphys$ has degree 4 over $\Cphys_-$ and it has degree 2 over $\Cphys_+$.

\item[(P9)] By Lemma \ref{vertical lifts}, 
every proper path $\gamma$ in $\Cphys$ has at least 4 proper lifts $\de_i$.
These lifts can meet only at the indeterminacy points $\INDphys_\pm$. 
If $\gamma$ crosses $\Icurve$ at a single point, then $\Rphys^{-1}\gamma=\cup\, \de_i$.
Two of these lifts (contained in $\La_\pm$) are ``singular'':
they terminate at the points $\INDphys_{\pm}$;  the other two are ``regular''. 
 


\item[(P10)] 
$\Rphys$ acts with {\it degree 4} on  closed curves:
If $\gamma$ is any closed curve on $\Cphys$ wrapping once around $\Cphys$, then
$\Rphys(\gamma)$ is a  closed  curve wrapping four times around $\Cphys$.

\end{itemize}

\comm{  **********************************
For  further reference, let us write down an explicit expression for the differential 
$D\Rphys= (Df\circ Q) DQ$:
$$
   \frac{4}{\zeta^2} 
                        \left(
                         \begin{array}{cc}  
                                                   \zeta & 0 \\
                                                    0 & 1- t^2 
                         \end{array}   \right) 
                         \left(
                          \begin{array}{cc} 
                                                  1+t^2 \cos 2\phi & - \sin 2\phi \\
                                                 -t^2(1-t^2) \sin 2\phi & (1+t^2)(1+\cos 2\phi))     
                          \end{array} \right)
                                      \left(  
                        \begin{array}{cc}  
                                                    1 & 0 \\
                                                    0 & t
                         \end{array}   \right) 
$$
   where $\zeta(\phi, t) = \eta(2\phi, t^2)$. 

\bignote{The formula for $DR$ should either go to the Appendix or be dropped altogether --
   if we do calculations in $(u,v)$-coordinates.}

**************************}

\subsection{Structure of $R$ on the M\"obius band}\label{str on Mob band}
Because of the conjugacy $\correspond: \Cphysbl\ra  \Cmigbl$ from Proposition \ref{cylinder-band conjugacy}, 
the structural properties (P1-P10) for $\Rphys:\Cphys \rightarrow \Cphys$
discussed in \S \ref{str on CC} have immediate analogs for $\Rmig: \Cmig \rightarrow \Cmig$.
Particularly important is
\begin{itemize}
\item[(P9$'$)] Every proper path $\gamma$ in $\Cmigbl$ lifts under $\Rmig$ 
               to at least $4$ proper paths in $\Cmigbl$.  
               If $\gamma$ crosses $\Imig$ at a single point, then $\Rmig^{-1}\gamma=\cup\, \de_i$.
\end{itemize}
(Here a path in $\Cmigbl$ is called {\it proper} if it goes from $\T$ to $\infty$).

Also, we have:
\begin{itemize}
  
\item
  The principal LY locus $\SSS$ in $\Cphys$ (see (\ref{S})) is turned into the 
the vertical  line $S=\{\Re u =-1\}$ in $\Cmig$. (This is seen directly from the
formula (\ref{Z-0}) for the partition function). We will refer to $S$ as the 
{\it principal LY locus} in the affine coordinates.

\item
The indeterminacy points $\alpha_\pm\in \TOPphys$ for $\Rphys$
are turned into indeterminacy point $\pm i\in \T$ for $\Rmig$.

\item
  The blow-up locus $\Icurve$  (\ref{Icurve-2}) is turned into the parabola
\begin{equation}\label{Imig}
   \Imig =\{u:\  |u|=\Re u + 2\} = \{x+iy:\ x= \frac 1{4} y^2 -1\}
\end{equation}
(use $u^{-1} = te^{i\phi}$). See Figure \ref{FIG:ALG_CONE_FIELD_INVARIANCE}.


\end{itemize}


Let us rotate the LY locus $S$ around the circle $\T$.
We obtain a family of lines $\Smig_\phi= e^{i\phi} S$ tangent to $\T$ at $e^{i\phi}$.
Let $\Smig^c_\phi = \{e^{-i\phi}U + 2V + e^{i\phi}W = 0\}$ be the corresponding complex line in $\CP^2$.  
By Corollary \ref{COR:DEGREE_GROWTH_MK}, 
the pullback $\Rmig^*(\Smig^c_\phi)$ is a complex algebraic curve of  degree $4$. 
By Bezout's Theorem, it intersects the conic $\Lone=\{uw=1\}$ 
(which is the complexification of the circle $\T$) at 8 points counted with multiplicity.  

\begin{lem}\label{pulback of S-phi}
{\rm (i)} If  $\phi\not=\pi$ (i.e., $\Smig_\phi\not= \Smig$), then the conic $\Lone$ intersects the pullback 
$\Rmig^*(\Smig^c_\phi)$ transversally  at four transverse double points 
($\pm e^{\psi/2}$ and the indeterminacy points $\pm i$). 
So, each intersection has multiplicity 2.

{\rm (ii)}
  If  $\phi=\pi$ (i.e., $\Smig_\phi = \Smig$), then $\Lone$ intersects $\Rmig^*(\Smig^c_\phi)$ tangentially
at two first order tangential double points (the indeterminacy points $\pm i$). 
So, each intersection has multiplicity 4. 
\end{lem}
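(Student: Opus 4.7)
The plan is to parametrize the conic $\Lone = \{uw=1\}$ by its affine coordinate $u$ (so that $w=1/u$ on $\Lone$) and reduce the lemma to a one-variable polynomial calculation on $\Lone \cong \CP^1$. The starting point is to check that $\Rmig$ preserves $\Lone$ and restricts there to the squaring map $u\mapsto u^2$: substituting $w=1/u$ into (\ref{uv coord}) and using $u+w=(u^2+1)/u$, both coordinates of $\Rmig(u,1/u)$ simplify, giving $\Rmig(u,1/u)=(u^2,1/u^2)$. Hence $\Rmig|\Lone$ is a degree-$2$ self-cover.

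Next, I would compute the scheme-theoretic intersection $\Rmig^*(\Smig^c_\phi)\cdot \Lone$ directly from the pullback equation. The pullback of the defining form of $\Smig^c_\phi$ under the homogeneous lift $\hat{\Rmig}$ (see (\ref{hat R})) is the degree-$4$ homogeneous polynomial
\begin{equation*}
P_\phi(U,V,W) = e^{-i\phi}(U^2+V^2)^2 + 2V^2(U+W)^2 + e^{i\phi}(V^2+W^2)^2.
\end{equation*}
By Corollary \ref{COR:DEGREE_GROWTH_MK}, $\Rmig^*(\Smig^c_\phi)$ has degree exactly $4$, so $P_\phi$ carries no excess factor and $\{P_\phi=0\}$ is precisely the pullback divisor. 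Setting $V=1$, substituting $w=1/u$, and multiplying by $u^4$ to clear denominators, $P_\phi$ restricts on $\Lone$ to
\begin{equation*}
(u^2+1)^2(u^2+e^{i\phi})^2,
\end{equation*}
a polynomial of degree $8$ in $u$, matching Bezout's count $4\cdot 2=8$. A direct check at the two points of $\Lone$ off the affine chart ($[0{:}0{:}1]$ and $[1{:}0{:}0]$) shows that $P_\phi$ is nonzero there, so the full intersection is supported in the affine chart at the roots of this polynomial.

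The roots are $u=\pm i$ (each of order $2$ from $(u^2+1)^2$) and $u=\pm ie^{i\phi/2}$ (each of order $2$ from $(u^2+e^{i\phi})^2$). In case (i), when $\Smig^c_\phi\neq \Smig^c$, the four roots are distinct, producing four intersection points of multiplicity $2$: the pair $\pm ie^{i\phi/2}$ is the two preimages under $\Rmig|\Lone\colon u\mapsto u^2$ of the unique tangency point $-e^{i\phi}$ of $\Smig^c_\phi$ with $\Lone$ (verified by substituting $w=1/u$ into the equation of $\Smig^c_\phi$ and factoring $e^{-i\phi}(u+e^{i\phi})^2=0$), while $\pm i$ are the indeterminacy points of $\Rmig$. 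In case (ii), when $\Smig^c_\phi=\Smig^c$, the two quadratic factors coincide and all eight units of multiplicity concentrate on $\pm i$, each of multiplicity $4$. The main bookkeeping point---that the naive polynomial $P_\phi$ has no common factor so that the polynomial computation on $\Lone$ gives the true intersection---is settled by the degree invariant of Corollary \ref{COR:DEGREE_GROWTH_MK}; everything else is straightforward polynomial algebra on $\CP^1$.
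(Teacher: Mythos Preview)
Your computation is correct and gives the intersection multiplicities directly: restricting $P_\phi$ to $\Lone$ via $w=1/u$ yields (up to the unit $e^{-i\phi}$) the polynomial $(u^2+1)^2(u^2+e^{i\phi})^2$, whose roots and orders give exactly the multiplicity-$2$ points in case (i) and the multiplicity-$4$ points when $\Smig^c_\phi=\Smig^c$. The check at the two points of $\Lone$ outside the affine chart is clean, and the identification of $-e^{i\phi}$ as the unique tangency of $\Smig^c_\phi$ with $\Lone$ is right.

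This is a genuinely different route from the paper's proof. The paper argues geometrically: at the two non-indeterminate points it invokes the Whitney-fold description of $\Rmig$ on $\Lone$ (Lemma~\ref{description of folds}) and the general Lemma~\ref{pullback of parabola} to see that the pullback of a curve tangent to the fold locus acquires a transverse node; at the indeterminacy points it blows up, observes that $\Imig$ meets $\Smig^c_\phi$ transversely at two regular values of $\tl R$, and then reads off the branch structure (using Lemma~\ref{pencil} for the tangential case). Thus the paper establishes not only the intersection numbers but also the \emph{local structure} of the curve $\Rmig^*(\Smig^c_\phi)$ at each point---that it is a transverse double point (a node) in case (i) and a first-order tangential double point in case (ii)---and the multiplicities then follow from that structure. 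Your approach is more elementary and self-contained (pure polynomial algebra on $\CP^1$, no Whitney folds or blow-ups), and it delivers the multiplicities, which is the part actually used in the Bezout count of Proposition~\ref{PROP:MK_CONEFIELD}. What it does \emph{not} give is the assertion that $\Rmig^*(\Smig^c_\phi)$ has a node at each of the four points in (i), or two tangent smooth branches at $\pm i$ in (ii); if you want the lemma exactly as stated, you would still need a local computation of the branches (e.g., by factoring $P_\phi$ in a neighborhood of each point, or by the blow-up argument the paper uses). One small remark: with the defining equation $\Smig^c_\phi=\{e^{-i\phi}U+2V+e^{i\phi}W=0\}$ the principal locus is $\Smig^c_0$ (not $\Smig^c_\pi$), and your polynomial factors coincide precisely when $\phi\equiv 0$; you handled this correctly by phrasing case (ii) as ``when $\Smig^c_\phi=\Smig^c$'' rather than fixing a value of $\phi$.
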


\begin{proof}
(i)  By Lemma \ref{description of folds},  the map $R$ is a Whitney fold at $\pm e^{i\psi/2}$.
By Lemma \ref{pullback of parabola}, the germ of $\Rmig^*(\Smig^c_\phi)$ at  $\pm e^{i\psi/2}$ 
is a transverse double point transversely intersected by the critical locus $\Lone$. 
  
To understand the germ of $\Rmig^*(\Smig^c_\phi)$ at an indeterminacy point $a\in \{\pm i\}$,
let us blow it up  
and  lift $R$ to a map $\tl R: \tl \CP^2\ra \CP^2$, see Appendix \ref{APP:BLOW_UPS}. 
The blow-up locus $\Imig=\tl R(\EE_\ex)$ intersects $\Smig_\psi$ transversely at two points
which are regular values for $\tl R$ (Lemma \ref{five lines and conic}). 
Hence  the curve $\tl R^* (\Smig_\psi^c)$ 
intersects the exceptional divisor $\EE_\ex$ transversely at two points. 
Projecting the corresponding germs to $\CP^2$, we obtain  two branches of  $R^* (\Smig_\psi^c)$  at $a$. 

\ssk (ii)
By Lemma \ref{description of folds}, 
the blow-up map $\tl R: \tl \Lone\ra \Lone$
is a Whitney fold at the intersection  point $\tl a = \tl \Lone\cap \EE_\ex$.
Hence the germ of ${\tl R}^*(\Smig_\psi)$  
has a transverse double point at $\tl a$ and intersects $\EE_\ex$ generically.
Its projection to $\CP^2$ is a pair of regular curves tangent to $\Lone$ at $a$
(see Lemma \ref{pencil}).  
\end{proof}

\begin{rem}
  The above lemma is reflected in the geometry of the initial LY loci
 illustrated on Figure \ref{FIG:LEE_YANG_ZEROS}.
The locus $\Sphys_1=\Rphys^{-1}\Sphys_0$ looks like two tangent parabolas near the indeterminacy points $\alpha_\pm$  
(part (ii) of the lemma). The next locus,
$\Sphys_2= \Rphys^{-1}\Sphys_1$, comprises $32$ branches meeting transversely at the top, 
as part (i) asserts.   
\end{rem}

\section{Structure of the RG transformation II: Global properties in $\CP^2$}\label{SEC:GLOBAL STRUCTURE}

The Lee-Yang Theorem  
places special emphasis of the dynamics of $\Rphys$ on the cylinder $\Cphys$. 
However, it is instructive to understand the global dynamics of $\Rphys$
on the projective space $\CP^2$, 
which has important consequences for the dynamics of $\Rphys| \Cphys$.
In this section we will describe basic global properties of $\Rphys$, 
along with those of $\Rmig:\CP^2 \ra \CP^2$.    

\subsection{Semiconjugacy $\correspond$}\label{SUBSEC:SEMICONJUGACY}
The mapping $\correspond$ given by (\ref{zt-uv}) is a degree two
rational map $\CP^2 \ra \CP^2$.  
In homogeneous coordinates $[Z:T:Y]$ in the domain (with $z=Z/Y,\ t=T/Y$) 
and $[U:V:W]$ in the image (with $u= U/V,\ w=W/V$), it assumes the form
\begin{eqnarray*}
U=Y^2,\quad V=ZT, \quad W=Z^2. 
\end{eqnarray*}
A generic point $[U:V:W]$ has two preimages under $\correspond$.
The critical locus of $\correspond$
is the union of the vertical axis $\{Z=0\}$ and the line at infinity $\{Y=0\}$. 
Under $\correspond$, the former collapses to an $R$-fixed point  $e=[1:0:0]$,
while the latter maps onto the vertical axis $\{U=0\}$.
Since $e$ does not lie on this axis, the intersection  $\gamma=[0:1:0]$ 
of  the two critical lines must be an indeterminacy point for $\correspond$
(and in fact, this is the only one). 

This collapsing line $\{Z=0\}$ and the associated indeterminacy point $\gamma$
 created by the change of variable $\correspond$ is what makes 
the physical coordinates $(z,t)$ less suitable for describing the global structure of the
renormalization. 
%

\comm{***
\begin{prop}\label{PROP:SEMI_CONJUGACY}The mapping $\correspond$ induces a semi-conjugacy
between $\Rphys$ and $\Rmig$ so that the following diagram commutes:
\begin{eqnarray}
\begin{CD}
\CP^2 @>\Rphys>> \CP^2 \\
@VV\correspond V 	@VV\correspond V\\
\CP^2 @>\Rmig>> \CP^2
\end{CD}
\end{eqnarray}
\end{prop}

\noindent
The proof is a direct calculation and we omit it.
***}

\subsection{Indeterminacy points for $\Rphys$ and $\Rmig$}
\label{SUBSEC:INDETERMINANT_PTS}

In homogeneous coordinates on $\CP^2$, the map $\Rmig$ has the form:
\begin{equation} \label{EQN:MK_HOMOG}
  \Rmig : [U: V: W] \mapsto [(U^2+ V^2)^2: \  V^2(U + W)^2: \ (V^2+W^2)^2)]
\end{equation}
\noindent
which is just (\ref{hat R}) with $(U,V,W)$ interpreted as the homogeneous coordinates.  
We find two points of indeterminacy: $\INDmig_+ := [i:1:-i]$ and $\INDmig_-:=[-i:1:i]$.
They lie on the M\"obius band $\Cmig$ 
and correspond under $\correspond | \Cphys$ to the indeterminate points $\INDphys_\pm \in \Cphys$ that we
discussed in \S \ref{SUBSEC:MAP_ON_CYL}.

If we now write $\Rphys$ in homogeneous coordinates we obtain
\begin{eqnarray}\label{EQN:R_HOMOG}
\begin{split}
 & \Rphys:[Z:T:Y] \mapsto \\ & [Z^2(Z^2+T^2)^2:  T^2(Z^2+Y^2)^2: (Z^2+T^2)(T^2 Z^2+Y^4)].
\end{split}
\end{eqnarray}
\noindent
We find the indeterminate points $\INDphys_\pm = (\pm i,1)  \in \Cphys$,
two symmetric points $(\pm i , -1)$, 
and two additional points of indeterminacy,
$\B0=(0,0)$ and $\gamma=[0:1:0]$
(here all the points except the last one are written in the physical coordinates $z=Z/Y,\, t=T/Y$).
In this way, 
when we turn $\Rmig$ into $\Rphys$ by the change of variable $\correspond$ 
we create two accidental points of indeterminacy, $\B0$ and $\gamma$,  
which makes the global properties of the map more awkward. 

One can resolve all the indeterminacies of $\Rphys$ using suitable blow-ups.  
We will only need resolutions of $\INDphys_\pm$, 
which are described in Appendix \ref{APP:BLOW_UPS}.

\subsection{Superattracting fixed points and their separatrices}
\label{SUBSEC:FIXED POINTS} 

\sss{Description in terms of $R$ (\ref{EQN:MK_HOMOG})}

We will often refer to $\Lzero:= \{V=0\}\subset \CP^2$ as the  {\it line at infinity}. 
It contains two symmetric fixed  points, $e=(1:0:0)$ and $e'=(0:0:1)$.
In local coordinates $(\xi= W/U, \, \eta= V/U)$ near $e$, the map $R$ assumes form
\begin{equation}\label{dynamics near e}
   \xi'= \left( \frac{\xi^2+\eta^2}{1+\eta^2}\right)^2\sim (\xi^2+\eta^2)^2,\quad 
    \eta'=\eta^2\left(\frac{1+\xi}{1+\eta^2}\right)^2\sim \eta^2,
\end{equation}
so $|Rx|\leq 2 |x|^2$ for small $x=(\xi, \eta)$.
This shows that $e$ is superattracting: 
$$
      |R^n x|\leq |2x|^{2^n}.
$$ 
By symmetry, $e'$ is superattracting as well.
Let $\WW^w(e)$ and $\WW^s(e')$ stand for the attracting basins of these points. 
 
Moreover, the line at infinity  $\Lzero=\{\eta=0\}$ is $R$-invariant, and the restriction $R|\Lzero$ is the 
power map $\xi\mapsto \xi^4$. Thus,  points in the disk $\{ |\xi|<1\}$ in $\Lzero$
are attracted to $e$, points in the disk $\{ |\xi|> 1\} $ are attracted to  $e'$,
and these two basins are separated by the unit  circle $\BOTTOMmig$.  
We will also call $\Lzero$ the {\it fast separatrix} of $e$ and $e'$. 

Let us also consider the conic 
\begin{equation}\label{Lone}
   \Lone = \{ \xi=\eta^2\}= \{V^2= UW\} 
\end{equation}
passing through points $e$ and $e'$.
It is an embedded $\CP^1$ that can be uniformized by coordinate $w=W/V= \xi/\eta$. 
Formulas (\ref{dynamics near e}) show that $\Lone$ is $R$-invariant,
and the restriction $R|\, \Lone$ is the quadratic map $w\mapsto w^2$.
Thus,  points in the disk $\{ |w|<1\}$ in $\Lone$
are attracted to $e$, points in the disk $\{ |w|> 1\} $ are attracted to  $e'$,
and these two basins are separated by the unit  circle $\TOPmig$. 
We will call $\Lzero$ the {\it slow separatrix} of $e$ and $e'$.

If a point $x$ near $e$ (resp. $e'$) does not belong to the fast separatrix $\Lzero$,
then its orbit is ``pulled'' towards the slow separatrix $\Lone$ at rate $\rho^{4^n}$,
with some $\rho<1$, 
and converges to $e$ (resp. $e'$) along $\Lone$ at rate $r^{2^n}$, with some $r<1$.  

The second formula of (\ref{dynamics near e}) also shows that
the strong separatrix $\Lzero$ is transversally superattracting:
all nearby points are pulled towards $\Lzero$ uniformly at rate $r^{2^n}$
(see also the proof of Lemma \ref{description of folds}).  
It follows that these points either converge  to one of the fixed points, $e$ or $e'$,
or converge to the circle $\BOTTOMmig$. 

Given a neighborhood $\Om$ of $\BOTTOMmig$, let 
\begin{equation}\label{complex basin of B}
  \WW^s_{\C,\loc} (\BOTTOMmig)= \{x\in \CP^2:\ R^n x\in \Om\ (n\in \N) \ {\mathrm {and}}\ 
        \R^n x\to \BOTTOMmig\ \mathrm{as}\ n\to \infty \}
\end{equation}                                                                 
(where $\Om$ is implicit in the notation, and an assertion involving $\WW^s_{\C,\loc}$ means
that it holds for arbitrary small suitable neighborhoods of $\BOTTOMmig$). 

We conclude:
\begin{lem}\label{nbd of B}
 $\WW^s(e)\cup \WW^s(e')\cup \WW^s_{\C,\loc}(\BOTTOMmig)$ fills in some neighborhood of $\Lzero$.
\end{lem}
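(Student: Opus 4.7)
First observe that the indeterminacy points $\INDmig_\pm = [\pm i : 1 : \mp i]$ of $\Rmig$ do not lie on the fast separatrix $\Lzero = \{V = 0\}$, so $\Rmig$ is holomorphic on some tubular neighborhood of $\Lzero$. In the affine chart $\{U \neq 0\}$ with coordinates $(\xi, \eta) = (W/U, V/U)$, formulas (\ref{dynamics near e}) give
\[
\xi \circ \Rmig = \xi^4 + O(\eta^2), \qquad \eta \circ \Rmig = \eta^2\, h(\xi, \eta),
\]
with $h$ holomorphic and bounded on any compact region $\{|\xi| \leq M\} \times \{|\eta| \leq 1/2\}$; a symmetric chart covers a neighborhood of $e'$. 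Shrinking a tubular neighborhood $N$ of $\Lzero$ transversally, I arrange $\Rmig(N) \subset N$ and the super-exponential contraction $|\eta_n| \leq C |\eta_0|^{2^n}$ for every orbit $x_n = (\xi_n, \eta_n) = \Rmig^n x$ starting in $N$. The plan is to show that $N$ splits, according to the behaviour of the modulus $|\xi_n|$, into the three basins appearing in the lemma.

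Let $\Om$ be the neighborhood of $\BOTTOMmig$ implicit in definition (\ref{complex basin of B}), and fix $r \in (0,1)$ close enough to $1$ so that $\Lzero \cap \{r < |\xi| < 1/r\} \subset \Om$. The arcs $\Lzero \cap \{|\xi| \leq r\}$ and $\Lzero \cap \{|\xi| \geq 1/r\}$ are compact subsets of the \emph{open} basins $\WW^s(e)$ and $\WW^s(e')$ respectively, because $\Rmig|\Lzero : \xi \mapsto \xi^4$ drives them to $e$ and $e'$. After further shrinking $N$ transversally I may therefore assume
\[
N \cap \{|\xi| \leq r\} \subset \WW^s(e), \quad N \cap \{|\xi| \geq 1/r\} \subset \WW^s(e'), \quad N \cap \{r < |\xi| < 1/r\} \subset \Om.
\]
Given $x \in N$, if $|\xi_n| \leq r$ for some $n$ then $x_n \in \WW^s(e)$ and hence $x \in \WW^s(e)$; symmetrically if $|\xi_n| \geq 1/r$ at some time. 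Otherwise $r < |\xi_n| < 1/r$ for all $n$, so the whole orbit lies in $\Om$, and it remains to show $x_n \to \BOTTOMmig$.

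Since $\eta_n \to 0$, the last claim reduces to $\rho_n := |\xi_n| \to 1$. The recursion is $\rho_{n+1} = \rho_n^4 + O(\eta_n)$, and for $r$ close enough to $1$ the identity $\rho^4 - 1 = (\rho-1)(\rho^3 + \rho^2 + \rho + 1)$ yields $|\rho^4 - 1| \geq 3 |\rho - 1|$ on $[r, 1/r]$. Combining this with the negligibility of $O(\eta_n)$ for $n$ large, I obtain $|\rho_{n+1} - 1| \geq 2|\rho_n - 1|$ for all $n$ past some $n_0$, so any failure of $\rho_n \to 1$ would force $\rho_n$ out of $[r, 1/r]$ in finitely many steps, contradicting the standing hypothesis of this case. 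This transient-funnel step --- showing that an orbit whose $\xi$-coordinate is trapped near the unit circle under a near-$\xi^4$ dynamics must be asymptotic to that circle --- is the main obstacle; the rest is soft topology built on openness of the attracting basins and on the explicit transversal super-attraction rate along $\Lzero$.
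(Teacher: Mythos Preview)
Your proof is correct and follows the same strategy that the paper sketches in the paragraph preceding the lemma: use the transversal superattraction $\eta' \sim \eta^2$ to pull orbits onto $\Lzero$, and then appeal to the dynamics $\xi \mapsto \xi^4$ on $\Lzero$ to separate the three basins. The paper simply asserts the conclusion (``It follows that these points either converge to one of the fixed points, $e$ or $e'$, or converge to the circle $\BOTTOMmig$''), whereas you fill in the one nontrivial gap, namely the transient-funnel step showing that an orbit confined to the annulus $r<|\xi_n|<1/r$ must actually have $|\xi_n|\to 1$; your repelling estimate $|\rho^4-1|\ge 3|\rho-1|$ combined with the super-exponentially decaying error handles this cleanly. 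Two minor remarks: the error in your recursion is $O(\eta_n^2)$ rather than $O(\eta_n)$ (from $\xi'=\xi^4(1+O(\eta^2))$), which only helps; and your contradiction argument should be read as ``if $\limsup|\rho_n-1|>0$, pick $n$ large with $|\rho_n-1|$ above a fixed threshold and iterate the doubling,'' which is what you intend.
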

        
As the weak separatrix $\Lone$ is concerned, formula (\ref{tau squared})
from the proof of Lemma~\ref{description of folds} shows that it is transversally  superattracting
away from the indeterminacy points $a_\pm$. On the other hand, the latter act as strong repellers.
We will see in \S \ref{SEC:HIGH_TEMP}  
that this competition makes $\TOPmig$ a  non-uniformly hyperbolic attractor.    

\sss{Description in terms of $\Rphys$}
In the physical coordinates, the superattracting fixed points become  $\CFIXphys =(0,1)$ 
and $\CFIXphys'=[1:0:0]$.
The pullback of the line at infinity $\Lzero$ under the semi-conjugacy $\correspond$ comprises
two lines, $\LL_0=\{t=0\}$ and $\{z=0\}$, where the latter is the blow-up
of the fixed point $e$ under $\correspond^{-1}$.
These two lines form the fast separatrix of the fixed points
(recall that the latter collapses to $\CFIXphys =(0,1)$ under $\Rphys$),
which is an annoying artifact of the physical coordinates. 
A related nuisance is that $\LL_0$, unlike $\Lzero$, is not transversally superattracting
any more. Namely, it is superattracting away from the origin $\B0$,
but the latter blows up to the whole line $\{z=0\}$.  
Still, we will sometimes refer to $\LL_0$ itself as the ``fast separatrix'',
as long as it does not lead to confusion.    

The slow separatrix of the fixed points is the line $\{t=1\}$. 

The restrictions of $\RR$ to the separatrices $\LL_0$ and $\LL_1$ become the power maps
$z\mapsto z^4$ and $z\mapsto z^2$ respectively. The invariant circles on these lines
(separating the basins of the fixed points) 
become  $\BOTTOMphys=\T\times \{0\}$  and $\TOPphys=\T\times \{1\}$,
which are the bottom and the top of the physical cylinder   $\Cphys$
that we discussed in \S \ref{SEC:STRUCTURE}.    

Lemma \ref{nbd of B} implies:

\begin{lem}\label{nbd of Bphys}
 $\WW^s(\CFIXphys)\cup \WW^s(\CFIXphys')\cup \WW^s_{\C,\loc}(\BOTTOMphys)$ fills in some neighborhood of 
$\LLzero\sm \{\B0\}$.
\end{lem}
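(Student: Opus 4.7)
The plan is to deduce this lemma from the corresponding Lemma~\ref{nbd of B} for $\Rmig$ via the semi-conjugacy $\correspond\circ\Rphys=\Rmig\circ\correspond$ from \S\ref{SUBSEC:SEMICONJUGACY}. The relevant preimage data, already recorded in the excerpt, is: $\correspond^{-1}(\Lzero)=\LL_0\cup\{z=0\}$, with $\{z=0\}$ the collapsing line that $\Rphys$ sends to $\CFIXphys$; $\correspond(\CFIXphys)=e$ and $\correspond(\CFIXphys')=e'$; and $\correspond$ restricts to a $2$-to-$1$ branched cover $\BOTTOMphys\to\BOTTOMmig$. Away from its single indeterminacy point $\gamma=[0\!:\!1\!:\!0]$ and its critical locus, $\correspond$ is a local biholomorphism, so along $\LL_0\setminus\{\B0,\CFIXphys'\}$ there is a well-defined inverse branch carrying a neighborhood of $\Lzero\setminus\{e,e'\}$ onto a tubular neighborhood of the corresponding portion of $\LL_0$.

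Starting from the neighborhood $\UU$ of $\Lzero$ supplied by Lemma~\ref{nbd of B}, I would pick a tubular neighborhood $\VV$ of $\LL_0\setminus\{\B0\}$ with $\correspond(\VV)\subset\UU$ that avoids all the remaining indeterminacy points of $\Rphys$. A direct expansion of the explicit formula for $\Rphys$ shows that, for small $t$ and $z$ bounded away from $0$,
\[
\Rphys(z,t) \;=\; \bigl(z^4+O(t^2),\;(z+z^{-1})^2\,t^2+O(t^4)\bigr),
\]
so $\LL_0\setminus\{\B0\}$ is transversally superattracting for $\Rphys$; consequently, if $\VV$ is taken sufficiently thin, the forward orbit $\{\Rphys^n x\}_{n\geq 0}$ of any $x\in\VV$ stays in $\VV$ and contracts toward $\LL_0$ at superattracting rate.

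For such $x$ set $y=\correspond(x)\in\UU$; the semi-conjugacy gives $\correspond(\Rphys^n x)=\Rmig^n y$ for all $n$, and Lemma~\ref{nbd of B} leaves three alternatives for $y$. If $y\in\WW^s(e)$, then $\Rmig^n y\to e$ and $\Rphys^n x$ is forced to accumulate on $\correspond^{-1}(e)=\{z=0\}$; since $\Rphys$ collapses $\{z=0\}$ to $\CFIXphys$, continuity forces $\Rphys^{n+1}x\to\CFIXphys$, so $x\in\WW^s(\CFIXphys)$. If $y\in\WW^s(e')$, then $\correspond(\Rphys^n x)\to e'$, and since the orbit remains in $\VV$ while $\correspond^{-1}(e')\cap\overline{\VV}=\{\CFIXphys'\}$ (the other preimage $\gamma$ has been excluded), we get $\Rphys^n x\to\CFIXphys'$. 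Finally, if $y\in\WW^s_{\C,\loc}(\BOTTOMmig)$, the unique branch of $\correspond^{-1}$ over the tube carrying the orbit forces $\Rphys^n x$ to stay near and accumulate on $\BOTTOMphys$, placing $x$ in $\WW^s_{\C,\loc}(\BOTTOMphys)$.

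The main obstacle is branch-tracking: $\correspond$ is generically $2$-to-$1$, so there are a priori two candidate lifts of each $\Rmig$-orbit under $\correspond^{-1}$, and one must pin the orbit of $x$ to the branch landing near $\LL_0$ rather than the one concentrating along $\{z=0\}$ or escaping toward $\gamma$. This is precisely what the transverse superattraction of $\LL_0\setminus\{\B0\}$ is used for—it confines the orbit to the tube $\VV$, where only one inverse branch of $\correspond$ exists. A secondary verification is that $\correspond^{-1}(\UU)$ truly contains a full two-sided neighborhood of $\LL_0\setminus\{\B0\}$ in $\CP^2$; this follows from $\correspond$ being regular with non-degenerate differential along this locus, as a short computation in affine coordinates confirms.
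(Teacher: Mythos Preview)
Your approach—transferring Lemma~\ref{nbd of B} through the semiconjugacy $\correspond$—is exactly what the paper does; the paper writes only ``Lemma~\ref{nbd of B} implies'' and supplies no details.

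There is, however, a real gap in your detailed argument: the forward-invariance claim for $\VV$ fails on the $|z|<1$ side. Take $x=(z_0,t_0)$ near $\LL_0$ with $0<|z_0|<1$ and $t_0\ne 0$ small. Along $\LL_0$ the map is $z\mapsto z^4$, so $z_n\to 0$; your own expansion then gives $t_{n+1}\approx(z_n+z_n^{-1})^2t_n^2\approx z_n^{-2}t_n^2$, and the diverging factor $z_n^{-2}$ eventually dominates the contraction from squaring $t_n$, driving $|t_n|$ up to order $1$. The orbit therefore exits any thin tube about $\LL_0$. The paper itself warns of exactly this just before the lemma: ``$\LL_0$, unlike $\Lzero$, is not transversally superattracting any more.'' Your branch-tracking, which relies on confinement to $\VV$, then collapses in this case.

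To repair the argument, drop the tube and use the finer asymptotics on the $\Rmig$ side. For $y=\correspond(x)\in\WW^s(e)\setminus\Lzero$, the orbit $\Rmig^n y$ is pulled onto the \emph{slow} separatrix $\Lone$ and approaches $e$ along it (see the discussion preceding Lemma~\ref{nbd of B}). Since $\correspond^{-1}$ of $\Lone$ near $e$ consists of points $(z,\pm1)$ with $z$ small, $\Rphys^n x$ accumulates on $\{(0,1),(0,-1)\}$; as $\CFIXphys=(0,1)$ is superattracting and $(0,-1)\mapsto\CFIXphys$, this forces $x\in\WW^s(\CFIXphys)$. The case $y\in\WW^s(e')$ is symmetric, and there your confinement argument actually does work. (For $x$ lying on $\LL_0$ itself with $0<|z|<1$, the orbit converges to $\B0$ rather than to $\CFIXphys$; the paper's formulation is slightly loose on this measure-zero slice.)
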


\subsection{Critical locus} 
  The critical locus of $\Rmig$ is described in Appendix, \S \ref{App: crit locus}.
Besides the separatrices $\Lzero$ and $\Lone$,
it comprises the line $\Ltwo$ that collapses to the low temperature fixed point $\FIXmig_0$,
and two symmetric pairs of lines, $\Lthree$ and $\Lfour$. 
The latter wander under the dynamics.  

The critical locus is schematically depicted on Figure \ref{FIG:CRITICAL_CURVES}, 
while its image, the critical value locus,  is depicted on Figure \ref{FIG:MK_CONFIGURATION}.  

\begin{figure}
\begin{center}
\input{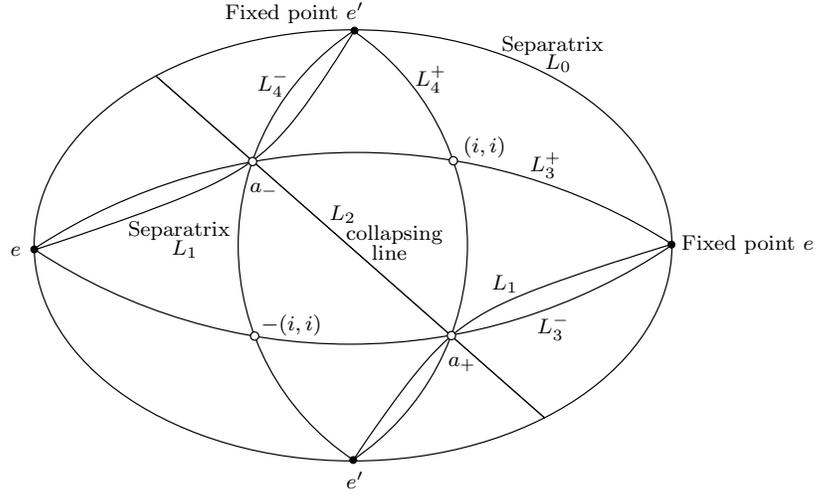}
\end{center}
\caption{\label{FIG:CRITICAL_CURVES}Critical locus for $\Rmig$ shown with the separatrix $\Lzero$ at infinity.}
\end{figure}

\begin{figure}
\begin{center}
\input{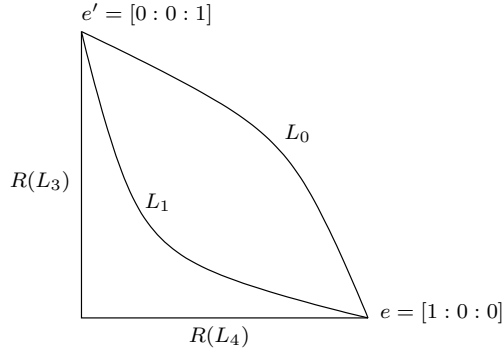}
\end{center}
\caption{Critical values locus of $R$.\label{FIG:MK_CONFIGURATION}}
\end{figure}

\bignote{}

In terms of the physical coordinates, the critical locus comprises:
\begin{itemize}
\item  $\correspond^{-1}\Lzero$: the fast separatrix $\LL_0\cup \{z=0\}$;
\item $\correspond^{-1}\Lone$:  the slow separatrix $\LL_1$ and its companion $\{t=-1\}$ 
      \\ (mapped to $\LL_1$ under $\Rphys$); 
\item $\correspond^{-1} \Ltwo$: two collapsing lines,  $z=\pm i$;
\item $\correspond^{-1} \Lthree$: two conics $zt=\pm i$;
\item $\correspond^{-1} \Lfour$: two lines $z=\pm i t$ (symmetric to the above conics). 
\end{itemize}

\begin{figure}
\begin{center}
\input{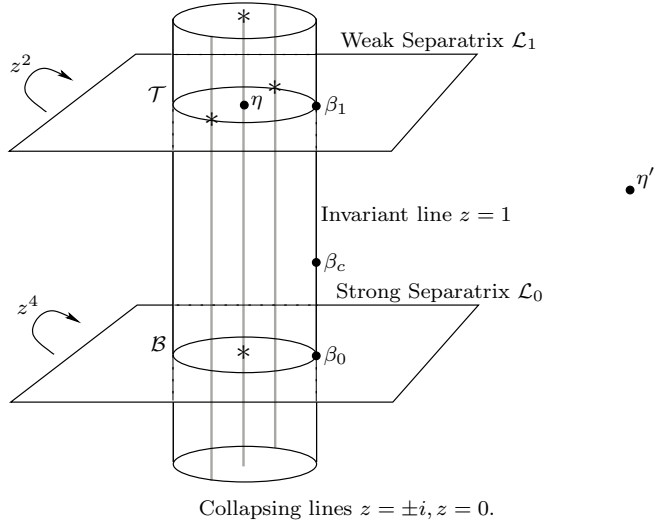}
\end{center}
\caption{\label{FIG:GLOBAL_CYLINDER}The LY cylinder $\Cphys$ situated between the strong separatrix $\LL_0$ and the weak separatrix $\LL_1$.  The collapsing lines at $z=0,\pm i$ are shown in grey and 
the indeterminate points $\INDphys_\pm, \B0, \gamma$ are depicted by stars.  The superattracting
fixed point $\eta' = (0,\infty)$ is ``symbolically'' shown at a finite location.}
\end{figure}

\comm{****
\subsection{Global structure of $\Rmig$ on complex projective space}
\label{SUBSEC:GLOBAL_MK}
We will now summarize global properties of $\Rmig$, as given in (\ref{EQN:MK_HOMOG}).
Most apparent is the symmetry $\rho: [U:V:W] \mapsto [W:V:U]$
 which is equivalent to the symmetry $\rho$ for $\Rphys$ expressed in Property (P1).

\msk
At infinity with respect to the usual affine coordinates $(u,w)$ is the line $V = 0$, which is forward invariant and
transversally superattracting like $v_{n+1} = O(v_n^2)$ (in either of the possible affine coordinates $v= \frac{V}{U}$ or $v = \frac{V}{W}$.)    Within $V=0$ we have $\Rmig(U/W) =
(U/W)^4$ leaving invariant the ``bottom circle'' $\BOTTOMmig = \{|U| = |W|, V=0\}$ from $\Cmig$.  Also on this line are the
two superattracting fixed points
$\CFIXmig := [1:0:0]$ and $\CFIXmig' := \rho(\CFIXmig) = [0:0:1]$ which attract the orbit of any point not on $\BOTTOMmig$.
These points are superattracting within $\C^2$ so that
any point in a sufficiently small neighborhood of $V=0$ must be in one of the
three stable sets $W^s(\CFIXmig), W^s(\CFIXmig')$, or $W^s(\BOTTOMmig)$.

There are three additional fixed points for $\Rmig$ on $V =0$ within $\BOTTOMmig$.  One of
these is $\FIXmig_0:=[1:0:1] \in \Cmig$ that was mentioned in \S \ref{SUBSEC:MAP_ON_CYL}, corresponding to the
``low-temperature fixed point'' $\FIXphys_0 \in \Cphys$.

In the affine coordinates $u = U/V, w=W/V$ we find three more fixed points.
Two are the fixed points $\FIXmig_1,\FIXmig_c \in \Cmig$ that correspond to
$\FIXphys_1, \FIXphys_c \in \Cphys$.  The third is not in $\Cmig$ and is
repelling.  

Thus, $\CFIXmig$ and $\CFIXmig'$ are the only attracting fixed points for $\Rmig$. It is unknown whether there are any
attracting periodic points for $\Rmig$ with period greater than $1$, however computer experiments suggest that
they do not exist.
See \S \ref{SUBSEC:SOLID_CYLINDERS}.

\msk
The critical points for $\Rmig$ are the union of five curves:
\begin{eqnarray*}
C_1 &:=& \{V=0\},\\
C_2 &:=& \{V^2 = UW\},\\
C_3 &:=& \{U = -W\},\\
C_4 &:=& \{U^2 = -V^2\}, \mbox{ and }\\
C_5 &:=& \{W^2 = -V^2\}.
\end{eqnarray*}

Using (\ref{EQN:MK_HOMOG}) one can check that $C_1$ and $C_2$ are forward invariant, that $C_3$ collapses to $[1:0:1] \in C_1$, and that $C_4$ and $C_5$
are mapped to the lines $U=0$ and $W=0$, respectively.  Thus, the critical value locus is
\begin{eqnarray*}
D_1 &:=& \{V=0\},\\
D_2 &:=& \{V^2 = UW\},\\
D_3 &:=& \{U=0\}, \mbox{ and } \\
D_4 &:=& \{W=0\}.
\end{eqnarray*}

Since $D_1$ and $D_2$ are forward invariant and $D_4 = \rho(D_3)$, there is one free critical orbit $\Rmig^n(D_3)$, up to the symmetry $\rho$.

\msk

We return our attention to $\CFIXmig$ and $\CFIXmig'$, which are the only attracting fixed points of $\Rmig$.
The two forward invariant critical curves $D_1 = \{V=0\}$ and $D_2 = \{V^2 = UW\}$ intersect at
$\CFIXmig$ and at $\CFIXmig'$.  As mentioned earlier, the dynamics of $D_1$ is given by taking the fourth power of either local coordinate $(U/W)$ or $(W/U)$.
The dynamics on $D_2$ is equally simple: if we use $z=W/U$ as uniformizing coordinate, then, except at the two points of indeterminacy $\INDmig_\pm \in D_2$, we
have $\Rmig(z) = z^2$.

Therefore both $\CFIXmig$ and $\CFIXmig'$ are superattracting fixed points with
$D_1$ and $D_2$ as invariant separatrices.  Points in a neighborhood of
$\CFIXmig$ (or $\CFIXmig'$, respectively) not on either of these separatrices will converge to
$\CFIXmig$ ($\CFIXmig'$, respectively) asymptotically to $D_2$ at rate $|z|^{2^n}$ (or
$|1/z|^{2^n}$, respectively).

Since $\CFIXmig$ and $\CFIXmig'$ are attracting they have open basin of attraction $W^s(\CFIXmig)$ and $W^s(\CFIXmig')$ consisting of all
points whose orbits converge to $\CFIXmig$ or $\CFIXmig'$. We will study $W^s(\CFIXmig)$ and $W^s(\CFIXmig')$ in \S \ref{SUBSEC:SOLID_CYLINDERS}.

The configuration of $D_1,D_2,D_3,$ and $D_4$ relative to $\CFIXmig$ and $\CFIXmig'$ is shown schematically in Figure
\ref{FIG:MK_CONFIGURATION}.

\begin{figure}
\begin{center}
\input{figures/MK_configuration.pstex_t}
\end{center}
\caption{Configuration of critical (value) curves $D_1,D_2,D_3,$ and $D_4$ for $\Rmig$ and the superattracting fixed points
$\CFIXmig, \CFIXmig'$.\label{FIG:MK_CONFIGURATION}}
\end{figure}
*****}

\subsection{Topological degree}
The {\em topological degree} $\deg_{top}(f)$ of a rational mapping $f: \mathbb{P}^k \rightarrow \mathbb{P}^k$ is the number of 
preimages under $f$ of a generic point $\zeta \in \mathbb{P}^k$.

\begin{prop}
We have $\deg_{\rm top}(\Rmig) = \deg_{\rm top}(\Rphys) = 8$.
\end{prop}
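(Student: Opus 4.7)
The plan is to compute $\deg_{\rm top}(R)$ by direct enumeration of preimages of a generic point, and then to transfer the result to $\Rphys$ via the degree-two semiconjugacy $\correspond$ from the commutative diagram (\ref{comm diagram}).

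For $R$, I would work in the affine coordinates $(u,w)$ and factor $R = S \circ Q$, where $S(x,y) = (x^2, y^2)$ is the coordinate-wise squaring, and
$$Q(u,w) = \left(\frac{u^2+1}{u+w},\, \frac{w^2+1}{u+w}\right).$$
Obviously $\deg_{\rm top}(S) = 4$. To compute $\deg_{\rm top}(Q)$, fix a generic target $(x,y)$. Subtracting the two preimage equations $u^2+1 = x(u+w)$ and $w^2+1 = y(u+w)$ gives $(u-w)(u+w) = (x-y)(u+w)$; the case $u+w=0$ forces $u = \pm i$, $w = \mp i$, which are the indeterminacy points $\INDmig_\pm$ and not hit by a generic target. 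So $w = u - x + y$, and substituting back reduces the system to a single quadratic $u^2 - 2xu + (1 + x^2 - xy) = 0$ with two simple roots $u = x \pm \sqrt{xy - 1}$ for generic $(x,y)$. Hence $\deg_{\rm top}(Q) = 2$, and by multiplicativity of topological degree under composition of dominant rational maps, $\deg_{\rm top}(R) = \deg_{\rm top}(S)\cdot \deg_{\rm top}(Q) = 4 \cdot 2 = 8$.

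For $\Rphys$, the semiconjugacy $\correspond:[Z{:}T{:}Y] \mapsto [Y^2{:}ZT{:}Z^2]$ has topological degree $2$: given a generic $(u,w)$, one recovers $z$ up to sign from $z^2 = w/u$, after which $t$ is uniquely determined by $zt = 1/u$. Applying multiplicativity of the topological degree to the commutative square $\correspond \circ \Rphys = R \circ \correspond$ gives
$$2 \cdot \deg_{\rm top}(\Rphys) \;=\; \deg_{\rm top}(\correspond \circ \Rphys) \;=\; \deg_{\rm top}(R \circ \correspond) \;=\; \deg_{\rm top}(R) \cdot 2,$$
so $\deg_{\rm top}(\Rphys) = \deg_{\rm top}(R) = 8$.

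The only mild point requiring care is the genericity of the target point in the count for $Q$: one must avoid the images of the critical and indeterminacy loci, but these form a Zariski-closed proper subset of $\CP^2$, so the count is valid on a Zariski-open dense set. Concretely it suffices to require $xy \neq 1$ and that the resulting $(u,w)$ satisfies $u+w \neq 0$, both of which are open conditions. There is no serious obstacle here --- the main content of the proof is the elementary elimination reducing $Q$ to a quadratic, combined with the elementary observation that $\correspond$ is two-to-one.
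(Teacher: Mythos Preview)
Your proof is correct but takes a different route from the paper. The paper argues geometrically: it picks a generic point near the invariant separatrix $\Lzero = \{V=0\}$ and counts preimages by combining the degree of $\Rmig|\,\Lzero$ (which is $\xi\mapsto\xi^4$, degree $4$) with the transverse degree (which is $2$, since $\eta'\sim\eta^2$ near $\Lzero$), giving $4\cdot 2 = 8$; for $\Rphys$ it repeats the same local count near $\LLzero$. Your approach is purely algebraic: the factorization $R = S\circ Q$ with $S$ the coordinate squaring reduces the count to a quadratic elimination for $Q$, and the semiconjugacy $\correspond$ transfers the result to $\Rphys$ by multiplicativity of topological degree. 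Your argument is more self-contained and does not rely on the local dynamical picture near the separatrices developed in \S\ref{SUBSEC:FIXED POINTS}; the paper's argument, by contrast, illustrates how the separatrix structure encodes the degree and foreshadows its later use. Both are perfectly valid.
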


\begin{proof}
This can be seen for $\Rmig$ by taking, e.g.,  a point $\zeta = (u,u)\in \C^2$  far away,
and hence close to the separatrix $\Lzero$.  
Such a point  has $8$ preimages under $\Rmig$,
since the transverse degree of $\Rmig$ at $\Lzero$ is equal to 2,
while $\deg(\Rmig|\, \Lzero)=4$.

Similarly for $\Rphys$, consider a generic point $\zeta$ sufficiently close to, but not on, the separatrix $\LLzero$.
\end{proof}

\subsection{Algebraic degrees and pullbacks of curves}
\label{SUBSEC:ALG_STABILITY}

The reader can consult  Appendix \ref{APP:DEGREE} for needed
background in elementary algebraic geometry.  

\sss{Case of $\Rmig$}
Since $\Rmig$ is given in homogeneous coordinates by relatively prime equations of degree $4$, we have
$\deg \Rmig  = 4$. 

The notion of algebraic stability
 is essential to understanding pullbacks of curves (considered as divisors)
 under iterates of rational mappings, see Appendix \ref{APP:DEGREE}.

\begin{prop}\label{alg stab}
The mapping $\Rmig:\mathbb{CP}^2\rightarrow \mathbb{CP}^2$ is algebraically stable.
\end{prop}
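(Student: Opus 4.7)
The plan is to invoke the standard Diller--Favre criterion: a dominant rational map $f : \mathbb{CP}^2 \to \mathbb{CP}^2$ is algebraically stable if and only if no irreducible curve $C \subset \mathbb{CP}^2$ has an iterated image $f^n(C)$ (for some $n \geq 0$) contained in the indeterminacy locus $I(f)$. Equivalently, we must check that no exceptional (collapsing) component of the critical set of $\Rmig$ eventually lands in $I(\Rmig) = \{\INDmig_+, \INDmig_-\}$ under forward iteration.

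First I would recall from \S\ref{SUBSEC:INDETERMINANT_PTS} that $I(\Rmig) = \{\INDmig_+, \INDmig_-\}$ consists of only two points, and from the description of the critical locus (carried out in Appendix~\ref{App: crit locus}) list all irreducible components: the two invariant separatrices $\Lzero = \{V=0\}$ and $\Lone = \{V^2 = UW\}$, the collapsing line $\Ltwo$, and the pairs $\Lthree^{\pm}, \Lfour^{\pm}$. By the classification recalled in \S\ref{SUBSEC:FIXED POINTS}--\S\ref{str on Mob band}, the components $\Lthree^{\pm}$ and $\Lfour^{\pm}$ are mapped to curves (they wander, not collapse), and the invariant separatrices $\Lzero, \Lone$ map onto themselves. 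Therefore the only exceptional components (in the sense of being collapsed to a point by $\Rmig$) are the branches of $\Ltwo$, and they collapse to the low-temperature fixed point $\FIXmig_0 = [1:0:1]$.

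It then remains to verify that the forward orbit of $\FIXmig_0$ avoids $\{\INDmig_\pm\}$. Since $\FIXmig_0$ is fixed under $\Rmig$, its orbit is the single point $\{[1:0:1]\}$, and a direct comparison of homogeneous coordinates shows $[1:0:1] \neq [\pm i : 1 : \mp i]$, so the orbit never meets $I(\Rmig)$. By the criterion, $\Rmig$ is algebraically stable. As a consequence one obtains $\deg(\Rmig^n) = 4^n$ for every $n \geq 1$, which is used in \S\ref{str on Mob band} (Corollary on degree growth) to control pullbacks of the Lee--Yang loci.

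The only nontrivial ingredient here is the global classification of the critical locus into exactly the components listed above, together with the identification of which components collapse and to which points. This is the main obstacle, but it is established in Appendix~\ref{App: crit locus} by a direct computation using the homogeneous formula~(\ref{EQN:MK_HOMOG}); once that data is in hand, the algebraic stability argument reduces to the one-line orbit check above.
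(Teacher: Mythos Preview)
Your proof is correct and takes essentially the same approach as the paper: the paper's proof is the single line ``The only collapsing curve is $L_2$, whose orbit lands on the low-temperature fixed point $\FIXmig_0$,'' which is exactly your argument with the supporting details filled in.
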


\begin{proof}
The only collapsing curve is $L_2$, whose orbit lands on the low-temperature fixed point $\FIXmig_0$.
\end{proof} 

It follows that $\deg \Rmig^n  = \left( \deg \Rmig\right)^n=4^n$, and hence we have: 

\begin{cor}\label{COR:DEGREE_GROWTH_MK}
If $D$ is an algebraic curve of degree $d$,
then the pullback $(\Rmig^n)^* D $ is a divisor of degree $d\cdot 4^n $.
\end{cor}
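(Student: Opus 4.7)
The plan is to deduce this directly from the algebraic stability of $\Rmig$ established in Proposition \ref{alg stab}, combined with the multiplicativity of algebraic degrees under composition of algebraically stable maps. Recall (see Appendix \ref{APP:DEGREE}) that for a rational self-map $f:\CP^2 \to \CP^2$, the algebraic degree $\deg f$ is the common degree of the homogeneous polynomials defining $f$ (when written in lowest terms), and it coincides with the degree of the pullback of a generic line. The pullback operator on divisors satisfies $\deg((f)^*D) = \deg(f)\cdot\deg(D)$ for any curve $D$.

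The key point is that in general $\deg(f^n)$ may be strictly less than $(\deg f)^n$ precisely when $f$ has an indeterminacy point that lies in the forward orbit of a collapsing curve; algebraic stability is exactly the condition that rules this out, and it is equivalent to the identity $\deg(f^n) = (\deg f)^n$ for all $n\ge 1$. Since Proposition \ref{alg stab} gives that $\Rmig$ is algebraically stable and the homogeneous formula \eqref{EQN:MK_HOMOG} has components of degree $4$ with no common factor, I obtain
\[
\deg(\Rmig^n) \;=\; (\deg \Rmig)^n \;=\; 4^n.
\]

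Now for a curve $D$ of degree $d$, the pullback $(\Rmig^n)^*D$ is by definition the divisor cut out by $P \circ \hat{\Rmig}^n$, where $P$ is a homogeneous polynomial of degree $d$ defining $D$ and $\hat{\Rmig}^n$ is a homogeneous lift of $\Rmig^n$ (of tri-degree $4^n$ by the previous display). The composition $P\circ \hat{\Rmig}^n$ is therefore a homogeneous polynomial of degree $d\cdot 4^n$, and its zero set is $(\Rmig^n)^*D$. Thus $\deg\bigl((\Rmig^n)^*D\bigr) = d\cdot 4^n$, as claimed.

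The only conceptual step that requires care is the multiplicativity of the degree; this is standard once algebraic stability is in hand, and the real content of the corollary lies in Proposition \ref{alg stab}, whose proof rests on the observation that the unique collapsing curve $L_2$ maps to the fixed point $\FIXmig_0$, which is not an indeterminacy point of $\Rmig$. No further obstacle arises.
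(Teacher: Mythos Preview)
Your proof is correct and follows exactly the same route as the paper: algebraic stability (Proposition~\ref{alg stab}) gives $\deg(\Rmig^n)=(\deg\Rmig)^n=4^n$, and then the pullback formula $\deg(R^*D)=\deg R\cdot\deg D$ (Lemma~\ref{LEM:PUSH_PULL} / equation~\eqref{EQN:PULL_BACK_ON_CP2}) yields the claim. The paper states this in a single sentence preceding the corollary; your version simply spells out the details.
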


\sss{Case of $\Rphys$}

Since $\Rphys$ is given in homogeneous coordinates by relatively prime equations of degree $6$,
we have $\deg \Rphys = 6$.  In particular, for any algebraic curve $X$ we have $\Rphys^* X$ is a
divisor of degree $6 \cdot \deg X$.
%
%
The degrees of pullbacks under iterates of $\Rphys$ are less organized:

\begin{obs}
The mapping $\Rphys:\CP^2 \ra \CP^2$ is not algebraically stable.
\end{obs}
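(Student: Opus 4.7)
Recall that $\Rphys: \CP^2 \to \CP^2$ fails to be algebraically stable precisely when some exceptional (i.e.\ collapsing) curve has a forward image landing on an indeterminacy point of $\Rphys$; equivalently, some iterate $\Rphys^n$ is represented by a homogeneous triple with a nontrivial common factor. The plan is therefore to exhibit a collapsing line whose image is one of the indeterminacy points already identified in \S\ref{SUBSEC:INDETERMINANT_PTS}.

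Among the critical components listed for $\Rphys$, the most promising candidates are the two symmetric lines $\correspond^{-1}\Lfour = \{z = \pm i t\}$, since (unlike the lines $z=\pm i$ which collapse to the bottom fixed point $\FIXphys_0$, or the separatrix components $\{z=0\}$ and $\{t=0\}$ which land on the superattracting fixed points) these lines seem naturally aimed at the ``accidental'' indeterminacy points $\B0=(0,0)$ and $\gamma=[0{:}1{:}0]$ that were introduced by the change of variables $\correspond$. I would simply substitute $Z = iT$ into the homogeneous formula (\ref{EQN:R_HOMOG}) for $\Rphys$. The three components are
\begin{align*}
P_1 &= Z^2(Z^2+T^2)^2, \qquad P_2 = T^2(Z^2+Y^2)^2, \\
P_3 &= (Z^2+T^2)(T^2Z^2+Y^4).
\end{align*}
Under $Z = iT$ one has $Z^2+T^2 = 0$, which forces both $P_1$ and $P_3$ to vanish identically, while $P_2 = T^2(Y^2-T^2)^2$ is not identically zero. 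Hence, along $\{Z=iT\}$,
\begin{equation*}
\Rphys[iT:T:Y] = [0: T^2(Y^2-T^2)^2 : 0] = [0:1:0] = \gamma
\end{equation*}
at every generic point of the line. Thus $\{z = it\}$ collapses to the indeterminacy point $\gamma$; by symmetry, the same happens for $\{z=-it\}$.

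Since an exceptional curve for $\Rphys$ is mapped to a point of indeterminacy of $\Rphys$, the composition $\Rphys \circ \Rphys$ fails to be given by a triple of degree $36 = (\deg \Rphys)^2$ in lowest terms, so $\Rphys$ is not algebraically stable. The only real work is a direct substitution, so there is no genuine obstacle — the observation is essentially a sanity check confirming that passing from the ``clean'' coordinates $(u,w)$ to the physical coordinates $(z,t)$ via the degree two map $\correspond$ (which itself creates the indeterminacy point $\gamma$ by collapsing the line $\{Z=0\}$) necessarily breaks algebraic stability. This is consistent with the earlier comment that $\Rmig$ is the better-behaved global model and $\Rphys$ has global properties that are ``more awkward''.
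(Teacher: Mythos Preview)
Your argument is correct and coincides with the paper's own proof: both exhibit the lines $Z=\pm iT$ as collapsing curves mapped by $\Rphys$ to the indeterminacy point $\gamma=[0{:}1{:}0]$, using the common factor $Z^2+T^2$ in the first and third homogeneous coordinates. Your version is more detailed and adds helpful commentary, but the core observation is identical.
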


Indeed,  $\Rphys$ maps the lines $Z=\pm i T$ to the point of indeterminacy
$\gamma=(0:1:0)$ since the first and third coordinates of (\ref{EQN:R_HOMOG}) contain the factor
$(Z^2+T^2)$.

\begin{rem}
In this case, algebraic instability results in a drop of degree for the second
iterate of $\Rphys$.  We have ${\rm deg}(\Rphys^2) =
28 < 36 = \left({\rm deg}\Rphys\right)^2$, since the common factor of $(Z^2+T^2)^4$ appears in the
expression for $\Rphys^2$, which must be canceled 
(compare Remark \ref{geom deg deficit}).  A cohomological calculation (which we omit) yields the {\em dynamical degree} $\delta(\Rphys) := \lim_{n\rightarrow \infty} \left(\deg \Rphys^n \right)^{1/n} = 4$.
\end{rem}

%
%


\begin{rem}\label{junk zeros}
Note that the preimage $\Psi^{-1} (R^{-n} S)$ is exactly the Lee-Yang locus $\SSS_n$ of degree $2\cdot 4^n$.
On the other hand, the preimage $\RR^{-n} (\SSS)=R^{-n} (\Psi^{-1} S)$ contains,
besides  $\SSS_n$, 
some ``junk'' components that collapse to the indeterminacy point $\gamma$ under some
iterate $\RR^k$, $k=0,1,\dots, n-1$.
So, the commutative diagram (\ref{comm diagram}) should be applied with caution. 
\end{rem}

\comm{******
\bignote{
The zero locus of the partition function written in $(z,t)$ coordinates should
be a divisor of degree $2\cdot 4^n$, since $\Gamma_n$ has $4^n$ edges.

However, the definition $\Sphys_n  = (\Rphys^n)^* \Sphys = (\Rphys^n)^*
\circ\correspond^*(\Smig)$ results in too high of a degree, e.g. when $n=1$ it
produces a divisor of degree $12$.  Rather we should use a different
definition: $\Sphys_n = (\Rphys^n \circ \correspond)^*(\Smig)$.

In fact, $(\Rphys^n)^* \circ\correspond^* \neq (\Rphys^n \circ \correspond)^*$
because $\Rphys$ maps the collapsing lines $Z=\pm T$ to the point of
indeterminacy $[0:1:0]$ for $\correspond$.  This results in extra superfluous components
consisting of the lines $Z= \pm T$ (and their preimages) within $(\Rphys^n)^*
\circ\correspond^*(\Smig)$.  
}

Here we will give an easy proof of the Lee-Yang Theorem for the DHL
by means of ``enumerative dynamics''.

\begin{thm}\label{dynamical LY thm}
  The Lee-Yang locus $\SSS_n$ intersects any complex line $\Line^t:= \C\times \{t\}$, $t\in [0,1)$,
in $2\cdot 4^m$ distinct points on the unit circle $\T$.
\end{thm}

\begin{proof}
  The assertion is obvious for   the principal LY locus $\Sphys$ (\ref{S}).

Recall that $\Sphys_n$ is defined to be $(\Rphys^n \circ \correspond)^*
\Smig$, where $\Smig$ is given by $\{U+2V+W=0\}$.  The semiconjugacy
$\correspond \circ \Rphys = \Rmig \circ \correspond$ gives that $\Sphys_n =
(\correspond \circ \Rmig^n)^* \Smig$.  Since $\correspond$ collapses the line
$Z=0$ to the fixed point $[1:0:0]$ for $\Rmig$, its images do not lie in the
indeterminacy set of $\Rmig^n$ for any $n$.  Using Lemma
\ref{LEM:DEGREE_OF_COMPOSITION} we see that $\deg (\correspond \circ \Rmig^n) =
\deg(\correspond) \circ \deg(\Rmig^n) = 2 \cdot 4^n$.

In particular, 
$\Sphys_n = (\correspond \circ \Rmig^n)^* \Smig$ is a divisor of degree $2\cdot
4^n$, hence it intersects the line $\Line^t$ at $2\cdot 4^n$ points counted
with multiplicities.  But Proposition \ref{vertical lifts} accounts for
$2\cdot 4^n$ distinct intersection points on the unit circle $\TT$. Hence no
other intersections can occur.  \end{proof}
************}

\comment{*********************************
\begin{proof}
  The assertion is obvious for   the principal LY locus $\SSS$ (\ref{S}).
By Lemma \ref{deg pullback X}, 
the level $n$ locus $\SSS_n= (\Rphys^n)^*(\SSS)$ is an algebraic curve of degree $2\cdot 4^n$,
so it intersects the line $\Line^t$ at $2\cdot 4^n$ points counted with multiplicities. 
But Proposition \ref{vertical lifts} accounts for  $2\cdot 4^n$ distinct intersection points 
 on the unit circle $\TT$. Hence no other intersections can occur.  
\end{proof}
***********************}

\section{Proof of the Lee-Yang Theorem for DHL}\label{LY for DHL sec}

In this section we will give an easy proof of the Lee-Yang Theorem for the DHL
by means of ``enumerative dynamics''.

\begin{thm}\label{dynamical LY thm}
  The Lee-Yang locus $\SSS_n$ intersects any complex line $\Secphys_t:= \C\times \{t\}$, $t\in [0,1)$,
in $2\cdot 4^m$ distinct points on the unit circle $\T$.
\end{thm}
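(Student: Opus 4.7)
The strategy is an enumerative argument combining an algebraic upper bound with a topological lower bound drawn from the proper-path lifting property (P9).

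For the upper bound, I will use the expansion (\ref{symmetric Laurent}): $Z_n(z,t) = \sum_{k=0}^{4^n} a_k(t)(z^k+z^{-k})$ with leading coefficient $a_{4^n}(t) = t^{-4^n/2}$. For any fixed $t \in (0,1)$, the rescaled Laurent polynomial $z^{4^n} Z_n(z,t)$ is a genuine polynomial of degree exactly $2 \cdot 4^n$ in $z$, so $Z_n(\cdot,t)$ has at most $2 \cdot 4^n$ zeros in $\C^*$ counted with multiplicity (and $z=0$ is a pole). This yields $|\SSS_n \cap \Secphys_t| \leq 2 \cdot 4^n$ with multiplicity. The degenerate case $t = 0$ is handled by direct inspection of the low-temperature limit: only the two ground states contribute, so $z^{4^n} t^{4^n/2} Z_n(z,t)\big|_{t=0} = 1 + z^{2 \cdot 4^n}$, whose $2 \cdot 4^n$ roots are the $(2^{n+1})$-th roots of $-1$, all distinct and lying on $\T$.

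For the matching lower bound, I argue by induction on $n$ that $\SSS_n \cap \Cphys$ contains at least $2 \cdot 4^n$ proper paths, pairwise disjoint in $\Cphystl := \Cphys \setminus \TOPphys$. The base case $n=0$ is explicit from (\ref{S}): $\SSS_0 = \SSS$ consists of the two arcs $\{t = -\cos\phi\}$ over $[\pi/2,\pi]$ and over $[\pi, 3\pi/2]$, meeting only at $\alpha = (\pi,1) \in \TOPphys$. For the induction step, from $Z_{n+1} = Z_0 \circ \hat R^{n+1}$ one has $\SSS_{n+1} \cap \Cphys \supset \Rphys^{-1}(\SSS_n \cap \Cphys) \cap \Cphys$. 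Applying property (P9) to each of the $2 \cdot 4^n$ proper paths in $\SSS_n$ produces at least four proper lifts in $\Cphys$, meeting only at the indeterminacy points $\alpha_\pm \in \TOPphys$, for a total of at least $2 \cdot 4^{n+1}$ proper paths in $\SSS_{n+1} \cap \Cphys$. Lifts of a single path are disjoint in $\Cphystl$ by (P9); lifts arising from distinct paths $\gamma_1 \neq \gamma_2$ in $\SSS_n$ cannot intersect in $\Cphystl$ either, since a common point $p$ would force $\Rphys(p) \in \gamma_1 \cap \gamma_2$, empty in $\Cphystl$ by the inductive hypothesis, while $\Rphys$ is indeterminate only at $\alpha_\pm \in \TOPphys$.

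The bounds then close the argument. For each $t \in [0,1)$ we have $\T_t \subset \Cphystl$, and each proper path intersects $\T_t$ in at least one point by continuity of the $t$-coordinate along the path, so
$$
2 \cdot 4^n \;\leq\; |\SSS_n \cap \T_t| \;\leq\; |\SSS_n \cap \Secphys_t| \;\leq\; 2 \cdot 4^n.
$$
Equality throughout forces exactly $2 \cdot 4^n$ intersection points, all simple and all lying on $\T_t$. The principal delicacy is the bookkeeping in the induction step -- property (P9) only addresses the lifts of a single path, and one must separately verify that the lifts arising from distinct proper paths in $\SSS_n$ remain disjoint throughout $\Cphystl$; granting the structural properties already established for $\Rphys |\,\Cphys$, the rest of the argument is a clean combination of (P9) with the elementary degree count for $Z_n$ in the $z$-variable.
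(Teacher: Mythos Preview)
Your proof is correct and follows essentially the same strategy as the paper: an algebraic upper bound from the Laurent-polynomial degree of $Z_n$ in $z$, matched against a topological lower bound coming from Property (P9), which supplies $2\cdot 4^n$ proper paths in $\SSS_n\cap\Cphys$ (hence that many distinct intersections with each $\T_t$). Your write-up is slightly more explicit than the paper's in spelling out the inductive disjointness of lifts of distinct paths and in treating $t=0$ separately, but the underlying argument is the same.
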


\begin{proof}
By (\ref{symmetric Laurent}), the partition function $Z_n$ is a symmetric Laurent polynomial in $z$ of degree $4^n$,
so it has $2\cdot 4^n$ zeros on every complex line in question.
But by (\ref{Z_n}), $Z_n= Z\circ \RR^n$ 
(where the $Z_n$ should be written in the physical coordinates), 
so every point $(z,t)$ of $\RR^{-n} Z$ 
which is a regular point for all  $\RR^k$, $k=0,1,\dots, n-1$ (compare Remark \ref{junk zeros}  below)  
is a Lee-Yang zero. But Property P9 supplies us with $2\cdot 4^n$ such zeros on the unit circle
of $\Secphys_t$. Hence it accounts for all of the zeros.
\end{proof}

Let us formulate the corresponding statement in the Migdal coordinates. In these coordinates,
the horizontal complex lines $\Pi_t$ 
turn into the conics 
$$
 \Secmig_t:=\{ |uw|= t^{-2} \}.
$$

A complex line $L=\{au+bw+c=0 \}$ is called {\it Hermitian} if it is invariant under the
antiholomorphic involution $(u,w) \mapsto (\bar w , \bar u)$.%
\footnote{Equivalently, $b= \bar a$, $c\in \R\sm \{0\}$ or $|b|=|a|$, $c=0$. } 
The slice of such a line by the real plane $\{w=\bar u\}$ is a real line
(otherwise it would be a single point).

\begin{thm}\label{LY in Mig coord-s}
  Let $t\in [0,1)$ and let $L$ by any Hermitian complex line crossing the top $\TOPphys=\T$ 
of the cylinder $\Cmig$.
Then the pullback $(R^n)^*L$ intersects the horizontal complex line $\Secmig_t$ in 
$2\cdot 4^n$ simple points, all on the cylinder $\Cmig$. 
\end{thm}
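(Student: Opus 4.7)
The plan is to mimic the proof of Theorem \ref{dynamical LY thm}, but work directly in Migdal coordinates by combining B\'ezout's theorem with the combinatorial count from property (P9$'$).

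\textbf{Upper bound via B\'ezout.} By the algebraic stability of $R$ (Proposition \ref{alg stab}), one has $\deg (R^n)^* L = (\deg R)^n \cdot \deg L = 4^n$. The horizontal conic $\Secmig_t = \{uw = 1/t^2\}$ is smooth and irreducible of degree $2$. B\'ezout's theorem then bounds $(R^n)^* L \cap \Secmig_t$ by $2 \cdot 4^n$ points counted with multiplicity, with equality iff all intersections are transverse.

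\textbf{Lower bound via enumerative dynamics.} Since $L$ is Hermitian, its real slice $L \cap \{w = \bar u\}$ is a real affine line. The hypothesis that $L$ crosses $\TOPmig$ means this real line meets the unit circle, so its trace in the M\"obius band $\Cmig = \{|u| \geq 1,\ w = \bar u\}$ consists of two rays, each a proper path from $\TOPmig$ to $\BOTTOMmig$ (even in the tangential case, e.g.\ $L = S$, the two rays still emanate from the tangency point). Applying (P9$'$) inductively, $(R^n)^* L$ contains at least $2 \cdot 4^n$ proper paths in $\Cmig$; these are pairwise disjoint outside the indeterminacy set $\{\pm i\} \subset \TOPmig$. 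Each proper path goes from $|u| = 1$ to $|u| = \infty$, so by the intermediate value theorem it crosses the real circle $\Secmig_t \cap \{w=\bar u\} = \{|u| = 1/t\}$ at least once (since $1/t > 1$). This produces at least $2 \cdot 4^n$ distinct real intersection points of $(R^n)^* L$ with $\Secmig_t$, all lying on $\Cmig$.

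\textbf{Conclusion.} The lower count matches the upper B\'ezout bound, so equality holds at every step: there are exactly $2 \cdot 4^n$ intersection points, they are simple, they all lie on $\Cmig$, and as a byproduct each proper path crosses $\{|u|=1/t\}$ exactly once.

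\textbf{Main obstacle.} The delicate point is the behavior at the indeterminacy points $\pm i$, where lifts of distinct proper paths may coalesce and where $(R^n)^* L$ could conceivably acquire spurious components from the blow-up locus $\Imig$. This is neutralized by the simple observation that $\{\pm i\} \subset \TOPmig$ is disjoint from $\Secmig_t \cap \Cmig = \{|u| = 1/t\}$ whenever $t < 1$, so neither phantom nor coalesced intersections can occur on the horizontal conic. The Hermitian character of $L$ together with the fact that $R$ commutes with the antiholomorphic involution $\sigma:(u,w)\mapsto(\bar w,\bar u)$ guarantees that $(R^n)^* L$ is $\sigma$-invariant, which is what makes the enumeration of real intersections legitimate as a count of complex intersections.
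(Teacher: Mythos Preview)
Your proof is correct and follows essentially the same approach as the paper: B\'ezout for the upper bound (degree $4^n$ pullback meeting a degree-2 conic), property (P9$'$) for the lower bound of $2\cdot 4^n$ real crossings on the circle $\{|u|=1/t\}$, and matching the two counts. Your ``Main obstacle'' paragraph is a useful elaboration---the paper's proof is terser and does not explicitly address why the coalescence of lifts at $\pm i$ causes no trouble---but the core argument is the same.
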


\begin{proof}
By Corollary \ref{COR:DEGREE_GROWTH_MK},  $(R^n)^* (L)$ has degree $4^n$,
so by Bezout Theorem, it has $2\cdot 4^n$ intersection points with the conic $\Secmig_t$.
On the other hand, $L \cap \Cmig$ comprises two vertical intervals on the cylinder $\Cmig$.
By property (P9) from \S \ref{str on CC},  $(R^n)^*(L) \cap \Cmig$  comprises  at least 
$2\times 4^n$ vertical curves on $\Cmig$  (connecting the top  $\TOPmig= \T$ to the bottom $\BOTTOMmig$ at infinity). 
They have at least $2\cdot 4^n$ different intersections with the circle $\{ |u|= t^{-1}\}=\Secmig_t\cap \Cmig$.
Hence all the intersection points of $(R^n)^* (L)$  with $\Secmig_t$ are captured on the cylinder $\Cmig$,
and all of them are simple. 
\end{proof}

\section{Algebraic cone field}\label{sec: alg cone field}

In this section we will construct a horizontal invariant cone field on the cylinder. 
It appears as the tangent cone
field  to a  pair of transverse algebraic foliations obtained by translating
the principal Lee-Yang locus around the cylinder or the M\"obius band. 
In the affine coordinates on the M\"obius band,  
 these foliations assume a particular simple linear form.

\subsection{Algebraic cone fields}
\label{SUBSEC:ALG_CONES}
Let us consider the 
M\"obius band $\Cmig$ introduced in \S \ref{SUBSEC:MAP_ON_CYL},
\begin{equation*} 
\inter \Cmig = \{(u,w) \in \C^2 : u = \overline{w} \,\,\mbox{and} \,\, |u| > 1\}\isom \C\sm \bar \D. 
\end{equation*}
\noindent
Recall that $\Smig_\psi$ stands for the line tangent to $\T$ at $e^{i\psi}$.
We define an algebraic horizontal cone field $K^\hor(u)$ on $\inter \Cmig$ as follows. 
For any  $u\in \C\sm \bar \D$, 
there are two tangents lines $\Smig_{\psi_1}$ and $\Smig_{\psi_2}$ passing through $u$.  
Then, $K^\hor(u)$ is the open  cone%
\footnote{Here a ``cone'' comprises two symmetric wedges.
Also, more precisely  one should think of $K^\hor(u)$ as the {\it tangent} cone at $u$.}
bounded by $\Smig_{\psi_1}$ and $\Smig_{\psi_2}$ that does not contain $\T$ (see Figure \ref{FIG:DEFINING_CONE_FIELD}).

\begin{figure}
\input{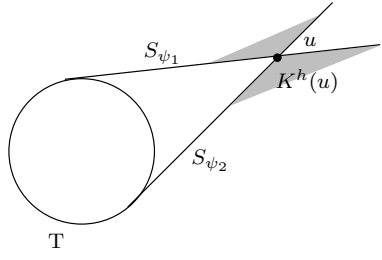}
\caption{\label{FIG:DEFINING_CONE_FIELD} An algebraic horizontal cone $K^\hor(u)$.}
\end{figure}

\begin{figure}
\begin{center}
\input{figures/alg_cone_field.pstex_t}
\end{center}
\caption{\label{FIG:CONEFIELD1} A horizontal cone $\KK^\hor(\omega)$.}
\end{figure}

This construction can be described in terms of the principal LY locus $\Smig=\{ \Re u = -1\}$
on $\Cmig$ (see \S \ref{str on Mob band}). 
The line $\Smig$ is a concatenation of two rays,  $S^+=\{u\in \Smig: \ \Im u>0\}$ and $S^-=\{u\in \Smig: \ \Im u<0\}$,
 meeting at $-1\in \T$.
Rotating these rays around the origin, we obtain two linear foliations $\Phi^\pm$  of $\inter \Cmig$
(comprised of the leaves  $\Smig_\theta^\pm = e^{i\theta} S^\pm$).
The cone $K^\hor(u)$ is bounded by (the tangent lines to) the leaves
 of these foliations passing through $u$.  

Described in this way,
the construction can be immediately transferred to the cylinder $\Cphys$.
Rotating the principal LY locus $\SSS$ around the cylinder, we obtain two transverse foliations of $\inter\Cphys$.
Then the horizontal cone field  is formed by the tangent cones $\KK^\hor(\om)$
bounded by the tangent lines to the two leaves meeting at $\om$ (see Figure \ref{FIG:CONEFIELD1}).
Clearly  $\KK^\hor(\om)=D\Psi^{-1}(K^\hor(u))$, where $u=\Psi(\om)$. 

\begin{rem}
In fact, this cone field extends to the bottom $\BOTTOMphys$ of $\Cphys$, so it is well defined on the 
topless cylinder $\Cphystl$. However, it degenerates to a line field at the top.
\end{rem}

A smooth path $\gamma(t)$ in $\Cmig$ is called {\it horizontal} 
if  it goes through the cones $K^\hor(x)$, i.e. $\gamma'(t)\in K^\hor(\gamma(t))$ 
whenever  $\gamma(t)\in \inter \Cmig$. (The same definition applies to the cylinder $\Cphys$.)

\begin{lem}\label{S transv to Icurve}
The blow-up locus $\Imig$ (respectively $\Icurve$) is horizontal.    
\end{lem}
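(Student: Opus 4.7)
The plan is to verify horizontality of $\Imig$ directly on the M\"obius band $\Cmig$ and then transfer the result to $\Icurve$ via the diffeomorphism $\Psi$. The key observation to set up first is a geometric characterization of $K^\hor$: for $u_0 \in \C \sm \bar \D$, a nonzero real direction $v \in T_{u_0}\C$ belongs to $K^\hor(u_0)$ if and only if the real affine line $u_0 + \R v$ is disjoint from $\bar \D$, i.e.\ its Euclidean distance to the origin is strictly greater than $1$. This is immediate from the definition, since the two boundary lines $\Smig_{\psi_1}, \Smig_{\psi_2}$ are precisely the real lines through $u_0$ at distance exactly $1$ from the origin, and rotating a line through $u_0$ sweeps its distance to $0$ continuously between $0$ and $|u_0|$, with values $>1$ realized on the two antipodal sectors that miss $\T$.

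Given this, I would parametrize $\Imig$ from (\ref{Imig}) by $u(y) = (y^2/4 - 1) + iy$, $y \in \R$, whose tangent direction is $u'(y) = y/2 + i$. A short calculation shows that the tangent line to $\Imig$ at $u(y_0)$ has equation $4x - 2 y_0 y + (y_0^2+4) = 0$ (where $x = \Re u$, $y = \Im u$), with distance $\sqrt{y_0^2+4}/2$ from the origin. This distance is strictly greater than $1$ precisely when $y_0 \neq 0$, i.e.\ whenever $u(y_0) \in \inter \Cmig$. By the characterization above, $u'(y_0) \in K^\hor(u(y_0))$, proving horizontality of $\Imig$ on $\inter \Cmig$.

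To pass to $\Icurve$ I would invoke Proposition \ref{cylinder-band conjugacy}: the diffeomorphism $\Psi : \Cphysbl \to \Cmigbl$ carries $\Icurve \sm \{\FIXphys_0\}$ onto $\Imig \sm \{-1\}$ and, by construction, identifies the two algebraic cone fields via $\KK^\hor = D\Psi^{-1}(K^\hor)$, so horizontality transfers verbatim. At the bottom endpoint $\FIXphys_0 = (0,0) \in \BOTTOMphys$ the map $\Psi$ degenerates, so a direct verification is needed: parametrizing $\Icurve$ in physical coordinates as $\phi \mapsto (\phi, \sin^2(\phi/2))$ yields tangent $(1,0)$ at $\phi = 0$, which lies strictly between the two leaves $\SSS_{\pm\pi/2}$ through $\FIXphys_0$ (their tangents there are easily computed to be $(1,\mp 1)$), hence inside the nondegenerate cone $\KK^\hor(\FIXphys_0)$. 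There is no serious obstacle here; the geometric content of the lemma is simply that $\Imig$ is the parabola with focus $0$ and directrix $\{\Re u = -2\}$, tangent to $\T$ only at its vertex $u=-1$, so all other tangent lines miss $\bar\D$ — exactly what the formula $\sqrt{y_0^2+4}/2$ quantifies.
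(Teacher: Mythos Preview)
Your proof is correct. The paper's own proof consists solely of the words ``See Figure~\ref{FIG:hl Imig}'', so your analytic argument is a genuine rigorization of the same geometric idea: the tangent lines to the parabola $\Imig$ miss $\bar\D$. The distance calculation $\sqrt{y_0^2+4}/2$ and its interpretation via the focal property (focus $0$, directrix $\{\Re u=-2\}$) are exactly the right way to make the picture precise. Two minor remarks: (i) $\Psi$ actually carries $\Icurve\sm\{\FIXphys_0\}$ onto all of $\Imig$, not $\Imig\sm\{-1\}$, since $\INDphys\in\Icurve\sm\{\FIXphys_0\}$ maps to $-1\in\TOPmig$; this is harmless because horizontality is only required on $\inter\Cphys$, which $\Psi$ identifies with $\inter\Cmig$. (ii) The separate check at $\FIXphys_0\in\BOTTOMphys$ is not needed under the stated definition of ``horizontal'' (which constrains tangents only at interior points), though it is correct and does no harm.
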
   

\begin{proof}
  See Figure \ref{FIG:hl Imig}. 
\end{proof}

\begin{figure}
\begin{center}
\input{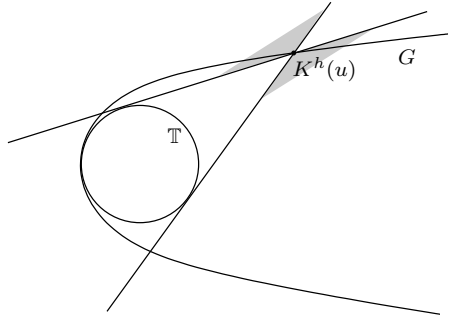}
\end{center}
\caption{\label{FIG:hl Imig} The blow-up locus $\Imig$ is horizontal.}
\end{figure}

Let us now define the {\it vertical cones } as the complements to the horizontal ones,  $K^\ver(u)=T_u\Cmig \sm K^\hor(u)$.
A smooth path $\gamma(s)$ is called {\it vertical} if it goes through the vertical cones,
i.e., $\gamma'(s)\in K^\ver(x)$.
(The same definitions apply to the cylinder $\Cphys$.)
We call a path {\it strictly vertical} if it goes through the $\inter K^\ver(u)$.

\begin{lem}\label{exactly 4}
  If $\gamma$ is a proper vertical path in $\Cmig$, then $\Rmig^{-1}\gamma$ comprises four 
proper paths (and similarly, for proper paths in $\Cphys$). 
\end{lem}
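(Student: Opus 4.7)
My plan is to reduce this lemma to the single claim that a vertical proper path $\gamma$ crosses the blow-up locus $\Imig$ (respectively $\Icurve$) at exactly one point. Property (P9$'$) of \S\ref{str on Mob band} (resp.\ (P9) of \S\ref{str on CC}) will then immediately give $R^{-1}\gamma=\delta_1\cup\dots\cup\delta_4$, with two regular lifts and two singular ones terminating at the indeterminacy points $\pm i$ (resp.\ $\INDphys_\pm$). Since the map $\Psi$ of (\ref{zt-uv}) is a diffeomorphism $\Cphystl\to\Cmig$ preserving the cone field by construction, and a short direct computation shows $\Psi(\Icurve)=\Imig$, I will carry out the argument on $\Cphys$ and transfer the conclusion to $\Cmig$.

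For the monotonicity step, I will work in $(\phi,t)$ coordinates on $\Cphys$. Differentiating the rotated Lee-Yang curves $\SSS_\psi=\{t=-\cos(\phi-\psi)\}$ at $(\phi_0,t_0)\in\inter\Cphys$ shows that the horizontal cone there is bounded by the two slopes $dt/d\phi=\pm\sqrt{1-t_0^2}$. Consequently any vertical tangent satisfies $|dt/d\phi|\ge\sqrt{1-t_0^2}>0$ on $(0,1)$, so $\gamma$ is monotone in $t$ and parameterizes as $\phi=\phi_\gamma(t)$ with $|\phi_\gamma'(t)|\le 1/\sqrt{1-t^2}$. The curve $\Icurve=\{t=(1-\cos\phi)/2\}$ splits into two graphs over $t$, namely $\phi_I^\pm(t)=\pm\arccos(1-2t)\ (\mathrm{mod}\ 2\pi)$, with $|(\phi_I^\pm)'(t)|=1/\sqrt{t(1-t)}$. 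The elementary inequality $\sqrt{t(1-t)}<\sqrt{1-t^2}$ on $[0,1)$ will then give
$$
|\phi_\gamma'(t)|\;\le\;\tfrac{1}{\sqrt{1-t^2}}\;<\;\tfrac{1}{\sqrt{t(1-t)}}\;=\;|(\phi_I^\pm)'(t)|,
$$
so $\phi_\gamma-\phi_I^\pm$ is strictly monotone on each branch of $\Icurve$.

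Finally I will combine this monotonicity with topology. The curve $\Icurve$ separates $\Cphys$ into $\Cphys_-$ (containing $\BOTTOMphys\setminus\{(0,0)\}$) and $\Cphys_+$ (containing $\TOPphys\setminus\{(\pi,1)\}$), so $\gamma$ must cross $\Icurve$ at least once. A direct sign check will show that every crossing (on either branch) is oriented $\Cphys_-\to\Cphys_+$: on the $+$ branch $\phi_I^+$ increases in $t$ and the region $\{\phi_\gamma>\phi_I^+\}$ is locally in $\Cphys_-$, and dually on the $-$ branch. A second crossing would therefore require $\gamma$ to first return to $\Cphys_-$, which is impossible; so there is exactly one crossing, and (P9) finishes the proof. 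The part I expect to require the most care is this sign analysis on the two branches (and its bookkeeping under the $2\pi$-periodicity of $\phi$); by contrast, the degeneration of the cone field at $\TOPphys$ is harmless because the slope comparison is strict throughout $(0,1)$.
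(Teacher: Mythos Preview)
Your proposal is correct and follows essentially the same route as the paper, only spelled out in more detail. The paper's two-line proof reads: since $\Imig$ (resp.\ $\Icurve$) is horizontal (Lemma~\ref{S transv to Icurve}) and $\gamma$ is vertical, they are transverse; combined with (P9$'$) this gives the assertion. Your slope inequality $|\phi_\gamma'|\le 1/\sqrt{1-t^2}<1/\sqrt{t(1-t)}=|(\phi_I^\pm)'|$ is precisely the content of ``$\gamma$ vertical, $\Icurve$ horizontal'' unpacked in coordinates: the first inequality is the definition of the vertical cone, and the second is exactly what Lemma~\ref{S transv to Icurve} asserts. So your explicit computation of $1/\sqrt{t(1-t)}$ is not needed --- horizontality of $\Icurve$ already gives $|(\phi_I^\pm)'|>1/\sqrt{1-t^2}$.

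One simplification eliminates the two-branch sign bookkeeping you flag as delicate. Rather than splitting $\Icurve$ into two graphs over $t$, use that $\Icurve$ is globally a graph over $\phi$, namely $t=h(\phi):=\sin^2(\phi/2)$. Then set $g(t)=t-h(\phi_\gamma(t))$; since $|h'(\phi)|<\sqrt{1-t^2}$ (horizontality) and $|\phi_\gamma'(t)|\le 1/\sqrt{1-t^2}$ (verticality), you get $|h'\cdot\phi_\gamma'|<1$ and hence $g'(t)>0$ on $(0,1)$. With $g(0)\le 0$ and $g(1)\ge 0$ this gives exactly one zero, with no branch or orientation analysis required.
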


\begin{proof}
Obviously, vertical paths are transverse to horizontal ones -- 
so, by Lemma \ref{S transv to Icurve}, they are transverse to the blow-up locus $\Imig$. 
Combined with Property (P9$'$), this yields the assertion.
\end{proof}

Given a cone $K$ in a linear space $E$, let $PK\subset PE$ stand for its projectivization.  
Let us say that a cone field $K(u)$ is {\it strictly forward invariant} if
       $$ D\Rmig(PK(u)) \Subset \inter PK(\Rmig u). $$

\begin{prop}
\label{PROP:MK_CONEFIELD}
The horizontal  cone fields  $K^\hor(u)$ and $\KK^\hor(u)$ are strictly forward invariant under
the corresponding dynamics,  $\Rmig$ and $\Rphys$.
\end{prop}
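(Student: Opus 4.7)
The plan is to prove the statement on the M\"obius band $\Cmig$ for $\Rmig$ via a Bezout-type argument, and then transfer to the cylinder $\Cphys$ via the semi-conjugacy of Proposition \ref{cylinder-band conjugacy}.

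I first reformulate strict forward invariance tangentially. At $u \in \inter\Cmig$ with $v = \Rmig(u)$, the projectivized cone $P K^\hor(u)$ is an open arc in the real projective line $P T_u\Cmig$, bounded by the tangent directions of the two tangent lines $\Smig_{\psi_1(u)}^c$ and $\Smig_{\psi_2(u)}^c$. Since $D\Rmig$ is a projective homeomorphism between these real projective lines, strict forward invariance $D\Rmig(P K^\hor(u)) \Subset \inter P K^\hor(v)$ is equivalent to the following assertion: for each of the two boundary leaves $\hat T_j$ of $K^\hor(v)$, the tangent direction at $u$ to the local branch of the pullback $\Rmig^{-1}(\hat T_j^c)$ through $u$ lies strictly in the interior of the complementary vertical cone $\inter K^\ver(u)$.

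The main step is to analyze this pullback using Lemma \ref{pulback of S-phi}. The curve $D := \Rmig^*(\hat T_j^c)$ has degree $4$ by Corollary \ref{COR:DEGREE_GROWTH_MK}, and by Lemma \ref{pulback of S-phi}(i) it meets the complexification $\Lone$ of $\T$ in exactly four transverse double points. On the real slice this means that four points of $\T$ carry two transverse real branches of $D \cap \Cmig$ each, and the proper-path lifts delivered by Property (P9$'$) of \S \ref{str on Mob band} are precisely the resulting branches descending from these four crossings toward $\BOTTOMmig$. The branch of $D \cap \Cmig$ through $u$ is therefore part of a real curve that crosses $\T$ transversely at two nearby points, which forces its real tangent line at $u$ to meet $\T$ in two real points --- placing the tangent direction in $K^\ver(u)$. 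For strictness (i.e.\ the tangent direction lying off the common boundary of $K^\hor(u)$ and $K^\ver(u)$), I count by Bezout: the intersection $D \cap \Smig_{\psi_i(u)}^c$ has total multiplicity $4$, and Lemma \ref{pulback of S-phi}, combined with the fact that $\Smig_{\psi_i(u)}^c$ is tangent to $\Lone$ at $e^{i\psi_i}$ (disjoint from the four forced points of $D \cap \Lone$ in generic configurations), leaves no room for a multiplicity $\geq 2$ at $u$, so $u$ is a simple transverse intersection and $D$ is transverse to $\Smig_{\psi_i(u)}^c$ there. The exceptional case $\hat T_j = S$ (Lemma \ref{pulback of S-phi}(ii)) and the degenerate behavior near the indeterminacy points $\pm i$ are handled via the horizontality of the blow-up locus $\Imig$ supplied by Lemma \ref{S transv to Icurve}, together with the explicit resolutions of Appendix \ref{APP:BLOW_UPS}.

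The transfer to $\Rphys$ on $\Cphys$ is then immediate. By Proposition \ref{cylinder-band conjugacy}, the change of variables $\correspond$ restricts to a diffeomorphism $\Cphysbl \to \Cmigbl$ semi-conjugating $\Rphys$ to $\Rmig$, and by construction $\KK^\hor$ pulls back to $K^\hor$ under $D\correspond$, so strict invariance of $\KK^\hor$ on $\Cphysbl$ is automatic. Strict invariance at the bottom $\BOTTOMphys$ itself, where $\correspond$ is two-to-one, is verified directly from the explicit formula $\Rphys|\BOTTOMphys : z\mapsto z^4$.

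The hard part will be the orientation step in the main paragraph: Bezout alone rules out the boundary (tangential) case but cannot by itself decide whether the tangent of $D$ at $u$ lies in $\inter K^\hor(u)$ or in $\inter K^\ver(u)$. The decisive input is the topological content of Property (P9$'$) combined with Lemma \ref{pulback of S-phi}(i) --- namely that the real pullback branches genuinely cross $\T$ at two nearby points --- which is what ultimately forces the tangent into the vertical cone rather than back into the horizontal one.
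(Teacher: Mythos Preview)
Your orientation step has a gap. The claim that ``the branch of $D \cap \Cmig$ through $u$ is part of a real curve that crosses $\T$ transversely at two nearby points'' is not correct: by Property (P9$'$) and Lemma \ref{pulback of S-phi}, each of the eight real branches of $\Rmig^{-1}(\Smig_\psi)$ on $\Cmig$ is a proper path landing at a \emph{single} point of $\T$ (four landing points, two branches at each). More importantly, even if a real-analytic arc did cross $\T$ at two points, that would not force its tangent line at an interior point $u$ to meet $\T$ --- a nearly radial arc would have tangent lines missing $\T$ entirely. So the inference ``crosses $\T$ twice $\Rightarrow$ tangent direction in $K^\ver(u)$'' is simply false, and the paragraph you flag as ``the hard part'' does not go through.

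The paper closes this gap by a different geometric argument. Rather than analyzing the tangent at a single point, it shows that every branch $\gamma_i$ of $\Rmig^{-1}(\Smig_\psi)$ meets every tangent line $\Smig_\theta$ in at most one point, transversally. The Bezout count $\Rmig^*(\Smig_\psi^c) \cdot \Smig_\theta^c = 4$ is matched against four forced real crossings: by the even symmetry, four of the eight proper paths lie near infinity on the side of $\Smig_\theta$ away from $\T$, and must cross to land on $\T$; degenerate configurations are handled via Lemma \ref{pulback of S-phi}. Once ``at most one transverse crossing with every $\Smig_\theta$'' is in hand, strict verticality follows by a trapping argument (Figure \ref{FIG:ALG_CONE_FIELD_INVARIANCE}): at any $u \in \gamma_i$, the two tangent lines $\Smig_{\psi_1(u)}, \Smig_{\psi_2(u)}$ through $u$ cut the plane into four sectors, two containing $\T$ and two not; since $\gamma_i \setminus \{u\}$ avoids both lines, it is confined to sectors determined by its tangent direction at $u$; a tangent in $\cl K^\hor(u)$ would either give a non-transverse crossing (boundary case) or trap $\gamma_i$ in a sector disjoint from $\T$ (interior case), contradicting that $\gamma_i$ lands on $\T$. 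Your Bezout-with-$\Smig_{\psi_i(u)}^c$ idea is in the right spirit, but it needs this trapping argument --- not an appeal to where the branch meets $\T$ --- to convert the intersection count into the orientation conclusion.
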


\begin{proof}
Equivalently,  the vertical  cone field $K^\ver(u)$ is
strictly backward invariant. Since the cones are tangent to the pair of foliations $\Phi^\pm$,
this is equivalent to the property that the pullbacks  $\Rmig^{-1}(\Smig_\psi^\pm) $ 
of the $\Phi^\pm$-leaves are strictly vertical. 

By  Lemma \ref{exactly 4}, each  pullback $\Rmig^{-1}(\Smig_\psi^\pm) $  
comprises  four disjoint proper paths in ${\Cmig}$. 
As the line $\Smig_\psi$ is the concatenation of two rays  $\Smig_\psi^\pm$,
the pullback $R^{-1}(\Smig_\psi)$ comprises eight disjoint proper paths $\gamma_i=\gamma_{i,\psi}$ in $\inter \Cmig$.

To prove the desired,  it suffices
to show that for any angles $\psi$ and $\theta$, 
each path $\gamma_{i,\psi}$ has at most one intersection point 
with any  line $\Smig_\theta$,
and the intersection is transverse.  In so, $\gamma_i$ could not
cross $\Smig_\theta$ through the horizontal cone $K^\hor(u)$, 
for it would be disjoint from the whole closed vertical cone $\cl K^\ver(u)$ 
(viewed as a subset of $\C$).   
But the latter contains $\T$, so $\gamma_i$ would fail to land on $\T$
(see  Figure \ref{FIG:ALG_CONE_FIELD_INVARIANCE}).

\begin{figure}
\input{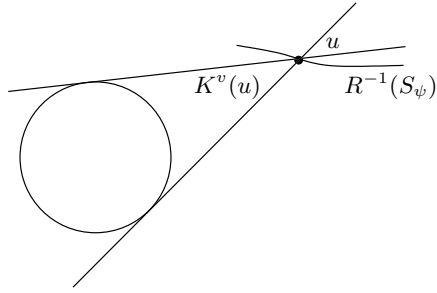}
\caption{\label{FIG:ALG_CONE_FIELD_INVARIANCE} Illustration to the proof  of invariance of the cone field.}
\end{figure}

Let $\Smig^c_\psi$ be the complexification of $\Smig_\psi$.  Since $\Rmig^*(\Smig^c_\psi)$ is a complex
algebraic curve of  degree $4$ (by Corollary \ref{COR:DEGREE_GROWTH_MK}),
its slice by the complex line  $\Smig^c_\theta$ consists of $4$ points counted
with multiplicity.  We will show that the intersection points that lie in $\inter \Cmig$
are transverse and belong to distinct radial components of $\Rmig^{-1}(S_\psi)$.
Let us consider several cases.


\ssk {\it Case 1 (generic)}. 
Let  $\theta\not=\psi/2,  \pi/2\ \mod \pi$.  
Then    $\Smig_\theta$ does not meet $R^{-1}(\Smig_\psi)$ on $\T$.
Since $R$ is even, both pullbacks $R^{-1}(\Smig_\psi^\pm)$ are symmetric with respect to the origin.
Hence the rays comprising  $R^{-1}(\Smig_\psi^\pm)$ come in symmetric pairs $\gamma_i$, $\gamma_i'$, 
$i=1,\dots, 4$. 
Then one ray in each pair (say $\gamma_i$)  near infinity is separated by  $\Smig_\theta$
from $\T$.
Since $\gamma_i$ lands on $\T$, it must intersect $\Smig_\theta$.  

Since the $\gamma_i$ are pairwise disjoint, 
this gives us 4 distinct intersection points  of $R^{-1}(\Smig_\psi)$ with $\Smig_\theta$,
Since the total number of intersection points counted with multiplicity is at most 4, 
we have accounted for all of them.
Thus,  each $\gamma_i$ intersects $\Smig_\theta$ exactly once and the intersection is transverse
(while the $\gamma_i'$ are disjoint from $\Smig_\theta$).

\ssk {\it Case 2.}
Let $\theta= \psi/2$ or $\pi/2\ \mod \pi$, but $\psi\not=\pi\ \mod 2\pi$.
By Lemma \ref{pulback of S-phi} (i), the algebraic curve $R^*(\Smig_\psi^c)$ has a double point 
at $e^{i\theta}$,   and $\Smig_\theta$ intersects it non-tangentially with multiplicity 2. 
It must intersect two other branches $\gamma_i$ in $\inter \Cmig$, 
and thus we have accounted for all four intersection points.  
The conclusion follows. 


\ssk {\it Case 3.} 
Finally, let $\theta= \psi/2 =  \pi/2\ \mod \pi$
(this is the most degenerate case, but it occurs exactly when $\Smig_\psi=\Smig$ is the principal LY locus,
see Figure \ref{FIG:LEE_YANG_ZEROS}). 
In this case,  all four branches $\gamma_i$ meet at  the indeterminacy points $e^{i\theta}$, 
and $\Smig_\theta^c$ intersects $R^*(\Smig_\psi^c)$ at this point
with multiplicity 4 (as described Lemma \ref{pulback of S-phi} (ii)). 
It accounts for all intersection points, so no intersections occur in $\inter\Cmig$.  
\comm{***
If there is an intersection of $\Rmig^*(\Smig_\phi)$ with $\Smig_\theta$ at the
point $e^{i\theta} \in \T$, then this point is a tangency between
$\Rmig^*(\Smig_\phi)$ with $\Smig_\theta$ because both are tangent to $\T$.
The intersection must therefore be counted with multiplicity $2$ (or $4$
in the rare case that $4$ of the radial curves from $\Rmig^*(\Smig_\phi)$ meet
$\Smig_\theta$ at $e^{i\theta}$.)

Similarly, if there are not four endpoints of radial curves from
$\Rmig^*(\Smig_\phi)$ on one side of $\Smig_\theta$ and four on the other
side, there is necessarily a tangency between $\Rmig^*(\Smig_\phi)$ and
$\Smig_\theta$ at infinity (with respect to $u$) and the resulting intersection
must be counted with multiplicity $2$.  

In each of these degenerate cases, one can check that the multiplicity
appropriately decreases the number of possible points of intersection (counted
without multiplicity) between $\Rmig^*(\Smig^c_\phi)$ and $\Smig^c_\theta$ in
$\CP^2$ so that for topological reasons each radial component from
$\Rmig^*(\Smig_\phi)$ can intersect $\Smig_\theta$ at most once.***}
\end{proof}



\begin{rem}
 Eric Bedford has informed us that a similar algebraic method for constructing an
invariant cone field (for certain birational maps) had been earlier used in \cite[\S 5]{BD_GOLDEN}.  
\end{rem}


Corollary \ref{exactly 4} and Proposition \ref{PROP:MK_CONEFIELD} imply: 

\begin{cor}\label{strictly vert pullbacks}
  If $\gamma$ is a proper vertical path in $\Cmig$, then $\Rmig^{-1}\gamma$ comprises four 
proper strictly vertical paths (and similarly, in $\Cphys$). 
\end{cor}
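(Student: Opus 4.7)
The plan is to combine Lemma~\ref{exactly 4}, which already produces the four proper preimage paths, with the strict forward invariance of the horizontal cone field supplied by Proposition~\ref{PROP:MK_CONEFIELD}. Write $\Rmig^{-1}\gamma = \delta_1\cup\cdots\cup\delta_4$ as given by Lemma~\ref{exactly 4}. All that remains is to upgrade each $\delta_i$ from merely vertical (tangent vectors in $K^v$) to strictly vertical (tangent vectors in $\inter K^v$).

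The key observation is that the strict forward invariance $D\Rmig(PK^h(x))\Subset \inter PK^h(\Rmig x)$ is equivalent to a strict backward invariance of the vertical cone field in the following sense: at any regular point $x$ of $\Rmig$, if a nonzero $w\in T_x\Cmig$ satisfies $D\Rmig(w)\in K^v(\Rmig x)$, then $w\in \inter K^v(x)$. Indeed, were $w$ in the closed horizontal cone $\overline{K^h(x)}$, Proposition~\ref{PROP:MK_CONEFIELD} would force $D\Rmig(w)\in \inter K^h(\Rmig x)$, contradicting membership in the disjoint $K^v(\Rmig x)$. Applying this with $w=\dot\delta_i(s)$ (noting $D\Rmig(\dot\delta_i(s)) = \dot\gamma(\Rmig\delta_i(s))\in K^v(\Rmig\delta_i(s))$ by verticality of $\gamma$) yields that $\delta_i$ is strictly vertical at every regular point of $\Rmig$ along $\delta_i$.

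The main obstacle is controlling what happens at the critical and indeterminacy loci of $\Rmig$, where $D\Rmig$ either drops rank or is undefined. Lemma~\ref{S transv to Icurve} handles the bulk of this: since the blow-up locus $\Imig$ is itself horizontal, a vertical $\gamma$ crosses $\Imig$ transversely, and the singular lift approaches $\alpha_\pm$ in a direction transverse to $\Imig$; via the blow-up calculations of Appendix~\ref{APP:BLOW_UPS} this direction remains strictly vertical in the cone field (which extends continuously along the exceptional divisor away from the critical lines). The critical lines $\II_{\pm\pi/2}$ collapse to the single bottom point $\FIXmig_0$, so a lift $\delta_i$ either avoids them entirely or coincides with such a line, which is itself a strictly vertical interval on $\Cmig$. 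Strict verticality at isolated non-regular points then follows by continuity from strict verticality on a dense open subset.

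Finally, for the parallel statement on the physical cylinder $\Cphys$, I transfer the conclusion via the diffeomorphism $\correspond:\Cphysbl\to\Cmigbl$ of Proposition~\ref{cylinder-band conjugacy}, observing that $\KK^h(\omega)=D\correspond^{-1}(K^h(\correspond\omega))$ by the very definition of $\KK^h$, so horizontal and vertical cones are intertwined by $D\correspond$ and the strict invariance transfers verbatim to $\Rphys$ acting on $\Cphys$.
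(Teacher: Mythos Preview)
Your proof is correct and follows essentially the same approach as the paper, which simply records the corollary as an immediate consequence of Lemma~\ref{exactly 4} and Proposition~\ref{PROP:MK_CONEFIELD} without spelling out the contrapositive argument you give in paragraph~2. Your paragraph~3 treatment of the critical and indeterminacy loci goes beyond what the paper deems necessary (the verticality condition is only imposed at points of $\inter\Cmig$, and the collapsing intervals are themselves strictly vertical), but it does no harm.
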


\subsection{Modified algebraic cone field}\label{SUBSEC:MODIFIED_ALG_CONES}
The algebraic cone field we have just constructed has a disadvantage that it
degenerates near the top.  We will now modify it near the top so that it will
become non-degenerate everywhere away from the indeterminacy points
$\alpha_\pm$.

 Given a small threshold  $\bar \tau>0$, let us consider the following annular neighborhood of the top:
\begin{equation}\label{EQN:VV}
  \VV \equiv \VV_{\bar \tau} =\{ x\in \Cphys :\ \tau\leq \bar \tau \}. 
\end{equation}
Given $\eta >0$,
let us consider two parabolas
$
    \YY^\pm_\eta = \{\tau= \eta \eps^2 \}
$
centered at the indeterminacy points $\alpha_\pm$.

Consider two parabolic regions $\PP^\pm_\eta$ below the curves $\YY^\pm_\eta$
(see Figure \ref{FIG:REGION_PARABOLIC}), and  let
\begin{equation}\label{EQN:VV'}
  \VV' \equiv \VV'_{\eta,\bar \tau}  = \VV_{\bar \tau} \sm (\PP^+_\eta \cup \PP^-_\eta). 
\end{equation}

For a small threshold $\bar \eps>0$, let us consider the following regions in $\VV$:
$$
      \UU\equiv \UU_{\bar\eps}=\{x\in \VV :\ |\eps(x)| <  \bar\eps\} \,\,\, \mbox{and} \,\,\,  \UU' =\{x\in \VV' :\ |\eps(x)| < \bar\eps\}.
$$

For the remainder of the construction, we let $\bar \tau = \eta\, \bar \eps^2 $ so
that the regions $\UU$ and $\UU'$ meet  the parabolas $\YY^\pm$ at their bottom, 
see Figure \ref{FIG:REGION_PARABOLIC}.
To be definitive, we fix $\eta = 1/18$,
so the only free parameter left in our disposal is $\bar \eps$.

Note that for $x\in \YY^\pm$,
the slope of the lines that bound the algebraic cone $\KK^a(x)$
is equal (in absolute value) to
\begin{equation}\label{slope on YY}
   s^a(x) = \sqrt{\tau(2-\tau)} \sim \sqrt{2\tau} = |\eps|/3, \quad \text{and}\quad s^a(x)\geq |\eps|/3.  
\end{equation}

\begin{figure}
\begin{center}
\input{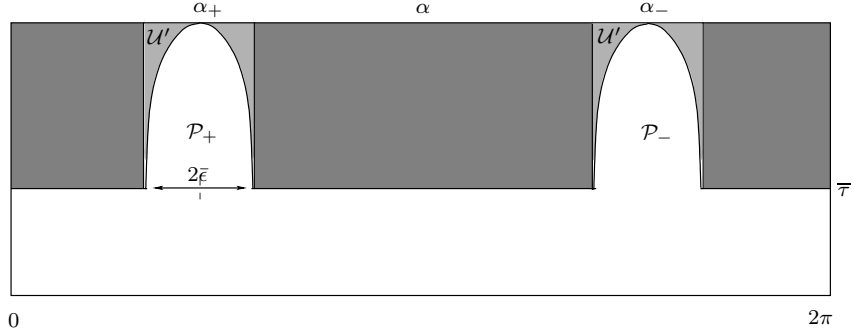}
\end{center}
\caption{\label{FIG:REGION_PARABOLIC}
$\VV'$ is the union of all shaded regions.
Note that the figure is not to scale.}
\end{figure}

\begin{lem}\label{inclusions}
  For $\bar \eps > 0$ sufficiently small, we have:
\begin{itemize}
\item [(a)] $\Rphys(\UU) \cap \UU = \emptyset$;
\item[(b)] $\Rphys^{-1}(\UU)$ is contained in a small neighborhood of 
the points\\  $(\phi = \pm \pi k /4,\ \tau=0 )\in \TOPphys$ with  $k=\pm 1, \pm 3$.
\end{itemize}
\end{lem}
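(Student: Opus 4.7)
Both parts rest on two geometric facts: the blow-up locus $\Icurve = \{t = \sin^2(\phi/2)\}$ meets the top strip $\{\tau \leq \bar\tau\}$ only in a thin arc around $(\pi,1)$, since $\tau = \cos^2(\phi/2)$ on $\Icurve$ forces $|\phi - \pi| \lesssim \sqrt{\bar\tau}$; and $\Rphys|\TOPphys\colon\phi \mapsto 2\phi$ sends $\pm\pi/2$ to $\pi$, not back to $\pm\pi/2$. Taken together, every point that $\UU$ can map to --- via either its intersection with $\TOPphys$ or the blow-up of $\alpha_\pm$ --- sits near $(\pi,1)$, and hence at distance $\asymp \pi/2$ from $\UU$ in $\phi$ once $\bar\eps$ is small.

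For (a) I would factor $\Rphys = f \circ Q$ and parameterize $x = (\pi/2 - \eps, 1-\tau) \in \UU \setminus \{\alpha_+\}$, so that $Q(x) = (\pi - 2\eps, 1-2\tau+\tau^2)$ approaches the indeterminacy point $\alpha = (\pi,1)$ of $f$ at angle $\omega$ with $\II_\pi$ determined by $\tan\omega \approx -\eps/\tau$. The blow-up formula (\ref{Icurve}) gives $\Rphys(x) \to \Icurve(\omega) = (2\omega, \sin^2\omega)$, and a uniform version of this limit (from the formulae assembled in Appendix \ref{APP:BLOW_UPS}) places $\Rphys(\UU \setminus \{\alpha_\pm\})$ in any prescribed neighborhood of $\Icurve \cup \{(\pi,1)\}$ once $\bar\eps$ is small. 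By the first geometric fact above, this neighborhood is disjoint from $\UU$; the case $x \in \UU \cap \TOPphys$ is handled directly by the second fact.

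For (b) I would first compute $\Rphys^{-1}(\alpha_\pm) \cap \Cphys$ from the explicit form (\ref{R}). Writing $\Rphys(z,t) = (z', t')$, the condition $t' = 1$ reduces to $t^2 + t^{-2} = 2$, forcing $t = 1$ on the cylinder; on $\TOPphys$ the map simplifies to $z \mapsto z^2$, so the preimages of $\pm i$ are precisely the four points $(e^{i\pi k/4}, 1)$ with $k \in \{\pm 1, \pm 3\}$. Since $\Rphys$ is continuous on $\Cphys \setminus \{\alpha_+, \alpha_-\}$ and, by part (a), the indeterminacy points themselves contribute no preimages of $\UU$, a standard compactness-contradiction argument then gives: for any prescribed neighborhood $N$ of the four points, $\Rphys^{-1}(\UU_{\bar\eps}) \subset N$ once $\bar\eps$ is sufficiently small.

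The one step that is not entirely routine is making the blow-up approximation $\Rphys(x) = \Icurve(\omega(x)) + o(1)$ uniform over all of $\UU$, especially inside the cusp regions $\PP_\pm \subset \UU$ where $\tau \ll \eps^2$ and $\omega$ is close to $\pm\pi/2$. There the non-uniform superattraction estimate $\tau' = O(\tau^2/\eps^2)$ from property (P5) supplies the vertical control that complements the horizontal blow-up information; both ingredients are already contained in Appendix \ref{APP:BLOW_UPS}.
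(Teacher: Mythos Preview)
Your proof is correct and follows essentially the same approach as the paper: part (a) uses the blow-up formula from Appendix~\ref{APP:BLOW_UPS} to place $\Rphys(\UU)$ near $\Icurve$, which meets the top only at $(\pi,1)$ and hence avoids $\UU$; part (b) combines (a) with continuity of $\Rphys$ away from $\alpha_\pm$ to localize $\Rphys^{-1}(\UU)$ near the four preimages of $\alpha_\pm$ on $\TOPphys$. Your extra care about uniformity in the cusp regions $\PP_\pm$ is not needed---the error term in (\ref{EQN_PHI_PRIME_BLOWUP}) is uniform for bounded $\kappa$, and in those regions $\kappa = \tau/\eps$ is small---but it does no harm.
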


\begin{proof}
a) By blow-up formula (\ref{EQN_PHI_PRIME_BLOWUP}),
the image $\Rphys(\UU)$ is contained in a small neighborhood of the singular curve $\Icurve$,
which is disjoint from $\UU$.

\msk 
b) 
By (a), there are no points of $\UU$ in $\Rphys^{-1} \UU$.  But in $\Cphys \sm \UU$,
the map $\Rphys$ is continuous, so $\Rphys^{-1}(\UU)$ is localized
near $\Rphys^{-1}(\alpha_\pm)$.

\end{proof}

For $x\in \Cphys$, let us define a continuous horizontal cone field $\KK^h(x) \equiv \KK^h_{\bar \eps}(x)$  whose boundary lines have slopes
with the absolute value  $s(x)$ such that:

\begin{itemize}

\item [(o)]  $\KK^h(x)\supset \KK^\hor(x)$ {\it everywhere};

\item [(i)] $\KK^h(x)=\KK^\hor(x)$ in the {\it white} region $\Cphys\sm \VV'$ (see Figure \ref{FIG:REGION_PARABOLIC});

\item [(ii)]   $s(x)\sim |\eps(x)| / 3$ in  the {\it grey} region $\UU'$;

\item [(iii)]   $s(x)= \bar w := \sqrt{\bar \tau(2-\bar \tau)} \sim \bar \eps/3$ in the {\it black} region $\VV \sm \UU=\VV'\sm \UU'$.

\end{itemize}

Conditions (o)--(ii) are compatible due to (\ref{slope on YY}).  
For a cone field satisfying these three conditions,
we have $s(x)=s_0$ on the horizontal boundary of the black region
and $s(x)\sim \bar\eps/3$ on the vertical one.
Hence it can be extended to the black region satisfying (iv).  

Note that in $\Cphys \sm \VV$, we have $s^a(x)=\sqrt{\tau(2-\tau)}\geq \bar w$. Hence

\begin{equation}\label{bar w}
    \mbox{ $s(x) \geq \bar w \sim \bar\eps/3 \quad $ in $\quad\Cphys \sm \UU$.}
\end{equation}

\begin{lem}\label{LEM:MOD_CF_INVARIANT}
For sufficiently small $\bar \eps > 0$, 
the cone field $\KK^h$ is strictly forward invariant:    
\begin{equation}\label{invariance of KK}
  \Rphys (\KK^h(x))\Subset \KK^h(\Rphys x).
\end{equation}   
\end{lem}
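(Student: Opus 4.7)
The plan is to verify the strict invariance region by region, exploiting the piecewise definition of $\KK^h$ and the fact that the algebraic cone field $\KK^\hor$ is already strictly invariant by Proposition~\ref{PROP:MK_CONEFIELD}. Wherever the modified cone agrees with the algebraic one -- that is, on $\Cphys\sm\VV$ and in the parabolic pockets $\VV\cap(\PP^+_\eta\cup\PP^-_\eta)$ -- the invariance is immediate:
\[
D\Rphys(\KK^h(x))=D\Rphys(\KK^\hor(x))\Subset \KK^\hor(\Rphys x)\subset \KK^h(\Rphys x)
\]
by Proposition~\ref{PROP:MK_CONEFIELD} and property (o). So only the grey region $\UU'$ and the black region $\VV\sm\UU$ demand a new argument, and both are treated via local computation of $D\Rphys$ near the top.

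For $x\in\UU'$, Lemma~\ref{inclusions}(a) ensures that $y:=\Rphys(x)$ lies outside $\UU$. Then $\KK^h(y)$ is either $\KK^\hor(y)$ or a wedge of slope $\bar w$, so in any case the slope of $\KK^h(y)$ is at least $\bar w\sim\bar\eps/3$ by (\ref{bar w}). At $x$ the cone $\KK^h(x)$ has slope at most $|\eps(x)|/3$. Using the blow-up formulae for $\Rphys$ near $\alpha_\pm$ from Appendix~\ref{APP:R_NEAR_ALPHA}, $D\Rphys$ stretches horizontally at rate of order $|\eps(x)|^{-1}$ while the vertical derivative is bounded, so any vector of slope $\le|\eps(x)|/3$ at $x$ is sent to a vector of slope at most $O(|\eps(x)|^2)$ at $y$. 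For $\bar\eps$ small this is strictly less than $\bar w$, yielding the required strict containment.

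For $x\in\VV\sm\UU$ (the black region) one has $|\eps(x)|\ge\bar\eps$ and $\Rphys$ is regular at $x$. On $\TOPphys$ the map acts as $\phi\mapsto 2\phi$ and, because $\tau'=O(\tau^2/\eps^2)$, both $\partial\tau'/\partial\tau$ and $\partial\tau'/\partial\phi$ vanish identically on the top, so the image of any vector of finite slope is purely horizontal there. By continuity the image slope stays $O(\bar\eps^2)$ throughout $\VV\sm\UU$. If $y\notin\UU$ the target cone has slope at least $\bar w\gg\bar\eps^2$, so invariance holds. If instead $y\in\UU$, Lemma~\ref{inclusions}(b) confines $x$ to a neighborhood of one of the four points $(\pm\pi k/4,0)$ with $k\in\{1,3\}$; the image-slope bound $O(\bar\eps^2)$ remains well below both $|\eps(y)|/3$ (the slope in $\UU'$) and the algebraic slope $\sqrt{\tau(y)(2-\tau(y))}$ inside the parabolic pockets of $\UU$.

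The main technical obstacle is the grey-region case: one needs genuinely quantitative control of the blown-up dynamics near $\alpha_\pm$ in order to match the narrow cone opening $|\eps(x)|/3$ against the horizontal expansion factor $|\eps(x)|^{-1}$. The calibration $\eta=1/18$ is chosen precisely so that on $\YY^\pm$ the algebraic slope $\sqrt{\tau(2-\tau)}\sim\sqrt{2\eta}|\eps|=|\eps|/3$ matches the grey-region slope, making $\KK^h$ continuous across the parabolic boundaries and eliminating the need for a separate interface estimate.
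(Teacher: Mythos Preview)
Your regional decomposition and overall logic match the paper's proof exactly: handle the white region $\Cphys\sm\VV'$ via Proposition~\ref{PROP:MK_CONEFIELD} and property~(o), then compute explicitly in the grey region $\UU'$ and the black region $\VV'\sm\UU'$, using Lemma~\ref{inclusions} to control where $\Rphys x$ can land. The structure is sound.

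However, your quantitative claims in the grey region are wrong, and this matters. You assert that in $\UU'$ the horizontal expansion is of order $|\eps|^{-1}$. It is not: in $\UU'$ one has $\tau\le\eta\eps^2$, so $\sigma^2=\eps^2+\tau^2\sim\eps^2$ and the upper-left entry of $D\Rphys$ from (\ref{DR near alpha}) is $2(\eps^2+\tau)/\sigma^2\sim 2$, bounded. Large horizontal expansion near $\alpha_\pm$ occurs only when $\tau\asymp|\eps|$, i.e.\ deep inside the parabolic pockets $\PP_\eta^\pm$, which belong to the white region, not $\UU'$. Consequently your conclusion that the image slope is $O(|\eps|^2)$ does not follow from your heuristic. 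The actual image slope is $O(|\kappa|)=O(\tau/|\eps|)\le\eta|\eps|\le\bar\eps/18$, and extracting this requires using both rows of (\ref{DR near alpha}) on the boundary vectors $v_\pm=(1,\pm|\eps|/3)$, exactly as the paper does. Fortunately $\bar\eps/18<\bar w$, so the desired inclusion still holds---but you must do the computation, not appeal to a nonexistent $|\eps|^{-1}$ stretching.

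A similar issue affects the black region: the image slope there is $O(\bar\eps)$, not $O(\bar\eps^2)$ (the dominant term in (\ref{EQN:B_EST}) is $\bar\tau/\bar\eps\sim\bar\eps/18$). This still beats $\bar w\sim\bar\eps/3$ when $\Rphys x\notin\UU$. When $\Rphys x\in\UU$, your claim that the image slope is below $|\eps(y)|/3$ needs justification: the paper uses that $x$ is then near $\pm\pi/4,\pm3\pi/4$ (so $\cos\phi$ is bounded independently of $\bar\eps$), whence $|s'|=O(\tau^2+\tau\bar\eps)=o(\tau)=o(\sqrt{\tau'})$, and compares this to the target slope $s(\Rphys x)\ge\sqrt{\tau'}$ via property~(o).
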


\begin{proof}
For $x\in \Cphys \sm  \VV'$, we make use of the invariance of the algebraic cone field and conditions (o) and (i):
 $$
  \Rphys(\KK^h(x)) = \Rphys (\KK^\hor(x)) \Subset \KK^\hor(\Rphys x)\subset \KK^h(\Rphys x).
$$

So, assume $x\in \VV'$.
The cones $\KK^h(x)$ are bounded by two lines spanned by the tangent vectors $v_\pm=(1, \pm  s(x))$.
Let us estimate the absolute value $s'$  of the slope of $D\Rphys(v_\pm)$.

Select $\bar\eps$ so small that (\ref{DR near alpha}) applies in $\UU$.
For $x\in \UU'$, it yields:
$$
 s' \leq \frac {|\eps| \tau( \tau +  \eps^2/3) }{\si^2(\tau+ 2\eps^2/3) } =\frac {|\eps| \tau}{\si^2}  
            < |\kappa| < \bar \eps / 18 < \bar w.
$$
By Lemma \ref{inclusions}, $\Rphys x \not \in \UU$, so property (\ref{bar w}) ensures (\ref{invariance of KK}).

Let $x=(\phi, \tau) \in \VV'\sm \UU'$.
Then for $\bar \eps$ sufficiently  small, we have:  
$$
    |\cos \phi| \geq \bar \eps/2 , \quad |\tg \phi| < 4 / (3 \bar \eps) , \quad \text{and}\quad  \tau\leq \bar\tau  \leq \bar\eps^2/18.
$$ 
Letting $(a,b) = D\Rphys (v_\pm)$, we obtain from  (\ref{EQN:A_EXPANSION}): 
\begin{eqnarray}\label{EQN:A_EST}
  a \geq 2-    2\tg\phi\cdot \bar\eps /3  \geq 10/9 > 1
\end{eqnarray}
\noindent
and
\begin{equation}\label{EQN:B_EST}
|b| \leq  \frac{2\tau^2 |\tg \phi| }{\cos^2 \phi}  + 
     \frac{\tau \bar \eps}{3 \cos^2 \phi}
 \leq 16\, \frac{\bar \tau^2}{\bar \eps^3} +
  2\, \frac{\bar \tau}{\bar \eps} 
 \leq \left( \frac{16}{ 18^2}   + \frac{2 }{18} \right)\,  \bar\eps 
<    \frac{\bar \eps}{6}.
\end{equation}
\noindent
So, the slope $s'=|b/a| < \bar w$ as well.
Due to property (\ref{bar w}), this implies (\ref{invariance of KK}) in case $\Rphys x\not\in \UU$.

Finally, let $x \in \VV'\sm \UU'$ and $\Rphys x=(\eps', \tau') \in \UU$. 
Then Lemma \ref{inclusions} b) gives $|\cos \phi| > C^{-1} > 0$ and $|\tg \phi| < C$, independent of $\bar \eps$.
Estimate (\ref{EQN:B_EST}) simplifies to
$$
    |s'| \leq |b| = O( \tau^2   +  \tau\bar \eps) = o(\tau) = o(\sqrt{\tau'}) \,\,\, \mbox{as} \,\, \bar\eps \ra 0.
$$
But  $s(\Rphys x) \geq \sqrt{\tau'}$ as follows from condition (o).   
This concludes the proof.
\end{proof}

The horizontal cone field $\KK^h$ extends continuously to the indeterminacy points
as degenerate cones $\KK^h(\alpha_\pm)= \{ d\tau=0\}$.
Obviously, this continuous extension is invariant.

We will also call an smooth path $\gamma(t)$ in $\Cphys$ {\em horizontal} if at
each point $\gamma(t) \in {\rm int} \ \Cphys$ we have $\gamma'(t) \in
\KK^h(\gamma(t))$.  In the remainder of the paper, all horizontal paths
will considered with respect to respect to $\KK^h$ (and not $\KK^\hor$), unless
otherwise specified.  

\begin{rem}
One obtains a pushed forward conefield $K^h := D\correspond \ \KK^h$ on $\Cmig$ that is invariant
under $\Rmig$ and non-degenerate away from $\INDmig_\pm$.
\end{rem}

\comm{******
\sss{Expansion within $\VV'$}
\begin{lem}\label{LEM:EXPANSION_IN_VPRIME}For any $\eta, \bar \tau > 0$ sufficiently small, there exists $\lambda > 1$ so that for all $x \in \VV_{\eta,\bar \tau}$ and any $v \in \KK^h(x)$ we have
\begin{eqnarray}\label{EQN:EXPANSION_IN_VPRIME}
d(\phi\circ R)(v) > \lambda \ d \phi(v).
\end{eqnarray}
In particular, (\ref{EQN:EXPANSION_IN_VPRIME}) holds in the region $\VV'$ used to define $\KK^h$.
\end{lem}

\begin{proof}
It suffices to check for the region $\VV'$ considered in the proof of Lemma
\ref{LEM:MOD_CF_INVARIANT}.  For $x \in \VV' \sm \UU'$,
(\ref{EQN:EXPANSION_IN_VPRIME}) follows from (\ref{EQN:A_EST}).

For $x \in \UU'$ the minimum of $d(\phi \circ \Rphys)(v)$ 
occurs at $v_\pm = (1,\pm s(x))$, where $s(x) \sim |\eps(x)|/3.$ 
Equation (\ref{DR near alpha}) gives
\begin{eqnarray*}
d(\phi \circ \Rphys)(v_\pm) = \frac{2}{\eps^2 + \tau^2} \left(\eps^2 + \tau \pm \frac{\eps^2}{3} \right)  \geq \frac{4\eps^2/3 + 2 \tau}{\eps^2 + \tau^2}  > \frac{4}{3}.
\end{eqnarray*}
\end{proof}
************************}

\subsection{Vertical cone fields and laminations}\label{sec: strictly vert lam}

In this section we develop  a language adopted, for definiteness, to the modified cone field $\KK^h$
on the cylinder $\Cphys$,
but a similar language, with obvious adjustments, can be applied to the algebraic
cone field $\KK^\hor$, as well as the corresponding cone fields on the M\"obius band $\Cmig$. 

We let  $\KK^v(x)= T_x\Cphys\sm \KK^h(x)$ be the complementary vertical cone field.  
(In particular, $\KK^v(\alpha_\pm)= \{d\phi\not=0\}$ is the complement to the horizontal line.) 

Let $\KK(x)\subset \KK^v(x)$ be a continuous cone field on $\Cphys$. 
Let us define the pullback $D\Rphys^*(\KK)$ as follows. 
Let $y=\Rphys x$. 
When $D\Rphys_x$ is well defined and invertible  (i.e.,  $x\not\in \BOTTOMphys\cup \TOPphys\cup \II_{\pm \pi/2}$,
we let  $D\Rphys^*(\KK)(x) = D\Rphys_x^{-1}(\KK(y))$. 
When $D\Rphys_x$ is well defined but is not invertible, we let $D\Rphys^*(\KK)(x)  = \Ker D\Rphys_x$.
Finally, for $x\in \{\alpha_\pm\}$ we let   $D\Rphys^*(\KK)(x)  = \KK^v(\alpha_\pm)$. 

It is easy to see that the pullback is continuous (and is contained in $\KK^v$).  

\begin{cor}\label{COR:R_CONEFIELD}
The vertical cone field $\KK^v$ is {\em backwards invariant}: $D\Rphys^*(\KK^v)\subset \KK^v$.
\end{cor}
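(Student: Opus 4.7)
The plan is to deduce this directly from the strict forward invariance of the horizontal cone field $\KK^h$ (Lemma \ref{LEM:MOD_CF_INVARIANT}), exploiting the complementary relationship $\KK^v(x) = T_x\Cphys \setminus \KK^h(x)$. Recall that backward invariance amounts to showing that for every $x \in \Cphys$, the pullback $D\Rphys^*(\KK^v)(x)$ sits inside $\KK^v(x)$, where the pullback is defined piecewise according to whether $D\Rphys_x$ is invertible, merely defined, or undefined.

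First, I will treat the generic case $x \notin \BOTTOMphys \cup \TOPphys \cup \II_{\pm\pi/2} \cup \{\alpha_\pm\}$, where $D\Rphys_x$ is a linear isomorphism and the pullback is $D\Rphys_x^{-1}(\KK^v(\Rphys x))$. Given $v \in D\Rphys_x^{-1}(\KK^v(\Rphys x))$, suppose for contradiction that $v \in \KK^h(x)$. Lemma \ref{LEM:MOD_CF_INVARIANT} then forces $D\Rphys_x(v) \in \KK^h(\Rphys x)$, which is disjoint from $\KK^v(\Rphys x)$ by construction — a contradiction. Hence $v \in \KK^v(x)$, which is the desired inclusion.

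Next, I will handle the remaining critical points $x \in \BOTTOMphys \cup \TOPphys \cup \II_{\pm\pi/2}$ (excluding $\alpha_\pm$), where by definition $D\Rphys^*(\KK^v)(x) = \Ker D\Rphys_x$. Here the only work is to check that this kernel line lies inside $\KK^v(x)$. Using Property (P3) together with the explicit derivative formula (\ref{Df}) (remembering that $R = f \circ Q$), one reads off that on $\BOTTOMphys$ and $\TOPphys$ the kernel is the $\partial_t$ direction (since these circles are superattracting invariant curves carrying $z \mapsto z^4$ and $z \mapsto z^2$ respectively), while on the collapsing intervals $\II_{\pm \pi/2}$ the kernel is tangent to the interval itself. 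In each case the kernel direction is purely vertical and hence automatically lies in $\KK^v(x)$. The case $x = \alpha_\pm$ is trivial because we defined $D\Rphys^*(\KK^v)(\alpha_\pm) = \KK^v(\alpha_\pm)$.

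There is no substantive obstacle: the heavy lifting was already done in Lemma \ref{LEM:MOD_CF_INVARIANT}, and what remains is the bookkeeping of matching the piecewise definition of the pullback against the vertical cone at the critical and indeterminacy strata. Continuity of the resulting pullback cone field (mentioned in the paragraph preceding the corollary) can be noted in passing but is not needed for the stated inclusion.
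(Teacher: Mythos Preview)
Your proposal is correct and follows the natural approach that the paper implicitly has in mind: the paper does not give a separate proof of this corollary, treating it as an immediate consequence of Lemma~\ref{LEM:MOD_CF_INVARIANT} and the piecewise definition of the pullback (see the sentence ``It is easy to see that the pullback is continuous (and is contained in $\KK^v$)'' just before the corollary). Your case analysis spells out exactly this reasoning, and your verification that $\Ker D\Rphys_x$ is vertical on each critical stratum is accurate.
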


\msk
In this setting,   ``vertical paths'' are understood in the sense of the vertical cone field $\KK^v$
rather than $\KK^{av}$. So, by definition,  a {\it vertical path}\footnote{In the remainder of the paper, all vertical paths will be considered with respect to $\KK^v$ (and not $\KK^{av}$)
unless otherwise specified.} is a 
a smooth path $\gamma(s)$ in $\Cphys$ such that $\gamma'(s)\in \KK^h(\gamma(s))$. 
Being a graph over a temperature interval, it can be parameterized accordingly:
$\phi= \gamma(t)$.  
Moreover, $\gamma'(s)$ is finite  except possibly at the indeterminacy points  $\alpha_\pm = \gamma(1)$
(if $\gamma$ terminates at one of them). 

\msk
{\it A vertical lamination} $\FF$ in $\Cphys$ is a family of 
 vertical paths (the {\it leaves} of the lamination) which are disjoint in the topless cylinder $\Cphystl$
(but are are allowed to meet on $\TOPphys$)
that has a {\it local product structure}. 
The latter means that for any path $\gamma_0\in \FF$
and any point $x_0=(\gamma_0(t_0), t_0)\in \Cphystl$ there exists an $\eps>0$ such that any leaf
$\gamma\in \FF$ passing near $x_0$ can be locally represented as a graph 
over $(t_0-\eps, t_0+\eps)$, and this graph depends $C^1$-continuously on the 
{\it transverse parameter} $\phi = \gamma(t_0)$.
The $\supp(\FF)$ is the union of the leaves of the lamination. 

A vertical lamination is called {\it proper} if all its leaves are proper. 


For instance, a finite family of disjoint proper vertical paths
form a proper vertical lamination. 

In case  when   $\supp \FF$ is open in $\Cphystl$,  
the lamination $\FF$ is called {\it a strictly vertical foliation} (of its support). 
For instance, the ``genuinely vertical'' foliation on the cylinder $\Cphystl$ 
is formed by the intervals $\II_\phi$, $\phi\in \R/ 2\pi \Z$.

Lemma  \ref{LEM:MOD_CF_INVARIANT} implies that 
the pullbacks $(\Rphys^n)^*(\FF)$ of a (proper) vertical lamination are (proper) vertical. 

We will mostly be dealing with laminations whose leaves begin on the bottom of the cylinder
(in fact, mostly with  proper laminations), 
and will use the bottom angle $\phi\in \Z/2\pi \Z$ as the  transverse
parameter, $\gamma= \gamma_\phi$.

\section{Central line field and dominated splitting}
\label{SUBSEC:DOMINATED}
 
In this section 
we will use the cone field constructed in \S \ref{sec: alg cone field}
to prove that $\Rphys: \Cphys \rightarrow \Cphys$ is {\it projectively hyperbolic},
or admits a {\em dominated splitting}. 
We will start with constructing an invariant vertical line field.

\subsection{Central line field}
  A  {\it central line field} $\LL$ on $\Cphys$ is an $\Rphys$-invariant continuous tangent line field $\LL(x)\subset \KK^v(x)$,  $x\in \Cphys\sm \{\alpha_\pm\}$.
Here ``invariance'' means that $D\Rphys_x (\LL(x))\subset \LL(\Rphys x)$ whenever $x\not\in \{\alpha_\pm\}\cup \Rphys^{-1}\{\alpha_\pm\}$. 

\begin{prop}\label{Central line field prop}
  There exists a unique central line field on $\Cphys$.
Moreover, if $\LL (x)\subset\KK^v(x) $ is any vertical line field then
$$
   (D\Rphys^n)^*\LL \to \LL^c
$$
uniformly and exponentially on compact subsets of $\Cphys \sm \{\alpha_\pm\}$. 
\end{prop}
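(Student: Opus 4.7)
The plan is to construct $\LL^c$ as the intersection of an infinite nested sequence of backward-invariant cones, and obtain uniqueness and convergence automatically from the construction. For $x \in \Cphys$, set
\[
    \mathscr{V}_n(x) \;=\; (D\Rphys^n)^{*}\bigl(\KK^v(\Rphys^n x)\bigr) \;\subset\; T_x\Cphys,
\]
where the pullback operation $(D\Rphys)^*$ on vertical cones was defined in \S\ref{sec: strictly vert lam}. Dualizing the strict forward invariance $D\Rphys(\KK^h(x))\Subset \inter \KK^h(\Rphys x)$ supplied by Lemma~\ref{LEM:MOD_CF_INVARIANT}, we obtain the strict backward invariance $(D\Rphys)^{*}\KK^v(\Rphys x)\Subset \inter \KK^v(x)$ (at every $x\not\in\{\alpha_\pm\}$), hence the $\mathscr{V}_n(x)$ form a strictly nested decreasing sequence of closed cones.

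Next I would prove that on each fiber the projectivized cones $\mathbb{P}\mathscr{V}_n(x)\subset \mathbb{P}T_x\Cphys$ shrink to a single point, which defines $\LL^c(x)$. On each fiber $\mathbb{P}T_x\Cphys\cong S^1$ the cones $\mathbb{P}\KK^h(x)$ and $\mathbb{P}\KK^v(x)$ cut out two complementary closed arcs, and $(D\Rphys)^*$ acts as a projective map sending the vertical arc at $\Rphys x$ strictly inside the vertical arc at $x$. On any compact set $K\Subset \Cphys\sm\{\alpha_\pm\}$ the cones $\KK^h,\KK^v$ are uniformly non-degenerate, and strict forward invariance of $\KK^h$ therefore yields a uniform Hilbert-metric contraction factor $\la_K < 1$ for $(D\Rphys)^*$. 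For a general point $x$, the orbit $\Rphys^n x$ may visit every neighborhood of $\{\alpha_\pm\}$, but strictness at each individual step still forces the intersection $\bigcap_n \mathscr{V}_n(x)$ to be a single line $\LL^c(x)$: any two distinct lines in $\bigcap_n \mathscr{V}_n(x)$ would bound a non-trivial arc $I_\infty(x)$ yielding a $\Rphys$-invariant non-degenerate vertical sub-cone field, and restricting to the compact part of $\Cphys$ where $\KK^h$ is non-degenerate and using the uniform Hilbert contraction there (invoked at returns of the orbit to such a compact set, guaranteed because $\{\alpha_\pm\}$ is a finite set and the post-singular set is non-trapping) contradicts strict invariance of $I_\infty$.

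Once $\LL^c(x)$ is defined, its $\Rphys$-invariance is immediate from the definition $\mathscr{V}_n(x) = (D\Rphys)^{*}\mathscr{V}_{n-1}(\Rphys x)$. Continuity of $\LL^c$ on $\Cphys\sm\{\alpha_\pm\}$ follows from locally uniform convergence of the $\mathscr{V}_n$: on a compact neighborhood $K\Subset \Cphys\sm\{\alpha_\pm\}$ the Hilbert diameter of $\mathbb{P}\mathscr{V}_n(x)$ decays geometrically with rate $\la_K^{n}$ (up to bounded corrections from the finitely many excursions of the orbit into a neighborhood of $\{\alpha_\pm\}$), so $\mathbb{P}\mathscr{V}_n\to\mathbb{P}\LL^c$ uniformly and exponentially on $K$. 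For uniqueness, any invariant vertical line field $\LL$ satisfies $\LL(x) = (D\Rphys^n)^{*}\LL(\Rphys^n x)\subset \mathscr{V}_n(x)$ for every $n$, whence $\LL(x)\subset \LL^c(x)$ and therefore $\LL=\LL^c$. The same inclusion $(D\Rphys^n)^{*}\LL(x)\subset \mathscr{V}_n(x)$ holds for any vertical (not necessarily invariant) line field $\LL$, so the locally uniform exponential convergence $\mathbb{P}\mathscr{V}_n\to \mathbb{P}\LL^c$ gives the final claim.

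The main obstacle is the lack of uniform Hilbert contraction near the indeterminacy points $\alpha_\pm$, where the cones $\KK^h$ degenerate; here one must use the fact that a forward orbit cannot stay near $\{\alpha_\pm\}$ for too long—this can be extracted from property (P4) in \S\ref{str on CC} together with the explicit blow-up calculations in Appendix \ref{APP:R_NEAR_ALPHA}—so that each orbit returns to a compact set $K\Subset \Cphys\sm\{\alpha_\pm\}$ with positive frequency, and the uniform contraction on $K$ dominates in the product.
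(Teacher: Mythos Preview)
Your approach is essentially the paper's: define the nested pullback cones $(D\Rphys^n)^*\KK^v$, use the Hilbert (hyperbolic) metric on the projectivized vertical arcs, and exploit that the projective action of $(D\Rphys)^*$ is a strict contraction to conclude exponential shrinking to a single line. The paper packages the contraction step as a Schwarz Lemma for projective maps (Lemma~\ref{Schwarz Lemma for proj maps}) and then proves exactly your geometric decay in Lemma~\ref{exp shrinking of cones}.

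One point deserves correction. You write that the Hilbert diameter decays geometrically ``up to bounded corrections from the finitely many excursions of the orbit into a neighborhood of $\{\alpha_\pm\}$.'' This is false as stated: a generic orbit visits every neighborhood of $\{\alpha_\pm\}$ infinitely often. You recover in the final paragraph with the correct ``positive frequency'' mechanism, but the reference to property~(P4) is off---postcritical finiteness is not what is used. The paper's argument is cleaner and worth internalizing: since $\alpha_\pm$ blow up to the curve $\Icurve$, and $\Icurve$ does not contain $\alpha_\pm$, one has $\Rphys(\UU)\cap\UU=\emptyset$ for any sufficiently small neighborhood $\UU$ of $\{\alpha_\pm\}$ (this is Lemma~\ref{inclusions}(a)). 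Hence the orbit visits $\UU$ with frequency at most $1/2$, and the hyperbolic diameter of $(D\Rphys^n)^*P\KK^v(x)$ decays like $O(\mu^{n/2})$ with $\mu<1$ the uniform contraction factor on $\Cphys\sm\UU$. This replaces your somewhat vague recurrence argument with a one-line combinatorial bound.
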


\sss{Hyperbolic metric}
To prove this proposition, we will make use of the ``projective hyperbolic metric'' on the vertical cones.
Any interval $I=(a,b)\subset \R$ can be viewed as the hyperbolic line endowed with the hyperbolic metric
$$
  \dist_I (x,y) = \log\frac{y-a}{x-a}+\log\frac {b-x}{b-y}, \quad  a<x<y<b.
$$
Since cross-ratios are projective invariants, the hyperbolic metric is invariant under M\"obius isomorphisms
$\phi: I \ra J$. Moreover, it gets contracted under inclusions: if $I\Subset J$ then 
$$
   \dist_J(x,y)\leq \la \dist_I(x,y),\quad  x,y\in I,
$$
where $\la< 1$  depends only on the hyperbolic diameter of $I$ in $J$.  
Putting these two properties together, we obtain the following ``Schwarz Lemma'' for projective maps:

\begin{lem}\label{Schwarz Lemma for proj maps}
  Let $\phi: I\ra J$ be a M\"obius transformation with $\phi(I) \Subset J$. Then 
$$
   \dist_J(\phi(x),\phi(y))\leq \la \dist_I(x,y),  \quad x,y\in I,
$$
where $\la<  1$ depends only on the hyperbolic diameter of $\phi(I)$ in $J$.  
\end{lem}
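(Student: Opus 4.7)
The plan is to prove the lemma by concatenating the two properties of the hyperbolic metric already recalled just above the statement: exact M\"obius invariance, and strict contraction under proper inclusion. Since $\phi : I \to J$ is a M\"obius transformation, its corestriction $\phi : I \to \phi(I)$ is a M\"obius isomorphism between intervals, hence an isometry between the corresponding hyperbolic metrics:
$$
\dist_{\phi(I)}(\phi(x),\phi(y)) \;=\; \dist_I(x,y), \qquad x,y \in I.
$$
Combined with the contraction estimate $\dist_J \le \la \, \dist_{\phi(I)}$ on $\phi(I)$, valid because $\phi(I)\Subset J$, this yields the desired inequality. So the only substantive content is to justify the contraction factor $\la<1$ for the inclusion $\phi(I)\hookrightarrow J$ and to show that it depends only on the hyperbolic diameter of $\phi(I)$ in $J$.

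For the contraction factor, I would first reduce to a normalized model by a M\"obius change of coordinates. Using the M\"obius group of $J$ (which acts transitively on $J$ and transitively on ordered pairs of points at a given hyperbolic distance), one can move $\phi(I)$ to a standard position inside $J$ — say $J=(-1,1)$ and $\phi(I)=(-r,r)$ for a suitable $r\in(0,1)$ determined by the hyperbolic diameter $D$ of $\phi(I)$ in $J$. Both sides of the desired inequality are invariant under this normalization. Then an elementary computation of the densities
$$
\frac{d\rho_J}{d\rho_{\phi(I)}}(x) \;=\; \frac{r^2-x^2}{1-x^2}\cdot\frac{1}{r},\qquad x\in(-r,r),
$$
gives a pointwise ratio bounded above by $r<1$. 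Integrating along a hyperbolic geodesic segment in $\phi(I)$ then yields $\dist_J \le r \cdot \dist_{\phi(I)}$ on $\phi(I)$, with $r=r(D)<1$ depending only on $D$.

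Putting the two steps together,
$$
\dist_J(\phi(x),\phi(y)) \;\le\; r(D)\,\dist_{\phi(I)}(\phi(x),\phi(y)) \;=\; r(D)\,\dist_I(x,y),
$$
with $\la := r(D)<1$, which is the claim. The main (in fact the only) technical point is justifying that the contraction constant depends solely on the hyperbolic diameter of $\phi(I)$ in $J$; I would handle this entirely by the normalization argument sketched above rather than by a direct computation in the arbitrary affine picture, since the latter would be cluttered by four endpoint parameters whose dependence is washed out by the M\"obius group action.
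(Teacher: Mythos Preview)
Your proof is correct and takes exactly the paper's approach: the lemma is obtained by composing M\"obius invariance ($\dist_{\phi(I)}(\phi(x),\phi(y))=\dist_I(x,y)$) with the contraction property of the inclusion $\phi(I)\Subset J$. The paper simply states these two properties and then announces the lemma as their immediate combination, whereas you additionally supply a clean justification of the contraction factor via the normalization $J=(-1,1)$, $\phi(I)=(-r,r)$ and the density ratio computation; this extra detail is correct and a welcome elaboration of what the paper leaves implicit.
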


Due to projective invariance, the above discussion can be carried to any 
intervals $I, J$ in the projective line $P\R^1$.

\sss{Contraction of the projective cone field.}

 As the cones $\KK^v(x)$ represent intervals in the projective tangent lines,
they can be endowed with the hyperbolic metrics $d_x$. 

\begin{lem}\label{exp shrinking of cones}
 For any neighborhood $\UU$ of the indeterminacy points $\{\alpha_\pm\}$,
 there exist $C>0$ and $\la > 1$ such that for any $x\in \Cphys\sm \UU$
$$
   \diam (D\Rphys^n)^* (P\KK^v)(x) \leq C\la^{-n},
$$
where the $\diam$ stands for the angular size of the cones. 
\end{lem}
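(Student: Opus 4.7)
The plan is to apply the Schwarz Lemma for M\"obius maps (Lemma \ref{Schwarz Lemma for proj maps}) to the pullback $D\Rphys^*$ viewed as a projective transformation of the vertical cones. For each $x \in \Cphys \sm \{\alpha_\pm\}$, the cone $\KK^v(x)$ is an open interval in the projective tangent line $PT_x\Cphys$, which we equip with its intrinsic hyperbolic metric $d_x$. The differential $D\Rphys_x$ induces a M\"obius isomorphism $PT_x\Cphys \to PT_{\Rphys x}\Cphys$ whose inverse $D\Rphys^*$ sends $\KK^v(\Rphys x)$ bijectively onto a sub-interval of $\KK^v(x)$. Dualizing the strict forward invariance of $\KK^h$ from Lemma \ref{LEM:MOD_CF_INVARIANT} yields the compact inclusion $D\Rphys^*(\KK^v(\Rphys x)) \Subset \KK^v(x)$, and therefore by the Schwarz Lemma the map $D\Rphys^*: (\KK^v(\Rphys x), d_{\Rphys x}) \to (\KK^v(x), d_x)$ is a strict hyperbolic contraction with some factor $\mu(x) \in (0,1)$.

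The crux is to show that $\mu(x) \leq \mu_0 < 1$ uniformly in $x \in \Cphys$. Away from $\{\alpha_\pm\}$, the quantitative slope bounds in the proof of Lemma \ref{LEM:MOD_CF_INVARIANT} --- for instance $|s'| \leq \bar w/2$ in $\VV' \sm \UU'$, together with the algebraic cone invariance from Proposition \ref{PROP:MK_CONEFIELD} in the complement of $\VV'$ --- give a uniform upper bound on the hyperbolic diameter of $D\Rphys^*(\KK^v(\Rphys x))$ inside $\KK^v(x)$, hence on $\mu(x)$. Near $\alpha_\pm$, Lemma \ref{inclusions}(a) forces $\Rphys x \notin \UU$ whenever $x \in \UU$; then $\KK^v(\Rphys x)$ is of bounded Euclidean size while $\KK^v(x)$ is nearly the full projective line, and since $\|D\Rphys_x\|$ blows up as $x \to \alpha_\pm$, its inverse $D\Rphys^*$ maps into a tiny interval sitting deep inside $\KK^v(x)$, driving $\mu(x) \to 0$.

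Iterating, with $y_k := \Rphys^k x$, we write the $n$-fold pullback as $(D\Rphys^n)^* = (D\Rphys^{n-1})^* \circ D\Rphys^*$ to get
$$
  d_x\text{-diam}\bigl( (D\Rphys^n)^* \KK^v(y_n) \bigr) \;\leq\; \Bigl( \prod_{k=0}^{n-2} \mu(y_k) \Bigr) \cdot d_{y_{n-1}}\text{-diam}\bigl( D\Rphys^* \KK^v(y_n) \bigr) \;\leq\; C \mu_0^n,
$$
where $C$ bounds the single-step hyperbolic diameter (finite by the same argument used for $\mu(x)$). To pass from hyperbolic to angular size, observe that for $x \in \Cphys \sm \UU$ the cone $\KK^v(x)$ has angular size bounded by some $L_\UU < \infty$; a direct cross-ratio computation then shows that a sub-interval of $\KK^v(x)$ of hyperbolic diameter $\delta$ has angular size at most $L_\UU(e^\delta - 1)/4$. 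Combining yields $\diam (D\Rphys^n)^*(P\KK^v)(x) \leq C' \la^{-n}$ with $\la = \mu_0^{-1} > 1$.

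The principal obstacle is the uniform bound $\mu(x) \leq \mu_0 < 1$ near the indeterminacy points, where the horizontal cones collapse to a line and one might fear that the compact inclusion $D\Rphys^*(\KK^v(\Rphys x)) \Subset \KK^v(x)$ becomes degenerate. The non-return property in Lemma \ref{inclusions}(a) --- consecutive iterates cannot both lie in a small neighborhood of $\{\alpha_\pm\}$ --- is precisely what rescues the argument, ensuring that at each step of the pullback either the source or the target cone is non-degenerate, so the compact inclusion remains quantitatively strict with uniform constants.
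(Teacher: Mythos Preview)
Your approach is the same as the paper's---hyperbolic metric on $P\KK^v$, Schwarz Lemma for M\"obius maps, and the non-return property from Lemma~\ref{inclusions}(a)---but there is a gap in how you handle the neighborhood of $\alpha_\pm$. The heuristic ``$\|D\Rphys_x\|$ blows up, so $D\Rphys^*$ maps into a tiny interval'' does not control projective contraction: the action of $D\Rphys_x$ on $PT_x\Cphys$ is a M\"obius transformation, whose contraction properties depend on cross-ratios (essentially the ratio of singular values), not on the operator norm. A linear map with large norm and large condition number can have projective action that is \emph{expanding} on part of the projective line. So your claim that $\mu(x)\to 0$ as $x\to\alpha_\pm$, and hence the uniform bound $\mu(x)\le\mu_0<1$ on all of $\Cphys$, is not established; in particular, for $x$ in the parabolic regions $\PP^\pm\cap\UU$ (where $\KK^h=\KK^\hor$ and the invariance comes from the Bezout argument of Proposition~\ref{PROP:MK_CONEFIELD}), no quantitative bound is available.

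The paper sidesteps this by \emph{not} claiming uniform contraction near $\alpha_\pm$. It argues only that $\mu(x)\le\mu_0<1$ for $x\notin\UU$, by continuity of the (nondegenerate) cone fields on the compact set $\Cphys\sm\UU$. For $x\in\UU$ one still has $\mu(x)\le 1$, since inclusion of intervals is non-expanding in the hyperbolic metric. The non-return property you correctly cite then gives that the orbit $\{y_k\}$ visits $\UU$ with frequency at most $1/2$, so $\prod_{k=0}^{n-1}\mu(y_k)\le\mu_0^{\lfloor n/2\rfloor}$. This yields the exponential rate $\la=\mu_0^{-1/2}$ rather than your claimed $\mu_0^{-1}$, but that suffices. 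Your final paragraph shows you see the role of the non-return property; you just need to feed it into the iteration as a frequency bound on \emph{where} strict contraction holds, not as a claim that contraction is uniformly strict everywhere.
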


\begin{proof}
By Lemma  \ref{Schwarz Lemma for proj maps}, the differential $D\Rphys^{-1}: P\KK^v(Rx)\ra P\KK^v(x)$ contracts the hyperbolic metric
by a factor $\mu(x)<1$ depending only on the hyperbolic diameter of $D\Rphys^* (P\KK^v)(x)$ in $P\KK^v(x)$.
(For a critical point $x$, the projective cone $D\Rphys^* (P\KK^v)(x)$ collapses to a point, and we let $\mu(x)=0$). 
By continuity of the cone fields, this factor is uniform away from $\UU$.
Since the $\alpha_\pm$ blow up to the curve $\Icurve$ that does not contain $\alpha_\pm$, 
the orbit of $x$ can visit $\UU$ with frequency bounded by $1/2$
(provided $\UU$ is sufficiently small). 
Hence the hyperbolic diameter of the cones $(D\Rphys^n)^* (P\KK^v)(x)$ decay exponentially with rate $O(\mu^{n/2})$.
Since the projective intervals $P\KK^v(x)$ have angular size bounded away from $\pi$,
the  $\diam (D\Rphys^n)^* (P\KK^v)(x)$ are $O$ of their hyperbolic size,
and the conclusion follows.  
\end{proof}

\sss{Proof of Proposition \ref{Central line field prop}.}
 Let us take a vertical line field $\LL$ on $\Cphys$
and pull it back by the dynamics: $\LL_n= (D\Rphys^n)^*\LL$. 
By Lemma \ref{exp shrinking of cones}, for any $m\leq n$ we have:
$$
  \dist (\LL_n(x), \LL_m(x))\leq C \la^{-m}, \quad x\in \Cphys\sm \UU.
$$
Hence the $\LL_n$ uniformly and exponentially converge to a limit,
which is the desired central line field $\LL^c$.
\QED

\subsection{Dominated splitting}

We say that the horizontal cone field $\KK^h$ and central line field $\LL^c$
give  a {\it dominated splitting} of the map $\Rphys: \Cphys\ra \Cphys$ if
for any neighborhood $\UU$ of the indeterminacy points $\alpha_\pm$, 
there exist constants $c>0$ and $\la>1$ such that for any two tangent vectors
$v^h\in \KK^h(x)$ and $v^c\in \LL^c(x)$ of unit length we have:
\begin{equation}\label{dom split def}
    \|D\Rphys^n_x v^h\|\geq c\la^n \| D\Rphys^n_x v^c \|, \quad x \in \Cphys\sm \UU.
\end{equation}
(In other words, horizontal vectors grow exponentially faster that the central ones.)


\begin{rem}
  For diffeomorphism, the splitting is usually given by two sub-bundles of the tangent bundle (see \cite{Pujals}).
However, such a definition is not suitable for the non-invertible case when the unstable sub-bundle 
may not exist. That is why we give a definition in terms of cone fields.
Of course, in the invertible case, both definitions are equivalent. 
\end{rem}

\begin{lem}\label{LEM:COMPARABLE_HORIZONTAL_VECTORS}
For any $x \in \Cphys \sm \{\INDphys_\pm\}$ and $i \geq 3$,
if $v_1, v_2 \in D\Rphys^i (\KK^h(x))$ satisfy $v_1 -  v_2 \in \LL^c(\Rphys^i x)$, then
$\|v_1\| \asymp \|v_2\|$.
\end{lem}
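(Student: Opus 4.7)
Let $y = \Rphys^i x$. Since both $v_1$ and $v_2$ lie in the horizontal cone $\KK^h(y)$ (as $D\Rphys^i(\KK^h(x)) \subset \KK^h(y)$ by forward invariance, Lemma \ref{LEM:MOD_CF_INVARIANT}), and their difference lies in the transverse central direction $\LL^c(y)$, the conclusion is a two-dimensional linear-algebra estimate. Its only non-trivial input is that $\LL^c$ be uniformly transverse to $\KK^h$ across $\Cphys$. My plan is (i) to set up an adapted frame at $y$ and reduce $\|v_1\| \asymp \|v_2\|$ to a uniform bound on the slope of $\KK^h(y)$ relative to $\LL^c(y)$; (ii) to verify this bound globally, including in the approach to the degenerate points $\alpha_\pm$; (iii) to explain the role of $i \ge 3$.

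\textbf{Main computation.} Choose at each $y \in \Cphys \setminus \{\alpha_\pm\}$ a unit vector $e^h(y)$ on the axis of $\KK^h(y)$ and a unit generator $\ell^c(y)$ of $\LL^c(y)$. In the (non-orthogonal) frame $(e^h, \ell^c)$ write $v_j = a_j e^h + b_j \ell^c$. The hypothesis $v_1 - v_2 \in \LL^c(y)$ forces $a_1 = a_2 =: a$, and a direct expansion gives
\[
\|v_j\|^2 = a^2 \|e^h\|^2 + 2ab_j \langle e^h,\ell^c \rangle + b_j^2 \|\ell^c\|^2.
\]
If $|b_j/a_j| \le M$ for some uniform constant $M$, then $\|v_j\|^2 \asymp a^2$ with implicit constants depending only on $M$ and on the angle between $e^h$ and $\ell^c$ (which is bounded away from $0$ and $\pi$). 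Hence $\|v_1\| \asymp \|v_2\|$.

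\textbf{Uniform transversality and role of $i \ge 3$.} The slope bound $|b_j/a_j| \le M$ holds uniformly because $\LL^c$ is uniformly separated from $\KK^h$ on $\Cphys$. On any compact subset of $\Cphys \setminus \{\alpha_\pm\}$ this is immediate from continuity of $\KK^h$ and $\LL^c$, combined with the pointwise disjointness $\LL^c(y) \in \KK^v(y) = T_y\Cphys \setminus \KK^h(y)$ from Proposition \ref{Central line field prop}. Near $\alpha_\pm$, where $\KK^h$ collapses to the horizontal line $\{d\tau = 0\}$, one uses the blow-up calculations of Appendix \ref{APP:BLOW_UPS} together with the fact that $\LL^c$ is the exponentially fast limit of the pullbacks $(D\Rphys^n)^* \KK^v$ (Lemma \ref{exp shrinking of cones} and Proposition \ref{Central line field prop}): these pullbacks concentrate on the vertical direction $\{d\phi = 0\}$, transverse to the horizontal direction, so $M$ remains bounded through the approach to $\alpha_\pm$. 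The hypothesis $i \ge 3$ is used to handle transitional regions where the ambient slope bound for $\KK^h(y)$ alone might deteriorate: by Lemma \ref{LEM:MOD_CF_INVARIANT} the cone is strictly forward invariant, and three iterations of this strict invariance combined with the projective contraction underlying Lemma \ref{exp shrinking of cones} place $D\Rphys^i(\KK^h(x))$ in a strictly smaller sub-cone of $\KK^h(y)$ with uniform angular margin from $\LL^c(y)$, giving a universal slope bound for the vectors $v_1, v_2$ actually considered.

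The principal technical obstacle is confirming the transverse limiting behavior of $\LL^c$ at $\alpha_\pm$ via the blow-up; once that is in hand, the lemma follows from the elementary calculation of the second paragraph.
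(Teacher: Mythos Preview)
Your linear-algebra reduction is sound, but the treatment near $\alpha_\pm$ has a real gap. The claim that $\LL^c$ ``concentrates on the vertical direction'' there is false: using the blow-up matrix (\ref{DR near alpha}), the preimage under $D\Rphys_y$ of any direction with definite slope (which is where $\LL^c(\Rphys y)$ lives, since $\Rphys y$ lands near $\Icurve$ away from $\alpha_\pm$) has slope of order $|\eps(y)|$ at $y$. So $\LL^c(y)$ is nearly \emph{horizontal}, with $\beta(y)\to 0$, and your sufficient condition ``$\sin\beta$ bounded away from $0$'' fails outright. Your fallback --- that $D\Rphys^i(\KK^h(x))$ has ``uniform angular margin from $\LL^c$'' --- conflates the bounded \emph{hyperbolic} diameter of the image sub-cone in $P\KK^h(y)$ (which the projective contraction does give for $i\ge 3$) with a uniform \emph{Euclidean} angular gap to $\LL^c$ (which it does not give: the image sub-cone can sit near the boundary of $\KK^h(y)$, and $\LL^c$ can be just across that boundary, both angles tending to zero together).

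The paper's argument uses the hyperbolic bound differently and avoids locating $\LL^c$ altogether. Once $D\Rphys^i(P\KK^h(x))$ has bounded hyperbolic diameter inside the interval $P\KK^h(y)$, the angular separation $\theta(L_1,L_2)$ of any two lines in it is at most a constant times the distance of either $L_j$ to an endpoint of that interval, hence at most a constant times $\theta(L_j,\LL^c)$ for \emph{any} direction $\LL^c$ outside the interval. This inequality feeds straight into the identity $\|v_1\|/\|v_2\| = \sin(\beta-\theta_2)/\sin(\beta-\theta_1)$ to give the comparison, with no assumption on the size of $\beta$ or of $\beta - s$. The hypothesis $i\ge 3$ enters only to secure the hyperbolic diameter bound, via the fact that at least one of any three consecutive iterates lies outside a small neighborhood of $\alpha_\pm$.
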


\begin{proof}
Let $\UU$ be a neighborhood of $\{\INDphys_\pm\}$ chosen sufficiently small so
that if $x \in \UU$ then $\Rphys^{n} x \not \in \UU$ for $n=1,2$.  
Lemma \ref{Schwarz Lemma for proj maps} implies that $D \Rphys^i
(P\KK^h(x))$ has uniformly bounded hyperbolic diameter for $i \geq 3$.  
Let $L_1$ and $L_2$ be any two lines through $D \Rphys^i (P\KK^h(x))$ and let $\theta(L_1,L_2)$, $\theta(L_1,\LL^c)$, and $\theta(L_2,\LL^c)$ be the angles
between them and between each line and $\LL^c \equiv \LL^c(x)$.  The uniform bound on the hyperbolic diameter implies that
that there is some constant $C > 0$ so that 
\begin{eqnarray*}
 \theta(L_1, L_2) \leq C \cdot \theta(L_j,\LL^c), \qquad j=1,2.
\end{eqnarray*}
The result then follows from basic trigonometry.
\end{proof}

\begin{cor}\label{COR:HORIZONTAL_VECS_ITERATE_COMPARABLY}
For any neighborhood $\UU$ of $\{\INDphys_\pm\}$ and any $x \in \Cphys \sm \UU$, if  $v_1^h, v_2^h \in \KK^h(x)$ are unit tangent vectors 
then $\|D\Rphys^n v_2^h\| \asymp \|D\Rphys^n  v_2^h\|$.
\end{cor}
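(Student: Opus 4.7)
The plan is to reduce the corollary to Lemma \ref{LEM:COMPARABLE_HORIZONTAL_VECTORS} by decomposing an arbitrary unit horizontal vector in terms of a fixed horizontal reference direction plus a central component, and to handle the finitely many small values of $n$ by a direct continuity argument.

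First I would fix a continuous unit vector field $e^h(x) \in \KK^h(x)$ on $\Cphys\sm\UU$ (say, a vector along the ``middle'' of the cone $\KK^h(x)$). Since the cone field $\KK^h$ and the central line field $\LL^c$ are continuous and uniformly transverse on $\Cphys\sm\UU$, for any unit $v^h\in\KK^h(x)$ we obtain a decomposition
\begin{equation*}
   v^h \;=\; \alpha(v^h)\,e^h(x) \;+\; \beta(v^h)\,\ell^c(x),
\end{equation*}
where $\ell^c(x)$ is a unit vector spanning $\LL^c(x)$, and the coefficient $|\alpha(v^h)|$ is bounded above and below by constants depending only on $\UU$. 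Invariance of the central line field gives $D\Rphys^n\,\ell^c(x)\in\LL^c(\Rphys^n x)$, so
\begin{equation*}
  \alpha(v^h)^{-1}\,D\Rphys^n v^h \;-\; D\Rphys^n e^h(x) \;\in\; \LL^c(\Rphys^n x).
\end{equation*}

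Next, for $n\geq 3$ I would apply Lemma \ref{LEM:COMPARABLE_HORIZONTAL_VECTORS} with $x$ as the base point and the pair $\alpha(v^h)^{-1}D\Rphys^n v^h,\, D\Rphys^n e^h(x)\in D\Rphys^n(\KK^h(x))$, whose difference is central. This yields
\begin{equation*}
   |\alpha(v^h)|^{-1}\,\|D\Rphys^n v^h\| \;\asymp\; \|D\Rphys^n e^h(x)\|,
\end{equation*}
with constants independent of $n$ and of $x\in\Cphys\sm\UU$. Since $|\alpha(v^h)|$ is itself bounded above and below, this shows $\|D\Rphys^n v^h\|\asymp\|D\Rphys^n e^h(x)\|$ uniformly. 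Applying this estimate to $v_1^h$ and to $v_2^h$ and composing the two comparabilities gives the desired conclusion for $n\geq 3$.

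For the finitely many exceptional exponents $n=0,1,2$, I would argue by continuity and compactness: on $\Cphys\sm\UU$ (after shrinking $\UU$ slightly if necessary) the map $\Rphys$ and its first two iterates are smooth, and by property (P3) the kernel of $D\Rphys$ at its critical points is vertical (tangent to $\II_{\pm\pi/2}$), so horizontal vectors never get collapsed by $D\Rphys$ or its iterates. Hence the quantity $\|D\Rphys^n v^h\|$ is a positive continuous function of $(x,v^h)$ on the compact set $\{(x,v^h):x\in\Cphys\sm\UU,\ v^h\in\KK^h(x),\ \|v^h\|=1\}$ for $n\leq 2$, and therefore attains a positive minimum and a finite maximum, producing the required comparison. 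The only delicate point in the whole argument is verifying that $|\alpha(v^h)|$ stays bounded away from $0$ uniformly, which is exactly the statement that $\KK^h(x)$ is bounded away from $\LL^c(x)$ uniformly on $\Cphys\sm\UU$ -- a standard consequence of the continuity of the two fields and the fact that $\LL^c(x)\subset\KK^v(x)=T_x\Cphys\sm\KK^h(x)$.
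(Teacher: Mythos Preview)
Your argument for $n\ge 3$ is essentially the paper's: the paper projects $v_1^h$ onto $v_2^h$ along $\LL^c$ directly (rather than going through an auxiliary field $e^h$), notes that the coefficient is $\asymp 1$ because $x\notin\UU$, and then applies Lemma~\ref{LEM:COMPARABLE_HORIZONTAL_VECTORS}. Your version with the intermediate reference vector is a harmless variant of the same decomposition.

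For $n\le 2$ there is a small but genuine gap. You claim that ``$\Rphys$ and its first two iterates are smooth on $\Cphys\sm\UU$'', but this is not true: even if $x\in\Cphys\sm\UU$, the first image $\Rphys x$ may land arbitrarily close to (or on) $\INDphys_\pm$, so $D\Rphys^2$ need not be uniformly bounded on $\Cphys\sm\UU$, and your compactness argument breaks down. (Shrinking $\UU$ only enlarges $\Cphys\sm\UU$, so it does not help.) The paper handles these small values of $n$ differently, invoking Lemma~\ref{LEM:BOUNDED_CONTRACT}, which gives a uniform lower bound on the horizontal expansion factor of a \emph{single} iterate valid at \emph{every} point of $\Cphys\sm\{\INDphys_\pm\}$, not just on $\Cphys\sm\UU$. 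Combined with the boundedness of $D\Rphys$ on the compact set $\Cphys\sm\UU$ for the first step, this yields the ratio bound for $n=1,2$. Your underlying observation that horizontal vectors are transverse to $\Ker D\Rphys$ is exactly what drives Lemma~\ref{LEM:BOUNDED_CONTRACT}, so you had the right idea; you just need to apply it step by step rather than as a single compactness statement on $\Cphys\sm\UU$.
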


\begin{proof}
Since $x \not \in \UU$, the projection of $v_1^h$ onto $v_2^h$ along $\LL^c$ will have length comparable to the length of $v_2^h$.
Thus, the result follows for $n \geq 3$ from Lemma \ref{LEM:COMPARABLE_HORIZONTAL_VECTORS}.
If $n \leq 2$, the result follows from the fact that $D\Rphys$ can only contract
a horizontal vector by a bounded amount (Lemma
\ref{LEM:BOUNDED_CONTRACT}).
\end{proof}

\noindent Note that Corollary \ref{COR:HORIZONTAL_VECS_ITERATE_COMPARABLY} implies that condition (\ref{dom split def}) is in fact independent of the particular choice of $v^h$.

\begin{prop}
   The horizontal cone field $\KK^h$ and central line field $\LL$
give  a {\it dominated splitting} of the map $\Rphys: \Cphys\ra \Cphys$.
\end{prop}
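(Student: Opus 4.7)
The plan is to deduce the dominated splitting directly from the exponential contraction of the pullback projective cones already established in Lemma~\ref{exp shrinking of cones}. Geometrically, the fact that $(D\Rphys^n)^*P\KK^v(\Rphys^n x)$ collapses to the central line $\LL^c(x)$ at exponential rate $\la^{-n}$ is equivalent to saying that $D\Rphys^n$ \emph{opens up} angles near $\LL^c$ by a factor of $\la^n$, and this angular expansion is precisely the ratio of horizontal to central stretching.

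Fix $x\in \Cphys\setminus \UU$, set $y_n=\Rphys^n x$ and $A_n=D\Rphys^n_x$. Choose, away from $\{\alpha_\pm\}$, a continuous reference horizontal direction $\LL^h(z)\subset \KK^h(z)$ transverse to $\LL^c(z)$, with unit generator $e^h(z)$; let $e^c(z)$ be a unit generator of $\LL^c(z)$. By invariance of $\LL^c$ we may write
\[
A_n e^c(x)=\lambda^c_n\,e^c(y_n),\qquad A_n e^h(x)=p_n\,e^h(y_n)+q_n\,e^c(y_n),
\]
with $p_n\neq 0$. Parametrize $P\KK^v(y_n)$ by an angular coordinate $\theta$ with $\theta=0$ at $\LL^c(y_n)$. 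A short projective computation shows that the M\"obius pullback $A_n^{-1}:P\KK^v(y_n)\to P\KK^v(x)$ has derivative $\lambda^c_n/p_n$ at $\theta=0$. Since $P\KK^v(y_n)$ has angular width bounded below by some $\Theta_0>0$ uniformly on $\Cphys$ (it only widens near $\alpha_\pm$), while the image $(D\Rphys^n)^*P\KK^v(y_n)$ has angular width $\leq C\la^{-n}$ by Lemma~\ref{exp shrinking of cones}, M\"obius distortion (equivalently, Lemma~\ref{Schwarz Lemma for proj maps} applied to $A_n^{-1}$) yields
\[
\frac{\lambda^c_n}{p_n}\leq C'\la^{-n}.
\]

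It remains to pass from the specific pair $(e^c(x),e^h(x))$ to arbitrary unit $v^c\in \LL^c(x)$, $v^h\in\KK^h(x)$. The case of $v^c$ is trivial. For $v^h$, Corollary~\ref{COR:HORIZONTAL_VECS_ITERATE_COMPARABLY} gives $\|A_n v^h\|\asymp\|A_n e^h(x)\|$ for $n\geq 3$ (and Lemma~\ref{LEM:BOUNDED_CONTRACT} handles $n\leq 2$). Since $A_n e^h(x)\in \KK^h(y_n)$ and the transversality of $\KK^h(y_n)$ to $\LL^c(y_n)$ is uniform (it degenerates only at $\alpha_\pm$, but there $\KK^h$ collapses \emph{to} the horizontal direction which is transverse to $\LL^c$), we have $\|A_n e^h(x)\|\asymp p_n$. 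Combined with $\|A_n v^c\|=\lambda^c_n$, this yields
\[
\|A_n v^c\|\leq C''\la^{-n}\|A_n v^h\|,
\]
which is precisely \eqref{dom split def}.

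The main subtlety is the uniformity of constants while $y_n$ roams through $\Cphys$, including possible visits to $\UU$. This is handled by the fact that Lemma~\ref{exp shrinking of cones} controls the pullback cone in terms of the \emph{endpoint} $x$ alone (not the intermediate orbit), and by the fact that $\KK^v(y_n)$ is never thin---it is only $\KK^h(y_n)$ that can degenerate, and in the favorable direction for our estimate. The proof carries no difficulty beyond organizing these observations.
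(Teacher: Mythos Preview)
Your approach---extracting the dominated splitting from the exponential shrinking of the pullback cones via the projective derivative of $A_n^{-1}$---is essentially the same strategy as the paper's, just organized differently. The paper constructs a specific vector $w_n$ at the forward endpoint and tracks its pullback; you compute the derivative of the projective action directly. Both reduce to Lemma~\ref{exp shrinking of cones}.

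However, there is a genuine gap. The step ``M\"obius distortion \ldots\ yields $\lambda^c_n/p_n\leq C'\la^{-n}$'' requires more than the angular width of domain and image: for a M\"obius map $\phi:I\to J$ with $\phi(0)=0$, one has $\phi'(0)=\dfrac{1/a_-+1/a_+}{1/b_-+1/b_+}$ where $a_\pm,b_\pm$ are the distances from $0$ to the endpoints. The denominator is indeed $\geq c\la^{n}$ since $b_\pm\leq C\la^{-n}$, but bounding the numerator requires $a_\pm$ bounded below, i.e.\ $\LL^c(y_n)$ at uniformly positive angular distance from $\partial\KK^v(y_n)$. Your justification (``$\KK^v(y_n)$ is never thin'') addresses the wrong quantity. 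The same issue reappears in your claim that the transversality of $\KK^h(y_n)$ to $\LL^c(y_n)$ is uniform: $\LL^c$ is not even defined at $\alpha_\pm$, and as $y_n\to\alpha_\pm$ along the boundary leaves of the primary central tongues $\La_\pm$---which are tangent to $\TOPphys$ at $\alpha_\pm$ by Property~(P6)---the direction $\LL^c(y_n)$ approaches the horizontal, precisely the direction to which $\KK^h(y_n)$ collapses. So the angle between them can be arbitrarily small.

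The paper circumvents this by choosing $v^h(x_n)$ on the boundary of the \emph{third} forward image $D\Rphys^3(\KK^h(x_{n-3}))$ rather than of $\KK^h(x_n)$ itself, and by invoking Lemma~\ref{LEM:COMPARABLE_HORIZONTAL_VECTORS}, whose hypothesis $i\geq 3$ encodes exactly that any three consecutive iterates visit $\UU$ at most once. This restores uniform transversality regardless of whether $y_n\in\UU$. Your argument can be patched the same way (or by handling the case $y_n\in\UU$ via a one-step comparison with $y_{n\pm 1}\notin\UU$), but as written the required uniformity is asserted without proof.
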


\begin{proof}
Since a single iterate of $\Rphys$ can only contract horizontal vectors by a bounded amount  (Lemma
\ref{LEM:BOUNDED_CONTRACT}), it suffices to consider $n \geq 3$. 
Let $x_n := \Rphys^n x$ be the orbit of any $x \in \Cphys \sm \UU$.
Let $v^h(x_n)$ be the unit  vector on the boundary of $D\Rphys^3 (\KK^h(x_{n-3}))$ pointing ``northeast'' and
let $v^c(x_n)$ be the unit central vector pointing ``north''.  By definition, the vector
\begin{equation}\label{vector w_n}
w_n =  v^h(x_n) + v^c(x_n)
\end{equation}
will satisfy $(D\Rphys^3)^* w_n \in \KK^v(\Rphys^{n-3} x)$.

We pull back $w_n$ under the dynamics and decompose it as 
$$(\Rphys^n)^* w_n := w = w^h + w^c$$
 with
$w^h$ parallel to the unit vector on the boundary of $\KK^h(x)$ pointing
``northeast'' and $w^c \in \LL^c(x)$.  
By Proposition \ref{Central line field prop}, 
\begin{equation}\label{exp growth of ratio}
\| w^c\| \geq c\la^n \|w^h\| .
\end{equation}

But 
$$
   w_n = \Rphys^n(w^h) + \Rphys^n(w^c) \ \mathrm{with}\ \Rphys^n(w^h)\in D\Rphys^3 (\KK^h(x_{n-3})),\  \Rphys^n(w^c)\in \LL^c.  
$$
Since $\Rphys^n(w^h)$ and $v^h(x_n)$ differ by an element of $\LL^c(x_n)$,
Lemma \ref{LEM:COMPARABLE_HORIZONTAL_VECTORS} gives 
$$
 \|\Rphys^n(w^h)\| \asymp \|v^h(x_n)| = 1,
$$
and hence 
$ \|\Rphys^n(w^c)\| = \| w_n -  \RR^n(w^h)  \| = O(1) $.
We see that $\|\Rphys^n(w^h)\|/\|\Rphys^n(w^c)\|$ is bounded from below, 
and hence  (\ref{exp growth of ratio}) can be written as
$$
    \frac{ \| \Rphys^n(w^h)\|}{\|\Rphys^n(w^c)\|} \geq c\la^n \frac{\|w^h\|}{\|w^c\|}
$$
But this is just the homogeneous form of the dominated splitting condition (\ref{dom split def}).
Since this condition is independent of the particular choice of vectors $w^h$ and $w^c$, we are done. 
\end{proof}

\sss{Central curves}

Let us say that a smooth curve is {\it central} it is tangent (on $\Cphys\sm \{\alpha_\pm\}$)
 to the central line field $\LL^c$. 

\begin{prop}\label{PROP:EXISTENCE_OF_CENTRAL_CURVES}
  Through any point $x\in \Cphys \sm \{\INDphys_\pm \}$ passes a vertical central curve.
\end{prop}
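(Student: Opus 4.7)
The plan is to apply the classical Peano existence theorem to the continuous line field $\LL^c$. By Proposition \ref{Central line field prop}, $\LL^c$ is a continuous line field on $\Cphys\sm\{\INDphys_\pm\}$ contained in the open vertical cone field $\KK^v$, and by construction of $\KK^v$ (see \S \ref{SUBSEC:MODIFIED_ALG_CONES}), the vertical cones have uniformly bounded slope on every compact subset of $\Cphys\sm\{\INDphys_\pm\}$. This makes the integral curves of $\LL^c$ locally expressible as graphs $\phi=\phi(t)$ with bounded derivative, which is exactly the setting for Peano.

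Concretely, first I would fix $x_0=(\phi_0,t_0)\in \Cphys\sm \{\INDphys_\pm\}$ and choose a product neighborhood $U=(\phi_0-\rho,\phi_0+\rho)\times (t_0-\rho,t_0+\rho)$ disjoint from $\{\INDphys_\pm\}$. Since $\LL^c(x)\subset\KK^v(x)$ and $\KK^v$ has slope bounded by some $M<\infty$ on $U$, I can write
\[
\LL^c \;=\; \ker\bigl(d\phi - f(\phi,t)\,dt\bigr)
\]
for a continuous function $f\colon U\to\R$ with $|f|\le M$. Peano's theorem then produces a $C^1$ solution $\phi(t)$ of the IVP
\[
\phi'(t)=f(\phi(t),t),\qquad \phi(t_0)=\phi_0,
\]
on some subinterval $(t_0-\eps,t_0+\eps)$, whose graph is a local vertical central curve through $x_0$.

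Next I would extend this local solution to a maximal integral curve $\gamma\subset\Cphys\sm\{\INDphys_\pm\}$. Standard continuation arguments show that each end of $\gamma$ either reaches $\BOTTOMphys\cup\TOPphys$ or accumulates on $\{\INDphys_\pm\}$ (Peano would otherwise supply a further local extension, since the slope bound prevents vertical blow-up of $\phi(t)$ in finite $t$-time). In either case we obtain a vertical curve tangent to $\LL^c$ on its entire interior and passing through $x_0$, which is the required central curve.

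The only subtlety to flag is that Peano gives existence but not uniqueness for continuous, non-Lipschitz ODEs; in principle there may be a whole pencil of central curves through a single point $x_0$. For the present proposition this is harmless — existence of \emph{some} central curve is all that is claimed. Unique integrability of $\LL^c$ (needed to turn this family of curves into a genuine central foliation $\FF^c$) is the deeper fact, and it will be obtained later in \S \ref{SEC:VERTICAL_FOLIATION} using the horizontal expansion of $\Rphys$ established in \S \ref{SEC:EXPANSION}.
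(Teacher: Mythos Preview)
Your proposal is correct and follows essentially the same route as the paper: invoke Peano's existence theorem for the continuous line field $\LL^c$, use the fact that $\LL^c\subset\KK^v$ (hence bounded slope $d\phi/dt$ away from $\INDphys_\pm$) to write the integral curve as a graph over $t$, and extend by standard continuation to the boundary of the cylinder. You are more explicit than the paper about the ODE formulation and the continuation dichotomy, and your remark flagging non-uniqueness (deferred to \S\ref{SEC:VERTICAL_FOLIATION}) matches the paper's own subsequent Remark.
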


\begin{proof}
It follows from the Peano Existence Theorem (see \cite{Walter}) that continuous
line fields are integrable, so we can find a central curve through any point
$x\in \Cphys\sm \{\INDphys_\pm\}$.  Since the central line field is transverse
to the genuinely horizontal foliation, this curve is a graph over the $t$-axes,
and for standard reasons can be extended in both ways to the boundary of the
cylinder.  This is the desired  vertical central curve.

If $x\in \{\alpha_\pm\}$, then one can take $\II_\pm$ as the desired central curve.
(In fact, there are whole central tongues $\La_\pm$ filled with vertical central curves landing at $\alpha_\pm$ --
see \S \ref{central fol sec}).  
\end{proof}

\begin{rem}
  At this stage we do not know yet that there exists a unique central curve through a given $x$.
In fact, as we have just mentioned, this is not the case for the indeterminacy points $\alpha_\pm$ 
(and hence for their preimages). However,  we will prove in \S \ref{central fol sec}  that the uniqueness holds on $\Cphystl$.
\end{rem}

\section{Horizontal expansion}
\label{SEC:EXPANSION}

In this section we will prove that
the map  $\Rphys:\Cphys\ra \Cphys$ is {\it horizontally expanding},
in the following sense:

\ssk\nin $\bullet$ $\Rphys$ has an invariant horizontal cone field 
$\KK^h(x)$  on $\Cphys\sm \{\alpha_\pm\}$;

\ssk\nin $\bullet$ There exist constants $c >0$ and $\la>1$  such that 
$$
           | D\Rphys^n(x) (v)| \geq c \la^n \|v\|, \quad n=0,1,\dots
$$
  for any $x\in \Cphys\sm \{\alpha_\pm\}$ and $v\in \KK^h(x)$.
Moreover, $\la$ is called a {\it rate of expansion}.


\begin{thm}\label{hor expansion thm}
    The map $\Rphys:\Cphys\ra \Cphys$ is horizontally expanding on $\Cphys$ with the rate $\la=2$
with respect to the horizontal cone field $\KK^h$ from \S \ref{SUBSEC:MODIFIED_ALG_CONES}. 
\end{thm}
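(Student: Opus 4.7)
The plan is to establish horizontal expansion by extending the dynamics to a complex-analytic setting and applying a Schwarz-type inequality to the resulting Blaschke products. First I would pass to the Migdal M\"obius band $\Cmig$, where the geometry is cleaner: since $\correspond\colon \Cphysbl \to \Cmigbl$ is a diffeomorphism (Proposition~\ref{cylinder-band conjugacy}) conjugating $\Rphys$ to $\Rmig$, horizontal expansion for $\Rmig$ with respect to the pushforward cone field transfers back to $\Rphys$ off a neighborhood of $\BOTTOMphys$, while the remaining annulus near $\BOTTOMphys$ is handled directly by the uniform superattraction there (property (P5)). Then I would complexify: each horizontal circle at level $r \in [1,\infty)$ in $\Cmig$ is the boundary of a complex-analytic disk $\Delta_r$, embedded as a horizontal section of a solid cylinder $\Solidmig \subset \CP^2$. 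The family $\{\Delta_r\}$ interpolates between the slow separatrix $\Lone$ (at $r=1$) and the bottom circle $\BOTTOMmig \subset \Lzero$ (at $r=\infty$), with each $\Delta_r$ tangent along its boundary to the complex extension of $\KK^h$ built in Appendix~\ref{SUBSEC:COMPLEXIFICATION_CONES}.

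Next I would fix a central projection $\pi\colon \CP^2 \setminus \{p\} \to \Lzero$ from a suitable base point $p\notin \Lzero$ and study the composition $B_n := \pi \circ \Rmig^n|_{\Delta_r}$. Choosing conformal uniformizations of $\Delta_r$ and of a neighborhood arc in $\Lzero$ both as $\overline{\D}$, the map $B_n$ becomes a holomorphic self-map of $\D$ carrying $\T$ to $\T$, hence a finite Blaschke product. The Counting Argument that pins down its degree and vanishing order would combine Theorem~\ref{LY in Mig coord-s} with Property~(P9$'$): the preimage $(\Rmig^n)^{-1}$ of a generic horizontal line meets each horizontal circle in exactly $2\cdot 4^n$ simple real points, which forces $B_n$ to have degree $4^n$ with all boundary preimages real (no hidden complex preimages in $\inter \Delta_r$). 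At the ``center'' of $\Delta_r$, namely its unique intersection with $\Lone$, the superattracting behavior at $e, e'$ combined with the formula $\Rmig|_{\Lone}\colon w \mapsto w^2$ (\S\ref{SUBSEC:FIXED POINTS}) forces $B_n$ to vanish there with multiplicity at least $2^{n+2}$, reflecting both the quadratic collapse along $\Lone$ and the transverse degree-$4$ behavior imposed by the fast separatrix $\Lzero$.

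The conclusion follows from Schwarz's lemma: writing $B_n(z) = z^{k_n}\tilde B_n(z)$ with $k_n \geq 2^{n+2}$ and $\tilde B_n$ a Blaschke product, the estimate $|B_n(z)| \leq |z|^{k_n}$ on $\D$ together with the Julia--Carath\'eodory inequality yields $|B_n'(\zeta)| \geq k_n$ for $\zeta\in \T$. Since the uniformizations and $\pi$ are bi-Lipschitz on compact subsets away from $\INDmig_\pm$, we recover $\|D\Rmig^n v\| \geq c \cdot 2^{n+2}\|v\|$ for any unit horizontal vector $v$ based at a point staying bounded away from the indeterminacies, which is expansion at rate $\lambda = 2$. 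The main anticipated obstacle is the behavior along orbits passing close to $\INDmig_\pm$, where the sections $\Delta_r$ and the modified cone field $\KK^h$ both degenerate; for such orbits the blow-up computations of Appendix~\ref{APP:R_NEAR_ALPHA}, together with the sparsity observation in Lemma~\ref{inclusions} that consecutive visits to the neighborhood $\UU$ are forbidden, should force the Blaschke-product estimate to remain valid on a positive proportion of iterates and thereby recover the expansion rate globally.
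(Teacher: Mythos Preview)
Your approach is essentially the paper's Global Approach (\S\ref{SUBSEC:GLOBAL_APPROACH}): the central projection $\pi\colon\CP^2\to\Lzero$ turns $\pi\circ\Rmig^n|_{\Secmig_t^*}$ into a Blaschke product whose branching order $2^{n+2}$ at the common center $e$ (computed explicitly from the local form (\ref{R in (v,w)}) near $e$, Lemma~\ref{order of vanishing}) yields the rate $\lambda=2$ via the boundary derivative estimate of Lemma~\ref{Blaschke}. Two small corrections: the degree of the Blaschke product is $2\cdot4^n$ rather than $4^n$ (Lemma~\ref{phi_n}), and the passage from tangent vectors to these specific sections to arbitrary horizontal vectors---and through neighborhoods of $\INDmig_\pm$---is handled in the paper by Corollary~\ref{COR:HORIZONTAL_VECS_ITERATE_COMPARABLY} together with Lemma~\ref{LEM:BOUNDED_CONTRACT}, not by a sparsity-of-visits argument.
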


\subsection{Global Approach}\label{SUBSEC:GLOBAL_APPROACH}
Let us consider the solid cylinder
$$
  \Solid := \{ (z,t):\ |z|\leq  1,\    t\in [0, 1] \}.
$$
It is foliated by the horizontal leaves
$$
   \Secphys_t^* : = \{ (z,t):\ |z|\leq  1\}, \quad t\in [0,1] \}. 
$$
In the $(u,w)$-coordinates, the horizontal complex lines $\Pi_t:=\{(z,t): z\in \C\}$
correspond to the conics $ \Secmig_t =\{ |uw|= t^{-2} \}$,
and the leaves of the solid cylinder become 
$$
  \Secmig_t^* := \{ (u,w)\in \Secmig_t : \ |u|\geq t^{-1}\},    \quad t \in [0,1]. 
$$
Here the bottom leaf $\Secmig_0^*$ is the complement  of the unit disk,  $ \CP^1 \sm \D$,
in the coordinate $\zeta=u/w$ on the line  at infinity $L_0 = \{V=0\}\isom \CP^1$.

Recall that the cylinder $\Cmig$  itself (or rather, the M\"obius band) is given by 
$$
     \Cmig = \{ w=\bar u, \ |u|\geq 1\},
$$
see \S \ref{SUBSEC:MAP_ON_CYL}. 
The leaf $\Secmig_t^*$ intersects it by the round circle $S_t = \{ |u| = t^{-1} \}$. 
All the leaves $\Secmig_t^*$ meet at the attracting fixed point $\CFIXmig=(1:0:0)$ at infinity.

Let us now consider the central projection from the origin to the line $L_0$ at infinity: 
$$
   \pi: \CP^2\ra L_0 \,\quad    (u,w) \mapsto \zeta = u/w.
$$

\begin{lem}\label{phi_n}
  For any $t\in [0,1)$,  the map 
\begin{equation}
  \psi_n= \pi\circ R^n : \Secmig_t^* \ra \Secmig_0^*
\end{equation}
is a branched covering of degree $2\cdot 4^n$. 
\end{lem}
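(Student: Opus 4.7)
The plan is to extend $\psi_n$ to a regular map on the full conic $\Secmig_t$, compute its degree via Bezout, and then use the $R$-invariance of the solid cylinder to verify that the restriction $\psi_n|\,\Secmig_t^*$ is surjective onto $\Secmig_0^*$ with the same degree.

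For $t\in(0,1)$ the conic $\Secmig_t=\{uw=t^{-2}\}$ is smooth, hence $\Secmig_t\cong\CP^1$. Since $\Secmig_t$ is a smooth projective curve and the target is $\CP^1$, the rational map $\psi_n=\pi\circ R^n$ restricted to $\Secmig_t$ extends uniquely to a regular map $\psi_n:\Secmig_t\to\CP^1$. For any $\zeta\in\CP^1$, the fiber $\pi^{-1}(\zeta)$ is the complex line $L_\zeta=\{u=\zeta w\}$, so $\psi_n^{-1}(\zeta)=(R^n)^*(L_\zeta)\cap \Secmig_t$ as divisors. By Corollary \ref{COR:DEGREE_GROWTH_MK}, $(R^n)^*(L_\zeta)$ has degree $4^n$, and combined with $\deg\Secmig_t=2$ Bezout's theorem gives $\deg\psi_n=2\cdot 4^n$.

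Next I would check that the decomposition $\Secmig_t=\Secmig_t^*\cup(\Secmig_t\sm\Secmig_t^*)$ is respected by $\psi_n$ in the sense that $\psi_n^{-1}(\Secmig_0^*)=\Secmig_t^*$. The $R$-invariance of $\Solidmig=\{|u|\geq|w|\}$ (from \S\ref{SEC:GLOBAL STRUCTURE}), combined with the symmetry $\rho:(u,w)\mapsto(w,u)$ commuting with $R$, forces $\Solidmig^c$ to be $R$-invariant as well. Since $\pi(\Solidmig)=\{|\zeta|\geq 1\}=\Secmig_0^*$ and $\pi(\Solidmig^c)=\{|\zeta|<1\}$, and since $\Secmig_t^*\subset\Solidmig$ while $\Secmig_t\sm\Secmig_t^*\subset\Solidmig^c$, we obtain
\[
   \psi_n(\Secmig_t^*)\subset\Secmig_0^*, \qquad \psi_n(\Secmig_t\sm\Secmig_t^*)\subset\CP^1\sm\Secmig_0^*.
\]
Therefore the restriction $\psi_n:\Secmig_t^*\to\Secmig_0^*$ is a proper holomorphic map that inherits the degree $2\cdot 4^n$ of the full map.

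Finally I would confirm simplicity of the generic fiber using Theorem \ref{LY in Mig coord-s}: for any $\zeta\in\T$, the line $L_\zeta$ is Hermitian (since $|\zeta|=1$, the involution $(u,w)\mapsto(\bar w,\bar u)$ sends $(\zeta w,w)$ to $(\bar w,\bar\zeta \bar w)$, which lies on $L_\zeta$) and crosses $\T$ at $\pm e^{i\arg\zeta/2}$, so $(R^n)^*L_\zeta\cap\Secmig_t$ consists of $2\cdot 4^n$ distinct simple points, all lying on $\Cmig\cap\Secmig_t=\partial\Secmig_t^*$. The main subtlety is the degree bookkeeping: algebraic stability of $R$ (Proposition \ref{alg stab}) prevents degree loss under iteration, and the indeterminacies of $\pi$ and $R^n$ are kept off $\Secmig_t^*$ by the containment $\Secmig_t^*\subset\Solidmig$ (excluding $[0:1:0]$) together with $\{a_\pm\}\subset\Secmig_1$ being disjoint from $\Secmig_t$ for $t<1$.
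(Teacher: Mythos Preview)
Your degree computation in Steps 1--2 is correct and efficient: extending $\psi_n$ regularly to the smooth conic $\Secmig_t\cong\CP^1$ and reading off the degree from $(R^n)^*L_\zeta\cap\Secmig_t$ via Bezout and Corollary~\ref{COR:DEGREE_GROWTH_MK} is clean.

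The gap is in Step 3. The global $R$-invariance of $\Solidmig=\{|u|\geq|w|\}$ that you invoke is not established anywhere in \S\ref{SEC:GLOBAL STRUCTURE}, and in fact it is false: for instance $R(i,\tfrac12)=(0,\ast)$ with $|\ast|>0$, so the image leaves $\Solidmig$. The symmetry $\rho$ does not rescue this, since a false invariance for $\Solidmig$ transfers to an equally false one for $\rho(\Solidmig)$. Without the invariance you cannot conclude $\psi_n(\Secmig_t^*)\subset\Secmig_0^*$, and the argument that the restriction inherits the full degree collapses.

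The paper closes this gap by a different route. It shows, using the degree-$4$ action of $R$ on $H_1(\Cmig)$, that $\psi_n$ restricts to a covering of the \emph{boundary circles} $S_t\to\BOTTOMmig$ of the same degree $2\cdot 4^n$ as the global map. Since $\psi_n$ intertwines the natural antiholomorphic involutions, it is a Blaschke product in suitable coordinates, and the Argument Principle then forces every point of the disk $\Secmig_0^*$ to have exactly $2\cdot 4^n$ preimages in $\Secmig_t^*$. Your own final paragraph already contains the key ingredient for this repair: Theorem~\ref{LY in Mig coord-s} applied to the Hermitian lines $L_\zeta$, $|\zeta|=1$, places all $2\cdot 4^n$ fibre points on $\Cmig\cap\Secmig_t=\partial\Secmig_t^*$, which is precisely the boundary-circle statement needed. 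Had you used that to \emph{replace} Step 3 rather than merely to confirm fibre simplicity, the proof would go through.
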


\begin{proof}
We fix some $t\in [0,1)$, and skip it from the notation, so $\Secmig^* \equiv
\Secmig_t^*$, etc.  Since $\Rmig$ is algebraically stable, Lemma
\ref{LEM:PUSH_PULL_AS} gives that the push-forward $(\Rmig^n)_* \Secmig$ is a
divisor of degree $4^n \cdot 2$.

Bezout's Theorem gives $4^n \cdot 2$ intersections of $(\Rmig^n)_* \Secmig$
with with the complexification of any radial line $L :=\{ \arg u = \phi \
\mod \pi \}$.  However, the M\"obius band $\Cmig$ is in invariant and $\Rmig$
acts with degree $4$ on its first homology, so that $\Rmig^n(\Secmig \cap
\Cmig)$ is a horizontal curve of degree $4^n$ on $\Cmig$, forcing all $4^n \cdot 2$
intersections take place in $\Cmig = \{|u| \geq 1\}$.  In particular $(\Rmig^n)_* \Secmig$ does
not pass through the origin $u=w=0$ so that the central projection $\pi:
R^n(\Secmig) \ra \BOTTOMmig$ is well defined, and is a degree $2\cdot 4^n$
branched covering.

Furthermore, $\pi : R^n(\Secmig)\cap \Cmig \ra  \BOTTOMmig$  is also a degree
$2\cdot 4^n$ covering so that $\psi_n: \Secmig  \ra L_0$ is a rational map
between two Riemann spheres of degree $2\cdot 4^n$ (commuting with the natural
antiholomorphic involutions) that restricts to a covering map between the
circles $\Secmig\cap \Cmig\ra \BOTTOMmig$ (the fixed points loci for the
involutions) of the {\it same} degree.  By the Argument Principle, any point
$\zeta\in \Secmig^*_0$ has at least $2\cdot 4^n$ preimages in the disk
$\Secmig^*$.

\comment{**************** Version of from before 12/20/09 avoids using formula for degree of push forwards:
We fix some $t\in [0,1)$, and skip it from the notation, so $\Secmig^* \equiv \Secmig_t^*$, etc.

Let us consider the invariant line $L\equiv L_\inv = \{ w=u\}$.   
By the Lee-Yang Theorem  (Theorem \ref{LY in Mig coord-s}),
all the intersection points of the pullback  $(R^n)^* (L)$ with $\Secmig$
lie on the cylinder $\Cmig$. 
Since the cylinder is forward invariant, 
all the intersection points of  the curve $R^n (\Secmig)$ with $L_\inv$ also lie on the cylinder.
Moreover, they are transverse since the curve $R^n(\Secmig) \cap \Cmig$ is horizontal.
Thus, $R^n(\Secmig)$ has exactly $2\cdot 4^n$ intersection points with $L$,
so its degree is equal $2\cdot 4^n$.   \note{is there a more direct way to show this ?} 

It also follows that the $R^n(\Secmig)$ do not pass through the origin $u=w=0$, 
so the central projection $\pi: R^n(\Secmig) \ra \BOTTOMmig$ is well defined,
and is a degree $2\cdot 4^n$ branched covering. 
Moreover,  $\pi : R^n(\Secmig)\cap \Cmig \ra  \BOTTOMmig$  is a degree $2\cdot 4^n$ covering
since  $R^n(\Secmig)\cap \Cmig$ is a horizontal curve on the annulus $\Cmig = \{ |u| \geq 1\} $ of degree $4^n$,
so it intersects any radial line $ \{ \arg u = \phi \ \mod \pi  \}$ at $2\cdot 4^n$ points.  

Thus, $ \psi_n: \Secmig  \ra L_0$
is a rational map between two Riemann spheres of degree $2\cdot 4^n$
(commuting with the natural antiholomorphic involutions) 
that restricts to a covering map between the circles $\Secmig\cap \Cmig\ra \BOTTOMmig$
(the fixed points loci for the involutions)
of the {\it same} degree.
  By the Argument Principle, any point $\zeta\in \Secmig^*_0$ has at least $2\cdot 4^n$ preimages in the disk 
$\Secmig^*$.  
**********************}
\end{proof}

Let us now consider the fixed point $e$ at the center of the both  disks $\Secmig_t$ and  $\Secmig_0$. 
  Obviously,   $\psi_n (e)= e $. 

\begin{lem}\label{order of vanishing}
The map $\psi_n$ has  branching of degree $ 2^{n+2}$ at $\CFIXmig$. 
\end{lem}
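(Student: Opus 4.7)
The plan is to reduce the question to a direct local computation at the superattracting fixed point $\CFIXmig = e$. In the affine chart $(\xi,\eta) = (W/U,V/U)$ centered at $e$, the map $\Rmig$ takes the form (\ref{dynamics near e}), and the two invariant separatrices through $e$ are $\Lzero = \{\eta = 0\}$ (fast) and $\Lone = \{\xi = \eta^2\}$ (slow). The leaf $\Secmig_t$, defined globally by $UW = t^{-2}V^2$, locally becomes the parabola $\{\xi = t^{-2}\eta^2\}$, so $\Secmig_t$ is tangent of order two to $\Lone$ at $e$: both are parabolas sharing the tangent direction along the $\eta$-axis, differing only in curvature. I will use $\eta$ as local parameter on $\Secmig_t$ near $e$; the projection $\pi([U:V:W]) = [U:0:W]$ then reads $(\xi,\eta) \mapsto \xi$ in the local coordinate $\xi = W/U$ on $\Lzero$ centered at $e$, so $\psi_n$ is literally the map $\eta \mapsto \xi_n(\eta)$ where $(\xi_n(\eta),\eta_n(\eta)) := \Rmig^n(t^{-2}\eta^2,\eta)$.

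The core step is to establish by induction on $n$ an asymptotic expansion of the form
\begin{equation*}
 \xi_n(\eta) = c_n\,\eta^{\,2^{n+1}}\bigl(1 + O(\eta^2)\bigr), \qquad
 \eta_n(\eta) = c_n'\,\eta^{\,2^n}\bigl(1 + O(\eta^2)\bigr),
\end{equation*}
with nonzero constants $c_n, c_n'$ (depending on $t$). The inductive step is immediate from (\ref{dynamics near e}): the inductive hypothesis forces $\xi_m^2 \ll \eta_m^2$, so the dominant contribution to $\xi_{m+1} = \bigl((\xi_m^2+\eta_m^2)/(1+\eta_m^2)\bigr)^2$ comes from $\eta_m^4$, giving $\xi_{m+1} \sim \eta_m^4 \sim \eta^{\,2^{m+2}}$, while the parallel calculation for $\eta_{m+1} = \eta_m^2\bigl((1+\xi_m)/(1+\eta_m^2)\bigr)^2$ yields $\eta_{m+1} \sim \eta_m^2 \sim \eta^{\,2^{m+1}}$. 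The claimed branching of $\psi_n$ at $e$ is then read off from the leading power of $\xi_n(\eta)$. The base case $n=0$ reduces to $\pi|_{\Secmig_t}(\eta) = t^{-2}\eta^2$, which accounts for the initial ``tangency'' contribution to the exponent.

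As an alternative ``enumerative'' route (closer to the spirit advertised in the introduction), one could compute the divisor $\psi_n^{-1}(e) = (\Rmig^n)^*\{W=0\}\cdot \Secmig_t$ on $\Secmig_t$ by induction. Lemma \ref{phi_n} already fixes the total degree as $2\cdot 4^n$, and from the factorization $\Rmig^*\{W=0\} = 2\bigl(\{W=iV\}+\{W=-iV\}\bigr)$ together with the local picture above one can track the growth of the multiplicity at $e$ at each step; the leading-order information from the first approach is exactly what supplies the needed local intersection multiplicities.

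The only real obstacle is bookkeeping: one must confirm that the denominators $(1+\eta_m^2)^{\pm 1}$ and the factor $(1+\xi_m)^{\pm 1}$ in (\ref{dynamics near e}) contribute only corrections of order $O(\eta^2)$ at each stage, and do not shift the leading power in the induction. This is routine because $\xi_m,\eta_m \to 0$ geometrically fast along the orbit, so the correction factors can be expanded as convergent geometric series whose errors remain subordinate to the leading term throughout the iteration.
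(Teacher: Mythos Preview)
Your approach is precisely the paper's: work in the local chart $(\xi,\eta)=(W/U,V/U)$ at $e$, parametrize $\Secmig_t$ by $\eta$ (the paper writes $s=v\equiv\eta$), and run an induction on the orders of vanishing of $(\xi_n,\eta_n)$ using the local form (\ref{dynamics near e}). Your inductive step and the control of the $(1+\eta_m^2)^{\pm1}$, $(1+\xi_m)^{\pm1}$ correction factors are correct.

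However, you should have noticed that your own induction yields $\xi_n(\eta)\sim c_n\eta^{\,2^{n+1}}$, hence branching degree $2^{n+1}$ at $e$ --- not the $2^{n+2}$ asserted by the lemma. A sanity check makes this unavoidable: for $n=0$ your formula gives $\psi_0(\eta)=t^{-2}\eta^2$, branching degree $2$, while $2^{0+2}=4$ would exceed the total degree $2\cdot4^0=2$ of $\psi_0$ supplied by Lemma~\ref{phi_n}. The paper's own proof contains the same slip: it states the inductive conclusion as $v_n\sim s^{2^{n+1}}$, $w_n\sim s^{2^{n+2}}$, but from the base $(v_0,w_0)=(s,s^2)$ one iteration gives $(v_1,w_1)\sim(s^2,s^4)$, so the correct pattern is $v_n\sim s^{2^n}$, $w_n\sim s^{2^{n+1}}$. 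The remark at the end of \S\ref{SUBSEC:COMBINATORIAL_EXPANSION} in fact confirms $2^{n+1}$ for the global approach (twice the $2^n$ obtained in the $z$-coordinate Blaschke product). The application to Theorem~\ref{hor expansion thm} via Lemma~\ref{Blaschke} is unaffected, since $2^{n+1}$ still gives the expansion rate $\la=2$.
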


\begin{proof}
Use local coordinates $(v= V/U, \, w=W/U)$ near $e= (1:0:0)$.
In these coordinates, the map $R$ assumes the form
\begin{equation}\label{R in (v,w)}
   v'= v^2 \left( \frac{1+w}{1+v^2} \right)^2\sim v^2 ,\quad  w'= \left(\frac{w^2+v^2}{1+v^2}\right)^2\sim v^4 (1+ (w/v)^2)^2,
\end{equation}
while the curve $\Secmig$ becomes the parabola $w=v^2$. 
Let us use $s = v$ as a local parameter for $\Secmig$.
Then all the images $R^n(\Secmig)=(v_n(s), w_n(s))$ are naturally parameterized by $s$ as well,
and (\ref{R in (v,w)}) imply inductively:
$$
   v_n\sim s^{2^{n+1}},\quad w_n \sim s^{2^{n+2}} .
$$ 
Hence given a small $w_n$, we find $2^{n+2}$ parameter values $s$ near  the origin such that $\psi_n(s)=w_n$. 
and the conclusion follows.
\end{proof}

\begin{rem}
  Note that because of the symmetry $R(-u,-w) = R(u,w)$,
the images $\Rmig^n (\Secmig)$ are covered twice by $\Secmig$. 
If we consider $\Rmig^n (\Secmig)$ as a divisor of multiplicity two,
then its  order of tangency with the $u$-axis at $e=(\infty,0)$ is $2^{n+2}$.
But if we treat $\Rmig^n (\Secmig)$ as a simple curve then the order of tangency is twice smaller.  
\end{rem}

\begin{lem}\label{Blaschke}
   The Blaschke product $B$  vanishing at the origin to order $k$
expands the Euclidean metric on the circle $\T$ at least by $k$.
\end{lem}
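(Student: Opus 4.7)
The plan is to use the standard logarithmic-derivative formula for a finite Blaschke product on the unit circle. Write
\[
B(z) = c\prod_{i=1}^{N} \frac{z-a_i}{1-\bar a_i z}, \qquad |c|=1,\ |a_i|<1,
\]
where by hypothesis at least $k$ of the $a_i$ vanish (say $a_1=\dots=a_k=0$). Differentiating logarithmically gives, for any $z$ in the domain,
\[
\frac{zB'(z)}{B(z)} = \sum_{i=1}^{N} \frac{z}{z-a_i} - \sum_{i=1}^{N} \frac{-\bar a_i z}{1-\bar a_i z}.
\]
A short algebraic manipulation, which I would carry out next, shows that when $|z|=1$ this sum simplifies to the purely real, positive quantity
\[
\frac{zB'(z)}{B(z)} \;=\; \sum_{i=1}^{N} \frac{1-|a_i|^2}{|1-\bar a_i z|^2}.
\]

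Taking absolute values on $\T$, and using $|B(z)|=|z|=1$, I then get
\[
|B'(z)| \;=\; \sum_{i=1}^{N} \frac{1-|a_i|^2}{|1-\bar a_i z|^2}.
\]
Each term is nonnegative, and the $k$ terms corresponding to $a_i=0$ each contribute exactly $1$. Hence $|B'(z)|\geq k$ for every $z\in\T$, which is the claimed lower bound on the Euclidean expansion factor of $B|\T$.

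There is essentially no obstacle here: the only nontrivial step is the identity for $zB'/B$ on $\T$, and even that follows immediately from the observation $\overline{a_i/(1-\bar a_i z)} = \bar a_i \bar z/(1-a_i \bar z)$ combined with $|z|^2=1$, used to rewrite $z/(z-a_i) = 1/(1-a_i/z)$ so that the two sums combine into one with the desired positive Poisson-kernel-type summands. The monotonicity ``more zeros $\Rightarrow$ more expansion'' then falls out because every summand is pointwise nonnegative.
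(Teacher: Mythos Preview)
Your proof is correct. The paper's argument is the same idea in a slightly less explicit form: it factors $B(z)=z^k\tilde B(z)$ and observes that in the angular coordinate $\phi$ this becomes $\phi\mapsto k\phi+h(\phi)$ with $h'(\phi)\geq 0$ because $\tilde B$ is an orientation-preserving self-map of $\T$; your Poisson-kernel identity for $|B'|$ on $\T$ is precisely what proves that last fact, so you have simply unpacked the step the paper takes for granted.
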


\begin{proof}
  Under the circumstances,
$$
   B (z) =  z^k \tl B, \quad \text{where}\  \tl B \ \text{is another Blaschke product}.
$$
In the angular coordinate $z=e^{i\phi}$ it assumes a form
$$
    \phi\mapsto k \phi + h(\phi),
$$
where $h'(\phi)>0$ since $\tl B$ is orientation preserving on the circle.
The assertion follows.
\end{proof}

\begin{proof}[Proof of Theorem \ref{hor expansion thm}.]
Let us uniformize $\Secmig$ by the Riemann sphere so that the $\Secmig\cap \Cmig$ becomes the unit circle
$\T=\{ |\la|=1 \}$.
Lemma \ref{phi_n} implies that  in this coordinate,  the map $\zeta = \psi_n(\la)$ becomes a Blaschke product.
By Lemma \ref{order of vanishing}, it vanishes of order $2^{n+2}$ at the origin.
By Lemma \ref{Blaschke}, it expands the circle metric at least by   factor $2^{n+2}$.
Hence $R^n$ expands the cylinder metric along $\Secmig\cap \Cmig$  at least by  $c\, 2^{n+2}$ with some $c>0$.

But then the same is true for any $v \in K^h(x), \, x \in \Cmig$.  If $N$ is any
small neighborhood of the indeterminate points $\INDmig_\pm$, then all
horizontal vectors $v\in K^h(x)$ that are equivalent mod $\LL^c$ have a
comparable length (uniformly over $x\in \Cmig\sm N$).
Corollary~\ref{COR:HORIZONTAL_VECS_ITERATE_COMPARABLY} then gives that the
lengths of the iterated vectors remain comparable.

Finally, if $x \in N$, then  Lemma \ref{LEM:BOUNDED_CONTRACT} gives that one
iterate of $\Rmig$ can contract the horizontal length only by a bounded factor
and the result follows.
 
\comment{
But then the same is true for any $v\in \KK^h(x)$, $x\in \Cmig$. 
Indeed, by compactness, the angle between the central line $\LL^c$ and the  cone $\KK^h(x)$ 
is bounded away from $0$ as long as $x\in \Cmig\sm N$, where $N$ is a neighborhood of  the indeterminacy points $\alpha_\pm$. 
But then all horizontal vectors $v\in \KK^h(x)$ that are equivalent mod $\LL^c$
 have a comparable length (uniformly over $x\in \Cmig\sm N$). 
Hence expansion by $DR^n(x)$   can be detected by taking any horizontal vector $v\in \KK^h(x)$,
as long as $x, R^n x\in \Cmig\sm N$.

Finally, the last condition can be eliminated
since, by Lemma \ref{LEM:BOUNDED_CONTRACT}, one iterate of $\Rmig$ can contract the horizontal length only by a bounded factor. 
&&&&&&&&&&&&&&&&&&&}

\end{proof}

\comm{****
begin{proof}
We will work in the affine $(u,w)$-coordinates. 
Let us consider the central projection from the origin to the line at infinity: 
$$
   \pi: \CP^2\ra \BOTTOMmig,\quad    (u,w) \mapsto \zeta = u/w.
$$
Consider a horizontal section of the solid cylinder,   \note{introduce}
given in these coordinates by
$$
     S= S_R = \{ uw= R^2,\quad |u| \geq R \}.
$$
Then the map 
$$
    \psi:= \pi \circ R^n : S\ra \D
$$
is a Blaschke product of degree $2\times 4^n$
vanishing at the origin with order $\sim 2^n$.
So
$$
    \psi(z) = z^{2^k} \phi (z),
$$
(where $\phi$ is another Blaschke product). This map expands the circle
metric at least by $c 2^n$.
Since the curves $R^n S$ have  bounded slope, this gives a bound on the
horizontal expansion of $R$.
end{proof}
************}

\subsection{Combinatorial Approach}\label{SUBSEC:COMBINATORIAL_EXPANSION}

We will now present another proof of Theorem~\ref{hor expansion thm} that is based on a
combinatorial interpretation of the DHL and the Lee-Yang Theorem with Boundary
Conditions.  The notation is
from \S \ref{background}.

Recall that the partition function of the Ising model on $\Gamma_n$ is given as
\begin{equation}\label{in3}
Z_n=\sum_\sigma e^{- H_n(\sigma)/T}=\sum_\sigma t^{-I(\si)/2} z^{-M(\si)},
\end{equation}
\noindent
where $M(\si)$ and $I(\si)$ are the magnetic moment (\ref{M}) and interaction (\ref{I}) of the configuration $\si$.
Let $\si_+ \equiv +1$ and $\si_- \equiv -1$.
Clearing denominators we obtain the modified partition function
\begin{equation}\label{in6}
\check Z_n(z):= t^{I(\si_-)}z^{4^n}Z_n(z) =\sum_{j=0}^{N} a_j z^j, 
\end{equation}
with $N=2\cdot 4^n$.
Recall the basic symmetry of the Ising model
\begin{equation*}
a_{N-j}=a_j,\quad j=0,1,\ldots N;\qquad a_0=a_N=1.
\end{equation*}
\noindent
which is obtained under the involution $\si \mapsto -\si$ and from the invariance $I(\si) = I(-\si)$.

The generating graph $\Gamma$ is symmetric under reflection across the vertical
line through the marked vertices $a$ and $b$.  This allows us to
factor\footnote{$V_n$ also factors, but will not use it.} the conditional
partition functions $U_n$ and $W_n$ as
\begin{eqnarray*}
U_n =  {\rm U}_n^2 \qquad W_n =  {\rm W}_n^2,
\end{eqnarray*}
\note{notation?}
\noindent
where ${\rm U}_n$ and ${\rm W}_n$ correspond to the conditional partition
functions of the right (or left) half of $\Gamma_n$, having the same boundary
conditions as $U_n$ and $W_n$.

Both halves of $\Gamma_n$ have valence $2^{n-1}$ at $a$ and $b$.  In particular,
if $\sigma(a) = \sigma(b) = +1$, there are at most $4^n/2-2^n$ edges both of
whose endpoints have spin $-1$. This gives that ${\rm U}_n$ has no terms in $z$ of
degree greater than $4^n/2-2^n$.  Similarly, ${\rm W}_n$ has no terms in $z$ of
degree lower than $-4^n/2 + 2^n$.

Clearing denominators, one obtains 
\begin{eqnarray*}
\check {\rm U}(z) &:=& t^{I(\si_+)/2}z^{4^n/2} \ {\rm U}(z) = \sum_{j=0}^{N_0} a_j^+ z^j \,\,\, \mbox{and} \\
\check {\rm W}(z) &:=& t^{I(\si_-)/2}z^{4^n/2-2^n} \ {\rm W}(z) = \sum_{j=0}^{N_0}a_j^- z^j,\\
\end{eqnarray*}
\noindent
where $N_0 = 4^n-2^n$.  
It follows from the Lee-Yang Theorem with Boundary Conditions that the zeros
$b_1,\ldots,b_{N_0}$ of $\check {\rm W}(z)$ all lie in $\D$.

The basic symmetry of the Ising model appears as the following symmetry between $\check {\rm U}$
and $\check {\rm W}$:
\begin{eqnarray*}
a^-_{N_0-j} = a_j^+, \qquad j=0,1,\ldots,N_0.
\end{eqnarray*}
\noindent
Consequently,
\begin{eqnarray*}
\check {\rm W}_n(z) = \prod_{j=1}^{N_0}(z-b_j) \,\,\, \mbox{and} \,\,\,  \check {\rm U}_n(z) = \prod_{j=1}^{N_0}(1-b_j z) = \prod_{j=1}^{N_0}(1-\bar{b}_j z),
  \end{eqnarray*}
\noindent
using also that $\check {\rm U}_n(z)$ has real coefficients.

Since $z_n^2 = W_n/U_n = {\rm W}_n^2/{\rm U}_n^2$, we obtain the Blaschke Product
\begin{eqnarray}\label{EQN:ZBP}
z_n = \frac{{\rm W}_n}{{\rm U}_n} = \frac{z^{2^n} \check{\rm W}_n}{\check {\rm U}_n} = z^{2^n}\prod_{j=1}^{N_0}\frac{z-b_j}{1-\bar{b}_j z}
\end{eqnarray}
having all of its zeros in $\D$ and a zero at $z=0$ of multiplicity $2^n$.

\begin{rem}The degree of (\ref{EQN:ZBP}) is half of the degree for the Blaschke product in terms of $u$ and $v$ that was obtained using the global approach because of the relationship $z^2 = W/U$.  This is why the multiplicity of $z=0$ in (\ref{EQN:ZBP}) is only $2^n$ instead of $2^{n+1}$.
\end{rem}

The remainder of the proof continues as in \S \ref{SUBSEC:GLOBAL_APPROACH}.

\section{Low temperature dynamics: basin of $\BOTTOMphys$ and its stable foliation}\label{SEC:BOTTOM BASIN}

The bottom circle $\BOTTOMphys$ is superattracting within $\Cphys$ by Property (P5)
from \S \ref{SUBSEC:MAP_ON_CYL}, so there is an open set 
$\WW^s(\BOTTOMphys) \subset \Cphystl$ 
consisting of points whose orbits converge to $\BOTTOMphys$, 
called the {\it basin of attraction} of $\BOTTOMphys$. 
Obviously, $\WW^s(\BOTTOMphys)$ is completely invariant under $\Rphys | \Cphystl$.

\subsection{Low temperature cylinder $\Cphyslow$}
\label{SUBSEC:BASIN_BOTTOM}   

Recall that $t_c$ is height of the real repelling fixed point $\FIXphys_c$ in the
invariant line $\{\phi=0\}$, and let $\FIXphys_c'= (\pi, t_c)$, 
$$ 
 \Cphyslow= \{ (\phi,t)\in \Cphys:\ t\leq t_c\}.
$$ 
 We will call $\Cphyslow \sm \{\FIXphys_c, \FIXphys_c' \}$ the {\it low temperature cylinder}.

Let us begin with a simple observation: 

\begin{lem}\label{BB}
The basin $\WW^s(\BOTTOMphys)$ contains the low temperature cylinder
$\Cphyslow\sm \{\FIXphys_c, \FIXphys_c'\}$.
\end{lem}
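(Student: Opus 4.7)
The plan is to show that the $t$-coordinate is a strict Lyapunov function for $\Rphys|_{\Cphyslow}$: it strictly decreases under iteration off of $\BOTTOMphys \cup \{\FIXphys_c, \FIXphys_c'\}$. Monotonicity together with a compactness/continuity argument then forces every orbit in $\Cphyslow \setminus \{\FIXphys_c, \FIXphys_c'\}$ to accumulate on $\BOTTOMphys$, which is exactly what the lemma asserts.

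The main step is a direct comparison. Restricting the explicit formula (\ref{R}) to $\Cphys$ (where $z^2+z^{-2}=4\cos^2\phi-2$) gives
\[
t(\Rphys(\phi,t)) \;=\; \frac{4\cos^2\phi}{4\cos^2\phi + (t-t^{-1})^2},
\]
and an elementary manipulation (factoring $(1-t)$ out of the numerator of $t'-t$) shows that $t'<t$ is equivalent to $\cos^2\phi < \psi(t)$, where
\[
\psi(t) \;:=\; \frac{(1-t)(1+t)^2}{4t}.
\]
A routine computation gives $\psi'(t) = -(1+t^2+2t^3)/(4t^2) < 0$ on $(0,1]$, so $\psi$ decreases strictly from $+\infty$ at $0^+$ to $0$ at $1$; moreover $\psi(t_c)=1$ is exactly the fixed-point relation for $\Rphys|_{\II_0}$ at $\FIXphys_c$ (equivalently, the cubic $t_c^3+t_c^2+3t_c-1=0$). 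Consequently, for $0 < t < t_c$ we have $\psi(t) > 1 \ge \cos^2\phi$, so $t' < t$ for every $\phi$; while for $t = t_c$ we have $\psi(t_c) = 1 \ge \cos^2\phi$ with equality iff $\phi \in \{0,\pi\}$, i.e.\ iff $(\phi,t) \in \{\FIXphys_c, \FIXphys_c'\}$. In particular $\Rphys(\Cphyslow) \subset \Cphyslow$.

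To conclude, fix $x_0 = (\phi_0,t_0) \in \Cphyslow \setminus \{\FIXphys_c, \FIXphys_c'\}$ with $t_0 > 0$. After at most one step $t_1 < t_c$ (equality would force $x_0 \in \{\FIXphys_c,\FIXphys_c'\}$), and then $(t_n)_{n\ge 1}$ is strictly decreasing, hence converges to some $t_\infty \in [0,t_c)$. Any accumulation point $y$ of the orbit lies in $\Cphystl$, away from the indeterminacies $\alpha_\pm$, so $\Rphys$ is continuous at $y$; this gives $t(\Rphys(y)) = t_\infty = t(y)$, and the dichotomy above forces $t_\infty = 0$. Hence $x_n \to \BOTTOMphys$, so $x_0 \in \WW^s(\BOTTOMphys)$. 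The case $t_0 = 0$ is immediate. The only delicate bookkeeping is identifying the condition $\psi(t_c)=1$ with the defining cubic of $t_c$; the rest is elementary algebra plus standard compactness.
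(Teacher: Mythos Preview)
Your proof is correct and close in spirit to the paper's, but the concluding step is organized differently. Both arguments rest on the explicit formula
\[
t(\Rphys(\phi,t)) \;=\; \frac{4\cos^2\phi}{4\cos^2\phi + (t-t^{-1})^2}
\]
to show monotonicity of the $t$-coordinate. The paper observes that this is maximized at $\phi=0$, giving the comparison $t(\Rphys x)\le t(\Rphys(0,t(x)))$ with equality only on $\II_0\cup\II_\pi$; it then dominates the orbit $(t_n)$ by the one-dimensional orbit $q_n = t(\Rphys^n(0,t_0))$ on $\II_0$, which is known to converge to $0$ for $t_0<t_c$. You instead work out the condition $t'<t$ directly as $\cos^2\phi<\psi(t)$, identify $\psi(t_c)=1$, and finish with a Lyapunov/compactness argument on the accumulation set. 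The paper's comparison avoids the compactness step at the cost of invoking the dynamics on $\II_0$; your version is more self-contained and makes the role of the cubic for $t_c$ explicit. Either way the content is the same.
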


\begin{proof}
We will check that points  $x\in \Cphyslow \sm \{\FIXphys_c\}$ converge to the bottom $\BOTTOMphys$ at least as fast
as they do on the low temperature interval $(\FIXphys_0, \FIXphys_c)\subset \II_0$.
 
For $x=(\phi,t)\in \Cphys$, we let $t(x)=t$.  Since the function \begin{displaymath}y \mapsto
\frac{y+1}{y+s}\end{displaymath} is non-increasing on $(-1,1]$ for $s \geq 1$, (\ref{f}) gives
that $t (\Rphys x)\leq  t(\Rphys(0,t(x)))$, with equality attained only if $x\in \II_0,
\II_\pi$. 

Let $t_n=t(\Rphys^n x)$ and let $q_n = t(\Rphys^n(0,t_0))$. 
By the above observation, if $x\in \Cphyslow\sm \{\FIXphys_c\}$ then $t_1 < t_c$,
and furthermore,  $t_{n+1} \leq  q_n  \to 0$ as $n\to \infty$. Thus,
$\Cphyslow\sm \{\FIXphys_c\} \subset \WW^s(\BOTTOMphys)$. 
\end{proof}

\subsection{Complex stable lamination of $\BOTTOMphys$ in $\C^2$}
In $\C^2$, the circle $\BOTTOMphys$ is {\it hyperbolic}, with a complex
one-dimensional transverse stable direction (the $t$-direction) and a real
one-dimensional transverse unstable one (the unstable direction within the
plane $t=0$).

Given an $\eps>0$, the $\eps$-{\it local basin} $\WW^s_{\C, \eps}(\BOTTOMphys)$
is the set of  points $\zeta \in \C^2$ that are $\eps$-close to $\BOTTOMphys$
and whose orbits converge to $\BOTTOMphys$ while remaining $\epsilon$-close to
$\BOTTOMphys$.  Alternatively, we can use any forward invariant open set containing
$\BOTTOMphys$ to define a local stable set $\WW^s_{\C, \loc}(\BOTTOMphys)$, with the specific
open set that is used implicit in the notation.

Similarly, for $x=(\phi, 0)\in \BOTTOMphys$, a local stable manifold
$\WW^s_{\C,\loc}(x)\equiv \WW^s_{\C,\loc}(\phi)$ is a $1$-dimensional holomorphic
curve containing $x$ consisting of all  points $\zeta$ near $x$ whose orbits
are forward asymptotic to the orbit of $x$, while remaining close to the orbit
of $x$.

In this subsection we will construct the local stable lamination
\footnote{Here, a lamination is a family of disjoint holomorphic curves that has
a local product structure, in a sense similar to that given in \S \ref{sec:
strictly vert lam}.} of the bottom circle $\BOTTOMphys$ in $\C^2$ and will show
that the leaves $\WW^s_{\C,\loc}(\phi)$, $\phi\in \T$, of this lamination are
holomorphic curves filling in a topological real $3$-manifold contained within
$\WW^s_{\C,\loc}(\BOTTOMphys)$.  

In the case of diffeomorphisms, the construction of the stable laminations for
hyperbolic sets is a standard background of the general theory, see
\cite{HPS,PDM,SHUB}. However, we have not been able to find an adequate
reference in the non-invertible case (notice, however,
the remark in \cite[p. 79]{PDM}), so we will give a direct argument in
our situation.

We will make use of the simple structure of the postcritical locus near
$\BOTTOMphys$ (comprising two lines $\{ t=\pm \pi/2\}$ collapsing to the fixed
point $\FIXphys_0$) and of the holomorphic $\la$-lemma.  This is similar to the
method used in \cite[\S 2.4]{HP_NEWTON} and \cite[\S 4]{ROE}.

\begin{prop}\label{PROP:STABLE_MANIFOLD}
For sufficiently small $\eps>0$, local stable manifolds $\WW^s_{\C,\loc}(x)$, $x\in \BOTTOMphys$,
are holomorphic curves whose union $\bigcup_{x \in \BOTTOMphys} \WW^s_{\C,\loc}(x)$ form a lamination 
supported on the local basin of $\WW^s_{\C, \loc}(\BOTTOMphys)$.
Moreover, $\bigcup_{x \in \BOTTOMphys} \WW^s_{\C,\loc}(x)$ is  a topological real $3$-manifold.
\end{prop}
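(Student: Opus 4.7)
The plan is to construct the local stable lamination by a graph–transform argument in the spirit of Hadamard--Perron, adapted to the non-invertible setting and exploiting the strong transverse superattraction along $\LL_0 = \{t=0\}$ that is supplied by Property~(P5). First, I would fix a complex tubular neighborhood
\begin{equation*}
  U \;=\; \bigl\{(z,t) \;:\; \bigl|\log|z|\bigr| < \delta,\ |t| < \eps\bigr\}
\end{equation*}
of $\BOTTOMphys$; the estimate $t' = O(t^2)$ together with expansion of $z \mapsto z^4$ on $\T$ guarantees $\Rphys(U) \Subset U$ for small $\eps,\delta$, and by construction $\WW^s_{\C,\loc}(\BOTTOMphys) = \bigcap_{n\geq 0} \Rphys^{-n}(U)$ consists of points whose $t$-coordinate decays super-exponentially. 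The key geometric observation is that the postcritical locus of $\Rphys$ near $\BOTTOMphys$ is contained in $\LL_0$ itself — the two transverse collapsing critical lines $\{z = \pm i\}$ land on the fixed point $\FIXphys_0 \in \BOTTOMphys$ — so off $\LL_0$ the map $\Rphys$ admits holomorphic local inverse branches on $U$, while on $\LL_0$ the only pathology is the second-order vanishing of the $t$-coordinate.

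Next, I would parameterize candidate leaves as holomorphic graphs over the $t$-direction: for each $z_0 \in \T$, seek
\begin{equation*}
  \Sigma_{z_0} \;=\; \{(g_{z_0}(t), t) \;:\; t \in \D_\eps\}, \qquad g_{z_0}(0) = z_0,
\end{equation*}
and require the invariance $\Rphys(\Sigma_{z_0}) \subset \Sigma_{z_0^4}$. This translates into a functional equation for the family $\{g_{z_0}\}_{z_0 \in \T}$ which I would view as a fixed-point equation for a pull-back graph transform $\Psi$: given the family over $z_0^4$, the family over $z_0$ is obtained by pulling back through a specific holomorphic inverse branch of $\Rphys$ near $(z_0,0)$ and reading off the resulting graph. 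Acting on a Banach space of continuous families of bounded holomorphic maps $\D_\eps \to \C$ close to the constants, $\Psi$ is a strict contraction in the sup norm because the stable eigenvalue of $D\Rphys$ along $\BOTTOMphys$ is $0$ (superattraction), while the tangential factor is a bounded number~$4$. Its unique fixed point furnishes holomorphic disks $\WW^s_{\C,\loc}(x) := \Sigma_{z_0}$ for $x=(z_0,0)$; uniqueness together with the decay of $t$ identifies their union with $\WW^s_{\C,\loc}(\BOTTOMphys)$.

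Local product structure and continuous (H\"older, in fact holomorphic with respect to parameters) dependence of the leaves on $z_0 \in \T$ follow from the contraction acting on the parameter-dependent function space — this is the standard Hadamard--Perron output, reinforced by the holomorphic $\lambda$-lemma which transports holomorphic parameter-dependence through $\Psi$. Finally, the map $(z_0, t) \mapsto (g_{z_0}(t), t)$ is a homeomorphism from $\T \times \D_\eps$ onto $\bigcup_{x \in \BOTTOMphys} \WW^s_{\C,\loc}(x)$, exhibiting the total space as a topological real $3$-manifold.

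The main technical obstacle is that $\LL_0$ lies in the critical locus of $\Rphys$, so the Jacobian of $\Rphys$ vanishes along $\BOTTOMphys$, and the naive inverse-branch construction branches exactly on the central fiber. The resolution is to absorb the degeneracy into a preliminary change of variable $t = s^2$, which turns $\Rphys$ into $(z,s) \mapsto (z^4 \phi(z,s),\ s\cdot \psi(z,s))$ with $\phi, \psi$ holomorphic and non-vanishing near $\BOTTOMphys$; in the resolved coordinates $\Rphys$ is a local biholomorphism and the contracting graph transform runs verbatim. Morally, the vanishing of the transverse derivative is an asset rather than an obstruction: it makes the contraction factor of $\Psi$ essentially zero to leading order, so once the set-up is in place, convergence is immediate — the same strategy was employed in \cite{HP_NEWTON} and \cite{ROE} cited in the text.
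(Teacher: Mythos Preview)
Your graph-transform strategy is a legitimate alternative to the paper's argument, but the final paragraph contains a real error. The substitution $t = s^2$ does \emph{not} turn $\Rphys$ into a local biholomorphism near $\BOTTOMphys$: since $t' = t^2(z+z^{-1})^2\bigl(1+O(t^2)\bigr)$ (this is Property~(P5) made explicit), the substitution yields $s' = s^2(z+z^{-1})\bigl(1+O(s^4)\bigr)$, which is still quadratic in $s$ --- and moreover vanishes along the collapsing lines $z=\pm i$. No monomial substitution in $t$ can remove the superattraction.

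Fortunately the obstacle you flag is illusory for the graph transform as you set it up. You do not need a two-dimensional holomorphic inverse branch of $\Rphys$; you only need to solve the scalar equation
\[
  \Rphys_1(z,t) \;=\; g_{z_0^4}\!\bigl(\Rphys_2(z,t)\bigr)
\]
for $z$ near $z_0$, and this is immediate from the implicit function theorem since $\partial_z\Rphys_1(z_0,0) = 4z_0^3 \neq 0$ on $\T$. With that correction the contraction estimate goes through with factor $\approx 1/4$ and the rest of your outline stands. (The parenthetical ``holomorphic with respect to parameters'' is also off: $z_0$ ranges over the real circle $\T$, so at best you get H\"older dependence from the contraction.)

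For comparison, the paper takes a different route: it builds the stable discs explicitly over the countable dense set $\QQ = \bigcup_n (\Rphys|\T)^{-n}(1)$ --- starting by hand with the vertical lines over $\pm 1,\pm i$, then pulling back as ordinary preimages of already-constructed discs (unproblematic because those discs meet the critical value locus transversally) --- reads this family as a holomorphic motion of $\QQ$ parameterized by $t\in\D_\rho$, and invokes the $\la$-lemma to extend continuously to all of $\BOTTOMphys$. This sidesteps any contraction-mapping machinery and directly yields quasiconformal regularity of the horizontal holonomy; your approach trades that for a more uniform Banach-space argument, but requires the implicit-function step above to be stated correctly in place of the erroneous resolution.
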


\begin{proof}
Let $\QQ \subset \BOTTOMphys$ be the set of iterated preimages of $1$ under $z
\mapsto z^4$.  We will construct a family of analytic discs $\DD_z$ through the
the set $\QQ$ in such a way that each disc intersects $\BOTTOMphys$ in exactly
one point $z \in \QQ$ and so that $\DD_z$ is obviously the stable disc of $z$.
We will furthermore verify that each $\DD_z$ can be expressed as the graph of a
function $z = \eta(t)$ for all $t$ in an appropriate small disc $\D_\rho$.

This family of discs provides a {\em holomorphic motion} of
$\QQ$, parameterized by $\D_\rho$:
\begin{eqnarray*}
h:\QQ \times \D_\rho \rightarrow \C.
\end{eqnarray*}
\noindent
Then, the $\lambda$-Lemma \cite{LY_LAMBDA,MSS} for holomorphic motions immediately gives that $h$
extends to the closure providing a continuous mapping $\bar h:\BOTTOMphys \times
\D_\rho \rightarrow \C$.

Recall that the line $t=0$ is superattacting away from the origin.  Therefore,
choosing $0 < a < 1$ and $C > 0$ we can easily further restrict $\rho>0$ so
that $|t_n| \leq C a^n$ for any choice of $(z,t) \in \cup_{z\in \QQ} \DD_z$.
(Here $t_n = t(\Rphys^n(z,t))$.) Therefore, points in the closure, and in
particular, points in the image of $\BOTTOMphys \times \D_\rho$ under $(\phi,t)
\mapsto \left(\bar h(\phi,t),t\right)$ converge at least geometrically to
$\BOTTOMphys$.

Given $(\phi,0) \in \BOTTOMphys$, the stable leaf $\WW^s_{\C,\loc}(\phi)$ is
parameterized by $t \ra \left(\bar h(\phi,t),t\right)$ for $|t| < \rho$.  The
union of all stable leaves, which is given by the image of $\BOTTOMphys \times
\D_\rho$ under $(\phi,t) \mapsto \left(\bar h(\phi,t),t\right)$, is the desired
lamination.  It is contained within
$\WW^s_{\C,\loc}(\BOTTOMphys)$ by the discussion in the previous paragraph.  

\msk
We now construct the holomorphic motion $h:\QQ \times \D_\rho \rightarrow
\C$.

Consider a neighborhood $\Delta_{\delta,\rho}$ of $\BOTTOMphys$ of the form
\begin{eqnarray*}
\Delta_{\delta,\rho} = \{(z,t) \in \C^2 \,\, : \, 1-\delta < |z| < 1+\delta,\,\, |t| < \rho \}
\end{eqnarray*}

Let $\partial ^v \Delta_{\delta,\rho}$ be the vertical boundary $|z| = 1 \pm \delta$ and $\partial ^h \Delta_{\delta,\rho}$
the horizontal boundary $|t| = \rho$.

\begin{lem}\label{LEM:GOODNBHD}
We can choose $\delta, \rho > 0$ sufficiently small so that
\begin{enumerate}
\item The complex vertical cone-field $|dt(v)| > |dz(v)|$ is backward invariant under $\Rphys$.

\item $\Rphys(\partial ^v \Delta_{\delta,\rho})$ is entirely outside of $\Delta_{\delta,\rho}$ and $\Rphys(\partial ^h \Delta_{\delta,\rho})$ is entirely contained within $|t| < \rho$.
\end{enumerate}
\end{lem}

\begin{proof}
On $\BOTTOMphys$
we have
\begin{eqnarray*}
D\Rphys = \left[\begin{array}{cc} 4z^3 & 0 \\ 0 & 0 \end{array}\right]
\end{eqnarray*}
so that, by continuity the $(1,1)$ term of $D\Rphys$ dominates the remaining
terms in any sufficiently small neighborhood of $\BOTTOMphys$.  This is
sufficient for the desired invariance of the conefield in (1).

We now further restrict $\partial ^h \Delta_{\delta,\rho}$ so that (2)
holds.  However, this again follows easily by continuity because on $t=0$ we
have that $\Rphys$ is given by $z \mapsto z^4$ which maps the boundary of any
annulus of the form $1-\delta < |z| < 1+\delta$ well outside of the annulus and
because the line $t=0$ is superattracting.  \end{proof}

We will call an analytic disc $\DD$ in $\Delta_{\delta,\rho}$ {\em admissible} if:
\begin{enumerate}
\item $\DD$ intersects $\BOTTOMphys$ in a single point,
\item  $\partial \DD$ intersects $\partial \Delta_{\delta,\rho}$ only in the vertical boundary $\partial^v \Delta_{\delta,\rho}$, and
\item The tangents to $\DD$ lie within the vertical cone-field Lemma \ref{LEM:GOODNBHD} above.
\end{enumerate}
\noindent
These properties are chosen to ensure that
any admissible disc can be written as the the graph of some function $z=\eta(t)$ for some analytic $\eta:\D_\rho \ra \C$.

We now construct the family of discs $\DD_z$ for all $z \in \QQ$.  We do this ``by hand'' for the first preimages of $1$, letting:
\begin{eqnarray}\label{EQN:FIRST_DISCS}
\DD_{\pm 1} = {\pm 1} \times \D_\rho, \mbox{  and  }
\DD_{\pm i} = {\pm i} \times \D_\rho,
\end{eqnarray}
\noindent
because this first level of discs is ``delicate'' since $\DD_{\pm i}$ contain portions of the collapsing lines.
Note that each is clearly the stable disc of its basepoint, and clearly admissible.

We now inductively define admissible stable discs $\DD_z$ over any $z \in
\QQ$, assuming that $z^4 \neq 1$.  Suppose that  $z$ is an $n$-th preimage
of $1$.  By construction, there is an admissible stable disc  $\DD_{z'}$ over
$z' = z^4$, which transversally intersects the critical value locus of $\Rphys$.
(Within $\Delta_{\delta,\rho}$ the critical value locus of $\Rphys$  is the
union of the line $t=0$ and the point $(1,0)$.)
Therefore, $R^{-1}(\DD_{z'})$ is a finite union of
analytic discs.  Let $\DD_z$ be the component of $R^{-1}(\DD_{z'}) \cap
\Delta_{\delta,\rho}$ containing $z$.   By properties (1) and (2) from the lemma,
we see that since $\DD_{z'}$ was an admissible disc, so is
$\DD_z$.

Continuing in this way one defines a family of admissible stable discs over every $z \in \QQ$.
The result is the desired holomorphic motion $h:\QQ \times \D_\rho \rightarrow \C$.

\msk
The map $(\phi,t) \mapsto \left(\bar h (\phi,t),t\right)$ is
clearly an immersion of $\BOTTOMphys \times \D_\rho$ into $\C^2$ with image $\bigcup_{x
\in \BOTTOMphys} \WW^s_{\C,\loc}(x)$.
Shrinking $\rho$ slightly, if necessary, this immersion can be made into an embedding.
\end{proof}

The $\la$-Lemma  gives the following regularity for $\bigcup_{x
\in \BOTTOMphys} \WW^s_{\C,\loc}(x)$.  Globally, it is just a topological
manifold, however each slice with $t=t_0$ for $|t_0| < \rho$ is the image of
the unit circle $\BOTTOMphys$ under a quasiconformal homeomorphism with
dilatation \begin{eqnarray*} K \leq \frac{\rho + |t_0|}{\rho - |t_0|}.
\end{eqnarray*}

$\WW^s_{\C,\loc}(\BOTTOMphys)$.  

\subsection{Stable foliation of $\BOTTOMphys$ in the cylinder}  \label{SUBSEC:BOTTOM_FOLIATION}
 Let us now consider the slices of the local stable manifolds by the cylinder,
$$
  \WW^s_\loc(x)\equiv \WW^s_\loc(\phi)= \WW^s_{\C,\loc}(\phi)\cap \Cphys, \quad x=(\phi,0), \ \phi\in \T.
$$ 
They are real analytic curves that form a foliation of a neighborhood of $\BOTTOMphys$ in $\Cphys$.
Note that the $\WW^s_\loc(\pm \pi/2)$ are arcs of the collapsing intervals $\II_{\pm \pi/2}$.

We will now globalize this foliation.
 Let $\WW^s_n(\phi)$ stand for the lift  
of $\WW^s(4\phi,0)$ that begins at $(\phi,0)$.  
Since $\Rphys(\WW_\loc(\phi))\subset \WW_\loc(4\phi)$, we have: 
$$
  \WW^s_\loc(\phi)\equiv \WW^s_0(\phi) \subset \WW_1^s(\phi)\subset \WW^s_2(\phi)\subset\dots
$$
By Property (P4), for $\phi \neq \pm \pi/2$, each $\WW^s_n(\phi)$ is
a real analytic curve, while $\WW^s_n(\pm \pi/2)=\II_{\pm \pi/2}$ for all $n\geq 1$. 
Hence the sets
$$
   \WW^s(\phi)=\bigcup_{n=0}^\infty \WW^s_n(\phi).
$$  
are real analytic curves for all $\phi\in \T$. They are called the {\it global stable manifolds}
of the points  $x=(\phi,0)\in \BOTTOMphys$.%
\footnote{This terminology is not completely standard, as usually the global
stable manifold of $x$  is defined as the set of {\it all} points that are forward
asymptotic to the  $\orb x$.  In our situation, it would be 
$\cup \Rphys^{-n} \WW^s(4^n \phi)$, which is disconnected.} 
Note that $\WW^s(\pm\pi/2)=I_{\pm\pi/2}$, while $\WW^s(0)=[\FIXphys_0, \FIXphys_c)$.  

By construction, $\Rphys(\WW^s(\phi))\subset \WW^s(4\phi)$,
and, in fact,  $\WW^s(\phi)$ is the lift of $\WW^s(4\phi)$ by  $\Rphys$
that begins at $(\phi,0)\in \BOTTOMphys$  (compare Lemma \ref{vertical lifts}).

\begin{lem}\label{stble man str vert}
  The stable manifolds $\WW^s(\phi)$ are strictly vertical curves.
\end{lem}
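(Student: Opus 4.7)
The plan is to prove strict verticality by induction on $n$, where $\WW^s(\phi) = \bigcup_n \WW^s_n(\phi)$, using the strict forward invariance of $\KK^h$ from Lemma~\ref{LEM:MOD_CF_INVARIANT}.

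For the base case $n=0$: at a point $(\phi,0)\in \BOTTOMphys$ with $\phi \ne \pm\pi/2$, the differential $D\Rphys(\phi,0)$ is diagonal in $(\phi,t)$-coordinates with the $\phi$-axis tangent to $\BOTTOMphys$ as its eigenvector (eigenvalue $4$ by property (P5)) and the $t$-axis as its kernel (the transversally superattracting direction). Hence $\WW^s_\loc(\phi)$ is tangent to $\{d\phi=0\}$ at $(\phi,0)$, which lies in the strict interior of $\KK^v(\phi,0)$. By continuity of the cone field and of the tangent of the real-analytic curve, the whole (small) $\WW^s_\loc(\phi)$ stays strictly vertical. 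The exceptional cases $\phi=\pm\pi/2$ are immediate, since $\WW^s_\loc(\pm\pi/2)\subset \II_{\pm\pi/2}$ is a genuinely vertical line in $\Cphys$.

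For the inductive step, I first observe that Lemma~\ref{LEM:MOD_CF_INVARIANT} yields a dual ``strict backward invariance'' of $\inter\KK^v$ at regular points: if $w\in T_x\Cphys$ is such that $D\Rphys(x)\,w \in \inter\KK^v(\Rphys x) = T_{\Rphys x}\Cphys\setminus \overline{\KK^h}(\Rphys x)$, then $w\in \inter\KK^v(x)$. Indeed, $\Rphys(\overline{\KK^h}(x))\subset \inter\KK^h(\Rphys x)$ forces $w\notin \overline{\KK^h}(x)$. Now let $\gamma:=\WW^s_n(\phi)$ be the $\Rphys$-lift of $\WW^s_{n-1}(4\phi)$ beginning at $(\phi,0)$. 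Off its basepoint, $\gamma$ lies in the open basin $\WW^s(\BOTTOMphys)\subset \Cphystl$, well away from the indeterminacy points $\alpha_\pm$, and by property (P4) $\Rphys$ is a local diffeomorphism on $\gamma\setminus \BOTTOMphys$ (the only possible interior critical contact would be with the collapsing intervals $\II_{\pm\pi/2}$, which themselves are the trivially vertical stable leaves $\WW^s(\pm\pi/2)$). Combining the inductive hypothesis on $\WW^s_{n-1}(4\phi)$ with the dual invariance therefore propagates strict verticality back to every interior point of $\gamma$.

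The one subtle point, which I expect to be the main obstacle, is the behaviour at the basepoint $(\phi,0)$, where $D\Rphys(\phi,0)$ is singular and the dual argument does not directly apply. To handle this, I would parameterize $\gamma$ near $(\phi,0)$ as $s\mapsto (\phi+\gamma_1(s), \gamma_2(s))$, and use that $\Rphys\circ\gamma$ is a reparameterization of $\WW^s_{n-1}(4\phi)$, which is tangent to the $t$-axis at $(4\phi,0)$ by the base-case analysis applied at each level. Since $D\Rphys(\phi,0)$ maps the $\phi$-axis onto itself with eigenvalue $4$ and annihilates the $t$-axis, vertical tangency of the image forces $\gamma_1(s)$ to vanish to higher order than $\gamma_2(s)$ at the basepoint, so $\gamma$ itself is tangent to the $t$-axis and hence strictly vertical there. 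Together, the base case, the inductive step for interior points, and this endpoint analysis yield the lemma.
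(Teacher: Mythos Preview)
Your proof is correct and follows the same approach as the paper: both establish that the local stable manifolds are tangent to $\Ker D\Rphys = \{d\phi=0\}$ on $\BOTTOMphys$ (hence strictly vertical), and then use backward invariance of the vertical cone field---equivalently, the strict forward invariance of $\KK^h$ from Lemma~\ref{LEM:MOD_CF_INVARIANT}---to propagate this along the global stable manifolds. Your concern about the basepoint in the inductive step is unnecessary, since the nesting $\WW^s_\loc(\phi)\subset\WW^s_n(\phi)$ means the germ at $(\phi,0)$ is fixed once and for all by the base case.
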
   

\begin{proof}
The stable manifolds $\WW^s(x)$ are tangent to the $\Ker D\Rphys(x)$.  
From representation $R=f\circ Q$ in \S \ref{f-sec}
we see that  the $\Ker D\RR(x)=\Ker DQ(x)$ are orthogonal to $\BOTTOMphys$. 
Hence the local stable manifolds $\WW_\eps^s(x)$ go through the vertical algebraic cones $\KK^v(x)$ 
(for $\eps>0$ sufficiently small), so they are strictly vertical. 
Since the cone field $\KK^v(x)$ is backward invariant (see Cor. \ref{COR:R_CONEFIELD}), 
the global stable manifolds $\WW^s(x)$ are strictly vertical as well. 
\end{proof}

\subsection{Stable tongues.}\label{stable tongues sec}

A {\it tongue} $\Tongue$   
attached to a ``tip'' $x \in \TOPphys$ is a domain in $\Cphys$ bounded by two proper vertical paths
meeting only at $x$. 
Note that $\Tongue$ meets $\BOTTOMphys$ in an interval $\BOTTOMphys_\Tongue$, called its {\it bottom}.
A tongue is called {\it stable} if it is contained in $\WW^s(\BOTTOMphys)$ and is foliated by proper
stable manifolds $\WW^s(\phi)$, $(\phi,0)\in \BOTTOMphys_\Tongue$ (terminating at $x$). 

\comm{***
   Since the critical leaves $\II_{\pm}$ are collapsed to
the fixed point $\FIXphys_0\in \BOTTOMphys$, they are contained in $\WW^s(\BOTTOMphys)$.  Hence all their lifts
under the iterates of $\Rphys$, which are also properly embedded vertical curves by
Property (P9), are also
contained in $\WW^s(\BOTTOMphys)$. Since the basin $\WW^s(\BOTTOMphys)$ is open, each of these lifts is
contained in $\WW^s(\BOTTOMphys)$ together with some tongue. 
****}

\begin{prop}\label{principal tongues}
   There are two stable tongues $\Tongue(\alpha_\pm)$ attached to the indeterminacy points $\alpha_\pm$ respectively. 
They are symmetric with respect to the collapsing intervals $\II_{\pm\pi/2}$ and have positive angles at the tip.
\end{prop}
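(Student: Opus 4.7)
The plan is to construct $\Tongue(\alpha_+)$ directly as a family of stable manifolds $\WW^s(\phi)$ for $\phi$ in an open interval around $\pi/2$, each a proper vertical curve terminating at $\alpha_+$; $\Tongue(\alpha_-)$ then follows by the basic symmetry $\rho$. A convenient anchor is that $\II_{\pi/2}$, which collapses to $\FIXphys_0 \in \BOTTOMphys$ under $\Rphys$ by Property (P3), plays the role of the ``spine'' of the tongue: the inductive definition in \S \ref{SUBSEC:BOTTOM_FOLIATION} gives $\II_{\pi/2} \subset \WW^s(\pi/2)$, a proper vertical curve from $(\pi/2, 0)$ to $\alpha_+$. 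The full family $\WW^s(\phi)$ for $\phi$ near $\pi/2$ will be obtained as singular $\Rphys$-lifts of stable manifolds $\WW^s(\psi)$ with $\psi$ near $0$.

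For the local setup near $\FIXphys_0$, I would consider the local stable manifolds $\WW^s_\loc(\psi)$ at $(\psi,0) \in \BOTTOMphys$ for $\psi$ in a small interval $J = (-\delta, \delta)$: by Proposition \ref{PROP:STABLE_MANIFOLD} and Lemma \ref{stble man str vert}, they form a strictly vertical real-analytic foliation near $\BOTTOMphys$. The singular curve $\Icurve$ is tangent to $\BOTTOMphys$ at $\FIXphys_0$ (see (\ref{Icurve-2})), so, shrinking $\delta$ if necessary, each $\WW^s_\loc(\psi)$ crosses $\Icurve$ transversally at exactly one point near $\FIXphys_0$ and extends into $\Cphys_+$. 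I would then apply Property (P9) to this extended initial segment of $\WW^s_\loc(\psi)$: its $\Rphys$-preimage contains a singular lift $\de_s^+ \subset \La_+$ ending at $\alpha_+$ and issuing from some $(\phi, 0) \in \BOTTOMphys$ with $4\phi \equiv \psi \mod 2\pi$, so $\phi$ lies in a small interval $J'$ about $\pi/2$. Iterating this lifting procedure at every level $n$ produces $\WW^s_n(\phi)$, and the resulting stable manifold $\WW^s(\phi) = \bigcup_n \WW^s_n(\phi)$ is a proper vertical curve from $(\phi, 0)$ to $\alpha_+$.

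The tongue $\Tongue(\alpha_+)$ is then the open region bounded by $\WW^s(\phi_1)$ and $\WW^s(\phi_2)$ for a suitable pair $\phi_1 < \pi/2 < \phi_2$ in $J'$; it is foliated by the strictly vertical curves $\WW^s(\phi)$, $\phi \in (\phi_1, \phi_2)$, each ending at $\alpha_+$, hence is a stable tongue. To get a positive opening angle at the tip, I would use the blow-up of $\Rphys$ at $\alpha_+$ described in Appendix \ref{APP:BLOW_UPS}: since $Q$ from \S \ref{f-sec} is a local diffeomorphism near $\alpha_+$, the exceptional divisor $E_+$ maps diffeomorphically onto $\Icurve$ via the parametrization (\ref{Icurve}), so distinct crossings of $\Icurve$ by the $\WW^s_\loc(\psi)$ (i.e., distinct $\psi \in J$) correspond to distinct tangent rays of $\WW^s(\phi)$ at $\alpha_+$. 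Finally, the Klein-group symmetries (Property (P1)) preserve $\WW^s(\BOTTOMphys)$ and the stable-manifold structure: the involution $\iota \circ \rho : \phi \mapsto \pi - \phi$ fixes $\alpha_+$ and leaves $\II_{\pi/2}$ pointwise invariant, yielding symmetry of $\Tongue(\alpha_+)$ about $\II_{\pi/2}$, while $\rho: \phi \mapsto \phi + \pi$ exchanges $\alpha_\pm$ and $\II_{\pm \pi/2}$, producing $\Tongue(\alpha_-) := \rho(\Tongue(\alpha_+))$ with the analogous properties.

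The main obstacle will be the inductive control in the second paragraph: verifying that the sequence of singular lifts $\WW^s_n(\phi)$ assembles into a single proper vertical curve landing at $\alpha_+$ with a well-defined tangent ray, uniformly as $\phi$ varies over $J'$. This requires uniform estimates in the blow-up chart at $\alpha_+$ together with the horizontal expansion of $\Rphys$ (Theorem \ref{hor expansion thm}), so that the singular branches at successive levels concentrate along a single direction in the exceptional divisor rather than spiraling or accumulating away from $\alpha_+$.
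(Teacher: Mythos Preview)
Your approach is correct and close in spirit to the paper's, but the ``main obstacle'' you flag is illusory: a \emph{single} singular lift already produces the entire proper leaf. By Lemma~\ref{vertical lifts} (and the remark following it about singular lifts of paths proper in $\Cphys_-$), the singular $\Rphys$-lift of any stable-leaf segment running from $\BOTTOMphys$ up to $\Icurve$ is a full proper path in $\Cphys$ terminating at $\alpha_+$. Since this proper path is contained in $\WW^s(\phi)$ and already joins bottom to top, it \emph{is} $\WW^s(\phi)$; there is nothing left for further iterates $\WW^s_n(\phi)$ to add, and no spiraling or accumulation can occur. The tangent direction at $\alpha_+$ is simply the $\omega$-value of the crossing point on $\Icurve$ via the parametrization~(\ref{Icurve}), exactly as you say. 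So no inductive control, no blow-up estimates, and no appeal to Theorem~\ref{hor expansion thm} are needed.

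The paper exploits this more directly and gets a cleaner quantitative bound. Rather than starting from \emph{local} stable manifolds near $\FIXphys_0$, it invokes Lemma~\ref{BB}: the whole low-temperature cylinder $\Cphyslow\sm\{\FIXphys_c,\FIXphys_c'\}$ lies in $\WW^s(\BOTTOMphys)$. Since $\Icurve(\om)=(2\om,\sin^2\om)$, the arc $\Icurve[-\om_c,\om_c]$ with $\sin^2\om_c=t_c$ sits inside $\Cphyslow$, so the region $U$ beneath it is already foliated by global stable leaves. One singular lift of $U$ then yields a stable tongue with explicit opening angle $2\om_c>0$ at the tip; the desired $\Tongue(\alpha_+)$ is the maximal stable tongue containing it, and symmetry about $\II_{\pi/2}$ follows from Property~(P1) as in your argument. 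Your route reaches the same conclusion, just with more scaffolding than the problem requires.
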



\begin{proof}
By the blow-up formula (\ref{Icurve}) 
points $x$ approaching $\INDphys_{\pm\pi/2}$ at angle $\omega$ from $\II_{\pm\pi/2}$ are mapped to the point
$\Icurve(\om) = (2\omega,\sin^2 \omega)$.  
This curve intersects the critical level $\{t=t_c\}$            
at two points, $\Icurve(\pm \om_c)$.                        
Moreover, 
$$
 \Icurve[-\om_c, \om_c]\subset \Cphyslow\sm \{\beta_c, \beta_c'\}\subset \WW^s(\BOTTOMphys)
$$ 
Hence there is a region $U$ under the arc $\Icurve[-\om_c,\om_c]$ 
(comprised of two  symmetric topological triangles) 
foliated by stable manifolds $\WW^s(\phi)$ (see Figure \ref{FIG:FOLIATION_TONGUE}).
By Lemma \ref{vertical lifts}, this region lifts by $\Rphys$ 
to two tongues $\Tongue'(\alpha_\pm)$ attached to $\alpha_\pm$. 
Each $\Tongue'(\alpha_\pm)$ has angle $2\om_c>0$ at the tip.
The desired tongues $\Tongue(\alpha_\pm)$ are the maximal stable tongues containing $\Tongue'(\alpha_\pm)$. 
By $\pm\pi/2$-symmetry of the basin $\WW^s(\BOTTOMphys)$ (see Property (P1) in \S \ref{str on CC}),
they are symmetric with respect to the corresponding axes $\II_{\pm \pi/2}$.
\end{proof}

\begin{figure}
\begin{center}
\input{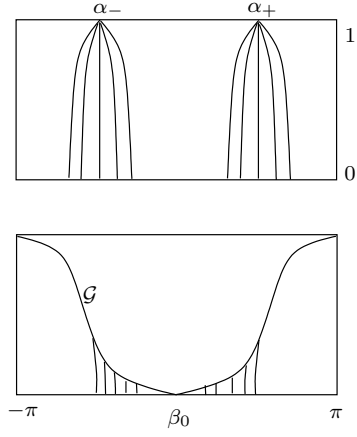}
\end{center}
\caption{\label{FIG:FOLIATION_TONGUE} 
Topological triangles foliated by stable manifolds (below)
and the corresponding stable tongues attached to the indeterminacy points $\INDphys_\pm$ (above).}
\end{figure}

The above tongues $\Tongue_\pm$ will be called the {\it primary} stable tongues of $\WW^s(\BOTTOMphys)$. 


For $n\in \N$, let 
\begin{equation}\label{PI set}
   \PI^n=\Rphys^{-n} \{\alpha_\pm\} = \{(\phi_{n,k},1)\}_{ k=0}^{2^n-1}, \ \mbox{where}\ \phi_{n,k}=\frac{\pm\pi/2 + 2\pi k}{2^{n}}.
\end{equation}
and let $\PI=\cup \PI_n$ be the {\it pre-indeterminacy set}.

\begin{prop}\label{tongues are a.e.}
There is a family of disjoint stable tongues  $\Tongue_k(\alpha)$
attached to the points $\alpha\in \PI$ such that
\begin{itemize}  
\item [(i)] $\Tongue_0 (\alpha_\pm)\equiv \Tongue(\alpha_\pm)$;
\item [(ii)] If $\alpha\not=\alpha_\pm$ then  $\Tongue_k (\alpha)$ is a regular lift of some $\Tongue_j(\Rphys(\alpha))$;
\item [(iii)] If  $\alpha=\alpha_\pm$ but $k\not=0$ 
        then  $\Tongue^{n+1}_k(\alpha)$ is a singular lift of some $\Tongue_j(\beta)$ with $\beta\in \PI$;
\item [(iv)] The union $\cup \Tongue_k(\alpha)$ is backwards invariant;
\item [(v)] The union of the bottoms of all the tongues is an open set of full Lebesgue measure in $\BOTTOMphys$. 
\end{itemize}
\end{prop}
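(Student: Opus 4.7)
The plan is to construct the family $\{\Tongue_k(\alpha)\}$ inductively via pullback under $\Rphys$, starting from the two primary tongues $\Tongue_0(\alpha_\pm)$ provided by Proposition \ref{principal tongues}. Suppose by induction that stable tongues $\Tongue_k(\beta)$ have been defined for all $\beta \in \PI_m$ with $m \leq n$. For each such tongue, consider its preimage $\Rphys^{-1}(\Tongue_k(\beta))$. The boundary of $\Tongue_k(\beta)$ is a pair of proper stable manifolds meeting at the tip $\beta \in \TOPphys$; by Lemma \ref{stble man str vert} these are proper strictly vertical paths. Applying Property (P9) (equivalently Lemma \ref{exactly 4}) to each of the two boundary paths, we get four proper vertical lifts: two regular lifts (forming the boundary of two tongues whose tips are the two points of $\PI_{n+1}$ lying over $\beta$ under $z\mapsto z^2$) and two singular lifts (which terminate at $\alpha_\pm$ and give rise to additional tongues at those points). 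These newly produced tongues, defined as the connected components of $\Rphys^{-1}(\Tongue_k(\beta))$ bounded by matched pairs of lifts, constitute the level-$(n+1)$ members of the family and yield properties (i)--(iv) by construction.

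To verify that each constructed tongue is stable, first note that $\WW^s(\BOTTOMphys)$ is $\Rphys$-invariant, so preimages of tongues lying in the basin remain in the basin. Furthermore, since the primary tongues are foliated by the stable manifolds $\WW^s(\phi)$ (also strictly vertical by Lemma \ref{stble man str vert}), these leaves lift under $\Rphys^{-n}$ to proper vertical stable manifolds that foliate each preimage tongue. Disjointness within a single pullback step is automatic since distinct tongues at that step are distinct connected components of a preimage set. Disjointness across the whole family is enforced by the fact that each $\Tongue_k(\alpha)$ lies in a well-defined connected component of $\Rphys^{-n}$ of a primary tongue where $n$ is the least integer with $\Rphys^n\alpha\in\{\alpha_\pm\}$, and the iteration of $\Rphys$ on these components respects the indexing inherited from the inductive construction.

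For (v), observe that the bottom $B^\pm_0 \subset \BOTTOMphys$ of each primary tongue $\Tongue_0(\alpha_\pm)$ is a nonempty open interval (coming from the construction of $\Tongue'(\alpha_\pm)$ via the arc $\Icurve[-\om_c,\om_c]$). Since $\Rphys|\BOTTOMphys$ is the expanding map $z\mapsto z^4$, the bottom of any regular pullback tongue at level $n$ is a connected component of $(z^4)^{-n}(B^\pm_0)$; inductively, the bottoms of all level-$n$ regular tongues cover $(z^4)^{-n}(B^+_0 \cup B^-_0)$. Since the union of all tongue bottoms is a union of open intervals it is itself open; its full measure follows from the ergodicity of $z\mapsto z^4$ on $\T$ with respect to Lebesgue measure. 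Explicitly, the complement $K = \T \setminus \bigcup_{n\geq 0}(z^4)^{-n}(B^+_0\cup B^-_0)$ is forward-invariant under $z\mapsto z^4$ and disjoint from a set of positive measure, so by ergodicity $\meas(K)=0$.

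The main obstacle is the careful bookkeeping of the singular lifts. Every pullback operation on a tongue contributes two brand-new tongues attached to $\alpha_\pm$, so infinitely many tongues accumulate at each of the indeterminacy points, and one must verify that these singular-lift tongues sit disjointly from all previously-constructed tongues at $\alpha_\pm$ (including the primary ones and earlier singular lifts). This is where Property (P9) is used in its sharp form: the four lifts of a proper vertical path crossing $\Icurve$ once are pairwise disjoint away from the tips $\alpha_\pm$, so the two singular-lift tongues at a given pullback step sit inside disjoint wedges of the local parabolic neighborhoods of $\alpha_\pm$ (Appendix \ref{APP:BLOW_UPS}), nested outside all tongues from earlier pullback steps.
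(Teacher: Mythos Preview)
Your overall scheme---pull back the primary tongues by iterates of $\Rphys$ and invoke ergodicity of $z\mapsto z^4$ for (v)---is exactly the paper's approach, and your argument for (v) is correct. The gap is in your disjointness argument for the tongues accumulating at $\alpha_\pm$. Your final paragraph asserts that each new singular-lift tongue at $\alpha_\pm$ is ``nested outside all tongues from earlier pullback steps,'' but you give no reason for this, and in fact it need not hold: a singular lift of some $\Tongue_j(\beta)$ is a stable tongue at $\alpha_+$ whose angular aperture is determined by where $\Tongue_j(\beta)$ crosses $\Icurve$, and nothing prevents this aperture from overlapping the aperture of the primary tongue $\Tongue_0(\alpha_+)$ (or of an earlier singular lift). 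Property (P9) only tells you that the four lifts of a \emph{single} vertical curve are disjoint away from $\alpha_\pm$; it says nothing about how lifts produced at different inductive stages interact.

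The paper sidesteps this entirely. First, it observes that any two stable tongues with \emph{different} tips are automatically disjoint, because each is foliated by stable manifolds $\WW^s(\phi)$ all landing at its tip, and a stable leaf lands at a unique point of $\TOPphys$. For tongues sharing a tip (necessarily $\alpha_+$ or $\alpha_-$), the paper does \emph{not} try to prove they are disjoint as constructed; instead it uses that the primary tongues $\Tongue(\alpha_\pm)$ were defined to be \emph{maximal} stable tongues, so any other stable tongue at $\alpha_\pm$ overlapping a primary one is contained in it. More generally, two stable tongues at the same tip are either disjoint or nested (their bottom intervals on $\BOTTOMphys$ determine them as unions of stable leaves). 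One then simply discards all non-maximal tongues from the family. This costs nothing for (v), since a discarded tongue's bottom is contained in the bottom of the maximal tongue that swallowed it. Replace your wedge argument with this maximality trick and the proof goes through.
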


\begin{proof}
Lemmas \ref{stble man str vert}, \ref{S transv to Icurve}, \ref{vertical lifts}, 
and Corollary  \ref{COR:R_CONEFIELD} imply that the lifts of stable tongues by $\Rphys$
are stable tongues. So, taking all possible lifts of the principal tongues $\Tongue(\alpha_\pm)$
by the iterates of $\Rphys$, we obtain an infinite family of stable tongues attached to points of $\PI$.  

Since the stable manifolds are disjoint, the stable tongues with different tips are disjoint. 
If one of these tongues overlaps with a principal tongue $\Tongue(\alpha_\pm)$ then it must be contained in it,
by maximality of the latter. 
It follows that any two tongues in the family are either disjoint or nested.          
Keeping only the maximal tongues, we obtain the desired family of tongues.  

All the properties of this family are straightforward except the last one.
This one follows from ergodicity of the map $z\mapsto z^4$  with respect to the Lebesgue measure on $\T$,
which implies that $\cup \Rphys^{-n} (\BOTTOMphys_{\Tongue_\pm})$ has full measure. 
\end{proof}


\begin{prop}\label{s-tongues inside La}
  The family of stable tongues $\Tongue_k(\alpha_+)$ is contained in the central tongue  
$\La_+$, and their bottoms form an open set of full Lebesgue measure in the bottom of $\La_+$ 
($=(\pi/4, 3\pi/4)$). Moreover, each of these tongues has positive angle at its tip.  
The same property is valid for $\alpha_-$. 
\end{prop}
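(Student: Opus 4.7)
I will split the proposition into its three assertions---the containment $\bigcup_k \Tongue_k(\alpha_+) \subset \La_+$, full Lebesgue measure of the bottom arcs inside $(\pi/4, 3\pi/4)$, and positive angular opening at the tip $\alpha_+$---and reduce each one to properties (P6)--(P7) from \S\ref{str on CC}, the recursive description of tongues in Proposition \ref{tongues are a.e.}, and the blow-up formula (\ref{Icurve}) from \S\ref{f-sec}. The symmetric assertion for $\alpha_-$ will then follow from the involution $\iota$ of \S\ref{sec: basic symmetries}.

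For containment, I first recall that the primary tongue $\Tongue_0(\alpha_+) = \Tongue(\alpha_+)$ was built in Proposition \ref{principal tongues} as the $\Rphys^{-1}_s$-lift of a stable triangular region sitting under $\Icurve[-\omega_c,\omega_c] \subset \CC_- \setminus \II_\pi$, and property (P6) places this singular lift in $\La_+ \setminus \II_{\pi/2}$. For $k \neq 0$, the tongue $\Tongue_k(\alpha_+)$ is defined in Proposition \ref{tongues are a.e.}(iii) as a singular lift of (the $\CC_-$-portion of) some $\Tongue_j(\beta)$, and the same property (P6) again places the image of $\Rphys^{-1}_s$ in $\La_+$.

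For the full-measure statement, I will classify the entire family $\{\Tongue_k(\alpha)\}_{\alpha\in\PI}$ by where the tongue lives: tongues at $\alpha_\pm$ lie in $\La_\pm$ by the containment just established, while tongues at $\alpha \in \PI \setminus \{\alpha_\pm\}$ are regular lifts by Proposition \ref{tongues are a.e.}(ii) and so lie in $\Pi = \CC \setminus (\La_+ \cup \La_-)$ by property (P7). Since $S_+$ is tangent to $\TOPphys$ only at $\alpha_+$, one has $\La_+ \cap \TOPphys = \{\alpha_+\}$, so any tongue inside $\La_+$ must be attached to $\alpha_+$. The bottom arcs of the three classes of tongues then lie in the pairwise disjoint arcs $(\pi/4, 3\pi/4)$, $(5\pi/4, 7\pi/4)$, and $(-\pi/4,\pi/4) \cup (3\pi/4,5\pi/4)$; restricting the full-measure statement of Proposition \ref{tongues are a.e.}(v) to the first of these arcs yields exactly the claim.

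Finally, positive angular opening at $\alpha_+$ holds for the primary tongue with explicit value $2\omega_c$, and for $k \neq 0$ I would exploit the decomposition $\Rphys = f \circ Q$ together with (\ref{Icurve}), which identify an angular direction $\omega$ at $\alpha_+$ with the point $\Icurve(2\omega) \in \Icurve$; consequently an open arc of $\Icurve$ lifts under $\Rphys^{-1}_s$ to an open angular interval at $\alpha_+$. It then suffices to check that $\Tongue_j(\beta) \cap \Icurve$ is a non-empty open arc. Since $\beta \in \PI$ and the $f$-indeterminacy point $\alpha = (\pi,1) \notin \PI$, we have $\beta \in \TOPphys \cap \CC_+$ whereas the bottom of $\Tongue_j(\beta)$ sits on $\BOTTOMphys \subset \overline{\CC_-}$; connectedness forces the open region $\Tongue_j(\beta)$ to meet $\Icurve$ in a non-empty open arc, necessarily of positive length. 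The main delicacy I anticipate is the bookkeeping for the singular lift---specifically, tracking which triangle of $\La_+ \setminus \II_{\pi/2}$ is the $\Rphys^{-1}_s$-image of a given triangle of $\CC_- \setminus \II_\pi$ via the explicit identification in (P6), and hence whether the lift is attached to $\alpha_+$ or $\alpha_-$.
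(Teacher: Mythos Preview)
Your proof is correct and rests on the same ingredients as the paper's---properties (P6)--(P7), the blow-up parametrization (\ref{Icurve}), and Proposition~\ref{tongues are a.e.}---but the organization differs. The paper introduces an auxiliary notion of a \emph{singular stable rectangle} (a stable-leaf-foliated region in $\CC_-$ bounded by an arc of $\BOTTOMphys$, an arc of $\Icurve$, and two vertical paths), observes that each $\Tongue_k(\alpha)\cap\CC_-$ is such a rectangle, takes the family of \emph{maximal} ones, and then obtains the $\Tongue_k(\alpha_+)$ as their singular lifts; the positive angle is read off directly as $|\omega_1-\omega_2|$ where $\Icurve(\omega_i)$ are the upper corners of the rectangle. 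Your route bypasses the rectangle notion by using the recursive description (ii)--(iii) of Proposition~\ref{tongues are a.e.} to place tongues at $\alpha_\pm$ inside $\La_\pm$ and tongues at other pre-indeterminacy points inside $\Pi$, and then restricts the full-measure statement (v) to the arc $(\pi/4,3\pi/4)$. Both arguments are valid; yours is arguably more direct for containment and full measure, while the paper's rectangle language makes the angle computation slightly more explicit.
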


\begin{proof}  
Let us say that a topological rectangle $\Pi\subset \Cphys_-$ is a {\it singular stable rectangle}
if it is bounded by an interval on $\BOTTOMphys$, an arc of $\Icurve$, and two proper vertical paths 
in $\Cphys_-$, and is foliated by stable leaves.%
\footnote{We allow one of the vertical sides to degenerate to $\beta_0$, making $\Pi$ degenerate to a triangle.} 
 For instance, the intersection of any stable tongue
$\Tongue_k(\alpha_+)$ with $\Cphys_-$ is   singular stable rectangle 
(by Lemmas \ref{stble man str vert} and \ref{S transv to Icurve}).

Let $\Pi_k$ be the family of {\it maximal} singular stable rectangles. 
Proposition \ref{tongues are a.e.} (v) implies that their bottoms have full measure in $\BOTTOMphys$. 

Any singular stable  rectangle lifts to a stable tongue attached to the indeterminacy points $\alpha_+$
with positive angle at the tip 
(equal to $|\om_1-\om_2|$ where $\Icurve(\om_i)$  are the upper vertices of $\Pi$).
Lifting the rectangles $\Pi_k$, we obtain the desired family of tongues. 
\end{proof}

However, the above discussion does not imply that there are infinitely many stable tongues $\Tongue_k(\alpha_\pm)$:
this will be justified in the following piece.  

\subsection{Long hairs growing from $\TOPphys$}
\label{SUBSEC:LONG_TEETH}
Let us recall dynamics on the invariant interval $\II_0$ (see \S \ref{SUBSEC:MAP_ON_CYL}).
Let 
$$
    \II_0^+= \{(\phi,t)\in \II_0:\ t>t_c\}, \quad   \II_0^\de= \{(\phi,t)\in \II_0:\ t\geq t_c+\de\}.
$$ 
The interval $\II_0^+$ is the stable manifold of the high temperature fixed point $\FIXphys_1\in \TOPphys$. 
In this section it will be convenient to orient $\II_0^+$ so that it begins at $\FIXphys_1$.  

We say that  a sequence of curves $\gamma_k$ {\it stretch along} $\II^+_0$ if for any $\de>0$ there is $k_0$
such that the curves $\gamma_k$, $k\geq k_0$, contain arcs that can be represented as graphs $\phi=\gamma_k(t)$   
over $\II_0^\de$ such that  $\gamma_k \to 0$ in $C^1(\II^+_0)$. 

Let us consider a sequence of preimages $\beta_k\in \TOPphys$ of $\FIXphys_1$ converging to $\FIXphys_1$, say 
$\beta_k= (2\pi/2^{k-1}, 1)$. Let $\gamma_k$ stand for the lift of $\II^+_0$ by $\Rphys^k$ that begins at $\beta_k$.

\begin{lem}\label{PROP:LONG_TEETH}
The curves $\gamma_k$ are pairwise disjoint,
and the orbits of points $x\in \cup \gamma_k$ converge to $\FIXphys$.
Moreover, the curves $\gamma_k$ stretch along $\II^+_0$.   
\end{lem}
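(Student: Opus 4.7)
The plan is to establish the three assertions in turn: existence of each $\gamma_k$ as a vertical path, pairwise disjointness together with orbit convergence, and finally the stretching.

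First I would construct $\gamma_k$ by continuation, following the template of Lemma~\ref{vertical lifts}. For each relevant $k$, the point $\beta_k=(2\pi/2^{k-1},1)$ lies on $\TOPphys$; apart from finitely many indices where $\beta_k$ coincides with a preimage of $\alpha_\pm$ (which can be handled separately or excluded from the sequence), $\Rphys$ near $\beta_k$ is a branched covering ramified along $\TOPphys$, so a local branch of $\Rphys^{-1}$ initiates a lift of $\II_0^+$ at $\beta_k$. Because $\II_0^+$ sits in the interior of $\Cphys$, is disjoint from the blow-up locus $\Icurve$ (which meets $\II_0$ only at $\FIXphys_0$) and from the collapsing intervals $\II_{\pm\pi/2}$, the standard continuation argument extends the lift to a vertical path $\gamma_k$ terminating at a preimage of $\FIXphys_c$. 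Verticality follows from backward invariance of $\KK^v$ (Corollary~\ref{COR:R_CONEFIELD}), so $\gamma_k$ is a graph in the $t$-variable.

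Next I would settle orbit convergence and disjointness. For any $x\in \gamma_k$, $\Rphys^k(x)\in \II_0^+$; since $\Rphys|_{\II_0}$ carries $\II_0^+$ into itself with $\FIXphys_1$ as the unique attractor of $\II_0^+$ (the only interior fixed point $\FIXphys_c$ being repelling), the forward orbit converges to $\FIXphys_1$. For disjointness, suppose $x$ is an interior intersection of $\gamma_k$ and $\gamma_l$ with $k<l$. Forward invariance $\Rphys(\II_0^+)\subset \II_0^+$ makes $\gamma_k$ a preimage-component of $\II_0^+$ under $\Rphys^l$ as well, since $\Rphys^l(\gamma_k)=\Rphys^{l-k}(\II_0^+)\subset \II_0^+$. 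All forward iterates $\Rphys^i(x)$, $0\le i\le l$, lie on the vertical curves $\gamma_{k-i}$ (with $\gamma_0:=\II_0^+$), and none can sit on the critical set $\BOTTOMphys\cup \TOPphys\cup \II_{\pm\pi/2}$: a point of $\gamma_{k-i}$ on $\II_{\pm\pi/2}$ would collapse to $\FIXphys_0\in \BOTTOMphys$, contradicting $\Rphys^l(x)\in \II_0^+$. Hence $\Rphys^l$ is a local diffeomorphism at $x$, the germ of preimage-lift of $\II_0^+$ through $x$ is unique, and $\gamma_k$ coincides with $\gamma_l$ in a neighborhood of $x$; extending this coincidence up to $\TOPphys$ forces $\beta_k=\beta_l$, a contradiction.

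The stretching is the main technical step. Near $\FIXphys_1=(0,1)$, expanding (\ref{R}) in the coordinates $(\phi,\tau)$ with $\tau=1-t$ yields $\phi'=2\phi+O(\phi\tau+\phi^3)$ and $\tau'=\tau^2+O(\phi^2\tau^2+\tau^3)$, so $\FIXphys_1$ is a superattracting saddle: the top is the expanding direction (factor $2$) and $\II_0^+$ is the (degenerate) contracting direction. I would first obtain $C^1$-convergence $\gamma_k\to \II_0^+$ on a fixed neighborhood $N$ of $\FIXphys_1$ via a graph-transform argument for the preimage branches with basepoints $\beta_k$ converging to $\FIXphys_1$ along $\TOPphys$. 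To propagate this convergence over $\II_0^\delta$, I would use that $\Rphys|\II_0^+$ is a smooth homeomorphism of $(t_c,1]$ with $\FIXphys_1$ as superattracting endpoint, so finitely many pullbacks $(\Rphys|\II_0)^{-m}$ cover $\II_0^\delta$ by preimages of $\II_0^+\cap N$; on a small tubular neighborhood of each such pullback, the corresponding branch of $\Rphys^{-m}$ is a smooth diffeomorphism whose horizontal contraction is controlled by Theorem~\ref{hor expansion thm}, and it transports $C^1$-proximity from $N$ to all of $\II_0^\delta$. The hardest point is the local analysis at the superattracting saddle: because the normal contraction is $\tau\mapsto \tau^2$ rather than linear, the classical hyperbolic stable-manifold theorem does not apply verbatim, and convergence of the graph transform must be verified by hand, using the horizontal expansion (factor $\la=2$) of Theorem~\ref{hor expansion thm} to dominate the non-uniform normal direction.
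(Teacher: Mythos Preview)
Your argument is correct, but it reconstructs by hand what the paper dispatches in one line. The paper's proof simply says: disjointness is obvious; stretching follows from the Dynamical $\lambda$-Lemma (inclination lemma) of Palis--de~Melo applied at the fixed point $\FIXphys_1$.

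The substantive difference is in the stretching part. The paper treats $\FIXphys_1$ as a hyperbolic saddle and cites the classical $\lambda$-Lemma directly. You correctly observe that $\FIXphys_1$ is in fact a \emph{superattracting} saddle (the vertical derivative vanishes, $\tau'\sim\tau^2$), so the textbook statement does not literally apply, and you propose to redo the graph-transform/inclination argument from scratch, using the horizontal expansion of Theorem~\ref{hor expansion thm} to control the transverse direction. This is a legitimate concern and your workaround is sound; what it buys is rigor at the cost of length. That said, the superattracting stable direction only \emph{strengthens} the domination hypothesis underlying the $\lambda$-Lemma, so the classical proof goes through verbatim (and in fact more easily)---the paper is being slightly loose with terminology rather than making an actual gap. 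Your disjointness argument is also more elaborate than necessary: once one notes $\Rphys(\gamma_k)\subset\gamma_{k-1}$, intersection of $\gamma_k$ with $\gamma_l$ reduces to an intersection with $\II_0^+$ itself, which is ruled out by the local-diffeomorphism property you invoke.
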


\begin{proof}
The first assertion is obvious. 
The last one follows from the Dynamical $\la$-Lemma (see  \cite[pp. 80-85]{PDM})
applied near the hyperbolic fixed point  $\FIXphys_1$. 
\end{proof}

\begin{prop}\label{inf many tongues}
  There are infinitely many tongues $\Tongue_k(\alpha_\pm)$.
Each of them sticks at a positive angle out of the top $\TOPphys$.
\end{prop}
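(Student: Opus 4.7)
The plan is to produce an infinite sequence of distinct stable tongues attached to $\alpha_+$ (the case of $\alpha_-$ is symmetric) by iteratively applying the singular branch of $\Rphys^{-1}$ to the principal tongue $\Tongue^{(0)}:=\Tongue(\alpha_+)$ from Proposition~\ref{principal tongues}. The portion $\Tongue^{(0)}\cap\Cphys_-$ is bounded below by an arc of $\BOTTOMphys$, above by an arc of $\Icurve$, and on the sides by two stable manifolds $\WW^s(\phi_1^{(0)})$, $\WW^s(\phi_2^{(0)})$. By Lemma~\ref{vertical lifts} and Proposition~\ref{s-tongues inside La}, its singular lift lies in $\La_+$ and produces a region $\Tongue^{(1)}$ whose boundary consists of a $4$-to-$1$ pullback of the bottom arc, two singular lifts of the side stable manifolds (themselves stable manifolds by backward invariance of $\WW^s(\BOTTOMphys)$), and the blown-up image of the $\Icurve$-arc collapsed to the single tip $\alpha_+$. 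Thus $\Tongue^{(1)}$ is a stable tongue at $\alpha_+$. Iterating gives a sequence $\Tongue^{(n+1)}:=$ singular lift of $\Tongue^{(n)}\cap\Cphys_-$.

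Next I would verify distinctness and positive tip angle. Since $\Rphys|\BOTTOMphys:z\mapsto z^4$, the bottom arc $\BOTTOMphys_{\Tongue^{(n)}}$ is a connected component of the $z\mapsto z^{1/4}$ preimage of $\BOTTOMphys_{\Tongue^{(n-1)}}$ lying inside the bottom of $\La_+$, so its Lebesgue length contracts geometrically in $n$. Hence the $\Tongue^{(n)}$ are pairwise distinct (and accumulate onto the fixed preimage point $(\pi/2,0)\in\BOTTOMphys$). For the positive angular opening at the tip, the blow-up formula (\ref{Icurve}) shows that a curve transverse to $\Icurve$ at $\Icurve(\omega)$ lifts singularly to a curve approaching $\alpha_+$ at angle $\omega$. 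Since the two bounding stable manifolds of $\Tongue^{(n-1)}$ are disjoint proper vertical curves in $\Cphys_-$, they cross $\Icurve$ at two distinct points $\Icurve(\omega_1^n),\Icurve(\omega_2^n)$ (Lemma~\ref{S transv to Icurve}), and the corresponding singular lifts of the boundary of $\Tongue^{(n)}$ approach $\alpha_+$ at distinct angles $\omega_i^n$. Hence $\Tongue^{(n)}$ has positive angle $|\omega_2^n-\omega_1^n|$ at its tip, as required.

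The main obstacle is ruling out that this angular interval collapses to zero as $n$ grows. Here the long hairs $\gamma_k$ from Lemma~\ref{PROP:LONG_TEETH} play the role of geometric barriers: each $\gamma_k\subset\WW^s(\TOPphys)$ is disjoint from every stable tongue, and the $\Rphys$-preimages of the $\gamma_k$ populate the tongue $\La_+$ densely along the axis $\II_{\pi/2}$, separating successive $\Tongue^{(n)}$ from each other and from $\II_{\pi/2}$. Combined with horizontal expansion (Theorem~\ref{hor expansion thm}), which controls the distortion of the singular lift across $\Icurve$ away from $\alpha_\pm$, this will force each $\omega_i^n$ to remain uniformly separated at each stage, so that the angular openings $|\omega_2^n-\omega_1^n|$ stay positive and in fact contribute the ``tongue'' pattern visible in Figure~\ref{FIG:CYLINDER_BASINS}. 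The analogous construction for $\alpha_-$ completes the proof.
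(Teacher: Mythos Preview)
Your iteration of the singular lift does not establish that the resulting tongues are pairwise distinct. You claim distinctness because the bottom arcs contract geometrically, but contraction by itself does not rule out nesting: the singular branch of $z\mapsto z^{1/4}$ restricted to the bottom of $\La_+$ is a contraction with a genuine fixed point (at $\phi=2\pi/3$, not at $\pi/2$ as you wrote), and if the bottom $J^{(0)}$ of the principal stable tongue contains that fixed point, then all your $J^{(n)}$ are nested and you have produced only one maximal tongue. Even if $2\pi/3\notin J^{(0)}$, the successive $J^{(n)}$ can overlap unless $J^{(0)}$ is sufficiently narrow, and nothing in the preceding results gives you that bound. So the first half of your argument does not stand on its own.

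The paper's proof bypasses this completely and uses the long hairs as the \emph{primary} mechanism rather than as a patch. One translates the hairs $\gamma_k$ of Lemma~\ref{PROP:LONG_TEETH} by $\pi$ to obtain curves $\gamma_k'$ stretching along $\II_\pi^+$; by the symmetry $\Rphys(\phi+\pi,t)=\Rphys(\phi,t)$ these lie in $\WW^s(\TOPphys)$ and hence are disjoint from $\WW^s(\BOTTOMphys)$. Since the $\gamma_k'$ cross $\Icurve$ transversally near the top, they cut $\Icurve$ into infinitely many arcs, and the maximal singular stable rectangles from the proof of Proposition~\ref{s-tongues inside La} can only sit in the gaps between consecutive $\gamma_k'$. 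Because tips of stable tongues are dense in $\TOPphys$, there is a tongue squeezed between every consecutive pair, so the corresponding rectangles are disjoint and lift to \emph{distinct} stable tongues at $\alpha_\pm$. The same separation from $\II_\pi$ by some $\gamma_k'$ also gives the positive angle at the tip. You invoke the hairs only at the end and only to prevent angular collapse; in fact they are exactly what is needed to produce the infinitude and the disjointness in the first place.
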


\begin{proof}
Let  $\gamma_k'$ be the curves $\gamma_k$ translated  horizontally by $\pi$.
They stretch along the interval $\II_\pi^+=\{(\phi,t)\in \II_0:\ t>t_c\}$
and hence (for $k$ sufficiently big)
intersect the singular curve $\Icurve$ transversally near the top.

By the symmetry $\Rphys(\phi+\pi)=\Rphys(\phi)$ (see Property (P1)),
the orbits of points $x\in \cup \gamma_k$ converge to $\FIXphys$.
So, $\cup \gamma_k'$ is disjoint from the basin $\WW^s(\BOTTOMphys)$,
and hence the singular stable rectangles from the proof of 
Proposition \ref{s-tongues inside La} can meet $\Icurve$ only in between 
the curves.

Since the tips of the stable tongues $\Tongue(\alpha)$ are dense in $\TOPphys$,
there is a tongue  $\Tongue(\alpha_k)$   ``squeezed'' in between any pair of curves $\gamma_k'$ and $\gamma_{k+1}'$. 
Then  the corresponding rectangles $\Tongue(\alpha_k)\cap \Cphys_-$ are contained in disjoint
(for $k$ big enough) maximal rectangle $\Pi_k$. The latter lift to disjoint stable tongues $\Tongue_k(\alpha_\pm)$. 
This proves the first assertion.

The second one follows as well since any singular stable rectangle $\Pi$
is separated from $\II_\pi$ by some curve $\gamma_k'$,
hence the corresponding tongue (the singular lift of $\Pi$) meets
the top non-tangentially.
\end{proof}


\begin{cor}
  The stable tongues  $\Tongue_k(\alpha_\pm)$ have angles $0 < \om_k(\alpha_\pm) < \pi$ at their tips.
All other stable tongues $\Tongue_k(\alpha)$,   $\alpha\not=\alpha_\pm$,   have  cusps at their tips.%
\footnote{This is a reason why the tongues do not appear to reach $\TOPphys$ on Figure \ref{FIG:CYLINDER_BASINS}.}
\end{cor}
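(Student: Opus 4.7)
The plan is to split the statement into the two assertions and treat each separately, exploiting the contrast between the blow-up behavior at the indeterminacy points $\alpha_\pm$ and the ``regular'' quadratic superattraction at the other points of $\TOPphys$.

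For the tongues $\Tongue_k(\alpha_\pm)$, the positive lower bound $\omega_k(\alpha_\pm) > 0$ for $k=0$ is exactly Proposition~\ref{principal tongues}, while for $k \neq 0$ property (iii) of Proposition~\ref{tongues are a.e.} identifies $\Tongue_k(\alpha_\pm)$ as a singular lift of some tongue $\Tongue_j(\beta)$. I would repeat the argument of the proof of Proposition~\ref{s-tongues inside La}: the intersection of $\Tongue_j(\beta)$ with $\Cphys_-$ extends to a maximal singular stable rectangle $\Pi$, and $\Pi$ meets $\Icurve$ along a non-degenerate arc $\Icurve[\omega_1,\omega_2]$ with $\omega_1 \neq \omega_2$. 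The blow-up formula (\ref{Icurve}) then gives the singular lift a positive opening angle $|\omega_1-\omega_2|$ at $\alpha_\pm$. The upper bound $\omega_k(\alpha_\pm) < \pi$ is automatic: the two boundary curves of $\Tongue_k(\alpha_\pm)$ are distinct proper vertical paths meeting only at the tip, so the tongue is a proper subdomain of the cylinder and cannot open into the full punctured neighborhood of $\alpha_\pm$.

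For the tongues $\Tongue_k(\alpha)$ with $\alpha \in \PI \setminus \{\alpha_\pm\}$, I would argue by induction on the smallest $n$ with $\Rphys^n\alpha \in \{\alpha_\pm\}$. The base case $n=1$ is the key step. By property (ii) of Proposition~\ref{tongues are a.e.}, $\Tongue_k(\alpha)$ is a regular lift of some $\Tongue_j(\alpha_\pm)$, which by the previous paragraph has positive angle at its tip. Since $\alpha$ is a regular point of $\TOPphys$ (i.e. $\cos\phi(\alpha)$ is bounded away from $0$), property (P5) gives, in the local coordinates $(\phi,\tau)$ based at $\alpha$, an expansion
\begin{equation*}
\Rphys(\phi,\tau) = \bigl(2\phi + O(\tau),\, c\tau^2(1+o(1))\bigr),
\end{equation*}
with $c = c(\alpha) > 0$. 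If the image tongue is contained in a wedge $\{|\phi'-\phi(\Rphys\alpha)| \leq C\tau'\}$ near its tip, pulling this wedge back by the regular branch of $\Rphys^{-1}$ yields a region of the form $\{|\phi - \phi(\alpha)| \leq C' \tau^2\}$. Thus the opening width at height $\tau$ is $O(\tau^2)$, which is the defining property of a cusp. For the inductive step, the image tongue at $\Rphys\alpha$ already has a cusp (of some exponent $\geq 2$), and the same computation shows that a further regular pullback squares the exponent, preserving (in fact strengthening) the cusp.

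The main subtlety will be making the ``cusp'' notion quantitative enough to be stable under the induction. I expect the cleanest route is to track a concrete invariant such as $\omega_k(\alpha) := \lim_{\tau\to 0^+}(\text{horizontal width of }\Tongue_k(\alpha)\text{ at height }1-\tau)/\tau$, show it is strictly positive for $\alpha=\alpha_\pm$ by the blow-up argument, and show it vanishes identically under one regular pullback through any point of $\TOPphys\setminus\{\alpha_\pm\}$; the induction then becomes essentially trivial, and the remaining verification is the Koebe-style bookkeeping needed to pass from the linearized estimate for $D\Rphys$ at the tip to the corresponding estimate for the full nonlinear map on the tongue, which is routine given horizontal expansion (Theorem~\ref{hor expansion thm}).
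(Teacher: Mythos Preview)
Your overall strategy matches the paper's: positivity of the angles at $\alpha_\pm$ via the blow-up/singular-lift mechanism (Propositions~\ref{principal tongues} and~\ref{s-tongues inside La}, or equivalently~\ref{inf many tongues}), and cusps at the other tips via the transverse superattraction $\tau'\sim c\tau^2$ combined with horizontal expansion. The induction you set up for the cusp part is correct but heavier than needed: one regular pullback through a point of $\TOPphys\setminus\{\alpha_\pm\}$ already turns any wedge $\{|\eps'|\le C\tau'\}$ into a cuspidal region $\{|\phi|\le C'\tau^2\}$, and further regular pullbacks only sharpen it, so the base case alone suffices.

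There is, however, a real gap in your upper bound $\omega_k(\alpha_\pm)<\pi$. Being a proper subdomain does not preclude opening angle $\pi$: a tongue could fill an entire punctured half-disk about $\alpha_+$ except for a cuspidal sliver, and the two boundary curves would then be tangent to $\TOPphys$ from opposite sides while still meeting only at the tip. The paper's argument is different and cleaner: the tongues $\Tongue_k(\alpha_+)$ for varying $k$ are pairwise disjoint (Proposition~\ref{tongues are a.e.}), so their opening angles satisfy $\sum_k \omega_k(\alpha_+)\le\pi$. Since by Proposition~\ref{inf many tongues} there are infinitely many of them, each with \emph{positive} angle, every individual $\omega_k(\alpha_+)$ is strictly less than $\pi$. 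You should replace your ``proper subdomain'' sentence with this summation argument.
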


\begin{proof}
 The angles $\om_k(\alpha_\pm)$ are positive by Propositions \ref{principal tongues} and \ref{inf many tongues}. 
Since $\sum  \om_k(\alpha_+) = \sum \om_k(\alpha_-)\leq \pi$, 
 each of the angles is also strictly smaller than $\pi$.  

By Property (ii) of Proposition \ref{tongues are a.e.},
the other stable tongues $\Tongue_k(\alpha)$ are regular pullbacks of the  $\Tongue_k(\alpha_\pm)$.
Since $\Rphys$ is transversely super-attracting  at $\TOPphys$ (away from $\INDphys_\pm$)
and expanding along $\TOPphys$,  these pullbacks have cusps at the tips.  
\end{proof}

\begin{figure}
\begin{center}
\input{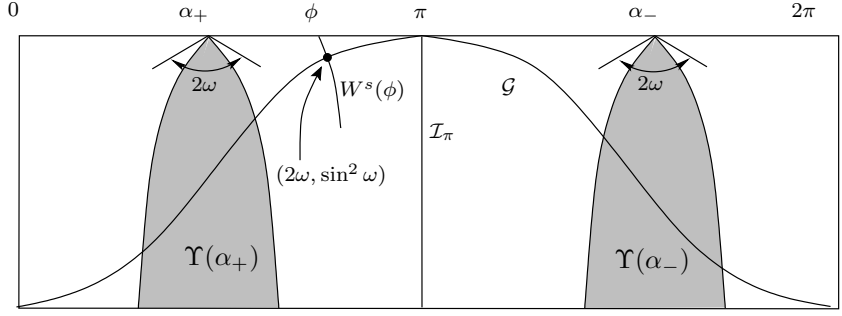}
\end{center}
\caption{\label{FIG:MAXIMAL_ANGLE} The primary stable tongues. }
\end{figure}


\subsection{Regularity}
\label{SUBSEC:REGULARITY}

\begin{prop}\label{PROP:CINFTY_FOLIATION}
$\FF^s(\BOTTOMphys)$ is a $C^\infty$ foliation of $\WW^s(\BOTTOMphys)$.
\end{prop}

\noindent 
Since the leaves of $\FF^s(\BOTTOMphys)$ are integral curves of the
central line field $\LL^c$, it suffices to prove the following proposition.

\comment{*************
Let $
\begin{eqnarray*}
B_n(x) = \frac{1}{4^n} D\Rphys^n(x)
A(\Rphys^{n-1}x) A(\Rphys^{n-1} x) \cdots A(x) \,\, \text{where} \,\, A(x) = [1/4 \,\, 1] \,  D \Rphys(x)
\end{eqnarray*}
************}

\begin{prop}\label{PROP:LOW_TEMP_CONVERGENCE}
The sequence
$B_n(x):=\frac{1}{4^n} D\Rphys^n (x)$
converges uniformly on compact subsets of $\WW^s(\BOTTOMphys)$ at super-exponential rate to a
$C^\infty$ matrix-valued function $B(x)$.  Moreover, $\LL^c(x) = \ker B(x)$.
\end{prop}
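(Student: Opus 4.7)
The plan is to exploit the superattracting structure of $\BOTTOMphys$ together with a simple algebraic identity for the leading-order normalized derivative $A(x) := \tfrac{1}{4} D\Rphys(x)$, which satisfies the cocycle relation $B_{n+1}(x) = A(\Rphys^n x)\, B_n(x) = B_n(\Rphys x)\, A(x)$.

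First, I would analyze the sequence on $\BOTTOMphys$ itself. In $(\phi,t)$ coordinates, since $\Rphys$ restricts to $\phi \mapsto 4\phi$ on the bottom and $\Rphys_2 = O(t^2)$ by Property (P5), a direct computation gives
$$A(\phi,0) =: A_0(\phi) = \begin{pmatrix} 1 & a(\phi)/4 \\ 0 & 0 \end{pmatrix}, \qquad a(\phi) = \partial_t \Rphys_1(\phi,0),$$
and the striking identity $A_0(\psi)\, A_0(\phi) = A_0(\phi)$ for arbitrary $\psi,\phi$. Feeding this into the cocycle relation yields $B_n(\phi,0) = A_0(\phi)$ for every $n \geq 1$, so the sequence is stationary on the bottom. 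I would then set $B(\phi,0) := A_0(\phi)$ and observe that $\ker B(\phi,0)$ is precisely the vertical superattracting stable direction, which agrees with $\LL^c(\phi,0)$.

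Second, I would complexify and work on the local complex stable neighborhood $\WW^s_{\C,\loc}(\BOTTOMphys)$ produced in Proposition \ref{PROP:STABLE_MANIFOLD}. There the estimate $|t_n(x)| \leq C r^{2^n}$, $r<1$, together with $\Rphys_2 = O(t^2)$, gives $A(\Rphys^n x) = A_0(\phi_n) + E_n$ with $\|E_n\| \leq C' t_n = O(r^{2^n})$. Iterating the column vectors $v^{(j)}_n := B_n(x)\, e_j$ via $v^{(j)}_{n+1} = (A_0(\phi_n) + E_n)\, v^{(j)}_n$ and using the idempotence identity above, the leading-order iteration collapses to a single projection of $v^{(j)}_0$ onto the horizontal direction (stable in norm), while the perturbative corrections form a telescoping series whose $n$-th term is super-exponentially small. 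Summing this series gives uniform super-exponential convergence $B_n \to B$ on compact subsets of $\WW^s_{\C,\loc}(\BOTTOMphys)$. I would then globalize to the full basin by taking $m \to \infty$ in the cocycle relation $B_{n+m}(x) = B_m(\Rphys^n x)\, B_n(x)$: for any $x \in \WW^s_\C(\BOTTOMphys)$ some iterate $\Rphys^n x$ enters $\WW^s_{\C,\loc}(\BOTTOMphys)$, and $B(x) := B(\Rphys^n x)\, B_n(x)$ is forced. Since each $B_n$ is holomorphic on $\WW^s_\C(\BOTTOMphys)$ and the convergence is uniform on compacta, $B$ is holomorphic there and hence real-analytic (so $C^\infty$) on $\WW^s(\BOTTOMphys)$; Cauchy's estimates promote the uniform super-exponential convergence to $C^k$ super-exponential convergence for every $k$. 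Finally, $\ker B(x) = \LL^c(x)$ propagates from $\BOTTOMphys$ to the whole basin via the limiting functional equation $B(x) = B(\Rphys x)\, A(x)$ and the invariance of $\LL^c$ under $D\Rphys$ (Proposition \ref{Central line field prop}).

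The main technical hurdle is the middle step: without the algebraic identity $A_0(\psi)\, A_0(\phi) = A_0(\phi)$, the naive estimate $\|B_{n+1} - B_n\| \leq C\|E_n\| \|B_n\|$ would allow $\|B_n\|$ to grow exponentially and would wipe out the super-exponential gain produced by $t_n = O(r^{2^n})$. The whole argument hinges on recognizing that this idempotence is the algebraic shadow of the rank-drop (superattracting) character of $\Rphys$ on $\BOTTOMphys$, and that it makes the bottom cocycle absorbing rather than multiplicatively amplifying, which is exactly what is needed to convert the super-attraction rate into convergence of the rescaled derivatives.
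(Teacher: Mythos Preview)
Your uniform-convergence argument is essentially correct and matches the paper's: since $A(\phi,0)=A_0=\begin{pmatrix}1&0\\0&0\end{pmatrix}$ (in fact $a(\phi)=0$, as one reads off from \eqref{EQN:DR}, so your idempotence identity is simply $A_0^2=A_0$), the product $B_n=A(\Rphys^{n-1}x)\cdots A(x)$ stays uniformly bounded and $B_{n+1}-B_n$ is controlled by $\|E_n\|=O(t_n)=O(q^{2^n})$. That part is fine.

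The gap is in the $C^\infty$ step. The set $\WW^s_{\C,\loc}(\BOTTOMphys)$ produced in Proposition~\ref{PROP:STABLE_MANIFOLD} is a real $3$-manifold in $\C^2$ (a Levi-flat hypersurface foliated by the complex stable discs), not an open set. On it, each $B_n$ is only the \emph{restriction} of a holomorphic function; the uniform limit $B$ is therefore CR (holomorphic along the leaves), but CR on a Levi-flat hypersurface says nothing about regularity transverse to the complex foliation, which is precisely the $\phi$-direction you need. Equally, Cauchy estimates require convergence on a genuine polydisc in $\C^2$, and on any such polydisc the orbits escape from $\BOTTOMphys$ (on $\{t=0\}$ the dynamics is $z\mapsto z^4$, expanding off $|z|=1$), so the $B_n$ do not converge there. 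Thus complexification gives you smoothness in $t$ for free, but not in $\phi$, and the latter is exactly the delicate transverse regularity of the stable foliation. The paper handles this by a direct real-analytic estimate: it bounds every multi-index partial $\partial_{\boldsymbol{\alpha}}B_n$ via Fa\`a di Bruno, using the crucial structural fact that $\Rphys_2$ vanishes \emph{quadratically} in $t$ on $\BOTTOMphys$ (so $|\partial_{\boldsymbol{\alpha}}t_n|\le \mu_{\boldsymbol{\alpha}}^n q^{2^n}$ survives the exponential growth of $\|D\Rphys^n\|$). That computation is the substance of the proof, and your complexification shortcut does not replace it.
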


\begin{proof}[Proof of Proposition \ref{PROP:LOW_TEMP_CONVERGENCE}.]
It suffices to prove the statement in any neighborhood of $\BOTTOMphys$,
since one can use the invariance
\begin{eqnarray}\label{EQN:B_INVARIANCE}
4 B_n(x) = B_{n-1}(\Rphys x) D\Rphys(x) \,\,  \mbox{and} \,\,  4 B(x) = B(\Rphys x) D\Rphys (x),
\end{eqnarray}
to extend the result to any compact subset of
$\WW^s(\BOTTOMphys)$.  For example, $\ker B(x) = \Rphys^* \ker B(\Rphys x)$ follows automatically from (\ref{EQN:B_INVARIANCE}) 
at the regular points of $\Rphys$ and is a simple check
near the critical points $\II_{\pm \pi/2}$.

For $x \in \Cphyslow$ we have
\begin{eqnarray}\label{EQN:SUPERCONVERGE_BOTTOM}
t_n \leq C q^{2^n}
\end{eqnarray}
\noindent
with $C > 1$ and $0 < q < 1$.  It is noteworthy that $C \equiv C(\eps)$ and $q \equiv q(\eps)$ can be chosen
uniformly on the region ${\Cphyslow^\eps} := \{x \in \Cphys \,: \, t(x) < t_c - \eps\}$ for any $\eps > 0$.

Note that $A(x):=\frac{1}{4} D\Rphys (x)$
is real-analytic and, by (\ref{EQN:DR}), it satisfies
\begin{eqnarray*}
A(\phi,0) = A_0 := \left[\begin{array}{cc} 1 & 0 \\ 0 & 0 \end{array} \right].
\end{eqnarray*} 
It follows from  (\ref{EQN:SUPERCONVERGE_BOTTOM}) that
\begin{eqnarray}\label{EQN:CONVERGENCE_OF_A}
|A(\Rphys^n x) - A_0| < C_0 q^{2^n}
\end{eqnarray}
\noindent for any $x \in \Cphyslow^\eps$.

By the chain rule
\begin{eqnarray*}
D B_n(x) = A(\Rphys^{n-1}x) A(\Rphys^{n-1} x) \cdots A(x).
\end{eqnarray*}
Moreover, Equation (\ref{EQN:CONVERGENCE_OF_A}) is sufficient for $B_n(x)$ to converge uniformly (and super-exponentially fast) to some
continuous $B(x)$ on $\Cphyslow^\eps$ for any $\eps$. 
It satisfies $B(\phi,0) = A_0$ and also $4 B(x) = B(\Rphys x) D\Rphys (x)$.

\msk

Since $\BOTTOMphys$ is superattracting, there is some forward invariant
neighborhood $\NN$ of $\BOTTOMphys$ so that for any  $x \in \NN$, any $v \in \LL^c(x)$ has
its length contracted under $D\Rphys$ by a definite factor, thus satisfying
$v \in \ker B(x)$.  Moreover, since $B(\phi,0) = A_0$ we can trim this
neighborhood, if necessary, so that ${\rank} B(x) = 1$ and thus $\LL^c(x) =
\ker B(x)$ for all $x \in \NN$.

\msk
We now show that $B(x)$ is $C^\infty$ in $\Cphyslow$.
The proof depends on the superattacting nature of
$\BOTTOMphys$.  Let $\Rphys = (\Rphys_1,\Rphys_2)$.   Then, $\Rphys_2$ vanishes quadratically in $t$ when $t=0$,
giving that for any multi-index $\bm \beta$ there is some
$M_{\bm \beta}$ such that
\begin{eqnarray}\label{EQN:R2_DERIVARIVES}
\left|\partial_{\bm \beta} \Rphys_2\right(\phi,t)| \leq
\begin{cases}
\frac{t^2}{C^2} M_{\bm \beta} & \text{if $\beta_2 = 0$}\\
\frac{t}{C} M_{\bm \beta} & \text{if $\beta_2 = 1$}\\
M_{\bm \beta} & \text{if $\beta_2 \geq 2$}\\
\end{cases}
\end{eqnarray}
\noindent
for all $(\phi,t) \in \Cphyslow$.
Furthermore, since $A(x) - A_0$ vanishes at $t=0$, we have that
\begin{eqnarray}\label{EQN:DERIVE_OF_A}
\left\|\partial_{\bm \beta} A\right\| < C_{\bm \beta} \, t \,\,\,\, \mbox{if} \,\, \beta_2 = 0.
\end{eqnarray}

\msk
We'll first observe that $B(x)$ is $C^1$.  It is a consequence of the following estimates
\begin{eqnarray}
\left\|D\Rphys^n (x) \right\| &\leq& \lambda^n, \label{EQN:DXN}\\
\left|\partial_x t_n \right| &\leq&  \mu^n q^{2^n}, \mbox{and} \label{EQN:DTN}\\
\left\| \partial_x A(\Rphys^n x) \right\| &\leq& C_1 \nu^n q^{2^n}. \label{EQN:DAN}
\end{eqnarray}
\noindent
for appropriate $\lambda, \mu, \nu,$ and $C_1$.

\comment{******************
\begin{eqnarray*}
D\Rphys^n (x) = D\Rphys(x_{n-1}) D\Rphys(x_{n-2})\cdots D\Rphys(x).
\end{eqnarray*}
******************}

The first follows a bound $\|D\Rphys(x)\| \leq \lambda$ on $\Cphyslow$ and the chain rule.
Meanwhile (\ref{EQN:DTN}) follows from induction on $n$, since
the chain rule and (\ref{EQN:R2_DERIVARIVES}) give
\begin{eqnarray}\label{EQN:SIMPLE_CASE}
\partial_x t_n &=& \partial_\phi \Rphys_2(\phi_{n-1},t_{n-1})  \partial_x
\phi_{n-1} + \partial_t \Rphys_2(\phi_{n-1},t_{n-1})\partial_x t_{n-1} \\
&\leq& \hspace{0.5in}    \frac{t_{n-1}^2}{C^2}M_\phi  \cdot  \partial_x
\phi_{n-1} \hspace{0.25in} + \hspace{0.25in} \frac{t_{n-1}}{C}M_t \cdot \partial_x
t_{n-1}.\notag
\end{eqnarray}
\noindent
Equation (\ref{EQN:DAN}) follows from similar use of the chain rule, together
with (\ref{EQN:DERIVE_OF_A}), (\ref{EQN:DTN}), and (\ref{EQN:DXN}).

Then (\ref{EQN:DAN}) is sufficient for the series
\begin{align}
\partial_x B(x) = \partial_x A(x) A(\Rphys x) A(\Rphys^2 x)\cdots + A(x) \partial_x A(\Rphys x) A(\Rphys^2 x)\cdots + \cdots
\end{align}
\noindent
to converge uniformly on $\Cphyslow^\eps$ for any $\eps > 0$.

\msk
To prove the convergence of higher derivatives of $B(x)$ we will use
\begin{lem}
For $x \in \Cphyslow^\eps$ and any multi-index ${\bm \alpha} = (\alpha_1, \alpha_2) \neq 0$ we have
\begin{eqnarray}
\left\|\partial_{\bm \alpha} \Rphys^n(x) \right\| &<& \lambda_{\bm \alpha}^n, \label{EQN:PARTIALS1} \\
\left|\partial_{\bm \alpha} t_n\right| &<& \mu_{\bm \alpha}^n q^{2^n}, \,\, \mbox{and} \label{EQN:PARTIALS2} \\
\left\|\partial_{\bm \alpha} A(\Rphys^n x) \right\| &<& C_{\bm \alpha} \nu_{\bm \alpha}^n q^{2^n}, \label{EQN:PARTIALS3}
\end{eqnarray}
\noindent
for suitable $\lambda_{\bm \alpha}, \mu_{\bm \alpha}, \nu_{\bm \alpha} > 1$ and $C_{\bm \alpha} > 0$.
\end{lem}

\begin{proof}
The proof is similar to that for (\ref{EQN:DXN}-\ref{EQN:DAN}) except that in
place of the chain rule we will use the
the Fa\`a di Bruno formula \cite{FA} to estimate the higher partial derivatives of a composition.

If
\begin{eqnarray*}
h(x_1,\ldots,x_d) =
f(g^{(1)}(x_1,\ldots,x_d),\ldots,g^{(m)}(x_1,\ldots,x_d))
\end{eqnarray*}
\noindent and $|{\bm \alpha}| := \sum \alpha_i$, it gives:
\begin{eqnarray}\label{EQN:FDB}
\partial_{\bm \alpha} h = \sum_{1 \leq |\bm \beta| \leq |\bm \alpha|} \partial_{\bm \beta} f \sum_{s=1}^{|\bm \alpha|} \sum_{p_s({\bm \alpha},{\bm \beta})} {\bm \alpha}! \prod_{j=1}^s \frac{(\partial_{{\bm l}_j}{\bm g})^{{\bm k}_j}}{({\bm k}_j !)({\bm l}_j !)^{|{\bm k}_j|}}.
\end{eqnarray}
\noindent
Each ${\bm \beta}$ and ${\bm k}_j$ is a $n$-dimensional multi-index and
each ${\bm l}_j$ is a $d$-dimensional multi-index.  The final sum is taken
over the set
\begin{eqnarray*}
p_s({\bm \alpha},{\bm \beta}) = \{({\bm k}_1,\ldots,{\bm k}_s;{\bm l}_1,\ldots,{\bm l}_s) \, : |{\bm k}_i| > 0, \\
{\bm 0} \prec {\bm l}_1 \prec \cdots \prec {\bm l}_s, \, \sum_{i=1}^s {\bm k}_i
= \bm \beta \, \mbox{and} \, \sum_{i=1}^s |{\bm k}_i| {\bm l}_i = \bm \alpha.\}
\end{eqnarray*}
\noindent
Here, $\prec$ denotes a linear order on the multi-indices (its details will not be important for us), ${\bm \eta}! := \prod \eta_i !$, and for a vector ${\bm z}$ we have ${\bm z}^{\bm \eta} = \prod z_i^{\eta_i}$.

\msk
We will not need the precise combinatorial details of this formula.  For example,
(\ref{EQN:PARTIALS1}) follows directly from the existence of a polynomial
expression for $\partial_{\bm \alpha} h$ in the partial derivatives of $f$ and
$g$.

Suppose that both $f$ and $g$ are functions of two variables.
Then, all that we will need to prove (\ref{EQN:PARTIALS2}) and (\ref{EQN:PARTIALS3})
is that the Fa\`a di Bruno formula gives an expression of the form
\begin{eqnarray}\label{EQN:FDB2}
\partial_{\bm \alpha} h = \sum_{i} K_i \, \partial_{{\bm \beta}_i} f
 \prod_{j = 1}^{\beta_i^1} \partial_{{\bm \gamma}_{i,j}} g^{(1)} \, \prod_{j = 1}^{\beta_i^2} \partial_{{\bm \eta}_{i,j}} g^{(2)},
\end{eqnarray}
having two additional properties:
\begin{enumerate}
\item $1 \leq {\bm \beta}_i,\ {\bm \gamma}_{i,j},\ {\bm \eta}_{i,j} \leq {\bm \alpha}$ for every $i,j$ and,
if either ${\bm \gamma}_{i,j} = {\bm \alpha}$ or ${\bm \eta}_{i,j} = {\bm \alpha}$, then
$|{\bm \beta}_i| = 1$; and
\item $K_i \geq 1$ for every $i$.
\end{enumerate}
\noindent
Here, each ${\bm \beta_i} = (\beta_i^1,\beta_i^2)$.

The proof of (\ref{EQN:PARTIALS2}) is done by an inductive use (\ref{EQN:FDB2})
similar to the usage of the chain rule for the first derivatives
(\ref{EQN:SIMPLE_CASE}).  It is the key step, so we'll prove it here and omit a
proof of (\ref{EQN:PARTIALS3}), which is simpler.

\msk

We already have (\ref{EQN:PARTIALS2}) when $|\bm \alpha| = 1$.  Therefore, can
suppose that it holds for all ${\bm \beta}$ satisfying
$|{\bm \beta}| < |\bm \alpha|$ in order to prove it for $\bm \alpha$.
Equation (\ref{EQN:FDB2})
gives
\begin{eqnarray}\label{EQN:FIRST_APPLICATION}
\partial_{\bm \alpha} t_n = \sum_{i} K_i \, \partial_{{\bm \beta}_i} \Rphys_2(x_{n-1})
 \prod_{j = 1}^{\beta_i^1} \partial_{{\bm \gamma}_{i,j}} \phi_{n-1} \, \prod_{j = 1}^{\beta_i^2} \partial_{{\bm \eta}_{i,j}} t_{n-1}.
\end{eqnarray}

\noindent
Let $\mu_{\bm \alpha}:= \max(N_{\bm \alpha},P_{\bm \alpha})$, where
\begin{eqnarray*}
N_{\bm \alpha} :=
\frac{1}{q^2}\sum_{i} K_i M_{{\bm \beta}_i}
\end{eqnarray*}
\noindent
and $P_{\bm \alpha}$ is the maximum of
\begin{eqnarray*}
\prod_{j=1}^{\beta_i^1} \lambda_{{\bm \gamma}_{i,j}} \prod_{j=1}^{\beta_i^2} \mu_{{\bm \eta}_{i,j}}
\end{eqnarray*}
\noindent
taken over all $i$ in (\ref{EQN:FIRST_APPLICATION}).

Equation (\ref{EQN:PARTIALS2}) follows for $n=1$, since
$\mu_{\bm \alpha} \geq N_{\bm \alpha} \geq \frac{1}{q^2} M_{\bm \alpha} \geq \frac{1}{q^2}
\left|\partial_{\bm \alpha} t_1\right|$.

We now suppose that (\ref{EQN:PARTIALS2}) is true for the
$(n-1)$-st iterate in order to prove it for the $n$-th iterate.
By the definition of $\mu_{\bm \alpha}$, it suffices to show that each term in (\ref{EQN:FIRST_APPLICATION}) has absolute value bounded by
\begin{eqnarray}\label{EQN:BASIC_INEQUALITY}
 q^{2^n} K_i M_{{\bm \beta}_i}  \left(
\prod_{j=1}^{\beta_i^1} \lambda_{{\bm \gamma}_{i,j}} \prod_{j=1}^{\beta_i^2} \mu_{{\bm \eta}_{i,j}}\right)^{n-1}.
\end{eqnarray}
\noindent
If $\beta_i^2 \geq 2$, then the factor of $q^{2^n}$ results from
at least two factors of $|\partial_{{\bm \eta}_{i,j}} t_{n-1}|$ in the $i$-th term from
(\ref{EQN:FIRST_APPLICATION}) and the induction hypothesis.  Otherwise, sufficient extra factors of
$q^{2^{n-1}}$ come from from $t_{n-1} < C q^{2^{n-1}}$ and
(\ref{EQN:R2_DERIVARIVES}).
\end{proof}

Using a generalization of the product rule, we have
\begin{align}\label{EQN:PARTIALS}
\partial_{\bm \alpha} B(x) =
\sum_{{\bm \alpha} = {\bm \alpha}_0 + {\bm \alpha}_1 + \cdots}
\frac{{\bm \alpha}!}{{\bm \alpha}_0 !  {\bm \alpha}_1 ! {\bm \alpha}_2 ! \cdots}
\partial_{{\bm \alpha}_0} A(x)  \cdot
\partial_{{\bm \alpha}_1} A(\Rphys x) \cdot
\partial_{{\bm \alpha}_2} A(\Rphys^2 x) \cdots
\end{align}
\noindent
where the sum is taken over all partitions of $\bm \alpha$ into a sum ${\bm \alpha}_0 + {\bm \alpha}_1 + \cdots$.
We apply (\ref{EQN:PARTIALS3}) to each $|{\bm \alpha}_i| \leq |\bm \alpha|$ and take appropriate maxima to find
\begin{eqnarray*}
\left|\partial_{{\bm \alpha}_n} A(\Rphys^n x) \right| &<& D_1 \nu^n q^{2^
n} \leq D_2
\end{eqnarray*}
\noindent
for each $n$ and suitable $\nu > 1, D_1 > 0,$ and $D_2 > 0$.

There are no more than $n^{|\bm \alpha|}$ partitions ${\bm \alpha} = {\bm \alpha}_0
+ {\bm \alpha}_1 + \cdots$ for which $n$ is the maximal index with $|{\bm
\alpha}_i| > 0$.  Each such term in the sum (\ref{EQN:PARTIALS}) can be bounded
by $K \nu^n q^{2^n}$, since there are at most $|\bm \alpha|$ terms in the
product for which ${\bm \alpha}_i \neq {\bm 0}$, each of which is bounded by $D_2$,
and the last one is bounded by $D_1 \nu^n q^{2^ n}$.  Thus, we bound
the sum (\ref{EQN:PARTIALS}) by
\begin{eqnarray*}
\sum_{n}  n^{|\bm \alpha|}K \nu^n q^{2^n},
\end{eqnarray*}
\noindent
which is convergent.

Thus the series (\ref{EQN:PARTIALS}) for $\partial_{\bm \alpha}
B(x)$ converges uniformly on $\Cphyslow^\eps$ for
any $\eps > 0$.  Since the multi-index $\bm \alpha$ was arbitrary, we conclude
that $B(x)$ is $C^\infty$ on $\Cphyslow$.
\end{proof}

\subsection{``B\"ottcher coordinate'' on $\WW^s(\BB)$  and convergence
of foliations}\label{SUBSEC:BOTTCHER}

Since the map $\RR: \CC\ra \CC $ has degree 4 in the first homology of $\CC_1$,  
the normalized  pullback
$$
    \phi_1: \CC_1\ra \T, \quad \phi_1 = \frac 14 \phi\circ \RR: \CC_1\ra \T
$$
is a  well defined map of degree 1.  Iterating the pullback, we obtain a sequence
of degree 1 maps
$$
   \phi_n : \CC_1\ra \T, \quad \phi_n = \frac {1}{4^n} \phi\circ \RR^n : \CC_1\ra \T.
$$
By Proposition \ref{SUBSEC:REGULARITY}, 
$$
   d\phi_n(x) = d\phi(R^n x)\cdot  B_n(x)\to (1,0) \cdot B(x):= \om (x),   \quad x\in \BB, 
$$
where $\om$ is a  closed $C^\infty$-smooth 1-form on $\WW^s(\BB)$ with period 1.
Hence $\om = d\Phi$ where $\Phi: \WW(\BB) \ra \T$ is a $C^\infty$ map  of degree 1.
Moreover,
\begin{equation}\label{Bottcher equation}
  \Phi|\, \Bottom \equiv  \phi, \quad \mbox{and}\quad  \Phi(\RR x) = 4 \Phi(x).
\end{equation}
Since $d\Phi$ vanishes on the  stable leaves $\WW^s(x)$ ($x\in \BB$)
and does not vanish  transversally,
it is a defining function for the stable foliation. 

The function $\Phi$ plays the role of the {\it B\"ottcher coordinate} on the basin of the bottom.

\msk
Instead of the angular coordinate $\phi$, we can do the same
construction with a more general function:

\begin{lem}\label{convergence of foliations}
Let $\psi: \CC_1\ra \T$ be a  $C^\infty$-map 
of degree $l$ tangent to $l \phi$ at the bottom (i.e., $\psi(\phi, t) = l\phi + o(t)$
as $t\to 0$). Then 
$$
    \psi_n := \frac {1}{l\cdot4^{n}} \psi (\RR^n x) \to \Phi(x) \quad \mbox{super-exponentially fast as}\ n\to \infty,
$$ 
in the $C^1$-topology on compact subsets of $\WW^s(\BB)$.
\end{lem}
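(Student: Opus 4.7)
The strategy is to reduce the convergence for $\psi$ to the already established convergence for $\phi$, by writing $\psi$ as a perturbation of $l\phi$ that is quadratically small at the bottom.

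Since $\psi : \CC_1 \to \T$ and $l\phi : \CC_1 \to \T$ are homotopic $C^\infty$ maps (both of degree $l$), their difference lifts canonically to a $C^\infty$ function $h : \CC_1 \to \R$ with $\psi = l\phi + h \pmod{2\pi\Z}$. The tangency hypothesis $\psi(\phi,t) = l\phi + o(t)$ forces $h(\phi,0) \equiv 0$ and $\partial_t h(\phi,0) \equiv 0$, so the $C^\infty$ regularity of $h$ yields uniform bounds, on any compact neighborhood of $\BOTTOMphys$,
\[
  |h(\phi,t)| = O(t^2), \qquad |\partial_\phi h(\phi,t)| = O(t^2), \qquad |\partial_t h(\phi,t)| = O(t).
\]
Normalizing by $l\cdot 4^n$ then gives the decomposition
\[
  \psi_n(x) = \phi_n(x) + \frac{1}{l\cdot 4^n}\, h(\RR^n x).
\]

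The first step is to control the orbit on compacta. Given a compact $K\Subset \WW^s(\BOTTOMphys)$, invariance of the basin and the superattracting estimates from Lemma~\ref{BB} and Proposition~\ref{PROP:LOW_TEMP_CONVERGENCE} (applied after finitely many iterates that bring $K$ into the low-temperature region $\Cphyslow^\eps$) produce constants $C>0$ and $q\in(0,1)$ with $t(\RR^n x) \le C\, q^{2^n}$ uniformly on $K$. The second step is to show $\phi_n \to \Phi$ super-exponentially in $C^1$ on $K$. For values, telescope
\[
  \phi_{j+1}(x)-\phi_j(x) \;=\; \frac{1}{4^{j+1}}\bigl(\phi\!\circ\!\RR - 4\phi\bigr)(\RR^j x),
\]
and note that $\phi\!\circ\!\RR - 4\phi$ vanishes on $\BOTTOMphys$ (since $\RR|_{\BOTTOMphys}:z\mapsto z^4$), so it is $O(t)$ and the telescoping series is dominated by a geometric-in-$4^{-j}$ series with factor $q^{2^j}$, giving $|\phi_n-\Phi| = O(q^{2^n}/4^n)$. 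For derivatives, $d\phi_n(x) = (1,0)\cdot B_n(x)$ and $d\Phi(x) = (1,0)\cdot B(x)$, so the super-exponential convergence $B_n\to B$ from Proposition~\ref{PROP:LOW_TEMP_CONVERGENCE} transfers directly to $d\phi_n \to d\Phi$.

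The third step bounds the perturbation term $\frac{1}{l\cdot 4^n} h(\RR^n x)$. For values, the quadratic estimate $|h(\phi,t)| = O(t^2)$ combined with $t(\RR^n x)\le Cq^{2^n}$ yields
\[
  \Bigl|\tfrac{1}{l\cdot 4^n} h(\RR^n x)\Bigr| \;=\; O\!\bigl(q^{2\cdot 2^n}/4^n\bigr).
\]
For derivatives, using $D\RR^n(x) = 4^n B_n(x)$ gives
\[
  \tfrac{1}{l\cdot 4^n}\, d(h\!\circ\!\RR^n)(x) \;=\; \tfrac{1}{l}\, dh(\RR^n x)\cdot B_n(x),
\]
and since $\|dh(\RR^n x)\| = O(t(\RR^n x)) = O(q^{2^n})$ while $B_n$ is uniformly bounded on $K$, this term is also $O(q^{2^n})$ in $C^0$. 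Adding the two pieces establishes super-exponential $C^1$ convergence $\psi_n \to \Phi$ on $K$.

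The only genuine subtlety, which will be the main thing to check carefully, is the uniformity of the double-exponential bound $t(\RR^n x) \le C q^{2^n}$ on an arbitrary compact $K\Subset\WW^s(\BOTTOMphys)$: far from the bottom one has only the rough contraction from Lemma~\ref{BB}, but the orbit eventually enters $\Cphyslow^\eps$ where the transversal superattraction of $\BOTTOMphys$ kicks in; combining the two regimes and using uniformity of the entry time on $K$ gives the required estimate. All remaining calculations are applications of the chain rule and the already-proven convergence of $B_n$ to $B$.
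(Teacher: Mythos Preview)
Your proof is correct and follows essentially the same approach as the paper. The paper's argument is more terse: it simply observes that the chain rule gives $d\psi_n(x) = \tfrac{1}{l}\,d\psi(\RR^n x)\cdot B_n(x)$, that the tangency hypothesis forces $d\psi(\RR^n x)\to (l,0)$ super-exponentially (since $\RR^n x\to\BOTTOMphys$ super-exponentially), and then invokes the convergence $B_n\to B$ from Proposition~\ref{PROP:LOW_TEMP_CONVERGENCE} to conclude $d\psi_n\to d\Phi$; the $C^0$ convergence then follows because $\psi_n|_{\BOTTOMphys}\equiv\phi=\Phi|_{\BOTTOMphys}$. Your decomposition $\psi = l\phi + h$ makes the same computation explicit by separating the ``known'' piece $\phi_n$ from the perturbation, but the underlying ingredients---super-exponential approach to $\BOTTOMphys$, the tangency hypothesis, and the convergence of $B_n$---are identical.
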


\begin{proof}
   The same reason as above shows that $d\psi_n\to d\Phi$  super-exponentially fast
in the $C^1$-topology on compact subsets of $\WW^s(\BB)$. 
The assertion follows, 
since the $\psi_n$  agree with $\phi$  on $\BB$. 
\end{proof}

If $\psi$ is a defining function for some foliation $\FF$ on $\WW^s(\BB)$, 
then the $\psi_n$ are defining functions for the pullbacks $(\RR^n)^*(\FF)$,
and Lemma \ref{convergence of foliations} gives a strong sense in
which  these pullbacks converge to the stable foliation  of the bottom  $\FF^s(\BOTTOMphys)$.

\msk
Let us finally mention that the B\"ottcher coordinate $\Phi$ can be extended
continuously to a degree 1 map  $\tl \Phi: \CC_1\ra \T$ satisfying
B\"ottcher  functional equation (\ref{Bottcher equation}), 
see  Remark \ref{REM:EXTENSION_OF_BOTTCHER} below.

\comment{
Let us finally mention that the B\"ottcher coordinate $\Phi$ can be extended
continuously to a degree 1 map  $\tl \Phi: \CC_1\ra \T$ satisfying
B\"ottcher  functional equation (\ref{Bottcher equation}).  We will do it in Remark \ref{REM:EXTENSION_OF_BOTTCHER},
after constructing a global central foliation $\FF^c$ on all of $\CC_1$ that coincides with $\FF^s(\BOTTOMphys)$ on $\WW^s(\BOTTOMphys)$.
Note that this extension will not be smooth
(even not transversally  absolutely continuous).  
}

\comment{**********************
\subsection{Regularity}\label{SUBSEC:REGULARITY}

\begin{prop}\label{PROP:CINFTY_FOLIATION}
$\FF^s(\BOTTOMphys)$ is a $C^\infty$ foliation of $\WW^s(\BOTTOMphys)$.
\end{prop}

\begin{proof}
Since the leaves of $\FF^c$ are integral curves to the central line field
$\LL^c$, it is sufficient to prove that $\LL^c(x)$ is $C^\infty$ for $x \in
\WW^s(\BOTTOMphys)$.  Furthermore, we need only prove this on the low-temperature cylinder $\Cphyslow$, since $\LL^c$ can be obtained in the
neighborhood of any point in $\WW^s(\BOTTOMphys)$ by pulling it back from
$\Cphyslow$ under finitely many iterates of $\Rphys$.

\comment{******
Let us denote by $\gamma_\phi(t)$ the leaf of $\FF^c$ satisfying $\gamma_\phi(0) = \phi$.
We will show that $\gamma_\phi(t)$ is $C^\infty$ smooth in $\phi$ and $t$ for $(\phi,t) \in \Cphyslow$.
********}

As usual, we will write $x = (\phi,t)$ and $x_n = (\phi_n,t_n) = \Rphys^n x$.  
For $x \in \Cphyslow$ we have
\begin{eqnarray}\label{EQN:SUPERCONVERGE_BOTTOM}
t_n \leq C q^{2^n}
\end{eqnarray}
\noindent
with $C > 1$ and $0 < q < 1$.  It is noteworthy that $C \equiv C(\eps)$ and $q \equiv q(\eps)$ can be chosen
uniformly on the region $\Cphyslow^\eps := \{x \in \Cphys \,: \, t(x) < t_c - \eps\}$ for any $\eps > 0$.

Up to multiplication by a scalar function, the inverse of $D\Rphys$ is given by
a real-analytic $A(x)$ with
\begin{eqnarray*}
A(\phi,0) = A_0 := \left[\begin{array}{cc} 0 & 0 \\ 0 & 1 \end{array} \right].
\end{eqnarray*}
\noindent
It follows from  (\ref{EQN:SUPERCONVERGE_BOTTOM}) that
\begin{eqnarray}\label{EQN:CONVERGENCE_OF_A}
|A(\Rphys^n x) - A_0| < C_0 q^{2^n}
\end{eqnarray}
\noindent for any $x \in \Cphyslow^\eps$.

\comment{*****************
\begin{eqnarray}
A(x) = \left[\begin{array}{cc} (1+\cos2\phi)(t-t^5) & -\sin 2\phi(t+2t^3 \cos 2\phi+t^5) \\
                               \sin 2\phi (t^2-2t^4+t^6) & (1+t^2 \cos 2\phi)(1+2t^2 \cos 2\phi+t^4)
\end{array}\right].
\end{eqnarray}
****************}

Let
\begin{eqnarray}\label{EQN:PRODUCT}
B(x) := A(x) A(\Rphys x)A(\Rphys^2 x) \cdots.
\end{eqnarray}
It is a consequence of (\ref{EQN:CONVERGENCE_OF_A}) that this infinite product
converges uniformly on $\Cphyslow^\eps$ for any $\eps$.  Therefore $B(x)$ is
continuous in $x$ on $\Cphyslow$ and satisfies
\begin{eqnarray*}
B(\phi,0) = A.
\end{eqnarray*}
\noindent
For $x \in \Cphyslow$,  $\LL^c(x)$ is spanned the image of any vertical tangent
vector under $B(x)$, so we will show that $B(x)$ is $C^\infty$.

\msk
The proof depends on the superattacting nature of
$\BOTTOMphys$.  Let $\Rphys = (\Rphys_1,\Rphys_2)$.   Then, $\Rphys_2$ vanishes quadratically in $t$ when $t=0$,
giving that for any multi-index $\bm \beta$ there is some
$M_{\bm \beta}$ such that
\begin{eqnarray}\label{EQN:R2_DERIVARIVES}
\left|\partial_{\bm \beta} \Rphys_2\right(\phi,t)| \leq
\begin{cases}
\frac{t^2}{C^2} M_{\bm \beta} & \text{if $\beta_2 = 0$}\\
\frac{t}{C} M_{\bm \beta} & \text{if $\beta_2 = 1$}\\
M_{\bm \beta} & \text{if $\beta_2 \geq 2$}\\
\end{cases}
\end{eqnarray}
\noindent
for all $(\phi,t) \in \Cphyslow$.
Furthermore, since $A(x) - A_0$ vanishes at $t=0$, we have that
\begin{eqnarray}\label{EQN:DERIVE_OF_A}
\left\|\partial_{\bm \beta} A\right\| < C_{\bm \beta} \, t \,\,\,\, \mbox{if} \,\, \beta_2 = 0.
\end{eqnarray}

\msk
We'll first observe that $B(x)$ is $C^1$.  It is a consequence of the following estimates
\begin{eqnarray}
\left\|D\Rphys^n (x) \right\| &\leq& \lambda^n, \label{EQN:DXN}\\
\left|\partial_x t_n \right| &\leq&  \mu^n q^{2^n}, \mbox{and} \label{EQN:DTN}\\
\left\| \partial_x A(\Rphys^n x) \right\| &\leq& C_1 \nu^n q^{2^n}. \label{EQN:DAN}
\end{eqnarray}
\noindent
for appropriate $\lambda, \mu, \nu,$ and $C_1$.

\comment{******************
\begin{eqnarray*}
D\Rphys^n (x) = D\Rphys(x_{n-1}) D\Rphys(x_{n-2})\cdots D\Rphys(x).
\end{eqnarray*}
******************}

The first follows a bound $\|D\Rphys(x)\| \leq \lambda$ on $\Cphyslow$ and the chain rule.
Meanwhile (\ref{EQN:DTN}) follows from induction on $n$, since
the chain rule and (\ref{EQN:R2_DERIVARIVES}) give
\begin{eqnarray}\label{EQN:SIMPLE_CASE}
\partial_x t_n &=& \partial_\phi \Rphys_2(\phi_{n-1},t_{n-1})  \partial_x
\phi_{n-1} + \partial_t \Rphys_2(\phi_{n-1},t_{n-1})\partial_x t_{n-1} \\
&\leq& \hspace{0.5in}    \frac{t_{n-1}^2}{C^2}M_\phi  \cdot  \partial_x
\phi_{n-1} \hspace{0.25in} + \hspace{0.25in} \frac{t_{n-1}}{C}M_t \cdot \partial_x
t_{n-1}.\notag
\end{eqnarray}
\noindent
Equation (\ref{EQN:DAN}) follows from similar use of the chain rule, together
with (\ref{EQN:DERIVE_OF_A}), (\ref{EQN:DTN}), and (\ref{EQN:DXN}).

Then (\ref{EQN:DAN}) is sufficient for the series
\begin{align}
\partial_x B(x) = \partial_x A(x) A(\Rphys x) A(\Rphys^2 x)\cdots + A(x) \partial_x A(\Rphys x) A(\Rphys^2 x)\cdots + \cdots
\end{align}
\noindent
to converge uniformly on $\Cphyslow^\eps$ for any $\eps > 0$.

\msk
To prove the convergence of higher derivatives of (\ref{EQN:PRODUCT}) we will use
\begin{lem}
For $x \in \Cphyslow^\eps$ and any multi-index ${\bm \alpha} = (\alpha_1, \alpha_2) \neq 0$ we have
\begin{eqnarray}
\left\|\partial_{\bm \alpha} \Rphys^n(x) \right\| &<& \lambda_{\bm \alpha}^n, \label{EQN:PARTIALS1} \\
\left|\partial_{\bm \alpha} t_n\right| &<& \mu_{\bm \alpha}^n q^{2^n}, \,\, \mbox{and} \label{EQN:PARTIALS2} \\
\left\|\partial_{\bm \alpha} A(\Rphys^n x) \right\| &<& C_{\bm \alpha} \nu_{\bm \alpha}^n q^{2^n}, \label{EQN:PARTIALS3}
\end{eqnarray}
\noindent
for suitable $\lambda_{\bm \alpha}, \mu_{\bm \alpha}, \nu_{\bm \alpha} > 1$ and $C_{\bm \alpha} > 0$.
\end{lem}

\begin{proof}
The proof is similar to that for (\ref{EQN:DXN}-\ref{EQN:DAN}) except that in
place of the chain rule we will use the
the Fa\`a di Bruno formula \cite{FA} to estimate the higher partial derivatives of a composition.

If
\begin{eqnarray*}
h(x_1,\ldots,x_d) =
f(g^{(1)}(x_1,\ldots,x_d),\ldots,g^{(m)}(x_1,\ldots,x_d))
\end{eqnarray*}
\noindent and $|{\bm \alpha}| := \sum \alpha_i$, it gives:
\begin{eqnarray}\label{EQN:FDB}
\partial_{\bm \alpha} h = \sum_{1 \leq |\bm \beta| \leq |\bm \alpha|} \partial_{\bm \beta} f \sum_{s=1}^{|\bm \alpha|} \sum_{p_s({\bm \alpha},{\bm \beta})} {\bm \alpha}! \prod_{j=1}^s \frac{(\partial_{{\bm l}_j}{\bm g})^{{\bm k}_j}}{({\bm k}_j !)({\bm l}_j !)^{|{\bm k}_j|}}.
\end{eqnarray}
\noindent
Each ${\bm \beta}$ and ${\bm k}_j$ is a $n$-dimensional multi-index and
each ${\bm l}_j$ is a $d$-dimensional multi-index.  The final sum is taken
over the set
\begin{eqnarray*}
p_s({\bm \alpha},{\bm \beta}) = \{({\bm k}_1,\ldots,{\bm k}_s;{\bm l}_1,\ldots,{\bm l}_s) \, : |{\bm k}_i| > 0, \\
{\bm 0} \prec {\bm l}_1 \prec \cdots \prec {\bm l}_s, \, \sum_{i=1}^s {\bm k}_i
= \bm \beta \, \mbox{and} \, \sum_{i=1}^s |{\bm k}_i| {\bm l}_i = \bm \alpha.\}
\end{eqnarray*}
\noindent
Here, $\prec$ denotes a linear order on the multi-indices (its details will not be important for us), ${\bm \eta}! := \prod \eta_i !$, and for a vector ${\bm z}$ we have ${\bm z}^{\bm \eta} = \prod z_i^{\eta_i}$.

\msk
We will not need the precise combinatorial details of this formula.  For example,
(\ref{EQN:PARTIALS1}) follows directly from the existence of a polynomial
expression for $\partial_{\bm \alpha} h$ in the partial derivatives of $f$ and
$g$.

Suppose that both $f$ and $g$ are functions of two variables.
Then, all that we will need to prove (\ref{EQN:PARTIALS2}) and (\ref{EQN:PARTIALS3})
is that the Fa\`a di Bruno formula gives an expression of the form
\begin{eqnarray}\label{EQN:FDB2}
\partial_{\bm \alpha} h = \sum_{i} K_i \, \partial_{{\bm \beta}_i} f
 \prod_{j = 1}^{\beta_i^1} \partial_{{\bm \gamma}_{i,j}} g^{(1)} \, \prod_{j = 1}^{\beta_i^2} \partial_{{\bm \eta}_{i,j}} g^{(2)},
\end{eqnarray}
having two additional properties:
\begin{enumerate}
\item $1 \leq {\bm \beta}_i,\ {\bm \gamma}_{i,j},\ {\bm \eta}_{i,j} \leq {\bm \alpha}$ for every $i,j$ and,
if either ${\bm \gamma}_{i,j} = {\bm \alpha}$ or ${\bm \eta}_{i,j} = {\bm \alpha}$, then
$|{\bm \beta}_i| = 1$; and
\item $K_i \geq 1$ for every $i$.
\end{enumerate}
\noindent
Here, each ${\bm \beta_i} = (\beta_i^1,\beta_i^2)$.

The proof of (\ref{EQN:PARTIALS2}) is done by an inductive use (\ref{EQN:FDB2})
similar to the usage of the chain rule for the first derivatives
(\ref{EQN:SIMPLE_CASE}).  It is the key step, so we'll prove it here and omit a
proof of (\ref{EQN:PARTIALS3}), which is simpler.

\msk

We already have (\ref{EQN:PARTIALS2}) when $|\bm \alpha| = 1$.  Therefore, can
suppose that it holds for all ${\bm \beta}$ satisfying
$|{\bm \beta}| < |\bm \alpha|$ in order to prove it for $\bm \alpha$.
Equation (\ref{EQN:FDB2})
gives
\begin{eqnarray}\label{EQN:FIRST_APPLICATION}
\partial_{\bm \alpha} t_n = \sum_{i} K_i \, \partial_{{\bm \beta}_i} \Rphys_2(x_{n-1})
 \prod_{j = 1}^{\beta_i^1} \partial_{{\bm \gamma}_{i,j}} \phi_{n-1} \, \prod_{j = 1}^{\beta_i^2} \partial_{{\bm \eta}_{i,j}} t_{n-1}.
\end{eqnarray}

\noindent
Let $\mu_{\bm \alpha}:= \max(N_{\bm \alpha},P_{\bm \alpha})$, where
\begin{eqnarray*}
N_{\bm \alpha} :=
\frac{1}{q^2}\sum_{i} K_i M_{{\bm \beta}_i}
\end{eqnarray*}
\noindent
and $P_{\bm \alpha}$ is the maximum of
\begin{eqnarray*}
\prod_{j=1}^{\beta_i^1} \lambda_{{\bm \gamma}_{i,j}} \prod_{j=1}^{\beta_i^2} \mu_{{\bm \eta}_{i,j}}
\end{eqnarray*}
\noindent
taken over all $i$ in (\ref{EQN:FIRST_APPLICATION}).

Equation (\ref{EQN:PARTIALS2}) follows for $n=1$, since
$\mu_{\bm \alpha} \geq N_{\bm \alpha} \geq \frac{1}{q^2} M_{\bm \alpha} \geq \frac{1}{q^2}
\left|\partial_{\bm \alpha} t_1\right|$.

We now suppose that (\ref{EQN:PARTIALS2}) is true for the
$(n-1)$-st iterate in order to prove it for the $n$-th iterate.
By the definition of $\mu_{\bm \alpha}$, it suffices to show that each term in (\ref{EQN:FIRST_APPLICATION}) has absolute value bounded by
\begin{eqnarray}\label{EQN:BASIC_INEQUALITY}
 q^{2^n} K_i M_{{\bm \beta}_i}  \left(
\prod_{j=1}^{\beta_i^1} \lambda_{{\bm \gamma}_{i,j}} \prod_{j=1}^{\beta_i^2} \mu_{{\bm \eta}_{i,j}}\right)^{n-1}.
\end{eqnarray}
\noindent
If $\beta_i^2 \geq 2$, then the factor of $q^{2^n}$ results from
at least two factors of $|\partial_{{\bm \eta}_{i,j}} t_{n-1}|$ in the $i$-th term from
(\ref{EQN:FIRST_APPLICATION}) and the induction hypothesis.  Otherwise, sufficient extra factors of
$q^{2^{n-1}}$ come from from $t_{n-1} < C q^{2^{n-1}}$ and
(\ref{EQN:R2_DERIVARIVES}).
\end{proof}

Using a generalization of the product rule, we have
\begin{align}\label{EQN:PARTIALS}
\partial_{\bm \alpha} B(x) =
\sum_{{\bm \alpha} = {\bm \alpha}_0 + {\bm \alpha}_1 + \cdots}
\frac{{\bm \alpha}!}{{\bm \alpha}_0 !  {\bm \alpha}_1 ! {\bm \alpha}_2 ! \cdots}
\partial_{{\bm \alpha}_0} A(x)  \cdot
\partial_{{\bm \alpha}_1} A(\Rphys x) \cdot
\partial_{{\bm \alpha}_2} A(\Rphys^2 x) \cdots
\end{align}
\noindent
where the sum is taken over all partitions of $\bm \alpha$ into a sum ${\bm \alpha}_0 + {\bm \alpha}_1 + \cdots$.
We apply (\ref{EQN:PARTIALS3}) to each $|{\bm \alpha}_i| \leq |\bm \alpha|$ and take appropriate maxima to find
\begin{eqnarray*}
\left|\partial_{{\bm \alpha}_n} A(\Rphys^n x) \right| &<& D_1 \nu^n q^{2^
n} \leq D_2
\end{eqnarray*}
\noindent
for each $n$ and suitable $\nu > 1, D_1 > 0,$ and $D_2 > 0$.

There are no more than $n^{|\bm \alpha|}$ partitions ${\bm \alpha} = {\bm \alpha}_0
+ {\bm \alpha}_1 + \cdots$ for which $n$ is the maximal index with $|{\bm
\alpha}_i| > 0$.  Each such term in the sum (\ref{EQN:PARTIALS}) can be bounded
by $K \nu^n q^{2^n}$, since there are at most $|\bm \alpha|$ terms in the
product for which ${\bm \alpha}_i \neq {\bm 0}$, each of which is bounded by $D_2$,
and the last one is bounded by $D_1 \nu^n q^{2^ n}$.  Thus, we bound
the sum (\ref{EQN:PARTIALS}) by
\begin{eqnarray*}
\sum_{n}  n^{|\bm \alpha|}K \nu^n q^{2^n},
\end{eqnarray*}
\noindent
which is convergent.

Thus the series (\ref{EQN:PARTIALS}) for $\partial_{\bm \alpha}
B(x)$ converges uniformly on $\Cphyslow^\eps$ for
any $\eps > 0$.  Since the multi-index $\bm \alpha$ was arbitrary, we conclude
that $B(x)$ is $C^\infty$ on $\Cphyslow$.
\end{proof}

\subsection{Real analyticity}\label{SUBSEC:REAL_ANALYTIC}

For a diffeomorphisms the existence and regularity of the local stable
manifold for a hyperbolic invariant manifold $N$ has been studied extensively
in \cite{HPS}.  In order guarantee a $C^1$ local stable manifold
$\WW^s_{\loc}(N)$ a strong form of hyperbolicity known as {\em normal
hyperbolicity} is assumed.  Essentially, $N$ is normally hyperbolic for $f$ if the
expansion of $Df$ in the unstable direction dominates the maximal expansion of
$Df$ tangent to $N$ and the contraction of $Df$ in the stable direction
dominates the maximal contraction of $Df$ tangent to $N$.  See \cite[Theorem
1.1]{HPS}.  If, furthermore, the expansion in the unstable direction dominates
the $r$-th power of the maximal expansion  tangent to $N$ and the contraction
in the stable direction dominates the $r$-th power of the maximal contraction
tangent to $N$, this guarantees that the stable manifold is of class $C^r$.

A similar theory has recently appeared for endomorphisms in \cite{BERGER}, in which normal hyperbolicity is also
a central hypothesis.

In our situation, $\BOTTOMphys$ is not normally hyperbolic because it lies
within the invariant line $t=0$ and $\Rphys$ is holomorphic.  This forces the
expansion rates tangent to $\BOTTOMphys$ and transverse to $\BOTTOMphys$
(within this line) to coincide.  Therefore, the following result does not seem
to be part of the standard hyperbolic theory:

\begin{prop}\label{PROP:REAL_ANALYTIC} $\WW^s_{\C,loc} (\BOTTOMphys)$ is a real-analytic manifold.
\end{prop}

\begin{proof} 
We will first show the corresponding statement for $\WW^s_{\C,loc}(\BOTTOMmig)$ since
the result for $\WW^s_{\C,loc}(\BOTTOMphys)$ will then follow easily from the semiconjugacy $\correspond$.

We begin by writing
$\WW^s_\C(\BOTTOMmig)$
as the zero locus of the real analytic function
$\omega: \Omega \rightarrow \mathbb{R}$, where $\Omega$ is some neighborhood of $\BOTTOMmig$.
\msk
In the affine coordinates $u = U/W, v = V/W$ $\Rmig$ becomes
\begin{eqnarray*}
\Rmig(u,v) = \left(\frac{(u^2+v^2)^2}{(v^2+1)^2},\frac{v^2(u+1)^2}{(v^2+1)^2}\right).
\end{eqnarray*}

\noindent
When needed we will write $u_n(u,v)$ and $v_n(u,v)$ to denote the first and
second coordinates of the $n$-th iterate of $\Rmig$.

In the $(u,v)$ coordinates $\BOTTOMmig$ is given by $|u| = 1, v=0$ and the
critical line $\Lzero$ by $v=0$.  By Lemma \ref{nbd of B} any point
sufficiently close to $\Lzero$ is in $\WW^s(e)\cup \WW^s(e')\cup
\WW^s_\C(\BOTTOMmig)$.  Since the current discussion is local, we a-priori
restrict to such a neighborhood of $\Lzero$.

We will show that there is a
neighborhood $\Omega$ of $\BOTTOMmig$ on which
\begin{eqnarray}\label{EQN:PSI}
\psi(u,v) = \lim_{n \ra \infty} \left(u_n(u,v) \right)^{1/4^n}
\end{eqnarray}
\noindent
converges uniformly to a non-constant (non-zero) analytic function.
Since $|u_n(u,v)|$
converges to $1$ if and only if $(u,v) \in \WW^s(\BOTTOMmig)$,
we have that
\begin{eqnarray*}
\omega(u,v) = \log |\psi(u,v)| = \lim \log
|\left(u_n(u,v) \right)^{1/4^n}|
\end{eqnarray*}
\noindent  converges to $0$ on $\WW^s(\BOTTOMmig)$
and to non-zero values away from $\WW^s(\BOTTOMmig)$.
Thus, $\omega$ is the desired real analytic function.

\ssk

To prove convergence of (\ref{EQN:PSI}) we write $\psi(u,v)$ as a telescoping product:
\begin{eqnarray}\label{EQN:TELESCOPING}
\psi(u,v) = u_1(u,v)^{1/4} \frac{u_2(u,v)^{1/16}}{u_1(u,v)^{1/4}} \cdot \frac{u_3(u,v)^{1/64}}{u_2(u,v)^{1/16}} \cdots
\end{eqnarray}
\noindent
We can write the first coordinate $R_1$ of $R$ as
\begin{eqnarray*}
\Rmig_1(u,v) = u^4+2u^2v^2+v^4+(u^2+v^2)^2 v^2 g(v)
\end{eqnarray*}
\noindent
for an appropriate analytic function $g$ defined in a neighborhood of $v=0$.
Thus, the general term in (\ref{EQN:TELESCOPING}) becomes:
\begin{eqnarray*}
\frac{u_{n+1}(u,v)^{1/4^{n+1}}}{u_n(u,v)^{1/4^n}} &=& \left(\frac{u_n^4+2 u_n^2
v_n^2 + v_n^4 + (u_n^2+v_n^2)^2 v_n^2 \, g(v_n)}{u_n^4}\right)^{1/4^{n+1}} \\
&=& \left(1 + 2 \left(\frac{v_n}{u_n}\right)^2 + \left(\frac{v_n}{u_n}\right)^4
+ \left(1+\left(\frac{v_n}{u_n}\right)^2 \right)^2 v_n^2 \, g(v_n)
\right)^{1/4^{n+1}}.
\end{eqnarray*}

It suffices to show
that in some neighborhood $\BOTTOMmig$ we consistently have
\begin{eqnarray}\label{EQN:DESIRED_1_2_BOUND}
\left|2 \left(\frac{v_n}{u_n}\right)^2 + \left(\frac{v_n}{u_n}\right)^4 + \left(1+\left(\frac{v_n}{u_n}\right)^2 \right)^2 v_n^2 \, g(v_n) \right| < \frac{1}{2}
\end{eqnarray}\noindent
for each $n$.  In this case
each of the $1/4^n$-th roots can be defined using the binomial formula
\begin{eqnarray*}
(1+u)^\alpha = \sum_{n=0}^\infty \frac{\alpha(\alpha-1)\cdots(\alpha -n +1)}{n!} u^n.
\end{eqnarray*}
\noindent which holds for $|u| < 1$ and
the associated series of logarithms
\begin{eqnarray*}
\sum \frac{1}{4^{n+1}} \log \left|1 + 2 \left(\frac{v_n}{u_n}\right)^2 + \left(\frac{v_n}{u_n}\right)^4 + \left(1+\left(\frac{v_n}{u_n}\right)^2 \right)^2 v_n^2 \,  g(v_n) \right|
\end{eqnarray*}
will converge uniformly, since it is then bounded above by the geometric
series
\begin{eqnarray*}
\sum \frac{\log 2}{4^{n+1}}.
\end{eqnarray*}

\msk
Recall that in the $(u,v)$ coordinates $\CFIXmig = (0,0)$ and that $\CFIXmig'$
is the point at infinity on the line $v=0$.  We can easily find a neighborhood
$\Lambda'$ of $\BOTTOMmig$ within $\WW^s(\BOTTOMmig) \cup \WW^s(\CFIXmig')$ in
which desired bound (\ref{EQN:DESIRED_1_2_BOUND}) holds. This is because the
iterates $|u_n(u,v)|$ are bounded away from $0$ while the iterates $v_n(u,v)$
are converging to $0$ (at a super-exponential rate).

However, for points in $\WW^s(\BOTTOMmig) \cup \WW^s(\CFIXmig)$ writing
(\ref{EQN:PSI}) as a telescoping product of the form (\ref{EQN:TELESCOPING})
does not give convergence because points that are not on the fast separatrix
$\Lzero$ converge to $\CFIXmig$ asymptotically to the slow separatrix $\Lone$
which is given  $u=v^2$.  Therefore, the terms of the form $\frac{v_n}{u_n}$ in
(\ref{EQN:DESIRED_1_2_BOUND}) blow up.

This does not necessarily mean that the sequence (\ref{EQN:PSI}) can not
converge uniformly on any subset of $\WW^s(\CFIXmig)$, rather that the
factorization (\ref{EQN:TELESCOPING}) does not hold there.  Instead, one can
use the symmetry $(U:V:W) \mapsto (W:U:V)$ for $\Rmig$.  Using the symmetric affine coordinates
$w':=W/U, v':= V/U$ the same proof as above gives that the sequence
\begin{eqnarray*}
 \left(w'_n(w',v') \right)^{1/4^n}
\end{eqnarray*}
\noindent
converges uniformly on a neighborhood $\Lambda$ of $\BOTTOMmig$ within $\WW^s(\BOTTOMmig) \cup \WW^s(\CFIXmig)$.
Since $w'_n = 1/u_n$, with $\left|\left(w'_n(w',v') \right)^{1/4^n}\right|$ bounded away from zero on $\Lambda$, we see that (\ref{EQN:PSI}) converges uniformly on $\Lambda$, as well.
\msk

Therefore, the sequence $u_n(u,v)^{1/4^n}$ converges uniformly on $\Lambda \cup
\Lambda'$, which is a full neighborhood of $\BOTTOMmig$ in $\CP^2$, as desired.

\msk

On the invariant line $v=0$ we have $u_{n+1} = (u_n)^4$ giving that
$\omega(u,0) = \log|u|$, which has non-zero derivative at points on
$\BOTTOMmig$ (in the direction transverse to $\BOTTOMmig$).  Therefore, the
implicit function theorem gives that $\WW^s_{\C,loc}(\BOTTOMmig) = \{\omega =
0\}$ forms a manifold in a sufficiently small neighborhood of $\BOTTOMmig$.

\msk
We now use the semiconjugacy $\correspond$ to translate these results to $\WW^s_{\C,loc}(\BOTTOMphys)$.
Observe that $\correspond^{-1}(\Omega)$ is a neighborhood of $\BOTTOMphys$ on which
$\WW^s_{\C,loc}(\BOTTOMphys)$ given by the zero set of the
real-analytic function $\omega \circ \correspond$.  Furthermore,
$\omega \circ \correspond(z,0) = -2 \log|z|$
so that every point on $\BOTTOMphys$ is a regular point of $\omega \circ
\correspond$, and hence $\WW^s_{\C,loc}(\BOTTOMphys)$ is a manifold in a
sufficiently small neighborhood of $\BOTTOMphys$.
\end{proof}

\begin{cor}
The stable lamination constructed in Proposition \ref{PROP:STABLE_MANIFOLD} fills out all of $\WW_{\C,loc}^s(\BOTTOMphys)$, that is:
\begin{eqnarray*}
\WW_{\C,loc}^s(\BOTTOMphys) = \bigcup_{x \in \BOTTOMphys} \WW^s_{\C,\loc}(x)
\end{eqnarray*}
\end{cor}
\begin{proof}
The inclusion
\begin{equation*}
\iota : \bigcup_{x \in \BOTTOMphys} \WW^s_{\C,\loc}(x) \ra \WW_{\C,loc}(\BOTTOMphys)
\end{equation*}
is an continuous injective map of one $3$-dimensional manifold into another, so
Brower's Invariance of Domain implies that the $\iota$ is a local homeomorphism.
\end{proof}

A global stable set
\begin{eqnarray*}
\WW_\C(\BOTTOMphys) = \cup_{n=0}^\infty \Rphys^{-n}\left(\WW_{\C,loc}(\BOTTOMphys)\right)
\end{eqnarray*}
\noindent
can be formed.  In general $\WW_\C(\BOTTOMphys)$ may not be a manifold as it intersects
the critical value locus for
$\Rphys$.  However, we do have the following

\begin{cor}
The global stable set of $\WW_\C(\BOTTOMphys)$ is a real-analytic variety.
\end{cor}

\begin{proof}
Any point of
$\WW_\C(\BOTTOMphys)$ has a neighborhood $N$ in which $\WW_\C(\BOTTOMphys) \cap N$ is given by the zero set of the real analytic function $\Rphys^n \circ \correspond \circ \omega$ for some appropriate $n$.
\end{proof}

Let us return to the cylinder $\Cphys$ and the foliation of $\WW^s(\BOTTOMphys)$ by stable curves.
As mentioned in \S \ref{SUBSEC:BOTTOM_FOLIATION}, each of the stable curves
$\WW^s(x)$, $x\in \BOTTOMphys$ is real analytic.
However, such dynamical foliations often have low transverse smoothness---
the general theory \cite{HPS,SHUB} typically asserts
\note{can be one source but more specific} only $C^{1+\epsilon}$ regularity.
In our setting it is not clear that the general theory applies.

Using Proposition \ref{PROP:REAL_ANALYTIC} we can actually obtain much better regularity:
\begin{prop}\label{foliation near B}
   $\FF^s(\BOTTOMphys)$ is a real analytic foliation.
\end{prop}

\begin{proof}
We restrict our attention to a neighborhood $\Lambda \subset \Cphys$ of
$\BOTTOMphys$ that is contained within $\WW^s_{\C,loc}(\BOTTOMphys)$.  This is
sufficient because the foliation $\FF^s(\BOTTOMphys)$ in the neighborhood of
any point in $\WW^s(\BOTTOMphys)$ can be obtained by pulling back the foliation
on $\Lambda$ under sufficiently many iterates of $\Rphys$.

The real slices of the complex stable curves within $\WW^s_{\C,loc}
(\BOTTOMphys)$ are the real stable curves $\WW^s(x)$ discussed in \S
\ref{SUBSEC:BOTTOM_FOLIATION}.  We will use these complex extensions of the
real stable curves $\WW^s(x)$ to derive their desired regularity of the
foliation $\FF^s(\BOTTOMphys)$.

The tangent space $L_x$ to $\WW^s_{\C,loc}(x)$ is a complex line in the three
real-dimensional tangent space $E_x= T_x(\WW^s_{\C,loc}(\BOTTOMphys))$.  This line is
invariant under  multiplication by $i$, and hence  $L_x \subset E_x\cap i E_x$.
Moreover, any real hyperplane $M$ in $\C^n$ always defines a {\em unique}
complex hyperplane $N:=M \cap iM$, so in this case $L_x = E_x\cap i E_x$.

Because $\WW^s_{\C,loc}(\BOTTOMphys)$ is a real-analytic manifold
the spaces $E_x$ depend real analytically on $x$, and hence the spaces $L_x = E_x \cap i E_x$ do, as well.
It follows that the real slices, $L_x \cap T_x\Cphys$, of these spaces form a real analytic line field on $\Cphys$. As the foliation $\FF^s(\BOTTOMphys)$ is obtained by integrating this line field,
it is real analytic as well.
\end{proof}

Since $\WW^s_\C (\BOTTOMphys)$ is real analytic,  one can
evaluate the Levi form on it, which vanishes because $\WW^s_\C (\BOTTOMphys)$ is foliated by the
complex stable curves.  Such a codimension $1$ submanifold is called ``Levi flat,'' see \cite{GUNNING}.

************************}

\section{High temperature dynamics: basin of the top of the cylinder}
\label{SEC:HIGH_TEMP}

Property (P5) from \S \ref{str on CC}  states that
the top $\TOPphys$ of the cylinder $\Cphys$ is non-uniformly superattracting. 
In this section we will prove that there is set of positive measure attracted to $\TOPphys$,
$$
   W^s(\TOPphys)= \{x\in \Cphys: \ \Rphys^n x\to \TOPphys\},
$$
that supports a ``stable bouquet'' $\FF^s(\TOPphys)$ consisting of
curves emanating from almost all points of $\TOPphys$.

\ssk

Near the top, we will make use of the local coordinate $\tau$,
and  near the indeterminacy points -- of the local coordinates  $(\tau, \eps)$, see (P5).
We say that ``$x$ is $\tau$-{\it below} $y$'' if $\tau(x)< \tau(y)$
(so, $x$ is, in fact, above $y$ on the cylinder $\Cphys$).

Recall the neighborhoods $\VV' \equiv \VV'_{\bar \tau,\eta}$ of $\TOPphys \sm
\{\INDphys_\pm\}$ obtained by removing the parabolic regions $\PP_\eta^\pm$
from the $\VV_{\bar \tau}$ neighborhood of the top (see \S \ref{SUBSEC:MODIFIED_ALG_CONES}.)
Let $q\in (0,1)$.
By property (P5), if $\eta$ and $\bar \tau$ are sufficiently small then
\begin{equation}\label{la}
    \tau(\Rphys x) <  q \, \tau (x) \quad \forall\ x\in \VV'.
\end{equation}

Let $\WW^s_{\eta,\bar \tau}(\TOPphys)$ be the set of points whose orbits
converge to $\TOPphys$ while remaining in $\VV'_{\bar \tau,\eta}$ and let
$\WW^s_{\eta}(\TOPphys)$ be the set of points whose orbits eventually land in
$\VV'_{\bar\tau, \eta}$ for some $\bar \tau$ and stay there (note that this
property is independent of $\bar\tau$).   Then, points of
$\WW^s_{\eta}(\TOPphys)$ are attracted to $\TOPphys$  with exponential rate
$O(q^n)$.  We will show below that this set supports a ``stable bouquet'' $\FF^s_\eta(\TOPphys)$ consisting of
curves emanating from almost all points of $\TOPphys$ and that it has positive two-dimensional Lebesgue measure.

\subsection{Vertical bouquet $\FF^s(\TOPphys)$.}
\label{SUBSEC:FOLIATION_NEAR_T}

A {\it bouquet} of curves in $\Cphys$ is a family of curves that are disjoint on $\Cphystl$
(which may or may not be a lamination). \note{Ok?}
The curves comprising the bouquet are called its {\it leaves}.

In the following proposition,  horizontal and vertical curves $\gamma$ are
understood in the sense of the cone fields $\KK^h$ and  $\KK^v$.  Vertical
curves are oriented by the local coordinate $\tau$.  Note that for sufficiently small $\eta$, the boundary of
$\VV'$ is horizontal because the tangent lines to the parabolas $\YY^\pm_\eta$
have slope $2\eta \epsilon$ which can be made less than $\epsilon/3$.

\begin{prop}\label{PROP:ETA_BASINS_LAMINATED}
For any sufficiently small $\eta> 0$,
the basin  $W^s_{\eta}(\TOPphys)$
supports an invariant bouquet $\FF^s(\TOPphys) \equiv \FF^s_\eta(\TOPphys)$
by smooth vertical paths landing transversely at almost all points of $\TOPphys$.
\end{prop}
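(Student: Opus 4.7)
The plan is to construct, for almost every $x_0 \in \TOPphys$, a stable leaf $W^s(x_0) \subset W^s_\eta(\TOPphys)$ as a pull-back limit of vertical segments along the forward orbit of $x_0$. The full-measure subset of $\TOPphys$ in question consists of those $x_0$ whose forward orbit $\{x_n := \Rphys^n x_0\}$ under the doubling map $\Rphys|\TOPphys$ avoids $\{\alpha_\pm\}$ and approaches these points sub-exponentially fast, in the sense that $\sum_{k<n}(-\log\dist(x_k,\alpha_\pm))$ grows sub-exponentially in $n$. Ergodicity of the doubling map with respect to Lebesgue measure on $\T$, together with integrability of $\log\dist(\,\cdot\,,\alpha_\pm)$, guarantees this for a.e.~$x_0$.

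For such $x_0$, each $x_n \in \TOPphys \cap \VV'$ is a regular point of $\Rphys$, so I can pick local inverse branches $\Rphys^{-1}_n$ mapping a small vertical disk $D_{n+1}$ near $x_{n+1}$ into a vertical disk $D_n \subset \VV'$ near $x_n$. Composing, set $F_n := \Rphys^{-1}_0 \circ \cdots \circ \Rphys^{-1}_{n-1}$ and $\Gamma_n := F_n(D_n \cap \II_{\phi_n})$: a vertical curve through $x_0$, vertical by backward-invariance of $\KK^v$ (Corollary \ref{COR:R_CONEFIELD}). Exponential $\tau$-contraction on $\VV'$ (equation (\ref{la})) forces $\Gamma_n \subset W^s_\eta(\TOPphys)$, while horizontal expansion with rate $\la=2$ (Theorem \ref{hor expansion thm}) rules out two distinct vertical curves through $x_0$ both shadowing $\{x_n\}$. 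These two properties, combined with convergence of pulled-back vertical line fields to $\LL^c$ (Proposition \ref{Central line field prop}), should yield Cauchy convergence of the $\Gamma_n$ in $C^1$ to a smooth vertical curve $W^s(x_0)$ tangent to $\LL^c$ throughout.

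The bouquet properties then follow in short order. Transversality to $\TOPphys$ at $x_0$ holds since $\LL^c(x_0) \subset \KK^v(x_0)$ excludes the horizontal direction (the modified cone field is non-degenerate on $\VV'$). Disjointness of two leaves $W^s(x_0), W^s(x'_0)$ on $\Cphystl$ follows because a shared point would simultaneously shadow two exponentially separating doubling-map orbits on the top, which horizontal expansion forbids. Forward invariance of the bouquet under $\Rphys$ is automatic from the pull-back construction.

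The main technical obstacle is that the distortion of $\Rphys^{-1}$ blows up like $1/\dist(\,\cdot\,,\alpha_\pm)$, so the compositions $F_n$ accumulate distortion at a rate depending on how closely $\{x_n\}$ approaches $\alpha_\pm$, and the pull-back disks $D_n$ threaten to collapse before reaching a definite size at $x_0$. Birkhoff-genericity of $x_0$ is used exactly to control this accumulation: the sub-exponential growth of $\sum_{k<n}(-\log\dist(x_k,\alpha_\pm))$ ensures that distortion is dominated by the geometric $q^n$ contraction in $\tau$ and $2^n$ expansion horizontally, so the pull-back disks maintain definite size at $x_0$. Making this Pesin-style estimate precise—and extending it to control higher derivatives, if one wants $C^\infty$ smoothness in the spirit of Proposition \ref{PROP:CINFTY_FOLIATION}—is the hardest part of the argument.
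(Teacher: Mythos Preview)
Your overall strategy---pulling back vertical segments along the forward doubling orbit and using a slow-recurrence condition to ensure the limiting curve has positive length---is the same as the paper's. The differences are in the technical implementation, and your version makes the argument harder than it needs to be.

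The paper does not use distortion estimates at all. The obstacle you identify (distortion of $\Rphys^{-1}$ blowing up like $1/\dist(\cdot,\alpha_\pm)$, Pesin-style accumulation) is not the real issue. What matters is a purely geometric question: does the pulled-back vertical curve $\gamma^n_\phi$ stay inside $\VV'$, or does it get truncated at the parabolic boundary near $\alpha_\pm$? The paper tracks only the \emph{vertical height} of these curves. The key observation is that a vertical curve based at $\phi\in\TOPphys$ with $\tau$-height at most $h(\phi):=\min\{\bar\tau,\eta\,\eps(\phi)^2/2\}$ is automatically contained in $\VV'$. Since some iterate $\Rphys^k(\gamma^n_\phi)$ must be proper (it reaches $\partial\VV'$, because the original $\gamma^0$ does), and since the height of $\Rphys^k(\gamma^n_\phi)$ is at most $q^k\bar\tau$, that proper iterate can occur only when $q^k\bar\tau\geq h(\phi_k)$.

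Correspondingly, the recurrence condition needed is simpler than yours. The paper uses Borel--Cantelli (not Birkhoff) to show that for a.e.\ $\phi$, eventually $\eps_n(\phi)>q^{n/4}$, hence $h(\phi_n)\gtrsim q^{n/2}$, which eventually dominates $q^n\bar\tau$. This forces the ``proper'' index $k$ above to lie in a bounded range $[0,k_0(\phi)]$ independent of $n$, so every $\gamma^n_\phi$ inherits a definite height from that bounded-index iterate by at most $k_0$ pullbacks.

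Your Birkhoff-sum condition and distortion bookkeeping would also succeed, but they import machinery the problem does not require. The remaining pieces you sketch---convergence of the pulled-back line field to $\LL^c$ giving $C^1$ smoothness, transversality at $\TOPphys$ from non-degeneracy of $\KK^v$ on $\VV'$, and disjointness via horizontal expansion---are correct and match the paper.
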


\begin{proof}
Let $q\in (0,1)$.
%
A vertical curve $\gamma$  is called {\it semi-proper} if it begins on $\TOPphys$.
If additionally, $\gamma$ lands on the $\tau$-upper boundary of $\VV'$, it is  called {\it proper}.

 Let
$$
   \VV'_n = \{x:\ \Rphys^k x\in \VV', \ k=0,\dots, n-1\}.
$$
Let $\gamma^0\equiv \gamma^0_x \subset \VV'$ be the proper genuinely vertical interval
containing  $x\in \VV'\equiv \VV'_{\bar\tau,\eta}$.
For each $n>0$ and $x\in \VV'_n$,
we will inductively construct a semi-proper vertical curve
$\gamma^n=\gamma^n_x \subset \VV'_n$ containing $x\in \VV'_n$.
Assume we have already constructed curves $\gamma^{n-1}_y$ for all $y\in \VV'_{n-1}$.
Then for $x\in \VV'_n$,  we define $\gamma^n_x$ as the regular lift of $\gamma^{n-1}_{\Rphys x}$
truncated (if needed) by the $\tau$-upper boundary of $\VV'$.
Since the cone field $\KK^v$ is backward invariant,
we obtain a semi-proper vertical curve. Since the boundary of $\VV'$ is horizontal
and $x\in \VV'$,
the whole  curve $\gamma_x^n$ is contained in $\VV'$.
Since $\gamma^{n-1}_{\Rphys x}\subset \VV'_{n-1}$, we conclude that $\gamma_x^n\subset \VV'_n$.

Let now $x\in \WW^s_{\eta,\bar\tau}=\bigcap \VV'_n$, so that the curves $\gamma^n_x$ exist for all $n\in \N$.
Since each of them contains $x$, they all have a definite height $\tau_0 \geq  \tau(x)$.
Since $\Rphys$ is horizontally expanding, 
the curves $\gamma^n_x$  exponentially converge in the uniform topology
to a curve $\gamma_x$ containing $x$ and of height $\geq \tau_0$.
Moreover, being vertical, the curves $\gamma^n_x$ are uniformly Lipschitz,
so $\gamma_x$ is Lipschitz as well.

The results of \S \ref{SUBSEC:DOMINATED} gives
that the intersections $\cap D\Rphys^{-n}(\KK^v(\Rphys^n x))$ converge
geometrically to the central line field $\LL^c(x)$.
Thus, $\gamma_x$ is tangent to $\LL^c(x)$ at $x$.
Similarly, at any $y \in \gamma_x$ we have that $\gamma_x$ is tangent to
$\LL^c(y)$.  Since the $\LL^c$ is a continuous line field, the entire curve
$\gamma_x$ is $C^1$.

It lands at $\TOPphys$ transversely since the vertical cone field $\KK^v$ is
non-degenerate on $\TOPphys\sm \{\alpha_\pm\}$.
(Note that the curves $\gamma_x$ do not land at $\alpha_\pm$ since they are vertical
while $\di \VV'$ is horizontal.)

So, the basin  $\WW^s_{\eta, \bar\tau}$ supports an invariant bouquet  by smooth
vertical paths landing transversely at $\TOPphys$.
Pulling it back by the dynamics, we obtain a similar bouquet supported on the whole basin
$\WW^s_\eta$.
To complete the proof we need to show that the leaves of this bouquet land at almost all points
of $\TOPphys$.

Let $\eps_n(\phi)= \eps(\Rphys^n(\phi))$ (where $\phi$ is considered to be a point on $\TOPphys$).
Since the doubling map  $\Rphys: \phi \mapsto 2\phi$ preserves the Lebesgue measure on $\TOPphys$,
the Borel-Cantelli Lemma implies that for a.e. $\phi\in \TOPphys$,
eventually we have: $\eps_n(\phi)> q^{n/4}$. Hence for a.e. $\phi\in \TOPphys$, there exists $c=c(x)>0$
such that
\begin{equation}\label{slow recurrence}
    \eps_n(\phi)> c\, q^{n/4}, \quad n\in \N.
\end{equation}

Let $h(\phi) = \min\{\bar \tau, \eta \eps^2/2\}$,
so that any vertical curve $\gamma$ based at $\phi\in \TOPphys$
of $\tau$-height   $\leq h(\phi)$  is necessarily contained in $\VV'$.
(To check it, note that a vertical curve that begins at $\eps_0$-distance from $\alpha_+$
goes $\tau$-below the parabola $\tau= (\eps_0^2 - \eps^2)/4$,
and hence reaches the boundary parabola $\YY_+= \{\tau=\eta\eps^2\}$ at least at $\tau$-height
$\eta\eps_0^2/(1+4\eta)$.)

Let us now slightly modify the above construction of semi-proper vertical curves.
Let $\gamma_\phi^0 \subset \VV'$ be the proper vertical (straight) interval  based at $\phi\in \TOPphys$.
For each $n>0$ we will inductively construct a family of semi-proper vertical curves
$\gamma^n_\phi \subset \VV'$ based at $\phi\in \TOPphys$.
Assume we have already constructed curves $\gamma^{n-1}_\phi$.
Then we define $\gamma^n_\phi$ as the regular lift of $\gamma^{n-1}_{2\phi}$
truncated (if needed) by the $\tau$-upper boundary of $\VV'$.
Since the cone field $\KK^v$ is backward invariant, we obtain a family of semi-proper vertical curves.

We will now show that if $\phi$ satisfies (\ref{slow recurrence})
then the curves $\gamma^n=\gamma^n_\phi$ have a definite height
(depending on $\phi$ but independent of $n$).
Indeed, by construction, one of the curves $\Rphys^k(\gamma^n)$, $k=0,1,\dots, n$, is proper.
But the height of $\Rphys^n(\gamma^n)$ is bounded by
$q^n \bar \tau$ which is eventually smaller than
$$
   \frac \eta {2}  c^2 q^{n/2} \leq \min\{\, \frac \eta{2} \eps_n^2, \, \bar\tau\} = h(\phi_n).
$$
Hence there is $k_0=k_0(\phi)$ (independent of $n$)
 such that all the curves $\Rphys^k \gamma^n$ are not proper
for $k>k_0$. It follows that one of the curves $\Rphys^k \gamma^n$, $k=0,1,\dots, k_0$,
is proper, and hence, it has a definite height.
Then the same is true for the curve $\gamma^n$.  This  completes the proof.
\end{proof}

\subsection{Positive measure of $\WW^s(\TOPphys)$.}\label{SUBSEC:BASIN_T}

Let 
\begin{eqnarray*}
\WW^{s,o}_\eta(\TOPphys) := \{x \in \WW^s_\eta(\TOPphys) \, : \, \mbox{the curve $\gamma_x \in \FF^s_\eta$ containing $x$ extends beyond $x$} \}.
\end{eqnarray*}
It is a completely invariant subset of $\WW^s_\eta(\TOPphys)$ consisting of points
$x \in \WW^s_\eta$ whose orbits $\Rphys^n x$ converge to $\TOPphys$ within the interiors of the corresponding leaves
$\gamma_{\Rphys^n x}$.

Let 
$$ 
     \pi: \Cphys \to \BOTTOMphys=\T,  \quad (\phi, t)\mapsto \phi 
$$ 
be the natural projection onto the bottom of the cylinder. 

Given a horizontal curve $\xi$, let  $dl^h\equiv dl_\xi^h=\pi^* (d\phi)$ stand for the
{\it horizontal length} on it, i.e., 
the {\it pullback} of the standard Lebesgue measure on $\T$ to $\xi$ under $\pi$.
So, if $\xi$ projects invectively onto the horizontal axis
(or equivalently, if $l^h(\xi)\leq 2\pi$)  then 
  $l^h(X)= |\pi(X)|$ for any measurable set $X\subset \xi$
(where $|Y|$ stands for the Lebesgue measure of $Y\subset \T$). 
In general, 
\begin{equation}\label{pullback meas}
  l^h(X) =\int |(\pi| X)^{-1}(\phi)| d\phi\leq \deg(\pi|\, X) \, |\pi(X)|, 
\end{equation}
where $\deg (\pi| X) =\max_{\phi\in \T}  |(\pi|X)^{-1}(\phi)| $.


By Theorem \ref{hor expansion thm}, 
$\Rphys$ expands the horizontal length: there exists $\la>1$ and $c > 0$ such that  
for any  horizontal curve $\xi \subset \VV'$ and any measurable set $X\subset \xi$, we have:
\begin{equation}\label{EQN:STRETCHING_IN_UPSILON}
  l^h(\Rphys^n X) \geq c \lambda^n \,  l^h(X).
\end{equation}

\begin{rem}
  In fact,  $\la=2$ but we will keep notation ``$\la$'' to distinguish it from
the combinatorial appearance of ``2''.  Note also that in region $\VV'$ (which
we are concerned with in this section)  expanding property
(\ref{EQN:STRETCHING_IN_UPSILON}) can be easily derived from Lemma
\ref{LEM:STRETCHING_NEAR_TOP}. 
\end{rem}

\begin{lem}\label{LEM:POS_MEASURE_AH_CURVE}
For any $\eta \in (0, 1/2)$ and $\de>0$, there exists a threshold $\bar\tau>0$ with the following property.  
Let  $\xi$ be a horizontal curve in the strip  $\VV_{\bar\tau}$ with $l^h(\xi) < 2\pi$. 
Then all points of $\xi$, except for a set of horizontal length $<\de$,
belong to $\WW^{s,o}_\eta(\TOPphys)$. 
\end{lem}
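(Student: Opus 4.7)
The plan is to bound the ``bad set'' in $\xi$ — those points whose forward orbit eventually leaves $\VV'$ through one of the parabolic regions $\PP_\eta^\pm$ — by a Borel--Cantelli-type estimate, and then absorb the set of leaf endpoints (a negligible set of measure zero) into the $\delta$-budget. For $n\geq 1$, I would set
$$
  B_n := \{x\in \xi\,:\, \Rphys^j x\in \VV'\text{ for }0\leq j<n,\ \Rphys^n x\in \PP_\eta^+\cup\PP_\eta^-\}.
$$
Since $\VV'$ is forward invariant into $\VV$ by (\ref{la}), any $x\in \xi$ whose forward orbit misses $\bigcup_{n\geq 1}B_n$ automatically belongs to $\WW^s_\eta(\TOPphys)$, so it is enough to prove $\sum_n l^h(B_n) < \delta/2$.

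The key step will be to show that $l^h(B_n)\leq K\sqrt{q^n\bar\tau/\eta}$ for a constant $K$ depending only on $\eta$. Three ingredients enter the estimate. First, (\ref{la}) gives $\tau(\Rphys^n x)\leq q^n\bar\tau$ for any $x\in\xi$ surviving $n$ steps in $\VV'$. Second, at height $\tau'$ the region $\PP_\eta^+\cup\PP_\eta^-$ projects under $\pi$ onto two $\phi$-intervals around $\pm\pi/2$ of total length at most $4\sqrt{\tau'/\eta}$, directly from the defining parabolas $\tau'=\eta\eps^2$. Third, on horizontal curves in $\VV'$ the map $\Rphys$ acts on $\phi$ as $\phi\mapsto 2\phi$ up to $O(\tau)$-corrections (Property (P5) and (\ref{DR near alpha})); the doubling map preserves Lebesgue measure on $\T$, and the nonlinear corrections telescope to a uniformly bounded distortion $\prod_j(1+O(q^j\bar\tau))\leq \exp(C\bar\tau/(1-q))$ across $n$ iterates because heights decay geometrically. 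I would then pull back the $\phi$-measure of $\PP_\eta^\pm$ at height $q^n\bar\tau$ under $\Rphys^n$ to a subset of $\T$ of $\phi$-measure $\lesssim\sqrt{q^n\bar\tau/\eta}$, intersect with $\pi(\xi)\subset\T$, and convert back to horizontal length via $l^h=\pi^*|d\phi|$ (valid because $\xi$ is a graph over $\pi(\xi)$, its tangents lying in the bounded-slope cone $\KK^h$). Summing the resulting geometric series yields
$$
   \sum_{n\geq 1}l^h(B_n)\;\leq\; \frac{K\sqrt{q\bar\tau/\eta}}{1-\sqrt q},
$$
which is $<\delta/2$ once $\bar\tau$ is chosen sufficiently small in terms of $\eta$ and $\delta$.

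To pass from $\WW^s_\eta(\TOPphys)$ to $\WW^{s,o}_\eta(\TOPphys)$, I would note that a point $x\in\WW^s_\eta\setminus \WW^{s,o}_\eta$ is necessarily an endpoint of its leaf $\gamma_x$. Inspecting the inductive construction of $\gamma_x$ in Proposition \ref{PROP:ETA_BASINS_LAMINATED}, such an $x$ must have some forward iterate landing on the lower boundary $\YY_\eta^\pm$ of $\VV'$ or on one of the indeterminacy points $\alpha_\pm$. Both loci carve out a countable union of smooth horizontal curves and isolated points, which intersect $\xi$ in a set of $l^h$-measure zero because the horizontal cone $\KK^h$ is transverse to the vertical directions that define these loci.

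The main obstacle is the cumulative $\phi$-distortion estimate for $\Rphys^n$ along orbits staying in $\VV'$. Horizontal expansion $\asymp 2^n$ (Theorem \ref{hor expansion thm}) supplies the requisite lower bound, but the matching upper bound — needed to compare $l^h(B_n)$ with the pullback $\phi$-measure — is nontrivial and requires bounded-distortion control that is uniform in $n$. The cleanest route is a Koebe-type argument applied to the complexified horizontal curves constructed in Appendix \ref{SUBSEC:COMPLEXIFICATION_CONES} (in the spirit of \cite{BLOKH_LYUBICH}), though a direct inductive estimate based on (\ref{EQN_PHI_PRIME_BLOWUP})--(\ref{DR near alpha}), exploiting the geometric decay $\tau_n\leq q^n\bar\tau$ to telescope the error terms, is also feasible at the cost of heavier bookkeeping.
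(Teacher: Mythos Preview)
Your Borel--Cantelli plan and the decomposition into ``first exit from $\VV'$'' sets $B_n$ are sound and close in spirit to the paper's approach. The genuine gap is in the distortion estimate. Your claim that the nonlinear corrections to $\phi\mapsto 2\phi$ are $O(\tau_j)$ and telescope is not correct near $\INDphys_\pm$: on $\VV'$ one has $\tau\le\eta\eps^2$, and plugging into (\ref{DR near alpha}) gives horizontal expansion $2(\eps^2+\tau-s\eps)/(\eps^2+\tau^2)=2(1+O(\sqrt\eta))$ for $|s|\le a\sqrt\tau$. The correction is $O(\sqrt\eta)$, a fixed constant independent of the step, so it does not telescope; over $n$ steps in which a positive fraction of iterates visit the $\bar\eps$-neighborhood of $\INDphys_\pm$, the cumulative distortion is genuinely exponential in $n$. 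The Koebe route you mention would control distortion only on small subdisks, not uniformly across $\xi$, and bridging that gap amounts to the same counting problem.

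The paper sidesteps distortion entirely. Instead of proving $l^h(B_n)\lesssim|\pi(\Rphys^n B_n)|$ via an upper bound on expansion, it uses only the \emph{lower} bound (\ref{EQN:STRETCHING_IN_UPSILON}) together with a degree estimate: the key observation is that for orbits staying in $\VV'$, the relevant inverse branches of $\Rphys$ are the two \emph{regular} ones (Property P7), because $\VV'$ lies above the central tongues $\La_\pm$. Concretely, the paper lifts the vertical interval $\II_\phi$ through $\Rphys^n x$ back along the orbit and shows these lifts are regular (hence $2^n$ of them), each crossing $\xi$ at most once since they are vertical and $\xi$ is horizontal. This gives $\deg(\pi|\Rphys^n(\xi_n))\le 2^n$, whence $l^h(\Rphys^n(\xi_n))\le 2^n\cdot|\pi(\Rphys^n(\xi_n))|$, and then $l^h(\xi_n)\le c^{-1}\la^{-n}\cdot 2^n\cdot|\pi(\Rphys^n(\xi_n))|$. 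With a shrinking target $|\eps_n|<2\sqrt{\bar\tau/\eta}\cdot 2^{-n}$ (rather than your fixed target $\PP_\eta^\pm$), the projection bound is $\lesssim 2^{-n}\sqrt{\bar\tau/\eta}$, and everything sums. The degree bound is the idea you are missing; once you have it, your $B_n$-decomposition works too.

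Your treatment of $\WW^{s,o}_\eta$ versus $\WW^s_\eta$ is also incomplete. The claim that leaf endpoints in $\xi$ form a null set is not obvious (and invoking Corollary~\ref{COR:ENDPOINTS_MEASURE_ZERO} would be circular, since that corollary is derived from the present lemma via Theorem~\ref{attractors}). The paper instead gives a direct quantitative argument: for $x\in\xi\setminus\cup\xi_n$, one shows inductively that each approximating curve $\gamma^n_{x_i}$ extends past $x_i$ by at least $4^{-i}K\bar\tau/\eta$, using that $|\eps_i|\ge 2\sqrt{\tau_i/\eta}$ forces any proper vertical curve in $\VV'_{2\bar\tau,\eta}$ through $x_i$ to extend a definite amount, together with the fact that $\Rphys$ contracts vertical lengths in $\VV'$ by a factor $q<1/4$.
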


\begin{proof}[Proof of Lemma \ref{LEM:POS_MEASURE_AH_CURVE}:]
Making $\eta$ and $\bar \tau$ sufficiently small, we can assume if $\gamma
\subset \VV'_{2\bar{\tau},\eta}$ is any vertical curve then the vertical
length\footnote{In general, $\Rphys \gamma$ need not be vertical.  In all cases, the vertical length of $\Rphys \gamma$ can be considered as the total length its projection onto the vertical interval $\II_0$.} of $\Rphys \gamma$ is at least a factor of $q < 1/4$ smaller than the
vertical length of $\gamma$.  This slightly stronger condition than (\ref{la})
can be obtained using (\ref{EQN:A_EXPANSION}) and (\ref{DR near alpha}).

Let
\begin{align}\label{EQN:TOP_REMOVALS}
   \xi_n = \left\{ x\in \xi: \ |\eps(\Rphys^k x)| \geq 2 \sqrt{\frac{\bar
\tau}{\eta}}  \cdot 2^{-k} \mbox{ for } 0 \leq k \leq  n-1 \mbox{ and } \ |\eps(\Rphys^n x)| < 2 
\sqrt{\frac{\bar \tau}{\eta}}  \cdot 2^{-n}\right\},
\end{align}
and let $X_n = \Rphys^n(\xi_n)$.  
Note that the sets $\xi_n$ are pairwise disjoint.

Using (\ref{la}), one can inductively show that $\xi \sm \cup \xi_n \subset \WW^s_\eta(\TOPphys)$.
Let us now estimate $l^h(\cup \xi_n)$.
By construction
$$
     | \pi(X_n)| \leq 8 \sqrt{\bar\tau/ \eta} \ 2^{-n}.
$$
Making use of (\ref{pullback meas}), we obtain:
$$
     l^h(X_n) \leq 8 \sqrt{\bar\tau/ \eta}\, \deg (\pi|\, X_n) \, 2^{-n}.
$$
Together with (\ref{EQN:STRETCHING_IN_UPSILON}), this implies 
$$
            l^h(\xi_n) \leq 8 \sqrt{\bar\tau/ \eta}  \,  \deg (\pi|\, X_n)\,  2^{-n}\,  c^{-1} \lambda^{-n}.
$$
We will show that
\begin{equation}\label{bound on deg}
     \deg (\pi|\, X_n)\leq 2^n.
\end{equation}
Indeed, in this case  $ l^h(\xi_n(x)) \leq 8  \sqrt{\bar\tau/ \eta}  \, c^{-1} \lambda^{-n}$,
hence
\begin{equation}\label{total length}
  l^h(\cup \xi_n) =  \sum_{n=0}^\infty l^h(\xi_n(x))  \leq 8 \sqrt{\frac{\bar\tau} {\eta}}\, \frac{\la}{c(\la-1)},
\end{equation}
which can be made arbitrarily small if $\bar \tau$ is selected small enough.

\ssk

Let us prove (\ref{bound on deg}).
For $x\in \xi_n$, let $x_k = \Rphys^k x$, and let $\phi=\pi(x_n)$.
Let $\gamma_n \equiv \gamma_n(x)\subset \II_\phi$ 
be the genuine vertical interval connecting $x_n$ to $\TOPphys$,
and let $\gamma_k\equiv \gamma_k(x)$ be
its lifts that connect $x_k$ to $\TOPphys$. Since $x_k \in \VV'$ for $k<n$, 
and the region $\VV'$ lies above the tongues $\La_\pm$ 
(defined by Property P6 and Figure \ref{FIG:R_ON_CYLINDER}),
the lifts $\gamma_k$ are regular, 
i.e., they lie in the regular lifts $I_k^j$ of the interval $\II_\phi$
(where the lift $I_k^j$ terminates at the point $(\phi+ 2\pi j)/2^k$ of $\TOPphys$, $j=0,1,\dots, 2^k-1$).
But each lift $I_k^j$ is  vertical since the vertical cone field $\KK^v$ is backward invariant.
Hence each $I_k^j$ crosses the horizontal curve $\xi$ at most once. 
Hence,  given $\phi=\pi(x_n)$, there are at most  $2^n$ points $x\in \xi_n$ such that
$\pi(x_n)=\phi$, and (\ref{bound on deg}) follows.   

\ssk
It remains to prove that $\xi \sm \cup \xi_n$ is actually a subset of $\WW^{s,o}_\eta(\TOPphys)$.
We will show that 
the leaf $\gamma_x \in \FF^s_\eta(\TOPphys)$ through any $x \in \xi \sm \cup \xi_n$ extends beyond $x$ by a definite amount.
It suffices to verify that this holds for each of the curves $\gamma_x^n$ used in the proof of
Proposition \ref{PROP:ETA_BASINS_LAMINATED} to construct $\gamma_x$.

There is a constant $K > 0$ so that 
if $\tau(x) \leq \bar \tau$ and 
\begin{eqnarray*}
|\eps(x)| \geq 2 \sqrt{\frac{\tau(x)}{\eta}},
\end{eqnarray*}
then any proper vertical curve $\gamma$ in $\VV'_{2\bar \tau,\eta}$ containing $x$
extends beyond $x$ by at least $K|\eps(x)|^2$.

To see it, let $\tl x$ be the point where $\gamma$ reaches the $\tau$-upper
boundary of $\VV'_{2\bar \tau,\eta}$.  If $\tau(\tl x) = 2\bar \tau$, then we
are done.  So, we can suppose that $x = (\eps,\tau)$ and $\tl x = (\tl \eps,
\tl \tau)$ are near $\INDphys_+$ and $\eps,\tl \eps > 0$.  If $\tl \eps \geq
3/4 \eps$, then $\tl \tau \geq (9/16) \eta \eps^2$ so that $\tl \tau - \tau \geq
(5/16) \eta \eps^2$.  Otherwise, $\tl \eps \leq 3/4 \eps$.  Since the vertical
cones $\KK^v(x)$ have slope $d \tau / d\eps \geq \eps(x)/3$ this forces that
$\tl \tau - \tau \geq (1/16) \eps^2$.

For any $x \in \xi \sm \cup \xi_n$, consider
the orbit $x_i = \Rphys^i x$.  By (\ref{EQN:TOP_REMOVALS}), one finds that
\begin{eqnarray*}
|\eps(x_i)| \geq 2 \sqrt{\frac{\bar \tau}{\eta}}  \cdot 2^{-i} \geq 2 \sqrt{\frac{\tau(x_i)}{\eta}},
\end{eqnarray*}
so that any proper vertical curve in $\VV'_{2\bar \tau,\eta}$ through $x_i$ extends beyond $x_i$ by at least 
$K|\eps(x_i)|^2 \geq 4^{-i} K \bar \tau/\eta$.

We now inductively prove that for every $n$ the curve $\gamma_{x_i}^n$ extends
beyond $x_i$ by at least $4^{-i} K \bar \tau/\eta$.  This holds when $n=0$,
since $\gamma_{x_i}^0$ is the proper genuinely vertical interval in
$\VV'_{2\bar \tau,\eta}$ through $x_i$.
Now suppose that it is true at step $n$ in order to check for step $n+1$.  Suppose that
some $\gamma_{x_i}^{n+1}$ extends beyond $x_i$ by less than $4^{-i} K \bar
\tau/\eta$.  It implies that $\gamma_{x_i}^{n+1}$ is not proper, hence 
$\Rphys: \gamma_{x_i}^{n+1} \ra \gamma_{x_{i+1}}^n$ is a homeomorphism.   Since
$\Rphys$ contracts the vertical lengths of vertical curves in $\VV'_{2\bar
\tau,\eta}$ by at least $q < 1/4$, this would then imply that  $\gamma_{x_{i+1}}^n$
extends beyond $x_{i+1}$ by less than $4^{-(i+1)} K \bar \tau/\eta$, contradicting the induction hypothesis.

In particular, each of the curves $\gamma^n_x \equiv \gamma^n_{x_0}$ extends
beyond $x$ by at least $K \bar \tau/\eta$, implying that $\gamma_x$ does as
well.

\end{proof}

\begin{rem}
In the above proof we could use the more obvious degree bound by $4^n$ instead of
the more delicate (\ref{bound on deg}) by either selecting $\eta$ so small that $q<1/16$ or 
using that $\lambda = 2$.
\end{rem}

\begin{cor}\label{basin of T}
 For any $\eta \in (0, 1/2)$ and $\de>0$, there exist a threshold $\bar\tau>0$ such that
$$
   \area \WW^s_{\eta, \bar\tau}(\TOPphys) \geq (1-\de) \area \VV_{\bar\tau}.
$$ 
Moreover, all the points in $\WW^s_{\eta, \bar\tau}$
get attracted to the top of the cylinder exponentially at rate $O(q^n)$, where $q=q(\eta)\to 0$ as $\eta\to 0$.
\end{cor}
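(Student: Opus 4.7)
The plan is to derive both assertions as an almost immediate consequence of Lemma \ref{LEM:POS_MEASURE_AH_CURVE} via Fubini, together with inequality (\ref{la}). Given $\eta \in (0,1/2)$ and $\delta > 0$, I set $\delta' := \delta/4$ and invoke Lemma \ref{LEM:POS_MEASURE_AH_CURVE} to produce a threshold $\bar\tau_0 > 0$ such that on any horizontal curve $\xi \subset \VV_{\bar\tau_0}$ with $l^h(\xi) < 2\pi$, all of $\xi$ except a subset of horizontal length at most $\delta'$ lies in $\WW^{s,o}_\eta(\TOPphys)$. I then take $\bar\tau := \bar\tau_0/2$, so that the orbits constructed by the bouquet in the proof of that lemma actually remain in $\VV'_{\bar\tau_0,\eta}$ from the start; in particular, the subset of $\xi$ produced by the lemma is contained in $\WW^s_{\eta,\bar\tau}(\TOPphys)$ for the chosen $\bar\tau$.

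Next I foliate $\VV_{\bar\tau}$ by the genuinely horizontal circles $\xi_\tau = \T \times \{\tau\}$, $\tau \in [0,\bar\tau]$. Each circle has horizontal length exactly $2\pi$, so I split $\xi_\tau$ into two arcs of horizontal length $\pi < 2\pi$ and apply the lemma to each. On horizontal circles the horizontal length element $dl^h = \pi^*(d\phi)$ agrees with $d\phi$, so the set $B_\tau \subset \xi_\tau$ of points not in $\WW^s_{\eta,\bar\tau}(\TOPphys)$ has $d\phi$-measure $|B_\tau| \leq 2\delta'$. Since the area element on $\VV_{\bar\tau}$ factors as $d\phi\,d\tau$, Fubini gives
$$
\area\bigl(\VV_{\bar\tau} \setminus \WW^s_{\eta,\bar\tau}(\TOPphys)\bigr) = \int_0^{\bar\tau} |B_\tau|\, d\tau \leq 2\delta' \bar\tau,
$$
whereas $\area \VV_{\bar\tau} = 2\pi\bar\tau$. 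Choosing $\delta' < \pi\delta$ yields the required ratio $1-\delta$.

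For the exponential attraction, by definition any $x \in \WW^s_{\eta,\bar\tau}(\TOPphys)$ has its entire forward orbit in $\VV'_{\bar\tau,\eta}$, so inequality (\ref{la}) applied inductively gives $\tau(\Rphys^n x) \leq q^n \tau(x)$. To see that $q(\eta) \to 0$ as $\eta \to 0$, note that outside the parabolic regions $\PP^\pm_\eta$ one has $\eps^2 \geq \tau/\eta$, so Property (P5) yields
$$
\frac{\tau(\Rphys x)}{\tau(x)} = O\!\left(\frac{\tau(x)}{\eps(x)^2}\right) = O(\eta),
$$
which can be made smaller than any prescribed $q < 1$ by shrinking $\eta$. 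The main (essentially only) subtlety is reconciling the various basin definitions $\WW^{s,o}_\eta$, $\WW^s_\eta$, and $\WW^s_{\eta,\bar\tau}$ so that Fubini's argument applies cleanly, which is precisely why I start from $\bar\tau_0/2$ rather than from $\bar\tau_0$ itself; no substantive new difficulty arises beyond Lemma \ref{LEM:POS_MEASURE_AH_CURVE} and Property (P5).
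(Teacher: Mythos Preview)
Your proof is correct and follows essentially the same route as the paper: slice $\VV_{\bar\tau}$ by the genuinely horizontal circles $\xi_\tau$, apply Lemma~\ref{LEM:POS_MEASURE_AH_CURVE} on each slice, and conclude by Fubini; the exponential rate comes from~(\ref{la}) together with Property~(P5). The paper's proof is three lines long and simply omits the bookkeeping you spell out. Your extra precautions---splitting the circle into two arcs to meet the hypothesis $l^h(\xi)<2\pi$, and halving $\bar\tau_0$ to pin down the basin---are harmless but largely unnecessary: inspection of the proof of Lemma~\ref{LEM:POS_MEASURE_AH_CURVE} shows that the points of $\xi\setminus\bigcup\xi_n$ already have their entire forward orbit in $\VV'_{\bar\tau,\eta}$ (the inductive use of the condition in~(\ref{EQN:TOP_REMOVALS}) together with $q<1/4$ forces $\tau_k<\eta\,\eps_k^2$ at every step), so they lie in $\WW^s_{\eta,\bar\tau}(\TOPphys)$ without any halving.
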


\begin{proof}
For each $\tau \in (0,\bar\tau)$ let $\xi_{\tau} =  \{x:\ \tau(x)=\tau\}$.   \note{good notation!}
Applying Lemma \ref{LEM:POS_MEASURE_AH_CURVE},
we find a subset of $\xi_{\tau}$ of horizontal measure $>(2\pi-\de)$
that converges to $\TOPphys$ exponentially at rate $O(q^n)$. 
An application of the Fubini Theorem completes the proof.
\end{proof}


The proof of Lemma \ref{LEM:POS_MEASURE_AH_CURVE} actually gives a slightly better statement, 
which we record here for later use. 

For a curve $\xi$, let $\tau(\xi)= \sup_{x\in \xi} \tau (x)$. 

\begin{lem}\label{LEM:SQRT_LENGTH_CURVE}
For any $\eta\in (0, 1/2)$, there exist $\bar\tau > 0$ and $C>1$
such that $\WW^{s,o}_\eta (\TOPphys)$ forms at least $3/4$ of the horizontal length
of any horizontal curve $\xi$ 
with $\tau(\xi) \leq \bar\tau$ and $l^h(\xi) \geq C \sqrt{\tau(\xi)}$.
\end{lem}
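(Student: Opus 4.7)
My strategy is to revisit the proof of Lemma~\ref{LEM:POS_MEASURE_AH_CURVE} and strengthen the estimate on the bad set in two ways: replace the global threshold $\bar\tau$ by the actual height $\tau(\xi)$ of the curve, and relax the hypothesis $l^h(\xi)<2\pi$ by tracking how a long horizontal curve wraps around the cylinder. The statement then follows by a case split on whether $l^h(\xi)$ is above or below $2\pi$.

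First, I will refine the definition (\ref{EQN:TOP_REMOVALS}) of the cascading bad sets $\xi_n$ by replacing every occurrence of $\bar\tau$ with $\tau(\xi)$. Since every $x\in\xi$ has $\tau(x)\leq \tau(\xi)$ and $\tau$ contracts at rate $q<1/4$ inside $\VV'$, one has $\tau(R^k x)\leq 4^{-k}\tau(\xi)$, so the condition $|\eps(R^k x)|\geq 2\cdot 2^{-k}\sqrt{\tau(\xi)/\eta}$ still ensures $R^k x\in \VV'$ and still guarantees that the curves $\gamma_x^n$ extend beyond $x$ by a definite amount (now proportional to $\tau(\xi)/\eta$ instead of $\bar\tau/\eta$). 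Thus $\xi\setminus\bigcup\xi_n\subset \WW^{s,o}_\eta(\TOPphys)$ exactly as before, and the projection bound tightens to $|\pi(X_n)|\leq 8\cdot 2^{-n}\sqrt{\tau(\xi)/\eta}$.

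Next I will upgrade the degree bound (\ref{bound on deg}) to handle arbitrary $l^h(\xi)$. For $\bar\tau$ small the horizontal cones in $\VV_{\bar\tau}$ have slope $O(\sqrt{\bar\tau})$, so horizontal curves are globally graphs of $\tau$ over the $\phi$-coordinate and hence wrap around $\T$ at most $\lceil l^h(\xi)/(2\pi)\rceil$ times. Each of the $2^n$ regular vertical lifts $I_k^j$ of $\II_\phi$ is a graph over $\tau$ concentrated in a narrow $\phi$-window, hence crosses $\xi$ at most $\lceil l^h(\xi)/(2\pi)\rceil$ times. This gives $\deg(\pi|X_n)\leq 2^n\lceil l^h(\xi)/(2\pi)\rceil$. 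Combining with (\ref{pullback meas}) and (\ref{EQN:STRETCHING_IN_UPSILON}), summing the geometric series in $n$, I obtain
\begin{equation*}
l^h\bigl(\textstyle\bigcup \xi_n\bigr)\;\leq\; C_0\,\lceil l^h(\xi)/(2\pi)\rceil\,\sqrt{\tau(\xi)},
\qquad C_0=\tfrac{8\lambda}{c(\lambda-1)\sqrt{\eta}}.
\end{equation*}

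Finally I split cases. If $l^h(\xi)\geq 2\pi$, then $\lceil l^h(\xi)/(2\pi)\rceil\leq l^h(\xi)/\pi$, so the bad set has length $\leq C_0\,l^h(\xi)\sqrt{\tau(\xi)}/\pi$, which is $\leq l^h(\xi)/4$ as soon as $\tau(\xi)\leq \pi^2/(16C_0^2)$. If $l^h(\xi)<2\pi$, the ceiling equals $1$ and the bad set has length $\leq C_0\sqrt{\tau(\xi)}$, which is $\leq l^h(\xi)/4$ as soon as $l^h(\xi)\geq 4C_0\sqrt{\tau(\xi)}$. Choosing $\bar\tau:=\pi^2/(16C_0^2)$ and $C:=4C_0$ covers both cases, giving at least $3/4$ of the horizontal length inside $\WW^{s,o}_\eta(\TOPphys)$.

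The main delicate point will be the degree bound: I need to be careful that in $\VV_{\bar\tau}$, even in the parabolic regions $\PP^\pm_\eta$ (where $\KK^h=\KK^{\hor}$), the algebraic cones have slope $\sim\sqrt{2\tau}$ which is still $O(\sqrt{\bar\tau})$, so the ``graph over $\phi$'' property is not spoiled near the indeterminacy points. This, together with verifying that the ``beyond extension'' argument from Lemma~\ref{LEM:POS_MEASURE_AH_CURVE} still produces a uniform (in $n$) lower bound on the extension length in terms of $\tau(\xi)$, is the technical core; the rest is bookkeeping of constants.
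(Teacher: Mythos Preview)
Your argument is correct and, for the core case $l^h(\xi)<2\pi$, is exactly the paper's proof: one reruns the estimate (\ref{total length}) from Lemma~\ref{LEM:POS_MEASURE_AH_CURVE} with $\tau(\xi)$ in place of the fixed threshold $\bar\tau$, obtaining $l^h(\xi\sm\WW^{s,o}_\eta(\TOPphys))\le C_0\sqrt{\tau(\xi)}$ with the same constant $C_0=\tfrac{8\lambda}{c(\lambda-1)\sqrt{\eta}}$, and then sets $C=4C_0$. The paper's proof is precisely this one-line observation.

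The extra work you do for $l^h(\xi)\ge 2\pi$ is not in the paper; the paper simply invokes (\ref{total length}) and lets the reader infer that the hypothesis $l^h(\xi)<2\pi$ from Lemma~\ref{LEM:POS_MEASURE_AH_CURVE} is either tacitly retained or irrelevant for the intended application (in \S\ref{SEC:TWO BASINS} the lemma is only applied to images of curves of definite, bounded size). Your wrapping argument is a legitimate way to prove the lemma as literally stated, though your justification of the degree bound is slightly off: the phrase ``concentrated in a narrow $\phi$-window'' is not accurate for the vertical lifts $I_k^j$ (their $\phi$-range need not be small). The correct reason is the one already in the paper's proof of (\ref{bound on deg}): a vertical curve (graph of $\phi$ over $\tau$) and a horizontal arc (graph of $\tau$ over a $\phi$-interval of length $<2\pi$) can meet at most once, because at any intersection the composite $\phi\mapsto g(f(\phi))$ has derivative of modulus $<1$, so zeros of $g(f(\phi))-\phi$ are isolated with negative derivative. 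Cutting $\xi$ into $\lceil l^h(\xi)/2\pi\rceil$ such arcs then gives your bound $\deg(\pi|X_n)\le 2^n\lceil l^h(\xi)/2\pi\rceil$.
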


\begin{proof}
By (\ref{total length}), 
$$
    l^h(\xi\sm \WW^s_\eta(\TOPphys)) \leq \frac{8}{\sqrt{\eta}}\, \frac{\lambda}{c(\lambda-1)}\, \sqrt{\tau(\xi)}\leq 
                  \frac 1{4}\, C\sqrt{\tau(\xi)},  
$$
where we let  $\displaystyle{C = \frac{32}{\sqrt{\eta}} \frac{\lambda}{c(\lambda-1)}}$.
\end{proof}


\comment{*******************
Let us construct one more invariant horizontal cone field,
 $\tl\KK^h$, on the region $\VV'^\eta_{\bar\tau}$ (see Figure \ref{FIG:REGION_UPSILON}).    \note{should go after \S C.3}
It will have a virtue of being non-degenerate away from the indeterminacy points.
Namely, fix a small threshold $\bar \eps>0$, and let
$$
 \UU=\{x\in \VV'^\eta_{\bar\tau}: |\eps(x)| < \bar\eps\},\quad \QQ= \{x\in \VV'^\eta_{\bar\tau}: |\phi-\pi|< \de\}\quad,
        \QQ'=\{x\in \VV'^\eta_{\bar\tau}: |\phi-\pi|< 2\de\}.
$$   \note{choose $\de$}
Let the cone $\tl\KK^h(x)$ have boundary lines with the absolute value  $w(x)$ of the slope
equal to $ |\eps(x)| / 2$ in $\UU$,
equal to $1/2$ in  $\QQ$, equal to $\bar \eps/2$ in $\Cphys \sm (\UU\cup \QQ')$,
and is squeezed  between $\bar\eps/2$ and $1/2$ in $\QQ'\sm \QQ$
(arranged so that altogether the cone field is continuous).

\begin{lem}
For sufficiently small $\bar\eps$, $\bar\tau$ and $\eta$.
the cone field $\tl \KK^h$ is forward invariant on the region $\VV'^\eta_{\bar\tau}$,
i.e., $\Rphys (\tl\KK^h(x))\subset \tl\KK^h(\Rphys x)$, provided $x, \Rphys x \in \VV'^\eta_{\bar\tau}$.
\end{lem}

\begin{proof}
The cones $\tl\KK^h(x)$ are bounded by two lines spanned by the tangent vectors $v_\pm=(1, \pm  w(x)$.
Let us estimate the absolute value $s$  of the slope of $D\Rphys(v_\pm)$.

Let us select $\bar\eps$ so that (\ref{DR near alpha}) applies in $\UU$.
For $x\in \UU$, it yields:
$$
 s \leq \frac {|\eps| \tau( \tau +  \eps^2/2) }{\si^2(\tau+ \eps^2/2) } <  \frac 1 {2},
$$
so
\begin{equation}\label{invariance of tl K}
 \Rphys(\tl \KK^h(x))\subset \tl\KK^h(\Rphys x).
\end{equation}
For $x\in \VV'^\eta_{\bar\tau}$, let us use (\ref{EQN:A_EXPANSION}).
It shows that the first component of $D\Rphys (v_\pm)$ is bounded away from $0$
(with the bound depending on $\bar\eps$), while the second one is $O(\bar\tau)$
(with an absolute constant).
So, selecting $\bar\tau$ sufficiently small, we make $s\leq  \bar\eps/2$.
This implies (\ref{invariance of tl K})  provided $\Rphys x\not\in \UU$.

Finally, if $\Rphys x=(\eps', \tau') \in \UU$ then
$$
    s=O(\tau)= O\sqrt{\tau'}\leq \sqrt{\eta} \cdot O(\eps'),
$$
so $s$ can be made smaller that $\eps'/2$ by selecting $\eta$ small enough.
Again,  (\ref{invariance of tl K}) follows.
\end{proof}

As usual, we let $\tl \KK^v$ be the complementary vertical cone field on $\VV'^\eta_{\bar\tau}$.
It is backward invariant under $\Rphys$.

Notice that at $x \in \VV'^\eta_{\bar \tau}$ we have $\KK^h(x) \Subset \tilde \KK^h(x)$ and hence $\tilde \KK^v (x) \Subset \KK^v(x)$.
 \note{Added by Roland, ok?}
********************************************}

\comment{***************************
\begin{lem}\label{LEM:EXPANSION_NEAR_TOP}
There exists $D > 0$ which is independent of $\bar \tau$ with the following property:
For any horizontal
curve $\xi$ through $x_0$, there is an $n_0 > 0$ with $l^h(\xi_{n_0}) \geq D \bar
\tau^{1/4}$.
\end{lem}

\begin{rem}\label{REM:NON_INT_CONES}
Near $\TOPphys$, the algebraic cone-field $\KK$ bounds the slope of a
horizontal curve $\xi$ at $x=(\phi,\tau)$ by $\frac{d\tau}{d \phi} = \sqrt{2
\tau}$, which is not uniquely integrable at $\tau = 0$.  Therefore, even if
$\xi$ is a long curve containing a point $x$ very near to $\TOPphys$, we cannot
guarantee that a long piece of $\xi$ is within a given neighborhood $\VV_{\bar
\tau}$.  However, if $\xi$ contains $x$ with $\tau(x) = \bar \tau/2$ and
$\tau(\xi) \geq \bar \tau$, then $l^h(\xi) \geq l_0(\bar \tau) := (\sqrt{2}-1) \sqrt{\bar
\tau}$.
\end{rem}

\begin{proof}[Proof of Lemma \ref{LEM:EXPANSION_NEAR_TOP}:]
Let us set up three appropriate regions.

Choose $\phi_0 < \pi/2$.  Property (P5) gives that if $\sigma>0$ is sufficiently small then points $x \in \AAA := \{x
\in \Cphys\,: |\phi(x)| \leq \phi_0, \, \tau(x) \leq \sigma \}$ will satisfy
\begin{equation*}
\tau(\Rphys(x))\leq M (\tau(x))^2.
\end{equation*}
\noindent

Choose $1/2 < c < 2 -\sqrt{2}$ so that Lemma \ref{LEM:BOUNDED_CONTRACTION}
gives that if $\sigma$ is small enough, then $v \in \KK(x)$ based in
\begin{equation*}
\HH_\pm:= \{x \in \Cphys\,:\, |\phi(x) \mp \pi/2| \leq \sigma, \tau(x) \leq
\sigma\}
\end{equation*}
\noindent
will satisfy $d\phi(D\Rphys(v)) \geq c \cdot d\phi(v)$.

Using the blow-up formula \ref{EQN:R_BLOWUP_ANGULAR} we see that if $\sigma$ is
sufficiently small and $\xi$ is a horizontal curve with $l^h(\xi) =
l_0(\sigma)$ and $\xi \cap \HH_\pm \neq \emptyset$, then
$\Rphys(\xi) \cap \HH_\pm = \emptyset$.

Let $1/c < \lambda < 2$.
According to Proposition \ref{LEM:STRETCHING_NEAR_TOP} if $\eta$ and $\sigma
$ are sufficiently small, then any horizontal curve $\xi \subset
\VV_{\eta,\sigma}$ is horizontally expanded by $\lambda$ under $\Rphys$.

We let $\hat \tau = \sqrt{\bar \tau/M}$.  We can choose
$\bar \tau$ sufficiently small so that
\begin{eqnarray*}
\hat \tau \leq \min\left\{\sigma,\sqrt{\sigma/\eta}\right\}.
\end{eqnarray*}
\noindent
In particular, $\hat \tau$ is small enough that every point in $\VV_{\hat
\tau}$ is either in $\VV_{\eta,\hat \tau}$ or in $\HH_\pm$.

Let $\xi'_n$ be the component of $\Rphys^n(\xi) \cap \VV_{\hat \tau}$
containing $\Rphys^n(x_0)$.  We will show that there is an $N>0$ so that for
any $n \geq N$ we have $l^h(\xi'_n) \geq c \cdot l_0(\bar \tau)$.

Let $\mu = c \cdot \lambda > 1$.  If $v \in \KK(x)$ with $x,\Rphys(x) \in
\VV_{\hat \tau}$, then $d\phi(D\Rphys^2(v)) \geq \mu \, d\phi(v)$.
In particular, if a truncation never occurs, then the lengths of the $\xi_n$
grow exponentially.

We can suppose that $\tau(\Rphys^n(x_0)) \leq \bar \tau /2$ so that if a
truncation occurs when forming $\xi_n$, then Remark \ref{REM:NON_INT_CONES}
gives $l^h(\xi_n) \geq l_0(\bar \tau)$.  Between successive truncations we must
have $l^h(\xi_i) \geq c \cdot l_0(\bar \tau)$ because any moment when there is
a contraction by at most $c$ is either followed by an expansion by $\lambda >
1/c$ or another truncation.

\msk

Because $x_0 \in \WW^s(\TOPphys)$ with $\Rphys | \TOPphys$ given by angle
doubling there are arbitrarily high $n$ with $|\phi(\Rphys^n(x_0))|$
arbitrarily small.  Thus, we can choose a moment when $|\phi(\xi_n)| < \phi_0$.
At the next iterate we have
\begin{eqnarray*}
\tau(\Rphys(\xi_n) &\leq& M \hat \tau^2 = \bar \tau, \, \mbox{and}\\
l^h(\Rphys(\xi_n)) &\geq&  \lambda \, l_0(\hat \tau) = D \bar \tau^{1/4}.
\end{eqnarray*}
\noindent
\end{proof}

Let $\bar \tau > 0$ and $C > 1$ be given by Lemma \ref{LEM:SQRT_LENGTH_CURVE}.
Making $\bar \tau$ smaller, if necessarily, we can assume that $D \bar
\tau^{1/4} \geq C \sqrt{\bar \tau}$.  Then, Lemma \ref{LEM:EXPANSION_NEAR_TOP}
gives a horizontal curve $\xi_{n_0}$ with $l^h(\xi_{n_0}) \geq C
\sqrt{\tau(\xi_{n_0})}$ so that $\WW^s_\eta(\TOPphys)$ forms at least $3/4$ of
the horizontal measure of $\xi_{n_0}$.

Let $\tl \xi_{n_0} \subset \xi_{n_0}$ be the subdisc of twice smaller horizontal radius, of which $\WW^s_\eta(\TOPphys)$ forms a
positive horizontal proportion, and let $\tl \xi = \Rphys^{-n_0}\left(\tl \xi_{n_0}\right) \subset \xi$.

\begin{lem}\label{LEM:DISTORTION_WHILE_APPROACHING_T}
Let $\tl \xi$ and $\tl \xi_{N}$ be as above.  Then $\Rphys^m: \tilde \xi \ra
\tilde \xi_{M}$ has bounded horizontal distortion.
\end{lem}

\begin{proof}

\bignote{We will either recover the appropriate distortion estimates and put them here, or use complex discs and Koebe.}

\end{proof}

By Lemma \ref{LEM:DISTORTION_WHILE_APPROACHING_T} we find that
$\WW^s_\eta(\TOPphys)$ also occupies a definite proportion of $\tilde \xi$
contrary to the fact that $x_0$ is a density point for $X=\WW^2(\TOPphys) \sm
\WW^s_\eta(\TOPphys)$.
\end{proof}
**********}

\comment{***********
\begin{rem}\label{REM:NON_INT_CONES}
Near $\TOPphys$, the algebraic cone-field $\KK$ bounds the slopes of an almost
horizontal curve $\xi$ at $x=(\phi,\tau)$ by $\frac{d\tau}{d \phi} = \sqrt{2
\tau}$, which is not uniquely integrable at $\tau = 0$.  Therefore, if $\xi$ is
a long curve containing a point $x$ very near to $\TOPphys$, we cannot
guarantee that a long piece of $\xi$ is within a given $\bar \tau$
neighborhood of $\TOPphys$.  More precisely, only a segment of length $O(\sqrt{ \bar
\tau})$ guaranteed to be within this neighborhood.
\end{rem}

\begin{proof}[Proof of Lemma \ref{LEM:EXPANSION_NEAR_TOP}:]
According to Proposition \ref{LEM:STRETCHING_NEAR_TOP} we can choose $\eta,
\bar \tau > 0$ so that any almost horizontal curve $\xi \subset
\VV_{\eta,\bar \tau}$ is horizontally expanded under $\Rphys$ by a factor
of $\lambda > 1$.

We further suppose that $\bar \tau$ is sufficiently small so that the blow-up
formula \ref{EQN:R_BLOWUP_ANGULAR} gives a
constant $\bar \kappa > 0$ so that all points $x$ sufficiently $\bar \tau$
close to $\INDphys_\pm$ having $\kappa(x) > \bar \kappa$ will have
$\tau(\Rphys(x)) \geq \bar \tau$.  (Here $\bar \kappa \approx \sqrt{\bar
\tau}$.)

Let
\begin{equation*}
\Delta := \{x \in \Cphys \,:\, \tau(x) \leq \bar \tau, \kappa(x) \leq \bar \kappa\}.\end{equation*}
so that is $\tau(x) \leq \bar \tau$ and $\tau(\Rphys(x)) \leq \bar \tau$, then $x \in \Delta$.  We will call the set of points in $\Delta$ having $\tau
(x) = \bar \tau$
the ``horizontal boundary'' of $\Delta$ and the set of points have $\kappa(x) = \bar \kappa$ the ``wedge boundary'' of $\Delta$.

Consider the rectangular neighborhoods of $\INDphys_\pm$
\begin{equation*}
\Pi_{\pm} := \left\{x \in \Cphys \,:\, \tau(x) \leq \bar \tau\, , \, |\phi(x) \mp \pi/2| \leq \sqrt{\frac{\bar \tau}{\eta}} \right\}.
\end{equation*}
\noindentNotice that almost horizontal curves contained in $\Delta \sm \Pi_\pm$ are expanded under $\Rphys$ be $\lambda > 1$.  Meanwhile, according
to Lemma \ref{LEM:BOUNDED_CONTRACTION} there is a bound $C > 0$ on the possible contraction of almost horizontal curves contained in $\Pi_\pm \cap \Del
ta$.

Because $x_0 \in \WW^s(\TOPphys)$ there is some $M >0$ so that
$\tau(\Rphys^n(x_0)) \leq \frac{\bar \tau}{2}$ for all $n \geq M$.  For each $n
\geq M$ we let $\xi_n$ be the component of  $\Rphys^n(\xi) \cap \{x \in \Cphys \,:\,
\tau(x) \leq \bar \tau\}$ containing $\Rphys^n(x_0)$.  Notice that if a
truncation $\xi_{n} \neq \Rphys(\xi_{n-1})$ occurs at some $n \geq M$ then
according to Remark \ref{REM:NON_INT_CONES} we must have $l^h(\xi) \geq l_*:=
(2-\sqrt2) \sqrt{\bar \tau}.$

Let $l_0 := C l_*$.  We will show that there is an $N\geq 0$ so that
$l^h(\xi_n) \geq l_0$ for $n \geq N$.

Recall the high-temperature fixed point $\FIXphys_1 = (0,1)$.
\begin{lem}\label{LEM:GROWTH_AT_RETURN_TIMES}
There exists $c > 1$ with the following property:If $L \subset \Delta$ is almost horizontal,
$\dist^h(L,\FIXphys_1) \geq 1$, and $n$ is the first iterate for which $\dist^h(\Rphys^n(L),\FIXphys_1)\geq 1$, then $l^h(\Rphys^n(L)) \geq \min(c \cdo
t l^h(L),l_*)$.
\end{lem}

We first observe how to use Lemma \ref{LEM:GROWTH_AT_RETURN_TIMES} to
complete the proof of Lemma \ref{LEM:EXPANSION_NEAR_TOP}.  Note that we can
make $\bar \tau$ small independent of $\eta$ obtaining
$\dist^h(\Pi_\pm,\FIXphys_1) > 1+l_*$.  Thus, contraction can only occur when
$\dist^h(\xi_n,\FIXphys_1) \geq 1$.

Suppose that a truncation never occurs.  Then $\xi_n \subset \Delta$ for each
$n$.  Otherwise there is some $j \geq M$ for which $\xi_{j}$ intersects the wedge
boundary of $\Delta$, which is mapped below $\bar \tau$ by $\Rphys$ implying
that a truncation occurs at the next iterate.

Therefore we can apply Lemma \ref{LEM:GROWTH_AT_RETURN_TIMES} which
implies that at the return times $n_i$ for which $\dist^h(\xi_{n_i},\FIXphys_1)
\geq 1$ the lengths undergo an exponential growth until they reach $l_*$.  Let
$N$ be the first return time when $l^h(\xi_N) = l_*$.  Then, $l^h(\xi_{n_i})
\geq l_*$ for each return time $n_i \geq N$.

We must therefore show that $n_i \geq N$ is a return time then for each $n_i < n < n_{i+1}$  we
have $l^h(\xi_n) \geq C l_* = l_0$.  This follows because a contraction of at
most a factor of $C$ can occur during the first iterate $n=n_i+1$ and then for
each future iterate before $n_{i+1}$ expansion occurs.

Otherwise, we let $N \geq M$ be the first moment when a truncation occurs.  As observed previously $l^h(\xi_N) \geq l_*$.
If $n_j,n_{j+1} \geq N$ are two successive moments of truncation, we must only check that for each $n_j < n < n_{j+1}$ we have
$l^h(\xi_n) \geq l_0$.  (If no further truncations occur after $n_j$ then we just check that $l^h(\xi_n) \geq l_0$ for all $n > n_j$.)

However, one can take further iterates until we have $n_\dagger \geq n_j$ so that $\dist^h(\xi_{n_\dagger},\FIXphys_1) \geq 1)$ with
$l^h(\xi_{n_\dagger}) \geq l^h(\xi_{n_j}) \geq l_*$, because contraction occurs only near $\INDphys_\pm$.
Then, the previous discussion gives that $l^h(\xi_n) \geq l_0$ for each $n_\dagger < n < n_{j+1}$.
\end{proof}

\begin{proof}[Proof of Lemma \ref{LEM:GROWTH_AT_RETURN_TIMES}:]
We cut $L = L' \cup L''$ with $L' \subset \Pi_+ \cup \Pi_-$ and $L'' := L \sm L' \subset \VV$.   See Figure \ref{FIG:REGION_PI}.

\begin{figure}
\begin{center}
\input{figures/region_Pi.pstex_t}
\end{center}
\caption{\label{FIG:REGION_PI}}
\end{figure}

For each $1 \leq i < n$ we can assume that $l^h(\Rphys^i(L'')) \leq l_*$.
(Otherwise we truncate and use only a piece of length $l_*$.) Therefore,
$\dist^h(L,\FIXphys_1)<1$ and $l^h(\Rphys^i(L'')) \leq l_*$ give that
$\Rphys^i(L'') \subset \VV$.  Therefore, $l^h(\Rphys^n(L'')) \geq
\min(\lambda^n \cdot l^h(L''),l_*) \geq \min(\lambda \cdot l^h(L''),l_*).$
We must work harder with $L'$.  The reason is that the horizontal length of an
almost horizontal curve $\xi \subset \Delta \sm \VV$ may be contracted by
a bounded amount under the first iterate.  (See Lemma  \ref{LEM:BOUNDED_CONTRACTION}.)
We apply Lemma \ref{One iterate near indeterminacy points} to $L'$ and take one more iterate,
finding
\begin{equation*}
\frac{l^h(\Rphys^2(L'))}{\dist^h(\Rphys(L'), \FIXphys_1)} \geq \frac{1}{2} \frac{l^h(L')}{\dist^h(L', \INDphys_\pm)}.
\end{equation*}

Therefore, using hyperbolicity of the fixed point $\FIXphys_1$, we find that the first iterate $n$ that $\dist^h(\Rphys^n(L'),\FIXphys_1) \geq 1$,
\begin{equation*}
l^h(\Rphys^n(L')) \geq \frac{1}{2\dist^h(L', \INDphys_\pm)} l^h(L').
\end{equation*}

Since $L' \subset \Pi_\pm$, we have that $\frac{1}{2\dist^h(L', \INDphys_\pm)} \gg 1$.  Therefore, at the first iterate $n$ for which $\dist^h(\Rphys^n(L'),\FIXphys_1) \geq 1$,
we have $l^h(\Rphys^n(L')) \geq \lambda' l^h(L')$, where $\lambda' > 1$.

\msk

Therefore, letting $c = \min(\lambda,\lambda') > 1$ we have that $l^h(\Rphys^{n}(L)) \geq \min(c \cdot l^h(L),l_0)$.
This completes the proof of Lemma \ref{LEM:GROWTH_AT_RETURN_TIMES}.
\end{proof}

*********}

\comm{****
\subsection{Long teeth in $\FF(\TOPphys)$}
\label{SUBSEC:LONG_TEETH}
We will show that by taking further preimages of the stable curves constructed in \S \ref{SUBSEC:FOLIATION_NEAR_T}
we obtain long curves in $W^s(\TOPphys)$ near $\II_0$ and $\II_\pi$.

Let $\gamma_0 \subset \II_0$ be the full stable curve of $\FIXphys_1$ stretching from $\FIXphys_1$ down to $\FIXphys_c$ and let $l_c$ be
the vertical length of $\gamma_0$.

\begin{prop}\label{PROP:LONG_TEETH}
Given any $\delta >0$, based at sufficiently near to $\FIXphys_1$ and $\INDphys =
(\pi,1) \in \TOPphys$ are stable curves $\gamma$ extending below $\tau = l_c -
\delta$.
\end{prop}

\begin{proof}
We need only prove the statement near $\FIXphys_1$ since the result follows for
$\INDphys$ under the symmetry $\Rphys(\phi+\pi,t)=\Rphys(\phi,t)$ described in Property
(P1).
The point $\FIXphys_1$ is hyperbolic, so within a sufficiently small neighborhood
$U$ of $\FIXphys_1$ the $\lambda$-Lemma from real dynamics \cite[pp. 80-85]{PDM} applies.  We take a
stable curve $\gamma_\epsilon \in \FF^s(\TOPphys)$ meeting $\TOPphys$ at $(\epsilon,1)
\in U$.  Because $\gamma_\epsilon$ is the stable curve for $(\epsilon,1)$ it is disjoint from $\gamma_0$ and
because it is a stable curve, it meets $\TOPphys$ at a non-zero angle.
We can therefore extend $\gamma_\epsilon$ as a $C^1$ curve that crosses $\TOPphys$ transversally and then
truncate, so that the resulting curve $\gamma'_\epsilon$ is contained in $U$ and disjoint from $\gamma_0$.

We denote by $\Rphys^{-1}$ the inverse branch of $\Rphys$ fixing $\FIXphys_1$.  Then, since
$\TOPphys = W^u(\FIXphys_1)$, and $\gamma'_\epsilon$ crosses $\TOPphys$ transversally, the
$\lambda$-Lemma gives that the preimages $\Rphys^{-n}(\gamma'_\epsilon)$ accumulate
parallel to $\gamma_0 = W^s(\FIXphys_1)$.  In particular, taking sufficiently many
preimages, we find a curve $\gamma = \Rphys^{-n}(\gamma'_\epsilon)$ extending below $\TOPphys$
arbitrarily close to $\tau = l_c$.
\end{proof}

Recall \S \ref{SUBSEC:BASIN_BOTTOM} where we obtained a lower bound $2\omega_0$
for the limiting limiting angle of a wedge that can fit into the principal
tongues $\LL_{\INDphys_\pm} \subset W^s(\BOTTOMphys)$ that reach $\TOPphys$ at
$\INDphys_\pm$. One consequence of Proposition \ref{PROP:LONG_TEETH} is an upper
bound $2\omega_1$, as shown in Figure \ref{FIG:MAXIMAL_ANGLE}.

Proposition \ref{PROP:LONG_TEETH} gives that
there is some $0 < \phi < \pi$ sufficiently close to $\pi$ so that $W^s(\phi)$ crosses $\Icurve$.
Since $W^s(\phi)$ and $\II_\pi$ are necessarily disjoint, $W^s(\phi)$ crosses $\Icurve$ at some point $(2\omega_1,\sin^2(\omega_1))$
with $0 < 2 \omega_1 < \pi$.  This angle $2 \omega_1$ gives the desired upper bound
since the portion of $W^s(\phi)$ that lies below $\Icurve$ pulls-back under the singular branch of $\Rphys^{-1}$ (Property (P6)) to
a smooth curve within $W^s(\TOPphys)$ meeting $\INDphys_+$ at angle $\omega_1$ with respect to $\II_{\pi/2}$.

\begin{figure}
\begin{center}
\input{figures/maximal_angle.pstex_t}
\end{center}
\caption{\label{FIG:MAXIMAL_ANGLE}}
\end{figure}
*****************************}

\comment{DIRECT CONSTRCTION OF THE CONE FIELD NEAR THE TOP
Throughout this section we will rely upon estimates of the behavior of $\Rphys$ in
a neighborhood of $\TOPphys$ and, in particular, close to the indeterminate points $\INDphys_\pm$.
These calculations are used repeatedly later in the paper, so we have put
them in Appendix \ref{APP:R_NEAR_ALPHA}, for reference.

\subsection{Cone-field near $\TOPphys$}
\label{SUBSEC:SQRT_CONE_FIELD}

For $x = (\phi,t) \in \Cphys$ we let $\tau(x) = 1-t(x)$.
Consider the horizontal cone field
\begin{equation}\label{hor cones}
   \KSQRT(x)  = \{ v \in T_x \Cphys: |d\tau (v)| < c\sqrt{\tau(x)}|d \phi(v)|\}, \ x=(\phi,\tau)\in \Cphys,
\end{equation}
\noindent
where $1 < c < \frac{3}{2}$.

\begin{prop}\label{PROP:SQRT_CONEFIELD}
There exists $\tau_0 > 0$ so that for all $x \in \Cphys$ with $0 < \tau(x) < \tau_0$
we have $D\Rphys({\KSQRT}(x)) \Subset \KSQRT(\Rphys(x))$.
\end{prop}

\begin{proof}
We first consider the case that $x$ is in a small neighborhood of $\INDphys_\pm$.  By symmetry we can
work in a neighborhood of $\INDphys_+$.

Let $\epsilon(x) = \pi/2 - \phi(x)$.   By linearity of the action of $D\Rphys$ it suffices to show that
the boundaries $v=(1,\pm c\sqrt{\tau})$ of the cone $\KSQRT(x)$ are mapped within $\KSQRT(\Rphys(x))$.
Using (\ref{DR near alpha}) we see that the $D\Rphys(x) v$ is proportional to $(1,a')$ where
\begin{eqnarray*}
a' = -\frac{\eps\tau(\tau-c\sqrt{\tau} \eps)} {|\sigma|^2(\eps^2+\tau-c\sqrt{\tau}\eps)},
\end{eqnarray*}
\noindent where $\zeta = \sqrt{\tau^2 + \epsilon^2}$.

Since $\tau' =\tau^2/\eps^2$, we need to check that $|a'|< c \tau/|\eps|$.
Since $\eps^2<|\zeta^2|$, it is enough to verify that
\begin{equation}\label{eps-tau}
  \frac{|\tau-c\sqrt{\tau} \eps| } {\eps^2+\tau-c\sqrt{\tau}\eps} < c.
\end{equation}

Note that denominator of this expression is a quadratic form in $(\eps, \sqrt{\tau})$
which is positive definite for $c\in (0,2)$.
Hence (\ref{eps-tau}) amounts to the following system of inequalities
$$ c\eps^2 - c(c+1)\eps\sqrt{\tau} +(1+c)\tau > 0, $$
$$ c\eps^2 - c(c-1)\eps\sqrt{\tau} +(c-1)\tau > 0.$$
These are two quadratic forms in $(\eps, \sqrt{\tau})$ that are  positive definite
in our range of $c$.

\msk

We now consider points outside of these neighborhoods of $\INDphys_\pm$.  We can therefore assume that
$|\pm \pi/w-\phi(x)|$ is bounded below will further restrict $\tau_0$ (if necessary) in order
to obtain invariance of $\KSQRT(x)$ for these points $x$.

Using expansion (\ref{EQN:A_EXPANSION}) for
\begin{equation*}
A :=\frac{1+2t^2\cos(2\phi)+t^4}{4} D\Rphys
\end{equation*}
\noindent
for any tangent vector $v = (1,a)$ based near $\TOPphys$ we have that
$D\Rphys(v)$ is proportional to
\begin{eqnarray*}
Av = (2(1+\cos(2\phi))^2-a\sin(2\phi)(2+2\cos(2\phi)) + O(\tau), O(\tau^2)).
\end{eqnarray*}

Since we assume that $|\pm \pi/2-\phi(x)|$ is bounded from below we have a
lower bound for $|1+\cos(2\phi)|$ that is independent of $\bar \tau$.
Therefore, the fact that $v \in \KSQRT(x)$ gives $|a| < c \sqrt{\tau}$ allows us
to choose $\tau$ sufficiently small so that $|a|\sin(2\phi)$ is negligibly
small compared to $1+\cos(2\phi)$.

Thus, choosing $\tau_0$ sufficiently small guarantees that the first
coordinate of $D\Rphys v$ is non-zero, independent of $\tau$.  The second
coordinate scales like $O(\tau^2)$, so if we re-scale $D\Rphys
v$ to the form $(1,a')$ we find that $a' = O(\tau^2)$.

In order to verify that $D\Rphys(\KSQRT(x)) \subset  \KSQRT(\Rphys(x))$
we need to check that $|a'| < c \sqrt{\tau'}$, where $\Rphys(x) =
(\phi',1-\tau')$.  Recall that $\TOPphys$ is non-uniformly superattracting with
asymptotics $\tau' = O(\tau^2/\cos^2(\phi))$.  Thus, $\tau'$ vanishes at most
like $O(\tau^2)$ and hence $c \sqrt{\tau'}$ vanishes at most like $O(\tau)$.
Since $|a'|$ vanishes as $O(\tau^2)$
it is easy to restrict $|\tau| < \tau_0$ so that $|a'| <
 c \sqrt{\tau'}$.
\end{proof}

Let $\Delta$ be the neighborhood of $\TOPphys$ found in Proposition \ref{PROP:SQRT_CONEFIELD} on which $\KSQRT(x)$ is forward invariant.  Clearly
$R(\Delta)$ contains a neighborhood of $\TOPphys$, as well.  We further restrict $\tau_0$ (if necessary) so that $R(\Delta)$
contains all $x \in \Cphys$ with $0 < \tau(x) < \tau_0$.  \note{$x$ or $\om$?}

\begin{cor}\label{COR:SQRT_VERT_CONES}
The vertical cone field $\KSQRT^v(x)$ complementary to $\KSQRT(x)$ is backward invariant under the regular lifts of $\Rphys$
for every $x$ satisfying $\tau(x) < \tau_0$.
\end{cor}

***************************************************************}

\section{Intertwined basins of attraction}\label{SEC:TWO BASINS}

Recall the sets $\WW^{s,o}_\eta(\TOPphys)$ from \S \ref{SUBSEC:BASIN_T}.
 We let
\begin{equation}\label{W^s_0}
\WW^{s,o}_0(\TOPphys) := \bigcap_{\eta > 0} \WW^{s,o}_\eta(\TOPphys).
\end{equation} 
Note that $\WW^{s,o}_0(\TOPphys)$ is a completely invariant set whose orbits
get attracted to $\TOPphys$ superexponentially fast.  

In this section we will prove the following result:


\begin{thm}\label{attractors}
The union of the basins  $\WW^s(\BOTTOMphys)$ and  $\WW^{s,o}_0(\TOPphys)$ is a set of full area in the cylinder
$\Cphys$.
\end{thm}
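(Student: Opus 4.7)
The plan is to argue by contradiction: assume $X := \Cphys \setminus \bigl(\WW^s(\BOTTOMphys) \cup \WW^{s,o}_0(\TOPphys)\bigr)$ has positive Lebesgue area, let $x_0$ be a density point of $X$, and derive a contradiction using horizontal expansion (Theorem \ref{hor expansion thm}) together with Koebe distortion on the complex horizontal curves of Appendix \ref{SUBSEC:COMPLEXIFICATION_CONES}. Since both basins are completely $\Rphys$-invariant, $X$ is too, so $\Rphys^n x_0 \in X$ for every $n \geq 0$. First I would note that $x_0$ cannot stay eventually in $\Cphyslow$, because $\Cphyslow \subset \WW^s(\BOTTOMphys)$ by Lemma \ref{BB}, and also that the orbit of $x_0$ cannot eventually stay in any $\VV'_{\bar\tau,\eta}$, for otherwise it would lie in $\WW^{s,o}_\eta(\TOPphys)$ for every $\eta > 0$.

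The core of the argument is to pick a small complex horizontal disc $\xi_0$ through $x_0$ (using the complex horizontal cone-field extension of Appendix \ref{SUBSEC:COMPLEXIFICATION_CONES}) and iterate it. By Theorem \ref{hor expansion thm} the horizontal radius of $\xi_n := \Rphys^n(\xi_0)$ grows like $c \cdot 2^n$, so along a sequence of moments $n_k$ the real slice of $\xi_{n_k}$ becomes a horizontal curve of length $l^h(\xi_{n_k}) \geq C\sqrt{\tau(\xi_{n_k})}$ with $\tau(\xi_{n_k}) \leq \bar\tau$. At each such moment Lemma \ref{LEM:SQRT_LENGTH_CURVE} guarantees that at least three quarters of the horizontal length of $\xi_{n_k}$ lies in $\WW^{s,o}_{\eta}(\TOPphys) \subset \WW^{s,o}_0(\TOPphys) \cup \WW^s(\BOTTOMphys)$ (after letting $\eta \to 0$ along a diagonal subsequence and trimming a negligible part of $\xi_{n_k}$ corresponding to leaves that fail to remain in $\VV'_{\bar\tau,\eta}$). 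Pulling this back to $\xi_0$ by $\Rphys^{-n_k}$ with bounded Koebe distortion (applied to inverse branches defined on a larger complex horizontal disc around $\xi_{n_k}$, which exist and have bounded hyperbolic size by the invariance of $\KK^h_{\C}$) forces $\WW^s(\BOTTOMphys) \cup \WW^{s,o}_0(\TOPphys)$ to occupy a definite positive proportion of $\xi_0$, contradicting the assumption that $x_0$ is a density point of $X$. Performing this slicing for horizontal discs in a Fubini decomposition of a two-dimensional neighborhood of $x_0$ upgrades the one-dimensional contradiction to a contradiction with two-dimensional density.

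The main obstacle is arranging the "good return" times $n_k$ at which both (a) $\xi_{n_k}$ satisfies the length-versus-height hypothesis of Lemma \ref{LEM:SQRT_LENGTH_CURVE}, and (b) a definite Koebe neighborhood around $\xi_{n_k}$ pulls back univalently with bounded distortion under $\Rphys^{-n_k}$. The enemy is an orbit of $x_0$ that periodically dives into the parabolic regions $\PP_\eta^\pm$ near the indeterminacy points $\INDphys_\pm$, where the renormalization blows up and the complex horizontal cones degenerate, destroying distortion control. The remedy is to apply a Borel--Cantelli argument (in the spirit of estimate (\ref{slow recurrence}) from the proof of Proposition \ref{PROP:ETA_BASINS_LAMINATED}) to the doubling action on $\TOPphys$ to show that for almost every candidate orbit the returns to $\UU_{\bar\eps}$ are controlled, so that a positive lower density of return times do stay outside a fixed $\UU_{\bar\eps}$-neighborhood of $\INDphys_\pm$; on this subsequence the required Koebe discs have uniformly bounded hyperbolic modulus and the distortion estimate goes through.
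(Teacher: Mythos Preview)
Your overall strategy---density point, complex horizontal discs, Koebe distortion, then Lemma~\ref{LEM:SQRT_LENGTH_CURVE}---matches the paper's, but the mechanism you propose for controlling the orbit near the indeterminacy points does not work, and your case analysis is incomplete.

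\textbf{The distortion control.} The Borel--Cantelli argument you invoke governs the doubling map \emph{on $\TOPphys$}; it says nothing about an orbit of a point $x_0$ that lives in the interior of $\Cphys$ and is not converging to $\TOPphys$. There is no a~priori reason such an orbit should shadow the top dynamics closely enough for (\ref{slow recurrence}) to apply. The paper replaces this with a purely geometric observation: by Proposition~\ref{principal tongues}, for $\bar\kappa$ large enough the wedge regions $\Delta_{\bar\kappa}$ at $\INDphys_\pm$ lie entirely inside the principal stable tongues, hence inside $\WW^s(\BOTTOMphys)$. Since $x_0\notin\WW^s(\BOTTOMphys)$ and this basin is completely invariant, the full forward orbit of $x_0$ \emph{automatically} avoids $\Delta_{\bar\kappa}$. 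This is exactly the hypothesis of the distortion Lemma~\ref{dist lemma} (proved via Proposition~\ref{PROP:ESCAPE_FROM_ALPHA}), which then supplies the sequence of good return times $n_i$ with uniform Koebe control---no probabilistic argument is needed.

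\textbf{The missing case.} You only address the situation where the iterated curve $\xi_{n_k}$ satisfies $\tau(\xi_{n_k})\le\bar\tau$, i.e.\ the orbit is near the top. But nothing forces the orbit of a point in $X$ to approach $\TOPphys$: it could accumulate on a limit point $y$ with $y\notin\TOPphys$. The paper handles this case separately: the admissible curves $\tilde\eta_i$ centred at $x_{n_i}\to y$ form a compact family on which the open dense set $\WW^s(\BOTTOMphys)$ occupies a definite proportion; pulling back with bounded distortion gives the same density contradiction. Only when $y\in\TOPphys$ does Lemma~\ref{LEM:SQRT_LENGTH_CURVE} enter (and there one extra iterate is needed to convert a curve of length $\asymp\sqrt{\bar\tau}$ at height $\bar\tau$ into one satisfying $l^h\ge C\sqrt{\tau}$, since $\tau$ drops to order $\bar\tau^2$ while the length persists).
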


Together with  Lemma \ref{LEM:STRETCHING_NEAR_TOP} we find

\begin{cor}\label{COR:LYAP_LOG2}
  Almost every point in $\WW^s(\TOPphys)$ has characteristic exponent $\log 2$.
\end{cor}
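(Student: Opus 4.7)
The plan is to establish the horizontal Lyapunov exponent $\lambda^h(x) := \lim_{n\to\infty} \frac{1}{n}\log \|D\Rphys^n(x) v\|$ (for any $v\in \KK^h(x)$) equals $\log 2$ at every point of $\WW^{s,o}_0(\TOPphys)$, and then observe that this full-measure subset of $\WW^s(\TOPphys)$ suffices by Theorem~\ref{attractors}. I will sandwich $\lambda^h(x)$ between two bounds which are both $\log 2$ after letting an error parameter tend to zero.

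First, the lower bound $\lambda^h(x) \geq \log 2$ is immediate on the whole $\Cphys \sm \{\INDphys_\pm\}$: Theorem~\ref{hor expansion thm} directly asserts $\|D\Rphys^n(x) v\| \geq c\,2^n \|v\|$ for every $x \in \Cphys \sm \{\INDphys_\pm\}$ and $v \in \KK^h(x)$, from which $\limsup \tfrac{1}{n} \log \|D\Rphys^n(x) v\| \geq \log 2$ follows at once. By Corollary~\ref{COR:HORIZONTAL_VECS_ITERATE_COMPARABLY}, this bound is independent of the particular choice of $v\in \KK^h(x)$.

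For the matching upper bound, the idea is that once the orbit stays inside $\VV'_{\bar\tau,\eta}$, the horizontal action of $D\Rphys$ closely mimics the doubling map $\phi \mapsto 2\phi$ on $\TOPphys$. Fix $\delta>0$. Using Lemma~\ref{LEM:STRETCHING_NEAR_TOP} together with the estimate (\ref{EQN:A_EXPANSION}) — which gives the leading term $a = 2 + O(|\tg\phi|\cdot s(x)) + O(\tau)$ for the horizontal component of $D\Rphys$ acting on a unit vector $v\in \KK^h(x)$ — we choose $\bar\tau, \eta$ small enough that
\[
\|D\Rphys(x) v\| \leq e^{\log 2 + \delta}\,\|v\|
\]
uniformly for $x \in \VV'_{\bar\tau,\eta}$ and $v\in \KK^h(x)$ (the degeneration of $\KK^h$ near $\INDphys_\pm$ is exactly what is compensated for by excising the parabolic regions $\PP^\pm_\eta$). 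For $x \in \WW^{s,o}_0(\TOPphys) \subset \WW^s_\eta(\TOPphys)$, the orbit $\{\Rphys^n x\}$ eventually enters and remains in $\VV'_{\bar\tau,\eta}$; let $N(x)$ be the first such moment. Chaining the estimates (and absorbing the finitely many initial iterates into a constant using Lemma~\ref{LEM:BOUNDED_CONTRACT}) gives
\[
\|D\Rphys^n(x) v\| \leq C_{x} \cdot e^{(n-N(x))(\log 2 + \delta)} \|v\|, \qquad n\geq N(x),
\]
whence $\lambda^h(x) \leq \log 2 + \delta$. Taking $\delta \to 0$ yields $\lambda^h(x) \leq \log 2$.

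Combining the two bounds gives $\lambda^h(x) = \log 2$ for every $x \in \WW^{s,o}_0(\TOPphys)$. Since $\WW^s(\BOTTOMphys) \cap \WW^s(\TOPphys) = \emptyset$ and, by Theorem~\ref{attractors}, $\WW^s(\BOTTOMphys) \cup \WW^{s,o}_0(\TOPphys)$ has full area in $\Cphys$, we conclude that $\WW^s(\TOPphys) \sm \WW^{s,o}_0(\TOPphys)$ is a null set, which completes the argument. The main subtlety — and really the only one — is verifying that the horizontal stretching constant inside $\VV'_{\bar\tau,\eta}$ can be driven all the way down to $2$ (as opposed to some larger factor) as $\eta, \bar\tau \to 0$, but this is precisely the content of the sharp expansion statement in Lemma~\ref{LEM:STRETCHING_NEAR_TOP} combined with the fact that on $\TOPphys$ itself $\Rphys$ is literally $z\mapsto z^2$.
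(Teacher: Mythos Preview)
Your approach is correct and matches the paper's (terse) derivation from Theorem~\ref{attractors} and Lemma~\ref{LEM:STRETCHING_NEAR_TOP}. However, there is a slip in your upper-bound step: Lemma~\ref{LEM:STRETCHING_NEAR_TOP} bounds the per-step horizontal stretching by $2+\delta$ only for vectors in the \emph{algebraic} cone field $\KK^\hor$, not for all $v\in\KK^h$. Indeed, in the region $\UU'$ (close to $\INDphys_\pm$ but outside the parabolas $\PP^\pm_\eta$), a boundary vector of $\KK^h(x)$ has slope $\sim|\eps|/3$, and the computation from~(\ref{DR near alpha}) gives horizontal stretching up to roughly $8/3$, independently of how small $\eta,\bar\tau$ are taken. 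Your parenthetical that ``the degeneration of $\KK^h$ near $\INDphys_\pm$ is exactly what is compensated for by excising $\PP^\pm_\eta$'' is thus not right for $\KK^h$; it is the degeneration of $\KK^\hor$ (slope $\sim\sqrt{\tau}$) that makes the lemma work there.

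The fix is immediate: take $v\in\KK^\hor(x)\subset\KK^h(x)$ from the start. Since $\KK^\hor$ is forward invariant (Proposition~\ref{PROP:MK_CONEFIELD}), the iterates $D\Rphys^n v$ remain in $\KK^\hor$, and Lemma~\ref{LEM:STRETCHING_NEAR_TOP} applies at every step once the orbit has entered $\VV'_{\bar\tau,\eta}$. The resulting exponent $\log 2$ then transfers to arbitrary $v\in\KK^h(x)$ via Corollary~\ref{COR:HORIZONTAL_VECS_ITERATE_COMPARABLY}, which you already invoke for the lower bound.
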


\subsection{Distortion control}

 To  prove Theorem \ref{attractors}, we will construct a family of horizontal
curves on which $\Rphys$ is expanding with bounded distortion.  Without the
indeterminacy points, this would be straightforward from partial hyperbolicity.

We will remove the union of two wedges extending downward from $\INDphys_\pm$:
\begin{eqnarray*}
 \Delta_{\bar \kappa} = \{x \in \Cphys \,:\, \tau(x) \geq \bar \kappa |\eps(x)| \}.
\end{eqnarray*}

 \begin{lem}\label{dist lemma}
Given any $\bar \kappa > 0$, there is a family $\HH_x$ of
``admissible'' horizontal curves centered each $x \in \Cphys$  with the following property:

If the orbit of 
$x \in \Cphys \sm \{\alpha_\pm\}$ avoids the wedge regions $\Delta_{\bar \kappa}$,
then there a neighborhood $\UU$ of $\INDphys_\pm$ and  sequence of times $n_i \equiv n_i(x)\in \Z_+$
such that $\Rphys^{n_i} x$ remains outside of $\UU$ and for any curve  $\xi\in \HH_x$ we have:

\ssk\nin {\rm (i)} $\xi\in \HH_x$ projects onto the  horizontal interval of radius 
   $r(x)\asymp \dist^h (x, \{\alpha_\pm\})$ centered at $x$;
  
\ssk\nin {\rm (ii)} The image $R^{n_i} \xi$ overflows some curve  $\eta_i \in \HH_{R^{n_i} x}$;

\ssk\nin {\rm (iii)} If $\widetilde{\eta}_i$ is the restriction of  $\eta_i$ to one half of its radius,
then, the inverse
branch $R^{-{n_i}} : \widetilde{\eta}_i \ra \xi $ is uniformly exponentially
contracting  with bounded distortion (with the contracting factor going to $0$
as $n_i\to \infty$).

\noindent 
\ssk
Moreover, the genuinely horizontal intervals 
$$
  \{  J_x= (\phi,t): \ t=t(x),\ |\phi - \phi(x)| < r(x) \} 
$$
are admissible.
\end{lem}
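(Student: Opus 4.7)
The plan is to construct $\HH_x$ out of complex horizontal discs. Using the invariant complex horizontal cone field on a neighborhood of $\Cphys \sm \{\INDphys_\pm\}$ furnished by Appendix \ref{SUBSEC:COMPLEXIFICATION_CONES}, I would define each admissible curve $\xi \in \HH_x$ to be the real slice of a holomorphic graph $\xi^\C$ through $x$, defined over a horizontal complex disc of radius $r(x) \asymp \dist^h(x,\{\INDphys_\pm\})$. The genuinely horizontal interval $J_x$ is the real slice of a round complex horizontal disc and so belongs to $\HH_x$; this gives property (i) and the ``moreover'' clause immediately.

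Next I would identify the sequence $n_i$ of ``good times''. Fix a small neighborhood $\UU$ of $\{\INDphys_\pm\}$. Whenever $\Rphys^n x \in \UU$, the avoidance hypothesis forces $\tau(\Rphys^n x) < \bar\kappa\,|\eps(\Rphys^n x)|$, so the orbit can enter $\UU$ only through the ``horizontal corridor''. There the blow-up formula from Appendix \ref{APP:R_NEAR_ALPHA} shows that $|\eps|$ roughly doubles while $\tau$ decays like $\tau^2/\eps^2$, so after a uniformly bounded number of iterates the orbit exits $\UU$. Consequently the iterates outside $\UU$ occur with positive density, providing the required infinite sequence $n_i$.

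For the main distortion estimate, invariance of the complex horizontal cone field ensures that the iterates $\Rphys^n(\xi^\C)$ remain complex horizontal discs centered at $\Rphys^n x$. At the good times $n_i$, horizontal expansion (Theorem \ref{hor expansion thm}) combined with the definite lower bound on $\dist^h(\Rphys^{n_i}x,\{\INDphys_\pm\})$ gives that $\Rphys^{n_i}(\xi^\C)$ contains a complex horizontal disc $\eta_i^\C$ of definite radius; its real slice is the desired $\eta_i$. The inverse branch $\Rphys^{-n_i}: \eta_i^\C \to \xi^\C$ is univalent, so Koebe's $1/4$ and distortion theorems applied to the half-radius subdisc $\widetilde{\eta}_i^\C$ yield bounded distortion, while the uniform exponential contraction of $\Rphys^{-n_i}$ follows from horizontal expansion.

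The main obstacle is controlling the complex geometry of the discs during passages through $\UU$: the complex horizontal cone field degenerates at $\{\INDphys_\pm\}$, and a single indeterminate passage can pinch the horizontal modulus of a complex disc substantially. The crux of the argument will therefore be an amortized estimate showing that the combined effect of a burst of indeterminate passages (during which, by the avoidance hypothesis, one has $\tau \lesssim \bar\kappa\,|\eps|$ throughout) followed by a return to a point of $\Icurve$ uniformly separated from $\{\INDphys_\pm\}$ preserves a definite fraction of the modulus. This uniform lower bound on the modulus of $\eta_i^\C$ at the good times is what enables the Koebe estimate to run with constants independent of $i$.
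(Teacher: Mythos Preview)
Your overall strategy is exactly the paper's: prove a complex version of the lemma using holomorphic horizontal discs in the complexified cone field of Appendix~\ref{SUBSEC:COMPLEXIFICATION_CONES}, and then apply the Koebe distortion theorem on the half-radius subdisc $\widetilde\eta_i^\C$ to get (iii). The admissible curves are indeed real slices of such discs, and the ``moreover'' clause follows as you say.

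However, your account of the passage through $\UU$ is where the real work lies, and your description of it is off. The blow-up formula does \emph{not} say that $|\eps|$ roughly doubles while the orbit stays in $\UU$; on the contrary, by Lemma~\ref{inclusions}(a) one has $\Rphys(\UU)\cap\UU=\emptyset$, so the orbit leaves $\UU$ after a \emph{single} step. What actually happens (see (\ref{EQN:R_BLOWUP_ANGULAR}) and Lemma~\ref{PROP:DISTANCE_TO_PI}) is that a point near $\INDphys_\pm$ with small $\kappa$ is thrown to a point near $\INDphys=(\pi,1)$, and one more iterate lands it near the hyperbolic fixed point $\FIXphys_1=(0,1)$, where it may linger for many iterates before escaping. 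There are no ``bursts of indeterminate passages''; the danger is the long sojourn near $\FIXphys_1$, during which the disc is small relative to the fixed-size family $\HH$.

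The paper handles this via Proposition~\ref{PROP:ESCAPE_FROM_ALPHA}, which is the heart of the argument and replaces your vague ``amortized modulus estimate''. Its mechanism is: (a) one iterate from $x\in\UU$ with $|\kappa|\leq\bar\kappa$ takes a disc of radius $a|\eps(x)|$ to one of radius $\gtrsim a\cdot\dist^h(\Rphys x,\INDphys)$ (Lemma~\ref{One iterate near indeterminacy points}); (b) one more iterate gives radius $\gtrsim a\cdot\dist^h(\Rphys^2 x,\FIXphys_1)$; (c) standard distortion at the hyperbolic point $\FIXphys_1$ (Lemma~\ref{LEM_ESCAPE_FROM_BETA1} in the paper, written as Lemma~\ref{LEM:ESCAPE_FROM_BETA1}) then lets the disc regrow to definite radius $\gtrsim a$ by the time $\dist^h(\Rphys^{n(x)}x,\FIXphys_1)\asymp 1$. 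This is what makes the overflowing property (ii) work with a \emph{fixed} radius $r_1$ away from $\UU$ and radius $a|\eps(x)|$ inside $\UU$. Without this specific three-step analysis, your proposed ``definite fraction of the modulus'' claim has no proof: the naive Koebe bound after a single indeterminate step gives a disc whose size is comparable only to $\dist^h(\Rphys x,\INDphys)$, which can be arbitrarily small.
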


\subsection{Proof of Theorem \ref{attractors}}
Let us first derive Theorem \ref{attractors} from  Lemma~\ref{dist lemma}.

Take a small $\eta\in (0,1/2)$ and
 let $X_\eta$ be the complement of $\WW^s(\BOTTOMphys)\cup \WW^{s,o}_\eta(\TOPphys)$.
Assume $\area(X_\eta)>0$. 
By the Lebesgue and Fubini Theorems,
there is a point  $x \in X_\eta$ which is a density point for the slice of $X_\eta$
by any genuinely horizontal interval  $J = J_x$ centered at $x$.

By Proposition \ref{principal tongues}, we can choose $\bar \kappa$
sufficiently large so that the wedge regions $\Delta_{\bar \kappa}$ are
entirely contained in $\WW^s(\BOTTOMphys)$.  Since $x \not \in
\WW^s(\BOTTOMphys)$, the orbit $x_n:=\Rphys^n x$ avoids $\Delta_{\bar \kappa}$,
allowing us to apply Lemma \ref{dist lemma}.

Let $S \subset \N$ be the subsequence of times $n_i \equiv n_i(x)$ given by
Lemma \ref{dist lemma}.  We can choose a further subsequence $S' \subset S$
along which $x_n$ converges to some $y \in \Cphys \sm \{\INDphys_\pm\}$. 
We will keep the same notation $n_i$ for this subsequence.

Now, let $J$ be the genuinely horizontal interval radius $r(x)$.  Since $J $ is
admissible, for any $n_i \in S'$ there is an admissible curve $\eta_i \in
\HH_{x_{n_i}}$ centered at $x_{n_i}$ such that the inverse branch $R^{-n_i}:
\tilde{\eta}_{i} \ra J$ is contracting (exponentially in $n_i$) with bounded
distortion. 

Suppose $y\not\in \TOPphys$.  
Then, the curves $\tl \eta_i$ are part of a 
compact family of curves having the property that each curve from the family intersects 
$\WW^s(\BOTTOMphys)$ 
in a dense open set.  This implies that
$\WW^s(\BOTTOMphys)$ occupies a definite
portion of each $\tl \eta_i$.
Since $R^{-n_i}: \tl \eta_i \ra J$ has a bounded
distortion, the basin $\WW^s(\BOTTOMphys)$ occupies a definite portion of
$R^{-n_i}(\tl \eta_i)$, which is can be made an arbitrarily small neighborhood of
$x \in J$ by taking $n_i$ sufficiently large.  This contradicts the choice of
$x$ as a density point of $X_\eta$ on $J$.

If $y\in \TOPphys$, then let us consider $\bar\tau>0$ and $C>1$ from Lemma
\ref{LEM:SQRT_LENGTH_CURVE}.  Since $\tl \eta_i$ is a horizontal curve (with respect
to the algebraic cone field $\KK^\hor$), the horizontal length of $\gamma:=
\tl \eta_i \cap \VV_{\bar\tau}$ is at least $\sqrt{\bar\tau}$ for $n_i \in S'$ sufficiently
big, since it lies above one of the parabolas $\Sphys_\psi$.  Since $\gamma$ is near $\TOPphys$ and bounded away from $\INDphys_\pm$, 
Lemma \ref{LEM:STRETCHING_NEAR_TOP} gives that the horizontal length of $\gamma':= \Rphys \gamma$ is at
least that big.  But  $\tau(\gamma')=O(\bar\tau^2)$, so $ l^h(\gamma')\geq
C\sqrt{\tau(\gamma')} $. By Lemma \ref{LEM:SQRT_LENGTH_CURVE},
$\WW^{s,o}_\eta(\TOPphys)$ occupies at least $3/4$ of $\gamma'$.  

Since $\gamma$ is bounded away from $\INDphys_\pm$, the single iterate $\Rphys:
\gamma \ra \gamma'$ has bounded distortion (as does its inverse).  Therefore, $\Rphys^{-(n_i+1)}: \gamma' \ra J$
is exponentially contracting with bounded distortion.
Hence, by taking
sufficiently large $i$, the basin $\WW^{s,o}_\eta (\TOPphys)$ occupies a definite
portion of the arbitrarily small neighborhoods $R^{-(n+1)}(\gamma')\subset J$ of
$x$, contradicting again  the choice of $x$.

The contradictions show that $\area (X_\eta)=0$ for any $\eta > 0$, and the conclusion follows. 
\QED

\subsection{Proof of Lemma \ref{dist lemma} }

Let us formulate  a stronger, complex version of Lemma~\ref{dist lemma}.  Here
``horizontal holomorphic curves'' are understood in the sense of the complex
extension of the horizontal cone field $\KK^{\hor}$ constructed in Appendix
\ref{SUBSEC:COMPLEXIFICATION_CONES}.

Let $\pi(z,w) = z$.

\comment{
For each $x \in \Cphys \sm \{\INDphys_\pm\}$ let $\HH_x$ be the family of all
horizontal holomorphic curves $\xi$ that project under $\pi(z,w) = z$ onto the round disc of radius
$r(x)$ (to be specified below) centered at $\pi(x)$. 
In particular, $\HH_x$ includes the genuinely horizontal disks
$$
  \{  D_x= (\phi,t): \ t=t(x),\ |\phi - \phi(x)| < r(x) \}.
$$
}

\begin{lem}\label{complex dist lemma}
Given any $\bar \kappa > 0$, there is a family $\HH_x$ of ``admissible'' horizontal holomorphic curves
entered at each $x \in \Cphys$ with the following property:

If the orbit of
$x \in \Cphys \sm \{\alpha_\pm\}$ avoids the wedge regions $\Delta_{\bar \kappa}$,
then there is a neighborhood $\UU$ of $\INDphys_\pm$ and a sequence of times $n_i \equiv n_i(x)\in \Z_+$
such that $\Rphys^{n_i} x$ remains outside of $\UU$ and for any curve  $\xi\in \HH_x$ we have:

\ssk\nin {\rm (i)} $\xi \in \HH_x$ projects under $\pi$ on a complex disc of
radius $r(x) \asymp \dist^h(x,\{\INDphys_\pm\})$ centered at $\pi(x)$.

\ssk\nin {\rm (ii)} The image $R^{n_i} \xi$ overflows some curve  $\eta_i \in \HH_{R^{n_i} x}$;

\ssk\nin {\rm (iii)} If $\widetilde{\eta}_i$ is the restriction of $\eta_i$ to one half of its radius, then, the inverse branch $R^{-{n_i}} : \widetilde{\eta}_i \ra \xi $ is uniformly exponentially contracting  with bounded distortion
(with the contracting factor going to $0$ as $n_i\to \infty$). 

Moreover, the genuinely horizontal discs 
$$
  \{  D_x= (\phi,t): \ t=t(x),\ |\phi - \phi(x)| < r(x) \}.
$$
\noindent
are admissible.
\end{lem}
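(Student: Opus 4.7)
The plan is to use the complex extension $\KK^h_\C$ of the horizontal cone field from Appendix~\ref{SUBSEC:COMPLEXIFICATION_CONES}, together with the horizontal expansion of Theorem~\ref{hor expansion thm} and the Koebe distortion theorem, to push admissible discs forward and then pull them back univalently onto $\xi$. First I would define an admissible curve through $x$ as a holomorphic integral curve $\xi$ of $\KK^h_\C$ containing $x$ whose projection $\pi|\xi$ is univalent onto the round disc $\{|z-\pi(x)|<r(x)\}$, where $r(x)$ is chosen comparable to $\dist^h(x,\{\INDphys_\pm\})$. With this choice the tangents to $\xi$ stay in a compact subcone of $\KK^h_\C$, so by the complex cone-field constructed in Appendix~\ref{SUBSEC:COMPLEXIFICATION_CONES} such curves exist and their tangents deviate from the genuinely horizontal direction by a uniformly bounded angle; in particular the genuinely horizontal discs $D_x$ are admissible.

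Next I would exhibit the times $n_i$. Fix a small neighborhood $\UU$ of $\{\INDphys_\pm\}$; its complement in $\Cphys$ meets the orbit of $x$ infinitely often, because by the blow-up formula~(\ref{Icurve}) any orbit entering $\UU$ outside the wedge $\Delta_{\bar\kappa}$ is mapped in one step away from the indeterminacy points onto a definite neighborhood of $\Icurve$, which lies in $\Cphys\sm \UU$ once $\UU$ is sufficiently small. Letting $n_i$ be the successive visits of $\orb(x)$ to $\Cphys\sm \UU$, the points $x_{n_i}:=\Rphys^{n_i}x$ have $r(x_{n_i})$ bounded below uniformly, so $\HH_{x_{n_i}}$ consists of discs of definite radius.

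I would then establish overflowing. By Theorem~\ref{hor expansion thm}, horizontal tangent vectors get stretched by a factor $\ge c\la^{n_i}$ with $\la=2$, hence for any $\xi\in\HH_x$ the image $\Rphys^{n_i}\xi$ is a horizontal holomorphic curve through $x_{n_i}$ whose horizontal size exceeds $r(x_{n_i})$ as soon as $n_i$ is large enough. Cutting back to the component containing $x_{n_i}$ that projects univalently onto the round disc of radius $r(x_{n_i})$ gives the desired $\eta_i\in\HH_{x_{n_i}}$, and by construction $\Rphys^{n_i}\xi$ overflows $\eta_i$. Shrinking $\UU$ slightly further, the relevant inverse branch of $\Rphys^{n_i}$ is well defined and univalent on the extended overflow, so restricting to the half-disc $\widetilde\eta_i$ the Koebe distortion theorem yields bounded distortion for $\Rphys^{-n_i}:\widetilde\eta_i\to\xi$; the contraction factor is $O(\la^{-n_i})$ by horizontal expansion in the inverse direction.

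The main obstacle is the univalence of the chosen inverse branch along the entire orbit segment $x_0,\dots,x_{n_i}$, because between consecutive returns the orbit may pass arbitrarily close to $\{\INDphys_\pm\}$, where $\Rphys$ has points of indeterminacy and where the horizontal cone field degenerates. To handle this I would use the modified horizontal cone $\KK^h$ of \S\ref{SUBSEC:MODIFIED_ALG_CONES} to control how $\Rphys^{n_i}\xi$ threads through $\UU$: because the orbit stays outside $\Delta_{\bar\kappa}$, each visit to $\UU$ is transient and by the blow-up calculation in Appendix~\ref{APP:BLOW_UPS} the local branch of $\Rphys^{-1}$ at $x_j$ whose image contains $x_{j-1}$ is well defined and holomorphic; composing these branches yields the required univalent inverse of $\Rphys^{n_i}$ over an overflow of $\eta_i$, to which Koebe then applies.
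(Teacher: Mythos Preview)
Your overall architecture is right and matches the paper's: complexify the horizontal cone field (Appendix~\ref{SUBSEC:COMPLEXIFICATION_CONES}), take admissible curves of radius comparable to $\dist^h(x,\{\INDphys_\pm\})$, push them forward until they overflow, and apply Koebe to the inverse branch on the half-disc. The genuinely horizontal discs being admissible is exactly as in the paper.

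There is, however, a real gap in your handling of passages near $\INDphys_\pm$, and it is not the one you single out. The issue is not primarily the pointwise well-definedness of the inverse branch at the base points $x_j$; it is whether the \emph{forward} images $\Rphys^j\xi$ remain horizontal holomorphic curves at all. The complex cone field $\KK^\hor$ is only invariant on the \emph{pinched} region $\Cphys^c_\rho(\theta)$ of Proposition~\ref{PROP:CX_EXTENSION_ALG_FIELD}, whose width shrinks like $|\eps|$ near $\INDphys_\pm$. When the orbit enters $\UU$, the expansion factor blows up like $1/|\eps|$ (Lemma~\ref{hexplow}), so an iterated disc can easily leave the pinched domain, and Theorem~\ref{hor expansion thm} (which is a statement about real tangent vectors on $\Cphys$) no longer says anything about it. Your final paragraph invokes the blow-up formula and the real modified field $\KK^h$, but neither of these gives control on the complex disc $\Rphys^j\xi$; having a well-defined local branch of $\Rphys^{-1}$ at $x_j$ does not imply $\Rphys^j\xi$ is still a horizontal holomorphic curve of the right size.

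The paper closes this gap with a dedicated escape mechanism, Proposition~\ref{PROP:ESCAPE_FROM_ALPHA}: for $x\in\UU$ with $|\kappa(x)|\le\bar\kappa$, a horizontal disc of radius $a|\eps(x)|$ has its half-subdisc sent by some $\Rphys^{n(x)}$ to a horizontal disc of \emph{definite} radius $\ge Ca$, while staying in the domain of $\KK^\hor$ throughout. This proposition is itself proved by tracking the disc through a specific itinerary (one iterate near $\INDphys=(\pi,1)$ via Lemma~\ref{One iterate near indeterminacy points}, then through a neighborhood of the hyperbolic fixed point $\FIXphys_1$ via Lemma~\ref{LEM:ESCAPE_FROM_BETA1}), with size estimates at each stage. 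The paper then builds the times $n_i$ \emph{inductively}, alternating between a bounded number of iterates outside $\UU$ (where the small constant radius $r_1$ keeps the disc in the domain of $\KK^\hor$) and the escape time $n(x)$ inside $\UU$. Overflowing is arranged at each step by tuning $r_1=Ca$, not obtained asymptotically from the global exponent $\lambda=2$. Without this step-by-step size control your appeal to Theorem~\ref{hor expansion thm} for overflowing and to Koebe for distortion is not justified.
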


\begin{proof}

If $x \in \Cphys \sm \UU$, then $\HH_x$ will consist of all horizontal
holomorphic curves $\xi$ that project under $\pi$ onto the round disc of
constant radius $r(x) = r_1$ (to be specified below) centered at $\pi(x)$.  If
$x \in \UU$, then $\HH_x$ will consist of the restrictions to half their radius of all horizontal
holomorphic curves that project under $\pi$ onto a round disc of radius
$a|\eps(x)|$.

Since $\Rphys$ is horizontally expanding (Theorem \ref{hor expansion thm}),
there is an $N$ such that $D\Rphys^N$ expands horizontal vectors $v\in
\KK^\hor(x)$, $x\in  \Cphys \sm \{\alpha_\pm\} $, by definite factor. 

We can choose a neighborhood $\UU \subset \Cphys$ of $\{\INDphys_\pm\}$
sufficiently small so that each of the preimages $\Rphys^{-i} \UU$ for $1 \leq
i \leq N$ is in the neighborhood $\VV'$ of $\TOPphys$ in which
Lemma~\ref{LEM:STRETCHING_NEAR_TOP} gives that each iterate of $\Rphys$ expands
horizontal vectors.  Therefore, if $x \in \Cphys \sm \UU$ there exists $n(x)
\leq N$ so that $D\Rphys^{n(x)}$ expands any $v \in \KK^\hor(x)$ and $\Rphys^i
x \not \in \UU$ for $0 \leq i < n(x)$.

Proposition \ref{PROP:ESCAPE_FROM_ALPHA} gives $C> 0$ so for any sufficiently
small $a > 0$ and any $x \in \UU$ with $|\kappa(x)| < \bar \kappa$ there is an
iterate $n(x)$ so that if $\xi$ is an horizontal holomorphic curve centered at
$x$ of radius $a|\eps(x)|$ and $\xis$ is the subdisc of radius $a|\eps(x)|/2$,
then $\Rphys^{n(x)} \xis$ is horizontal and projects under $\pi$ onto a disc
has a definite radius $\geq C a$.  We further restrict $\UU$ so that $|\eps(x)|
< C$ for any $x \in \UU$.  It ensures that $\Rphys^{n(x)} \xis$ will be larger
than any curve $\xi$ of radius $a|\eps(x)|$ that is based at any $x \in \UU$.

Since the cone-field  $\KK^\hor$ is defined in a definite complex neighborhood
of $\Cphys \sm \UU$, on which $D\Rphys$ has bounded expansion, we can choose
$r_0$ sufficiently small for any horizontal holomorphic curve $\xi$ centered at
$x \in \Cphys \sm \UU$ of radius $\leq r_0$ we have that $\Rphys^i \xi$ is in the
domain of definition of $\KK^\hor$ for $1 \leq i < n(x)$.  In particular,
$\Rphys^{n(x)} \xi$ will be horizontal.  By continuity, we can also require
that $r_0$ be sufficiently small so that $\Rphys^{n(x)}$ is uniformly expanding
on any such curve  $\xi$.

If we choose $a$ sufficiently small so that $C\cdot a < r_0$ and choose $r_1 = C\cdot a$, it will
guarantee the overflowing property (ii).
\ssk

The above gives a sequence of further times
$n_i(x)$ and curves $\eta_i \in \HH_{\Rphys^{n_i(x)} x}$ so that
$\Rphys^{n_{i+1}-n_i} \eta_i$ is horizontal and overflows $\eta_{i+1}$.
Consequently, the inverse $\Rphys^{-n_i(x)} \eta_i \ra \xi$ is well-defined
(and hence univalent) for each $i$.

The Koebe Distortion Theorem gives that the restriction $\Rphys^{-n_i(x)} \
\widetilde{\eta}_i \ra \xi$ of each inverse branch to the disc of half the
radius will have bounded distortion.   By construction, $D\Rphys^{n_i(x)}$ is
exponentially expanding at the center of $\xi$, therefore the inverse branch is
exponentially contracting.

It follows from the proof of Proposition \ref{PROP:ESCAPE_FROM_ALPHA} that if
$\Rphys^{n_i(x)} x \in \UU$, then $\Rphys^{n_{i+1}(x)} x \not \in \UU$.
Therefore, passing to a subsequence, we can suppose that $\Rphys^{n_i(x)} x
\not \in \UU$ for each~$i$.  \end{proof}

\begin{rem}
  The whole proof of Lemma \ref{complex dist lemma} goes through 
in the purely real way  except one problematic  point: the distortion control in (iv).
In fact, with some extra work it should be possible to do it as well,
using the property  that the horizontal non-linearity of $\Rphys$
behaves like $1/\eps$ near the indeterminacy points $\alpha_\pm$. 
\end{rem}

\comment{

\subsection{Proof of Lemma \ref{dist lemma}: real approach (outline)}

Finally,  we will outline a purely real approach to Lemma \ref{dist lemma}.
Namely, the whole proof of Lemma \ref{complex dist lemma} goes through 
with real curves on the cylinder (and in fact, it becomes easier since we do not need 
to construct the complex extension of the horizontal cone field),
except one point: the distortion control in (iv).
To establish  it in a purely real way, we need 
to estimate on the non-linearity of $R$ near the indeterminacy points. 

Recall that the {\em non-linearity} of an interval map $h: I\ra I$ is
defined as  
$$
 \NN h (x) =  h''(x)/h'(x).
$$
We define the {\it horizontal non-linearity}  $\NN^h\Rphys(x)$  as
the sup of the non-linearity of $\Rphys$ (measured with respect to the angular parameter)
on all admissible curves $\xi \in \HH_x$. Then estimate (\ref{}) implies:
\begin{lem}\label{hor non-linearity}
   \NN^h\Rphys(x) = O(\frac 1{\dist^h(x, \{\alpha_\pm\}}),
\end{lem}
as long as the admissible curves have uniformly bounded curvature
(which should be secured using the dominated splitting). 

This allows one to bound the horizontal distortion of the iterates $\Rphys^n$
by
$$
   \sum_{k=0}^\infty \frac {|J_k|}{\dist (x_k, \{\alpha_\pm\}) }.
$$
Due to exponential expansion, this gives us a uniform bound 
at the moments of the closest approaches to $\alpha_\pm$.
And then we have to deal with the rest of the orbit!

**************** }

\section{Central foliation, its holonomy and transverse measure} 
\label{SEC:VERTICAL_FOLIATION}

{\it In what follows all laminations in question will be assumed strictly vertical.}
Given a lamination $\FF$ and $\tau\in (0,1)$, we let $\FF_\tau$ be the slice of $\FF$ by the 
truncated cylinder $\Cphys_\tau= \T\times [0,1-\tau]$. 

\subsection{Central foliation}\label{central fol sec}

Recall that  {\it central foliation} is a strictly vertical foliation invariant under $\Rphys^*$. 

\begin{thm}\label{cs foliation}
  The map  $\Rphys$ has a unique central foliation.
\end{thm}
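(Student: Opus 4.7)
The plan is to upgrade the already-established existence of integral curves for $\LL^c$ (Proposition \ref{PROP:EXISTENCE_OF_CENTRAL_CURVES}) to \emph{unique} integrability, using the horizontal expansion of $\Rphys$ as the sole dynamical input. Invariance of the line field under $D\Rphys^*$ together with forward invariance of $D\Rphys(\LL^c)\subset \LL^c$ implies that forward and backward images of integral curves are again tangent to $\LL^c$; so it suffices to prove the local statement: through every $x_0\in \Cphystl$ there is at most one strictly vertical curve tangent to $\LL^c$. Combined with Proposition \ref{PROP:EXISTENCE_OF_CENTRAL_CURVES} and the lifting properties in Lemma \ref{vertical lifts} and Corollary \ref{strictly vert pullbacks}, these local leaves will then fit together into a single $\Rphys^*$-invariant vertical foliation of $\Cphys$, with the stable tongues of \S\ref{SEC:BOTTOM BASIN} accounting for the ``fans'' of leaves that terminate at $\alpha_\pm$ and at their iterated pre-images.

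The heart of the proof is a pullback contraction argument. Suppose $\gamma_1,\gamma_2$ are two central curves through $x_0$, parameterized as graphs $\gamma_i(t)=(\phi_i(t),t)$ with $\phi_1(t_0)=\phi_2(t_0)$. Let $\eta_i^n:=\Rphys^n\gamma_i$, central curves through $x_n:=\Rphys^n x_0$. Since both $\eta_i^n$ are $C^1$ integral curves of the continuous line field $\LL^c$ through the \emph{same} point $x_n$, they share a common tangent at $x_n$; by uniform continuity of $\LL^c$ on compact subsets of $\Cphys\setminus\{\alpha_\pm\}$, their horizontal spread in any fixed-size neighborhood of $x_n$ that stays a definite distance away from $\{\alpha_\pm\}$ is bounded by a universal constant $M$. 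On the other hand, $\gamma_i$ is the pullback of $\eta_i^n$ through $x_0$ under the inverse branch of $\Rphys^n$ taking $x_n$ back to $x_0$; by Theorem \ref{hor expansion thm}, this branch contracts horizontal distances by a factor $\leq C\lambda^{-n}$. Hence, for $t$ close enough to $t_0$,
$$ |\phi_1(t)-\phi_2(t)|\ \leq\ C M \lambda^{-n}, $$
and letting $n\to\infty$ forces $\gamma_1\equiv\gamma_2$ near $x_0$.

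The main obstacle is controlling the constant $M$ when the forward orbit $\{x_n\}$ repeatedly approaches the indeterminacy locus $\{\alpha_\pm\}$, where the cone $\KK^v$ degenerates and the horizontal-spread estimate degenerates too. Here I would exploit the fact that $\alpha_\pm$ blow up to the curve $\Icurve$, which is bounded away from $\{\alpha_\pm\}$ itself (Lemma \ref{S transv to Icurve} and the blow-up formula \eqref{Icurve}): any orbit visiting a small neighborhood $\UU$ of $\{\alpha_\pm\}$ is ejected from $\UU$ at the very next iterate. Consequently, there is a subsequence $n_k\to\infty$ along which $x_{n_k}$ stays in the complement of a fixed neighborhood of $\{\alpha_\pm\}$, and running the pullback contraction argument along this subsequence is enough to force $\gamma_1\equiv\gamma_2$. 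For the exceptional set of $x_0$ whose orbit eventually hits $\{\alpha_\pm\}$ (i.e.\ $x_0\in\Rphys^{-k}\{\alpha_\pm\}$ for some $k$), uniqueness fails in general, but the non-unique integral curves are exactly the proper vertical sides of the stable tongues $\Tongue_k(\alpha)$ from Proposition \ref{tongues are a.e.}; these we designate as the leaves of $\FF^c$ through such $x_0$, and the resulting family of leaves is readily seen to form a foliation of $\Cphys$ by combining the lifting properties of \S\ref{sec: alg cone field} with the tongue structure of \S\ref{stable tongues sec}.

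Finally, invariance $\Rphys^*\FF^c=\FF^c$ is automatic, since pullbacks of central leaves are central (the backward-invariant cone field $\KK^v$ contains their tangents by Corollary \ref{COR:R_CONEFIELD}), and their bottom-basepoints match the four-to-one action of $\Rphys$ on $\BOTTOMphys$. Uniqueness of the foliation among all strictly vertical $\Rphys^*$-invariant foliations then follows from the local uniqueness statement proved above, applied leaf by leaf.
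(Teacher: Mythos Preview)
Your pullback--contraction argument has a genuine gap. You bound the ``horizontal spread'' of $\eta_1^n,\eta_2^n$ near $x_n$ by a constant $M$ and then claim the inverse branch of $\Rphys^n$ contracts this to $CM\lambda^{-n}$. But Theorem~\ref{hor expansion thm} says that \emph{horizontal curves} are contracted by inverse branches; it does not directly bound the horizontal distance between two \emph{vertical} curves at a common height. To conclude $|\phi_1(t)-\phi_2(t)|\le CM\lambda^{-n}$ you must instead push the genuinely horizontal segment $h_t=[\gamma_1(t),\gamma_2(t)]$ \emph{forward} and bound $l^h(\Rphys^n h_t)$ uniformly in $n$. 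This bound does not follow from uniform continuity of $\LL^c$ (continuous line fields can have integral curves that separate arbitrarily; that is precisely the phenomenon you are trying to rule out). What actually works is the homotopy observation used later in the proof of Theorem~\ref{foliation FF}: the loop $\gamma_1[t_0,t]\ast h_t\ast(\gamma_2[t_0,t])^{-1}$ is null-homotopic, and since the two vertical arcs $\Rphys^n\gamma_i[t_0,t]$ have uniformly bounded horizontal length, so does $\Rphys^n h_t$. You did not supply this step, and the reason you offer in its place is incorrect.

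The paper's proof avoids this bookkeeping by a different route. Rather than bounding $l^h(\Rphys^n\de)$ from above, it exploits its growth: assuming $\gamma_1\neq\gamma_2$, the horizontal segment $\de=[y_1,y_2]$ has $l^h(\Rphys^n\de)\to\infty$, so eventually $\Rphys^n\de$ projects onto all of $\T$ and hence crosses the collapsing interval $\II_{\pi/2}$. This places some $y\in\de$ inside $\WW^s(\BOTTOMphys)$, where by Proposition~\ref{PROP:CINFTY_FOLIATION} the central line field is $C^\infty$ and therefore uniquely integrable; the unique central curve through $y$ is then squeezed between $\gamma_1$ and $\gamma_2$ down to $x$, yielding a contradiction. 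This uses the smoothness of $\FF^s(\BOTTOMphys)$, which your argument does not, and sidesteps the need to bound the forward images. (As a side remark, your ``exceptional set'' $\bigcup_k\Rphys^{-k}\{\alpha_\pm\}$ lies entirely on $\TOPphys$, not in $\Cphystl$, so no separate case is needed for uniqueness on the topless cylinder; non-uniqueness of integral curves occurs only at those tips on the top, which is exactly where the foliation is not required to separate leaves.)
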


\begin{proof}
According to Proposition \ref{PROP:EXISTENCE_OF_CENTRAL_CURVES}, through each
$x \in \Cphys \sm \{\INDphys_\pm \}$ is a central curve extending in both
directions to the boundary of the cylinder.  Taking the union of all such
curves through every point on $\Cphystl$, we obtain an invariant family $\FF^c$
of strictly vertical  $C^1$-curves filling in the whole cylinder $\Cphystl$. 

However, for a continuous vector field, there may exist many  integral curves through a given point,
so $\FF^c$ may fail to be a foliation. What saves the day is that  $\Rphys$ is horizontally expanding
(Theorem \ref{hor expansion thm}). Namely, assume that there exist two integral curves,
$\gamma_1$ and $\gamma_2$,  through a point $x\in \Cphystl$. Let us take two points $y_i\in \gamma_i$ 
on the same height, and connect them with a (genuinely) horizontal interval $\de=[y_1, y_2]$. 
By Theorem \ref{hor expansion thm}, the curves $\de_n:= \Rphys^n(\de)$ are almost horizontal,
and $l^h(\de_n)\to \infty$.  Hence the horizontal projections of the $\de_n$ eventually cover the whole
circle $\T$, and in particular, they intersect the critical interval $\II_{\pi/2}$. 
Hence $\de$ intersects $(\Rphys^n)^*(\II_{\pi/2})$ at some point $y$. 

But the interval $\II_{\pi/2}\sm \{\alpha_+\}$ is contained in the basin of $\BOTTOMphys$,    
where $\FF^c$ coincides with the stable foliation  $\FF^s(\BOTTOMphys)$,                       
which is a $C^\infty$ foliation (Proposition \ref{PROP:CINFTY_FOLIATION}). 
Pulling it back, we conclude that there is a unique integral curve $\WW^c(y)$ through $y$,
and $\FF^c$ is a $C^\infty$ foliation near it. But $\WW^c(y)$ is squeezed in between
the curves $\gamma_1$ and $\gamma_2$, and hence must merge with them at $x$ -- contradiction.     

This proves that the line field  $\LL^c$ is uniquely integrable,
so the family  $\FF^c$  of all integral curves forms the central foliation.
Since any such foliation must be formed by integral curves to $\LL^c$, it is unique.  
\end{proof}

Recall from the Introduction that the {\it holonomy} transformations
$g_t: \BOTTOMphys\ra \T_t\equiv \T\times \{t\}$,   \note{good notation!}
$t\in [0,1)$, are defined by the property
that $x$ and $g_t(x)$ belong to the leaf of $\FF^c$.

\begin{rem}\label{REM:EXTENSION_OF_BOTTCHER}
By means of the holonomy along the  central foliation to the bottom of $\CC$, 
we  can now obtain a continuous extension $\tl \Phi: \Cphystl \ra \T$ of the B\"otcher
coordinate that was constructed in \S \ref{SUBSEC:BOTTCHER} 
(namely, let $\tl \Phi(\phi,t) = g_t^{-1}(\phi)$).   
Since $\FF^c$ is $\Rphys$-invariant, 
$\tl \Phi$ also satisfies the B\"ottcher functional equation
$\tl \Phi(\Rphys(\phi,t)) = 4 \tl \Phi(\phi,t)$.
However, $\tl \Phi$ has  weak regularity outside of  $\WW^s(\BOTTOMphys)$: 
for example, it is not even absolutely continuous (see Corollary \ref{COR:AC_HOLONOMY} below).
\end{rem}

\subsection{Central tongues}
Recall a notion of a tongue from \S \ref{stable tongues sec}. 
A tongue is called {\it central} if it is bounded by two central leaves
(and hence it is saturated by intermediate leaves of the central foliation $\FF^c$).
For instance, the primary central tongues $\La_\pm$ from Property (P6) in  \S \ref{str on CC}
are central tongues in this sense  (as they are bounded by $\Rphys$-lifts of $\II_\pi$, 
which are central leaves). 

Recall also the pre-indeterminacy set $\PI=\cup \PI^n$ (\ref{PI set}).

\begin{prop}\label{central tongues}
 There is one maximal central tongue $\La(\alpha)$  attached to each pre-indeterminacy point $\alpha\in \PI^n$,
which is the regular pullback of one if the tongues $\La_\pm\equiv \La(\alpha_\pm)$ by $f^n$.   
This family of tongues is dense in $\Cphys$, and any central tongue is contained in one of these. 
\end{prop}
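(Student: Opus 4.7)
The plan is to construct $\La(\alpha)$ inductively on the level $n$ such that $\alpha \in \PI^n$, then establish density by exploiting the density of $\PI$ in $\TOPphys$, and finally use horizontal expansion to show that every central tongue has its tip in $\PI$.

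For the construction, I would start with $\La(\alpha_\pm) := \La_\pm$, the primary central tongues from Property (P6) in \S\ref{str on CC}. For the inductive step, suppose $\La(\beta)$ is defined for every $\beta \in \PI^n$, and take $\alpha \in \PI^{n+1} \setminus \PI^n$, so $\Rphys(\alpha) = \beta \in \PI^n$ and $\alpha \notin \{\alpha_\pm\}$. Since $\alpha$ is not an indeterminacy point, there is a well-defined regular branch of $\Rphys^{-1}$ sending $\beta$ to $\alpha$ (\S\ref{str on CC}, property (P6)), obtained by pulling back across $\TOPphys$ and acting on a neighborhood of $\beta$ in $\La(\beta)$. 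The two bounding central leaves of $\La(\beta)$ lift to two proper vertical curves meeting at $\alpha$, and by invariance of the central foliation $\FF^c$ established in Theorem~\ref{cs foliation} (more precisely, by the uniqueness part), these lifts are themselves central leaves. They therefore bound a central tongue $\La'(\alpha)$. Define $\La(\alpha)$ as the maximal central tongue containing $\La'(\alpha)$; this is well defined since the union of any nested family of central tongues at $\alpha$ is again a central tongue (the leaves of $\FF^c$ vary continuously transversally).

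For density, observe that $\PI = \bigcup_n \Rphys^{-n}\{\alpha_\pm\}$ is dense in $\TOPphys$ because $\Rphys|\TOPphys$ is the squaring map $z \mapsto z^2$ and the backward orbit of any point under a degree $\geq 2$ expanding map of $\T$ is dense. Each $\La(\alpha)$ is bounded by two proper central leaves and so meets the bottom $\BOTTOMphys$ in a nondegenerate arc (by Lemma~\ref{stble man str vert} applied to the bounding leaves, which are central and strictly vertical). Since the tips $\{\alpha\}$ are dense on $\TOPphys$ and the bottoms of $\La(\alpha)$ accumulate accordingly along $\BOTTOMphys$, the union $\bigcup_{\alpha \in \PI} \La(\alpha)$ is dense in $\Cphys$.

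The key step is the containment statement, and this is where I expect the main obstacle. Let $\La$ be any central tongue, with tip $y \in \TOPphys$. If $y = \alpha$ for some $\alpha \in \PI$, then by maximality $\La \subseteq \La(\alpha)$ and we are done. Otherwise, assume $y \notin \PI$, so $\Rphys^n(y) \notin \{\alpha_\pm\}$ for every $n \geq 0$; then $\Rphys$ is a local homeomorphism on $\TOPphys$ near each $\Rphys^n(y)$, and the image $\Rphys^n(\La)$ is again a central tongue (by invariance of $\FF^c$), with tip $\Rphys^n(y)$ bounded away from $\{\alpha_\pm\}$. The plan to obtain a contradiction is to use horizontal expansion (Theorem~\ref{hor expansion thm}): pick a horizontal arc $\de$ crossing $\La$ somewhere below $\TOPphys$ and away from the indeterminacy points; then $l^h(\Rphys^n(\de)) \geq c\, 2^n\, l^h(\de)$, so for large $n$ the horizontal projection of $\Rphys^n(\La)$ covers all of $\T$, forcing $\Rphys^n(\La)$ to encircle the cylinder. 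But a central tongue, being bounded by two disjoint proper central leaves meeting only at the tip, cannot wrap around $\TOPphys$, which is a contradiction. Therefore $y \in \PI$, and $\La \subseteq \La(y)$ by maximality. The delicate point in this last argument is to ensure that the widening of $\Rphys^n(\La)$ takes place away from a small neighborhood of $\{\alpha_\pm\}$, which is guaranteed because the tip $\Rphys^n(y)$ stays bounded away from $\{\alpha_\pm\}$ under the doubling map (either $\{\Rphys^n(y)\}$ is eventually periodic and avoids $\{\alpha_\pm\}$, or one passes to a subsequence of $n$ for which $\Rphys^n(y)$ lies in a fixed compact subset of $\TOPphys\setminus\{\alpha_\pm\}$).
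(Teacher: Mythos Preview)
Your containment argument (via iteration) is a valid alternative to the paper's approach, but your density argument has a genuine gap, and you never verify that the maximal tongue \emph{equals} the regular pullback.

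The paper's proof is a single counting argument on $\BOTTOMphys$. Since $\Rphys|\BOTTOMphys$ is $z\mapsto z^4$ and the primary tongues $\La_\pm$ have bottoms of length $\pi/2$, the regular pullback at each $\alpha\in\PI^n$ has bottom of length $\pi/2^{2n+1}$; with $|\PI^n|=2^{n+1}$ pairwise disjoint tongues at level $n$, one gets $\sum_n 2^{n+1}\cdot\pi/2^{2n+1}=2\pi$. Thus the pullback bottoms fill $\BOTTOMphys$ exactly, which simultaneously yields density, containment (any other tongue would need a nontrivial bottom arc inside a measure-zero set), \emph{and} the identification of each maximal tongue with its regular pullback (no room to enlarge).

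Your containment argument is correct but can be simplified: you do not need Theorem~\ref{hor expansion thm} or the subsequence business. Just note that if $y\notin\PI$ then $\Rphys^n(\La)$ is a central tongue at $\Rphys^n(y)$ whose bottom arc has length $4^nL$ (since $\Rphys|\BOTTOMphys:z\mapsto z^4$); once $4^nL>2\pi$ every leaf of $\FF^c$ lands at $\Rphys^n(y)$, contradicting, say, $\II_0(1)=\FIXphys_1$.

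The density gap: you infer ``bottoms of $\La(\alpha)$ accumulate along $\BOTTOMphys$'' from density of tips, but the landing map $g_1:\BOTTOMphys\to\TOPphys$ is the Devil Staircase (Proposition~\ref{devil staircase}), not a homeomorphism, so density does not pull back automatically. An easy fix within your framework: the bottoms of the regular pullbacks are the $(z\mapsto z^{4^n})$-preimages of the fixed arcs $\BOTTOMphys_{\La_\pm}$, and such iterated preimages are manifestly dense in $\T$.

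Finally, you define $\La(\alpha)$ as the \emph{maximal} tongue containing the pullback $\La'(\alpha)$ but never show $\La(\alpha)=\La'(\alpha)$, which is part of the proposition. Your dynamical method does not obviously give this (applying $\Rphys$ to a tongue at $\alpha_\pm$ is problematic since the tip blows up to $\Icurve$); the paper's measure equality is what pins it down.
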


\begin{proof}
Obviously, regular lifts of central tongues are central tongues. 
Lifting the $\La_\pm$, we obtain a family of central tongues $\La(\alpha)$ attached to points $\alpha\in \PI$.
These tongues are pairwise disjoint as they are attached to different points of $\TOPphys$.     
Moreover, a central tongue $\La(\alpha)$ with $\alpha\in \PI^n$  has the bottom of length $ \pi/2^{2n+1}$. 
Since $|\PI^n|= 2^{n+1}$, we have:
$$
    \sum |\BB_{\La(\alpha)}| =  \sum_{n=0}^\infty \frac {\pi}{2^n} = 2\pi,   
$$
so the bottoms of these tongues have full measure in $\BOTTOMphys$. 
Hence this family of tongues is dense in $\Cphys$.
So, there is no room for enlarging these tongues, 
nor for fitting any extra tongue in between. 
\comm{
Let us list obvious properties of central tongues:

\ssk\nin (i) Any central tongue is contained in a maximal central tongue.

\ssk\nin (ii) A ``convex hull'' of central tongues attached to the same point is a central tongue.
  Hence there is at most one maximal central tongue attached to any point. 

\ssk These  properties imply that there is at most countable set $X\subset \TOPphys$ 
and a family of maximal central tongues $\La(x)$ attached to points $x\in X$  such each central tongue is contained in 
one of these.  
}
\end{proof}

\subsection{Orbits of typical leaves}
Given $x \in \Cphys$, for each $n\geq 0$ let $\gamma_n(x) \in \FF^c$ be the
leaf containing $\Rphys^n x$.  Invariance of $\FF^c$ gives $\Rphys \gamma_n(x)
\subset \gamma_{n+1}(x)$.  Note that we can have $\Rphys \gamma_n(x) \subsetneq
\gamma_{n+1}(x)$ if $\gamma_n(x)$ meets $\TOPphys$ at $\INDphys_\pm$.

\begin{prop}For almost every $x \in \Cphys$ have either:
\begin{itemize}

\item[(i)] there exists $N \geq 0$ so that for all $n \geq N$: $\gamma_n(x)$
meets $\TOPphys$ away from $\INDphys_\pm$, $\Rphys \gamma_n(x) =
\gamma_{n+1}(x)$, there are non-trivial intervals of points on $\gamma_n$
converging superexponentially to $\BOTTOMphys$ and to $\TOPphys$, or

\item[(ii)] $\gamma_n(x) \subset \WW^s(\BOTTOMphys)$ for all $n \geq 0$.
\end{itemize}
\end{prop}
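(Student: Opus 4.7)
The plan is to match the two sides of the dichotomy to the two basins of attraction in Theorem~\ref{attractors}. That theorem reduces the problem to showing (ii) for every $x \in \WW^s(\BOTTOMphys)$ and (i) for every $x \in \WW^{s,o}_0(\TOPphys)$, the complement having zero area.

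For case (ii), Proposition~\ref{PROP:CINFTY_FOLIATION} identifies $\FF^s(\BOTTOMphys)$ as a $C^\infty$ foliation of the forward-invariant open basin $\WW^s(\BOTTOMphys)$ by central curves, and uniqueness of the central foliation (Theorem~\ref{cs foliation}) forces it to agree there with $\FF^c$. Hence $\gamma_0(x)$ is a stable leaf, lying in $\WW^s(\BOTTOMphys)$ (up to its possible top endpoint on $\TOPphys$), and $\gamma_n(x) \subset \WW^s(\BOTTOMphys)$ for every $n$ by forward invariance of the basin.

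For case (i), let $\phi_n$ denote the top endpoint of $\gamma_n(x)$. The key claim is that $\phi_n \neq \pm\pi/2$ for all sufficiently large $n$. Were $\phi_n = \pi/2$, the leaf $\gamma_n$ would land at $\INDphys_+$, and since $\Rphys^n x \to \TOPphys$ superexponentially while lying on $\gamma_n$, it would in fact approach the leaf's top endpoint $\INDphys_+$. The blow-up formula~(\ref{Icurve}) then places $\Rphys^{n+1} x$ near the curve $\Icurve$, at positive $\tau$-distance from $\TOPphys$, contradicting superexponential convergence. Choosing $N$ past the last violation, for $n \geq N$ the leaf $\gamma_n$ is disjoint from the critical intervals $\II_{\pm\pi/2}$ (different central leaves, by uniqueness of $\FF^c$); hence $\Rphys$ restricts to a local diffeomorphism on $\gamma_n$ sending its bottom endpoint into $\BOTTOMphys$ and $\phi_n$ to $2\phi_n \in \TOPphys$. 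Forward invariance of $\LL^c$ makes $\Rphys\gamma_n$ a central curve, and uniqueness of central leaves gives $\Rphys\gamma_n = \gamma_{n+1}$.

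Finally, the non-trivial superexponential intervals come from Property~(P5): it provides a uniform neighborhood of $\BOTTOMphys$ where $\tau' = O(\tau^2)$, whose intersection with the strictly vertical curve $\gamma_n$ is a non-trivial initial segment attracted to $\BOTTOMphys$ superexponentially. For the top interval, for each $\eta > 0$ let $\gamma_x^\eta \in \FF^s_\eta(\TOPphys)$ be the bouquet leaf through $x$ supplied by Proposition~\ref{PROP:ETA_BASINS_LAMINATED}, which is a sub-arc of $\gamma_0(x)$ by uniqueness of central curves; the iterate $\Rphys^n(\gamma_x^\eta) \subset \gamma_n$ is a non-trivial sub-arc lying in $\WW^{s,o}_\eta$, and since $x \in \bigcap_\eta \WW^{s,o}_\eta$ with rates $q(\eta) \to 0$ as $\eta \to 0$, every point of it converges to $\TOPphys$ superexponentially. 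The delicate step is the genericity claim on $\phi_n$: a naive argument via the ``pushforward of Lebesgue along $\FF^c$ to $\TOPphys$'' is blocked by the main theorem's assertion that this holonomy is not even absolutely continuous on $\WW^s(\TOPphys)$, so one is forced to argue purely dynamically as above, exploiting the fact that any approach of the orbit to an indeterminacy point destroys the superexponential attraction to $\TOPphys$ on the very next iterate.
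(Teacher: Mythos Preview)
Your case (ii) argument has a genuine gap. You claim that for \emph{every} $x\in\WW^s(\BOTTOMphys)$ the entire proper leaf $\gamma_0(x)$ lies in $\WW^s(\BOTTOMphys)$, on the grounds that $\FF^c$ agrees with $\FF^s(\BOTTOMphys)$ there. But this only says that the \emph{portion} $\gamma_0(x)\cap\WW^s(\BOTTOMphys)$ coincides with a stable leaf $\WW^s(\phi)$; it does not force the rest of $\gamma_0(x)$ to be in the basin. A concrete counterexample: take $x=(0,t_c/2)\in\WW^s(\BOTTOMphys)$. Then $\gamma_0(x)=\II_0$, yet $\II_0$ contains $\FIXphys_c$ and the whole interval above it, none of which lies in $\WW^s(\BOTTOMphys)$. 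So (ii) is simply false for this $x$, and your ``for every $x$'' argument cannot be repaired into an ``almost every $x$'' statement without an additional idea. The paper supplies that idea: on $\BOTTOMphys$ the dynamics is $z\mapsto z^4$, and by Poincar\'e recurrence a.e.\ base point returns infinitely often to the bottom of a stable tongue, forcing $\gamma_n\subset\WW^s(\BOTTOMphys)$ along a subsequence; complete invariance of the basin then propagates this to all $n$. A Fubini argument (using $C^\infty$ smoothness of $\FF^c$ on $\WW^s(\BOTTOMphys)$) lifts the full-measure set from $\BOTTOMphys$ to $\WW^s(\BOTTOMphys)$.

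Your case (i) argument is also shakier than it needs to be. The blow-up contradiction for $\phi_n=\pm\pi/2$ runs into trouble: knowing only that $\Rphys^n x$ is on a vertical leaf through $\INDphys_+$ with $\tau_n\to 0$ does not pin down the slope $\kappa_n=\tau_n/\eps_n$, and if $\kappa_n\to 0$ (which the $\VV'_\eta$ containment actually forces) then the blow-up image lands near $(\pi,1)\in\TOPphys$ with $\tau'\asymp\kappa^2$, so you do not get a point at positive $\tau$-distance from $\TOPphys$. The paper bypasses this entirely: the bouquet leaf through $\Rphys^n x\in\WW^s_{\bar\tau,\eta}$ from Proposition~\ref{PROP:ETA_BASINS_LAMINATED} already lands on $\TOPphys$ \emph{away from} $\INDphys_\pm$ (because it is vertical while $\partial\VV'$ is horizontal), and by uniqueness of the central foliation this bouquet leaf is a sub-arc of $\gamma_n$, so $\gamma_n$ lands at the same point. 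You invoke exactly this proposition two sentences later for the top interval; invoking it here as well gives the clean argument.
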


\begin{proof}
According to Theorem \ref{attractors}, almost every $x
\in \Cphys$ is in $\WW^s(\BOTTOMphys) \cup \WW^s_0(\TOPphys)$.
It will be helpful later if we also assume that
$x \not \in \TOPphys$.

If $x \in \WW^s_0(\TOPphys)$, then for any $\bar \tau, \eta > 0$ there exits $N
\geq 0$ so that $\Rphys^n x \in \WW^s_{\bar \tau,\eta}(\TOPphys)$ for all $n
\geq N$.  (See \S \ref{SUBSEC:BASIN_T}.) For such $n$, some portion of
$\gamma_n$ agrees with a curve from the stable lamination of $\WW^s_{\bar
\tau,\eta}(\TOPphys)$ constructed in the proof of Proposition
\ref{PROP:ETA_BASINS_LAMINATED}.  In particular, $\gamma_n$ meets $\TOPphys$
away from $\INDphys_\pm$, and hence $\Rphys(\gamma_n) = \gamma_{n+1}$.  

Any leaf of $\FF^c$ intersects  $\WW^s(\BOTTOMphys)$ in a non-trivial interval.
Meanwhile, $\Rphys^n x$  has orbit superattracted to $\TOPphys$, so all points
on $\gamma_n$ above $\Rphys^n x \not \in \TOPphys$ have orbits that converge
superexponentially to $\TOPphys$. 

Now suppose $x \in \WW^s(\BOTTOMphys)$.  Let $\BOTTOMphys_0 = \cup
\Rphys^{-n}(\BOTTOMphys_{\Tongue_\pm})$, where $\BOTTOMphys_{\Tongue_\pm}$ are
the bottoms of the primary stable tongues.  By Proposition \ref{tongues are
a.e.}, $\BOTTOMphys_0$ has full Lebesgue measure.  Since $z \mapsto z^4$
preserves Lebesgue measure, the Poincar\'e Recurrence Theorem gives that the
set of points $\BOTTOMphys'_0 \subset \BOTTOMphys_0$ whose orbits return
infinitely many times to $\BOTTOMphys_0$ has full Lebesgue measure in
$\BOTTOMphys_0$.  Hence it also has full measure in $\BOTTOMphys$.

For any $x \in \BOTTOMphys'_0$, there is an infinite sequence of times $n_i$
for which $\Rphys^{n_i} x \in \BOTTOMphys_0$, implying $\gamma_{n_i}(x) \subset
\WW^s(\BOTTOMphys)$.  Then, because of the invariance $\Rphys(\gamma_n(x)) \subset
\gamma_{n+1}(x)$, we have $\gamma_n(x) \subset \WW^s(\BOTTOMphys)$ for all $n$.

Therefore, it suffices to prove that almost every point of $\WW^s(\BOTTOMphys)$
is in $\bigcup_{x \in \BOTTOMphys'_0} \gamma_0(x)$.  However, this follows since
$\FF^c$ is $C^\infty$ on $\WW^s(\BOTTOMphys)$ and that $\BOTTOMphys'_0$
has full Lebesgue measure in $\BOTTOMphys$.  \end{proof}

\subsection{Convergence}

Given a metric space $M$, the {\it Hausdorff metric} on the space of closed subsets of $M$ 
is defined as follows: $\dist_H (X,Y)$ is the infimum of $\eps>0$ such that
$X$ is contained in the $\eps$-neighborhood of $Y$, and the other way around.  

We say that a sequence of strictly vertical laminations $\FF^n$ converges to a 
lamination $\FF$ if they converge in the Hausdorff metric on subsets of the space $C^0[0,1]$.
Convergence is called exponentially with rate $\la \in (0,1)$ is there exists $C > 0$ such that
$$
    \dist_H (\FF^n,\, \FF) \leq C \la^n. 
$$

\begin{thm}\label{foliation FF}
  For any strictly vertical lamination $\FF$, 
  the pullbacks $\FF^n:=(\Rphys^n)^*\FF$ converge exponentially  
  (with the same rate) to the central  foliation $\FF^c$ of $\Rphys$.
\end{thm}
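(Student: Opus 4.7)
The plan is to combine the exponential contraction of the projective vertical cone field (Lemma \ref{exp shrinking of cones}), which forces the tangent directions of pulled-back leaves to approach $\LL^c$ at rate $\la^{-n}$, with the $4^n$-fold equidistribution of preimages of every leaf of $\FF$ at the bottom of the cylinder, ensuring that lifted leaves become dense.

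First I would parameterize every strictly vertical leaf as a graph $\phi=\gamma(t)$ over $t\in[0,1]$. By Corollary \ref{strictly vert pullbacks}, $\FF^n:=(\Rphys^n)^*\FF$ is again a strictly vertical lamination, and for any leaf $\gamma^n\in\FF^n$ the tangent at $x=\gamma^n(t)$ lies in the contracted cone $(D\Rphys^n)^*\KK^v(x)$. Off of any fixed neighborhood $\UU$ of $\{\alpha_\pm\}$, Lemma \ref{exp shrinking of cones} gives that the angular diameter of this cone is $\leq C\la^{-n}$ around the central line $\LL^c(x)$. Consequently, the slope functions $(\gamma^n)'(t)$ differ from the central slope $\ell^c$ (evaluated along $\gamma^n$) by $O(\la^{-n})$, uniformly on compact subsets of $\Cphystl \setminus\{\alpha_\pm\}$.

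Next I would identify the limits with genuine leaves of $\FF^c$ by exploiting the dynamics on $\BOTTOMphys$, where $\Rphys$ acts as $z\mapsto z^4$. Every leaf $\gamma\in\FF$ based at $(\phi_0,0)$ has exactly $4^n$ lifts under $\Rphys^n$, based at the equidistributed points $((\phi_0+2\pi k)/4^n,0)$ for $k=0,\dots,4^n-1$. Given a central leaf $\gamma^c$ based at $(\psi_0,0)$, pick the lift $\gamma^n\in\FF^n$ whose base point is within $2\pi/4^n$ of $\psi_0$. The slopes of $\gamma^n$ and of $\gamma^c$ both obey an $O(\la^{-n})$ deviation from $\ell^c$, so integrating in $t$ and using Gronwall (the central slope is Lipschitz on $\Cphystl\setminus\UU$) yields $\sup_{t\in[0,1-\tau]}|\gamma^n(t)-\gamma^c(t)|\leq C'\la^{-n}$ on the truncated cylinder $\Cphys_\tau$, since $\la\leq 4$. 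The reverse inclusion, approximating each leaf of $\FF^n$ by one of $\FF^c$, follows by the same matching at the bottom combined with the uniqueness given by Theorem \ref{cs foliation}.

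The main obstacle is the behavior near $\alpha_\pm$ and the top $\TOPphys$, where the cone field degenerates and both projective contraction and the comparison of slopes lose their uniform control. I would handle this by the truncation to $\Cphys_\tau$ and then, separately, by using Proposition \ref{central tongues}: every leaf of $\FF^n$ terminating on $\TOPphys$ either lies in a central tongue $\La(\alpha)$ attached at a pre-indeterminacy point $\alpha$ (and then coincides with a boundary leaf of the tongue from some iterate on), or terminates at a non-pre-indeterminacy top point where the tangent cones already shrink at rate $\la^{-n}$. In the first case convergence is eventually exact; in the second the integrated bound extends up to the top with an additional $O(\la^{-n})$ penalty from the transverse contraction of $\TOPphys$, yielding Hausdorff convergence of $\FF^n$ to $\FF^c$ on all of $C^0[0,1]$ at rate $\la^{-n}$.
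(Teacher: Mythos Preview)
Your approach has a genuine gap at the Gronwall step. You assert that ``the central slope is Lipschitz on $\Cphystl\setminus\UU$'' in order to integrate the $O(\la^{-n})$ slope deviation into an $O(\la^{-n})$ bound on $|\gamma^n(t)-\gamma^c(t)|$. But the paper only establishes that the central line field $\LL^c$ is \emph{continuous} (Proposition~\ref{Central line field prop}); it is shown to be $C^\infty$ on $\WW^s(\BOTTOMphys)$ (Proposition~\ref{PROP:CINFTY_FOLIATION}), but no Lipschitz regularity is proved on the rest of the cylinder, and the holonomy is in fact not absolutely continuous on $\WW^s(\TOPphys)$. Without a Lipschitz bound on $\ell^c$, knowing that $(\gamma^n)'(t)$ is close to $\ell^c(\gamma^n(t),t)$ and that $(\gamma^c)'(t)=\ell^c(\gamma^c(t),t)$ does not let you control the growth of $|\gamma^n(t)-\gamma^c(t)|$: the two curves sample $\ell^c$ at \emph{different} points, and you have no way to compare those values. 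Your handling of the top is also imprecise; leaves of $\FF^n$ terminating at pre-indeterminacy points need not be boundary leaves of central tongues, so the dichotomy you invoke does not hold.

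The paper's proof bypasses the regularity issue entirely by using horizontal expansion (Theorem~\ref{hor expansion thm}) rather than slope comparison. Having matched $\gamma\in\FF^c$ with $\eta\in\FF^n$ at the bottom (as you do), it connects $\gamma(t)$ to $\eta(t)$ by a genuinely horizontal segment $h_t$, chosen so that the loop $\gamma[0,t]\cup h_t\cup\eta[0,t]$ is homotopic to $h_0$. Then $\Rphys^n(h_t)$ has horizontal length bounded by $l^h(\Rphys^n h_0)+2K\le\pi+2K$, where $K$ bounds the horizontal extent of any vertical curve. Horizontal expansion then forces $l^h(h_t)\le C\la^{-n}(\pi+2K)$, uniformly in $t\in[0,1]$. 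This argument needs no regularity of $\LL^c$ and no special treatment near the top.
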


\begin{proof}
Given any $\gamma \in \FF^c$, let $\gamma_i$ be the curve from $\FF^c$
containing $\Rphys^i \gamma$ for $i=1,\ldots,n$.   Let $\eta_n$ be any leaf
from $\FF$.  We choose a sequence of iterated preimages
$\eta_{n-1},\ldots,\eta_0$, so that $\eta_{i-1}$ is obtained from $\eta_i$
under the same inverse branch of $\Rphys$ as $\gamma_{i-1}$ is obtained from
$\gamma_i$.
Let $h_0$ be the shorter of the two segments on $\BOTTOMphys$ connecting from 
$\gamma_0$ to $\eta_0$.  By construction, $l^h(h_0) \leq \pi/4^n$. 

For any $t \in [0,1]$, let $h_t$ be the genuinely horizontal curve from
$\gamma(t)$ to $\eta(t)$ chosen so that $\gamma[0,t] \cup h_t \cup
\eta[0,t]$ and $h_0$ are homotopic (relative to their endpoints) on $\Cphys$.  Then, their iterates under
$\Rphys^n$ are also homotopic.  The horizontal length of any vertical curve in
$\Cphys$ is bounded by some constant $K$.  Therefore, since $\Rphys^n \gamma[0,t]$ and
$\Rphys^n \eta[0,t]$ are vertical, we have that $l^h(\Rphys^n h_t) \leq
l^h(\Rphys^n h_0) + 2K \leq \pi+ 2K$.

Using horizontal expansion, Theorem
\ref{hor expansion thm}, we have $l^h(h_t) \leq C \lambda^{-n} \cdot l^h(\Rphys^n h_t)$
for appropriate $C > 0$ and $\lambda > 1$.  Since $l^h(\Rphys^n h_t) \leq \pi + 2K$, we 
have $l^h(h_t) \leq D \lambda^{-n}$.  Therefore, given any $\gamma \in \FF^c$ there is some
$\eta \in \FF^n$ that is $D \lambda^{-n}$ close within $C^0[0,1]$.

After switching the roles of $\gamma$ and $\eta$, the same proof shows that
given any $\eta \in \FF^n$ there is some $\gamma \in \FF^c$ that is
$D \lambda^{-n}$ close within $C^0[0,1]$.
\end{proof}

\begin{cor}\label{COR:LY_EXPONENTIAL_CONVERGENCE}
  The sequence of Lee-Yang loci $\Sphys_n$ converges exponentially to the central foliation $\FF^c$. 
\end{cor}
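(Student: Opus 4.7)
The plan is to deduce this corollary almost directly from Theorem \ref{foliation FF}. The Lee-Yang loci are built by iterated pullback, $\Sphys_n = (\Rphys^n)^* \Sphys_0$, where the principal locus $\Sphys_0 = \{t=-\cos\phi : \phi \in [\pi/2, 3\pi/2]\}$ consists of two arcs meeting only at the top point $(\pi,1)$. By construction of the algebraic cone field in \S\ref{sec: alg cone field} (where $\KK^\hor$ is literally generated by tangents to rotated copies of $\Sphys_0$), the locus $\Sphys_0$ itself lies on the boundary of the horizontal cones, and so is vertical but not strictly vertical. Theorem \ref{foliation FF}, however, is stated for strictly vertical laminations, so the first task is to produce such a lamination from $\Sphys_0$.

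The natural move is to take one pullback: by Property (P9), $\Sphys_1 = \Rphys^* \Sphys_0$ consists of exactly four proper paths, disjoint on $\Cphystl$ and meeting only at the two indeterminacy points $\alpha_\pm \in \TOPphys$. Corollary \ref{strictly vert pullbacks} guarantees that these four paths are in fact \emph{strictly} vertical, because the strict forward invariance of $\KK^h$ (Proposition \ref{PROP:MK_CONEFIELD}) dually pushes preimages of boundary-of-cone curves into the open interior of $\KK^v$. Thus $\Sphys_1$ forms a strictly vertical proper lamination $\FF$ (with four leaves; the local product structure is trivial since there are only finitely many leaves and they are disjoint on $\Cphystl$).

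Applying Theorem \ref{foliation FF} to $\FF := \Sphys_1$ yields $(\Rphys^n)^* \FF = \Sphys_{n+1} \to \FF^c$ exponentially with some rate $\la \in (0,1)$, that is $\dist_H(\Sphys_{n+1}, \FF^c) \leq C\,\la^n$. Re-indexing absorbs the shift into the constant and gives the required exponential convergence $\Sphys_n \to \FF^c$. There is no real obstacle here beyond the cone-field bookkeeping in the previous paragraph, since the essential dynamical input---horizontal expansion and Hausdorff approximation in both directions---was already handled in the proof of Theorem \ref{foliation FF}; the only point that deserves a sentence in the writeup is why $\Sphys_0$, which sits exactly on the cone boundary, does become strictly vertical after one pullback.
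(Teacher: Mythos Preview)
Your proposal is correct and is essentially the argument the paper has in mind: the corollary is stated without proof immediately after Theorem~\ref{foliation FF}, and your write-up supplies the one piece of bookkeeping the paper leaves implicit, namely that $\Sphys_0$ sits on the boundary of the algebraic cones, so one passes to $\Sphys_1$ via Corollary~\ref{strictly vert pullbacks} before invoking the theorem. One small point: Corollary~\ref{strictly vert pullbacks} and Proposition~\ref{PROP:MK_CONEFIELD} are stated for the \emph{algebraic} field $\KK^{\hor}$, whereas the standing convention in \S\ref{SEC:VERTICAL_FOLIATION} is verticality with respect to the modified field $\KK^v$; this is harmless here because the proof of Theorem~\ref{foliation FF} only uses that vertical curves have bounded horizontal length and that genuinely horizontal segments are expanded, both of which hold for either cone field.
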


We have a better convergence at low temperatures.  For any $\eps >0$ let
$\FF^n_\eps$ and $\FF^c_\eps$ the truncations of $\FF^n$ and $\FF^c$ to the
cylinder $\T \times [0,t_c - \eps]$. 

\begin{prop}
For any $\eps > 0$ we have exponential convergence
\begin{eqnarray*}
\dist_H^1(\FF^n_\eps,\FF^c_\eps) \leq C(\eps) \lambda^n
\end{eqnarray*}
\noindent
where $\dist^1_H$ denotes the Hausdorff metric on subsets of $C^1 [0,1-\eps]$.
\end{prop}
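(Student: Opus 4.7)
The plan is to upgrade the $C^0$ exponential convergence of Theorem~\ref{foliation FF} to $C^1$ on the truncated cylinder by comparing tangent line fields. The crucial observation is that the truncated region $\T \times [0, t_c - \eps]$ is contained in $\WW^s(\BOTTOMphys)$ by Lemma~\ref{BB}, so the central foliation $\FF^c$ coincides there with the stable foliation $\FF^s(\BOTTOMphys)$, which is $C^\infty$ by Proposition~\ref{PROP:CINFTY_FOLIATION}. In particular the central line field $\LL^c$ is $C^\infty$, hence Lipschitz with some constant $L = L(\eps)$, on this compact region.

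First I would parameterize leaves of both $\FF^c_\eps$ and $\FF^n_\eps$ as $C^1$ graphs $\phi = \gamma(t)$ over $t \in [0, t_c - \eps]$. Leaves of $\FF^n_\eps$ are $C^1$ because $\FF^n = (\Rphys^n)^* \FF$ is obtained by pulling back a strictly vertical lamination via regular branches of $\Rphys^n$ throughout the low-temperature cylinder (away from the discrete critical fibers $\II_{\pm\pi/2}$ and their iterated preimages, whose leaves simply coincide with collapsing or stable intervals). Given $\gamma \in \FF^c_\eps$, Theorem~\ref{foliation FF} produces some $\eta \in \FF^n_\eps$ with $\|\gamma - \eta\|_{C^0[0, t_c - \eps]} \leq D(\eps)\,\lambda^{-n}$ for appropriate $D, \lambda$, and conversely.

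To control $|\gamma'(t) - \eta'(t)|$, I would use that $\gamma'(t)$ is the slope of $\LL^c$ at $(\gamma(t), t)$, while $\eta'(t)$ is the slope of $\LL_n := (D\Rphys^n)^* \LL$ at $(\eta(t), t)$, where $\LL$ is any vertical line field tangent to $\FF$. By Proposition~\ref{Central line field prop}, $\LL_n \to \LL^c$ exponentially with some rate $\mu > 1$, uniformly on the compact truncated cylinder (which is bounded away from $\{\alpha_\pm\}$). Combining with the Lipschitz estimate for $\LL^c$ and the $C^0$ bound,
$$
|\gamma'(t) - \eta'(t)| \leq |\LL^c(\gamma(t), t) - \LL^c(\eta(t), t)| + |\LL^c(\eta(t), t) - \LL_n(\eta(t), t)| \leq L\,D(\eps)\,\lambda^{-n} + C'(\eps)\,\mu^{-n}.
$$
Setting $\nu := \min(\lambda, \mu) > 1$ yields $\|\gamma - \eta\|_{C^1[0, t_c - \eps]} \leq C(\eps)\,\nu^{-n}$, giving the desired exponential bound in $\dist_H^1$.

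The argument is essentially free of serious obstacles, since all the substantive work---smoothness of $\LL^c$ on $\WW^s(\BOTTOMphys)$ and exponential convergence of the pullback line fields---has already been accomplished in Propositions~\ref{PROP:CINFTY_FOLIATION} and~\ref{Central line field prop}. The only subtlety is confirming that the compactness of the truncated cylinder, together with its being bounded away from the indeterminacy locus $\{\alpha_\pm\}$, converts the compact-uniform rates in those propositions into genuinely uniform rates here; this follows because the orbit of any point in the truncated region converges to $\BOTTOMphys$ (Lemma~\ref{BB}) and hence its forward orbit eventually remains in a fixed neighborhood of $\BOTTOMphys$ disjoint from any prescribed neighborhood of $\{\alpha_\pm\}$.
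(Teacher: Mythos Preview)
Your proposal is correct and follows essentially the same approach as the paper: combine the $C^0$ exponential convergence from Theorem~\ref{foliation FF} with the exponential shrinking of pullback vertical line fields toward $\LL^c$, and use the regularity of $\LL^c$ on the low-temperature cylinder to transfer the pointwise estimate across the $C^0$-close basepoints. The paper invokes Lemma~\ref{exp shrinking of cones} directly and cites only H\"older continuity of $\LL^c$, whereas you invoke Proposition~\ref{Central line field prop} and Lipschitz continuity; these are equivalent ingredients here since Proposition~\ref{Central line field prop} is built on that lemma and $\LL^c$ is $C^\infty$ on the truncated region.
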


\begin{proof}
Given any $\gamma \in \FF^c$, let $\eta \in \FF^n$ be the curve constructed in
the proof of Theorem \ref{foliation FF}.  Let $t \in [0,t_c-\eps]$.  By Lemma
\ref{exp shrinking of cones}, $\eta'(t)$ is exponentially close to
$\LL^c(\eta(t))$.  Since $\gamma(t)$ and $\eta(t)$ are exponentially close and
$\LL^c$ is H\"older in $\WW^s(\BOTTOMphys)$ (see Proposition
\ref{PROP:CINFTY_FOLIATION}) $\gamma'(t) = \LL^c(\gamma(t))$ and
$\LL^c(\eta(t))$ are also exponentially close.
\end{proof}

\subsection{Holonomy and the transverse measure}

\sss{Regularity of the holonomy}


\begin{prop}\label{regularity of holonomy}
   All holonomy transformations $g_t$, $0\leq t< 1$, are uniformly H\"older continuous homeomorphisms.   \note{how about $g^{-1}$?}
If $y=g_t(x)\in \WW^s(\BOTTOMphys)$ then $g_t$ is a  $C^\infty$ local diffeomorphism near $x$. 
\end{prop}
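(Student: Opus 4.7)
The second assertion (local $C^\infty$) is the easier one.  If $y = g_t(x) \in \WW^s(\BOTTOMphys)$, then by uniqueness of the central foliation (Theorem \ref{cs foliation}) its restriction to $\WW^s(\BOTTOMphys)$ must agree with the $\Rphys$-invariant stable foliation $\FF^s(\BOTTOMphys)$, and the arc of the central leaf $\gamma_x$ joining $(x,0)$ to $y$ is itself contained in the basin.  Indeed, if this arc were to leave $\WW^s(\BOTTOMphys)$ at some point $p$ with $t(p)<t(y)$, then the iterates $\Rphys^n(p)$ and $\Rphys^n(y)$ would remain on the same central leaf $\gamma_{4^n x}$ with $\Rphys^n(p)$ never above $\Rphys^n(y)$ in the $t$-coordinate; since $y\in \WW^s(\BOTTOMphys)$ forces $\Rphys^n(y)\to \BOTTOMphys$, the point $\Rphys^n(p)$ would also converge to $\BOTTOMphys$, contradicting $p\in\partial\WW^s(\BOTTOMphys)$.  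By Proposition \ref{PROP:CINFTY_FOLIATION} the foliation $\FF^s(\BOTTOMphys)$ is $C^\infty$, and both $\BOTTOMphys$ and $\T_t$ are smooth transversals to its strictly vertical leaves, so $g_t$ is a $C^\infty$ diffeomorphism near $x$.

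For the uniform H\"older continuity, my plan is to approximate $g_t$ by level-$n$ holonomies along the pullback laminations.  Let $\FF^v$ be the genuinely vertical foliation by the intervals $\II_\phi$, and set $\FF^v_n := (\Rphys^n)^*\FF^v$; for $\phi$ outside the countable pre-indeterminacy set $\PI$, the leaf of $\FF^v_n$ starting at $(\phi,0)$ is a smooth proper vertical curve (by Property (P9) together with backward invariance of $\KK^v$), and I let $g_t^n(\phi)\in\T_t$ be its horizontal coordinate at height $t$.  This $g_t^n$ is characterized by $\Rphys_1^n(g_t^n(\phi),t)\equiv 4^n\phi\pmod{2\pi}$ on the appropriate branch of the inverse.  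Implicit differentiation and Theorem \ref{hor expansion thm} applied to the horizontal vector $\partial/\partial\psi\in \KK^h$ yield
\begin{equation*}
\bigl|(g_t^n)'(\phi)\bigr| \;=\; \frac{4^n}{\bigl|\partial\Rphys_1^n/\partial\psi\bigr|\bigl(g_t^n(\phi),t\bigr)} \;\leq\; c^{-1}\cdot 2^n
\end{equation*}
uniformly in $(\phi,t)$: because the modified cone field from \S \ref{SUBSEC:MODIFIED_ALG_CONES} has slope globally bounded by $\bar\epsilon/3$, the horizontal component of $D\Rphys^n(\partial/\partial\psi)\in\KK^h$ is a uniform fraction of its Euclidean norm, and Theorem \ref{hor expansion thm} transfers directly to this partial derivative.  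Independently, Proposition \ref{Central line field prop} together with Theorem \ref{foliation FF} yields exponential convergence of leaves of $\FF^v_n$ to leaves of $\FF^c$ through the common bottom point, giving $\|g_t^n-g_t\|_\infty \leq K\cdot 2^{-n}$ uniformly in $t\in[0,1)$.  The triangle inequality then produces
\begin{equation*}
|g_t(\phi_1)-g_t(\phi_2)| \;\leq\; 2K\cdot 2^{-n} \;+\; c^{-1}\cdot 2^n\,|\phi_1-\phi_2|,
\end{equation*}
and optimizing in $n$ by taking $n\approx \tfrac12\log_2|\phi_1-\phi_2|^{-1}$ gives the H\"older bound $|g_t(\phi_1)-g_t(\phi_2)|\leq C|\phi_1-\phi_2|^{1/2}$ with $C$ independent of $t$.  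Injectivity of $g_t$ follows from the disjointness of distinct leaves of $\FF^c$ on $\Cphystl$, so this H\"older continuous bijection on the compact circle is automatically a homeomorphism.

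The main technical difficulty is ensuring that the Lipschitz bound on $(g_t^n)'$ is truly uniform across orbits that shadow the degenerate direction at the indeterminacy points $\alpha_\pm$.  Two features must work together: uniform horizontal expansion in $\KK^h$ from Theorem \ref{hor expansion thm}, valid on $\Cphys\setminus\{\alpha_\pm\}$, and the uniform upper bound on the slope of $\KK^h$ itself, which is precisely the purpose of the cone-field modification in \S \ref{SUBSEC:MODIFIED_ALG_CONES}.  A secondary concern is that the leaves of $\FF^v_n$ through $\PI$ are singular; since $\PI$ is countable, the H\"older bound extends across these by continuity of $g_t$.  I expect the most delicate point to be the uniformity of the convergence rate $\|g_t^n-g_t\|_\infty = O(2^{-n})$ as $t\to 1$, where $\LL^c$ becomes nearly parallel to $\KK^h$ at $\alpha_\pm$; this should be controlled by the dominated splitting of \S \ref{SUBSEC:DOMINATED} and the uniform expansion rate $\lambda=2$ which holds all the way up to (but not at) the top.
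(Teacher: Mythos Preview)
Your argument is correct, and for the $C^\infty$ assertion you add a useful detail the paper leaves implicit (why the arc from $(x,0)$ to $y$ stays in the basin). For the H\"older bound you take a genuinely different route. You approximate $g_t$ by the holonomies $g_t^n$ along $(\Rphys^n)^*\FF^v$, bound $|(g_t^n)'|\le C\,2^n$ via horizontal expansion, bound $\|g_t^n-g_t\|_\infty\le K\,2^{-n}$ via the argument of Theorem~\ref{foliation FF} (applied with coinciding bottom points, so $l^h(h_0)=0$), and optimize in $n$. The paper instead uses the $\Rphys$-invariance of $\FF^c$ directly: given an interval $J\subset\BOTTOMphys$, choose $n$ so that $\Rphys^n(J)$ wraps once to four times around $\T$; since holonomy commutes with $\Rphys$, the horizontal curve $\Rphys^n(g_t(J))$ wraps the same number of times and hence has bounded horizontal length, so horizontal expansion forces $|g_t(J)|=O(\la^{-n})=O(|J|^{\log\la/\log 4})$. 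Both arguments ultimately yield the same exponent from the same two rates (expansion $\la$ in $\KK^h$ versus $4$ on $\BOTTOMphys$), but the paper's is a one-step application of invariance while yours is an interpolation scheme that would adapt to situations where the limiting foliation is not itself invariant. One small slip: the slope of $\KK^h$ is not globally bounded by $\bar\eps/3$ (in the white region it equals $\sqrt{\tau(2-\tau)}\le 1$), but any uniform bound suffices for your purpose. Your concern about uniformity as $t\to1$ is unfounded: the horizontal-length bound $K$ for vertical curves is finite even for leaves landing at $\alpha_\pm$, and the expansion constants of Theorem~\ref{hor expansion thm} are already uniform on $\Cphys\setminus\{\alpha_\pm\}$.
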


\begin{proof}
  Let us take an interval $J\subset \BOTTOMphys$, and let $J_t=g_t(J)$.
Since $\Rphys(z)=z^4$ on $\BOTTOMphys$, there is an $n\in \N$ 
such that $\Rphys^n(J)$ covers $\T$ at least once, but no more than four times.
Then the same is true for $\Rphys^n(J_t)$. Hence the horizontal length of both intervals
$\Rphys^n(J)$ and $\Rphys^n(J_t)$ is squeezed in between $2\pi$ and $8\pi$. 
It follows that $l(J)\asymp 4^{-n}$,                                   \note{make ``$l$'' standard} 
while $l(J_t)=O(\la^{-n})$ with  $\la>1$ from Theorem \ref{hor expansion thm}. 
Hence $l(J_t)= O(l(J)^\si)$ with $\si= \log \la/ \log 4$. 

Since the $g_t: \T\ra \T_t$ 
 are continuous bijections on a compact space, they are homeomorphisms.
The last assertion follows from $C^\infty$ smoothness of the foliation 
$\FF^c|\, \WW^s(\BOTTOMphys) = \FF^{s}(\BOTTOMphys)$
(Proposition \ref{PROP:CINFTY_FOLIATION}).
\end{proof}

At the top of the cylinder, the holonomy degenerates to the {\it Devil Staircase}: 

\begin{prop}\label{devil staircase}
 As $t\to 1$,  the holonomy maps $g_t$ uniformly converge to the map $g_1$ that collapse
the bottoms $\BOTTOMphys_{\La(\alpha)}$ of the central tongues to their tips $\alpha\in\PI$.   
\end{prop}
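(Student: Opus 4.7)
The plan is to prove pointwise convergence of $g_t$ on the dense union of tongue bottoms, identify the limiting values, and then upgrade to uniform convergence by equicontinuity.

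First I would fix a central tongue $\La(\alpha)$ with tip $\alpha \in \PI$, and let $\gamma_1, \gamma_2 \in \FF^c$ be its two bounding central leaves. By definition of the tongue, both $\gamma_1$ and $\gamma_2$ are proper vertical curves that terminate at the same point $\alpha \in \TOPphys$. If $x_1, x_2 \in \BOTTOMphys$ denote the endpoints of $\BOTTOMphys_{\La(\alpha)}$, i.e.\ $x_i \in \gamma_i \cap \BOTTOMphys$, then $g_t(x_i) = \gamma_i \cap \T_t \to \alpha$ as $t \to 1$.

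Next, for an arbitrary $x \in \BOTTOMphys_{\La(\alpha)}$, the leaf $\WW^c(x)$ lies in the interior of the tongue $\La(\alpha)$, because distinct leaves of the central foliation $\FF^c$ are disjoint in $\Cphystl$ (Theorem \ref{cs foliation}). Consequently $g_t(x)$ is sandwiched, in the cyclic order on $\T_t$, between $g_t(x_1)$ and $g_t(x_2)$. It follows that
\[
     \sup_{x\in \BOTTOMphys_{\La(\alpha)}} \dist_{\T_t}\bigl(g_t(x), \alpha\bigr) \leq \dist_{\T_t}\bigl(g_t(x_1), g_t(x_2)\bigr) \longrightarrow 0
\]
as $t \to 1$. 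Thus on each tongue bottom, $g_t$ converges uniformly to the constant map with value $\alpha$. Using Proposition \ref{central tongues}, the union $\bigcup_{\alpha\in\PI} \BOTTOMphys_{\La(\alpha)}$ is open and of full Lebesgue measure in $\BOTTOMphys$, so this already defines a candidate limit $g_1$ on an open dense subset; its complement $K \subset \BOTTOMphys$ is a closed Cantor-type set of measure zero.

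To upgrade to uniform convergence on all of $\BOTTOMphys$, I would invoke Proposition \ref{regularity of holonomy}, which gives a uniform H\"older modulus for the family $\{g_t\}_{t\in [0,1)}$. The family is therefore equicontinuous. Since the tongue bottoms are dense in $\BOTTOMphys$ and the convergence holds uniformly on each of them (with limit the appropriate tip), any uniform limit of a subsequence $g_{t_n}$ must coincide with the described collapsing map on this dense set, and hence is unique. Equicontinuity thus forces full uniform convergence to a continuous map $g_1: \BOTTOMphys \to \TOPphys$. Monotonicity of each $g_t$ (preservation of cyclic order on the circle) passes to the limit, so $g_1$ is a continuous weakly monotone circle map that collapses each $\BOTTOMphys_{\La(\alpha)}$ to $\alpha$ — a Devil's Staircase, as desired.

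The main subtlety is the global uniform convergence on $\BOTTOMphys$, in particular at points of the complementary Cantor set $K$, where no single tongue controls $g_t(x)$. Here the combination of (i) equicontinuity from Proposition \ref{regularity of holonomy} and (ii) the density of the tongue bottoms is what saves us: any oscillation of $g_{t_n}(x)$ for $x\in K$ would, by equicontinuity, propagate to nearby tongue bottoms and contradict the already-established uniform convergence there. All other steps reduce to the elementary observation that two proper vertical leaves sharing an endpoint on $\TOPphys$ trap every leaf between them, which in turn follows from the foliation property of $\FF^c$ established in Theorem \ref{cs foliation}.
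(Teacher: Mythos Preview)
Your argument is correct and shares the paper's core observation: every leaf starting in $\BOTTOMphys_{\La(\alpha)}$ terminates at the tip $\alpha$, so $g_t|_{\BOTTOMphys_{\La(\alpha)}}\to\alpha$. The paper's proof is literally this one sentence and stops there.

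Your proof goes further in supplying the passage to \emph{uniform} convergence on all of $\BOTTOMphys$, which the paper leaves implicit. Your route via equicontinuity (using the uniform H\"older bound from Proposition~\ref{regularity of holonomy}) is valid. An equally short alternative, perhaps closer in spirit to what the authors have in mind, is to use monotonicity directly: each $g_t$ is an orientation-preserving circle homeomorphism, and pointwise convergence of monotone maps on a dense set (the union of tongue bottoms) to a \emph{continuous} limit forces uniform convergence everywhere. Either completion works; yours has the advantage of making the equicontinuity explicit, while the monotonicity argument avoids invoking the H\"older estimate.
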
  

\begin{proof}
  Indeed, all the leaves that begin on the bottom $\BOTTOMphys_{\La(\alpha)}$ of the central tongue $\La(\alpha)$
merge at its tip $\alpha\in \TOPphys$. 
\end{proof}

\sss{Standard transverse measure}

Recall also that 
a {\it transverse invariant measure} $\mu$ for $\FF$ is a family of measures
$\mu_t$, $t\in [0,1)$,  such that $\mu_t= (g_t)_*(\mu_0)$.  Obviously, it is uniquely
determined by $\mu_0$.  It can be evaluated not only on genuinely horizontal sections but
on all local transversals to $\FF^c$, in particular, on all almost horizontal curves.

The transverse measure $\mu$ for which $\mu_0$ is the normalized Lebesgue  measure $d\phi/ 2\pi$
will be called {\it balanced}. 
For $t\in [0,1)$, we let 
$$
  O_t= \{x = (\phi,t)\in \Cphys: \, x\in \WW^s(\BOTTOMphys)\}, \quad K_t= \T\sm O_t.
$$ 
Let us also use a special notation for the
{\it horizontal expansion factor}:  
\begin{equation}\label{upper-left entry}
    \la^h(x):= \frac{\di(\phi\circ \Rphys)}{\di \phi}(x),
\end{equation}  \note{introduce earlier}
equal to  the upper-left entry of the Jacobi matrix $D\Rphys$.
Note that the transverse measure $d\mu = \rho\, d\phi/2\pi$ is transformed by the rule
\begin{equation}\label{transfer rule}
    \frac {\di (\Rphys^*\mu)} {\di \mu} =  \la^h(x)  \frac {\rho(\Rphys x)}   {\rho(x)}, \quad x\in \WW^s(\BOTTOMphys).
\end{equation}

\begin{prop}\label{hol inv meas}
  Let $\mu$ be the standard transverse measure on the central foliation $\FF^{c}$.
For any $t\in [0,1)$, the measure $\mu_t$ is absolutely continuous and $\mu_t(O_t)=1$.
Its density $\rho_t$ is  positive and $C^\infty$ on each component of $O_t$. 
Moreover,  for any almost horizontal curve $\gamma$ we have the transfer rule: 
\begin{equation}\label{balanced property}
  \mu(R(\gamma))= 4\mu(\gamma),\quad
\mbox{or equivalently:}\quad
    4\rho(x)= \la^h(x)\, \rho(\Rphys x), \quad x\in \WW^s(\BOTTOMphys).
\end{equation}
\end{prop}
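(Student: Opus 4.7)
The plan is to exploit two ingredients: the detailed regularity of the holonomy on the basin of the bottom (Propositions \ref{PROP:CINFTY_FOLIATION} and \ref{regularity of holonomy}) and the fact that $\Rphys$ acts on the leaf space of $\FF^c$, identified with $\BOTTOMphys$ via the holonomy, as the angle-quadrupling map $\phi\mapsto 4\phi$. Under this identification the balanced transverse measure $\mu$ is just normalized Lebesgue on $\BOTTOMphys$, so $\mu_t=(g_t)_*\mu_0$.

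First I would check $\mu_t(O_t)=1$. Since $\mu_t(K_t)=\mu_0(g_t^{-1}(K_t))$, it suffices to show $g_t^{-1}(K_t)$ is Lebesgue-null. If $\phi_0$ lies in the bottom $\BOTTOMphys_{\Tongue(\alpha)}$ of any stable tongue, then $\WW^c(\phi_0)=\WW^s(\phi_0)\subset \WW^s(\BOTTOMphys)$, so $g_t(\phi_0)\in O_t$ for every $t\in[0,1)$. Hence $g_t^{-1}(K_t)$ is disjoint from $\bigcup_\alpha \BOTTOMphys_{\Tongue(\alpha)}$, which has full Lebesgue measure by Proposition \ref{tongues are a.e.}(v). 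For the absolute continuity and smoothness of $\rho_t$: since $O_t=\WW^s(\BOTTOMphys)\cap\T_t$ is open in $\T_t$, each component is an open arc, and for any $y\in O_t$ Proposition \ref{regularity of holonomy} gives that $g_t$ is a $C^\infty$ local diffeomorphism near $x:=g_t^{-1}(y)$. Thus $g_t^{-1}$ is $C^\infty$ near $y$, and pushing forward $d\phi/(2\pi)$ gives $d\mu_t=\rho_t\,d\phi/(2\pi)$ with $\rho_t=(g_t^{-1})'$ positive and $C^\infty$ on each component.

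For the transfer rule, the key observation is that $\Rphys$ preserves $\FF^c$ and, on the leaf space identified with $\BOTTOMphys$, sends the leaf $\WW^c(\phi)$ to the leaf through $\Rphys(\phi,0)=(4\phi,0)$, namely $\WW^c(4\phi)$. Writing $G:\Cphystl\to\BOTTOMphys$ for the leaf-space projection (so $G|\T_t=g_t^{-1}$), I obtain the functional equation $G\circ\Rphys=4G$. For any sufficiently small almost horizontal $\gamma$ the restriction $G|\gamma$ is injective and $\mu(\gamma)=|G(\gamma)|/(2\pi)$, so
\begin{equation*}
\mu(\Rphys(\gamma))=\frac{|G(\Rphys(\gamma))|}{2\pi}=\frac{|4\,G(\gamma)|}{2\pi}=4\mu(\gamma),
\end{equation*}
and a partition-of-unity argument extends this to arbitrary almost horizontal $\gamma$. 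The infinitesimal form is obtained by applying this to a horizontal segment at $x\in \WW^s(\BOTTOMphys)$ of length $d\phi$: the left-hand side equals $\rho(\Rphys x)\,\lambda^h(x)\,d\phi/(2\pi)$, since the vertical component of $DR\,\partial_\phi$ is killed by $DG$ (leaves are vertical) so only the horizontal rate $\lambda^h(x)$ contributes, while the right-hand side is $4\rho(x)\,d\phi/(2\pi)$; equating gives $4\rho(x)=\lambda^h(x)\,\rho(\Rphys x)$.

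The main subtlety is that $G$ is only continuous on all of $\Cphystl$ (Proposition \ref{devil staircase} shows it degenerates to a devil's staircase at $t=1$), so the infinitesimal calculation must be done where $G$ is actually smooth; this is exactly the hypothesis $x\in\WW^s(\BOTTOMphys)$, on which Proposition \ref{PROP:CINFTY_FOLIATION} guarantees $C^\infty$ regularity of $\FF^c$ and hence of $G$. A minor point is that the countable set of pre-indeterminacy leaves, where $G$ is multi-valued, is negligible for all the measure-theoretic statements.
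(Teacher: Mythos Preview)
Your proof is correct and follows essentially the same route as the paper's: both use Proposition~\ref{tongues are a.e.}(v) to get $\mu_t(O_t)=1$, Proposition~\ref{regularity of holonomy} for the smooth positive density on $O_t$, and the fact that $\Rphys$ acts as $\phi\mapsto 4\phi$ on the leaf space (i.e., on $\BOTTOMphys$) together with holonomy invariance of $\mu$ to obtain the transfer rule. Your leaf-space projection $G$ is exactly the extended B\"ottcher coordinate $\tilde\Phi$ of Remark~\ref{REM:EXTENSION_OF_BOTTCHER}, so your argument simply makes explicit what the paper compresses into ``obviously satisfied on $\BOTTOMphys$; by holonomy invariance it holds for any almost horizontal curve.''
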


\begin{proof}
By Proposition \ref{tongues are a.e.} (v),
$\mu$ is supported on the union of stable tongues $\Tongue_k(\alpha)$, hence $\mu_t(O_t)=1$.
By the second assertion of Proposition \ref{regularity of holonomy}, 
$\mu_t$  has a  positive $C^\infty$ density on $O_t$.

  Property (\ref{balanced property}) is obviously satisfied for the Lebesgue measure on $\BOTTOMphys$.
By holonomy invariance of $\mu$, it is satisfied for any almost horizontal curve. 
The equivalent formulation in terms of the density $\rho$ comes from the (\ref{transfer rule}).
\end{proof}

\begin{cor}\label{COR:AC_HOLONOMY}
  The holonomy maps $g_t: \BOTTOMphys\ra \T_t$ are absolutely continuous,
while the inverse maps $g_t^{-1}$ are not for $t>t_c$.
\end{cor}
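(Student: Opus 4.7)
The strategy is to derive both statements from the identity $\mu_t=(g_t)_*\mu_0$ (with $\mu_0$ the normalized Lebesgue measure on $\BOTTOMphys$), by unpacking the introduction's definition of absolute continuity.

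For the absolute continuity of $g_t$, the argument is essentially tautological: Proposition~\ref{hol inv meas} states that $\mu_t$ is absolutely continuous on $\T_t$, i.e.\ $(g_t)_*\lambda\ll\lambda$, and this unwinds exactly to $\lambda\bigl(g_t^{-1}(X)\bigr)=0$ whenever $\lambda(X)=0$, which is the paper's definition of $g_t$ being absolutely continuous.

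For the failure of absolute continuity of $g_t^{-1}$ when $t>t_c$, I would set $K_t:=\T_t\sm O_t$. Proposition~\ref{hol inv meas} gives $\mu_t(K_t)=0$, hence
\begin{equation*}
\lambda\bigl(g_t^{-1}(K_t)\bigr)=\mu_0\bigl(g_t^{-1}(K_t)\bigr)=\mu_t(K_t)=0,
\end{equation*}
so $E_t:=g_t^{-1}(K_t)$ is a Lebesgue null subset of $\BOTTOMphys$. On the other hand, $g_t(E_t)=K_t$, so as soon as $\lambda(K_t)>0$ the preimage of $E_t$ under the map $g_t^{-1}$—namely $g_t(E_t)=K_t$—has positive Lebesgue measure, violating the paper's definition of absolute continuity applied to $g_t^{-1}$.

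The whole load of the corollary is therefore carried by the claim $\lambda(K_t)>0$ for every $t\in(t_c,1)$, which is the main obstacle. By Theorem~\ref{attractors}, up to a Lebesgue null subset of $\T_t$ we have $K_t=\WW^{s,o}_0(\TOPphys)\cap\T_t$, and for $t$ close to $1$ this set has nearly full Lebesgue measure by Corollary~\ref{basin of T}. To promote positivity to all $t\in(t_c,1)$ my plan is to iterate backward using the machinery of Section~\ref{SEC:TWO BASINS}: starting from a positive-measure set $A\subset\WW^{s,o}_0(\TOPphys)\cap\T_{t^*}$ at a reference height $t^*$ close to $1$, take suitable regular inverse branches of $R^n$ and transfer positivity to $\T_t$ through the bounded-distortion analysis on horizontal curves developed for Lemma~\ref{dist lemma}; the long-hair leaves of Proposition~\ref{PROP:LONG_TEETH} together with the bouquet structure of~\S\ref{SUBSEC:FOLIATION_NEAR_T} guarantee that $\WW^s(\TOPphys)$ already reaches every slice with $t>t_c$, so the inverse branches have well-defined targets. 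The technically hardest subtask is converting qualitative density of these hairs into a quantitative positive-measure statement on each horizontal slice without losing mass near the indeterminacy points $\INDphys_\pm$—exactly where the distortion control is most delicate and most essential.
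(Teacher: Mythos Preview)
Your overall structure—deriving both assertions from $\mu_t = (g_t)_*\mu_0$ and Proposition~\ref{hol inv meas}, then reducing the second to the claim $\lambda(K_t)>0$—matches the paper's exactly. The paper's proof is far more concise: for the second part it simply invokes Corollary~\ref{basin of T} for positive measure of the complement of the stable tongues on $\T_t$, together with Proposition~\ref{tongues are a.e.}(v) for the null-set on $\BOTTOMphys$, and stops.

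You are right that Corollary~\ref{basin of T} literally only produces $\lambda(K_t)>0$ for $t$ close to $1$, so you have put your finger on a point the paper leaves implicit. But your proposed bridging argument is phrased in the wrong direction: $\Rphys$ does not preserve horizontal slices, so ``regular inverse branches of $\Rphys^n$'' starting from a set on $\T_{t^*}$ will not land on $\T_t$, and there is no transfer of positivity backward between slices as you describe. The correct route runs \emph{forward}. Take the genuinely horizontal interval $J=J_x\subset\T_t$ centred at $x=(0,t)\in\II_0^+\subset\WW^s(\TOPphys)$. Its orbit stays on $\II_0$, converges to $\FIXphys_1\in\TOPphys$, and in particular avoids any wedge $\Delta_{\bar\kappa}$ about $\INDphys_\pm$; hence Lemma~\ref{dist lemma} applies. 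For large $n_i$ the image $\Rphys^{n_i}J$ overflows an admissible horizontal curve near $\TOPphys$, and the $y\in\TOPphys$ case of the proof of Theorem~\ref{attractors} (via Lemma~\ref{LEM:SQRT_LENGTH_CURVE}) shows that $\WW^{s,o}_\eta(\TOPphys)$ occupies a definite proportion of it; bounded distortion then pulls this back to a positive-measure subset of $J\subset\T_t$. This is just the argument of Theorem~\ref{attractors} specialised to the single point $(0,t)$, and it is what your plan should have said.
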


\begin{proof}
 Absolute continuity of $g_t$ is equivalent to absolute continuity of the push-forward measure $\mu_t=(g_t)_*\mu_0$,
so Proposition \ref{hol inv meas} implies the first assertion.

On the other hand, by Theorem \ref{basin of T},  
the complement to the stable tongues on any section $\T_t$, $t>t_c$, has positive measure,
while on the bottom $\BOTTOMphys$, it has measure zero (by Proposition \ref{tongues are a.e.} (v)). 
This yields the second assertion. 
\end{proof}

At the top, the transverse measure becomes purely atomic:

\begin{prop}\label{m1}
  As $t \to 1$, the distributions $\mu_t$ weakly converge to the distribution $\mu_1$ supported on the pre-indeterminacy 
set $\PI$ that assigns to a point $\alpha\in \PI^n$ weight $1/4^{n+1}$.  
\end{prop}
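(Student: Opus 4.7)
The plan is to deduce this statement almost directly from Proposition \ref{devil staircase} together with the bookkeeping of tongue widths from Proposition \ref{central tongues}. Recall that by definition $\mu_t = (g_t)_* \mu_0$, where $\mu_0 = d\phi/(2\pi)$ is the normalized Lebesgue measure on $\BOTTOMphys$. The idea is to first pass weak convergence of $\mu_t$ to the pushforward $(g_1)_* \mu_0$ through the uniform convergence $g_t \to g_1$, and then to identify this pushforward with the claimed atomic measure by computing the $\mu_0$-mass of each tongue bottom.

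First I would establish weak convergence. Given any continuous test function $f \in C(\T)$, by uniform continuity and the uniform convergence $g_t \to g_1$ asserted in Proposition \ref{devil staircase}, the functions $f \circ g_t$ converge uniformly on $\BOTTOMphys$ to $f \circ g_1$. Since $\mu_0$ is a finite measure, it follows that
\begin{equation*}
  \int_{\T} f \, d\mu_t \;=\; \int_{\BOTTOMphys} f \circ g_t \, d\mu_0 \;\xrightarrow[t\to 1]{}\; \int_{\BOTTOMphys} f \circ g_1 \, d\mu_0 \;=\; \int_{\T} f \, d\bigl((g_1)_* \mu_0\bigr),
\end{equation*}
so $\mu_t \Rightarrow (g_1)_* \mu_0$ weakly.

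Next I would identify $(g_1)_* \mu_0$. By Proposition \ref{central tongues}, the bottoms $\BOTTOMphys_{\La(\alpha)}$, $\alpha \in \PI$, are pairwise disjoint and their union has full Lebesgue measure in $\BOTTOMphys$; and a central tongue $\La(\alpha)$ with $\alpha \in \PI^n$ has bottom of length $\pi/2^{2n+1}$. Therefore
\begin{equation*}
  \mu_0\bigl(\BOTTOMphys_{\La(\alpha)}\bigr) \;=\; \frac{1}{2\pi}\cdot \frac{\pi}{2^{2n+1}} \;=\; \frac{1}{4^{n+1}} \qquad \text{for } \alpha\in\PI^n.
\end{equation*}
Proposition \ref{devil staircase} says $g_1$ collapses $\BOTTOMphys_{\La(\alpha)}$ to the tip $\alpha$; since the complement $\BOTTOMphys \sm \bigcup_\alpha \BOTTOMphys_{\La(\alpha)}$ has $\mu_0$-measure zero, we conclude that $(g_1)_*\mu_0$ is purely atomic, supported on $\PI$, with mass $1/4^{n+1}$ at each $\alpha \in \PI^n$. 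As a consistency check, the total mass equals $\sum_{n\geq 0} |\PI^n|\cdot 4^{-(n+1)} = \sum_{n\geq 0} 2^{-(n+1)} = 1$, matching $\mu_0(\BOTTOMphys) = 1$.

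The proof is essentially a bookkeeping exercise once the substantive prior results are in hand; there is no real obstacle. The only point deserving a brief comment is that $g_1$ is a well-defined continuous map (being a uniform limit of homeomorphisms), locally constant on the full-measure open set $\bigcup_\alpha \BOTTOMphys_{\La(\alpha)}$—a Devil's staircase—so that $f \circ g_1$ is $\mu_0$-integrable and the pushforward is unambiguous.
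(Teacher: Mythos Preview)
Your proof is correct and follows exactly the paper's approach: the paper's proof is the one-liner ``This follows from Proposition \ref{devil staircase}, taking into account that $\mu_0(\BOTTOMphys_{\La(\alpha)})=1/4^{n+1}$,'' and you have simply spelled out the implicit steps (passing weak convergence through uniform convergence of the $g_t$, and computing the tongue-bottom measures via Proposition \ref{central tongues}).
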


\begin{proof}
  This follows from Proposition \ref{devil staircase}, 
taking into account that 
$\mu_0(\BOTTOMphys_{\La(\alpha)})=1/4^{n+1}.$
\end{proof}

\subsection{High-temperature hairs and critical points}
\label{SEC:HIGH_TEMP_ENDPOINTS}

As usual, we
parameterize each $\gamma \in \FF_c$ by temperature $t$.
Every $\gamma \in \FF^c \sm \{\II_{\pm \pi/2}\}$ maps by $\Rphys$
homeomorphically onto $\Rphys(\gamma)$, so there are temperatures $0 < t^-_\gamma
\leq t^+_\gamma \leq 1$ so that
\begin{eqnarray*} 
\gamma \cap \WW^s(\BOTTOMphys) &=& \gamma[0,t^-_\gamma) \,\,\, \mbox{and} \\ 
\gamma \cap \WW^s(\TOPphys) &=& \gamma(t^+_\gamma,1] \,\, \mbox{or} \,\, \gamma[t^+_\gamma,1].
\end{eqnarray*}
\noindent
We call $h_\gamma := \gamma \cap \WW^s(\TOPphys)$ the {\em high-temperature
hair} contained in $\gamma$.  Recall the notion of Cantor bouquet introduced in \S \ref{SUBSEC:FOLIATION_NEAR_T}.  

\begin{prop}$\WW^s(\TOPphys)$ is a Cantor bouquet containing all of the bouquets that were constructed in
\S \ref{SUBSEC:FOLIATION_NEAR_T}.
\end{prop}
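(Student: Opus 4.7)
The plan is to identify $\WW^s(\TOPphys)$ with the union of the high-temperature hairs $h_\gamma$ over leaves $\gamma \in \FF^c$ and then verify the Cantor bouquet structure by transferring information from the bottom via the holonomy.

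First I would show that $\WW^s(\TOPphys) = \bigcup_{\gamma \in \FF^c} h_\gamma$. The inclusion $\supset$ is immediate from the definition of $h_\gamma$. For $\supset$, take any $x \in \WW^s(\TOPphys)$. Pick a large $n$ such that $\Rphys^n x$ lies in some $\WW^s_{\eta,\bar\tau}(\TOPphys)$. By Proposition~\ref{PROP:ETA_BASINS_LAMINATED} there is a smooth vertical curve $\gamma_{\Rphys^n x}$ of the bouquet $\FF^s_\eta(\TOPphys)$ through $\Rphys^n x$. Its backward iterate by the appropriate regular branch of $\Rphys^{-n}$ (well-defined because the curve lies in $\VV'$ and is vertical, so it avoids the collapsing intervals and $\alpha_\pm$) is a vertical curve through $x$ whose forward orbit converges to $\TOPphys$. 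By uniqueness of the central foliation (Theorem~\ref{cs foliation}), this curve is a subarc of the central leaf $\gamma \in \FF^c$ through $x$, so $x \in h_\gamma$. The same uniqueness shows that the bouquet leaves constructed in \S\ref{SUBSEC:FOLIATION_NEAR_T} are exactly the corresponding hairs $h_\gamma$, hence those bouquets are contained in the full bouquet.

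Next I would verify the Cantor bouquet structure, namely that the hairs $h_\gamma$ are disjoint vertical $C^1$ curves, each landing transversely at a unique point of $\TOPphys$, whose landing points form a Cantor-like set. Disjointness is automatic from the foliation property of $\FF^c$ on $\Cphystl$. Each non-trivial hair lands at some $\beta_\gamma \in \TOPphys$ because a vertical curve is a graph over the $t$-axis. The landing set $E := \{\beta_\gamma : h_\gamma \neq \emptyset\} \subset \TOPphys$ has positive measure (by Theorem~\ref{attractors} and Corollary~\ref{COR:LYAP_LOG2} combined with Proposition~\ref{PROP:ETA_BASINS_LAMINATED}), its complement is dense (since the bottoms of the stable tongues $\Tongue_k(\alpha)$ are dense in $\BOTTOMphys$ by Proposition~\ref{tongues are a.e.}, and the corresponding central leaves, lying in $\WW^s(\BOTTOMphys)$, terminate at points whose complement $E$ is thus nowhere dense on $\TOPphys$). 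Transversality of the landing at points $\beta_\gamma \neq \alpha_\pm$ follows because $\KK^v$ is non-degenerate on $\TOPphys \setminus \{\alpha_\pm\}$ and the hairs are tangent to $\LL^c \subset \KK^v$; at an eventual preimage of $\alpha_\pm$ the hair lies in a pulled-back stable tongue of $\BOTTOMphys$, which contradicts $x \in \WW^s(\TOPphys)$, so these points contribute no hairs.

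The main subtlety will be showing that each $x \in \WW^s(\TOPphys)$ lies on a properly embedded vertical central curve that actually \emph{reaches} $\TOPphys$, rather than merely accumulating on it. Here one uses that $\Rphys^n x \in \VV'$ for all large $n$ and that the bouquet construction of Proposition~\ref{PROP:ETA_BASINS_LAMINATED} produces a curve of definite $\tau$-height; pulling this curve back by the regular branches of $\Rphys^{-1}$ (which exist along the orbit of $x$ since, by definition of $\WW^s_\eta$, the orbit eventually avoids both the collapsing intervals and the singular lift regions once it enters $\VV'$) gives a semi-proper vertical curve landing on $\TOPphys$ and containing $x$, as required. The remaining regularity statements (transversality, Hölder structure of $E$) then follow from horizontal expansion as in Proposition~\ref{regularity of holonomy}.
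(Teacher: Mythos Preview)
Your proposal overcomplicates what the paper treats as nearly immediate, and in doing so introduces a gap.

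The paper gives no explicit proof; the proposition is meant to follow from the paragraph preceding it. The key observation there is that every leaf $\gamma \in \FF^c \setminus \{\II_{\pm\pi/2}\}$ is mapped by $\Rphys$ homeomorphically (and, since $\Rphys$ is orientation-preserving off its critical locus, $t$-order-preservingly) onto a subarc of the image leaf. Hence if $x \in \gamma \cap \WW^s(\TOPphys)$ and $y \in \gamma$ with $t(y) > t(x)$, then $t(\Rphys^n y) > t(\Rphys^n x) \to 1$, so $y \in \WW^s(\TOPphys)$ as well. Thus each $h_\gamma = \gamma \cap \WW^s(\TOPphys)$ is an upward-closed interval of the leaf, and $\WW^s(\TOPphys) = \bigcup_\gamma h_\gamma$ is a bouquet of vertical $C^1$ curves. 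The bouquets $\FF^s_\eta(\TOPphys)$ from \S\ref{SUBSEC:FOLIATION_NEAR_T} have leaves tangent to $\LL^c$ (see the proof of Proposition~\ref{PROP:ETA_BASINS_LAMINATED}), so by uniqueness of the central foliation (Theorem~\ref{cs foliation}) they are subarcs of the hairs $h_\gamma$.

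Your route instead tries to construct the hair through a given $x \in \WW^s(\TOPphys)$ by pulling back a leaf of $\FF^s_\eta(\TOPphys)$ through $\Rphys^n x$. But this requires $\Rphys^n x \in \WW^s_{\eta,\bar\tau}(\TOPphys)$ for some $n$, $\eta$, $\bar\tau$---equivalently, that the orbit of $x$ eventually stays in $\VV'_{\eta,\bar\tau}$. Nothing in the paper establishes this for a \emph{general} $x \in \WW^s(\TOPphys)$: a priori the orbit could enter the parabolic regions $\PP^\pm_\eta$ infinitely often for every $\eta$. (Theorem~\ref{attractors} controls almost every point, not every point.) So your argument covers only $\bigcup_\eta \WW^s_\eta(\TOPphys)$, and you have not shown that the remaining points of $\WW^s(\TOPphys)$, if any, lie on hairs---i.e., you have not shown that each $h_\gamma$ is connected.

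Two secondary remarks. The identification $\WW^s(\TOPphys) = \bigcup_\gamma h_\gamma$ is a tautology, since every point of $\Cphystl$ lies on a unique leaf and $h_\gamma$ is defined as $\gamma \cap \WW^s(\TOPphys)$; no construction is needed there. And your ``main subtlety'' about the curve reaching $\TOPphys$ is a non-issue: central leaves are proper by construction (Proposition~\ref{PROP:EXISTENCE_OF_CENTRAL_CURVES} and Theorem~\ref{cs foliation}), so every hair automatically lands on $\TOPphys$.
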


We call $e_\gamma:= \gamma(t^+_\gamma)$ the {\em endpoint
of $h_\gamma$} and $c_\gamma:=\gamma([t^-_\gamma,t^+_\gamma])$ the {\em
critical points of $\gamma$}.  Let
\begin{eqnarray*}
\epoints:= \bigcup_\gamma e_\gamma \,\, \mbox{and} \qquad \crittemps :=
\bigcup_{\gamma} c_\gamma
\end{eqnarray*}
\noindent
Each is an $\Rphys$ invariant set.

\begin{prop}
The set of critical temperatures $\crittemps$ has zero Lebesgue measure.
\end{prop}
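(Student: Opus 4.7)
The plan is to show that $\crittemps$ is contained in the complement of the two attracting basins and then invoke Theorem~\ref{attractors}. Specifically, I will argue that a point $x = \gamma(t)$ lying in the critical arc $c_\gamma = \gamma([t^-_\gamma, t^+_\gamma])$ cannot belong to $\WW^s(\BOTTOMphys) \cup \WW^{s,o}_0(\TOPphys)$, except possibly at the two boundary temperatures $t = t^\pm_\gamma$. Indeed, if $x \in \WW^s(\BOTTOMphys)$, then some neighborhood of $x$ within $\gamma$ also lies in $\WW^s(\BOTTOMphys)$ (since this basin is open), forcing $t < t^-_\gamma$; similarly $x \in \WW^{s,o}_0(\TOPphys) \subset \WW^s(\TOPphys)$ would force $t > t^+_\gamma$. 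The two boundary points $\gamma(t^\pm_\gamma)$ of each critical arc account for at most a countable-per-leaf correction, but the family of leaves is itself parameterized by a circle, so this boundary set has zero area by Fubini (it meets each horizontal circle in at most countably many points across the whole foliation only after one checks that the map $\gamma \mapsto (t^-_\gamma, t^+_\gamma)$ is suitably measurable; this follows from the $C^1$-continuity of $\FF^c$ away from $\{\INDphys_\pm\}$ established in \S\ref{SUBSEC:DOMINATED}--\S\ref{SEC:VERTICAL_FOLIATION}).

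Consequently,
\begin{equation*}
   \crittemps \subset \bigl(\Cphys \sm (\WW^s(\BOTTOMphys) \cup \WW^{s,o}_0(\TOPphys))\bigr) \cup N,
\end{equation*}
where $N$ is the boundary correction. By Theorem~\ref{attractors}, the first set on the right has zero two-dimensional Lebesgue measure, and $N$ has zero measure by the remark above. Hence $\area(\crittemps) = 0$.

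The only step that requires a little care is the measurability/null-set argument for the boundary temperatures $t^\pm_\gamma$, since a priori it is conceivable that $\{\gamma(t^+_\gamma) : \gamma \in \FF^c\}$ could be a large set. However, this set is exactly the set $\epoints$ of endpoints of high-temperature hairs, and each such endpoint is the unique accumulation point of a hair $h_\gamma \subset \WW^s(\TOPphys)$; by Proposition~\ref{PROP:ETA_BASINS_LAMINATED} (applied after noting that the hairs themselves form a bouquet transverse to any horizontal circle $\T_t$), the set $\epoints$ meets each $\T_t$ in a set of one-dimensional measure zero, so $\area(\epoints) = 0$ by Fubini; the same argument applies to $\{\gamma(t^-_\gamma)\}$ using the foliation $\FF^s(\BOTTOMphys)$ from \S\ref{SUBSEC:BOTTOM_FOLIATION}. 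This reduces the proposition to a direct application of Theorem~\ref{attractors}, with no further dynamics needed.
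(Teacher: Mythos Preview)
Your core idea is exactly the paper's: show $\crittemps \subset \Cphys \sm (\WW^s(\BOTTOMphys) \cup \WW^{s,o}_0(\TOPphys))$ and invoke Theorem~\ref{attractors}. But you have overcomplicated the argument with an unnecessary detour.

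Your own two implications already give the \emph{clean} inclusion with no exception at the boundary temperatures. You correctly observe that $x \in \WW^s(\BOTTOMphys)$ forces $t < t^-_\gamma$ (openness). For the other basin you write that $x \in \WW^{s,o}_0(\TOPphys)$ forces $t > t^+_\gamma$, and this is also a \emph{strict} inequality: by the very definition of $\WW^{s,o}_\eta(\TOPphys)$ in \S\ref{SUBSEC:BASIN_T}, the hair $\gamma_x$ extends strictly below $x$, so some $\gamma(t')$ with $t' < t$ lies in $\WW^s(\TOPphys)$, hence $t' \ge t^+_\gamma$ and $t > t^+_\gamma$. Thus neither endpoint $\gamma(t^\pm_\gamma)$ can lie in $\WW^s(\BOTTOMphys) \cup \WW^{s,o}_0(\TOPphys)$, and the ``except possibly'' clause is vacuous. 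This is precisely the paper's one-line proof: ``the high temperature hair through any $x \in \WW^{s,o}_0(\TOPphys)$ extends below $x$, so $\crittemps$ lies in the complement of $\WW^s(\BOTTOMphys) \cup \WW^{s,o}_0(\TOPphys)$.''

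Your subsequent paragraph about the boundary correction set $N$, measurability of $\gamma \mapsto t^\pm_\gamma$, and the claim that $\epoints$ meets each $\T_t$ in a null set via Proposition~\ref{PROP:ETA_BASINS_LAMINATED} is therefore superfluous. It is also shaky on its own terms: Proposition~\ref{PROP:ETA_BASINS_LAMINATED} describes the bouquet of hairs, not their endpoints, and does not yield that $\epoints \cap \T_t$ is null; and invoking $\area(\epoints)=0$ here would be circular, since Corollary~\ref{COR:ENDPOINTS_MEASURE_ZERO} is deduced \emph{from} the present proposition. Drop the detour; the argument is complete without it.
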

\noindent

\begin{proof}
By construction, the high temperature hair through any  $x \in \WW^{s,o}_0(\TOPphys)$ extends below $x$.
Therefore, $\crittemps$ lies in the complement of $\WW^s(\BOTTOMphys) \cup \WW^{s,o}_0(\TOPphys)$ so that
it has measure $0$ by Theorem \ref{attractors}.
\end{proof}

\begin{cor}\label{COR:ENDPOINTS_MEASURE_ZERO}
The set of endpoints to the high-temperature hairs $\epoints$ has zero Lebesgue measure.
\end{cor}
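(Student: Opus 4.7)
The plan is to observe that $\epoints$ is a subset of the set $\crittemps$ from the preceding proposition, and hence automatically has Lebesgue measure zero.

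More precisely, recall the definitions
\[
e_\gamma := \gamma(t^+_\gamma) \quad\text{and}\quad c_\gamma := \gamma\bigl([t^-_\gamma, t^+_\gamma]\bigr).
\]
Since $t^-_\gamma \leq t^+_\gamma$, the parameter $t^+_\gamma$ lies in the closed interval $[t^-_\gamma, t^+_\gamma]$, and therefore $e_\gamma \in c_\gamma$ for every $\gamma \in \FF^c$ (other than the critical leaves $\II_{\pm\pi/2}$, which contribute at most a set of measure zero anyway). Taking the union over all leaves yields
\[
\epoints \;=\; \bigcup_\gamma e_\gamma \;\subset\; \bigcup_\gamma c_\gamma \;=\; \crittemps.
\]
By the previous proposition, $\crittemps$ has zero (two-dimensional) Lebesgue measure on $\Cphys$, so the same holds for $\epoints$.

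There is no real obstacle here — the work has already been done in establishing that $\crittemps$ has measure zero (which in turn rests on Theorem \ref{attractors} and the fact that critical points of a leaf belong to the complement of $\WW^s(\BOTTOMphys) \cup \WW^{s,o}_0(\TOPphys)$). The only thing worth checking carefully is that the two exceptional leaves $\II_{\pm\pi/2}$, which are explicitly excluded when defining $t^\pm_\gamma$, do not cause trouble; but these are a finite union of vertical intervals contained in $\WW^s(\BOTTOMphys)$ and hence contribute no extra points to $\epoints$ at all.
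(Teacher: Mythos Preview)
Your argument is correct and matches the paper's intent: the corollary is stated without proof precisely because $\epoints \subset \crittemps$ is immediate from the definitions $e_\gamma = \gamma(t^+_\gamma) \in \gamma([t^-_\gamma,t^+_\gamma]) = c_\gamma$, so the result follows at once from the preceding proposition.
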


\section{Lee-Yang Distributions, Local Rigidity, and Critical exponents}
\label{SEC:THERM}

Now, having plowed hard in the RG dynamical cylinder, let us collect the
physics harvest:

\subsection{Lee-Yang Distributions and Critical exponents}


{\it Proof of the Main Theorem (physical version).}
Recall from \S \ref{MK RG eq-s} that the Lee-Yang locus $\SSS_n$ of level $n$
is equal to the pullback $(\Rphys^n)^*\SSS_0$ of the principal LY locus
$\SSS\equiv \SSS_0$ (\ref{S}).  By Theorem \ref{foliation FF}, these loci
converge exponentially fast to the central foliation $\FF^c$.

Moreover, on the bottom $\BOTTOMphys$ the Lee-Yang zeros are obviously asymptotically
equidistributed with respect to the Lebesgue measure $\mu_0$. It follows that on the circle $\T_t$, they are
asymptotically equidistributed with respect to the measure $(g_t)_*(\mu_0)=\mu_t$,
which is the standard transverse measure for $\FF^c$.
The rest of the physical version of the Main Theorem follows from the properties of $\mu_t$
established in \S \ref{SEC:VERTICAL_FOLIATION}.


\subsection{Local Rigidity} 
\label{SUBSEC:LOCAL_RIGIDITY}
Recall the notion of {\em local rigidity} introduced in \S \ref{DHL intro}.

\comment{
\begin{prop}\label{PROP:C1_CONVERGENCE}
The Lee-Yang zeros $\phi_{k}^n (t)\in \T_t$ 
are given by
\begin{eqnarray}\label{EQN:GN2}
    \phi_k^n(t)= g^n_t\left( \frac{\pi k}{4^n}  + \frac{\pi}{2\cdot 4^n} \right) \quad k=0,1,\dots, 2 \cdot 4^n-1,
\end{eqnarray}
where the $g^n_t: \T \ra \T_t$ are real-analytic homeomorphisms depending
real-analytically on $t \in [0,1)$.  
Moreover, on $g_t^{-1}(\T_t \cap \WW^s(\BOTTOMphys))$, the $g^n_t$ converge in the $C^1$ topology to
the holonomy map $g_t$.
\end{prop}

\begin{proof}
Let $\tl \Cphystl \equiv \R \times [0,1)$ be the universal cover of $\Cphystl$
and $\tl \Rphys$ the lift of $\Rphys$.  Let $\tl f$ be the lift of the map $f: \Cphystl
\ra \Cphystl$ appearing in the factorization $\Rphys = f \circ Q$ from \S
\ref{SUBSEC:MAP_ON_CYL}.  

Since $\Rphys$ acts on the homology of $\Cphystl$
with degree $4$, we have
\begin{eqnarray}\label{EQN:COORD0}
\phi \circ \tl \Rphys^n(\phi+2\pi k,t) = \phi \circ \tl \Rphys^n(\phi,t)+4^k\cdot 2\pi k
\end{eqnarray}
\noindent
for any $k \in \Z$.  
(A similar formula holds for $\tl f$ with the powers of $4$ replaced with powers of $2$.)

Therefore, the map
$\tl h^n: \tl \Cphystl \ra \R$ given by
\begin{eqnarray}
\tl h^n(x) = \frac{1}{2\cdot 4^n} \phi \circ \tl f \circ \tl \Rphys^n(x)
\end{eqnarray}
\noindent
is $2\pi$-periodic in $\phi$, inducing a map $h^n: \Cphystl \ra \T$.

Let $\tl \SSS_n$ the $2\pi$-periodic lift of the $n$-th Lee-Yang locus to $\tl
\Cphystl.$  Since $\SSS_n = \Rphys^{n*} \SSS_0$,  $\tl
\Rphys$ maps one period of $\tl \SSS_n$ onto $4^n$ periods of $\tl \SSS$,
with each branch of $\tl \SSS_n$ corresponding to the pullback under $\tl \Rphys$ of a distinct branch from $\tl
\SSS$.  Moreover, $\tl f$ maps $\tl \SSS$ to the vertical intervals
\begin{eqnarray*}
\{\phi = 2 \pi k+\pi\,:\, k \in \Z\},
\end{eqnarray*}
\noindent
since $f$ maps $\SSS$ to the vertical interval $\phi = \pi$.
Therefore $\tl f \circ \Rphys^n$ maps 
one the period of $\tl \SSS_n$ 
to 
\begin{eqnarray*}
\{\phi = 2\pi k + \pi \,:\, k = 0,\ldots 2\cdot 4^n-1\}
\end{eqnarray*}
\noindent
and $h^n: \Cphystl \ra \T$ sends $\SSS_n$ to    
\begin{eqnarray*}
\left\{\phi = \frac{\pi k}{4^n}  + \frac{\pi}{2\cdot 4^n} \,:\, k = 0,\ldots 2\cdot 4^n-1\right\}.
\end{eqnarray*}

It follows from Lemma \ref{LEM:BOUNDED_CONTRACT} and Remark \ref{REM:F_BOUNDED_CONTRACT} that 
$\frac{d}{d\phi} h^n(\phi,t) > 0$ at any fixed $t$.  Thus, for each $n$
\begin{eqnarray*}
h^n_t(\phi) := h^n(\phi,t): \T_t \ra \T
\end{eqnarray*}
is a degree one local homeomorphism, i.e. a homeomorphism.  The inverse
$g^n_t: \T \ra \T_t$ exists and the the Lee-Yang zeros $\phi_k^n(t)$  satisfy
(\ref{EQN:GN2}).  Since $h^n(\phi,t)$ is a real-analytic function of two
variables, each $g^n_t$ is a real-analytic function of $\phi$ depending
real-analytically on $t$.

\msk
Suppose $x \in \tl \Cphystl$ and $r > s> 0$.  Let $k \in \Z$ so that
\begin{eqnarray}\label{EQN:COORD2}
2\pi k \leq \phi \circ \tl \Rphys^s(x) \leq 2\pi(k+1).
\end{eqnarray}
\noindent
Applying (\ref{EQN:COORD0}) and the similar formula for $\tl f$ we find
\begin{eqnarray}\label{EQN:MONOTONE}
2 \cdot 4^{r-s}\cdot 2\pi k \leq \phi \circ \tl f \circ \tl \Rphys^r(x) \leq 2\cdot 4^{r-s}\cdot 2\pi (k+1).
\end{eqnarray}

It follows from (\ref{EQN:COORD2}) and (\ref{EQN:MONOTONE}) that
\begin{eqnarray*}
\frac{2\pi k}{4^s} \leq h_r(x),  h_s(x) \leq \frac{2 \pi (k+1)}{4^s}.
\end{eqnarray*}
\noindent
Thus, the sequence $h^n(x)$ is uniformly Cauchy on $\Cphystl$, converging to some continuous $h:\Cphystl \ra \T$.

\msk
By the chain rule
\begin{eqnarray}\label{EQN:DHN}
D h_n(x) = [1/2 \,\, 0] \, D \tl f(\Rphys^{n-1} x) \, \frac{1}{4^n} D\Rphys^n(x).
\end{eqnarray}
\noindent
The product of the first two terms in (\ref{EQN:DHN}) converges uniformly on
compact subsets of $\WW^s(\BOTTOMphys)$ to the matrix $[1 \,\, 0]$ and,
by Proposition \ref{PROP:LOW_TEMP_CONVERGENCE}, $\frac{1}{4^n} D\Rphys^n(x)$
converges uniformly to a $C^\infty$ function $B(x)$.  Therefore, $D h_n$
converges uniformly on compact subsets of $\WW^s(\BOTTOMphys)$ to the $C^\infty
$ function $D h(x) = [1 \,\, 0]  \, B(x)$.  Note that 
$\Ker D h(x) = \Ker B(x) = \LL^c(x)$ for any $x \in \WW^s(\BOTTOMphys)$, since the image of the matrix $B(x)$ is horizontal.

In particular, for any fixed $t$ the homeomorphisms $h^n_t(\phi)$ converge to a homeomorphism $h_t: \T_t \ra \T$ with
$h_t$ a $C^\infty$ function on $\T_t \cap \WW^s(\BOTTOMphys)$ and the convergence taking place in the $C^1$ topology
at those points.

In order to check that $h_t$ is the inverse of the holonomy map $g_t$, it suffices to check on
the dense set $\T_t \cap
\WW^s(\BOTTOMphys)$, since both maps are continuous.
It follows there, since $\LL^c(x) = \Ker D h(x)$ for any $x \in \WW^s(\BOTTOMphys)$ 
so that $h$ is constant on the leaves of $\FF^c$.

Therefore, the inverses $g^n_t(\phi)$ converge uniformly to $g_t := h_t^{-1}$.
Moreover, it follows from the inverse function theorem that $g_t$ is $C^\infty$
within $h_t(\T_t \cap \WW^s(\BOTTOMphys))$ and the convergence occurs in
the $C^1$-topology there.
\end{proof}
}

\begin{prop}\label{PROP:LOCAL_RIGIDITY}
The Lee-Yang zeros for the DHL are locally rigid at any point where the limiting density is positive $\rho(\phi,t) > 0$.
\end{prop}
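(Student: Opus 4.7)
The plan is to exploit the representation of the Lee–Yang zeros through the approximate holonomy maps $g^n_t$ that appears in the commented paragraph preceding this statement. Concretely, one first observes that since $\SSS_n = (\Rphys^n)^*\SSS_0$ and $f \circ \Rphys$ sends $\SSS_0$ to the vertical line $\II_\pi$ (times $4$ in homology along the bottom), the Lee--Yang zeros on $\T_t$ can be written as
\[
\phi^n_k(t) \;=\; g^n_t\!\left(\frac{\pi k}{4^n} + \frac{\pi}{2\cdot 4^n}\right), \qquad k = 0,1,\dots,2\cdot 4^n - 1,
\]
for real analytic diffeomorphisms $g^n_t:\T\to\T_t$ obtained as the horizontal components of the corresponding inverse branches of $f\circ\Rphys^n$. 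This is the analogue, for the DHL at level $n$, of formula (\ref{in12}).

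Next, I would verify that $g^n_t \to g_t$ in the $C^1$ topology on any compact subset of $g_t^{-1}(\T_t\cap\WW^s(\BOTTOMphys))$, where $g_t$ is the central holonomy. By the chain rule, $D g^n_t$ differs from $[1\,\,0]\cdot B_n(x)$ by a bounded factor coming from the first iterate of $f$, and by Proposition~\ref{PROP:LOW_TEMP_CONVERGENCE} the matrices $B_n(x)$ converge super-exponentially to a $C^\infty$ matrix $B(x)$ on $\WW^s(\BOTTOMphys)$, with image horizontal and kernel equal to $\LL^c(x)$. Together with the $C^\infty$ regularity of $\FF^s(\BOTTOMphys)$ from Proposition~\ref{PROP:CINFTY_FOLIATION}, this identifies the limit of $g^n_t$ with $g_t$ and gives the required $C^1$ convergence.

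Now fix $(\tl\phi, t)$ with $\rho_t(\tl\phi)>0$. By the physical Main Theorem, $\tl\phi\in O_t \subset \T_t\cap \WW^s(\BOTTOMphys)$, so $g_t$ is a $C^\infty$ diffeomorphism in a neighborhood of $\tl\psi := g_t^{-1}(\tl\phi)$, with $g_t'(\tl\psi) = 1/\rho_t(\tl\phi)$. Given any compact $K\subset\R$ in the rescaled coordinate, the preimages $g_t^{n,-1}(\tl\phi^n_k)$ for zeros $\tl\phi^n_k$ in the window $|\phi-\tl\phi|\le |K|/( L_n \rho_t(\tl\phi))$ lie in a shrinking $C/4^n$ neighborhood of $\tl\psi$; on such a neighborhood the $C^1$ convergence $g^n_t\to g_t$ yields
\[
\phi^n_k(t) - \tl\phi^n \;=\; (g^n_t)'(\tl\psi_n)\cdot\frac{\pi(k-k_n)}{4^n} + o\!\left(\frac{k-k_n}{4^n}\right)
\;=\; \frac{1}{\rho_t(\tl\phi)}\cdot\frac{\pi(k-k_n)}{4^n}\bigl(1+o(1)\bigr),
\]
where $k_n$ is chosen so that $\tl\phi^n = \phi^n_{k_n}(t)$. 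With $L_n = 2\cdot 4^n$ the affine rescaling in the definition of local rigidity sends these zeros to $k-k_n + o(1)$, i.e.\ to the integer lattice $\Z$, uniformly on $K$.

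The main obstacle, and the reason the argument is not entirely routine, is justifying the $C^1$ convergence $g^n_t\to g_t$ in a full neighborhood of $\tl\psi$ rather than merely the pointwise convergence built into the definition of the holonomy; this ultimately rests on the super-exponential convergence of $B_n$ from Proposition~\ref{PROP:LOW_TEMP_CONVERGENCE}, which in turn depends on the strong superattracting rate along $\BOTTOMphys$. Once that regularity is in place, the rigidity statement is just a Taylor expansion of $g^n_t$ at $\tl\psi$ combined with the fact that $2\cdot 4^n$ matches the lattice density on the bottom being pushed forward by a diffeomorphism of derivative $1/\rho_t(\tl\phi)$ at the basepoint.
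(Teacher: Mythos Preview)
Your approach is correct and essentially identical to the paper's: both write the zeros as $\phi^n_k(t)=g^n_t\bigl(\tfrac{\pi k}{4^n}+\tfrac{\pi}{2\cdot4^n}\bigr)$, obtain $C^1$ convergence $g^n_t\to g_t$ on $\WW^s(\BOTTOMphys)$ from the super-exponential convergence of $B_n$ in Proposition~\ref{PROP:LOW_TEMP_CONVERGENCE} (the paper routes this through the B\"ottcher-coordinate Lemma~\ref{convergence of foliations}), and then finish with the Mean Value Theorem. One minor slip: it is the derivative of the \emph{forward} map $h^n_t=(g^n_t)^{-1}$ that is directly given by $[1\ 0]\cdot Df\cdot B_n$, not $Dg^n_t$ itself; you then invert to get $(g^n_t)'\to g_t'$.
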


\begin{proof}
 The Lee-Yang zeros $\phi_k^n(t)$ of level $n$ on $\T_t$ are the solutions to
\begin{align}\label{EQN:RIGIDITY1}
h^n_t(\phi) := \frac{1}{2\cdot4^n}  f_1 \circ \Rphys^n(\phi,t) = \frac{\pi k}{4^n} + \frac{\pi}{2\cdot 4^n}, \,\, k=0,1,\ldots 2\cdot 4^n-1,
\end{align}
where $f_1: \CC_1\ra \T$ is the first coordinate of the mapping $f$ given in (\ref{f}).
Notice that $f_1$ is a degree 2 map tangent  to $2\phi$ on $\BOTTOMphys$.

Fix a point  $x=(\phi_*,t) \in \WW^s(\BOTTOMphys)$ and a closed horizontal
interval $J_t \subset O_t$ containing $x$ in its interior. 

By  Lemma \ref{convergence of foliations}, the maps $h_t^n$ converge  in the
$C^1$ topology on $J$  to the B\"ottcher coordinate $\Phi_t(\cdot) \equiv \Phi(\cdot,t)$. 
Since $\partial \Phi / \partial \phi \neq 0$,  
the maps  $h^n_t$ are invertible on $J_t$  for sufficiently large $n$. 
 Let $g^n_t: J_0 \ra J_t$ be the inverse (where  $J_0:= h^n_t(J)\subset \BB$).  
Since  the holonomy map $g_t$ is the inverse of $\Phi_t$, we conclude that
$(g^n_t)'\ra g_t' $ uniformly on $J_0$.

For each $n$, let $\phi_l^n$ be the Lee-Yang zero that is closest to $\phi_*$.
We will show that the rescaled Lee-Yang zeros
\begin{eqnarray}\label{EQN:RIG2}
s^n_k = \frac{2\cdot 4^n}{2\pi} \ \rho_\tconv(\phi_*)\left(\phi^n_{l+k} - \phi^n_{l}\right),
\end{eqnarray}
converge locally uniformly to the integer lattice $\Z$.

After fixing $k$,
$\phi^n_{l+k}$ and $\phi^n_{l}$
will be in $J$ for all $n$ sufficiently large. 
The Mean Value Theorem gives
\begin{eqnarray}\label{EQN:RIG3}
\phi_{l+k}^n - \phi_{l}^n = (g_t^n)'(\psi_k^n) \frac{2\pi k}{2\cdot 4^n},
\end{eqnarray}
\noindent
where $\psi_k^n$ is between $\frac{\pi l}{4^n} + \frac{\pi}{2\cdot 4^n}$ and $\frac{\pi (l+k)}{4^n} + \frac{\pi}{2\cdot 4^n}$.

Equation (\ref{EQN:RIG2}) becomes
\begin{eqnarray}
s_k^n = k \cdot (g_t^{-1})'(\phi_*) \cdot ({g_t^n}) ' (\psi_k^n),
\end{eqnarray}
\noindent
since $\rho_t(\phi_*) = (g_t^{-1})'(\phi_*)$.  For fixed $k$
\begin{eqnarray*}
\lim \psi_k^n = \Phi_t(\phi_*) = (g_t)^{-1}(\phi_*).
\end{eqnarray*}
Thus, $(g_t^n)'(\psi_k^n) \ra g_t'((g_t)^{-1}\phi_*)$ giving that $s_k^n \ra k$.  
\end{proof}

\subsection{Critical Exponents}
\label{SUBSEC:CRITICAL_EXPONENTS}
Let  $x=(z,t)\in \Cphys\sm \WW^s(\BOTTOMphys)$ 
(so that the density $\rho_t$ of the transverse measure $\mu_t$ vanishes at $x$).
If
\begin{equation}\label{hor exp def}
     \mu_t(J)\asymp |J|^{\si^h+1}\ \mbox{for a horizontal interval $J$ containing $x$ on its boundary,}
\end{equation}
 then  $\si^h=\si^h(x)$
is called the {\it horizontal critical exponent} of the transverse measure at $x$
(on the left- or right-hand side of $x$, depending on $J$ -- if we do not specify the side, 
it means that the critical exponent exists on both sides).

Let additionally, $x$ lie on the boundary point of $\LL^c(x)\cap \WW^s(\BOTTOMphys)$
(in other words,  let all points on the  central leaf $\LL^c(x)$ below $x$ converge to the
bottom of the cylinder). If 
\begin{equation}\label{vert exp def}
  \rho(y)\asymp \dist (x,y)^{\si^v}\ \mbox {for $y\in \LL^c(x)$ below $x$},
\end{equation}
then $\si^v$ is called  the {\it vertical critical exponent} of the transverse measure at $x$.

Let  $x$ be a periodic point for $\Rphys$ of period $p$ with multipliers  $\la^u >  \la^c$.
Here the {\it unstable multiplier} $\la^u$ corresponds to the eigenvector of $D\Rphys^p_x$ in the horizontal cone $\KK^h(x)$,
while the {\it central multiplier} corresponds to the eigenvector tangent to the central leaf $\LL^c(x)$.
The inequality between the multipliers follows from the dominated splitting.
Also, $\la^u>1$ because of the horizontal expansion. 
The corresponding {\it characteristic exponents} at $x$ are defined as
$$
  \chi^u(x)= \frac 1{p} \log \la^u,\quad \chi^c(x) = \frac 1{p} \log \la^c.
$$ 

\begin{prop}\label{crit exp for periodic pts}
  Let  $x$ be a periodic point for $\Rphys$ of period $p$ with the characteristic exponents $\chi^u$ and $\chi^c$.
Then 
\begin{equation}\label{hor exp}
  \si^h(x) = \frac{\log 4}{\chi^u} - 1.
\end{equation}
Moreover, if $x$ is a boundary point of some component $J$ of the basin $O_t$, then 
\begin{equation}\label{asymp of rho}
        \rho_t(y)\asymp \dist(x,y)^{\si^h}, \quad y\in J \ \mbox{near $x$}.
\end{equation}
If $x$ is  the boundary point of $\LL^c(x)\cap \WW^s(\BOTTOMphys)$ and $\chi^c(x)>0$, then 
\begin{equation}\label{vert exp}
  \si^v(x) =\frac{\log 4 -\chi^u}{\chi^c}. 
\end{equation}
\end{prop}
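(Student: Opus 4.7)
The plan is to iterate the balanced transfer rule (\ref{balanced property}) along the periodic orbit of $x$ and to exploit the local hyperbolic structure of $\Rphys^p$ at this fixed point.

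For the horizontal asymptotic (\ref{asymp of rho}), I would introduce the first-return map $F:\T_t\to\T_t$ near $x$, obtained by following $\Rphys^p$ with the central-foliation holonomy projection back to $\T_t$. Since $x$ is a boundary point of the component $J\subset O_t$ and $\FF^c$ is $C^\infty$ on $\WW^s(\BOTTOMphys)\supset J$ by Proposition \ref{PROP:CINFTY_FOLIATION}, $F$ is a $C^\infty$ self-map of a one-sided neighborhood of $x$ in $J$ fixing $x$; the $\Rphys^p$-invariance of $\LL^c(x)$ identifies $F'(x)$ with the quotient of $D\Rphys^p(x)$ by its central eigenspace, namely $\la^u$. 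The balanced property applied $p$ times, together with the holonomy invariance of $\mu$, gives $\mu_t(F(J'))=4^p\mu_t(J')$ for $J'\subset J$ near $x$, equivalently the pointwise relation $\rho_t(Fy)\,F'(y)=4^p\rho_t(y)$. Iterating yields $\rho_t(y)=(F^n)'(y)\,\rho_t(F^n y)/4^{np}$; standard bounded distortion at a $C^\infty$ hyperbolic fixed point gives $(F^n)'(y)\asymp(\la^u)^n$ and $\dist(x,y)\asymp(\la^u)^{-n}$ when $F^n y$ first lands in a chosen fundamental domain $D\Subset J$, where $\rho_t\asymp 1$. Hence
\begin{equation*}
\rho_t(y)\asymp(\la^u/4^p)^n\asymp\dist(x,y)^{\log 4/\chi^u-1}=\dist(x,y)^{\sigma^h},
\end{equation*}
which proves (\ref{asymp of rho}); integration then gives $\mu_t(J')\asymp|J'|^{\sigma^h+1}$ and hence (\ref{hor exp}).

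For the vertical exponent, I would use the pointwise form $4\rho(z)=\la^h(z)\rho(\Rphys z)$ iterated $np$ times. For $y\in\LL^c(x)$ below $x$ and close enough, $y\in\WW^s(\BOTTOMphys)$ by hypothesis, so
\begin{equation*}
\rho(y)=\frac{\prod_{k=0}^{np-1}\la^h(\Rphys^k y)}{4^{np}}\,\rho(\Rphys^{np} y).
\end{equation*}
Since $\LL^c(x)$ is $\Rphys^p$-invariant, $G:=\Rphys^p|_{\LL^c(x)}$ has $G'(x)=\la^c>1$, so $x$ is repelling along the leaf. Choosing $n$ so that $G^n y$ sits in a fundamental domain of $\LL^c(x)\cap\WW^s(\BOTTOMphys)$, one has $\dist(x,y)\asymp(\la^c)^{-n}$ and $\rho(G^n y)\asymp 1$, while bounded distortion along the periodic orbit yields $\prod_{k=0}^{np-1}\la^h(\Rphys^k y)\asymp\La^n$ with $\La:=\prod_{k=0}^{p-1}\la^h(\Rphys^k x)$.

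The linking identity is $\La=\la^u$, which I plan to extract by applying the same iterated pointwise transfer rule to a point $y\in J$ at horizontal distance $(\la^u)^{-n}$ from $x$: the dominated splitting ensures that $\Rphys^{np}(y)$ lands in a fixed compact subset of $\WW^s(\BOTTOMphys)$ (the central component of its displacement is suppressed by $(\la^c/\la^u)^n$), giving $\rho_t(y)\asymp\La^n/4^{np}$, whereas the first-return analysis above gives $\rho_t(y)\asymp(\la^u/4^p)^n$; matching forces $\La=\la^u$. Substituting into the vertical estimate yields
\begin{equation*}
\rho(y)\asymp(\la^u/4^p)^n=\exp\bigl[np(\chi^u-\log 4)\bigr]\asymp\dist(x,y)^{(\log 4-\chi^u)/\chi^c},
\end{equation*}
proving (\ref{vert exp}). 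The hard part will be securing the bounded distortion uniformly in $n$ at the boundary periodic point, which needs $F$ to extend with $C^{1+\alpha}$ regularity at $x$; this is delicate because $\FF^c$ is only $C^0$ transversally at $x$, but should be accessible since $\LL^c(x)$ itself inherits a smooth structure at $x$ as the invariant weak-unstable leaf of a hyperbolic periodic orbit.
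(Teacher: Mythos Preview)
Your core idea---iterate the balanced property until a fundamental domain is reached, then read off the exponent from the step count---is exactly the paper's strategy. But your implementation via the first-return map $F=(\text{holonomy to }\T_t)\circ\Rphys^p$ creates an obstacle that the paper simply bypasses.

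The paper never projects back to $\T_t$. For \eqref{hor exp} it uses only the \emph{integrated} rule $\mu(\Rphys^n J)=4^n\mu_t(J)$ together with $l^h(\Rphys^n J)\asymp e^{n\chi^u}|J|$: choose $n$ so that $\Rphys^n J$ wraps once to four times around the cylinder, making both quantities $\asymp 1$, and the exponent drops out. For \eqref{asymp of rho} and \eqref{vert exp} it iterates the pointwise rule \eqref{balanced property} along the $\Rphys$-orbit of $y$, evaluating $\rho$ on whichever horizontal circle $\T_{t_n}$ happens to contain $\Rphys^n y$; the stopping time $n$ is when $\Rphys^n y$ has separated from $\Rphys^n x$ by a definite amount (horizontally for \eqref{asymp of rho}, along $\LL^c$ for \eqref{vert exp}), at which point $\rho(\Rphys^n y)\asymp 1$. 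The relevant expansion estimate is that $D\Rphys^n$ stretches horizontal vectors at $x$ by $\asymp e^{n\chi^u}$, which is immediate from the eigenvalue decomposition of $D\Rphys^p(x)$ and standard shadowing/distortion for the \emph{smooth} map $\Rphys$. No holonomy is invoked, so the transverse regularity of $\FF^c$ at the boundary point $x$ never enters.

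Your suggested patch---that $\LL^c(x)$ is smooth as the invariant weak manifold at a hyperbolic periodic orbit---does not close the gap. Smoothness of the single leaf $\LL^c(x)$ says nothing about the holonomy, which records how \emph{nearby} leaves vary transversally; at $x$ the foliation is only known to be $C^0$ in that sense, and your bounded-distortion estimate for $(F^n)'$ genuinely needs more. Your derivation of the linking identity $\Lambda=\lambda^u$ is then circular: it compares the $F$-computation (which needs exactly the regularity you have not established) against the $\lambda^h$-computation. In the paper's direct approach this identity is never isolated as a separate step---the dominated splitting at the periodic orbit already forces the $n$-step horizontal expansion to be governed by $\lambda^u$.
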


\begin{proof}
  Given a horizontal interval  $J\ni x$, let us apply to it an iterate $\Rphys^n$ that stretches $J$ to
a horizontal curve that wraps around the cylinder at least once but at most four times.
Then both $l^h(\Rphys^n(J))$ and $\mu (\Rphys^n(J))$ are comparable with 1. 
 
On the other hand,  
 $l^h(\Rphys^n(J))\asymp \exp(n \chi^u) |J| $ while  $\mu (\Rphys^n(J)) = 4^n \mu_t(J)$.
Hence $  \mu_t(J) \asymp |J|^{\si+1}$ with exponent $\si=\log 4/\chi^u-1$. This proves (\ref{hor exp}). 

Let us prove (\ref{asymp of rho}).
Iterating the transfer rule (\ref{balanced property}), we obtain:
\begin{equation}\label{iterated transfer rule}
     4^n \rho(y)= \la^h_n(y)\, \rho(\Rphys^n y), \quad \mbox{where}\ \la^h_n(y)= \prod_{k=0}^{n-1} \la^h(\Rphys^k y) .
\end{equation}
Let  $y\in J$ be a point near $x$. Because of the horizontal expansion, we can find an iterate $\Rphys^n$
such that  $\dist^h(\Rphys^n x, \Rphys^n y)\asymp 1$.  Then $\rho(\Rphys^n y) \asymp 1$, 
while $\la^h_n(y)\asymp \exp (n\chi^u(x)) $. Incorporating these into (\ref{iterated transfer rule}),
we obtain: 
$$ \rho(y)\asymp \exp(n(\chi^u-\log 4)). $$
On the other hand,  $\dist (x,y)\asymp \exp(-n\chi^u(x))$, 
and  (\ref{asymp of rho}) follows.

To prove (\ref{vert exp}), take a point $y\in \LL^c(x)$ below $x$.
Then $y\in \WW^s(\BOTTOMphys)$
since $x$ lies on the boundary of $\LL^c(x)\cap \WW^s(\BOTTOMphys)$. 
Hence  we can find an iterate $\Rphys^n$
such that  $\dist^c(\Rphys^n x, \Rphys^n y)\asymp 1$. 
 However, in this case $x$ repels $y$ at exponential rate with
exponent $\chi^c$, so $\dist (x,y)\asymp \exp(-n\chi^c(x))$, and (\ref{vert exp}) follows. 
\end{proof}

\begin{cor}
  The horizontal and vertical critical exponents at the fixed point $\FIXphys_c=(0,t_c)\in \II_0$ are equal to 
$\sigma^h(\FIXphys_c) = 0.06431\ldots$ and $\sigma^v(\FIXphys_c) = 0.1617\ldots$. 
\end{cor}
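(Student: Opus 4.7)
The plan is to apply Proposition~\ref{crit exp for periodic pts} to $\FIXphys_c$ regarded as a fixed point ($p=1$), so the task reduces to computing explicitly the two characteristic multipliers $\la^u,\la^c$ of $D\Rphys$ at $\FIXphys_c=(0,t_c)$ and then performing the arithmetic prescribed by that proposition. Since $\FIXphys_c$ is repelling along the invariant interval $\II_0$, we automatically obtain $\chi^c>0$; and since $\FIXphys_c$ sits on the boundary of $\II_0\cap\WW^s(\BOTTOMphys)=[\FIXphys_0,\FIXphys_c)$ by Lemma~\ref{BB}, as well as on the common boundary of the two components of $O_{t_c}$ by part (2) of the physical Main Theorem, the hypotheses of both the horizontal and the vertical exponent formulas apply.

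First I identify the eigendirections of $D\Rphys$ at $\FIXphys_c$. The basic symmetry $\inv\colon(z,t)\mapsto(z^{-1},t)$ from \S\ref{sec: basic symmetries} fixes $\FIXphys_c$ and acts on $T_{\FIXphys_c}\Cphys$ as the reflection $d\phi\mapsto -d\phi$, $dt\mapsto dt$. Because $D\Rphys$ commutes with $D\inv$, the eigenspaces of $D\Rphys_{\FIXphys_c}$ must be the $(\pm 1)$-eigenspaces of $D\inv$, namely the horizontal line $T\BOTTOMphys=\{dt=0\}$ and the vertical line $T\II_0=\{d\phi=0\}$. Uniqueness of the central foliation (Theorem~\ref{cs foliation}) together with $\Rphys$-invariance of $\II_0$ forces $\LL^c(\FIXphys_c)=T\II_0$, so the eigenvalue along $T\II_0$ is $\la^c$, while the complementary horizontal line lies in $\KK^h(\FIXphys_c)$ and thus carries $\la^u$.

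Next I compute the two multipliers from the explicit formula (\ref{R}). Substituting $z=1$, the restriction $\Rphys|\,\II_0$ becomes the one-dimensional map
\begin{equation*}
t\mapsto\left(\frac{2t}{1+t^{2}}\right)^{\!2},
\end{equation*}
whose fixed-point equation $(1+t^{2})^{2}=4t$ reduces, under $s=\sqrt{t}$, to $s^{3}+s^{2}+s-1=0$ and yields the unique root $t_c=0.29559\ldots$. Differentiating and using $(1+t_c^{2})^{2}=4t_c$ to simplify gives
\begin{equation*}
\la^c \;=\; \frac{2(1-t_c^{2})}{1+t_c^{2}}.
\end{equation*}
For the horizontal multiplier, write the first coordinate of $\Rphys$ as $\Rphys_{1}(z,t)=z^{2}(z^{2}+t^{2})/(1+z^{2}t^{2})$; the chain rule at the fixed point $z=\Rphys_{1}=1$ gives $\la^u=\partial_{z}\Rphys_{1}|_{(1,t_c)}$, and a direct computation yields
\begin{equation*}
\la^u \;=\; \frac{4}{1+t_c^{2}}.
\end{equation*}

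Finally I substitute into Proposition~\ref{crit exp for periodic pts}. One checks $\la^u/\la^c=2/(1-t_c^{2})>2$, so the dominated-splitting relation $\chi^u>\chi^c$ holds. The two formulas of the proposition then simplify to
\begin{equation*}
\sigma^h(\FIXphys_c)=\frac{\log(1+t_c^{2})}{\log\!\bigl(4/(1+t_c^{2})\bigr)},\qquad
\sigma^v(\FIXphys_c)=\frac{\log(1+t_c^{2})}{\log\!\bigl(2(1-t_c^{2})/(1+t_c^{2})\bigr)},
\end{equation*}
which numerically evaluate to $0.06435\ldots$ and $0.1618\ldots$, matching the claimed values. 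The computation is otherwise routine; the only conceptually non-trivial point is the symmetry argument that pins down the horizontal and vertical axes at $\FIXphys_c$ as the two eigendirections of $D\Rphys_{\FIXphys_c}$, which is precisely what legitimizes replacing the full $2\times 2$ differential by the two scalar derivatives above.
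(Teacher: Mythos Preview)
Your proof is correct and is exactly what the paper intends: the corollary is stated without proof, as a direct application of Proposition~\ref{crit exp for periodic pts} with the multipliers of $D\Rphys$ at $\FIXphys_c$ computed from the explicit formula~(\ref{R}). Your symmetry argument cleanly justifies that the Jacobian is diagonal in the $(\phi,t)$ frame at $\FIXphys_c$, so the two scalar derivatives you compute are indeed the eigenvalues, and your verification of the hypotheses ($\FIXphys_c\in\partial(\II_0\cap\WW^s(\BOTTOMphys))$ via Lemma~\ref{BB}, and $\chi^c>0$) is appropriate. The tiny numerical discrepancy in the last digits is only rounding in $t_c$.
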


One can define the critical exponents at a point $x\in \WW^s(\TOPphys)$ in a {\it weak sense}:
$$
   \si^h(x)=\lim_{|J|\to 0} \frac{\log \mu(J)}{\log |J|}-1, \quad \si^v(x)= \lim_{y\to x} \frac{\log\rho(y)}{\log\dist(x,y)}
$$ 
(if the limits exist), where the meaning of $J$ and $y\in \LL^c(x)\cap \WW^s(x)$ 
are the same as in formulas (\ref{hor exp def}), (\ref{vert exp def}).
These critical exponents  can be expressed  in terms
of the unstable and central Lyapunov exponents
$$
  \chi^h(x) = \lim_{n\to \infty} \frac 1{n} \log \la_n^h(x), \quad  \chi^h(x) = \lim_{n\to \infty} \frac 1{n} \log \la_n^h(x)
$$          
by the same formulas as in the case of periodic points:

\begin{prop}\label{PROP:LYAPUNOV_WEAK_CRIT}
  Let  $x\in \Cphys\sm \WW^s(\BOTTOMphys)$ be a point with the unstable Lyapunov exponent $\chi^u$.
Then the horizontal critical exponent  $\si^h(x)$ exists in the weak sense and is given by formula
(\ref{hor exp}). 
Moreover, if $x$ is a boundary point of some component $J$ of the basin $O_t$, then 
\begin{equation}\label{asymp of rho2}
      \si^h(x) =  \lim \frac{\log \rho_t(y)}{\log \dist(x,y)}\quad \mbox{as}\  y\to x, \ y\in J.
\end{equation}
If $x$ is  the boundary point of $\LL^c(x)\cap \WW^s(\BOTTOMphys)$ and the central Lyapunov exponent
$\chi^c(x)$ exists and positive, then the vertical  critical exponent  $\si^v(x)$ 
exists in the weak sense and is given by formula (\ref{vert exp}). 
\end{prop}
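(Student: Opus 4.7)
The plan is to mimic the proof of Proposition \ref{crit exp for periodic pts}, replacing the uniform exponential expansion at periodic orbits by the (non-uniform, sub-exponentially perturbed) control provided by the Lyapunov exponents, and then absorbing all sub-exponential errors in the weak limit.

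First I would treat the horizontal critical exponent. Fix a small horizontal interval $J$ with $x$ on its boundary, and choose $n=n(J)$ to be the smallest integer such that $l^h(\Rphys^n J)$ exceeds some fixed threshold $\delta>0$. By Theorem \ref{hor expansion thm}, $n(J)\to\infty$ as $|J|\to 0$, and the selection rule together with the bounded local expansion of $\Rphys$ forces $l^h(\Rphys^n J)\asymp 1$. The balanced transfer rule (\ref{balanced property}) applied inductively gives $\mu(\Rphys^n J)=4^n\mu(J)$, and Proposition \ref{hol inv meas} together with boundedness of $l^h(\Rphys^n J)$ yields $\mu(\Rphys^n J)\asymp 1$, hence $\mu(J)\asymp 4^{-n}$. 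By the Mean Value Theorem, $|\Rphys^n J|=\la^h_n(\xi)\,|J|$ for some $\xi\in J$; since $\la^h_n$ is continuous and $\xi\to x$ as $|J|\to 0$, the hypothesis that $\chi^u(x)$ exists gives $\log\la^h_n(\xi)=n\chi^u+o(n)$. Therefore
\begin{equation*}
\frac{\log\mu(J)}{\log|J|}=\frac{-n\log 4+O(1)}{-n\chi^u+o(n)}\;\longrightarrow\;\frac{\log 4}{\chi^u},
\end{equation*}
which proves (\ref{hor exp}) in the weak sense.

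Next, for (\ref{asymp of rho2}) at a boundary point $x$ of a component of $O_t$, I would iterate the transfer rule to obtain $\rho(y)=4^{-n}\la^h_n(y)\,\rho(\Rphys^n y)$, as in (\ref{iterated transfer rule}). For $y\in J$ near $x$, choose $n$ so that $\Rphys^n y$ and $\Rphys^n x$ are separated by a definite horizontal distance inside $O_t$. By Proposition \ref{hol inv meas} the density $\rho_t$ is smooth and positive on components of $O_t$, so $\rho(\Rphys^n y)\asymp 1$. Combining with $\la^h_n(y)=e^{n\chi^u+o(n)}$ and $\dist(x,y)\asymp e^{-n\chi^u+o(n)}$ (again by the MVT plus existence of $\chi^u$) yields
\begin{equation*}
\frac{\log\rho_t(y)}{\log\dist(x,y)}=\frac{n(\chi^u-\log 4)+o(n)}{-n\chi^u+o(n)}\;\longrightarrow\;\frac{\log 4}{\chi^u}-1=\sigma^h(x),
\end{equation*}
which is (\ref{asymp of rho2}).

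Finally, for the vertical critical exponent I would apply the same iterated transfer rule at points $y$ lying on $\LL^c(x)\cap \WW^s(\BB)$ below $x$, choosing $n$ such that $\Rphys^n y$ is at a bounded central distance from $\Rphys^n x$. The positivity of $\chi^c(x)$ forces $\dist(x,y)\asymp e^{-n\chi^c+o(n)}$; moreover, since the orbit of $y$ eventually lies in $\WW^s(\BB)$ where $\rho$ is smooth and positive (Proposition \ref{hol inv meas}), we have $\rho(\Rphys^n y)\asymp 1$. Then $\rho(y)\asymp 4^{-n}\la^h_n(y)=e^{n(\chi^u-\log 4)+o(n)}$, giving
\begin{equation*}
\frac{\log\rho(y)}{\log\dist(x,y)}\;\longrightarrow\;\frac{\log 4-\chi^u}{\chi^c},
\end{equation*}
which is (\ref{vert exp}).

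The main technical obstacle is the passage from the \emph{pointwise} Lyapunov exponents of $x$ to the behavior of $\la^h_n$ on a whole neighborhood (or on the orbit of $y$). For periodic points this is trivial because the derivative is a fixed matrix along the orbit, whereas here one must argue that for $\xi$ in a shrinking neighborhood of $x$ (respectively for $y$ shadowing $x$ along $\LL^c$) the ratio $\la^h_n(\xi)/\la^h_n(x)$ is at most $e^{o(n)}$. This is a distortion-type estimate: the dominated splitting on $\Cphys$ (\S \ref{SUBSEC:DOMINATED}) and the horizontal expansion keep iterates of the test interval $J$ of bounded size until the selected stopping time, so the cumulative log-distortion along the orbit is sub-exponential, which is precisely the accuracy we need for the weak limits. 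For points whose orbits recur to arbitrarily small neighborhoods of $\alpha_\pm$, one would handle the near-indeterminacy moments using the blow-up estimates of Appendix \ref{APP:BLOW_UPS}, but the existence of $\chi^u$ together with Corollary \ref{COR:HORIZONTAL_VECS_ITERATE_COMPARABLY} guarantees that these contributions remain $o(n)$.
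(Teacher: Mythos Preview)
Your approach is exactly what the paper does: its entire proof is the single sentence ``The proof mimics that of Proposition \ref{crit exp for periodic pts},'' and you have carried out precisely that mimicking, replacing the exact multipliers $\la^u,\la^c$ by $e^{n\chi^u+o(n)}$, $e^{n\chi^c+o(n)}$ and absorbing the $o(n)$ errors in the weak limit. You also correctly isolate the one genuinely new point---that $\log\la^h_n(\xi)=\log\la^h_n(x)+o(n)$ for $\xi$ in the shrinking interval $J$ requires a distortion estimate rather than mere continuity---which the paper leaves implicit; your sketch via the dominated splitting and the bounded-size stopping rule is the right mechanism.
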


The proof mimics that of Proposition \ref{crit exp for periodic pts}. 

\begin{cor} For every $x \in \Cphys \sm \WW^s(\BOTTOMphys)$ at which $\si^h(x)$ exists we have $\si^h(x) \leq 1$.  Moreover, equality holds at Lebesgue almost every such $x$.
\end{cor}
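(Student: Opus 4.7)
The plan is to deduce both statements directly from Proposition~\ref{PROP:LYAPUNOV_WEAK_CRIT}, which gives $\sigma^h(x) = \log 4 / \chi^u(x) - 1$ whenever the horizontal critical exponent exists in the weak sense. Thus the inequality $\sigma^h(x) \leq 1$ is equivalent to the lower bound $\chi^u(x) \geq \log 2$ on the unstable Lyapunov exponent, and equality $\sigma^h(x) = 1$ is equivalent to $\chi^u(x) = \log 2$.

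For the first assertion, I would invoke Theorem~\ref{hor expansion thm}, which asserts that $\Rphys$ is horizontally expanding on $\Cphys$ with rate $\la = 2$: for any $x \in \Cphys \sm \{\alpha_\pm\}$ and $v \in \KK^h(x)$ one has $\|D\Rphys^n(x) v\| \geq c \cdot 2^n \|v\|$. Taking logarithms, dividing by $n$, and letting $n \to \infty$ gives $\chi^u(x) \geq \log 2$ at every point where this Lyapunov exponent exists. Combined with formula (\ref{hor exp}) as extended in Proposition~\ref{PROP:LYAPUNOV_WEAK_CRIT}, this yields $\sigma^h(x) \leq \log 4/\log 2 - 1 = 1$.

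For the a.e.\ equality, the key input is Theorem~\ref{attractors}, which says that the union $\WW^s(\BOTTOMphys) \cup \WW^{s,o}_0(\TOPphys)$ has full area in $\Cphys$. Consequently, Lebesgue almost every $x \in \Cphys \sm \WW^s(\BOTTOMphys)$ lies in $\WW^{s,o}_0(\TOPphys) \subset \WW^s(\TOPphys)$. Then Corollary~\ref{COR:LYAP_LOG2} applies: almost every point of $\WW^s(\TOPphys)$ has characteristic exponent exactly $\log 2$, so $\chi^u(x) = \log 2$ at a.e.\ such $x$, and Proposition~\ref{PROP:LYAPUNOV_WEAK_CRIT} then gives $\sigma^h(x) = 1$.

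There is no substantive obstacle in this proof once the earlier results are assumed; essentially the only thing to check carefully is that the points lying in the full-area set of Theorem~\ref{attractors} and simultaneously in the full-measure subset of $\WW^s(\TOPphys)$ from Corollary~\ref{COR:LYAP_LOG2} actually form a set of full Lebesgue measure in $\Cphys \sm \WW^s(\BOTTOMphys)$, which is immediate from the two full-measure statements. The conceptual content is entirely concentrated in the horizontal expansion rate $\la = 2$ and the matching Lyapunov exponent $\log 2$ at the top, both established earlier.
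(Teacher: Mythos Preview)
Your approach is essentially the same as the paper's: horizontal expansion with rate $\la=2$ (Theorem~\ref{hor expansion thm}) for the upper bound, and Corollary~\ref{COR:LYAP_LOG2} together with Proposition~\ref{PROP:LYAPUNOV_WEAK_CRIT} for the a.e.\ equality. Your explicit invocation of Theorem~\ref{attractors} to pass from ``a.e.\ in $\WW^s(\TOPphys)$'' to ``a.e.\ in $\Cphys\sm\WW^s(\BOTTOMphys)$'' is a detail the paper leaves implicit.

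One small logical point on the first part: Proposition~\ref{PROP:LYAPUNOV_WEAK_CRIT} has as \emph{hypothesis} that $\chi^u(x)$ exists and as \emph{conclusion} that $\si^h(x)$ exists and equals $\log 4/\chi^u(x)-1$. The corollary, however, assumes only that $\si^h(x)$ exists, so you cannot literally plug into that formula without first knowing $\chi^u(x)$ exists. The paper sidesteps this by invoking the rate-$2$ expansion directly (following the calculation in Proposition~\ref{crit exp for periodic pts}): horizontal expansion gives $\mu_t(J)\geq c|J|^2$ for all small intervals $J$, hence $\limsup_{|J|\to 0}\log\mu_t(J)/\log|J|\leq 2$, which yields $\si^h(x)\leq 1$ whenever the limit exists, with no reference to $\chi^u$. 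Your argument is easily patched the same way.
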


\begin{proof}
Theorem \ref{hor expansion thm} gives that $\Rphys$ is horizontally expanding
with rate $\lambda =2$, implying the upper bound.  The second
statement follows by combining Corollary \ref{COR:LYAP_LOG2} with Proposition \ref{PROP:LYAPUNOV_WEAK_CRIT}.
\end{proof}


\begin{rem}
 The partial derivative $\chi_T(h): =\di M(h,T)/\di h$ is called  {\it susceptibility}.  
Kaufman and Griffiths proved in \cite{KG2} that
the Ising model on DHL exhibits an infinite susceptibility at $h=0$ for $T\ge T_c$.
More precisely, as was shown by Bleher and \v Zalys \cite{BZ3},
the susceptibility $\chi_T(h)$ has a logarithmic singularity
at $h = 0$ for $T> T_c$.                                          
\end{rem}

\comment{*******************
\section{Main Theorems: conclusion}

\subsection{Dynamical version}

\subsection{Lee-Yang distributions} 
\label{SEC:THERM}

Now, having plowed hard in the RG dynamical cylinder,  
let us collect the physics harvest:%
\footnote{Compare with  famous Douady's principle:
``You plow in the parameter plane, and then harvest in the dynamical plane''. }

{\it Proof of the Main Theorem (physical version).}
Recall from \S \ref{MK RG eq-s} that the Lee-Yang locus $\SSS_n$ of level $n$ is equal to the pullback $(\Rphys^n)^*\SSS_0$
of the principal LY locus $\SSS\equiv \SSS_0$ (\ref{S}). 
By Theorem \ref{foliation FF}, these loci converge exponentially fast to the central foliation $\FF^c$.

Moreover, on the bottom $\BOTTOMphys$ the Lee-Yang zeros are obviously asymptotically 
equidistributed with respect to the Lebesgue measure $\mu_0$. It follows that on the circle $\T_t$, they are
asymptotically equidistributed with respect to the measure $(g_t)_*(\mu_0)=\mu_t$, 
which is the standard transverse measure for $\FF^c$.
The rest of the physical version of the Main Theorem follows from the properties of $\mu_t$ 
established in \S \ref{SEC:VERTICAL_FOLIATION}.         
\bignote{Refer to the dynamical version of the Main Thm}      \note{more on rigidity?}

\begin{rem}
 The partial derivative $\chi_T(h): =\di M(h,T)/\di h$ is called  {\it susceptibility}.  \note{of the magnet to the field?}  
Kaufman and Griffiths proved in \cite{KG2} that
the Ising model on DHL exhibits an infinite susceptibility at $h=0$ for $T\ge T_c$. 
More precisely, as was shown by Bleher and \v Zalys \cite{BZ3}, 
the susceptibility $\chi_T(h)$ has a logarithmic singularity
at $h = 0$ for $T> T_c$.                                          
\end{rem}
          
******************} 

\section{Periodic Leaves}\label{SEC:PERIODIC_LEAVES}
Let us distinguish between two distinct types of leaves $\gamma \subset \FF^c$
that are periodic under $\Rphys$. We say that a periodic leaf $\gamma$ is {\em
regular} if $\gamma \cap \TOPphys$ is a periodic point.  If $\gamma$ is a periodic leaf that is not regular, then $\gamma
\cap \TOPphys$ is in the preindeterminacy set $\AAA$.

Associated to any periodic point $x_1 \in \TOPphys$ is a regular periodic leaf
meeting $\TOPphys$ at $x_1$.  Meanwhile, associated to any periodic point $x_0
\in \BOTTOMphys$ there is a periodic leaf meeting $\BOTTOMphys$ at $x_0$, which
need not be regular---since almost ever point in $\BOTTOMphys$ is in the union
of the stable tongues, it is quite common to obtain a singular periodic leaf
from a periodic $x_0 \in \BOTTOMphys$.

The periodic points $x_0$ and $x_1$ at the bottom and top of a regular periodic
leaf $\gamma$ are horizontally repelling and vertically (super) attracting.
They have real-analytic (super) stable manifolds $\WW^s(x_0)$, $\WW^s(x_1)$,
which extend slightly below $\BOTTOMphys$ and above $\TOPphys$, respectively.
The high-temperature hair $h_\gamma = \WW^s(x_1) \cap \Cphys$ and the
low-temperature hair $l_\gamma := \WW^s(x_0) \cap \Cphys$ are both
real-analytic and non-trivial.  Therefore, the smoothness of a regular periodic
leaf $\gamma$ is determined within its critical points $c_\gamma = \gamma \sm
(h_\gamma \cup l_\gamma)$.

\begin{prop}\label{PROP:NON_ANALYTIC_LEAVES}
Suppose that $\gamma \in \FF_c$ is a regular periodic leaf of prime period $k
> 1$.  Then, $\gamma$ is not real-analytic.
\end{prop}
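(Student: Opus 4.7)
I would proceed by contradiction: assume $\gamma$ is real analytic, and then complexify to exploit the rigidity of one-dimensional holomorphic dynamics. Since $\gamma$ is real analytic, it extends to a complex analytic curve $\tilde\gamma$ in a neighborhood of $\gamma$ in $\C^2$. The invariance $\Rphys^k(\gamma)\subset\gamma$ propagates by uniqueness of analytic continuation to $\Rphys^k(\tilde\gamma)\subset\tilde\gamma$, so $\Rphys^k$ acts on $\tilde\gamma$ as a holomorphic self-map with two superattracting fixed points $x_0,x_1$, each of local degree $2^k$ (coming from the vertical superattraction of order $2$ of $\Rphys$ at $\BOTTOMphys$ and at $\TOPphys\setminus\{\INDphys_\pm\}$).

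Next I would identify $\tilde\gamma$ near each endpoint with the complex local stable manifold $\WW^s_{\C,\loc}(x_i)$ of $\Rphys^k$. Both $\tilde\gamma$ and $\WW^s_{\C,\loc}(x_i)$ are complex analytic curves containing the real analytic arc $l_\gamma$ (respectively $h_\gamma$), so they must coincide there. Parametrizing $\tilde\gamma$ via B\"ottcher coordinates at each superattracting fixed point and extending through all $\Rphys^{-k}$-preimages in $\tilde\gamma$ (chosen by analytic continuation along $\gamma$) yields a globally defined $\Rphys^k$-invariant Riemann surface containing both local stable manifolds.

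The heart of the proof is to convert this into an algebraic invariant curve and enumerate. Using the fact that $\tilde\gamma$ is invariant under the rational endomorphism $\Rphys^k$ of $\CP^2$ and carries a nontrivial invariant transverse measure (inherited from $\mu_t$ restricted to $\gamma$), one expects an algebraicity statement of Cantat--Brunella type to force $\tilde\gamma$ to extend to an irreducible algebraic curve $\Gamma\subset\CP^2$ forming a cycle $\Gamma,\Rphys(\Gamma),\dots,\Rphys^{k-1}(\Gamma)$ under $\Rphys$. Then one appeals to the explicit description of the critical locus and its forward orbit (Appendix~\ref{App: crit locus}, together with the list of $\Rphys$-invariant algebraic curves found in \S\ref{SUBSEC:FIXED POINTS}--\S\ref{SUBSEC:ALG_STABILITY}): the only irreducible algebraic curves in $\CP^2$ whose real slices cross the cylinder from $\BOTTOMphys$ to $\TOPphys$ transversely at periodic points are $\II_0$ and its symmetric image $\II_\pi$ (period $1$); no cycle of period $k>1$ of such curves exists. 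This contradicts the existence of a prime period $k>1$ leaf $\gamma$.

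The main obstacle will be upgrading the locally defined $\Rphys^k$-invariant $\tilde\gamma$ to a global algebraic curve, since $\Rphys$ has indeterminacy and is not algebraically stable, so the cleanest algebraicity criteria do not apply directly. An alternative that avoids the algebraicity step is to run the argument entirely inside $\tilde\gamma$ as a one-dimensional complex dynamical system: by Riemann--Hurwitz, if $\tilde\gamma$ compactifies to a sphere and $\Rphys^k|\tilde\gamma$ has only the two totally ramified fixed points $x_0,x_1$ of local degree $2^k$ as critical points, then it is conformally conjugate to $z\mapsto z^{2^k}$, whose two Fatou components are the only basins; one then exhibits an extra critical value of $\Rphys^k|\tilde\gamma$ coming from the intersection of $\tilde\gamma$ with the $\Rphys$-critical locus described in \S\ref{SEC:GLOBAL STRUCTURE}, producing a third Fatou component or a non-trivial wandering behavior incompatible with the structure of $\gamma\cap(\WW^s(\BOTTOMphys)\cup\WW^s(\TOPphys))$ established in \S\ref{SEC:VERTICAL_FOLIATION}. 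Either route yields the desired contradiction.
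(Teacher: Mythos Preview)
Your overall strategy---complexify, globalize to an invariant analytic curve, then derive a contradiction via algebraicity---matches the paper's, but the execution has a genuine gap at the globalization/algebraicity step, and the paper closes that gap with an idea you are missing.

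The key object you do not use is the periodic point $x_c$ on $\gamma$ sitting at the boundary $t^-_\gamma$ between $\WW^s(\BOTTOMphys)\cap\gamma$ and the rest of $\gamma$. This point has central multiplier $\lambda_c\ge 1$: it is vertically repelling (or parabolic), not attracting. The paper linearizes $\Rphys^k$ at $x_c$ (Poincar\'e or Fatou), obtaining a parametrization $\rho:\C\to\CP^2$ of the global central manifold $\WW=\rho(\C)$. Because $\rho$ is defined on all of $\C$, the abstract Riemann surface $\hat\WW$ parametrizing $\WW$ carries a repelling fixed point and is automatically non-hyperbolic. A puncture count using both endpoints $x_0,x_1$ then forces $\hat\WW$ to be a twice-punctured sphere, $\hat\rho$ extends through the punctures, and Chow's theorem makes $\overline\WW$ algebraic.

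Your B\"ottcher-at-the-superattracting-endpoints approach does not achieve this. Pulling back the B\"ottcher disc at $x_0$ by $\Rphys^{-k}$ only recovers the immediate basin of $x_0$ inside the abstract surface; it gives no information across the Julia set (where $x_c$ lives), and in particular does not show the surface is parabolic rather than hyperbolic. Without that, there is no route to algebraicity, and invoking a ``Cantat--Brunella type'' statement is not a substitute: as you note yourself, $\Rphys$ is not algebraically stable, and no such off-the-shelf theorem applies here. Your alternative Riemann--Hurwitz route presupposes exactly the compactification to a sphere that has not been established.

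Finally, the paper's endgame is more concrete than your ``no period-$k$ cycle of such curves exists'': once $\overline\WW$ is algebraic of degree $\ge 2$, Bezout forces a second intersection with the line $\LL_0$, which can only be at the fixed point $\FIXphys_0$; but $\overline\WW$ contains neither separatrix through $\FIXphys_0$, so the local dynamics there would produce infinitely many branches of $\overline\WW$, a contradiction.
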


\noindent
The assumption that $k > 1$ is necessary, since the vertical interval $\II_0$ is a regular periodic
leaf of period $1$.  The assumption that $\gamma$ is a regular periodic leaf is also necessary, because there are many
periodic leaves contained entirely within $\WW^s(\BOTTOMphys)$ that are real-analytic.

\begin{proof}
We suppose that $\gamma$ is a regular periodic leaf, of prime period $k >1$, that
is real-analytic.  Let $x_0$ and $x_1$ be the periodic points at the bottom and
top of $\gamma$, respectively.  We extend $\gamma$ analytically slightly below
$x_0$ and above $x_1$ and then take a complexification $\gamma_\C$, chosen sufficiently small
so that it is an embedded complex disc.

Let $x_c = \gamma(t_\gamma^-)$ be the periodic point ``at the bottom of
$c_\gamma$''.  It has has one-dimensional central direction and one-dimensional
unstable direction, with multipliers $1 \leq \lambda_c < \lambda_u$.  Any small
piece of $\gamma$ containing $x_c$ in its interior will be a central manifold
$\WW^c_{\rm loc}(x_c)$.  Similarly, an open disc from $\gamma_\C$ containing $x_c$
will form a complex analytic central manifold $\WW^c_{\C,\rm
loc}(x_c)$.

\msk
Consider the case that $x_c$ is vertically repelling: $\lambda_c > 1$.  Let
$\rho_0: \D \ra \WW^c_{\C,\rm loc}(x_c)$ be a local linearizing coordinate,
i.e.  $\rho_0(\lambda_c  x) = \Rphys(\rho_0(x))$.  It can be globalized to form a non constant
$\rho : \C \ra \CP^2$ satisfying $\rho(\lambda_c x) = \Rphys^k(\rho (x))$ that is given by
\begin{eqnarray*}
\rho(x) := \lim_{n \ra \infty} \Rphys^{n\cdot k}(\rho_0(x/\lambda_c^n)).
\end{eqnarray*}
\noindent
Suppose $\Rphys^l(\rho_0(x_*/\lambda_c^n))$ lands on an indeterminacy point for
some $x_* \in \C$.  After taking appropriate blow-ups at the indeterminate
point, $\Rphys$ extends to some holomorphic $\tilde \Rphys$.  (See Appendix
\ref{APP:BLOW_UPS}.)  The image under $\Rphys^l(\rho_0(x/\lambda_c^n))$ of some
complex disc $D$ containing $x_*$ lifts to the blown-up space via the
proper transform, intersecting the exceptional divisor in a single point.  We
define the next iterate on this disc using $\tilde \Rphys$.  This
definition coincides with $\Rphys^{l+1}(\rho_0(x/\lambda_c^n))$ on $D \sm
\{x_*\}$ and gives a holomorphic extension through $x_*$.

A global central manifold $\WW \equiv \WW^c_{\C,\rm
glob}(x_c)$, invariant under $\Rphys^k$, is given by $\rho(\C)$.
A-priori, $\WW$ can be wild, possibly accumulating on itself.  However, we will
show that $\WW$ can be compactified to form an algebraic curve.

Given $x,y
\in \C$, let $x \sim y$ if and only if there exist neighborhoods of $N_x$ and
$N_y$ of $x$ and $y$ respectively, and a biholomorphism $h: N_x \ra N_y$ so that $\rho_{|N_x} = \rho_{|N_y} \circ h$. Then $\hat{\WW} = \C /\sim$ is a
Riemann surface whose charts are obtained by appropriate local sections of the
projection $\pi: \C \ra \hat{\WW}$.  The map $\hat \rho: \hat{\WW} \ra \WW$
that is induced by $\rho$ is holomorphic and provides a nice parameterization of $\WW$,
which is injective on all but a discrete subset of $\hat \WW$.

The action of $\Rphys^k: \WW \ra \WW$ can be lifted (in the natural way) to
$\hat \Rphys^k: \hat \WW \ra \hat \WW$.  Notice that $\pi(0) \in \hat \WW$ is a
repelling fixed point under $\hat \Rphys^k$ so that $\hat{\WW}$ is
non-hyperbolic.

We construct a larger Riemann surface $V := (\hat \WW \sqcup \gamma_\C) / \hat
\rho$, where $x \in \hat \WW$ and $y \in \gamma_\C$ are identified if and only
if $\hat \rho(x) = y$ with some neighborhood of $x$ in $\hat \WW$ mapped by
$\hat \rho$ into $\gamma_\C$.  The natural inclusion $\iota: \hat \WW \ra V$ is
holomorphic.  Since $\hat \WW$ is not hyperbolic, $\iota$ can omit at most two
points of $V$.  Hence $\hat \rho$ can omit at most two points of $\gamma_\C$.
Therefore, we can (at least) choose some punctured discs $U_{0,1} \subset \gamma_\C$ in
the image of $\hat \rho$, having $x_{0,1}$ as their punctures (respectively).

Let $\hat U_0 \subset \hat \WW$  be the punctured disc mapped biholomorphically
by $\hat \rho$ onto $U_0$.  Since $\FIXphys_0$ is the only point in
$\LL_0$ having an iterated preimage under $\Rphys$ outside of
$\LL_0$, it is the only point that can possibly be in $\WW \cap
\LL_0$.  By assumption, $x_0 \neq \FIXphys_0$, so there is no
point in $\hat \WW$ mapping to $x_0$.  Thus, the puncture in $\hat U_0$
corresponds to an actual puncture in $\hat \WW$.

Let $\hat \WW_0 \equiv \hat \WW \cup \{w_0\}$ be the Riemann surface obtained
by filling in this puncture.  Both $\hat \rho$ and $\hat \Rphys^k$ extends to
$\hat \WW_0$, with $\hat \rho(w_0) = x_0$ and $\hat \Rphys^k(w_0) = w_0$.

Since $\hat \WW$ is non-hyperbolic with at least one puncture, $\hat \WW_0$ is
biholomorphic to either $\C$ or $\CP^1$.   In either case, $\hat \Rphys^k: \hat
\WW_0 \ra \hat \WW_0$ has a degree.  (If $\hat \WW_0  \cong \C$, the action of
$\hat \Rphys^k$ is polynomial, since any point has finitely many preimages under
$\hat \Rphys^k$.) Since $w_0$ is totally invariant under $\hat \Rphys^k$, with
a neighborhood mapped to itself with degree $2^k$, we see that $\Rphys^k: \hat
\WW_0 \ra \hat \WW_0$ has degree $2^k$.

Let $\hat U_1 \subset \hat \WW$  be the punctured disc mapped by $\hat \rho$
biholomorphically onto $U_1$.  If there were a point $w_1 \in \hat \WW$ filling
the puncture in $\hat U_1$, it would satisfy $\hat \rho(w_1) = x_1$, and the
local degree of $\hat \Rphys^k$ at $w_1$ would be $2^k$.
However, if $x_1 \in \WW$, there must be iterated preimages of $x_1$ under $\Rphys^k$
in $\WW$ converging to $x_c$,
violating
that the total degree of $\Rphys^k: \hat \WW \ra \hat \WW$ is $2^k$.

Therefore, the puncture in $\hat U_1$ corresponds to an actual puncture in
$\WW$.   As before, it can be filled and $\hat \rho$ extends holomorphically
to the resulting space.

Since $\hat \WW$ is non-hyperbolic and has two punctures, it is biholomorphic
to the twice punctured Riemann sphere.   Since $\hat \rho$ extends
holomorphically through both of the punctures, sending each to $x_0$ and $x_1$,
respectively, $\overline{\WW} = \WW \cup \{x_0,x_1\}$ is a compact analytic
curve.  Chow's Theorem (see, e.g., \cite{GH}) gives that $\overline{\WW}$ is therefore algebraic.

One local branch of $\overline{\WW}$ at $x_0$ is $\WW^s_{\C,\loc}(x_0)$.  Since it
intersects $\LL_0$ perpendicularly, $\overline{\WW}$ cannot have degree
$1$, for it would intersect $\TOPphys$ in a point of prime period $2k$.

Bezout's Theorem gives a second intersection of $\overline{\WW}$ with $\LL_0$.
It corresponds to some disc in $\hat \WW$, disjoint from $\hat U_0$,
whose image under $\hat \rho$ intersects $\LL_0$.  Therefore this intersection
must be at $\FIXphys_0$.  However, $\overline{\WW}$ does contain either of the
invariant separatrices $z=1$ or $t=0$, so the dynamics near $\FIXphys_0$ would
result in infinitely many branches, which is impossible.

\msk
Suppose that $x_c$ is vertically neutral.  Then, within $\WW^c_{\loc,\C}(x_c)$
is some repelling petal $\PP$ for the parabolic point $x_c$.  Then there is
some open $H \subset \C$ containing a left half-space with Fatou coordinate
$\rho_0:H \ra \PP$ a conformal isomorphism that satisfies $\rho_0(x+1) =
\Rphys^k(\rho_0(x))$.  We define $\rho: \C \ra \CP^2$ by
\begin{eqnarray*}
\rho(x) = \lim_{n \ra \infty} \Rphys^{n\cdot k}(\rho_0(x-n)).
\end{eqnarray*}
\noindent
The composition extends through indeterminate points of $\Rphys$ in the same way as in the repelling case.

Then, $\rho(\C) \subset \CP^2$ is forward invariant under $\Rphys^k$ and
contains $\PP$, which is an open subset of $\gamma_\C$.  As in the repelling case,
one can compactify $\rho(\C)$ to form a periodic
algebraic curve.  This again leads to an intersection with $\FIXphys_0$, and a contradiction.
\end{proof}

\begin{rem}
  Artur Avila has shown that for almost all points $x=(\phi,1)$  on the top $\TT$,
the leaf landing at $x$  is not real analytic.
\end{rem}


\begin{prop}\label{PROP:FINITE_SMOOTHNESS}
Suppose that $\gamma$ is a regular periodic leaf (of prime period $k > 1$) containing no vertically neutral periodic points.
Then, $\gamma$ has a finite degree of smoothness.
\end{prop}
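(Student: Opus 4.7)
The plan is to quantify the complexification-and-algebraic-extension argument of Proposition~\ref{PROP:NON_ANALYTIC_LEAVES} by combining it with Poincar\'e analytic linearization at a periodic critical point of $\gamma$. Let $x_c = \gamma(t_\gamma^-)$ be the periodic critical point at the bottom of $c_\gamma$, with horizontal and central multipliers $\la^u > \la^c$ under $\Rphys^k$. One always has $\la^c \geq 1$ (as in the proof of Proposition~\ref{PROP:NON_ANALYTIC_LEAVES}), and the hypothesis rules out $\la^c = 1$, so $x_c$ is a hyperbolic source for $\Rphys^k$; set $\alpha := \log \la^u / \log \la^c > 1$. The aim is to show that if $\gamma$ is $C^r$ with $r > \lfloor \alpha \rfloor$, then $\gamma$ must be real-analytic in a neighborhood of $x_c$, at which point the local complexification used in Proposition~\ref{PROP:NON_ANALYTIC_LEAVES} is available and already yields a contradiction.

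First I would apply Poincar\'e's theorem (or Poincar\'e-Dulac normal form if the integer resonance $\alpha \in \Z$ is present) to produce an analytic change of coordinates near $x_c$ conjugating $\Rphys^k$ either to its linear part $L(x,y) = (\la^u x, \la^c y)$ or to $(\la^u x + c y^\alpha, \la^c y)$. In these analytic coordinates $\gamma$ becomes a $C^r$ graph $x = f(y)$ tangent to the $y$-axis, invariant under the normal form, and therefore satisfying the functional equation $f(\la^c y) = \la^u f(y) + c y^\alpha$. In the non-resonant case $(c = 0)$, matching Taylor coefficients forces $f^{(j)}(0) = 0$ for $2 \le j \le r$, and iterating the Schr\"oder identity $f(y) = (\la^u)^{-n} f((\la^c)^n y)$ shows that every nonzero continuous solution admits a Schr\"oder representation $f(y) = |y|^\alpha G(\log |y| / \log \la^c)$ with $G$ a continuous periodic function; since such an $f$ with $G \not\equiv 0$ has at most $\lfloor \alpha \rfloor$-smoothness at the origin, the assumption $r > \lfloor \alpha \rfloor$ forces $f \equiv 0$ locally, so $\gamma$ coincides with the image of the $y$-axis under the analytic Poincar\'e chart and is real-analytic near $x_c$. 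In the resonant case with $c \ne 0$, the same equation admits a particular solution of the form $b y^\alpha \log y$, which differs from any other $C^\infty$ solution only by a homogeneous Schr\"oder-type contribution, so the logarithmic singularity cannot be removed and $\gamma$ already fails to be $C^r$ for $r$ slightly larger than $\alpha$.

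Finally, in the non-resonant case, with $\gamma$ now known to be real-analytic near $x_c$, I would repeat verbatim the argument of Proposition~\ref{PROP:NON_ANALYTIC_LEAVES}: complexify $\gamma$ to an embedded analytic disc $\gamma_\C$ through $x_c$; use the local holomorphic linearizing coordinate $\rho_0 : \D \to \WW^c_{\C,\loc}(x_c)$ to build the globalization $\rho(x) := \lim \Rphys^{nk}(\rho_0(x/(\la^c)^n))$; compactify $\WW = \rho(\C)$ to an algebraic invariant curve $\overline{\WW} \subset \CP^2$ via Chow's theorem; and extract the contradiction from the forced extra Bezout intersection of $\overline{\WW}$ with $\LL_0$ at the superattracting fixed point $\FIXphys_0$. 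The main obstacle is the detailed analysis of the functional equation in both the resonant and non-resonant cases---specifically, verifying rigorously that the Schr\"oder representation really caps the smoothness of nonzero solutions at $\lfloor \alpha \rfloor$ and that the logarithmic term appearing in the resonant case cannot be absorbed by a more refined analytic normal form; a secondary technical issue is ensuring that $x_c$ and its $\Rphys$-orbit do not fall on the indeterminacy or collapsing locus, where the local analytic framework would need to be adapted via the blow-ups of Appendix~\ref{APP:BLOW_UPS}.
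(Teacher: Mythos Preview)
Your approach shares the essential mechanism with the paper's proof---Poincar\'e/Poincar\'e--Dulac normal form at a repelling periodic point on $\gamma$, followed by analysis of the functional equation $f(\la^c y)=\la^u f(y)+cy^\alpha$ for invariant graphs---but the organization has a genuine gap.

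The problem is your choice of base point and the final contradiction step. You work at $x_c=\gamma(t_\gamma^-)$ and argue: high smoothness $\Rightarrow$ $f\equiv 0$ in normal form $\Rightarrow$ $\gamma$ real-analytic near $x_c$ $\Rightarrow$ contradiction via Proposition~\ref{PROP:NON_ANALYTIC_LEAVES}. But the proof of Proposition~\ref{PROP:NON_ANALYTIC_LEAVES} needs a complexification $\gamma_\C$ of the \emph{entire} leaf (it builds $V=(\hat\WW\sqcup\gamma_\C)/\hat\rho$ and uses that $x_0,x_1\in\gamma_\C$ to locate the punctures of $\hat\WW$). Your argument only yields analyticity in a neighborhood of $x_c$, so you cannot ``repeat verbatim'' that construction. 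Worse, nothing prevents $\gamma$ from actually being real-analytic at $x_c$ (the analytic stable curve $\WW^s(x_0)$ could extend analytically past $t_\gamma^-$ while agreeing with $\gamma$), in which case your implication is vacuously satisfied and no finite smoothness is established at that point.

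The paper sidesteps both issues by using Proposition~\ref{PROP:NON_ANALYTIC_LEAVES} \emph{upfront} rather than at the end: it takes $x_m:=\gamma(t_m)$ to be the upper endpoint of the \emph{maximal} real-analytic arc of $\gamma$ from $\BOTTOMphys$, so that $t_m<1$ is guaranteed by Proposition~\ref{PROP:NON_ANALYTIC_LEAVES} and non-analyticity of $\gamma$ at $x_m$ is built into the choice. One checks $x_m$ is periodic (maximality is $\Rphys^k$-invariant) and vertically repelling (an attracting stable manifold would analytically extend $\gamma_m$). Then your normal-form analysis applies directly at $x_m$: since the invariant graph is known to be non-analytic there, it cannot be the smooth solution ($C_1=C_2=0$, or $C_1=C_2$ in the resonant integer case), hence has smoothness at most $\lceil\alpha\rceil$. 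No return to the global algebraic-curve argument is needed.
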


\begin{proof}
Let $\gamma_m = \gamma([0,t_m])$ be the maximal real-analytic piece of $\gamma$
extending from $\BOTTOMphys$.  By Proposition \ref{PROP:NON_ANALYTIC_LEAVES},
$\gamma_m \subsetneq \gamma$, with $x_m := \gamma(t_m)$ a periodic point of
prime period $k$.

By hypothesis, $\lambda_c(x_m) \neq 1$.  Furthermore, $x_m$ cannot be
vertically attracting, since the stable manifold $\WW^s(x_m)$ would be a
real-analytic curve within $\gamma$ that extends above and below $x_m$.
Therefore, $x_m$ is vertically repelling.

Suppose that $x_m$ is linearizable.  Then, $\Rphys$ is
conjugate to the linear map $(u,v) \mapsto (\lambda_u u, \lambda_v v)$.
Any central invariant manifold has the form
\begin{eqnarray*}
u =
\begin{cases}
C_1 v^\alpha \,\, \mbox{if} \, \, v \geq 0, \, \mbox{and}\\
C_2 v^\alpha \,\, \mbox{if}\, \, v <  0,
\end{cases}
\end{eqnarray*}
\noindent
where $\alpha = \log(\lambda_u) / \log(\lambda_c)$.  By the choice of $x_m$, the central
manifold that is formed by $\gamma$ is not analytic at $x_m$, therefore it
does not correspond to $C_1 =  C_2 = 0$ or, if $\alpha \in \N$, to $C_1 =
C_2$.  In the remaining cases, the central manifold is not of class $C^{r}$, where $r-1
< \alpha \leq r$.

Since $\lambda_c, \lambda_u > 1$, we are in the Poincar\'e domain, with the
only obstruction to linearization being a resonance of the form $\lambda_c^r =
\lambda_u$ for some $r \in \N$.  Thus, if $x_m$ is not linearizable, the
Poincar\'e-Dulac Theorem gives that in some neighborhood $U \subset \Cphys$ of
$x_c$, $\Rphys$ is real-analytically conjugate to the normal form
\begin{eqnarray*}
(u,v) \mapsto (\lambda_u u + a v^r, \lambda_c v)
\end{eqnarray*}
\noindent
with $a \neq 0$.
(See, e.g.  \cite{IY}.)

Any central manifold is given by $u = g(v)$.  Invariance gives:
\begin{eqnarray*}
av^r = g(\lambda_c v) - \lambda_u g(v) = g(\lambda_c v) - \lambda_c^r g(v)
\end{eqnarray*}
\noindent
Differentiating $r$ and $r+1$ times, respectively, one finds
\begin{eqnarray}
a r!/\lambda_c^r &=&  g^{(r)}(\lambda_c v) - g^{(r)}(v),  \,\,  \mbox{and} \label{EQN_RTH_DERIVE}\\
0 &=& \lambda_c g^{(r+1)}(\lambda_c v) - g^{(r+1)}(v). \label{EQN_R+1_DERIVE}
\end{eqnarray}
By (\ref{EQN_R+1_DERIVE}),
either $g^{(r+1)}(v) \equiv 0$, or it is undefined at $v = 0$.  In the former
case, substitution of $g^{(r)}(v) \equiv C$ into (\ref{EQN_RTH_DERIVE}) gives a contradiction.
Thus, the central manifold is not of class $C^{r+1}$.
\end{proof}

Since the leaves of $\FF^c$ are obtained by integrating the continuous line field $\LL^c(x)$, they are all at least $C^1$ smooth.  
In fact, the regular periodic leaves have a slightly better smoothess:

\begin{prop}
Any regular periodic leaf $\gamma \in \FF^c$ is $C^{1+\de}$
for some $\de > 0$.
\end{prop}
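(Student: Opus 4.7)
The plan is to prove Hölder regularity of the central line field $\LL^c$ along (and in a neighborhood of) the orbit of $\gamma$, and then invoke elementary ODE theory: an integral curve of a $C^\delta$ line field is $C^{1+\delta}$.

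First I would observe that, since $\gamma$ is a \emph{regular} periodic leaf of period $k$, both of its endpoints are periodic points of $\Rphys$. The top endpoint $x_1 \in \TOPphys$ is a periodic point distinct from $\INDphys_\pm$ (since $\INDphys_\pm$ are indeterminacy points, not periodic), and similarly the bottom endpoint $x_0 \in \BOTTOMphys$ is a periodic point of $z \mapsto z^4$. In particular, the compact orbit $\orb(\gamma) = \bigcup_{j=0}^{k-1}\Rphys^j(\gamma)$ stays a uniform distance $d_0 > 0$ away from $\{\INDphys_\pm\}$, and so does a small open neighborhood $\UU \supset \orb(\gamma)$ which we may assume is forward invariant under $\Rphys|\UU$ (using the attracting nature of $\TOPphys$ away from $\INDphys_\pm$ and of $\BOTTOMphys$).

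Second I would apply the dominated splitting established in \S \ref{SUBSEC:DOMINATED}. On $\UU$, the domination inequality (\ref{dom split def}) holds with \emph{uniform} constants $c_0 > 0$ and $\lambda_0 > 1$, because $\UU$ is bounded away from the indeterminacy points where the splitting could degenerate. Likewise the horizontal expansion (Theorem \ref{hor expansion thm}) gives a uniform lower bound $\|D\Rphys^n v^h\| \geq c \lambda^n \|v^h\|$ on $\UU$, while $\|D\Rphys|\UU\|$ is bounded above by some $\mu > \lambda_0$. Set $\delta_0 := \log \lambda_0 / \log \mu \in (0,1)$.

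Third, I would use the standard argument that the invariant center distribution for a $C^2$ partially hyperbolic map with dominated splitting is Hölder continuous on compact invariant sets with exponent up to $\delta_0$: one represents $\LL^c(x) = \lim_{n\to\infty}(D\Rphys^n)^*\LL(\Rphys^n x)$ as in Proposition \ref{Central line field prop}, writes the difference $\LL^c(x) - \LL^c(y)$ as a telescoping sum using the cocycle $D\Rphys$, and bounds each term using (i) the $C^1$-Lipschitz norm of $D\Rphys$ on $\UU$, (ii) the uniform expansion $\mu^n$ in the base, and (iii) the uniform contraction $\lambda_0^{-n}$ of the projective action on vertical cones from Lemma \ref{exp shrinking of cones}. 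Optimizing over $n$ gives $|\LL^c(x) - \LL^c(y)| \leq C \, \dist(x,y)^\delta$ for any $\delta < \delta_0$, uniformly on $\UU$.

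Finally, since the leaf $\gamma$ is the integral curve of the continuous line field $\LL^c$ (unique by Theorem \ref{cs foliation}), and $\LL^c$ is now known to be $C^\delta$ on a neighborhood of $\gamma$, classical ODE regularity gives that $\gamma$ is $C^{1+\delta}$. The main obstacle in this plan is the third step: while Hölder regularity of the center distribution is well known for invertible partially hyperbolic diffeomorphisms, for our non-invertible $\Rphys$ one must verify that the standard telescoping argument goes through. Restricting to the forward-invariant neighborhood $\UU$ of the periodic orbit (where $\Rphys$ is a local diffeomorphism and all relevant cocycle estimates are uniform) reduces the situation to the classical one, and the argument of Hirsch--Pugh--Shub (or its modern version in Pujals--Sambarino) applies \emph{mutatis mutandis}.
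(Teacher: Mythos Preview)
Your overall strategy---prove H\"older regularity of $\LL^c$ along $\gamma$ via the projective contraction of Lemma~\ref{exp shrinking of cones} combined with a Lipschitz bound on $D\Rphys$, then optimize over the iterate---is exactly what the paper does. But one step fails as written: you cannot find a forward-invariant $2$-dimensional neighborhood $\UU \supset \orb(\gamma)$. Since $\Rphys$ is horizontally expanding (Theorem~\ref{hor expansion thm}), any open neighborhood of the leaf is stretched horizontally and its forward images eventually meet $\INDphys_\pm$; the vertical attraction of $\TOPphys$ and $\BOTTOMphys$ does not prevent this. Consequently the ``uniformly on $\UU$'' estimates in your third step are not available for generic $x,y \in \UU$.

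The fix, which is what the paper does, is to restrict entirely to the leaf: after replacing $\Rphys$ by an iterate so that $\gamma$ itself is invariant, compare $\LL^c(\alpha)$ and $\LL^c(\beta)$ only for $\alpha,\beta \in \gamma$. Their forward orbits then stay on $\gamma$, hence uniformly away from $\INDphys_\pm$, and all the needed bounds (Lipschitz constant of $\Rphys|\gamma$, uniform cone contraction, Lipschitz bound on $D\Rphys$) are available. This yields H\"older continuity of $t \mapsto \LL^c(\gamma(t))$, which is exactly $\gamma'(t)$, so $\gamma \in C^{1+\delta}$ directly---no $2$D ODE regularity argument is needed.
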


\begin{proof}
It suffices to show that the line field $\LL^c$ is H\"older on $\gamma$ with exponent $\de$.

Replacing $\Rphys$ with an iterate of itself (keeping the same notation)
we can assume that $\gamma$ is invariant.
Below, the inverse map $\Rphys^{-1}$ will stand for $(\Rphys|\gamma)^{-1}$. 

For any $x,y \in \gamma$ and two parallel  
tangent lines $X \in \KK^v( x)$, $ Y' \in \KK^v(y)$ we have a Lipschitz estimate:
\begin{eqnarray*}
\dist_{a}( D\Rphys^{-1} X , D\Rphys^{-1} Y' ) &\leq& M \dist(x,y),
\end{eqnarray*}
\noindent
where $\dist_a$ denotes the angular distance.

On the other hand,  Lemma \ref{exp shrinking of cones} implies that  
there exists $\si \in (0,1)$ so that for any vertical lines $\tl L, L'\in \KK^v(y)$ we have
$$
    \dist_a  (D\Rphys^{-1} Y' , D\Rphys^{-1} Y) \leq \si \dist(Y', Y)
$$
Putting the last two estimates together, we obtain
$$
   \dist_{a}( D\Rphys^{-1} X , D\Rphys^{-1} Y ) \leq  \si \dist_a (X,Y) +  M \dist(x,y)
$$
for any  $X \in \KK^v(x)$,  $Y\in \KK^v(y)$.
Iterating this estimate in the backward time we obtain%
\footnote{The simplest way to see it is to notice that the iterates are bounded by the fixed point of the linear map $x\mapsto \si x+ Md$.}:
\begin{equation}\label{pullbacks of L and L'}
         \dist_{a}( D\Rphys^{-n} X , D\Rphys^{-n} Y ) \leq \frac { Md_n(x,y)} {1-\si},\quad {\mathrm{where}}
     \ d_n(x,y) =\max_{0\leq k\leq n-1} \dist (\RR^{-k}x, \RR^{-k}y), 
\end{equation}
as long as $\dist_a(X, Y)< d/(1-\si)$. (We can always start with parallel $X$ and $Y$). 

\msk
Let now $K$ be a Lipschitz constant for $\RR|\gamma$, and let $K_1 > K$.
Take two nearby  points $\alpha, \beta, \in \gamma$ and find $n$ such that
\begin{eqnarray}\label{K_1}
K_1^{-n} \leq \dist(\alpha , \beta) < K_1^{-(n-1)}.
\end{eqnarray}
\noindent
Letting $x = \RR^n \alpha$, $y = \RR^n \beta$, we obtain
\begin{equation}\label{lip estimate}
   d_n(x,y)\leq K^n d(\alpha, \beta)\leq \kappa^n, \quad \mathrm{where} \ \kappa= K/K_1< 1.
\end{equation}
By (\ref{pullbacks of L and L'}),  we have 
$$
  \dist_{a}( D\Rphys^{-n} X , D\Rphys^{-n} Y ) = O(\kappa^n). 
$$
But according to Proposition \ref{Central line field prop},
$D\Rphys^{-n} X$ is exponentially close to the tangent line $\LL^v(\alpha)$ to $\gamma$ at $\alpha$,
and likewise $D\Rphys^{-n} Y$ is exponentially close to the tangent line $\LL^v(\beta)$.
Hence
$$
       \dist_a (\LL^v(\alpha), \LL^v(\beta)) = O(\eta^n) \quad {\mathrm{for\ some}}\ \eta\in (0,1).
$$
Together with (\ref{K_1}), this implies that 
$$
   \dist_a (\LL^v(\alpha), \LL^v(\beta)) \leq C \dist(\alpha, \beta)^\de, \quad {\mathrm{with}}\ \de= \frac{\log K_1}{\log (1/\kappa)}. 
$$
\end{proof}

\begin{rem}
  The above argument applies to any vertical leaf whose forward orbit stays away from the points of indeterminacy $\alpha_\pm$. 
The  problem with other leaves is that the Lipschitz estimate for $\RR|\gamma$ may fail for leaves $\gamma$ passing through $\INDphys$
because of the big expansion near the $\alpha_\pm$.
\end{rem}

\appendix

\section{Elements of complex geometry}\label{APP:COMPLEX GEOM}

We are primarily interested in rational maps between complex projective
spaces in two dimensions.  However, in order to understand the behavior
near indeterminate points, we will need a discussion at somewhat greater
generality.  Much of the below material can be found in with greater detail in
\cite{DAN,DEMAILLY,GH,SHAF}.

\subsection{Projective varieties and rational maps}
Let $\pi: \C^{k+1} \sm \{0\} \ra \CP^k$ denote the canonical projection. 
Given $z \in \CP^k$, any $\hat z \in \pi^{-1}(z)$ is called a {\em lift} of $z$.
One calls $V \subset \CP^k$ a (projective) algebraic hypersurface if there is a
homogeneous polynomial $\hat p: \C^{k+1} \ra \C$ so that 
$$
  V= z \in \CP^k: \quad  \hat p(\hat z) = 0.
$$ 
More generally, a
(projective) algebraic variety is the locus satisfying a finite number projective
polynomial equations.   Any algebraic variety $V$ has the structure of a smooth
manifold away from a proper subvariety $V_{\rm sing} \subset V$ and 
the dimension of $V \sm V_{\rm sing}$ is called {\em the dimension of $V$}.

A rational map $R:\CP^k \ra \CP^l$ is given by a homogeneous polynomial map
$\hat R: \C^{k+1} \ra \C^{l+1}$ for which we will assume the components have no
common factors.  One defines $R(z):= \pi(\hat R (\hat z))$ if $\hat R(\hat z) \neq
0$, and otherwise we say that $z$ is an {\em indeterminacy point for $R$.}
Since $\hat R$ is homogeneous, the above notions are well-defined.
Because the components of $\hat R$ have no common factors, the set of
indeterminate points $I(R)$ is a projective variety of codimension greater than
or equal to two.  

Given two projective varieties, $V \subset \CP^k$ and $W \subset \CP^l$, 
a {\it rational map} $R: V\ra W$ is the restriction of a rational map
$R: \CP^k\ra \CP^l$ such that  $R(V \sm I(R)) \subset W$. 
%
As above, 
$I(R) \subset V$ is a projective subvariety of codimension greater than or equal to two in $V$.  
If $I(R) = \emptyset$, we say that $R$ is a (globally)  {\it holomorphic (regular)} map.

A rational mapping $R:V \ra W$ between non-singular varieties is {\em dominant}
if there is a point $z \in V \sm I(R)$ such that  ${\mathrm {rank}}\,  DR  (z) = \dim W$.  

\begin{lem}\label{LEM:DOMINANT}
Let $R:V \ra W$ be a dominant rational map between non-singular varieties.  
If $z$ is not an indeterminate point for $R$, then $R$ is locally surjective at $z$.
\end{lem}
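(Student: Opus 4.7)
The plan is to reduce the statement to the density of the full-rank locus of $R$ in $V \setminus I(R)$, combined with the implicit function theorem. The natural reading of ``$R$ locally surjective at $z$'' in this context is: for every open neighborhood $U$ of $z$ in $V$, the image $R(U)$ contains a nonempty open subset of $W$. (The stronger statement that $R$ is open at $z$, namely that $R(U)$ is a neighborhood of $R(z)$ for every neighborhood $U$ of $z$, would be false in general: the polynomial map $(z,w) \mapsto (zw,z)$ is dominant and has no indeterminacy point, yet it sends any neighborhood of the origin into a proper cone around the $w$-axis rather than a full neighborhood of $(0,0)$.)

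First I would verify that the full-rank locus is Zariski-open and dense in $V \setminus I(R)$. Let $m = \dim W$ and set
$$\Omega = \{ y \in V \setminus I(R) : \mathrm{rank}\, DR(y) = m\}.$$
This is the complement in $V \setminus I(R)$ of the common vanishing locus of all $m \times m$ minors of the Jacobian of a local polynomial representative of $R$, so $\Omega$ is Zariski-open. By the definition of dominance $\Omega$ is nonempty, and by irreducibility of $V$ (implicit in ``non-singular variety'' as used throughout the paper) $\Omega$ is then dense in $V \setminus I(R)$ in both the Zariski and the classical topologies.

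Then I would apply the holomorphic submersion theorem. Given $z \in V \setminus I(R)$ and an arbitrary open neighborhood $U$ of $z$, the intersection $U \cap \Omega$ is nonempty; pick any $z' \in U \cap \Omega$. Since $DR(z')$ has maximal rank $m$, the submersion theorem yields a smaller neighborhood $U' \subset U$ of $z'$ such that $R(U')$ is an open neighborhood of $R(z')$ in $W$. Therefore $R(U) \supset R(U')$ contains a nonempty open subset of $W$, which is the desired local surjectivity at $z$.

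The main obstacle here is not computational but interpretive: one must commit to the above reading of ``locally surjective''. The argument really proves that the image of any open neighborhood of a regular point contains an open subset of $W$, which is exactly the form of the conclusion that is used downstream (for instance, in the blow-up analysis of Appendix~\ref{APP:BLOW_UPS}, where one needs the image of a small transverse disc through a regular point to sweep out an open set of the codomain near a critical value). If instead the authors intend openness at $z$ in the strict sense, some additional hypothesis ensuring equidimensionality of fibers -- in the spirit of Remmert's Open Mapping Theorem -- would be required.
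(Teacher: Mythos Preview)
Your proof of the weak form---that the image of every neighborhood of $z$ contains a nonempty open subset of $W$---is correct: Zariski-density of the full-rank locus plus the submersion theorem does it. Your counterexample to the strong form is also valid. Realized projectively as $[Z:W:T]\mapsto[ZW:ZT:T^2]$, the map $(z,w)\mapsto(zw,z)$ is a dominant rational self-map of $\CP^2$, the origin $[0:0:1]$ is not indeterminate, yet the image of any bidisc about the affine origin misses every point $(a,0)$ with $a\neq 0$. So the lemma read as ``$R$ is open at $z$'' is false in general.

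The paper's route is genuinely different and in fact aims at the strong form: it invokes the local parameterization theorem (Demailly, Propositions~4.8 and~4.19) for the graph of $R$ inside $V\times W$, expressing it as a branched cover over a $\dim W$-dimensional linear subspace chosen close to the target plane, and then tries to transfer openness back to the original target coordinates. Your counterexample shows the conclusion cannot hold in that strength; the gap is that after tilting the projection to obtain a branched cover, there is no well-defined map from the tilted coordinate plane to the original target plane independent of the source variables, so openness does not transfer. Your interpretation is therefore the right one, and your argument is both simpler and actually sound. Two minor remarks: density of the full-rank locus needs $V$ irreducible, equivalently (since $V$ is smooth) connected, which is harmless here; and the lemma is not in fact cited anywhere else in the paper, so your claim about its use in Appendix~\ref{APP:BLOW_UPS} is not accurate, though this has no bearing on the mathematics.
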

\noindent
It is a consequence of the Weierstrass Preparation Theorem---see
Propositions 4.8 and 4.19 from \cite{DEMAILLY}.

\comment{***************
\subsection{Dominant Maps}
A rational mapping $R:V \ra W$ is called {\em dominant} if there is a
single point $z \in V \sm I(R)$ where $DR(z)$ has rank equal to $\dim(W)$.  If $R$ is not dominant,
then one can check that $R(V)$ is contained within a a proper analytic
hypersurface of $W$. \note{Reference/Explanation?} 


\begin{lem}\label{LEM:DOMINANT}
Let $R:V \ra W$ be a dominant rational map.  If $z$ is not an indeterminate
point for $R$, then $R$ is locally surjective at $z$.
\note{Require $V$ and $W$ to be non-singular?}
\end{lem}

\begin{proof}
The result will follow from basic properties of the local structure of a complex
analytic subsets of $\C^k$, see for example \cite{GH,DEMAILLY}.

Denote the indeterminacy set by $I(R)$.  Then the graph $G$ of $R$ is a complex
analytic subset of $(V \sm I(R)) \times W$.  Using that $G$ is the
graph of $R$, one can check that $G$ is irreducible and since $DR$ is
generically of rank $m$, the regular part of $G$ has dimension $m$.

Suppose that $z_1,\ldots,z_m$ are local coordinates near $z$ and
$w_1,\ldots,w_m$ are local coordinates near $R(z)$.  If $N = N_1 \times N_2$ is
any neighborhood of $(z,R(z))$ that is sufficiently small so that $N_1$
contains none of the indeterminate points of $R$, we must show that the
projection from $G \cap N$ onto the second set of coordinates $w_1,\ldots,w_m$
contains some open set centered at $R(z)$.

According to Propositions 4.8 and 4.19 from \cite{DEMAILLY}, under a linear
change of coordinates (arbitrarily close to the identity) we can choose a new
system of coordinates $x_1,\ldots,x_m,x_{m+1},\ldots,x_{2m}$ so that, in these
coordinates, the projection $\pi$ from $G \cap N$ onto $x_{m+1},\ldots,x_{2m}$
is a branched covering.  In particular, it is finite-to-one and an open
mapping.

Note that $R$ may not be locally a branched covering in the original
coordinates, particularly if $z$ is within a collapsing variety for $R$.

However, the projection from $x_{m+1},\ldots,x_{2m}$ to
$w_1,\ldots,w_m$ is also an open mapping since the new system of coordinates
was obtained from the old ones by a linear transformation arbitrarily close to the identity.

Therefore, the projection of $G \cap N$ onto $w_1,\ldots,w_m$ is a
composition of these two open mappings, hence open.
\end{proof}
**************}

\subsection{Blow-ups}
\label{APP:BLOW_UPS}

A non-singular variety of dimension two is called a {\em projective surface}.
Matters are simpler for maps $R:V \ra W$ 
between projective surfaces since $I(R)$ is finite  in this case.
(We will refer to such maps as ``rational surface maps''.)

Given a pointed projective surface $(V,p)$,
the {\it blow-up} of $V$ at $p$ is another projective surface  $\tl V$ with a
holomorphic projection $\pi: \tl V \ra V$ such that
\begin{itemize}
\item $L_\ex(p): = \pi^{-1}(p)$ is a complex line $\CP^1$ called the {\it exceptional divisor};
\item $\pi: \tl V \sm L_\ex(p) \ra V\sm \{p\}$ is a biholomorphic map.
\end{itemize}
\noindent
See \cite{GH,SHAF}.

The construction has a local nature near $p$, so it is sufficient to provide it
for $(\C^2, 0)$.
The space of lines $l\subset \C^2$ passing through the origin is $\CP^1$  by definition.
Then $\tilde {\C^2}$ is realized as the surface $X$ in $\C^2\times
\CP^1$ given by equation $\{(u,v) \in l\}$
with the natural projection $(u,v,l)\mapsto (u,v)$.
 In
this model, points of the exceptional divisor $L_\ex=\{(0,0,l): \ l \in \CP^1 \}$
get interpreted as the {\it directions} $l$ at which the origin is approached.

Any line $l\subset \C^2$ naturally lifts to the ``line'' $\tl l= \{(u,v,l):\ (u,v)\in l\}$ in $\tilde {\C^2}$
crossing the exceptional divisor at $(0,0,l)$.%
\footnote{This turns $\tilde{\C^2}$ into a line bundle over $\CP^1$ known as the  {\it tautological}  line bundle.}
Moreover, $\tilde{\C^2}\sm \tl l $ is isomorphic to $\C^2$.
Indeed, let $\phi(u,v)=au+bv$ a linear functional that determines $l$.
It is linearly independent with one of the coordinate functionals, say with $v$ (so $a\not=0$).
Then
$$
  (u,v,l)\mapsto  (\phi, \kappa:= v/\phi)
$$
is a local chart that provides
a desired isomorphism.
In particular, two  charts corresponding to the coordinate axes in $\C^2$
provide us with  local coordinates  $(u, \kappa= v/u)$ and
 $(v , \kappa =  u/v)$ which are usually used in calculations.

The value of this construction lies in the fact that it can be used to {\em resolve} the indeterminacy of a rational map, as follows:

\begin{thm}[See \cite{SHAF}, Ch. IV, \S 3.3]\label{THM:RESOLUTION_IND}
Let  $R: V \ra W$ be a rational surface map. Then there exists a sequence of blow-ups $V_m \xrightarrow{\pi_m} \cdots
\xrightarrow{\pi_2} V_1 \xrightarrow{\pi_1} V$ so that $R$ lifts to a globally holomorphic map $\tilde R:
V_m \ra W$ making the following diagram commute\footnote{As with diagram (\ref{comm diagram}), commutativity is only at points where all maps are defined.}


\comment{**********
\begin{diagram} 
V_m &  &\\ 
\dTo^{\pi_m} & \rdTo^{\tl R}   &\\ 
\vdots &  & \\
\dTo^{\pi_{2}} &  &\\ 
V_{1} & & \\
\dTo^{\pi_{1}} &  &\\ 
V & \rTo^{R}  & W\\
\end{diagram} 
************}

\begin{diagram}
V_m &  &\\
\dTo^{\pi} & \rdTo^{\tl R}   &\\
V & \rTo^{R} & \CP^n.\\
\end{diagram}
Here, $\pi = \pi_1 \circ \cdots \circ \pi_m$.  
\end{thm}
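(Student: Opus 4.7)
First, I would reduce the statement to a purely local problem around a single indeterminacy point. Because $V$ is a non-singular projective surface and $I(R)$ is a proper algebraic subvariety of codimension $\geq 2$, the set $I(R)$ consists of finitely many points $p_1,\dots,p_s$. A blow-up $\pi_i$ centered at $p_i$ is an isomorphism off $p_i$, so the indeterminacies at the remaining $p_j$ are untouched. Hence it is enough to show that finitely many blow-ups supported over a single indeterminacy point $p$ suffice to make $R$ holomorphic in a neighborhood of $\pi^{-1}(p)$.

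Next, I would set up the local picture. Fix $p \in I(R)$, pick local coordinates $(u,v)$ on $V$ near $p$ with $p$ at the origin, and embed $W \hookrightarrow \CP^N$. Near $p$, write $R = [f_0 : \cdots : f_N]$ with the $f_j$ holomorphic and sharing no common non-unit factor. The indeterminacy at $p$ means all $f_j(0,0)=0$. Let
\begin{equation*}
m \;=\; \min_{0\le j\le N} \mathrm{ord}_p(f_j) \;\geq\; 1.
\end{equation*}
In either standard chart of the blow-up $\pi:\tilde V\to V$ — say $(u,\kappa)$ with $v=u\kappa$ — the exceptional divisor $L_{\mathrm{ex}}(p)$ is $\{u=0\}$, and each pullback $f_j\circ\pi$ is divisible by $u^m$. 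Cancelling this common factor yields holomorphic functions $\tilde f_j$ with $\min_j \mathrm{ord}(\tilde f_j\,|\,L_{\mathrm{ex}}(p))=0$, and these define the lifted map $\tilde R$ away from the finitely many points of $L_{\mathrm{ex}}(p)$ where all $\tilde f_j$ still vanish simultaneously. Thus each blow-up either removes the indeterminacy or replaces it by possibly new indeterminacy points lying on the fresh exceptional divisor.

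The main obstacle, and the heart of the proof, is showing that this procedure terminates. My strategy is to attach to a germ of a rational map at an indeterminacy point a non-negative integer invariant that strictly decreases under every such blow-up. A natural choice is the colength of the base ideal $\mathcal{I}=(f_0,\dots,f_N)$ in the local ring $\mathcal{O}_{V,p}$: after blowing up $p$ and dividing out by $u^m$, a standard computation with the Weierstrass Preparation Theorem shows that the total colength of the new base ideal, summed over the finitely many new indeterminacy points on $L_{\mathrm{ex}}(p)$, is strictly smaller than $\dim_\C \mathcal{O}_{V,p}/\mathcal{I}$. (Equivalently, one may use the intersection multiplicity with the strict transform of a generic pencil through $p$ and apply Bezout bookkeeping.) Because this invariant is a non-negative integer, it can only decrease finitely many times, so after at most $N(p)$ blow-ups in a tower over $p$ all indeterminacies supported over $p$ are eliminated.

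The hard part is thus the numerical verification that the base-ideal colength drops under one blow-up: this requires a careful local computation in both affine charts of $\pi$, checking that the factor $u^m$ really is the full common factor and that the remaining $\tilde f_j$ generate a strictly smaller ideal at each new indeterminacy point. Once this is established, iterating blow-ups at each point of $I(R)$ and concatenating the towers produces the desired sequence $V_m \xrightarrow{\pi_m}\cdots\xrightarrow{\pi_1} V$ with $\tilde R$ globally defined, and the commutativity of the diagram is automatic since each $\pi_i$ is an isomorphism away from its blow-up center where $R$ was already defined.
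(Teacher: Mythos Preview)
The paper does not prove this theorem at all: it is stated with an explicit reference to Shafarevich, \emph{Basic Algebraic Geometry}, Ch.~IV, \S 3.3, and is treated as a standard background fact in the appendix on complex geometry. Your proof plan is essentially the classical argument one finds there (and in many other texts): localize at each of the finitely many indeterminacy points, blow up, cancel the common power of the exceptional equation from the lifted defining functions, and iterate. Your choice of termination invariant---the colength $\dim_\C \mathcal{O}_{V,p}/\mathcal{I}$ of the base ideal---is the standard one, and the claim that the total colength over the new indeterminacy points on $L_{\mathrm{ex}}(p)$ is strictly smaller is correct (this is where the real work lies, as you acknowledge). So your approach is sound and matches the literature; there is simply nothing in the paper to compare it against beyond the citation.
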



Any analytic curve $D$ on $V$ lifts to an analytic curve $\tl D:=
\overline{\pi^{-1}(D \sm \{p\})}$ on $\tl V$, known as the {\em proper
transform} of $D$, which tends to have milder singularities than $D$:  

\begin{thm}
Let $D$ be an irreducible curve on a non-singular projective surface $V$.
Then, there exist a sequence of blow-ups $V_m \xrightarrow{\pi_m} \cdots
\xrightarrow{\pi_2} V_1 \xrightarrow{\pi_1} V$ so that the proper transform of
$\tl D$ of $D$ is a non-singular curve on $V_m$.
\end{thm}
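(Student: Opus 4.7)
The plan is to reduce the statement to a local problem at each singular point of $D$ and then run an induction on a suitable numerical invariant that strictly decreases under blow-up. Since $D$ is irreducible, its singular locus $D_{\rm sing}$ is a proper closed analytic subset of $D$, hence a finite set of points $\{p_1,\dots,p_N\}$. It therefore suffices to show the following local statement: given a germ of a (possibly singular) curve $(D,p)$ on a non-singular surface, there is a finite sequence of blow-ups, each centered at a point over $p$, after which the proper transform of $D$ is everywhere non-singular. One may then treat the $p_i$ one at a time, noting that a blow-up at a point $q$ lying over $p_i$ does not disturb the already-resolved curve over the other $p_j$.

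For the local analysis, choose coordinates $(u,v)$ on $V$ centered at $p$ and a local defining equation $f(u,v)=0$ for $D$. Let $m = m_p(D)\ge 2$ be the multiplicity, i.e.\ the order of vanishing of $f$ at $0$. In the standard chart $(u,\kappa)$ of the blow-up $\pi\colon \tl V\to V$, with $v = u\kappa$, one has $f(u,u\kappa) = u^m \tl f(u,\kappa)$, where $\tl f(0,\kappa)$ is the (non-zero) leading homogeneous form of $f$ divided by $u^m$. The proper transform $\tl D$ is cut out by $\tl f$, and the singular points of $\tl D$ over $p$ lie at the points $q_j\in L_{\rm ex}(p)$ corresponding to the roots of $\tl f(0,\kappa)$ (together with at most the one point in the other chart), with multiplicities $m_{q_j}(\tl D)\le m$. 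This is the standard local computation for blow-ups of plane curves; see for instance \cite{SHAF}, Ch.~IV.

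The key invariant I will use is the non-negative integer
\[
\nu(D,p) \;=\; \sum_{q} \binom{m_q}{2},
\]
where the sum is over the (finitely many) singular points $q$ of $\tl D$ lying on any iterated exceptional divisor over $p$ in some resolution constructed so far, and $m_q$ is the local multiplicity. The classical Noether formula for blow-ups shows that if we blow up a singular point $q$ of multiplicity $m\ge 2$, the contribution $\binom{m}{2}$ is replaced by $\sum_j \binom{m_{q_j}}{2}$, where $m_{q_j}\le m$ and $\sum_j m_{q_j}\le m$ (with equality forced only in degenerate ways that still strictly reduce $\sum \binom{m_{q_j}}{2}$ as soon as $m\ge 2$). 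In particular $\nu$ strictly decreases with each blow-up of a singular point, and since $\nu\ge 0$, the process terminates; at termination every point of the proper transform has multiplicity $1$, i.e.\ is non-singular. Equivalently, one can use the delta-invariant $\delta_p$, which is a non-negative integer satisfying $\delta_p=0$ iff $p$ is smooth and which strictly drops under blow-up at a singular point.

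The main obstacle is the last step: establishing that the chosen integer invariant genuinely drops with each blow-up, and that the blow-ups one performs above different singular points do not interfere. For the first issue, the combinatorics of $\sum\binom{m_{q_j}}{2}$ under the constraint $\sum m_{q_j}\le m$ handles the non-branch case, while for branches that are tangent to the exceptional divisor one must track the number of infinitely near singular points carefully (this is where the argument is cleanest via $\delta_p$, whose strict drop under blow-up of a singular point is a short local computation with conductor ideals). For the second issue, one notes that each blow-up is centered at a point of the iterated exceptional divisor, which is disjoint from the already-smoothed part of the proper transform over the other singular points, so the resolutions can be concatenated independently. Carrying out these two checks completes the proof.
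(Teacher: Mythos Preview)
The paper does not prove this theorem; it simply cites \cite[Ch.~IV, \S 4.1]{SHAF}. Your outline is essentially the classical argument found there: localize at the finitely many singular points, blow up, and induct on a numerical invariant that strictly decreases.

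There is, however, a genuine gap in your specific choice of invariant. Your claim that $\sum_j \binom{m_{q_j}}{2} < \binom{m}{2}$ after a single blow-up is false in general: take the germ $v^2 = u^5$, which has $m=2$; after one blow-up in the chart $v=u\kappa$ you get $\kappa^2 = u^3$, again a cusp with multiplicity $2$, so $\binom{2}{2}=\binom{2}{2}$ and nothing drops. The constraint $\sum_j m_{q_j}\le m$ alone does not force strict inequality when a single infinitely near point carries the full multiplicity. You seem aware of this, since you retreat to the $\delta$-invariant at the end; that is indeed the correct fix. Noether's formula $\delta_p = \binom{m}{2} + \sum_j \delta_{q_j}$ gives $\sum_j \delta_{q_j} = \delta_p - \binom{m}{2} < \delta_p$ whenever $m\ge 2$, so the total $\delta$ over all singular points of the proper transform strictly decreases and the induction goes through. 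If you want to keep a multiplicity-based invariant, you must sum $\binom{m_q}{2}$ over \emph{all} infinitely near singular points (past and future), which by Noether's formula is exactly $\delta_p$ anyway.
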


\noindent
See \cite[Ch. IV, \S 4.1]{SHAF}.

\subsection{Divisors}
\label{APP:DEGREE}
Divisors are a generalization of algebraic hypersurfaces that behave naturally
under dominant rational maps.  We will present an adaptation of material from
from \cite[Ch. 3]{DAN}, \cite[\S 3]{FULTON}, and \cite{SHAF} suitable for our purposes.

A {\em
divisor} $D$ on a projective surface $V$ is a collection of irreducible hypersurfaces
$C_1,\ldots,C_r$ with assigned integer multiplicities $k_1,\ldots,k_r$.  One writes $D$ as a formal sum
\begin{eqnarray}\label{EQN:DIVISOR}
D = k_1 C_1 + \cdots + k_r C_r.
\end{eqnarray} 
\noindent
Alternatively, $D$ can be described by choosing an open cover $\{U_i\}$ of $V$
and rational functions $g_i:U_i \ra \C$ with the comparability property that
$g_i/g_j$ is a non-vanishing holomorphic function on $U_i \cap U_j \neq \emptyset$.
Taking zeros and poles of the $g_i$ counted with multiplicities,
we obtain representation (\ref{EQN:DIVISOR}).

These two equivalent descriptions of divisors allow us to pull them back and
push them forward under rational maps: If $f: V \ra W$ is a dominant
holomorphic map, and $D = \{U_i,g_i\}$ is a divisor on $W$.  The {\em pullback}
$f^* D$ is the divisor on $V$ given by $\{f^{-1}U_i,f^* g_i\} \equiv \{f^{-1}
U_i, g_i \circ f\}$.

If $f:V \ra W$ is a proper holomorphic map and $D$ is an irreducible
curve on $V$ we define its push-forward $f_* D$ to be the divisor that assigns
multiplicity\footnote{Note that it is possible for a non-generic point of $f(V)$ to have more
than $\deg_{\rm top}(f :V \ra W)$ preimages under $f|V$.
}                                                         \note{In this case, is it the image of a collapsing line? (M)}         
$\deg_{\rm top}(f|V:V \ra W)$ to the
image curve $f(V)$.  (If $f$ collapses $V$ to a point, we assign multiplicity $0$.)  This definition extends linearly to arbitrary divisors on
$V$ expressed in form (\ref{EQN:DIVISOR}).

If $R: V \ra W$ is a rational map having indeterminacy, we use Theorem
\ref{THM:RESOLUTION_IND} to define the pull-back and push forward of divisors
by 
\begin{eqnarray}\label{EQN:PUSH_PULL}
R^*D_1 := \pi_* \tl R^* D_1 \qquad R_* D_2 := \tl R_* \pi^* D_2
\end{eqnarray}
\noindent
where $D_1$ and $D_2$ are divisors on $W$ and $V$, respectively.  

Alternatively, one can pull-back $D$ under $R: V \sm I(R) \ra W$.  Since $I(R)$
is a finite collection of points, the result (in terms of local defining
functions) can be extended trivially to obtain a divisor $R^* D$ on all of $V$.
Since the trivial extension of a divisor is unique, this alternative definition
agrees with the previous one.

\msk

Any hypersurface $C$ can be triangulated as a singular cycle, thus to any
divisor $D$ is an associated {\em fundamental class} $[D] \in H_2(V)$.
Representing $D$ by local defining functions allows us to associate a
cohomology class $(D) \in H^2(V)$ called its {\em Chern class}; see \cite{FULTON}.
Furthermore, $[D]$ is the Poincar\'e dual of $(D)$.

These classes are natural satisfying $[f_* D_1] = f_* [D_1]$ and $(f^*D_2) =
f^* (D_2)$ for any holomorphic map $f: V \ra W$.  For a rational map $R: V\ra
W$, we again have
\begin{eqnarray}\label{EQN:NATURALITY}
[R_* D_1] = R_* [D_1] \,\, \mbox{and} \,\, (R^* D_2) = R^* D_2,
\end{eqnarray}
using (\ref{EQN:PUSH_PULL}) at the level of homology and cohomology and also
Poincar\'e duality.

\subsection{Composition of rational maps}
The {\em algebraic degree} of a rational map $R: \CP^k \ra \CP^l$, denoted $\deg R$, is
the degree
of the coordinates of its lift $\hat R: \C^{k+1} \ra \C^{l+1}$.

The following statement appears in \cite[Prop. 1.4.3]{S_PANORAME}:

\begin{lem}\label{LEM:DEGREE_OF_COMPOSITION}
Let  $R:\CP^k \ra \CP^l$ and $S:\CP^l \ra \CP^m$  be rational maps. Then, $\deg(S
\circ R) = \deg(S) \cdot \deg(R)$ if and only if there is no algebraic
hypersurface $V \subset\CP^k$ that is collapsed
by $R$ to an indeterminate point of $S$.
\end{lem}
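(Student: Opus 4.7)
\medskip

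The plan is to work at the level of homogeneous lifts and reduce the statement to a question about common factors of the components.

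Let $\hat R = (\hat R_0, \ldots, \hat R_l) : \C^{k+1} \to \C^{l+1}$ be a homogeneous lift of $R$ of degree $d_R = \deg R$, with components having no common factor; similarly let $\hat S$ of degree $d_S$ lift $S$. First I would observe that the composition $\hat S \circ \hat R = (\hat S_0 \circ \hat R, \ldots, \hat S_m \circ \hat R)$ is a homogeneous polynomial map of degree $d_R \cdot d_S$, and it lifts $S \circ R$. However, to read off $\deg(S \circ R)$ one must first divide out the $\gcd$ of the components. Setting $g := \gcd(\hat S_0 \circ \hat R, \ldots, \hat S_m \circ \hat R)$, we obtain
\begin{equation*}
\deg(S\circ R) \;=\; d_R\, d_S \;-\; \deg(g),
\end{equation*}
so the equality $\deg(S\circ R) = \deg(S)\deg(R)$ is equivalent to $g$ being a nonzero constant, i.e.\ to the absence of any irreducible homogeneous polynomial $P$ dividing every $\hat S_j \circ \hat R$.

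Next I would translate the existence of such a $P$ into a geometric condition. Let $\hat V = \{P=0\} \subset \C^{k+1}$ and $V = \pi(\hat V \setminus \{0\}) \subset \CP^k$, an irreducible algebraic hypersurface. By the Nullstellensatz applied to the prime ideal $(P)$, an irreducible $P$ divides each $\hat S_j \circ \hat R$ if and only if every $\hat S_j \circ \hat R$ vanishes on $\hat V$. Unwinding the definitions, this is equivalent to $\hat R(\hat V) \subset \hat S^{-1}(0) = \pi^{-1}(I(S)) \cup \{0\}$; equivalently, $R(V \setminus I(R)) \subset I(S)$. Since $I(R)$ has codimension at least two in $\CP^k$, the hypersurface $V$ is not contained in $I(R)$, so the latter condition simply says that $R$ sends (the generic points of) $V$ into $I(S)$.

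Finally, since $\dim V = k-1$ whereas $I(S)$ has codimension at least two in $\CP^l$, any such $R|_V$ must lower dimension, i.e.\ collapse $V$ onto a point of $I(S)$. (In the surface case $k=l=2$, where $I(S)$ is already a finite set of points, the word ``collapse'' is literal.) Conversely, if $V = \{P=0\}$ is collapsed by $R$ to some $q \in I(S)$, then $\hat R$ sends $\hat V$ into the line $\pi^{-1}(q) \cup \{0\} \subset \hat S^{-1}(0)$, so each $\hat S_j \circ \hat R$ vanishes on $\hat V$ and hence is divisible by $P$.

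The main conceptual obstacle to phrase carefully is the passage between the projective statement (``$R$ collapses a hypersurface into $I(S)$'') and the affine cone statement used for the $\gcd$ computation, because $R$ is not defined on $I(R)$. This is handled by noting that $I(R)$ is of codimension $\geq 2$, so it meets the hypersurface $V$ in a proper closed subset, and then using continuity of the polynomial map $\hat R$ on the affine cone $\hat V$ together with the Nullstellensatz for the principal prime ideal $(P)$. Everything else is a direct bookkeeping of degrees.
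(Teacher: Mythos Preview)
Your proof is correct and follows essentially the same approach as the paper: both lift to homogeneous coordinates, observe that $\deg(S\circ R) = \deg S \cdot \deg R$ fails precisely when the components of $\hat S \circ \hat R$ share a common irreducible factor $P$, and identify this (via the Nullstellensatz) with $R$ sending the hypersurface $\{P=0\}$ into $I(S)$. Your version is simply more explicit about the Nullstellensatz step and the codimension considerations, and you correctly flag that for $l>2$ the phrase ``collapsed to an indeterminate point'' should be read as ``mapped into $I(S)$.''
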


\begin{proof}
Let $\hat R$ and $\hat S$ be lifts of $R$ and $S$ of degree
$\deg R$ and $\deg S$, respectively.
An irreducible algebraic hypersurface $V$ given by $\{p(\hat z)=0\}$
(where $p$ is prime)
is collapsed under $R$ to an indeterminate point for $S$ if and only if $p(\hat z)$
divides each of the components of $\hat S \circ \hat R$.
On the other hand,
$\hat S \circ \hat R$ has a common factor if and only if
$\deg(S \circ R) < \deg S \cdot \deg R$.
\end{proof}

\begin{rem}\label{geom deg deficit}
  To understand this phenomenon geometrically (for simplicity, in dimension two: k=l=m=2), 
let us consider the algebraic curve $G$
to which the indeterminacy point $\gamma$ blows up under $S$. Then any line $L$ must intersect
$G$, and hence $S^{-1} L$ passes through $\gamma$. It follows that $V\subset R^{-1}(S^{-1} L)$.
On the other hand, $V\not\subset (S\circ R)^{-1} L$ (unless $L\supset G$,
which may happen only for  a special line).
But according to Lemma \ref{LEM:PUSH_PULL} below,  
$$
   \deg S \cdot \deg R  = \deg (S^{-1} (R^{-1} L)), \quad  \deg (S\circ R)= \deg (S\circ R)^{-1} L
$$
So, components of $V$, possibly with multiplicities, account for the degree deficit.
\end{rem}

\subsection{Degree of divisors in $\CP^2$}
Associated to any homogeneous polynomial $p:
\C^{3} \ra \C$ is a divisor $D_p$ given by $\{U_i,p \circ \sigma_i\}$, where
the $\{U_i\}$ form an open covering of $\CP^2$ that admits local sections
$\sigma_i: U_i \ra \C^{3} \sm \{0\}$ of the canonical projection $\pi$.
Furthermore, every divisor can be described as a difference $D = D_p - D_q$ for
appropriate $p$ and $q$.  The following simple formula describes the pull-back:
\begin{eqnarray}\label{EQN:PULL_BACK_ON_CP2}
R^* D_p = D_{\hat R^*p} \equiv D_{p \circ \hat R}.
\end{eqnarray}

The {\em degree} of a divisor $D = D_p - D_q$ is $\deg D = \deg p - \deg q$.
Any complex projective line $L$ on which $p$ does not identically vanish
intersects the divisor $D_p$ exactly $\deg D_p$ times (counted with
multiplicity), providing an alternative geometric definition of $\deg D_p$.

More generally, {\em Bezout's Theorem} asserts that two divisors $D_1$ and
$D_2$ intersect $\deg D_1 \cdot \deg D_2$ times in $\CP^2$, counted with
appropriate {\em intersection multiplicities}.  
Suppose that $D_1$ and $D_2$ are irreducible algebraic curves assigned multiplicity one.  Then,
an intersection point $z$ is assigned multiplicity one if and only if both curves are non-singular at $z$,
meeting transversally there.
See \cite[Ch. IV]{SHAF}.

There is an alternative, homological, definition for $\deg D$. 
Namely,  any algebraic curve $D$ represents a class $[D]\in H_2(\CP^2)$. 
Moreover, $H_2(\CP^2) = \Z$ and is generated by the class $[L]$ of any line $L$. Then we have 
\begin{eqnarray}\label{EQN:HOMOLOGICAL_DEFN_DEGREE}
[D] = \deg D \cdot [L]
\end{eqnarray}

\begin{lem}\label{LEM:PUSH_PULL} Given a dominant rational map $R:\CP^2 \ra \CP^2$ 
and any divisor $D$ on $\CP^2$ we have: 
\begin{eqnarray}
\deg (R^* D) &=& \deg R \cdot \deg D, \,\, \mbox{and} \label{EQN:DEGREE_PULL_BACK} \\
\deg (R_* D) &=& \deg R \cdot \deg D. \label{DEGREE_PUSH_FORWARD}
\end{eqnarray}
In particular, $\deg (R^* L) = \deg(R_* L) = \deg R$ for any projective line $L \subset \CP^2$.
\end{lem}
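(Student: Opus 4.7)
The plan is to reduce both formulas to the case of an irreducible curve $D$ using linearity of pullback and push-forward, then to read the pullback formula off from the polynomial description of divisors and to obtain the push-forward formula by a ``double count'' of intersections with a generic line via Bezout's Theorem.

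For the pullback half, I will write an irreducible $D = D_p$ with $p$ a homogeneous prime polynomial of degree $\deg D$, and let $\hat R : \C^3 \ra \C^3$ be a homogeneous lift of $R$ whose coordinates have degree $\deg R$. Formula~(\ref{EQN:PULL_BACK_ON_CP2}) gives $R^*D_p = D_{p \circ \hat R}$, and $p \circ \hat R$ is a homogeneous polynomial of degree $\deg p \cdot \deg R$. Dominance of $R$ ensures $p \circ \hat R \not\equiv 0$, so $\deg(R^*D_p) = \deg D \cdot \deg R$, and (\ref{EQN:DEGREE_PULL_BACK}) follows by linearity. The special case $\deg(R^*L) = \deg R$ is immediate for $\deg D = 1$.

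For the push-forward half, I will reduce by linearity to an irreducible curve $D$ with multiplicity one and set $d := \deg_{\mathrm{top}}\bigl(R|\, D : D \ra R(D)\bigr)$, so that $R_*D = d \cdot R(D)$ by definition. I will then choose a line $L' \subset \CP^2$ in sufficiently general position that $L'$ avoids the (finite) image of $I(R)$ under the blow-up of $R$ and the critical values of $R|\, D$; that $L'$ meets $R(D)$ transversally in exactly $\deg R(D)$ reduced points; and that $R^{-1}L'$ meets $D$ transversally away from $I(R)$. Each intersection $q \in L' \cap R(D)$ then has exactly $d$ preimages on $D$, yielding
\begin{equation*}
   \#\bigl(R^{-1}L' \cap D\bigr) \;=\; d \cdot \deg R(D) \;=\; \deg(R_*D).
\end{equation*}
On the other hand, $R^{-1}L' = R^*L'$ is a divisor of degree $\deg R$ by the pullback part already proved, so Bezout gives $\#\bigl(R^{-1}L' \cap D\bigr) = \deg R \cdot \deg D$. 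Equating the two counts proves (\ref{DEGREE_PUSH_FORWARD}), and the line case follows again by taking $\deg D = 1$.

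The only real obstacle will be the genericity argument for the push-forward part: one must verify that the ``bad'' lines in $\CP^2$---those meeting $R(I(R))$, tangent to $R(D)$ at a smooth point, passing through a singular point of $R(D)$, or whose $R$-preimage fails to meet $D$ transversally off $I(R)$---form a proper Zariski-closed subset of the dual projective plane. Each of these loci cuts out a manifestly closed condition, and dominance of $R$ together with irreducibility of $D$ and of $R(D)$ ensures that none of them exhausts the dual~$\CP^2$; the intersection of their complements is then a non-empty Zariski-open set from which an admissible $L'$ may be chosen.
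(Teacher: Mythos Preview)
Your pullback argument is correct and coincides with the paper's: formula~(\ref{EQN:PULL_BACK_ON_CP2}) immediately gives that $p \circ \hat R$ is homogeneous of degree $\deg R \cdot \deg D$, and dominance ensures it is not identically zero.

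Your push-forward argument, however, has a genuine gap. You write $R_*D = d \cdot R(D)$ ``by definition,'' but this is the definition only for a globally holomorphic map; for a rational $R$ the paper defines $R_*D := \tilde R_* \pi^* D$ via a resolution of indeterminacy. These agree when $D$ is disjoint from $I(R)$, but not otherwise: if $D$ passes through an indeterminacy point $a$, then $\pi^*D$ acquires a multiple of the exceptional divisor over $a$, and its $\tilde R_*$-image contributes an extra component to $R_*D$ beyond $d \cdot R(D)$. Your Bezout count has the mirror-image problem: the curve $R^*L'$ passes through \emph{every} point of $I(R)$ (since $\ell \circ \hat R$ vanishes wherever $\hat R$ does), so if $D$ meets $I(R)$ then $R^*L' \cap D$ contains those points for \emph{every} choice of $L'$, and they are not accounted for in your first count. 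No genericity condition on $L'$ removes this. (Incidentally, collapsing curves necessarily meet $I(R)$: if $D$ collapsed to $q$ while avoiding $I(R)$, then for $L'$ not through $q$ the divisor $R^*L'$ would be disjoint from $D$, contradicting Bezout.)

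The fix is easy and is essentially what the paper does: observe first that both sides of (\ref{DEGREE_PUSH_FORWARD}) depend only on the homology class $[D] \in H_2(\CP^2) \cong \Z$ (via (\ref{EQN:NATURALITY}) and (\ref{EQN:HOMOLOGICAL_DEFN_DEGREE})), so it suffices to verify the formula for a single line $L$ chosen disjoint from $I(R)$. At that point your Bezout double-count goes through cleanly. The paper instead handles the line case by composing with an inclusion $\iota : \CP^1 \hookrightarrow \CP^2$ of $L$ and a central projection $\mathrm{pr}: \CP^2 \ra \CP^1$, then invoking Lemma~\ref{LEM:DEGREE_OF_COMPOSITION} to get $\deg(\mathrm{pr} \circ R \circ \iota) = \deg R$ and reading off the result on $H_2$. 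Once the reduction to a generic line is made, either route works; your intersection-theoretic count is arguably more direct, but you must make that reduction first.
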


\begin{proof}

Equation (\ref{EQN:DEGREE_PULL_BACK}) follows immediately from
(\ref{EQN:PULL_BACK_ON_CP2}).  To obtain (\ref{DEGREE_PUSH_FORWARD}) we make
use of the homological interpretation of degree
(\ref{EQN:HOMOLOGICAL_DEFN_DEGREE}).  By (\ref{EQN:NATURALITY}), the push-forward of divisors $R_*$
preserves homology, 
so it suffices to check (\ref{DEGREE_PUSH_FORWARD}) for any
complex projective line $L$.

We choose $L$ disjoint from $I(R)$ and we can assume that $[0:0:1] \not \in R(L)$.
Let $\iota: \CP^1 \ra \CP^2$ be the inclusion of $L$ into $\CP^2$ and ${\rm
pr}: \CP^2 \ra \CP^1$ the central projection onto the line at infinity
$L_{\infty}$ from the center $[0:0:1]$.  Note that both $i_*$ and ${\rm pr}_*$ induce
isomorphisms on the second homology.  


We consider the composition ${\rm pr} \circ R \circ \iota: \CP^1 \ra \CP^1$.  By Lemma \ref{LEM:DEGREE_OF_COMPOSITION},
\begin{eqnarray*}
\deg({\rm pr} \circ R \circ \iota) = \deg{\rm pr}\cdot \deg R \cdot \deg \iota = \deg R,
\end{eqnarray*}
since the image of $\iota$ avoids $I(R)$ and the image of $R \circ \iota$ avoids $I({\rm pr}) = \{[0:0:1]\}$.

Thus,
\begin{eqnarray*}
{\rm pr}_* \circ R_* [L] = {\rm pr}_* \circ R_* \circ \iota_* [\CP^1]= \deg({\rm pr} \circ R \circ \iota) [\CP^1] = \deg R \cdot [\CP^1]
\end{eqnarray*}
Since ${\rm pr}_*: H_2(\CP^2) \ra H_2(\CP^1)$ is an isomorphism we find $[R_* L] = \deg R \cdot [L_\infty]$.
\end{proof}

\begin{rem}
Formulas (\ref{EQN:DEGREE_PULL_BACK}) and (\ref{DEGREE_PUSH_FORWARD}) generalize
to other varieties using the fact that pull-back
and push-forward (under suitable maps) preserve {\em linear equivalence of
divisors,}  
see \cite[Ch. 3 \S 5.2]{DAN} and \cite[\S 3.3]{FULTON}.  
\end{rem}

\subsection{Iteration of rational maps}
A rational mapping $R : \CP^2 \rightarrow \CP^2 $ is called {\em algebraically
stable} if there is no integer $n$ and no collapsing hypersurface $V \subset
\CP^{2}$ so that $R^n(V)$ is contained within the indeterminacy set of $R$,
\cite[p. 109]{S_PANORAME}.  
By Lemma \ref{LEM:DEGREE_OF_COMPOSITION},  $R$ is algebraically stable if and only
if $\deg R^n = ( \deg R )^n$.         
Together with Lemma \ref{LEM:PUSH_PULL}, this yields:

\begin{lem}\label{LEM:PUSH_PULL_AS}
          If $R$ is an algebraically stable map and $D$ is any divisor on $\CP^2$ we have:  
\begin{eqnarray*}\label{EQN:PULL_BACKS_AS}
\deg ((R^n)^* D) &=& (\deg R)^n \cdot \deg D \\  
\deg ((R^n)_* D) &=& (\deg R)^n \cdot \deg D 
\end{eqnarray*}
\end{lem}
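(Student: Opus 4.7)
The plan is to combine the characterization of algebraic stability with the single-iterate push/pull formulas of Lemma~\ref{LEM:PUSH_PULL}. Concretely, the hypothesis that $R$ is algebraically stable is equivalent (as noted just before the lemma, via Lemma~\ref{LEM:DEGREE_OF_COMPOSITION}) to the identity $\deg R^n = (\deg R)^n$ for every $n \geq 1$. So the only real content left is to apply the single-step formulas to the rational map $R^n$ in place of $R$.

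First I would observe that $R^n : \CP^2 \to \CP^2$ is itself a dominant rational map, since $R$ is (compositions of dominant maps are dominant, because one can pick a regular point $z$ for $R^n$ at which each $DR$ along the orbit has full rank). Thus Lemma~\ref{LEM:PUSH_PULL} applies to $R^n$ and yields
\[
\deg\bigl((R^n)^* D\bigr) \;=\; \deg R^n \cdot \deg D, \qquad \deg\bigl((R^n)_* D\bigr) \;=\; \deg R^n \cdot \deg D
\]
for every divisor $D$ on $\CP^2$.

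Next I would invoke algebraic stability to replace $\deg R^n$ by $(\deg R)^n$, which gives both identities simultaneously. The only subtle point worth double-checking is that the pullback and pushforward of $R^n$ as used here really coincide with the $n$-fold iterates of the pullback and pushforward of $R$, so that the statement of the lemma (phrased with $(R^n)^*$ and $(R^n)_*$) is literally what we have proven. This is built into the definitions (\ref{EQN:PUSH_PULL}): one uses the simultaneous resolution of indeterminacy for $R^n$, or equivalently, since no hypersurface is collapsed by $R^k$ into $I(R)$ for any $k < n$ (by algebraic stability), one may iterate the one-step formulas without losing any components to cancellation. I do not anticipate any genuine obstacle here—the entire argument is a packaging of earlier results—so the "proof" is essentially two lines of reference chasing.
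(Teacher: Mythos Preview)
Your proof is correct and matches the paper's approach exactly: the paper simply notes that algebraic stability gives $\deg R^n = (\deg R)^n$ via Lemma~\ref{LEM:DEGREE_OF_COMPOSITION}, and then invokes Lemma~\ref{LEM:PUSH_PULL} applied to $R^n$. Your additional remarks about dominance of $R^n$ and the relationship between $(R^n)^*$ and $(R^*)^n$ are reasonable sanity checks but not strictly needed for the statement as written.
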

%

\comment{***************
\noindent
For non-algebraically stable maps, one can also consider the
{\em dynamical degree} 
$$
   \delta(R) := \lim_{n\rightarrow \infty} \left( \deg (R^n)\right)^{1/n}\leq \deg R,
$$
see \cite[p. 110]{S_PANORAME}.  
Note that $\delta(R) = \deg(R)$ if and only if $R$ is algebraically stable.
***********}


\comment{**************************
Let us consider a rational map $R: \CP^m \ra \CP^m $ lifted to a homogeneous polynomial map
$\hat R: \C^{m+1}\ra \C^{m+1}$. For a generic complex line $L$ in $\CP^m$, the pullback $R^{-1}L$ is an
an algebraic curve  of degree $d$ with $d$ independent of $L$.                               \note{justify}
It is called the {\it degree} of $R$. It can be also defined as the degree of the pullback
operator $R^*: H^2 (\CP^2)\ra H^2 (\CP^2)$ via the Poincar\'e duality.

A rational mapping $R : \CP^m \rightarrow \CP^m $ is
called {\em algebraically stable} if there is no integer $n$ and no hypersurface
$V\subset \C^{m+1}$ so that each component of $\hat R^n$ (written in homogeneous coordinates) vanishes
on $V$, \cite[p. 109]{S_PANORAME}.  In particular,
$R$ is algebraically stable if and only if
$\deg R^n = ( \deg R )^n$.                  \note{should be removed from \S 4.5}

************************}

\section {Renormalization near the indeterminacy points}
\label{APP:R_NEAR_ALPHA}

\subsection{Blow-ups}

\comment{***************************
\label{APP:BLOW_UPS}

Let us recall the notion of blow-up (see \cite{GH}).
Given a pointed complex 2D manifold $(M,p)$,
the {\it blow-up} of $M$ at $p$ is a complex 2D manifold $\tl M$ with a
holomorphic projection $\pi: \tl M \ra M$ such that
\begin{itemize}
\item $L_\ex(p): = \pi^{-1}(p)$ is a complex line $\CP^1$ called the {\it exceptional divisor};
\item $\pi: \tl M \sm L_\ex(p) \ra M\sm \{p\}$ is a biholomorphic map.
\end{itemize}

The construction has a local nature near $p$, so it is sufficient to provide it
for $(\C^2, 0)$.  
The space of lines $l\subset \C^2$ passing through the origin is $\CP^1$  by definition.  
Then $\tilde {\C^2}$ is realized as the surface $X$ in $\C^2\times
\CP^1$ given by equation $\{(u,v) \in l\}$ 
with the natural projection $(u,v,l)\mapsto (u,v)$. 
 In
this model, points of the exceptional divisor $L_\ex=\{(0,0,l): \ l\in ]\CP^1 \}$ 
get interpreted as the {\it directions} $l$ at which the origin is approached.

Any line $l\subset \C^2$ naturally lifts to the ``line'' $\tl l= \{(u,v,l):\ (u,v)\in l\}$ in $\tilde {C^2}$
crossing the exceptional divisor at $(0,0,l)$.%
\footnote{This turns $\tilde{\C^2}$ into a line bundle over $\CP^1$ known as the  {\it tautological}  line bundle.}  
Moreover, $\tilde{C^2}\sm \tl l $ is isomorphic $C^2$. 
Indeed, let $\phi(u,v)=au+bv$ a linear functional that determines $l$.
It is linearly independent with one of the coordinate functionals, say with $v$.
Then 
$$
  (u,v,l)\mapsto  (\phi, \kappa:= v/\phi)
$$ 
is a local chart that provides 
a desired isomorphism. 

In particular, two  charts corresponding to the coordinate axes in $\C^2$
provide us with  local coordinates  $(u, \kappa= v/u)$ and
 $(v , \kappa =  u/v)$ which are usually used in calculations. 

The value of this construction lies in the fact that a rational map with
indeterminacy at $p$ could be lifted to a regular map on $\tl M$ (or, at least,
with  milder indeterminacies), thus {\it resolving} the indeterminacy.
\note{make a precise statement?} Also, any analytic curve $D$ on $M$ lifts to
an analytic curve $\tl D:= \overline{\pi^{-1}(D \sm \{p\})}$ on $\tl M$, known
as the {\em proper transform}                       \note{standard?}
of $D$, which tends to have milder singularities
than $D$.  (It is readily studied in the local  coordinates $(u,m)$ near the
exceptional divisor.)
****************}

Below we calculate blow-ups for the renormalization      
in the affine coordinates $(u,v)$           
and in the angular coordinates $(\phi,t)$.

\sss{Affine coordinates}

Let us represent $R$ as $Q\circ g$ where
$$
    g: (u,w) \mapsto  \left( \frac{u^2+1}{u+w}, \frac{w^2+1}{u+w} \right)
$$
and $Q: (u,w)\mapsto (u^2, w^2)$. The indeterminacy points for $R$ and $g$ are
the same, $a_\pm= \pm (i,- i)$.
 Because of the basic symmetry $(u,w)\mapsto (w,u)$,
it is sufficient to carry the calculation at $a_+=(i,-i)$.
In  coordinates $\xi = u-i$ and $\chi =  (w+i)/(u-i)$,
we obtain the following expression for the map $\tl g: \tl\CP^2\ra \CP^2$
near  $L_\ex(\INDmig_+)$:
\begin{equation}\label{blow up near a+}
    u = \frac{\xi+2i}{1+\chi},\quad  w = \frac{\chi^2 \xi-2i \chi}{1+\chi}.
\end{equation}
So $L_\ex(\INDmig_+)= \{\xi=0\}$ is mapped by $\tl g$ biholomorphically  onto the line
\begin{equation}\label{blow-up line}
        \left\{ u= \frac{2i}{1+\chi},\ w= -\frac{2i\chi}{1+\chi} \right\} = \{u-w=2i\}.
\end{equation}
In other words, $g$ blows up the indeterminacy point $a_+$ to  line (\ref{blow-up line}).
Notice that this line connects $a_+$ to the low-temperature fixed point $b_0=(1:0:1)$
at infinity. Its slice by the Hermitian plane $C=\{w=\bar u\}$ (corresponding to the cylinder $\Cphys$)
is the horizontal line $\{ \Im u =1\}$. 

The lift $\tl \Rmig$ of $\Rmig$ to $L_\ex(\INDmig_+)$ is given by $\tl \Rmig = Q \circ \tl g$, and hence obtained
by squaring the expressions for $u$ and $w$ in (\ref{blow up near a+}).

\comm{******
\sss{Physical coordinates}
Letting $\rho = -1-z$ and $\tau = 1-t$, we compute the blow-up for $F$ in the coordinates $(\rho,\mu) \mapsto (-1-\rho,\mu(-1-\rho),[1:\mu])$:
\begin{eqnarray}\label{EQN:F_BLOWUP_COMPLEX}
    z'= \frac{(1+\mu)(1+\rho)}{\mu-1+\mu\rho}, \quad t'= \frac{1-\mu\rho}{1-\mu^2-\mu\rho-\mu^2\rho}.
\end{eqnarray}
\noindent
The limiting values for $F(z,t)$ as $(z,t)$ approaches $\INDphys$ at given slope $\mu = \frac{1-t}{-1-z}$ at thus given by setting $\rho=0$
\begin{eqnarray*}
 z'= \frac{1+\mu}{\mu-1}, \quad t'= \frac{1}{1-\mu^2}.
\end{eqnarray*}

Because of the basic symmetry $z \mapsto 1/z$, the blow-ups for $\Rphys$ at
$\INDphys_\pm$ are the same, so we consider $\INDphys_+ = (i,1)$
in the coordinates $(\epsilon,\nu) \mapsto (i - \epsilon,1 - \nu \epsilon,[1:\epsilon])$.  The formula for $\tilde \Rphys$ is given in these coordinates by:
\begin{eqnarray}\label{EQN:R_BLOWUP_COMPLEX}
z' &=&  -\frac{ \left( 2-\nu\,\epsilon+i\epsilon \right)  \left( i+\nu
 \right)  \left( i-\epsilon \right) ^{2}}{ \left( -\nu\,\epsilon+i\nu+
1 \right)  \left( -\nu\,{\epsilon}^{2}+i\nu\,\epsilon-2\,i+\epsilon
 \right) }\\
t' &=& -{\frac { \left( -1+\nu\,\epsilon \right) ^{2} \left( -\epsilon+2\,i
 \right) ^{2}}{ \left( -\nu\,\epsilon+i\nu+1 \right)  \left( -\nu\,{
\epsilon}^{2}+i\nu\,\epsilon-2\,i+\epsilon \right)  \left( 2-\nu\,
\epsilon+i\epsilon \right)  \left( i+\nu \right) }} \nonumber
\end{eqnarray}

Restricted to $L_\ex(\INDphys_+)=\{\epsilon = 0\}$ it simplifies to
\begin{eqnarray}
  z' = \frac{1+i\nu}{i\nu-1}, \quad t'=\frac{1}{1+\nu^2}
\end{eqnarray}

**********************}

\sss{Angular coordinates}  (compare \cite[p. 419]{BZ3}).
We will now calculate the blow-up of $\Rphys$ at the indeterminacy points $\alpha_\pm$ 
in the angular coordinates $(\phi,t)$ on $\Cphys$.
As before, it suffices to consider $\INDphys_+$.  We let $\eps =
\frac{\pi}{2}-\phi$ and $\tau = 1-t$, and  $\kappa= \tau/\eps$. 
In the blow-up coordinates  $(\eps,\kappa)$ we find:

\begin{eqnarray}\label{EQN:R_BLOWUP_ANGULAR_WITH_EPS}
(z',t') = \left(\frac{-i+\kappa -  \eps -  \eps\kappa^2/2}
                      {i+\kappa -  \eps -   \eps\kappa^2/2}, \
   \frac{1-2\eps \kappa}
        {1+\kappa^2-\eps(2\kappa+\kappa^3)}\right)+O(|\eps|^2),
\end{eqnarray}
where the constant in the residual term depends on an upper bound on $\kappa$. 

\newcommand{\arctg}{\operatorname{arctg}}
\newcommand{\arcctg}{\operatorname{arcctg}}

\noindent
Thus
\begin{eqnarray}\label{EQN_PHI_PRIME_BLOWUP}
\phi' = -i \log(z') = - 2 \arcctg(\kappa- \eps + \eps\kappa^2/2) + O(|\eps|^2).
\end{eqnarray}

\noindent
Taking the limit as $\eps \ra 0$, we find:
\begin{eqnarray}\label{EQN:R_BLOWUP_ANGULAR}
(\phi',t') = \left(- 2\arcctg \kappa,\ \frac{1}{1+\kappa^2}\right)
\end{eqnarray}

\noindent
Letting  $\om$ be the (complexified) angle between the collapsing line $\II_{\pi/2}$ and the line with slope
$\kappa= - \ctg \om$, we come up with expression (\ref{Icurve}) for the blow-up locus $\Icurve$. 
(Notice that the blow-up loci for the maps $f$ and $\Rphys=f\circ Q$ coincide, since $Q$ is a local diffeomorphism.)

Recall that point $\INDphys = (\pi,1)$, which mapped by $\Rphys$ to the high
temperature fixed point $\beta_1$.
\begin{lem}\label{PROP:DISTANCE_TO_PI}
Let $c>0$. 
If $\zeta = (\eps,\tau)\in \C^2$ is sufficiently close to one of the indeterminacy points $\INDphys_\pm$ 
with the slope $\kappa = \tau/\eps$ sufficiently small
and $|\eps-\kappa|\geq c|\eps|$, then
\begin{equation*}
\dist^h(\Rphys(x),\INDphys) \asymp |\eps-\kappa|,
\end{equation*}
with the constant depending on $c$. 
\end{lem}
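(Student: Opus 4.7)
The plan is to derive the estimate directly from the blow-up formula (\ref{EQN_PHI_PRIME_BLOWUP}), which expresses the angular coordinate of $\Rphys(\zeta)$ near the indeterminacy point in terms of $(\eps,\kappa)$. By the basic symmetry it suffices to treat the case $\INDphys_+$. Since $\INDphys = (\pi,1)$ and $\dist^h$ measures only the angular component (modulo $2\pi$), the task reduces to estimating $|\phi'-\pi|$ on $\T$.

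First I would recall that $\arcctg(0)=\pi/2$, so $-2\arcctg(0) = -\pi \equiv \pi \pmod{2\pi}$; hence the point $\INDphys$ is exactly the value of the blow-up map along the line $\{\eps=\kappa,\,\kappa\to 0\}$ (up to the higher-order correction $\eps\kappa^2/2$). Expanding $\arcctg$ to first order near $0$, namely $\arcctg(x) = \pi/2 - x + O(x^3)$, and substituting $x = \kappa - \eps + \eps\kappa^2/2$ into (\ref{EQN_PHI_PRIME_BLOWUP}), I obtain
\begin{equation*}
  \phi'-\pi \;=\; 2(\kappa-\eps) \;+\; \eps\kappa^2 \;+\; O\bigl((\kappa-\eps)^3\bigr) \;+\; O(|\eps|^2)
\end{equation*}
modulo $2\pi$, valid once $(\eps,\kappa)$ lies in a small enough neighborhood of the origin.

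It then remains to argue that under the hypothesis $|\eps-\kappa|\geq c|\eps|$ the leading term $2(\kappa-\eps)$ dominates all correction terms. The term $\eps\kappa^2$ is bounded by $\tfrac12 \kappa^2 |\eps|$, which is an arbitrarily small multiple of $|\eps| \le |\eps-\kappa|/c$ once $\kappa$ is taken small. The cubic term $O((\kappa-\eps)^3)$ is controlled directly by a fixed power of $|\eps-\kappa|$. Finally, $|\eps|^2 \leq |\eps|\cdot|\eps-\kappa|/c$ is negligible compared to $|\eps-\kappa|$ when $\eps$ is small. Collecting these bounds yields $|\phi'-\pi|\asymp |\eps-\kappa|$, with constants depending only on $c$, which is the desired $\asymp$-estimate.

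The main obstacle here is essentially bookkeeping: ensuring the constant in the $O(|\eps|^2)$ residue in (\ref{EQN:R_BLOWUP_ANGULAR_WITH_EPS}) is controlled uniformly for $\kappa$ in a bounded range (note the excerpt states the constant depends on an upper bound on $\kappa$), and then verifying that the quantitative condition $|\eps-\kappa|\geq c|\eps|$ is the precise hypothesis needed to subdue both the intrinsic cubic error in the Taylor expansion of $\arcctg$ and the $O(|\eps|^2)$ error already present in the blow-up formula. No essentially new geometric input is required beyond Appendix \ref{APP:BLOW_UPS}.
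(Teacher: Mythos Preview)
Your proof is correct and is exactly the approach the paper takes: the paper's proof is a single line stating that under the assumptions, formula (\ref{EQN_PHI_PRIME_BLOWUP}) implies $|\phi'-\pi|\asymp|\kappa-\eps|$. You have simply unpacked that implication in detail via the Taylor expansion of $\arcctg$ near $0$ and checked that the hypothesis $|\eps-\kappa|\geq c|\eps|$ controls the residual terms.
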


\begin{proof}
  Indeed, under our assumptions,  formula (\ref{EQN_PHI_PRIME_BLOWUP}) implies 
$
  |\phi' -  \pi  |\asymp |\kappa-\eps|.
$
\end{proof}

\subsection{The differential $D\Rphys$.}
Formula (\ref{Df}) implies the following
explicit expression for the differential $D\Rphys$ at  $x=(\phi,t)\in \Cphys$: 

{\footnotesize
\begin{equation}\label{EQN:DR} 
  D\Rphys = \frac{4}{\zeta^2} 
                        \left(
                         \begin{array}{cc}  
                                                   \zeta & 0 \\
                                                    0 & 1- t^2
                         \end{array}   \right)
                         \left(
                          \begin{array}{cc}
                                                  1+t^2 \cos 2\phi & - \sin 2\phi \\
                                                 -t^2(1-t^2) \sin 2\phi & (1+t^2)(1+\cos 2\phi))
                          \end{array} \right)
                                      \left(  
                        \begin{array}{cc}   
                                                    1 & 0 \\
                                                    0 & t
                         \end{array}   \right)  
\end{equation}
}

\noindent
where $\zeta(x) = 1+2t^2\cos 2\phi + t^4$. 

Expanding it in $\tau = 1-t$ near $\TOPphys$, we obtain: 
{\small
\begin{eqnarray}\label{EQN:A_EXPANSION}
           D\Rphys = 
\left(
       \begin{array}{cc}  2          & -2\tg \phi \\
                        -2\tau^2 \tg\phi\, (\cos\phi)^{-2}   & -\tau (\cos\phi)^{-2} 
\end{array}\right)
   (I+O(\tau))
\end{eqnarray}
}

In the $\epsilon = \pm \pi/2-\phi$ 
coordinate near an indeterminacy point $\alpha_\pm$  we
obtain the following asymptotic expression for the differential 
$\Rphys: (\eps, \tau)\mapsto  (\phi', \tau')$:

\begin{equation}\label{DR near alpha}
D\Rphys \sim   \frac 2 {\sigma^4}
 \left(
                         \begin{array}{cc}
                                                 (\eps^2+\tau) \sigma^2 & -\eps \sigma^2 \\
                                                 -\eps\tau^2 & \tau\eps^2
                         \end{array}   \right)
\end{equation}
where $\sigma=\sqrt{\eps^2+\tau^2}$.

\ssk

\comm{*****************

\begin{lem}\label{cones near alpha}
  Let $1<c<3/2$, and let  $x=(\eps, \tau)\in \Cphys\sm T$ be a point near $\INDphys_+$,
and let $x'=(\phi', \tau')=\Rphysx$.
If $v\in T_x\Cphys $ is a tangent vector at $x$   with slope bounded
(in absolute value) by $c\sqrt{\tau}$
then its image  $D\Rphys(x) v$ has slope bounded by $c\sqrt{\tau'}$.
\end{lem}

\begin{proof}
For $v=(1, \theta)$ we have:
Hence
$$
   D\Rphys(x)v  =
\frac  8 {|x|^4}    \left( \begin{array}{c} |x|^2(\eps^2+\tau -\theta\eps)  \\
                             -\eps\tau (\tau-\theta\eps)  \end{array}     \right)
$$
For $\theta= c\sqrt{\tau}$, the slope of $D\Rphys(x) v$ is equal
$$
   \theta' = -\frac{\eps\tau(\tau-c\sqrt{\tau} \eps)} {|x|^2(\eps^2+\tau-c\sqrt{\tau}\eps)}.
$$
  Since $\tau' =\tau^2/\eps^2$,  \note{write before}
we need to check that $|\theta'|< c \tau/|\eps|$.
Since $\eps^2<|x^2|$, it is enough to verify that
\begin{equation}\label{eps-tau}
  \frac{|\tau-c\sqrt{\tau} \eps| } {\eps^2+\tau-c\sqrt{\tau}\eps} < c.
\end{equation}
Note that denominator of this expression is a quadratic form in $(\eps, \sqrt{\tau})$
which is positive definite for $c\in (0,2)$.
Hence (\ref{eps-tau}) amounts to the following system of inequalities
$$ c\eps^2 - c(c+1)\eps\sqrt{\tau} +(1+c)\tau > 0, $$
$$ c\eps^2 - c(c-1)\eps\sqrt{\tau} +(c-1)\tau > 0.$$
These are two quadratic forms in $(\eps, \sqrt{\tau})$ that are  positive definite
in our range of $c$.

\end{proof}

*********}
 
\subsection{Horizontal stretching near $\TOPphys$}
\label{APP:HOR_STRETCHING_NEAR_TOP}

Let us define the {\it horizontal expansion factor} $\hexp(x)$ at $x\in \Cphys\sm \{\alpha_\pm\}$ as
\begin{eqnarray}\label{EQN:DEF_HEXPLOW}
   \hexp(x) = \inf_{v \in \KK^h(x)} \frac{\pi (D\Rphys(v)) } {\pi(v) }.
\end{eqnarray}
Equivalently, consider an almost horizontal curve $\xi$ through $x=(\phi,t)$
 naturally  parameterized by the angular coordinate (by means of $(\pi|\xi)^{-1}$).
Let
$$\chi\equiv \chi_\xi = \pi\circ\Rphys\circ(\pi|\xi)^{-1}.$$
Then
$$
   \hexp(x) = \inf_\xi \chi_\xi' (\phi).
$$
The {\em $n$-th horizontal expansion factor} $\hexpmin_{,n}(x)$ is defined similarly, by replacing $D\Rphys$ with $D\Rphys^n$ in (\ref{EQN:DEF_HEXPLOW}).

\begin{lem}\label{hexplow}
There exists $\lambda_0 > 0$ so that for any
$x=(\eps, \tau)\in \CC\sm\{\alpha_\pm\}$ near one of the indeterminacy points $\alpha_\pm$ we have
$\hexpmin(x) \geq \lambda_0$.

Moreover, given a slope $\bar \kappa > 0$, there exits $\lambda_1 > 0$ such that 
if $x=(\eps,\tau)$ also satisfies $|\kappa| \equiv |\tau/\eps| \leq \bar \kappa$, then
\begin{eqnarray*}
   \hexpmin(x) \geq \lambda_1 \left |\frac{\kappa+\eps}{\eps} \right|.
\end{eqnarray*}
\end{lem}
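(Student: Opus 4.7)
The plan is to derive both bounds by a direct computation using the asymptotic formula (\ref{DR near alpha}) for $D\Rphys$ near an indeterminacy point $\alpha_\pm$. Since $\lambda^h_{\min}(x)$ is the infimum of $\pi(D\Rphys(v))/\pi(v)$ over $v \in \KK^h(x)$, and the ratio is a linear-fractional function of the slope $s = v_\tau/v_\phi$, it suffices to test on the extremal slopes bounding the cone. By the construction of $\KK^h$ in \S\ref{SUBSEC:MODIFIED_ALG_CONES}, for $x=(\eps,\tau)$ in the neighborhood $\UU'$ of $\alpha_\pm$ the slope bound is $|s| \le s(x)$ with $s(x) \sim |\eps|/3$, so $|s\eps| \le \eps^2/3 + o(\eps^2)$.

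The key computation is to apply (\ref{DR near alpha}) to a vector $v = (1,s)$: the first (horizontal) component of $D\Rphys(v)$ equals
\begin{equation*}
\frac{2}{\sigma^2}\bigl(\eps^2 + \tau - s\eps\bigr) + o(1),
\end{equation*}
where $\sigma^2 = \eps^2 + \tau^2$, so that
\begin{equation*}
\frac{\pi(D\Rphys(v))}{\pi(v)} \;\ge\; \frac{2(\tfrac{2}{3}\eps^2 + \tau)}{\eps^2+\tau^2}\,\bigl(1+o(1)\bigr) \;\ge\; \frac{4}{3}\cdot\frac{\eps^2+\tau}{\eps^2+\tau^2}\,\bigl(1+o(1)\bigr).
\end{equation*}
For part (i), I use that $\tau$ is small, hence $\tau^2 \le \tau$, so $\eps^2 + \tau^2 \le \eps^2 + \tau$ and the right-hand ratio is bounded below by $4/3$ up to negligible errors. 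Shrinking the neighborhood of $\alpha_\pm$ so the $o(1)$ term is $\le 1/2$ (say) yields the uniform bound $\lambda_0$.

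For part (ii), the bound $|\kappa| \le \bar\kappa$ means $\tau^2 \le \bar\kappa^2 \eps^2$, so $\sigma^2 = \eps^2 + \tau^2 \le (1+\bar\kappa^2)\eps^2$ and therefore
\begin{equation*}
\frac{\pi(D\Rphys(v))}{\pi(v)} \;\ge\; \frac{4}{3(1+\bar\kappa^2)}\cdot\frac{\eps^2+\tau}{\eps^2}\,\bigl(1+o(1)\bigr).
\end{equation*}
Since $|(\kappa+\eps)/\eps| = |1 + \tau/\eps^2| = (\eps^2+\tau)/\eps^2$, this gives the desired inequality with $\lambda_1 = \tfrac{4}{3(1+\bar\kappa^2)} \cdot (1-o(1))$, and again absorbing the error by restricting to a small enough neighborhood finishes the proof. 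The only mildly delicate step is to ensure that the $o(1)$ corrections coming from the asymptotic formula (\ref{DR near alpha}) are uniform in the slope ratio $\kappa$ on the locus $|\kappa| \le \bar\kappa$; this is automatic since the correction terms in the blow-up expressions (\ref{EQN:R_BLOWUP_ANGULAR_WITH_EPS})--(\ref{EQN_PHI_PRIME_BLOWUP}) are uniform in $\kappa$ on any bounded set.
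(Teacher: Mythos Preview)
Your approach is the same as the paper's, but there is a gap in the case analysis for the slope bound of $\KK^h$. You assert that near $\alpha_\pm$ the cone $\KK^h(x)$ has slope bound $s(x)\sim|\eps|/3$, citing the region $\UU'$ from \S\ref{SUBSEC:MODIFIED_ALG_CONES}. However, $\UU'$ is \emph{not} a full neighborhood of $\alpha_\pm$: by construction $\UU'=\UU\cap\VV'$, and $\VV'=\VV\sm(\PP^+_\eta\cup\PP^-_\eta)$ excludes the parabolic regions $\PP^\pm_\eta=\{\tau\ge\eta\eps^2\}$ below the indeterminacy points. Inside these parabolic regions one has $\KK^h(x)=\KK^\hor(x)$ (item (i) of the modified cone construction), whose slope bound is $s^a(x)=\sqrt{\tau(2-\tau)}\sim\sqrt{2\tau}$, and this can be far larger than $|\eps|/3$ (for instance when $|\kappa|$ is of order~$1$, $\sqrt{2\tau}\asymp\sqrt{|\eps|}\gg|\eps|$). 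So the inequality $|s\eps|\le\eps^2/3+o(\eps^2)$, and hence your lower bound $(2/3)\eps^2+\tau$ for the numerator, fails there.

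The paper covers both regimes at once by taking $|s|\le\max\{\sqrt{2\tau},\,|\eps|/3\}$ and then observing that in the $\sqrt{2\tau}$ case the numerator $\eps^2+\tau-|\eps|\sqrt{2\tau}$ is a positive definite quadratic form in $(|\eps|,\sqrt{\tau})$, hence bounded below by a constant multiple of $\eps^2+\tau$. Once you add this case your argument goes through with only a change of constants; the identity $(\eps^2+\tau)/\eps^2=|(\kappa+\eps)/\eps|$ you use for part~(ii) is exactly what the paper uses (written there as $(\eps^2+\tau)/(\eps^2+\tau^2)=|\kappa+\eps|/(|\eps|(1+\kappa^2))$).
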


\begin{proof}
  Take a horizontal vector $v=(1,s)\in \KK^h(x)$ with slope $s$. 
By definition of the horizontal cone field $\KK^h$, 
$|s| \leq \max \{\sqrt{2\tau}, |\eps|/3\}$ 
(see  \S \ref{SUBSEC:MODIFIED_ALG_CONES}, items (iii)-(iv) in the definition of  $\KK^h$).                    
Applying the asymptotical expression  (\ref{DR near alpha}), we obtain 
\begin{eqnarray*}
   \chi'(\phi) &=& \pi(D\Rphys (1,s)) = 2 \frac { \eps^2+\tau- s\eps }{|\si|^2}\\
&\geq& \frac {2} {|\si|^2}   \min \{ \eps^2+\tau-|\eps| \sqrt{2\tau}, \  (2/3)\eps^2+\tau\}
    \geq   \lambda_0 \frac{\eps^2+\tau}{\eps^2+\tau^2} \geq \lambda_0.
\end{eqnarray*}
The second to last estimate follows from positive definitiveness of the  quadratic form 
$\eps^2+\tau-\eps\sqrt{2\tau}$ in $\eps$ and $\sqrt{\tau}$.  
Finally,
$$
 \frac{\eps^2+\tau} {\eps^2+\tau^2} = \frac {|\kappa+\eps|}{|\eps|(\kappa^2+1)} \geq 
     \frac 1{\bar\kappa^2+1} \left|\frac{\kappa+\eps}{\eps}\right|,
$$
and the second estimate follows.
\end{proof}

\begin{lem}\label{LEM:BOUNDED_CONTRACT}
There exists $\lambda_2 > 0$ so that for any $x \in \Cphys \sm \{\INDphys_\pm\}$ we have $\hexpmin(x)~\geq~\lambda_2$.
\end{lem}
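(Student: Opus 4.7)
The plan is to combine Lemma~\ref{hexplow} with a compactness-continuity argument. Both $D\Rphys$ and the horizontal cone field $\KK^h$ depend continuously on the base point throughout $\Cphys\sm\{\INDphys_\pm\}$ (with $\KK^h$ interpreted via its closure, so that the infimum defining $\hexpmin$ in (\ref{EQN:DEF_HEXPLOW}) is attained), and hence $x\mapsto\hexpmin(x)$ is itself continuous on $\Cphys\sm\{\INDphys_\pm\}$. Fix a neighborhood $\UU$ of $\{\INDphys_\pm\}$ on which Lemma~\ref{hexplow} supplies $\hexpmin(x)\geq \lambda_0 > 0$. The task then reduces to the pointwise nonvanishing statement on the compact set $\Cphys\sm\UU$: once $\hexpmin(x)>0$ is known at every such $x$, continuity and compactness automatically upgrade it to a uniform lower bound $\lambda_3>0$, and we set $\lambda_2:=\min(\lambda_0,\lambda_3)$.

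By the strict forward invariance $D\Rphys(\KK^h(x))\Subset\KK^h(\Rphys x)$ (Lemma~\ref{LEM:MOD_CF_INVARIANT}), the image of a nonzero horizontal vector can never be vertical, so $\hexpmin(x)=0$ would force $\ker D\Rphys(x)\cap\overline{\KK^h(x)}\ne\{0\}$. By Property~(P3), $D\Rphys$ has a kernel only on $\BOTTOMphys\cup\TOPphys\cup\II_{\pi/2}\cup\II_{-\pi/2}$. I will plug the corresponding values of $(\phi,t)$ into the explicit formula (\ref{EQN:DR}) and check in each case that $\ker D\Rphys$ is the vertical (temperature) direction, hence transverse to $\KK^h$. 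A direct substitution along each stratum also yields the explicit expansion: $\hexpmin\equiv 4$ on $\BOTTOMphys$, $\hexpmin\geq 4/(1-t^2)\geq 4$ on $\II_{\pm\pi/2}$, and on $\TOPphys\sm\UU$ the horizontal image of $v=(1,s)\in\KK^h$ is $(2-2s\tg\phi,\,0)$ where $|s|\leq \bar w \sim \bar\eps/3$ (by construction of $\KK^h$) and $|\tg\phi|$ is bounded on the complement of $\UU$, giving $\hexpmin\geq 2-O(\bar\eps)>1$ provided $\bar\eps$ was chosen small enough in \S\ref{SUBSEC:MODIFIED_ALG_CONES}.

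Off the critical locus, $D\Rphys$ is invertible and so $\hexpmin(x)>0$ is immediate; the only delicate sets to examine are thin neighborhoods of the four critical lines, where the degeneration of the matrix is balanced by the cone field lying well inside the directions that $D\Rphys$ stretches. These computations are essentially the same ones used to establish invariance of $\KK^h$ (Lemma~\ref{LEM:MOD_CF_INVARIANT}) and can be read off from the asymptotic expansions (\ref{EQN:A_EXPANSION}) and (\ref{DR near alpha}) from Appendix~\ref{APP:R_NEAR_ALPHA}. The main obstacle is purely bookkeeping near $\TOPphys$, where one must coordinate the smallness of the cone slope $s$ with the smallness of $\tau$ so that no accidental cancellation in $2-2s\tg\phi$ occurs on the complement of $\UU$; given the freedom to shrink $\bar\eps$ at the outset, this causes no real difficulty.
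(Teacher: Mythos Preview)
Your proposal is correct and follows essentially the same approach as the paper: use Lemma~\ref{hexplow} near the indeterminacy points, and away from them invoke transversality of the horizontal cones to $\ker D\Rphys$ (which lives on the critical lines $\II_{\pm\pi/2}$, $\BOTTOMphys$, $\TOPphys$) together with compactness. The paper's proof is simply the two-line version of yours, citing only the transversality to $\II_{\pm\pi/2}$ and leaving the continuity/compactness argument and the checks on $\BOTTOMphys$, $\TOPphys$ implicit.
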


\begin{proof}
Away from $\{\alpha_\pm\}$ this is true since the horizontal cones are
transverse to the critical lines $\II_{\pm \pi/2}$.   Near  $\{\alpha_\pm\}$,
it follows from Lemma  \ref{hexplow}.
\end{proof}

\msk

We now estimate horizontal expansion of vectors (and hence curves) taken with respect to the algebraic cone field $\KK^\hor$.  It will be useful to consider both upper and lower bounds:
\begin{eqnarray*}
\horexpmin(x) &=& \inf_{v \in \KK^\hor(x)} \frac{\pi (D\Rphys(v)) } {\pi(v) }, \,\, \mbox{and} \,\,\, \horexpmax(x) = \sup_{v \in \KK^\hor(x)} \frac{\pi (D\Rphys(v)) } {\pi(v) }
\end{eqnarray*}
The $n$-th expansion factors $\horexpmin_{,n}(x)$ and $\horexpmax_{,n}(x)$ are defined similarly.
Note that $\horexpmin(x) \geq \hexpmin(x)$, since $\KK^\hor(x) \subset \KK^h(x)$.  In particular the estimate of Lemma \ref{hexplow} also applies to $\horexpmin(x)$.

\begin{lem}\label{LEM:STRETCHING_NEAR_TOP}
For any $\delta > 0$ there exist $\eta > 0$ and $\bar \tau > 0$
such that for any  $x \in \VV'$  we have:
\begin{eqnarray*}
(2-\delta)  < \horexpmin(x) < \horexpmax(x)  < (2 + \delta).
\end{eqnarray*}
\end{lem}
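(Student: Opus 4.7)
The plan is to establish the bound by partitioning $\VV'$ into two subregions based on proximity to the indeterminacy points and, in each subregion, combine the relevant asymptotic expansion of $D\Rphys$ (either \eqref{EQN:A_EXPANSION} or \eqref{DR near alpha}) with the slope bound $|s|\leq \sqrt{2\tau}$ for vectors $v=(1,s)\in \KK^\hor(x)$ (the latter coming from the tangent line geometry to $\T$ in the $(u,w)$-picture of \S\ref{SUBSEC:ALG_CONES}). The crucial point is that in $\VV'$ the defining constraint $\tau\leq \eta\eps^2$ (i.e., the point lies outside the parabolic regions $\PP^\pm_\eta$) eliminates precisely the approach directions to $\alpha_\pm$ along which $D\Rphys$ would blow up.

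First, fix a threshold $\eps_0\in(0,\bar\eps)$ and split $\VV'$ into the ``far'' part $\VV'_{\rm far}:=\{x\in\VV':|\eps(x)|\geq \eps_0\}$ and the ``near'' part $\VV'_{\rm near}:=\{x\in\VV':|\eps(x)|<\eps_0\}$. On $\VV'_{\rm far}$, I would apply \eqref{EQN:A_EXPANSION}. Since $|\cos\phi|$ is bounded below by $\sin\eps_0$, the entry $|\tan\phi|$ is bounded, and the formula yields
\[
\pi\bigl(D\Rphys(1,s)\bigr)=2-2s\tan\phi+O(\tau)=2+O\!\left(\sqrt{\tau}/\eps_0\right)+O(\tau),
\]
because $|s|\leq\sqrt{2\tau}$. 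Choosing $\bar\tau$ small enough (relative to $\delta$ and $\eps_0$) brings this within $\delta$ of $2$.

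On $\VV'_{\rm near}$ I would use the blow-up asymptotics \eqref{DR near alpha}, which gives
\[
\pi\bigl(D\Rphys(1,s)\bigr)=\frac{2(\eps^2+\tau-s\eps)}{\eps^2+\tau^2}.
\]
The $\VV'$-condition $\tau\leq \eta\eps^2$ provides the two essential smallness estimates $\tau/\eps^2\leq \eta$ and $\tau^2/\eps^2\leq \eta^2\bar\eps^2$, and the slope bound yields $|s\eps|\leq \sqrt{2\tau}\,|\eps|\leq\sqrt{2\eta}\,\eps^2$. Dividing numerator and denominator by $\eps^2$, the ratio is sandwiched between $2(1-\sqrt{2\eta})/(1+\eta^{2}\bar\eps^{2})$ and $2(1+\sqrt{2\eta}+\eta)$, both of which tend to $2$ as $\eta\to 0$. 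Choosing $\eta$ small enough (depending only on $\delta$) brings this estimate within $\delta$ of $2$.

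Combining the two cases, one first picks $\eta$ (hence $\bar\tau=\eta\bar\eps^2$) small in terms of $\delta$ to handle $\VV'_{\rm near}$; this fixes the threshold $\eps_0$ at $\bar\eps$, after which $\bar\tau$ may need to be further shrunk so that the ``far'' estimate also lies within $\delta$ of $2$. The main obstacle I anticipate is keeping the estimates uniform across the interface $|\eps|\sim \eps_0$: near the boundary the correction $O(\sqrt\tau/\eps_0)$ from \eqref{EQN:A_EXPANSION} must remain compatible with the $O(\sqrt{\eta})$ correction from \eqref{DR near alpha}, and the sub-leading terms implicit in the $\sim$ of \eqref{DR near alpha} must be bounded uniformly in $\VV'_{\rm near}$. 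Both issues are resolved by the paper's convention $\bar\tau=\eta\bar\eps^2$, which forces $\sqrt\tau/\eps_0\lesssim \sqrt\eta$ at the interface and keeps the error terms in \eqref{DR near alpha} of size $O(\eps,\tau)$ negligible compared with the leading $\eps^2$ in the denominator.
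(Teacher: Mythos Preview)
Your proposal is correct and follows essentially the same approach as the paper: split $\VV'$ into a region near $\INDphys_\pm$ (where \eqref{DR near alpha} applies) and a region bounded away from $\INDphys_\pm$ (where \eqref{EQN:A_EXPANSION} applies), use the slope bound $|s|\lesssim\sqrt{2\tau}$ for $v\in\KK^\hor(x)$, and in the near region exploit the $\VV'$-constraint $\tau\leq\eta\eps^2$ to control the ratio. The paper's algebra is organized slightly differently---it bounds the numerator $\eps^2+\tau\pm a|\eps|\sqrt\tau$ by $(1\pm a\sqrt\eta)(\eps^2+\tau)$ rather than dividing through by $\eps^2$---but the content is identical.
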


\begin{proof}
The slope a vector $v \in \KK^\hor(x)$ is bounded by $a \sqrt{\tau}$, where $a=1.45$ and $\tau = \tau(x)$.

Near the indeterminacy points  $\INDphys_\pm$ we can use (\ref{DR near alpha})
to bound from below the horizontal stretching of the boundary  tangent vector  $v = (1,\pm a \sqrt{\tau})$:
\begin{eqnarray*}
2\, \frac {\eps^2+\tau -  a |\eps| \sqrt{\tau}} {\epsilon^2+\tau^2}
      <  \pi (D\Rphys (1,s)) < 2\, \frac {\eps^2+\tau +  a|\eps| \sqrt{\tau}} {\epsilon^2+\tau^2}.
\end{eqnarray*}
Since $x \in \VV'$, we have $\sqrt{\tau} < \sqrt{\eta}|\epsilon|$ which gives:
$$
(1-a\sqrt {\eta}) (\eps^2+\tau)  \leq \eps^2+\tau \pm  a|\eps| \sqrt{\tau} \leq (1+a\sqrt {\eta}) (\eps^2+\tau).
$$
Hence $$ 2(1-a\sqrt{\eta}) \leq   \pi (D\Rphys (1,s))  \leq 2(1+a\sqrt{\eta}),$$
which can be made arbitrary close to 2 if one's  $\eta$ is chosen  sufficiently small.

Now suppose that $v$ is based $\bar\eps$-away from $\INDphys_\pm$.
Then $\tg\phi\leq 2/\bar\eps$ and (\ref{EQN:A_EXPANSION}) implies:
\begin{align}\label{EQN:CONTROL_OF_EXPANSION_AWAY_ALPHA}
2(1 -  2a\sqrt{\tau}/\bar\eps)(1+O(\tau)) \leq     \pi (D\Rphys (1,s))  \leq 2(1+  2a\sqrt{\tau}/\bar\eps)(1+O(\tau)),
\end{align}
which can be made arbitrary close to 2 by choosing $\tau$ small enough.
\end{proof}

\comment{

\begin{lem}\label{LEM:BOUNDED_CONTRACTION}
Given any $c < 2-\sqrt{2}$, if $|\eps|,\tau$ are sufficiently small
and $v \in \KK(x)$ is based $x=(\eps,\tau)$ then $d(\phi\circ R) (v) \geq
 c d\phi(v)$.
\end{lem}

\begin{proof}
Recall that the cones in $\KK(x)$ have slope bounded above by $\sqrt{2\tau}$.

If $|\eps|,\tau$ are sufficiently small we can replace $DR$ with (\ref{DR near
alpha}), up to an arbitrarily small additive constant.  

The horizontal stretching of all vectors in $\KK(x)$ is bounded by that
of the boundary vectors $v = (1,\pm  \sqrt{2\tau})$, for which (\ref{DR near
alpha}) gives:
\begin{eqnarray*}
d(\phi\circ R) (v)  > 2\, \frac {\eps^2+\tau -  \eps \sqrt{2 \tau}} {\epsilon^2+\tau^2}
     \cdot d\phi(v).
\end{eqnarray*}

We check that
\begin{eqnarray*}
2 \,(\eps^2+\tau -  \eps \sqrt{2 \tau}) \geq (2-\sqrt{2})(\eps^2+\tau) > (2-\sqrt{2})(\eps^2+\tau^2).
\end{eqnarray*}
\noindent
This is equivalent to
\begin{eqnarray*}
\sqrt{2}\eps^2 -2\sqrt{2}\eps \sqrt{\tau} +\sqrt{2} \tau = \sqrt{2}(\eps - \sqrt{\tau})^2 \geq 0
\end{eqnarray*}

\end{proof}

}
\comm{************

\subsection{Horizontal stretching of holomorphic discs}

Given a horizontal holomorphic curve $\xi$ centered at $x = (\phi, t)\in \Cphys$,
let $r^h(\xi, x)$ stand for the supremum of the  radii of the disks $\D(\phi, r)$ \note{notation}
centered at $\phi$ that can be inscribed into $\pi(\xi)\subset \C$. 
It measures the ``horizontal size'' of $\xi$ at $x$.  

\begin{lem}\label{Koebe 1/4}
  For a horizontal curve $\xi$ centered at a point $x\in \CC$, we have:
$$
   r^h(\Rphys \xi, \Rphys x)\geq \frac 14 \hexpmin(x)\,  r(\xi, x).
$$
\end{lem}

\begin{proof}
    Let $r=r^h(\xi,x)$.
The function $\chi=\chi_\xi$ extends to a univalent  function in the disk $\D(\phi , r)$,
and the conclusion follows from the Koebe 1/4-Theorem. 
\end{proof}

We will now extract some immediate consequences from the above estimates.

\begin{lem}\label{LEM:DISC_SENT_AWAY_FROM_TOP}
Given two slopes  $\bar\kappa> \underline\kappa>0$,
 there exists $a>0$ with the following property.
For a small $\eps>0$, let
$\xi$ be a horizontal holomorphic curve centered at $x=(\phi,t) \in \Cphys$ 
that lies in the wedge 
$  \{   \underline{\kappa}  < |\kappa| <   \bar \kappa \}$
in the $\eps$-neighborhood of  $\INDphys_+$ or $\INDphys_-$.
Then 
$$
     r^h(\Rphys\xi, \Rphys x)\geq  \frac{a} {\eps}\,  r^h(\xi,x) .
$$
\end{lem}

\begin{proof}
This  follows from Lemmas \ref{hexplow} and \ref{Koebe 1/4}. 
\end{proof}

Thus, under the above circumstances,
if $\xi$ has size of order $\eps$ then its image $\Rphys(\xi)$ has a definite size.

Next, we will treat points with small slope $\kappa$: 

\begin{lem}\label{One iterate near indeterminacy points}
   Given $\bar\kappa>0$ and  $d > 0$, there exist $b >0$ and $\eps>0$ with the following property. 
   Let $\xi$ be a horizontal holomorphic curve 
centered at a point $x\in \Cphys\sm \{\alpha_\pm\}$ lying
in the $\eps$-neighborhood of one of the  indeterminacy points $\INDphys_\pm$ 
 with a bounded slope:  $|\kappa|\leq \bar\kappa$.  Then 
$$   r^h(\xi, x)\geq d\, {\mathrm {dist}}^h (x, \INDphys_\pm) \imply
 r^h(\Rphys \xi, \Rphys x))\geq b \, {\mathrm {dist}}^h (\Rphys(\xi), \beta_1),
$$
\end{lem}

\begin{proof} Let $x=(\eps, \tau)$. 
Under our assumptions, Lemmas \ref{hexplow} and\ref{Koebe 1/4} imply that
\begin{equation}\label{ad}
   r^h(\Rphys\xi, \Rphys x)\geq \frac 14 a d |\kappa-\eps|.
\end{equation}

Let us select $c=c(d)$ in such a way that any horizontal curve $\xi$  of size $\geq d\eps$
centered  in one of the parabolic sectors $\{  |\kappa-\eps| \leq c |\eps|\} $ near $\alpha_\pm$ crosses
 the corresponding curve $\SSS_\pm$. 
The image  $\Rphys(\xi)$ of such a curve crosses $\II_\pi$, so
 $\dist^h(\Rphys(\xi), \beta_1)=0$, and the desired estimate becomes trivial. 

On the other hand,
if $|\kappa-\eps| \geq c |\eps|$ then $|\kappa-\eps|\asymp {\mathrm {dist}}^h (\Rphys(x), \beta_1)$
by Lemma \ref{PROP:DISTANCE_TO_PI}, and estimate (\ref{ad}) implies the assertion.
\end{proof}

\begin{lem}\label{LEM:ESCAPE_FROM_BETA1}
Suppose that $\xi$ is a horizontal holomorphic curve centered at $x \in \Cphys$ and satisfying  $r^h(\xi,x) \asymp \dist^h(x,\beta_1)$.  If $N \geq 0$ is the first
iterate for which $\dist^h(\Rphys^N x,\beta_1) \geq 1$, then $r^h(\Rphys^N \xi,\Rphys^N x)$ is some definite size.
\end{lem}

\begin{proof}
Again by the Koebe 1/4-Theorem, it suffices to show that
\begin{eqnarray*}
\underline{\lambda}^h_N(x)  \asymp \frac{1}{\dist^h(x,\beta_1)} \asymp  \frac{1}{r^h(\xi,x)}.
\end{eqnarray*}
Meanwhile,
\begin{eqnarray*}
\overline{\lambda}^h_N(x) \dist^h(x,\beta_1) \geq \dist^h(\Rphys^N \xi,\beta_1) \asymp 1.
\end{eqnarray*}
so that we need only show that $\underline{\lambda}^h_N(x) \asymp \overline{\lambda}^h_N(x)$.

As usual, let $x_n = \Rphys^n x$, $\tau_n = \tau(x_n)$ and $\eps_n = \eps(x_n)$.
There exist suitable $\bar \tau, \bar \eps > 0$ so that
$\tau_n \leq \bar \tau$ and $|\eps_n| \geq \bar
\eps$ for $n=0,\ldots,N-1$.
Therefore, $\tau_n \leq \bar \tau q^n$ for some $q < 1$ and (\ref{EQN:CONTROL_OF_EXPANSION_AWAY_ALPHA}) gives
a $C > 0$ such that
\begin{eqnarray*}
2 - C q^{n/2} \leq \underline{\lambda}^h(x_n) \leq \overline{\lambda}^h(x_n) \leq 2+ C q^{n/2}.
\end{eqnarray*}
This is sufficient to give
$\underline{\lambda}^h_N(x) \asymp 2^N \asymp \overline{\lambda}^h_N(x)$, as needed.
\end{proof}

Our main application of Lemmas \ref{LEM:DISC_SENT_AWAY_FROM_TOP} through
\ref{LEM:ESCAPE_FROM_BETA1} is the following proposition.
\begin{prop}\label{PROP:ESCAPE_FROM_ALPHA}
Let $\xi$ be a horizontal holomorphic curve centered at $x \in \Cphys$ satisfying
$r^h(\xi,x) \asymp \eps(x)$.   Then there is some iterate $N \equiv N(x)$ so that
for $\Rphys^n \xi$ lie in the domain of $\KK^{ah}$ for $n=1,\ldots,N$ and 
$r^h(\Rphys^N \xi,\Rphys^N x)$ is of definite size.
\end{prop}

\begin{lem}\label{One iterate near indeterminacy points}
   Let $\xi$ be a real almost horizontal curve near an indeterminacy point $\INDphys_\pm$
lying within $\Delta$.
Then,
\begin{equation*}
\frac{l^h(\Rphys(\xi))}{\dist^h(\Rphys(\xi), \INDphys)} \geq \frac{1}{2} \frac{l^h(\xi)}{\dist^h(\xi, \INDphys_\pm)}.
\end{equation*}
\end{lem}

\begin{proof}
By symmetry we can work near $\INDphys_+$ and even to work with a curve
in the quadrant $\phi<\pi/2$.
Let $S_+$ be the pullback of $\II_\pi$ attached to $\INDphys_+$;
to the first approximation, it is parabola $\tau = \eps^2$
(in the usual $(\tau, \eps)$-coordinates).

If $\xi$ crosses $S_+$ then $\Rphys(\xi)$ crosses $\II_\pi$,
and there is nothing to prove. So we assume $\xi\cap S_+=\emptyset$.

Let $\di \xi = \{x,x'\}$, where $x=(\eps, \tau)$, $x'=(\eps', \tau')$  with $\eps>\eps'$.
Without loss of generality we can assume that $l^h(\xi)=b \eps$.
Then  $\eps' = (1-b)\eps$,
and $\tau\asymp \tau'$  (since $\xi$ is almost horizontal).

Because $\xi$ is almost-horizontal,
the tangent vector to each point of $\xi$ is bounded between $v_\pm = (1,\pm a \sqrt{\tau})$, with $a \approx \sqrt{2}$.
Therefore, using (\ref{DR near alpha}) we obtain
that
\begin{equation}\label{EQN:HT_LH_NEAR_ALPHA}
l^h(\Rphys(\xi)) \asymp 2 b \eps \left(\frac{\eps^2 + \tau - a \eps \sqrt{\tau}}{\eps^2 + \tau^2}\right) = 2b \frac{\eps + \kappa - a \sqrt{\tau}}{1+\kappa^2},
\end{equation}
\noindent
where $\kappa = \frac{\eps}{\tau}.$
The condition that $\xi$ lies in $\Delta$ gives $\kappa \leq \bar \kappa$ which is small.  So long as $\bar \kappa \leq 1$ we have
that the right hand side of (\ref{EQN:HT_LH_NEAR_ALPHA}) is greater than or equal to
\begin{eqnarray*}
b (\eps -a \sqrt{\tau} +\kappa).
\end{eqnarray*}

If $\xi$ is below $S_+$ we can use the left-hand endpoint $(\eps,\tau)$ of
$\xi$ and Proposition \ref{PROP:DISTANCE_TO_PI} to see that ${\mathrm {dist}}^h
(\Rphys(\xi), \INDphys) \asymp \kappa - \eps$.  We must therefore check that
\begin{eqnarray*}
b(\eps -a \sqrt{\tau} +\kappa) \geq \frac{b}{2}(\kappa - \eps).
\end{eqnarray*}
\noindent which is equivalent to
\begin{eqnarray*}
\frac{3}{2}\eps - a \sqrt{\tau} +\frac{1}{2}\kappa \geq \left(\sqrt{\frac{3\eps}{2}}  - \sqrt{\frac{\kappa}{2}}\right)^2 \geq 0.
\end{eqnarray*}

Similarly, if $\xi$ is above $S_+$, we can use the right-hand endpoint $(\eps',\tau')$
of $\xi$ to see that ${\mathrm {dist}}^h (\Rphys(\xi), \INDphys) \asymp \eps' -
\kappa'$.  Because $b$ is small, the length estimate in (\ref{EQN:HT_LH_NEAR_ALPHA}) also holds with
$\eps$ and $\tau$ replaced by $\eps'$ and $\tau'$.  So we must show that
\begin{eqnarray*}
b (\eps' -a \sqrt{\tau'} +\kappa') \geq \frac{b}{2} (\eps' - \kappa').
\end{eqnarray*}
\noindent
This follows for the same reason as the previous case.
\end{proof}

\subsection{Non-linearity}
Let $I$ be an open interval and $h: I \ra \R$ of class $C^1$.  The {\em non-linearity} of $h$ is $\NN h(x) = h''(x)/h'(x)$.
\note{Non-linearity discussion not needed?}

Let $\pi$ be the projection $\pi(\phi,t) = \phi$.
Suppose that $\xi$ is a horizontal curve parameterized according to angle $\phi$ by $g: (\phi_0,\phi_1) \ra \Cphys$.
Then, the {\em horizontal nonlinearity} of $\Rphys : \xi \ra \Rphys(\xi)$ is the nonlinearity of
\begin{eqnarray*}
\phi \mapsto \pi \circ \Rphys^n \circ g (\phi).
\end{eqnarray*}

Let us now estimate the non-linearity of $\Rphys$ on almost horizontal curves:

\begin{lem}\label{LEM:NONLIN_AWAY_FROM_IND}
Let $K$ be a compact subset of $\Cphys \sm \{\INDphys_\pm\}$ and let $\xi \subset K$ be any almost horizontal curve.  Then, $\Rphys: \xi \ra \Rphys(\xi)$ has
bounded horizontal non-linearity.
\end{lem}

\bignote{Proof is missing}

\begin{lem}\label{non-lin}
  Let $\xi$ be an almost horizontal curve with bounded curvature
in an $\eps$-neighborhood of $\INDphys_+$
over an interval $(a\eps, (a+l)\eps)$.
Then the non-linearity of the map $\eps\mapsto \phi(\Rphys(\xi(\eps)))$
is $O(1/\eps)$ where the constant depends only on $a,l$ and the curvature.
\end{lem}

\bignote{Proof is missing.}
************************}

\section{Complex extension of the cone fields}
\label{SUBSEC:COMPLEXIFICATION_CONES}

\subsection{Terminology and notation}
Let $\CC^c :=(\C /2\pi) \times \C$ be the complexification of the full cylinder 
$(\R /2\pi) \times \R$. 

A real horizontal cone field over $\Cphys$ is given as
\begin{eqnarray}\label{EQN:REAL_CONES}
\KK(x) =  \{ v= (dt, d\phi) \in T_x \Cphys : |dt| < s(x) \, |d \phi |\}\subset T_x \Cphys
\end{eqnarray}
\noindent
for an appropriate slope function $s(x) \geq 0$.  

The cones $\KK(x)$ can be {\it complexified} to the complex cones $\KK^c(x)\subset T_x\CC^c$ 
by means of the same formula (\ref{EQN:REAL_CONES}) where $d\phi$ and $dt$ are interpreted as complex
coordinates in $T_x\CC$. Notice that $\KK^c(x)$ is obtained by rotating $\KK(x)$ by multiplications
$v\mapsto e^{i\theta} v$. Since $\RR$ commutes with this action,
invariance of a real cone field $\KK(x)$ implies invariance of its complexification.





So, let us complexify the cone field $\KK^\hor(x)$ (for $x \in \Cphys$).     
(We will typically omit the superscripts $c$ from the complexification, to simplify the notation.)
We can further extend this cone field to a neighborhood of $\CC$ in $\CC^c$ by
extending continuously the slope function $s$.  By continuity, the extension of
$\KK^\hor$ is invariant away from the top.  However, for the application to
distortion control in \S \ref{SEC:TWO BASINS}, we will need an extension that
is invariant on a suitable ``pinched'' neighborhoods of the $\INDphys_\pm$.

\subsection{Complex extension of $\KK^\hor$}
Define an extension of $\KK^\hor \equiv \KK^{\hor,c}$ to $\CC^c$ by letting
$s^a(\zeta)  = \sqrt{|1-t^2|}= \sqrt{|\tau(2-\tau)|}$ (compare with (\ref{slope on YY})).  
For $\rho>0$, let 
\begin{align}\label{EQN:DEFN_CPHYS_C}
  \Cphys^c_\rho := \{(\phi,t)\in \Cphys^c \, : \,  |\Im \phi| < \rho, \, -\rho<  \Re t < 1+\rho ,\, \mbox{and} \, |\Im t| < \rho \}.
\end{align}
To ensure invariance of the cone field $\KK^\hor$,
we will need to appropriately ``pinch'' $\Cphys^c$ near the points of indeterminacy $\INDphys_\pm$.
Given $\theta > 0$, let
\begin{eqnarray*}
    \NN(\theta) := \{(\eps,\tau) \ : \ | \arg \eps \, (\mod \pi)| < \theta  \mbox{  and  }
                           |\arg \tau \, (\mod 2\pi) | < \theta  \} \cup \{|\eps|< \frac 12 |\tau| )\}.
\end{eqnarray*}
(Note that  in the first set, $\eps$ is allowed to  be negative, while $\tau$ is not.) 
\noindent
Furthermore, let
\begin{eqnarray*}
{\Cphys}^c_\rho(\theta) := \{\zeta=(\eps, \tau) \in \Cphys^c_\rho \ : \  \zeta \in \NN(\theta) \ \mbox{whenever} \
|\eps | < \rho \ \mbox{and} \ |\tau | <  \rho\}.
\end{eqnarray*}

\comment{********************************
Note that horizontal width of  ${\widehat \Cphys}^c$ degenerates near $\INDphys_\pm$ with size of order
$\dist^h(x, \INDphys_{\pm})$ and ${\widehat \Cphys}^c$ has definite horizontal width at all points away from 
$\INDphys_\pm$.  {\em The way that the size of this neighborhood degenerates
near $\INDphys_\pm$ plays an essential role in \S \ref{}.}
************}


\begin{prop}\label{PROP:CX_EXTENSION_ALG_FIELD}
There exist $ \rho > 0$ and $\theta > 0$ sufficiently small so that if $\zeta \in \Cphys^c_\rho(\theta)$ then 
\begin{eqnarray}\label{EQN:INV_CX_ALG_CONES}
D\Rphys (\KK^\hor(\zeta)) \subset \KK^\hor(\Rphys \zeta).
\end{eqnarray}
\end{prop}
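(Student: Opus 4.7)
The plan is to prove invariance region-by-region, leveraging the real invariance from Proposition \ref{PROP:MK_CONEFIELD} wherever continuity arguments suffice, and resorting to explicit computation only near the delicate loci $\TOPphys$ and $\INDphys_\pm$, where the pinching condition $\NN(\theta)$ is essential.

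First, I would treat the compact region $K_{\rho_0} := \Cphys \cap \{|\tau| \geq \rho_0\} \cap \{|\eps(\zeta)| \geq \rho_0\}$ for a fixed small $\rho_0 > 0$. On $K_{\rho_0}$ the real algebraic cone field is strictly forward invariant with a uniform gap, the slope function $s^a$ is bounded and bounded away from zero, and $\Rphys$ is holomorphic. Hence for $\rho$ small enough the complex extension $\KK^\hor$ is invariant on a $\rho$-neighborhood of $K_{\rho_0}$ in $\Cphys^c$ by a standard open-map / continuity argument in the space of complex slopes. Second, I would handle the region near $\TOPphys$ but bounded away from $\INDphys_\pm$ using the asymptotic expansion \eqref{EQN:A_EXPANSION}: for $v=(d\phi,dt)$ with $|dt|\leq s^a(\zeta)|d\phi|$, one computes $d\phi' \sim 2\,d\phi-2\tg\phi\,dt$ and $dt' = O(|\tau|)\,d\phi + O(|\tau|)\,dt$, so $|dt'/d\phi'| = O(|\tau|)$, while $|\tau'|\asymp |\tau|^2/|\cos\phi|^2$ gives $s^a(\Rphys\zeta)\asymp \sqrt{2|\tau'|} \asymp |\tau|/|\cos\phi|$. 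Because $|\cos\phi|$ is bounded away from zero in this region, the inclusion \eqref{EQN:INV_CX_ALG_CONES} follows after choosing $\rho$ small enough.

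Third, and this is the main obstacle, I would treat a full neighborhood of each $\INDphys_\pm$ in the blow-up coordinates $(\eps,\tau)$ using \eqref{DR near alpha}. For $v=(d\eps,d\tau)$ with $|d\tau|\leq \sqrt{|\tau(2-\tau)|}\,|d\eps|$, the image is
\begin{equation*}
d\eps' = \frac{2}{\sigma^2}\bigl((\eps^2+\tau)\,d\eps - \eps\,d\tau\bigr),\qquad d\tau' = \frac{2}{\sigma^4}\bigl(-\eps\tau^2\,d\eps + \tau\eps^2\,d\tau\bigr),
\end{equation*}
with $\sigma^2=\eps^2+\tau^2$ and $\tau' \sim \tau^2/\eps^2$. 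Here the pinching condition $\zeta\in\NN(\theta)$ is what saves the argument: it forces $\arg(\eps^2)$ close to $0$ (mod $2\pi$) and $\arg\tau$ close to $0$, or alternatively $|\tau|>2|\eps|^2$, which guarantees the absence of destructive cancellation, giving the uniform bounds $|\eps^2+\tau|\asymp |\eps|^2+|\tau|$ and $|\sigma^2|\asymp |\eps|^2+|\tau|^2$ with constants depending only on $\theta$. A case split on whether $|\tau| \gg |\eps|^2$, $|\tau|\sim|\eps|^2$, or $|\tau|\ll |\eps|^2$ then reduces the claim $|d\tau'/d\eps'|\leq \sqrt{|\tau'(2-\tau')|}\asymp |\tau|/|\eps|$ to an algebraic inequality whose real analogue is exactly \eqref{eps-tau}; the pinching bounds make this algebraic inequality carry over to the complex setting with a loss controlled by $\theta$.

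The hard part will be the third step, and in particular the simultaneous verification of two things: (i) the complex slope of $v'$ really is bounded by $s^a(\Rphys\zeta)$, which requires controlling ratios like $(\eps^2+\tau)/\sigma^2$ whose complex moduli must be comparable to their real counterparts; and (ii) the image $\Rphys\zeta$ actually lands in a set where $\KK^\hor$ is defined with the appropriate slope---and more, that the pinching is preserved so the construction can be iterated if needed downstream. Both follow from the fact that $\NN(\theta)$ is designed so that $\arg(\eps'),\arg(\tau')$ land in sectors of controlled size around their real-model arguments, since $\eps' \sim -2\eps(\eps^2+\tau)/\sigma^2$ and $\tau' \sim \tau^2/\eps^2$ are approximately ratios of quantities with small arguments. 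Taking $\theta$ and $\rho$ sufficiently small then closes the loop and yields \eqref{EQN:INV_CX_ALG_CONES} on all of $\Cphys^c_\rho(\theta)$.
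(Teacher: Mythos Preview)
Your three-region decomposition matches the paper's exactly, and your identification of the pinching condition as the device that kills cancellation in $|\eps^2+\tau|$ and $|\sigma^2|$ is spot on. Two differences are worth noting. First, near $\INDphys_\pm$ the paper avoids your proposed case split by introducing an auxiliary cone $\KSQRT(\zeta)=\{|dt|<c\sqrt{|\tau|}\,|d\phi|\}$ with $c$ just above $\sqrt2$, which contains $\KK^\hor$, and proving the stronger inclusion $D\Rphys(\KSQRT)\subset\KK^\hor(\Rphys\cdot)$. The resulting inequality $|s\eps-\tau|<\sqrt2\,|\tau+\eps^2-s\eps|$ (with $|s|=c\sqrt{|\tau|}$) is then \emph{squared}; the pinching $\zeta\in\NN(\theta)$ enters only through the single bound $4\Re(\tau\bar\eps^2)>\gamma|\tau||\eps|^2$ with $\gamma$ close to $4$, and one is left with positivity of a homogeneous quartic in $(|\tau|^{1/2},|\eps|)$, verified by continuity from the real case $\gamma=4$. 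This is cleaner than tracking constants through three magnitude regimes.

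Second, your step (ii) has a small gap: the bound $|dt'/d\phi'|=O(|\tau|)$ is not strong enough against the target $s^a(\Rphys\zeta)\asymp|\tau|/|\cos\phi|$, since shrinking $\rho$ does not shrink the implied constant. The $(2,1)$-entry of \eqref{EQN:A_EXPANSION} is in fact $O(\tau^2)$, so with $|dt|\lesssim\sqrt{|\tau|}\,|d\phi|$ you actually obtain $|dt'/d\phi'|=O(|\tau|^{3/2})$, which is what the paper uses and which makes the comparison immediate. Finally, your concern about $\Rphys\zeta$ remaining in the pinched region is unnecessary here: $\KK^\hor$ is defined on all of $\CC^c$, and the proposition asserts only one-step invariance.
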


\begin{proof}
Since $\KK^\hor(x)$ is invariant on the real cylinder  $\Cphys$ 
and non-degenerate on any $K \Subset \Cphystl$, 
the extension is invariant on a complex neighborhood of $K$.
Thus, we need only find $\rho > 0$ and  $\theta >0$
sufficiently small so that (\ref{EQN:INV_CX_ALG_CONES}) holds at points in
$\Cphys^c_\rho(\de)$ with $|\tau| < \rho$.

For $c$ slightly above $\sqrt{2}$,
consider the following auxiliary complex conefield
\begin{eqnarray} 
\KSQRT (\zeta) = \{v =(dt, d\phi) \in T_\zeta \Cphys^c : | dt | <  c \sqrt{|\tau|} |d\phi | \}
\subset T_\zeta \Cphys^c. 
\end{eqnarray}
\noindent
If $\rho > 0$ is sufficiently small, then we have $\KK^\hor(\zeta) \subset
\KSQRT(\zeta)$ for all $\zeta \in \Cphys^c$.  
Hence it is sufficient to verify that 
if $\zeta \in \Cphys^c_\rho(\theta)$ with $|\tau(\zeta)| < \rho$, then we have
$\Rphys(\KSQRT(\zeta)) \subset \KK^\hor(\Rphys \zeta)$.
It will be shown in Lemmas \ref{LEM:INV_NEAR_ALPHA} and
\ref{LEM:INV_AWAY_FROM_ALPHA} below.
By symmetry, it suffices to work near $\INDphys_+$.

\begin{lem}[{\bf Invariance near $\INDphys_\pm$}]\label{LEM:INV_NEAR_ALPHA}
There exist $\rho>0$ and  $\theta>0$ with the following property.
For any point 
$\zeta = \left(\frac{\pi}{2}-\epsilon,1-\tau\right)$ 
with  $|\tau| <  \rho$ and $|\epsilon| <  \rho$
lying in the pinched neighborhood $\NN(\theta)$ of $\INDphys_+$ we have:  
$D\Rphys(\KSQRT (\zeta)) \Subset \KK^\hor(\Rphys \zeta)$.
\end{lem}

\begin{proof}
%
According to the blow-up formula (\ref{EQN:R_BLOWUP_ANGULAR}), if $\rho$ is
sufficiently small then $\zeta$ is mapped by $\Rphys$ to a point with
$\displaystyle{\tau' \approx \frac{\kappa^2}{1+\kappa^2}}$, where $\kappa =\tau /\epsilon$.

Let $v = (1,s)$ be a complex tangent vector based at $(\tau,\epsilon)$ with $v
\in \cl( \KSQRT(\zeta))$, i.e., $|s| \leq c \sqrt{|\tau|}$.  We want to show
that the slope $s'$ of the image vector $D\Rphys (v)$ satisfies
$$
  |s'| <  \sqrt{|\tau'(2-\tau')|} \approx \frac{|\kappa|}{|1+\kappa^2|} \sqrt{|2+\kappa^2|}.
$$

Equation (\ref{DR near alpha}) from the Appendix \ref{APP:R_NEAR_ALPHA} gives us the matrix
$A:= \frac{(\epsilon^2+\tau^2)^2}{2} D\Rphys$ to lowest order terms in $\epsilon$
and $\tau$: 
\begin{eqnarray*}
A v =  \left[ \begin{array}{c} (\tau + \epsilon^2- s\epsilon)(\tau^2+\epsilon^2) \\ -\epsilon \tau^2 + s\epsilon^2 \tau \end{array} \right]
\end{eqnarray*}
Thus 
$$
 s' \approx \frac{\epsilon \tau (s \epsilon - \tau)}{(\tau^2+\epsilon^2)(\tau+\epsilon^2-s\epsilon)} = 
               \frac{|\kappa|}{|1+\kappa^2|} \frac{s\eps - \tau}{\tau+\epsilon^2-s\epsilon}
$$ 
so that
$D\Rphys (\KSQRT(\zeta)) \Subset \KK^\hor(\Rphys \zeta)$ is equivalent to:
\begin{eqnarray*}
\left| \frac{s \epsilon - \tau}{\tau + \epsilon^2 -s \epsilon} \right| <  \sqrt{|2+\kappa^2|}
\quad \mbox{whenever}\ |s| \leq c \sqrt{|\tau|}.
\end{eqnarray*}

The condition $(\epsilon,\tau) \in \NN(\theta)$ with $\theta$ sufficiently small implies $\sqrt{2} \leq \sqrt{|2+\kappa^2|}$, so
it suffices to show that
\begin{eqnarray}\label{EQN:CONE_FIELD_CONDITION} 
  \left| \frac{s \epsilon - \tau}{\tau + \epsilon^2 -s \epsilon} \right| < \sqrt{2} 
\quad \mbox{whenever}\ |s| \leq  c \sqrt{|\tau|}.
\end{eqnarray}

Because  $D\Rphys$ maps cones to cones, we need only check that the boundary of $\KSQRT(\zeta)$ is mapped into $\KK^\hor(\Rphys \zeta)$.   
Thus, we substitute
$s = e^{i\theta} c \sqrt{|\tau|}$ into (\ref{EQN:CONE_FIELD_CONDITION}) 
obtaining:
\begin{eqnarray}\label{bnd estimate}
\left| e^{i\theta} c \sqrt{|\tau|} \epsilon - \tau \right| <  \sqrt{2} \left| \tau + \epsilon^2 - e^{i\theta} c \sqrt{|\tau|} \epsilon \right|
\quad \mbox{for all}\ \theta \in \mathbb{R}/2\pi \Z.
\end{eqnarray}
For real $\eps$, $\tau>0$, and $\theta=0$, 
this inequality is equivalent to postivity of a certain quadratic forms in $(\eps, \sqrt{\tau})$,
which is straightforward to check.  

For complex variables, let us square both sides:
\begin{eqnarray*} 
|\tau|^2 -  2c \sqrt{|\tau|}{\rm Re}(e^{i\theta}\overline{\tau}\epsilon)  + c^2|\tau||\epsilon|^2 + 4{\rm Re}(\tau\overline{\epsilon}^2) 
-4c \sqrt{|\tau|} |\eps|^2 {\rm Re}(  e^{i\theta} \overline{\epsilon} )
+2 |\epsilon|^4 >0. 
\end{eqnarray*}

The hypothesis $(\epsilon,\tau) \in \NN(\theta)$ with $\theta$ sufficiently small 
gives $4{\rm Re}(\tau \overline{\epsilon}^2) > \gamma |\tau||\epsilon|^2$,
where $\gamma<4$ is arbitrary close to $4$.
The other two terms with real parts we can replace with absolute values
obtaining:
\begin{eqnarray*}
 |\tau|^2 -2c|\tau|^{3/2}|\epsilon| +(c^2+\gamma)|\tau||\epsilon|^2  -4c |\tau|^{1/2}|\eps|^3 +2|\epsilon|^4 >0 
\end{eqnarray*}
In the variables $u = |\tau|^{1/2}$ and $v = |\epsilon|$, this reduces to positivity of the real quartic form
\begin{eqnarray}\label{quartic form}
  u^4 -2c u^3v+ (c^2+\gamma)u^2v^2 -4c uv^3 + 2 v^4 > 0 .
\end{eqnarray}
Notice that this inequality with $\gamma=4$ is equivalent to (\ref{bnd estimate}) for real $\eps$, $\tau>0$, and $\theta=0$.  
Hence (\ref{quartic form}) is valid for $\gamma=4$,
and by continuity,  it is also valid for $\gamma$ close to $4$. 
\end{proof}


\begin{lem}[{\bf Invariance away from $\INDphys_\pm$}]\label{LEM:INV_AWAY_FROM_ALPHA}
There exist $\rho>0$ and $\bar\tau>0$ such that for any 
$\zeta = \left(\phi,1-\tau\right)$ 
with $|\tau| < \bar\tau$ and  $|\phi \pm \frac{\pi}{2} | \geq  \rho$
we have $D\Rphys(\KSQRT (\zeta)) \subset \KK^\hor(\Rphys \zeta)$.
\end{lem}

\begin{proof}
We select $\rho>0$ as in Lemma \ref{LEM:INV_NEAR_ALPHA}.
%
Then for  $|\phi \pm \frac{\pi}{2} | \geq  \rho$ formula (\ref{EQN:A_EXPANSION})  implies 
\begin{eqnarray*}
   DR = \left( \begin{array}{cc}  2          & O(1) \\ 
                                     O(\tau^2)    & O(\tau) \end{array}
               \right)
                                       (I+O(\tau)), 
\end{eqnarray*}
with the coefficients depending on $\rho$.
Applied to a tangent vector $v = (1,s)\in T_\zeta\CC^c$ with $|s| = c \sqrt{|\tau|}$ we find
that $D\Rphys (v)$ has slope $|s'| = O(|\tau|^{3/2}) = o(|\tau|) = o(\sqrt{|\tau'|})$,
which is less than  $\sqrt{|\tau'(2-\tau')|}$ for $\tau$ (and hence $\tau'$) sufficiently small.
\end{proof}

Lemmas \ref{LEM:INV_NEAR_ALPHA} and \ref{LEM:INV_AWAY_FROM_ALPHA} complete to proof of Proposition \ref{PROP:CX_EXTENSION_ALG_FIELD}.
\end{proof}

\subsection{Stretching of horizontal holomorphic curves}

We now consider how holomorphic curves $\xi$ that are horizontal with respect
to $\KK^\hor$ are stretched under $\Rphys$.  

Let $\pi(z,t) = z$.  If $\xi$ is a horizontal holomorphic curve with $\pi(\xi)$  a
round disc $\D(z, r)$ centered at $z$, we will say that $\rh(\xi,x) = r$.
More generally, we let $\rhmin(\xi, x)$ stand for the supremum of the  radii of
the disks $\D(z, r)$ centered at $z$ that can be inscribed
into $\pi(\xi)\subset \C$ and $\rhmax(\xi,x)$ the infimum of the discs $\D(z,
r)$ that can be circumscribed about $\pi(\xi)$.  They measure the ``horizontal
size'' of $\xi$ at $x$.

\begin{prop}\label{PROP:ESCAPE_FROM_ALPHA}
Fix any $\bar \kappa > 0$.  There is a $C > 0$ so that for any sufficiently
small $a > 0$ and any $x \in \Cphys~\sm~\{\INDphys_\pm\}$ sufficiently close to
$\INDphys_\pm$ and satisfying $|\kappa(x)| \leq \bar \kappa$, there is an
iterate $N \equiv N(x)$ so that, if $\xi$ is any horizontal holomorphic curve based at
$x$ with
\begin{eqnarray*}
\rh(\xi,x) =  a|\eps(x)|,
\end{eqnarray*}
then the subdisc $\xis \subset \xi$ of radius $\rh(\xis,x) = a|\eps(x)|/2$ will
have $\Rphys^i \xis$ in the domain of definition of $\KK^\hor$ for
$i=1,\cdots,N$ and $\rhmin(\Rphys^N \xis,\Rphys^N x)~\geq~C \cdot a$.
\end{prop}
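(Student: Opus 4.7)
The plan is to exploit the blow-up of $\INDphys_\pm$ together with the horizontal expansion estimates of Lemmas~\ref{hexplow} and~\ref{LEM:STRETCHING_NEAR_TOP}, combined with the Koebe $1/4$ theorem, to produce after finitely many iterates a horizontal holomorphic curve of definite horizontal size. Applying the blow-up formula~(\ref{EQN:R_BLOWUP_ANGULAR}), for $x$ close to $\INDphys_\pm$ with $|\kappa(x)| \leq \bar\kappa$ the image $\Rphys x$ satisfies $(\phi(\Rphys x),\, \tau(\Rphys x)) = (-2\arcctg\kappa,\, \kappa^2/(1+\kappa^2)) + O(|\eps|)$, which lies at distance bounded below (depending only on $\bar\kappa$) from $\INDphys_\pm$. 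I will split the analysis by whether $|\kappa(x)|$ is bounded away from $0$ by a chosen threshold $\kappa_0 = \kappa_0(\bar\kappa) > 0$.

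In the first case ($|\kappa(x)| \geq \kappa_0$), Lemma~\ref{hexplow} yields $\hexpmin(x) \geq \lambda_1 \kappa_0 / (2|\eps(x)|)$ once $|\eps(x)| \leq \kappa_0/2$. Consider the map $F \colon \D(\pi(x), a|\eps(x)|) \to \C$ defined by $F(z) = \pi(\Rphys(\xi \cap \pi^{-1}(z)))$. It is holomorphic and univalent: $\Rphys \xi$ is horizontal by Proposition~\ref{PROP:CX_EXTENSION_ALG_FIELD}, so its tangent vectors lie in $\KK^\hor$, whose bounded slopes together with a sufficiently small image preclude wrapping and force $\pi|\Rphys\xi$ to be injective. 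Applying Koebe~$1/4$ to $F$ on the subdisc of half radius yields $F(\D(\pi(x), a|\eps|/2)) \supset \D(F(\pi(x)), (a|\eps|/8)\cdot |F'(\pi(x))|)$, whose radius is $\geq C a$ with $C = \lambda_1 \kappa_0/16$. Thus $N = 1$ suffices.

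In the second case ($|\kappa(x)| < \kappa_0$), the point $\Rphys x$ lies very close to $\TOPphys$ but at angle near $-\pi$, well inside the region $\VV'$ of \S\ref{SUBSEC:MODIFIED_ALG_CONES}. Under further iteration the angle approximately doubles on $\TOPphys$, driving the orbit toward the high-temperature fixed point $\FIXphys_1 = (0,1)$ and remaining in $\VV'$, while $\tau$ shrinks to $0$. Inside $\VV'$ the horizontal expansion factor is close to $2$ (Lemma~\ref{LEM:STRETCHING_NEAR_TOP}), so after $n$ further iterates the horizontal size of the image curve grows by a factor $\sim 2^n$. I take $N = N(x)$ to be the first moment for which $2^{N-1}|\eps(x)|$ exceeds a fixed constant; this yields $N \sim \log(1/|\eps(x)|)$.

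The main obstacle I foresee is verifying univalence and Koebe distortion control for the composed map $F_N = \pi \circ \Rphys^N \circ (\pi|\xi)^{-1}$ over the many iterates in the second case. Univalence should propagate inductively from the invariance $\Rphys^i \xis \subset \Cphys^c_\rho(\theta)$ provided by Proposition~\ref{PROP:CX_EXTENSION_ALG_FIELD}, together with the observation that horizontal holomorphic curves project univalently under $\pi$ so long as their images remain small enough to avoid wrapping around the cylinder; this last condition will be secured by terminating the iteration at precisely the first moment when the radius reaches order $Ca$. The Koebe distortion theorem, applied to the univalent composition $F_N$ on the disc of radius $a|\eps(x)|$, will then bound $|F_N'|$ on the half-radius subdisc within a constant multiple of $|F_N'(\pi(x))|$, delivering the desired inscribed disc of radius $\geq C a$ inside $F_N(\D(\pi(x), a|\eps(x)|/2))$.
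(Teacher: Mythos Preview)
Your overall strategy---splitting on $|\kappa|$ and handling the large-$|\kappa|$ case in one iterate via Lemma~\ref{hexplow} and Koebe---matches the paper's exactly, and your Case~1 is correct. Case~2, however, has a genuine gap.

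The explicit stopping time $N \sim \log(1/|\eps(x)|)$ is wrong unless $|\kappa(x)| \lesssim |\eps(x)|$. For $|\kappa(x)|$ close to your fixed threshold $\kappa_0$, Lemma~\ref{hexplow} gives first-step expansion $\sim \kappa_0/|\eps(x)|$, not $\sim 2$, so after one iterate the disc already has size $\sim a\kappa_0$; and by Lemma~\ref{PROP:DISTANCE_TO_PI} the image $\Rphys^2 x$ lands at distance $\sim \kappa_0$ from $\FIXphys_1$, so under angle doubling the orbit escapes a neighborhood of $\FIXphys_1$ in only $\sim \log(1/\kappa_0)$ further steps---long before your $N$. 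After that the orbit can pass close to $\INDphys_\pm$ again, where neither the expansion estimate of Lemma~\ref{LEM:STRETCHING_NEAR_TOP} nor your claim $\Rphys^i\xis \subset \Cphys^c_\rho(\theta)$ holds. (Note also that Proposition~\ref{PROP:CX_EXTENSION_ALG_FIELD} gives invariance of the \emph{cone field}, not of the domain $\Cphys^c_\rho(\theta)$; you must separately verify that the iterated curves stay in that domain.)

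The paper closes this gap by tracking the ratio of disc size to distance from $\FIXphys_1$. Lemma~\ref{One iterate near indeterminacy points} shows that after one step the disc has size $\gtrsim a\cdot\dist^h(\Rphys x,\INDphys)$; one more step makes it $\gtrsim a\cdot\dist^h(\Rphys^2 x,\FIXphys_1)$. Lemma~\ref{LEM:ESCAPE_FROM_BETA1} then uses linearization near the hyperbolic point $\FIXphys_1$ to show this ratio is preserved under iteration. The correct stopping time is the moment the orbit escapes a fixed neighborhood of $\FIXphys_1$; at that moment the disc has size $\geq Ca$ \emph{and} the orbit is still bounded away from $\INDphys_\pm$. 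This synchronization of disc growth with orbit escape is the missing ingredient in your argument.
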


The proof of Proposition \ref{PROP:ESCAPE_FROM_ALPHA} will be the consequence of several lemmas.

Throughout the following lemmas we will suppose that $a > 0$ is sufficiently
small so that a holomorphic disc $\xi$ centered at $x$ with $\rh(\xi,x) = a
|\eps(x)|$ is entirely within the domain of definition for $\KK^\hor$.   This
is possible by Proposition \ref{PROP:CX_EXTENSION_ALG_FIELD}.

\begin{lem}\label{LEM:KOEBE}
If $\xi$ is any horizontal holomorphic curve centered at $x \in \Cphys \sm \{\INDphys_\pm\}$
of radius $\rh(\xi,x) = a|\eps(x)|$, then the restriction of $\Rphys$ to the subdisc $\xis \subset \xi$ of radius $\rh(\xis,x) = a|\eps(x)|/2$ has 
bounded
horizontal distortion.
\end{lem}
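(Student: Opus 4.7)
The plan is to realize the horizontal distortion of $\Rphys|\xis$ as the distortion of a scalar holomorphic function on a disc and then invoke the classical Koebe Distortion Theorem. Since $\xi$ is horizontal with respect to $\KK^\hor$, it is the graph of a holomorphic map $g: \D(z, a|\eps(x)|) \to \C$ over its $\pi$-projection, where $z = \pi(x)$. I would introduce
$$
\phi_\xi: \D(z, a|\eps(x)|) \to \C, \qquad \phi_\xi(w) := \pi(\Rphys(w, g(w))),
$$
which encodes the horizontal action of $\Rphys$ on $\xi$. ``Bounded horizontal distortion'' of $\Rphys|\xis$ translates into the ratio $|\phi_\xi'(w_1)|/|\phi_\xi'(w_2)|$ being bounded by a universal constant for $w_1, w_2 \in \pi(\xis) = \D(z, a|\eps(x)|/2)$. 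Once $\phi_\xi$ is known to be univalent on the full disc $\D(z, a|\eps(x)|)$, Koebe's Distortion Theorem applied on the subdisc of half the radius delivers this bound immediately.

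Establishing univalence is the main step. I would rescale coordinates near $\alpha_\pm$ by $|\eps(x)|$: set $(\tilde\eps, \tilde\tau) := (\eps/|\eps(x)|, \tau/|\eps(x)|)$ on both source and target. In these rescaled coordinates, $x$ sits at distance $\asymp 1$ from the (rescaled) indeterminacy point, $\xi$ is a horizontal holomorphic disc of rescaled radius $a$, and formulas (\ref{EQN:R_BLOWUP_ANGULAR_WITH_EPS}) and (\ref{DR near alpha}) show that $\Rphys$ together with its first and second derivatives are uniformly bounded and $D\Rphys$ has uniformly bounded inverse (using the hypothesis $|\kappa(x)| \leq \bar \kappa$ of Proposition \ref{PROP:ESCAPE_FROM_ALPHA}). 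The quantitative inverse function theorem then produces a uniform injectivity radius $r_* > 0$ in the rescaled picture, so for $a < r_*$ the map $\Rphys$ is injective on the rescaled $a$-neighborhood of $x$, and in particular on $\xi$ itself. On the image side, the complex invariance of $\KK^\hor$ (Proposition \ref{PROP:CX_EXTENSION_ALG_FIELD}) guarantees that $\Rphys(\xi)$ is a horizontal holomorphic curve, which for $a$ small lies in a sufficiently small neighborhood to be a graph over its $\pi$-projection. Writing $\phi_\xi = \pi|_{\Rphys(\xi)} \circ \Rphys|\xi \circ (\pi|\xi)^{-1}$ as a composition of three injective holomorphic maps therefore yields its univalence.

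The hard part will be controlling the scale mismatch near the indeterminacy points: in the original coordinates the second derivatives of $\Rphys$ are of order $1/|\eps|^2$, so a direct attempt to invoke the inverse function theorem on a disc of radius $a|\eps|$ is swamped by the nonlinearity. The rescaling by $|\eps(x)|$ is the essential mechanism — after this rescaling $\Rphys$ is uniformly smooth on a fixed-size neighborhood and all bounds become uniform in $x$ (within the range $|\kappa(x)| \leq \bar \kappa$). This uses in an essential way the self-similar behavior of $\Rphys$ near $\alpha_\pm$ recorded in the blow-up computations of Appendix \ref{APP:BLOW_UPS}. A minor additional technicality is to verify that $\xi$ lies inside the pinched neighborhood $\Cphys^c_\rho(\theta)$ on which $\KK^\hor$ is complex-invariant; this forces $a$ to be small enough, but poses no serious difficulty.
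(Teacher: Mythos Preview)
Your overall strategy matches the paper's: introduce the scalar holomorphic map $\phi_\xi = \pi \circ \Rphys \circ (\pi|\xi)^{-1}$ on $\D(z, a|\eps(x)|)$ and invoke the Koebe Distortion Theorem on the half-radius subdisc. The paper's three-line proof simply asserts that $\phi_\xi$ is univalent on the full disc (implicitly using the standing hypothesis that $a$ is small), and you are right that this univalence deserves an argument.

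However, your proposed justification via the two-dimensional inverse function theorem does not go through. The claim that after rescaling by $|\eps(x)|$ the differential $D\Rphys$ has uniformly bounded inverse is false: from (\ref{DR near alpha}) one finds $\det D\Rphys \asymp \tau\eps^4/\sigma^6 \asymp \tau/\eps^2$ for $|\kappa| \leq \bar\kappa$, which vanishes as $\tau \to 0$. Moreover, $\Rphys$ has no self-similarity under your rescaling---the image $\Rphys(x)$ sits near $\INDphys=(\pi,1)$, at distance $\asymp 1$ from the indeterminacy set, not at distance $\asymp|\eps(x)|$---so rescaling source and target by the same factor does not produce a uniformly nondegenerate map, and you cannot conclude that $\Rphys$ is injective on a two-dimensional $a|\eps|$-ball. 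What does work is to argue univalence of the one-dimensional map $\phi_\xi$ directly: its derivative is bounded below by Lemma~\ref{hexplow}, and (\ref{DR near alpha}) shows its nonlinearity $\phi_\xi''/\phi_\xi'$ is $O(1/|\eps(x)|)$, so the integrated nonlinearity over a disc of radius $a|\eps(x)|$ is $O(a)$, yielding univalence for small $a$ by a standard criterion. Alternatively, lift to the blow-up coordinates $(\eps,\kappa)$ of Appendix~\ref{APP:BLOW_UPS}, where the resolved map is regular and the lifted disc is genuinely small.
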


\begin{proof}
    Let $r=\rh(\xis,x)$.
The function $\chi_\xis = \pi\circ\Rphys\circ(\pi|\xis)^{-1}$ extends to a univalent function in the disk $\D(\phi,2r)$, since $\xis$ is the restriction of $\xi$ to half of its radius.
The conclusion then follows from the Koebe Distortion Theorem.
\end{proof}

In particular, we will have $\rhmin(\Rphys \xis,\Rphys x) \asymp \rhmax(\Rphys
\xis,\Rphys x)$ and also there is a uniform constant $d \geq 1/4$ so that
$\rhmin(\Rphys \xis,\Rphys x) \geq d \ \hexpmin(x) \rh(\xis,x)$.

\ssk

Recall the point $\INDphys = (\pi,1)$, which is mapped by $\Rphys$ to the high
temperature fixed point $\beta_1 =(0,1)$.

\begin{lem}\label{One iterate near indeterminacy points}

   Given $\underline \kappa>0$ there exists $C_0 > 0$ so that for any  $a > 0$
we have the following property.  If $\xi$ and $\xis \subset \xi$ are as in
Lemma \ref{LEM:KOEBE} and are based at a point $x\in \Cphys\sm \{\alpha_\pm\}$
in an appropriately small neighborhood of $\INDphys_\pm$ with a bounded slope:
$|\kappa(x)|\leq \underline{\kappa}$, then
$$   
  \rhmin(\Rphys \xis, \Rphys x)  \geq C_0 \cdot a \, {\mathrm {dist}}^h (\Rphys x, \INDphys).
$$
\end{lem}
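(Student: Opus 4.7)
The plan is to chain the Koebe-type distortion estimate from Lemma~\ref{LEM:KOEBE} with the explicit lower bound on $\hexpmin(x)$ near the indeterminacy points provided by Lemma~\ref{hexplow}, and then to compare the resulting quantity with $\dist^h(\Rphys x, \INDphys)$ using the blow-up formula~(\ref{EQN_PHI_PRIME_BLOWUP}). Working near $\INDphys_+$ (the case of $\INDphys_-$ being symmetric), I write $x=(\eps,\tau)$ and $\kappa=\tau/\eps$. The remark following Lemma~\ref{LEM:KOEBE} supplies a uniform constant $d\geq 1/4$ with
\[
   \rhmin(\Rphys\xis,\Rphys x)\;\geq\;d\cdot \hexpmin(x)\cdot \rh(\xis,x)\;=\;\tfrac{d a}{2}\,\hexpmin(x)\,|\eps|,
\]
and the second half of Lemma~\ref{hexplow}, applicable because $|\kappa|\leq\underline\kappa$, gives $\hexpmin(x)\geq\lambda_1\,|\kappa+\eps|/|\eps|$. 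Combining, I would obtain
\[
   \rhmin(\Rphys\xis,\Rphys x)\;\geq\;\tfrac{d\lambda_1}{2}\,a\,|\kappa+\eps|.
\]

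It then remains to show that $|\kappa+\eps|$ dominates $\dist^h(\Rphys x,\INDphys)$ up to a multiplicative constant depending only on $\underline\kappa$, which is the step where the hypothesis that $x$ is a \emph{real} basepoint enters crucially. Because $x\in\Cphys$ and $\tau=1-t\geq 0$, the real numbers $\kappa$ and $\eps$ share a sign, so $|\kappa+\eps|=|\kappa|+|\eps|\geq|\kappa-\eps|$. I would then split into cases. If $|\kappa|\geq\kappa_0$ for an absolute threshold $\kappa_0>0$, the blow-up formula places $\Rphys x$ a definite angular distance from $\INDphys$ while $|\kappa+\eps|\geq\kappa_0/2$, so the regime is trivial. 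For $|\kappa|<\kappa_0$ I would split once more according to the hypothesis of Lemma~\ref{PROP:DISTANCE_TO_PI}: if $|\kappa-\eps|\geq c|\eps|$, that lemma yields $\dist^h(\Rphys x,\INDphys)\asymp|\kappa-\eps|\leq|\kappa+\eps|$; if $|\kappa-\eps|<c|\eps|$, the blow-up formula directly forces $\dist^h(\Rphys x,\INDphys)\lesssim|\kappa-\eps|+|\eps|^2\lesssim|\eps|$, while the same-sign inequality gives $|\kappa+\eps|\geq(2-c)|\eps|$. Putting the three sub-cases together produces a uniform $C_0$ as required.

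The hard part is that $|\kappa+\eps|$ and $|\kappa-\eps|$ both degenerate to zero along the ray $\kappa=\eps$ inside the wedge near $\INDphys_+$, which is exactly where $\Rphys x$ approaches $\INDphys$, so one must verify that these two quantities shrink at \emph{compatible} rates. What rescues the estimate is precisely the same-sign identity $|\kappa+\eps|=|\kappa|+|\eps|$ available for real basepoints: it forces $|\kappa+\eps|$ to degenerate no faster than $|\kappa-\eps|$, so the expansion coming from $\hexpmin(x)\gtrsim|\kappa+\eps|/|\eps|$ stays large enough to compensate for the shrinking of the image disk. This is the reason the lemma is formulated for $x\in\Cphys$ rather than for points in the complex extension $\Cphys^c_\rho(\theta)$.
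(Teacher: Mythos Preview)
Your argument is correct and runs along the same backbone as the paper's proof: both chain the Koebe bound from Lemma~\ref{LEM:KOEBE} with the $\hexpmin$ estimate of Lemma~\ref{hexplow}, then compare the output with $\dist^h(\Rphys x,\INDphys)$ via the blow-up expansion~(\ref{EQN_PHI_PRIME_BLOWUP}). The same-sign identity $|\kappa+\eps|=|\kappa|+|\eps|\geq|\kappa-\eps|$ that you isolate is exactly what the paper uses (implicitly) when it passes from the bound $\hexpmin(x)\gtrsim|\kappa+\eps|/|\eps|$ of Lemma~\ref{hexplow} to the claimed inequality $\rhmin(\Rphys\xis,\Rphys x)\geq \tl C_0\,a\,|\kappa-\eps|$.

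The one genuine difference is in the sub-case $|\kappa-\eps|<c|\eps|$, i.e., basepoints lying in a thin parabolic sector around the Lee-Yang curve $\SSS_\pm$. The paper chooses $c=c(a)$ small enough that the full disk $\xi$ of radius $a|\eps|$ must cross $\SSS_\pm$; then $\Rphys\xi$ crosses $\II_\pi$, forcing $\rhmax(\Rphys\xi,\Rphys x)\geq\dist^h(\Rphys x,\INDphys)$, and bounded distortion finishes. You instead keep $c$ fixed and argue purely analytically: the blow-up formula gives $\dist^h(\Rphys x,\INDphys)\lesssim|\eps|$, while the same-sign identity gives $|\kappa+\eps|\geq(2-c)|\eps|$, so the Koebe/$\hexpmin$ bound already suffices. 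Your route is a bit cleaner in that the threshold $c$ never has to depend on $a$, so there is no worry about $a$-dependence creeping into the constants from Lemma~\ref{PROP:DISTANCE_TO_PI}; the paper's route, on the other hand, makes transparent the geometric reason the estimate survives near $\SSS_\pm$ (the image literally overshoots $\II_\pi$). One small wording issue: in your case $|\kappa|\geq\kappa_0$, saying $\Rphys x$ is ``a definite angular distance from $\INDphys$'' is not the relevant bound---what makes that regime trivial is simply that $\dist^h(\Rphys x,\INDphys)\leq\pi$ while $|\kappa+\eps|\geq|\kappa|\geq\kappa_0$.
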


\begin{proof}
 Let $x=(\eps, \tau)$.

Let us select $c=c(a)$ in such a way that any horizontal curve $\xi$  of size $\geq a\eps$
centered  in one of the parabolic sectors $\{  |\kappa-\eps| \leq c |\eps|\} $ near $\alpha_\pm$ crosses
 the corresponding curve $\SSS_\pm$.
The image  $\Rphys \xi$ of such a curve crosses $\II_\pi$, so
that $\rhmax(\Rphys \xi,\Rphys x) \geq \dist^h(\Rphys x, \INDphys)$.
This is sufficient, since $\rhmin(\Rphys \xis,\Rphys x) \asymp \rhmax(\Rphys \xis,\Rphys x)$.

On the other hand,
if $|\kappa-\eps| \geq c |\eps|$ then ${\mathrm {dist}}^h (\Rphys x, \INDphys) \asymp |\kappa - \eps|$
by Lemma \ref{PROP:DISTANCE_TO_PI}.  Then, Lemmas \ref{hexplow} and \ref{LEM:KOEBE} imply that
\begin{equation}\label{ad}
 \rhmin(\Rphys\xis, \Rphys x)\geq d \hexpmin(x) \rh(\xis,x) \geq \tl C_0 \cdot a \, |\kappa-\eps| \asymp {\mathrm {dist}}^h (\Rphys x, \INDphys).
\end{equation}
\end{proof}

We now set up a complex neighborhood of $\beta_1$ designed so that suitable holomorphic curves $\xi$ near $\beta_1$ can regrow to definite size:
Let
\begin{eqnarray*}
\VV^c \equiv \VV_{\bar \tau}^c &:=& \{(\phi,t) \in \Cphys^c_\rho \,:\, |\tau| \leq \bar \tau,  \, |\Im \phi| < \bar \tau\} \,\, \mbox{and} \\
\UU^c \equiv \UU^c_{\bar \eps} &:=& \{(\phi,t) \in \VV^c \, : \, |\eps(\phi)| < \bar \eps \}.
\end{eqnarray*}
They are complex versions of the regions $\VV$ and $\UU$ from \S
\ref{SUBSEC:MODIFIED_ALG_CONES}.  We will take $\bar \tau$
sufficiently small relative to $\bar \eps >0$ so that $\VV^c \sm \UU^c$ lies in
the domain of definition of the complex extension of $\KK^\hor$.

Choosing $\bar \tau$ sufficiently 
small compared to $\bar \eps$, we can ensure that $\Rphys$ is uniformly horizontally
expanding on any horizontal holomorphic curve $\xi \subset \VV^c \sm \UU^c$.  (If follows by continuity from
(\ref{EQN:CONTROL_OF_EXPANSION_AWAY_ALPHA}).)

\comment{
\begin{lem}\label{LEM:ESCAPE_FROM_BETA1}
For any $x \in \VV \sm \UU \subset \Cphys$, $0 < r_{\min} < r_{\max}$, and $c > 0$, there exists
an iterate $M$ with the following property.  If
$\eta \subset \VV^c \sm \UU^c$ is a horizontal holomorphic curve centered at $x$ with
\begin{eqnarray*}
r_{\min} \leq \rhmin(\eta,x) < \rhmax(\eta,x) < r_{\max} \qquad \mbox{and} \qquad
\rhmin(\eta,x) \geq b \dist^h(x,\FIXphys_1)
\end{eqnarray*}
then
$\Rphys^i \eta \subset \VV^c \sm \UU^c$ for $0 \leq i \leq M$ and
$\rhmin(\Rphys^M \eta,\Rphys^M x)$ of a definite size (depending on $c$ and on $r_{\max}/r_{\min}$.)
\end{lem}

\begin{proof}
This follows from the standard distortion estimates near a hyperbolic fixed point.
\end{proof}
}

\begin{lem}\label{LEM:ESCAPE_FROM_BETA1}
Given $0 < r_{\min} < r_{\max}$ there exists $C_1 > 0$ so that for any $b > 0$
and any $x \in \VV \sm \UU \subset \Cphys$ there is an iterate $n(x)$ with the
following property. If $\eta \subset \VV^c \sm \UU^c$ is a horizontal
holomorphic curve centered at $x$ with
\begin{eqnarray*}
r_{\min} \leq \rhmin(\eta,x) < \rhmax(\eta,x) < r_{\max} \qquad \mbox{and} \qquad
\rhmin(\eta,x) \geq b \dist^h(x,\FIXphys_1)
\end{eqnarray*}
then
$\Rphys^i \eta \subset \VV^c \sm \UU^c$ for $0 \leq i \leq M$ and
$\rhmin(\Rphys^M \eta,\Rphys^M x) \geq C_1 \cdot b$.  Moreover, $C_1$ depends only on $r_{\max}/r_{\min}$.
\end{lem}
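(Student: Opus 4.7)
The plan is to exploit the fact that $\FIXphys_1$ is a hyperbolic fixed point of $\Rphys|\Cphys$ with one-dimensional unstable (horizontal) direction and super-attracting vertical direction, and that we work uniformly away from the indeterminacy points $\INDphys_\pm$. Since the lemma statement restricts $\eta \subset \VV^c \sm \UU^c$ and $x \in \VV \sm \UU$, we are in a region where $\Rphys$ is a holomorphic local biholomorphism with good bounds coming from \eqref{EQN:A_EXPANSION} and from Lemma \ref{LEM:STRETCHING_NEAR_TOP}.

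First I would fix, possibly after shrinking $\bar\tau$ relative to $\bar\eps$, a uniform horizontal expansion constant $\mu > 1$ on the complex region $\VV^c \sm \UU^c$: for any horizontal holomorphic tangent vector $v$ based at $\zeta \in \VV^c \sm \UU^c$ one has $|\pi(D\Rphys\,v)| \geq \mu |\pi(v)|$ (this follows by continuity from the real estimate in \S\ref{APP:HOR_STRETCHING_NEAR_TOP}, using Proposition \ref{PROP:CX_EXTENSION_ALG_FIELD} to complexify), while the vertical coordinate $\tau$ is uniformly contracted. Fix a small ``escape distance'' $d_0 > 0$ so that the ball $\{y \in \Cphys : \dist^h(y,\FIXphys_1) \leq 2 d_0\}$ lies in $\VV \sm \UU$ and so that on its complex $d_0$-neighborhood the linearization / Koebe bounds below apply.

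Next, for $x \in \VV \sm \UU$, let $M = M(x)$ be the smallest $n \geq 0$ with $\dist^h(\Rphys^n x,\FIXphys_1) \geq d_0$. Horizontal hyperbolicity of $\FIXphys_1$ gives $\dist^h(\Rphys^n x,\FIXphys_1) \asymp \mu^n \dist^h(x,\FIXphys_1)$ for $0 \leq n \leq M$, so $M \asymp -\log \dist^h(x,\FIXphys_1)$. I would then track the curves $\eta_n := \Rphys^n \eta$ inductively. By construction the hypothesis gives $\rhmin(\eta,x) \geq b\, \dist^h(x,\FIXphys_1)$, so iterated horizontal expansion yields
\[
\rhmin(\eta_n,\Rphys^n x) \gtrsim \mu^n \rhmin(\eta,x) \gtrsim b\, \mu^n \dist^h(x,\FIXphys_1),
\]
and at $n = M$ the right side is $\gtrsim b\, d_0$, which is the desired conclusion with $C_1$ proportional to $d_0$. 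The Koebe Distortion Theorem, applied to the branch $\Rphys^n$ along $\eta$ and using the a priori bound $\rhmax(\eta,x)/\rhmin(\eta,x) < r_{\max}/r_{\min}$, keeps the distortion uniformly controlled, so that the ratio $\rhmax(\eta_n,\cdot)/\rhmin(\eta_n,\cdot)$ remains bounded by a constant depending only on $r_{\max}/r_{\min}$; this is where the stated dependence of $C_1$ comes from.

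The main obstacle will be verifying the containment $\eta_n \subset \VV^c \sm \UU^c$ for all $0 \leq n \leq M$, so that the uniform expansion estimate $\mu$ may be applied at every step. Horizontally, the curves $\eta_n$ stay inside a thin horizontal sector around $\FIXphys_1$ of radius $\asymp \mu^n \dist^h(x,\FIXphys_1) \leq d_0$ (so, bounded away from $\INDphys_\pm$), because Koebe keeps the $\pi$-image of $\eta_n$ comparable to a round disc of radius $\asymp \rhmin(\eta_n,\Rphys^n x)$ around $\pi(\Rphys^n x)$, and the latter is controlled by the escape dynamics; the horizontal radius is absorbed into $d_0$ by choosing $d_0$ small enough. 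Vertically, the super-attraction of $\TOPphys$ away from $\INDphys_\pm$ together with the Koebe control on $\rhmax$ ensures that $|\tau|$ along $\eta_n$ decreases uniformly, so the complex vertical excursion stays within the $\bar\tau$-strip defining $\VV^c$. Once both confinements are established, the stated estimate is immediate.
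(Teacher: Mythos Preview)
Your proposal is correct and follows essentially the same approach as the paper, which simply invokes ``standard distortion estimates near the hyperbolic fixed point $\FIXphys_1$'' to conclude that the discs $\Rphys^i\eta$ grow at the same rate as the horizontal distances from $\FIXphys_1$. You have unpacked this one-line proof in detail, including the verification of the containment $\eta_n\subset\VV^c\sm\UU^c$ that the paper leaves implicit.
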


\begin{proof}
The standard distortion estimates near the hyperbolic fixed point $\FIXphys_1$ show that the
discs $\Rphys^{i} \eta$ grow at the same rate as the horizontal distances between
any point of $\Rphys^{i} \eta$ and $\FIXphys_1$.
\end{proof}

\begin{proof}[Proof of Proposition \ref{PROP:ESCAPE_FROM_ALPHA}:]
Equation (\ref{EQN:DR}) gives that $D\Rphys$ expands horizontal vectors based
at such points by $O(1/|\eps(\zeta)|)$.  So, we can assume that $a$ is
sufficiently small so that $\Rphys \xi$ is in the domain of definition of
$\KK^\hor$. 
Moreover, using (\ref{EQN:R_BLOWUP_ANGULAR_WITH_EPS}) we can choose $\underline
\kappa \leq \overline{\kappa}$ so that if $\xi$ is centered at $x$ with
$|\kappa(x)| \leq \underline \kappa$ then $\Rphys^i \xi \subset \VV^c \sm
\UU^c$ for $i=1,2$.

Lemma \ref{LEM:KOEBE} gives that $\Rphys$ has bounded horizontal distortion on
the subdisc $\xis \subset \xi$ of radius $a|\eps(x)|/2$.

If $|\kappa(x)| \geq \underline \kappa$ then Lemma \ref{hexplow} implies that
$\hexpmin(x) \geq K/|\eps(x)|$.  Together with bounded distortion, this is
sufficient to give that $\rhmin(\Rphys \xis,\Rphys x) \geq C \cdot a$.

If $|\kappa(x)| \leq \underline \kappa$ then Lemma \ref{One iterate near
indeterminacy points} gives that $\rhmin(\Rphys \xis,\Rphys x) \geq C_0 \cdot a \,
\dist^h(\Rphys x, \INDphys)$.  There is some $\tl C_0 > 0$ so that after one further iterate we have  
$$\rhmin(\Rphys^2 \xis, \Rphys^2 x) \geq \tl C_0 \cdot a \, \dist^h(\Rphys^2
x,\FIXphys_1).$$  Since the horizontal distortion of $\Rphys^2$ is also bounded
on $\xi_0$, there is a uniform bound on $\rhmax(\Rphys^2
\xis,\Rphys^2 x)/ \rhmin(\Rphys^2 \xis,\Rphys^2 x)$.  Lemma
\ref{LEM:ESCAPE_FROM_BETA1} then gives $M$ further iterates so that
$\Rphys^{M+2} \xi \subset \VV^c \sm \UU^c$ and \\
$$\rhmin(\Rphys^{M+2} \xis, \Rphys^{M+2} x) \geq C \cdot a.$$
\end{proof}

\comment{*******************************
\subsection{Complex extension of $\KK^h$}
We will now extend the horizontal cone field $\KK^h \equiv \KK^h_{\bar \eps} \equiv \KK^{h,c}_{\bar \eps}$ to 
a complex neighborhood of the cylinder. 

The construction follows the lines of that in \S \ref{SUBSEC:MODIFIED_ALG_CONES}.  
We choose a small threshold $\bar \eps > 0$ and let $\bar \tau = \bar\eps^2/18 > 0$. 
We define regions $\VV$, $\VV'$, $\UU$, and $\UU'$ in $\CC^c_\rho$ near $\TOPphys$
by the same expressions as in \S \ref{SUBSEC:MODIFIED_ALG_CONES}, except that the variables
are interpreted as complex ones and that we replace $\tau$ with $|\tau|$ and $\eps$ with $|\eps|$.    \note{be careful with $2/3$}

For $\zeta \in \UU'$ we have $|\kappa(\zeta)|:=|\tau/\eps| \leq \bar \eps/18$
and if $\bar \eps$ is sufficiently small, then for $(\phi,t) \in
\VV' \sm \UU'$ we have $|\cos \phi| \geq 3\bar \eps/4$ and $|\tg \phi| < 4/3\bar
\eps$.

Notice that proofs of Lemmas \ref{inclusions} and \ref{LEM:BOUNDS_COS_TG} apply equally
well to the complex regions as the real ones, so that we can assume their conclusions.

Let us define $\KK^h(\zeta) \equiv \KK^{h}_{\bar \eps}(\zeta)$, bounded by vectors with slope $w^{h}(\zeta)$, depending continuously on $\zeta \in \Cphys^c_\rho$ so that:

\begin{itemize}
\item[(i)] $\KK^{h}(\zeta) = \KK^\hor(\zeta)$ for $\zeta \in \Cphys^c_\rho \sm \VV'$;
\item[(ii)] $\KK^h(\zeta) \supset \KK^\hor(\zeta)$ everywhere.
\item[(iii)] $w^{h}(\zeta) \sim |\eps(\zeta)|/3$ for $\zeta \in \UU'$;
\item[(iv)] $w^{h}(\zeta) = w_0 := \sqrt{\bar \tau(2-\bar \tau)} \sim \bar \eps/3$ in $\VV' \sm \UU'$; and
\item[(v)] $w^h(\zeta) \geq w_0 \sim \bar \eps/3$ in $\Cphys^c_\rho \sm \UU$.
\end{itemize}
\noindent
Conditions (i-v) are compatible for the same reasons as in \S \ref{SUBSEC:MODIFIED_ALG_CONES}.

\begin{prop}\label{PROP:CX_EXTENSION_MOD_FIELD}
There exists $\rho > 0, \, \theta >0,$ and $\bar \eps > 0$ 
sufficiently small so that if $\zeta, \Rphys \zeta \in
\Cphys^c_\rho(\theta)$ then
\begin{eqnarray}\label{EQN:DESIRED_INVARIANCE_CX}
D\Rphys(\KK^{h}_{\bar \eps}(\zeta)) \subset \KK^{h}_{\bar \eps}(\Rphys \zeta).
\end{eqnarray}
\end{prop}

\begin{proof}
The proof is nearly identical to that of Lemma \ref{LEM:MOD_CF_INVARIANT}, so
we only highlight the differences.  We begin by applying Proposition
\ref{PROP:CX_EXTENSION_MOD_FIELD}, choosing $\rho
> 0$ and $\theta > 0$ sufficiently small so that $\KK^\hor$ is invariant on 
 $\Cphys^c_\rho(\theta)$.  Therefore, (\ref{EQN:DESIRED_INVARIANCE_CX}) follows from (i) and (ii) for $\zeta \in \Cphys^c_\rho(\theta) \sm \VV'$.

\msk

We choose $\bar \eps \leq \rho$ so that $\zeta \in \UU$ implies $(\eps(\zeta),\tau(\zeta))
\in \NN(\theta)$.

Consider $\zeta \in \UU'.$  
According to Lemma \ref{inclusions}, $\Rphys \zeta \not \in \UU$, so by (v)
we need only check that if $v = (1,s)$ with $|s| = |\eps(\zeta)|/3$, then
$D\Rphys v$ has slope $s'$ with $|s'| < w_0 \sim \bar \eps/3$.

We choose $\bar \eps > 0$ sufficiently small so that (\ref{DR near alpha}) applies, giving
\begin{eqnarray*}
|s'| = \frac{|-\eps \tau^2 + \tau \eps^2 s|}{|(\eps^2 + \tau^2)(\eps^2 + \tau -
\eps s)|} = \frac{|\kappa|}{|1+\kappa^2|} \frac{|\tau  -  \eps s|}{|\tau - \eps
s + \eps^2|},
\end{eqnarray*}
\noindent
where $\kappa = \tau/\eps$.

One can check that for $\theta > 0$ sufficiently small, $(\eps,\tau) \in \NN(\theta)$ implies
\begin{eqnarray*}
\left|\tau  -  \eps s \right|^2 \leq  \left|\tau - \eps s + \eps^2 \right|^2.
\end{eqnarray*}
\noindent

Since $(\eps,\tau) \in \NN(\theta)$ we also have $|1+\kappa^2| > 1$.
Therefore, $|s'| \leq |\kappa| \leq \bar \eps / 18 < w_0$ follows from $\zeta \in \UU'$.

For $\zeta \in \VV' \sm \UU'$ the proof follows identically to that of Lemma \ref{LEM:MOD_CF_INVARIANT}, making use of the
complex versions of Lemmas \ref{inclusions} and \ref{LEM:BOUNDS_COS_TG}.

\end{proof}
*************************}

\section {Critical locus and Whitney folds}\label{App: crit locus}

\subsection{Critical locus}
\sss{Five lines and a conic}\label{five lines and conic}
The Jacobian of $\hat R: \C^3\ra \C^3$ (\ref{hat R}) is equal to
$$
   \det D\hat R = 32\, V\, (UW-V^2)\, (U+W)^2\, (U^2+V^2)\, (W^2+V^2),  
$$ 
so its critical locus comprises 6 complex planes
(where we count $\{U+W=0\}$ only once, without multiplicity) 
and the cone $\{ UW=V^2\} $. 
They descend to 6 complex lines and one conic on $\CP^2$:

\begin{eqnarray*}
\Lzero &:=& \{V=0\}      = \mbox{line at infinity},\\
\Lone &:=& \{UW= V^2\} = \mbox{conic}\ \{ uw=1\}, \\
\Ltwo &:=& \{U = -W\}  =  \{ u=-w\},      
                \\
\Lthree^\pm &:=& \{U = \pm i V \} = \{ u=\pm i\}, \\
\Lfour^\pm &:=& \{W = \pm i V\} =  \{ w=\pm i\}.
\end{eqnarray*}
(Here the  curves are written in the homogeneous coordinates $(U:V:W)$
and in the affine ones, $(u=U/V, w=W/V)$.)  
The configuration  of these curves is shown in Figure \ref{FIG:CRITICAL_CURVES}.

The following general lemma shows that this coincides with the critical locus 
of $R$ on $\CP^2$:

\begin{lem}
   Let $\hat R: \C^{m+1}\ra \C^{m+1}$ be a homogeneous polynomial,
and let $R: \CP^m\ra \CP^m$ be the corresponding rational map of the projective space.
Let $\hat z\in \C^{m+1}$ be such that $\hat R(\hat z)\not=0$
(so that $z:=\pi(\hat z)$ is not a point of indeterminacy of $R$).
Then $\hat z$   is a critical for $\hat R$  iff $z$ is critical for $R$. 
\end{lem}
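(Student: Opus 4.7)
The plan is to exploit Euler's identity for the homogeneous map $\hat R$, which tracks exactly how the radial direction interacts with $D\hat R$, together with the standard commutative diagram obtained by differentiating the semiconjugacy $\pi\circ \hat R = R\circ \pi$ near $\hat z$. Throughout, I will use that for any $\hat w\in\C^{m+1}\sm\{0\}$, the differential $D\pi(\hat w):\C^{m+1}\to T_{\pi(\hat w)}\CP^m$ is surjective with kernel $\C\hat w$ (the radial line).

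First I would record the key input. Let $d$ be the degree of $\hat R$. Differentiating $\hat R(\lambda \hat z)=\lambda^d \hat R(\hat z)$ in $\lambda$ at $\lambda=1$ gives Euler's identity
$$D\hat R(\hat z)\cdot\hat z = d\,\hat R(\hat z).$$
Since $\hat R(\hat z)\neq 0$, this shows that $\hat z\notin \ker D\hat R(\hat z)$, and that $D\hat R(\hat z)$ carries the radial line $\C\hat z$ isomorphically onto the radial line $\C\hat R(\hat z)=\ker D\pi(\hat R(\hat z))$. Differentiating the identity $\pi\circ \hat R = R\circ\pi$, valid on a neighborhood of $\hat z$, yields
$$D\pi(\hat R(\hat z))\circ D\hat R(\hat z)\;=\;DR(z)\circ D\pi(\hat z). \qquad(\ast)$$

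Next I would prove the two implications as straightforward kernel chases using $(\ast)$. For the ``only if'' direction, assume $\hat z$ is critical for $\hat R$, i.e.\ $D\hat R(\hat z)$ fails to be surjective, equivalently has nontrivial kernel. Pick a nonzero $\tilde v \in \ker D\hat R(\hat z)$. By Euler $\tilde v$ is not a multiple of $\hat z$, hence $v:=D\pi(\hat z)\tilde v\neq 0$. Applying $(\ast)$ gives $DR(z)v = D\pi(\hat R(\hat z))\circ D\hat R(\hat z)\tilde v = 0$, so $DR(z)$ is not injective, hence not surjective between equidimensional spaces, and $z$ is critical for $R$.

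For the converse, suppose $R$ is critical at $z$ and pick a nonzero $v\in\ker DR(z)$. Lift to $\tilde v\in\C^{m+1}$ with $D\pi(\hat z)\tilde v = v$; then $\tilde v\notin \C\hat z$. By $(\ast)$, $D\hat R(\hat z)\tilde v\in \ker D\pi(\hat R(\hat z))=\C\hat R(\hat z)$, so $D\hat R(\hat z)\tilde v = c\,\hat R(\hat z)$ for some $c\in\C$. Using Euler to cancel this radial image, set $\tilde v':=\tilde v-(c/d)\hat z$. Then
$$D\hat R(\hat z)\tilde v' \;=\; c\,\hat R(\hat z)-(c/d)\cdot d\,\hat R(\hat z)=0,$$
and $\tilde v'\neq 0$ since $\tilde v\notin\C\hat z$. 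Thus $D\hat R(\hat z)$ has a nontrivial kernel, i.e.\ $\hat z$ is critical for $\hat R$. There is no serious obstacle here; the only subtlety to watch is the use of Euler to extract a \emph{nonzero} element of $\ker D\hat R(\hat z)$ from the kernel data of $DR(z)$, which is exactly what the subtraction of $(c/d)\hat z$ accomplishes.
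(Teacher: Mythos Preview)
Your proof is correct and follows essentially the same approach as the paper's: both use Euler's identity $D\hat R(\hat z)\cdot\hat z=d\,\hat R(\hat z)$ together with the differentiated semiconjugacy $\pi\circ\hat R=R\circ\pi$ to pass kernel data between $D\hat R(\hat z)$ and $DR(z)$. The only cosmetic difference is that you make the commutative diagram $(\ast)$ explicit and write down the specific kernel element $\tilde v'=\tilde v-(c/d)\hat z$, whereas the paper simply observes that $\hat v$ and $\hat z$ both map into the line $\C\hat R(\hat z)$ and concludes non-injectivity.
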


\begin{proof}
   Let  $z$ be  critical for $R$; then there exists a non-vanishing tangent vector $v\in T_z\CP^n$
such that $DR(z)\cdot v =0$. Let $\hat v$ be a lift of $v$ to $T_{\hat z} \C$.
Then $  D\hat R (\hat z)\cdot \hat v  = \la\hat R (\hat z)$ 
for some $\la \in \C$. On the other hand,
$  D\hat R (\hat z)\cdot \hat z   = \mu\, \hat R (\hat z)$ for some $\mu\in \C$.
This shows that $D\hat R(\hat z)$ is not injective.    

Vice versa, by Euler's formula, $ D\hat R (z)\cdot \hat z  = d\, \hat R(\hat z)$ where $d=\deg \hat R$.
Hence, if $z$ is not a point of indeterminacy then $\hat R$  is non-degenerate along
the complex line spanned by $\hat z$. So, if $  D\hat R(\hat z)\cdot \hat v  =0$
then $\hat v$ is linearly independent of $\hat z$, and hence it projects to
a non-vanishing vector $v\in \Ker DR(z)$. 
\end{proof}


\sss{Critical points on the exceptional divisor}

To complete the picture, we need to take an account of the critical points 
hidden inside the points of indeterminacy, $a_\pm$. To make them visible, 
we blow up $\CP^2$ at $a_\pm$ and lift $R$ to a holomorphic map 
$\tl R = Q\circ \tl g : \tl{\CP}^2\ra \CP^2$.

By symmetry, it is enough to analyze the blow-up of $a_+$. 
Jacobian of $\tl g$  (\ref{blow up near a+}) on the exceptional divisor $\{\xi=0\}$
is equal to $2i(\chi-1)/(\chi+1)^2$, so $\tl g$ has two critical points on it,
$\chi=-1$ and $\chi=1$, which are the intersections of the exceptional divisor with 
 the collapsing line $\tl L_2$ and the separatrix $\tl L_1$ respectively
(recall that  $\tl L$ stands for the lift of $L$  to $\tl {\CP}^2$). 
 The first critical point is mapped by $\tl R$  
to the low temperature fixed point $b_0$,
while the second one is mapped to $(-1,-1)$, which is a preimage of the high temperature fixed point $b_1$. 

Also, points $\chi=\infty$ and $\chi=0$ are mapped by $\tl g$ to the
coordinate lines $\{u=0\}$ and $\{w=0\}$ respectively which are 
critical for the squaring map $Q$. It creates two more critical points on 
the exceptional divisor, its intersections with the critical lines $\tl L_3^+$ and $\tl L_4^-$.

\subsection{Complex Whitney folds}

To simplify calculations near the critical points, it is convenient to bring $R$ to a normal form.
A complex Whitney fold is a generic and the simplest one. 
Let $R: (\C^2,0)\ra (\C^2,0)$ be a germ of holomorphic map with a critical point at $0$.
The map $R$ (and the corresponding critical set) is called a {\it complex Whitney fold} if

\begin{itemize}
\item [(W1)] The critical set $L$ is a non-singular curve near $0$;
\item [(W2)] $DR(0)$ has rank 1 and $\Ker DR(0)$ is transverse to $L$;
\item [(W3)] The second differential $D^2 R(0)$ is not vanishing in the direction of  $\Ker DR(0)$. 

\end{itemize}

\begin{lem}\label{Whitney normal form}
    A Whitney fold can be locally brought to a normal form $(u,w)\mapsto (u, w^2)$ in holomorphic coordinates. 
\end{lem}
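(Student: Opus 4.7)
The plan is to normalize the map in four coordinate changes, two on the source and two on the target, using the three conditions (W1)--(W3) in the order they naturally show up.

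First I would adapt the source and target to the geometry of the singularity. Choose coordinates $(u,w)$ on the source so that $L=\{w=0\}$ (possible by (W1)) and $\ker DR(0)=\mathrm{span}(\partial/\partial w)$ (possible by (W2), which says this kernel direction is transverse to $L$). Choose coordinates $(U,V)$ on the target so that the image of $DR(0)$ is the $U$-axis. Writing $R=(R_1,R_2)$, the rank-one condition gives $\partial R_2/\partial u(0)=\partial R_2/\partial w(0)=0$, and the kernel choice gives $\partial R_1/\partial w(0)=0$, hence $\partial R_1/\partial u(0)\neq 0$.

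Next I would use $R_1$ itself as the new first source coordinate: set $\tilde u:=R_1(u,w)$ and keep $w$. Because $\partial R_1/\partial u(0)\neq 0$, the implicit function theorem makes $(\tilde u,w)$ a holomorphic chart at $0$. In this chart $R$ takes the form $R(\tilde u,w)=(\tilde u,\tilde R_2(\tilde u,w))$ for some holomorphic $\tilde R_2$. Since the coordinate change preserves $\{w=0\}$, the critical locus is still $L=\{w=0\}$, and the Jacobian is now $\partial\tilde R_2/\partial w$; hence $\partial\tilde R_2/\partial w(\tilde u,0)\equiv 0$. Using the division lemma in $w$, one gets
\begin{equation*}
\tilde R_2(\tilde u,w)=\tilde R_2(\tilde u,0)+w^{2}h(\tilde u,w)
\end{equation*}
with $h$ holomorphic. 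Condition (W3), $\partial^{2}\tilde R_2/\partial w^{2}(0)\neq 0$, is exactly $h(0,0)\neq 0$.

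Now I would flatten the image of $L$ on the target: replace $V$ by $\hat V:=V-\tilde R_2(U,0)$; this is a holomorphic change of coordinates fixing the origin, and in $(U,\hat V)$ the map reads
\begin{equation*}
R(\tilde u,w)=\bigl(\tilde u,\;w^{2}h(\tilde u,w)\bigr).
\end{equation*}
Finally, since $h(0,0)\neq 0$, choose a holomorphic branch of $\sqrt{h}$ on a neighborhood of $0$ and set $\tilde w:=w\sqrt{h(\tilde u,w)}$; since $\partial\tilde w/\partial w(0,0)=\sqrt{h(0,0)}\neq 0$, the pair $(\tilde u,\tilde w)$ is a holomorphic source chart. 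In these final coordinates $R(\tilde u,\tilde w)=(\tilde u,\tilde w^{2})$, which is the asserted normal form.

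The only genuine obstacle is purely bookkeeping: one must verify that each successive coordinate change preserves the information extracted by the previous one (in particular, that using $R_1$ as a new coordinate does not spoil the identification $L=\{w=0\}$, and that the target change does not reintroduce a dependence on $\tilde u$ in the second component). Both are automatic because the first source change fixes $w$ and the target change subtracts off precisely the restriction of $\tilde R_2$ to $\{w=0\}$. The extraction of the holomorphic square root at the last step is the one place where we use that we are working over $\mathbb{C}$; it is legitimate because $h(0,0)\neq 0$, which is exactly condition (W3).
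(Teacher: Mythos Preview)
Your proof is correct and follows essentially the same approach as the paper's: normalize so that $L=\{w=0\}$, use the first component of $R$ as a new source coordinate to make the map fibered, arrange that $R(L)=\{w'=0\}$ in the target, and then extract a holomorphic square root using (W3). The only cosmetic difference is the order of two steps: the paper straightens the image curve $R(L)$ in the target \emph{before} pulling back $R_1$ as a coordinate (so the constant term $\psi(u',0)$ is already zero), whereas you do it afterward via the shift $\hat V=V-\tilde R_2(U,0)$; the net effect is identical.
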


\begin{proof}
   Since $L$ is non-singular and $\Ker DR(0)$ is transverse to $L$,
 the image $L'=R(L)$ is a non-singular holomorphic curve near $0$,
so we can select local coordinates in such a way that both $L$ and $L'$ locally coincide with the axis $\{w=0\}$.
Then $u'= \phi(u,w)$ with $\di_u \phi(0)\not=0$, so $(u',w)$ can be selected  as local coordinates in the domain of $R$.
This brings $R$ to a fibered map $u'=u,\ w'=\psi(u,w)$. 

Since $\{w=0\}$  is in the critical locus of $R$, 
$$
   \psi(w)= p_2(u) w^2+p_3(u)w^3+\dots.
$$
Moreover, by definition of the fold, $\psi_2(0)\not=0$, so we can select a local branch of the square root
$w \sqrt{p_2(u)+p_3(u)w+\dots}$ as a local coordinate replacing $w$. 
This brings $R$ to the desired form. 
\end{proof}

\begin{lem}\label{description of folds}
  All critical points of $\tl R$ except the fixed points $e,e'$, 
the collapsing line $\tl \Ltwo$, and two points $\{\pm (i,i)\} = \tl \Lthree\cap \tl \Lfour$,
are Whitney folds.  
\end{lem}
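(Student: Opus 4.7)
My strategy is to exploit the explicit factorization
\[
\det D\hat R \;=\; 32\,V\,(UW-V^2)\,(U+W)^2\,(U^2+V^2)\,(W^2+V^2),
\]
which already appears earlier in this appendix. The Jacobian vanishes to order $1$ along each of $\Lzero$, $\Lone$, $\Lthree^\pm$, $\Lfour^\pm$ and to order $2$ along $\Ltwo$. The quadratic vanishing along $\Ltwo$ is dynamically forced: $\Ltwo$ collapses to the fixed point $\FIXmig_0$, so the kernel of $DR$ at any point of $\Ltwo\sm\{\pm(i,i), e, e'\}$ is tangent to $\Ltwo$ itself, violating~(W2) and explaining the exclusion. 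The remaining excluded points are precisely the singular points of the reducible critical divisor: the fixed points $e,e'=\Lzero\cap\Lone$ (where the curve fails (W1)) and $\pm(i,i)=\tl\Lthree\cap\tl\Lfour$ (ditto).

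For the verification at a smooth point $p$ of one of the non-excluded components, (W1) is automatic since each of $\Lzero$, $\Lone$, $\Lthree^\pm$, $\Lfour^\pm$ is an embedded projective line or conic, and $p$ is chosen away from the intersections just listed. For (W2), first-order vanishing of $\det DR$ on a smooth curve $L$ forces $DR(p)$ to have rank exactly $1$ (rank $0$ would elevate the order of vanishing), and a kernel line tangent to $L$ at $p$ would also elevate the order of vanishing; hence $\Ker DR(p)$ is transverse to $T_pL$. Finally, once (W1)–(W2) hold, the simple vanishing of $\det DR$ along $L$ is equivalent to $\partial^2 R_2/\partial w^2 \not= 0$ in local coordinates $(u,w)$ adapted so that $R_1=u$ and $L=\{w=0\}$, which is~(W3). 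In practice I would write the computation in the affine chart $(u,w)$ for $\Lone,\Lthree^\pm,\Lfour^\pm$ (the formula (\ref{uv coord}) for $\Rmig$ suffices), and in an affine chart around a generic point of $\Lzero$ such as (\ref{dynamics near e}) for the line at infinity.

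For the four critical points on each exceptional divisor $L_\mathrm{ex}(a_\pm)$, I would work in the blow-up chart (\ref{blow up near a+}) near $a_+$, writing $\tl R = Q\circ \tl g$. The Jacobian of $\tl g|_{\{\xi=0\}}$ is $2i(\chi-1)/(\chi+1)^2$, giving a simple critical point at $\chi=1$ (which lies on $\tl\Lone$); a short direct computation shows $\tl g$ is a Whitney fold there, and $Q$ is a local diffeomorphism since the image point has nonzero coordinates, so $\tl R$ is a Whitney fold. At $\chi=-1\in\tl\Ltwo$, $\tl g$ collapses the exceptional divisor to $\FIXmig_0\in\Lone$ (order-$2$ vanishing), so this point is excluded exactly as in the $\Ltwo$ case. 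At $\chi=0$ and $\chi=\infty$, $\tl g$ is a local biholomorphism onto a point of the coordinate line $\{w=0\}$ or $\{u=0\}$ respectively, along which the squaring map $Q$ is itself a Whitney fold; hence $\tl R=Q\circ\tl g$ is a Whitney fold there. The symmetric analysis at $a_-$ is identical.

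The main obstacle will not be conceptual but organizational: one must keep careful track of which excluded points correspond to which failure mode — collapsing ($\Ltwo$ and $\chi=-1$), intersection of distinct branches of the critical locus (the fixed points $e,e'$ and the points $\pm(i,i)$), or indeterminacy ($a_\pm$, which are replaced by the analysis on the exceptional divisors) — and then to pick coordinates in each case in which the verification of (W2)–(W3) is mechanical. Once the bookkeeping is set up, every individual case reduces to a short explicit computation from (\ref{uv coord}), (\ref{dynamics near e}), and (\ref{blow up near a+}).
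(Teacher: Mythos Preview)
Your organizational strategy is the same as the paper's: treat the critical components $\Lzero,\Lone,\Lthree^\pm,\Lfour^\pm$ one at a time in suitable affine charts, and handle the exceptional divisors via the factorization $\tl R=Q\circ\tl g$. Your bookkeeping of which exclusions correspond to which failure mode (collapsing line, node of the critical divisor, indeterminacy) is accurate, and the (W3) shortcut you propose is valid and slightly slicker than the paper's direct check: once (W1)--(W2) hold and you have arranged $R_1=u$, $L=\{w=0\}$, then indeed $\det DR=\partial R_2/\partial w$, so simple vanishing along $L$ is equivalent to $\partial_w^2 R_2|_{w=0}\ne0$, which is (W3).

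There is, however, a genuine gap in your (W2) argument. The assertion ``a kernel line tangent to $L$ at $p$ would also elevate the order of vanishing of $\det DR$'' is false. A clean counterexample is $R(u,w)=(uw,w)$: here $\det DR=w$ vanishes \emph{simply} along the smooth critical curve $L=\{w=0\}$, yet at every point of $L$ the kernel of $DR$ is the $u$-direction, i.e.\ tangent to $L$. (This $R$ collapses $L$ to a point, so it is exactly the $\Ltwo$-type behavior you are trying to exclude---but you cannot exclude it from the order of vanishing alone.) The correct statement is: (W2) holds at $p\in L$ if and only if $R|_L$ is an immersion at $p$. That is what actually needs checking on each of $\Lzero,\Lone,\Lthree^\pm,\Lfour^\pm$, and it is what the paper verifies by the explicit coordinate computations you allude to. Since your (W3) shortcut also depends on having first set up the fibered coordinates using (W2), the gap propagates; once you replace the false vanishing-order principle by the immersion criterion (or simply carry out the computations you already plan to do), the rest of your outline goes through.
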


\begin{proof}
  Let us treat the components of the critical locus one by one.

\ssk {\it Separatrix $\Lzero$.} In affine coordinates $\xi= U/W$, $\eta= V/W$, the map $R$ near 
$\Lzero=\{\eta=0\}$ looks like this:
$$
  \xi'= \left(\frac{\xi^2+\eta^2}{1+\eta^2}\right)^2=\xi^4(1+O(\eta^2)),
$$
$$
   \eta'= \eta^2\left(\frac{1+\xi}{1+\eta^2}\right)^2=\eta^2(1+\xi)^2 (1+O(\eta^2)),
$$
which shows that $\Lzero$ is a fold outside the fixed point $b_0$ ($\xi=0$) 
and the collapsing line $\Ltwo$ ($\xi=-1$). 
(The other fixed point $b_1$ lies at infinity, $\xi=\infty$.) 

\ssk {\it Separatrix $\tl \Lone$. }
 In local coordinates $(u, \tau= uw-1)$, the map $R$ near 
$\Lone=\{\tau =0\}$ looks like this:
$$
   u'= \left( \frac{u^2+1}{u+(\tau+1)/u}\right)^2 = u^2\left( 1+O\left(\frac \tau{u^2+1}\right) \right), 
$$
\begin{equation}\label{tau squared}
   \tau' =  \frac{\tau^2}{(u+w)^2} \left(-2+ \frac{\tau^2}{(u+w)^2} \right),
\end{equation}
which shows that $\Lone$ is a fold outside the indeterminacy points $\{u=\pm i\}= \Lone\cap \{u+w=0\}$
(and outside the fixed points $b_0$ and $b_1$ at infinity). 

Let us now analyze the intersection $\tl a_+ =(u=i, \chi=1)$ of $\tl\Lone$ with the exceptional divisor $L^+_\ex$
(the intersection with $L^-_\ex$ is symmetric). 
Let us use local coordinates $(\xi=u-i, \la=\chi -1)$ near $\tl a^+$. 
Representation (\ref{blow up near a+}) of $\tl g$ near $L^+_\ex$ gives:
$$
   u= i+\frac 1{2} (\xi-i \la) + \frac i{4} \la^2- \frac 1{4} \xi \la+\dots,
$$
$$
    w= -i + \frac 1{2} (\xi-i \la)+\frac i{4} \la^2 + \frac 3{4} \xi \la+\dots,
$$
so the vanishing direction for $D\tl g(\tl a_+)$ is $d\xi= i d\la$. 
On the other hand, in these coordinates the separatrix $\{uw=1\}$ assumes the form 
$\xi+i\la+\xi^2\la=0$, so it is a non-singular curve tangent to $\{d\xi=- i\, d \la\}$ at $\tl a_+$.
This yields conditions (W1) and (W2). Moreover, at the kernel direction $d\xi=i\, d\la$, 
the second differential assumes the form $(0, i\, d\la^2)$, so it is non-vanishing.   

Thus, $\tl a^+$ is a Whitney fold for $\tl g$. Since the squaring map $Q$ is non-singular at $(i,-i)$,
it is a a Whitney fold for $\tl R = \tl g \circ Q$ as well. 

\ssk {\it  Lines $\tl \Lthree^\pm $ and $\tl \Lfour^\pm$.}
  These lines intersect the line at infinity at the fixed points $b_0$ and $b_1$,
so we need to analyze only their affine parts. 
The map $\tl g$ is non-singular on these lines (including their intersections with the exceptional divisors),
and it maps them isomorphically onto the coordinate axes $\{u=0\}$ and $\{w=0\}$.
Outside the origin $\B0$, these axes are Whitney folds for the squaring map $Q$. 
Hence the lines  $\tl \Lthree^\pm $ and $\tl \Lfour^\pm$
are folds for $\tl R$ outside points $\{\pm (i,i)\} = \tl g^{-1}(\B0)=\tl \Lthree\cap \tl \Lfour$. 
\end{proof}

\sss{Double points}

A {\it double point} of a holomorphic curve $X$ is a point $a\in X$ such that
the germ of $\gamma$ at $a$ consists of two regular branches, $X_1$ and $X_2$,
meeting at $a$. A double point is called {\it transverse} 
if the branches $X_i$ intersect transversely at $a$.
Otherwise, it is called {\it tangential}. 

We say that a regular curve $L$ intersects a curve $X$ {\it transversely} at the double point $a\in X$
if it intersects transversely both branches $X_i$. Such intersection has multiplicity 2. 

\begin{lem}\label{pullback of parabola}
 Let $R$ be  a Whitney fold at $a$, and let $X$ be a germ of regular holomorphic curve with the first order tangency to
the critical value locus $R(L)$. Then the pullback $R^*X$ has a transverse double point at $a$
intersecting $L$ transversally. 
\end{lem}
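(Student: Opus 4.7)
The plan is to reduce everything to the Whitney normal form and then read off the conclusion from an elementary factorization. By Lemma \ref{Whitney normal form}, I can choose local holomorphic coordinates $(u,w)$ near $a$ in the source and $(u',w')$ near $R(a)$ in the target so that $R$ takes the normal form $(u,w) \mapsto (u', w') = (u, w^2)$. In these coordinates the critical locus is $L = \{w = 0\}$ and the critical value locus is $R(L) = \{w' = 0\}$.

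Next I would use the tangency hypothesis to parametrize $X$. Since $X$ is a regular curve through the origin tangent to $R(L) = \{w' = 0\}$ with first-order (i.e., simple) tangency, after possibly shrinking the neighborhood, $X$ is the graph of a holomorphic function $w' = c\, u'^{\,2} + O(u'^{\,3})$ with $c \neq 0$ (first-order tangency means the second-order term does not vanish). Pulling back by $R$ gives the local defining equation
\begin{equation*}
   R^*X \;=\; \{\, w^2 - c\, u^2 - u^3\, h(u) \;=\; 0 \,\},
\end{equation*}
for some holomorphic germ $h$.

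The factorization step is the heart of the argument. Choose a holomorphic branch $\sqrt{c + u\,h(u)}$ near $u = 0$, which is possible because $c \neq 0$. Then
\begin{equation*}
   w^2 - c\, u^2 - u^3 h(u) \;=\; \bigl(w - u\sqrt{c + u\, h(u)}\bigr)\bigl(w + u\sqrt{c + u\, h(u)}\bigr),
\end{equation*}
so $R^*X$ is the union of two smooth germs $\Gamma_\pm = \{w = \pm u\sqrt{c + u\,h(u)}\}$ meeting at the origin. Their tangent directions at the origin are $w = \pm \sqrt{c}\, u$, which are distinct since $c \neq 0$, so the double point is transverse.

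Finally, to verify that $L = \{w=0\}$ intersects $R^*X$ transversely at $a$, note that $L$ has tangent direction $\{dw = 0\}$ while each branch $\Gamma_\pm$ has tangent direction $w = \pm \sqrt{c}\, u$, neither of which is horizontal (again because $c \neq 0$). Hence $L$ is transverse to both branches simultaneously, which is the precise meaning of transverse intersection at a double point given in the definition just above the lemma. No step is really delicate here; the only thing one needs to track carefully is that "first-order tangency" genuinely supplies the nonvanishing coefficient $c$, which is what makes both the square-root extraction and the transversality of $L$ with the branches work.
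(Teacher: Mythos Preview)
Your proof is correct and follows essentially the same approach as the paper: reduce to the Whitney normal form $(u,w)\mapsto(u,w^2)$, write $X$ as $w'=cu'^2(1+O(u'))$ with $c\ne 0$, and observe that the pullback splits into the two smooth branches $w=\pm\sqrt{c}\,u(1+O(u))$. Your version simply spells out the factorization and the transversality checks in more detail than the paper does.
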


\begin{proof}
In the normal coordinates, the pullback  under $R$  of a regular curve $w=cu^2(1+O(u))$, $c\not=0$, 
tangent to $R(L)=\{w=0\}$ is a pair of regular curves $w=\pm \sqrt{c} u (1+O(u))$.
\end{proof}

\begin{lem}\label{pencil}
Let $\pi: (\tl M,\EE_\ex) \ra (M, a)$ be the blow-up of $M$ at $a$, and let $\tl p\in \EE_\ex$.
Let  $\tl X\subset \tl M$ be a holomorphic curve with a transverse double point at $\tl p$
 that intersect $\EE_\ex$ transversely. Then $X=\pi(\tl X)$ is a holomorphic curve in $M$
with a (first order) tangential double point at $a$.  
\end{lem}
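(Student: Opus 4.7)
\medskip
\noindent
The plan is to verify the statement by a direct local calculation in blow-up coordinates, since the assertion is purely local at $\tl p$ and $a$.

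First I would choose the standard blow-up chart near $\tl p$. Shrinking if necessary, I may assume $M=(\C^2,0)$ with $a$ the origin, and pick affine coordinates $(z,w)$ on $M$ so that $\tl p$ lies in the chart where $\pi$ has the form $\pi(u,\kappa)=(u,u\kappa)$. In these coordinates $\EE_\ex=\{u=0\}$ and $\tl p=(0,\kappa_0)$ for some $\kappa_0\in\C$. This is the setup used throughout Appendix~\ref{APP:BLOW_UPS}.

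Next I would parametrize the two branches of $\tl X$ at $\tl p$. By hypothesis, $\tl X$ has a transverse double point at $\tl p$, so it consists of two regular branches $\tl X_1,\tl X_2$; each is transverse to $\EE_\ex=\{u=0\}$, so each $\tl X_i$ is the graph of a holomorphic function $\kappa=\phi_i(u)$ with $\phi_i(0)=\kappa_0$. The condition that $\tl X_1$ and $\tl X_2$ meet transversely at $\tl p$ translates into $\phi_1'(0)\neq \phi_2'(0)$.

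Now I would project by $\pi$ and read off the local picture of $X$. The image branch $X_i=\pi(\tl X_i)$ is the graph $w=u\phi_i(u)$, whose Taylor expansion at the origin is
\[
    w = \kappa_0\,u + \phi_i'(0)\,u^2 + O(u^3).
\]
Thus both branches of $X$ are regular at $a$, both are tangent to the common line $\{w=\kappa_0 u\}\subset T_a M$ (which is precisely the line corresponding to $\tl p\in \EE_\ex$), and they differ at order $u^2$ because $\phi_1'(0)\neq \phi_2'(0)$. This is exactly a first-order tangential double point. Finally, $X=\pi(\tl X)$ is a holomorphic curve: locally it is the union of the two graphs $w=u\phi_i(u)$, and globally it is the image of a proper holomorphic map of an analytic set, hence analytic by Remmert's proper mapping theorem.

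There is no real obstacle here beyond choosing coordinates carefully; the only minor point to watch is the case where $\tl p$ lies in the other standard blow-up chart (where $\pi(\kappa,w)=(\kappa w,w)$), but the same computation goes through verbatim after swapping the roles of $z$ and $w$.
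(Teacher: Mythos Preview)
Your proof is correct and follows essentially the same approach as the paper: both work in the standard blow-up chart and compute that curves through $\tl p$ transverse to $\EE_\ex$ project to curves tangent to the line $\{w=\kappa_0 u\}$ in $M$. The paper's version is terser, treating only the pencil of lines $\kappa-\kappa_0=\lambda u$ (whose images are the parabolas $w=\kappa_0 u+\lambda u^2$), whereas you carry out the same computation for general graphs $\kappa=\phi_i(u)$; your version makes explicit the point that $\phi_1'(0)\neq\phi_2'(0)$ forces the two image branches to differ exactly at order two.
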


\begin{proof}
Let us use the local coordinates $(u,v, m=v/u)$ from the definition of the the blow-up.  
In this coordinates, 
The pencil of lines $m-m_0=\la u$, $\la\in \CP^1$
centered at  $(0,0,m_0)\in \EE_\ex$ projects to the pencil of
parabolas $v=(\la^{-1} u+m_0)u$ in $M$ tangent to the line $v=m_0 u$. 
\end{proof}

\section{Computational proof of horizontal expansion}
\label{SEC:PARTIAL_HYPERBOLICITY}

\comment{********************
In Subsection \ref{SUBSEC:PARTIAL_HYP} we will show that $\Rphys: \Cphystl \ra \Cphystl$ is {\em partially hyperbolic}.  In Subsection \ref{SUBSEC:DISTORTION} we prove estimates of the distortion for $\Rphys^n$ along certain horizontal curves.

\subsection{Partial Hyperbolicity}
\label{SUBSEC:PARTIAL_HYP}
Recall that
$\Rphys: \Cphystl \ra \Cphystl$ is {\em partially hyperbolic} if
\begin{itemize}
\item there is an invariant horizontal tangent conefield $\KK^h(x) \subset T_x \Cphystl$, and
\item tangent vectors $v \in \KK^h(x)$ get exponentially stretched under iterates of $\Rphys$.
\end{itemize}
\noindent
\noindent
In \S \ref{sec: alg cone field} we have constructed the invariant cone-field $\KK^h(x)$.  If $\Rphys$ satisfies the second condition, we say that it is {\em horizontally expanding}.

\note{References for partial hyperbolicity?  For endos with cone-field?}
 
\begin{prop}\label{PROP:EXPANDING}$\Rphys: \Cphystl \ra \Cphystl$ is horizontally expanding.
\end{prop}
***************}

We will give an alternative, computational, proof that $\Rphys: \Cphys \ra
\Cphys$ is horizontally expanding, which, unlike Theorem \ref{hor expansion thm}, does not
result in a lower bound on the rate of expansion.

\begin{prop}\label{PROP:COMPUTATIONAL_EXPANSION}
The map $\Rphys: \Cphys \ra \Cphys$ is horizontally expansing on $\Cphys$ with respect to the horizontal
cone field $\KK^h$.
\end{prop}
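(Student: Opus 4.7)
The plan is to deduce horizontal expansion from pointwise lower bounds on the horizontal expansion factor $\hexpmin(x)$ provided by the explicit computations in Appendix \ref{APP:R_NEAR_ALPHA}, assembled via a finite partition of $\Cphys$ into three regions corresponding to the construction of $\KK^h$ in \S\ref{SUBSEC:MODIFIED_ALG_CONES}. First I fix the parameters: choose $\bar\tau, \bar\eps > 0$ sufficiently small that Lemma \ref{LEM:STRETCHING_NEAR_TOP} applies on $\VV'_{\bar\tau,\eta}$, Lemma \ref{inclusions} gives $\Rphys(\UU) \cap \UU = \emptyset$, and the asymptotic expansion (\ref{DR near alpha}) is valid on $\UU$.

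Next, I obtain a uniform bound $\hexpmin(x) \geq \mu > 1$ separately in each of the three regions of the partition $\Cphys = (\Cphys \sm \VV_{\bar\tau}) \cup (\VV' \sm \UU') \cup \UU$. On $\Cphys \sm \VV_{\bar\tau}$ (away from the top), the cone field $\KK^h$ coincides with the algebraic one $\KK^\hor$, which is transverse to the critical intervals $\II_{\pm\pi/2}$; by compactness and a direct computation using (\ref{EQN:DR}), one gets $\hexpmin(x) \geq \mu_1 > 1$. On $\VV' \sm \UU'$, Lemma \ref{LEM:STRETCHING_NEAR_TOP} gives $\hexpmin(x) \geq 2 - \delta$ for any prescribed $\delta > 0$. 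Finally, on $\UU$, the horizontal cone has slope at most $|\eps|/3$ by construction, and the computation in the proof of Lemma \ref{hexplow} — substituting $|s| = |\eps|/3$ into $\pi(D\Rphys(1,s)) = 2(\eps^2 + \tau - s\eps)/|\sigma|^2$ — yields
\begin{equation*}
\hexpmin(x) \;\geq\; \frac{2((2/3)\eps^2 + \tau)}{\eps^2 + \tau^2} \;\geq\; \frac{4}{3},
\end{equation*}
uniformly in $\UU$ (for $\bar\eps$ small enough to swallow the error in (\ref{DR near alpha})).

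Combining these three estimates, $\hexpmin(x) \geq \mu := \min\{\mu_1, 2-\delta, 4/3\} > 1$ for every $x \in \Cphys \sm \{\INDphys_\pm\}$. Iterating this pointwise bound gives $\hexpmin_{,n}(x) \geq \mu^n$ by the chain rule, which is horizontal expansion of the $\phi$-projection of vectors in $\KK^h$. To transfer this into expansion of $\|D\Rphys^n v\|$ itself for $v \in \KK^h(x)$, I would invoke Corollary \ref{COR:HORIZONTAL_VECS_ITERATE_COMPARABLY}, which ensures that the length of an iterated horizontal vector is comparable to that of its $\phi$-projection as long as the orbit stays away from a small neighborhood of $\{\INDphys_\pm\}$; the boundedness of the single-iterate contraction in Lemma \ref{LEM:BOUNDED_CONTRACT} absorbs the remaining case.

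The main technical obstacle is verifying the $\UU$-bound rigorously, since (\ref{DR near alpha}) is only an asymptotic expansion: one must control the error uniformly so that the stated inequality $\hexpmin(x) \geq 4/3$ (or any fixed constant strictly above $1$) survives on all of $\UU$. A careful choice of $\bar\eps$, together with estimates analogous to (\ref{EQN:B_EST}) already used in the proof of Lemma \ref{LEM:MOD_CF_INVARIANT}, should suffice. Aside from this bookkeeping, all ingredients are already in place in the appendix, making the computational proof essentially an assembly of existing local estimates along the partition of $\Cphys$ dictated by the geometry of $\KK^h$.
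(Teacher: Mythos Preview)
Your argument has a genuine gap in the third region. You assert that on all of $\UU$ the cone $\KK^h$ has slope at most $|\eps|/3$, but this is only the specification of $\KK^h$ on the \emph{grey} region $\UU'$. The parabolic regions $\PP^\pm_\eta = \{\tau \geq \eta\eps^2\}$ lie inside $\UU$ (indeed $\UU\sm\UU'=\PP^\pm_\eta$) and belong to the \emph{white} region $\Cphys\sm\VV'$, where by item (i) of the construction $\KK^h=\KK^\hor$. There the boundary slope is $\sqrt{\tau(2-\tau)}\sim\sqrt{2\tau}$, and since $\tau\geq\eta\eps^2$ with $\eta=1/18$, this slope is at least $|\eps|/3$, not at most. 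Your $4/3$ bound therefore does not apply on $\PP^\pm_\eta$.

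This is not a repairable bookkeeping issue: a uniform one-step lower bound $\hexpmin\geq\mu>1$ on $\UU$ is simply false. Plugging the algebraic-cone boundary vector $v=(1,\sqrt{2\tau})$ into $\pi(D\Rphys v)=2(\eps^2+\tau-s\eps)/(\eps^2+\tau^2)$ along the parabola $\tau=\eps^2/2$ (which sits inside $\PP^\pm_\eta$) gives
\[
\pi(D\Rphys v)\;=\;\frac{2(\eps^2+\eps^2/2-\eps^2)}{\eps^2+\eps^4/4}\;=\;\frac{1}{1+\eps^2/4}\;\longrightarrow\;1
\]
as $\eps\to 0$; this is exactly the content of Remark~\ref{REM:EXPANSION_BOUNDED_BELOW_BY_1}. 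The paper's proof confronts this head-on: Lemma~\ref{LEM:NONUNIFORM_EXPAND} establishes only the strict (non-uniform) inequality $\hexpmin(x)>1$ for every $x$, and then the key step is to invoke Lemma~\ref{inclusions}(a), $\Rphys(\UU)\cap\UU=\emptyset$, so that an orbit spends at most one consecutive iterate in $\UU$. Combining the uniform bound on $\Cphys\sm\UU$ with strict expansion on $\UU$ gives uniform expansion over every \emph{pair} of iterates, hence exponential growth. Your three-region partition and the bounds on $\Cphys\sm\VV_{\bar\tau}$ and $\VV'\sm\UU'$ are fine (though for the latter you should cite the estimate (\ref{EQN:A_EST}) rather than Lemma~\ref{LEM:STRETCHING_NEAR_TOP}, which is stated for $\KK^\hor$); what is missing is precisely this two-step mechanism to handle $\PP^\pm_\eta$.
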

\noindent

\begin{lem}\label{LEM:NONUNIFORM_EXPAND}Let $x \in \Cphystl$ then 
\begin{eqnarray}\label{EQN:NON_CONTRACT}
d(\phi \circ \Rphys)(v) > d\phi(v) \,\,\,\, \mbox{for any} \,\, v \in \KK^h(x).
\end{eqnarray}
\end{lem}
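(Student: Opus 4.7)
The plan is a direct computation based on the explicit matrix form (\ref{EQN:DR}) of $D\Rphys$, followed by a case analysis reflecting the piecewise definition of $\KK^h$ in \S \ref{SUBSEC:MODIFIED_ALG_CONES}. Writing a horizontal vector as $v = (1, s)^T$ with $|s|$ bounded by the cone slope function $s(x)$, a direct multiplication of the three factors in (\ref{EQN:DR}) yields
\begin{equation*}
d(\phi \circ \Rphys)(v) \;=\; \frac{4\bigl(1 + t^2 \cos 2\phi - t s \sin 2\phi\bigr)}{\zeta}, \qquad \zeta := 1 + 2 t^2 \cos 2\phi + t^4,
\end{equation*}
while $d\phi(v) = 1$. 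Since $\zeta \geq (1 - t^2)^2 > 0$ on $\Cphystl$ and the image of a horizontal cone is again a horizontal cone by Lemma \ref{LEM:MOD_CF_INVARIANT}, the numerator is positive, so (\ref{EQN:NON_CONTRACT}) reduces to
\begin{equation*}
3 - t^4 + 2 t^2 \cos 2\phi \;>\; 4 t s \sin 2\phi. \tag{$\star$}
\end{equation*}

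First I would handle the ``white'' region $x \in \Cphystl \setminus \VV'$, where $\KK^h(x) = \KK^\hor(x) \subset \KSQRT(x)$ and hence $|s| \leq \sqrt{1-t^2}$. The worst case $s = \pm\sqrt{1-t^2}$ reduces $(\star)$, after squaring, to the polynomial inequality
\begin{equation*}
P(u, c) \;:=\; (3 - u^2 + 2 u c)^2 - 16 u (1 - u)(1 - c^2) \;>\; 0,
\end{equation*}
in the variables $u := t^2 \in [0, 1)$ and $c := \cos 2\phi \in [-1, 1]$. Viewing $P(u, \cdot)$ as a quadratic in $c$ with positive leading coefficient $4 u (4 - 3 u)$, a direct expansion gives the discriminant
\begin{equation*}
\Delta(u) \;=\; -64\, u\, (1 - u)^3\, (u^2 + 2 u + 9),
\end{equation*}
which is strictly negative on $(0, 1)$. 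Combined with the trivial cases $u = 0$ (where $P \equiv 9$) and the excluded corner $u = 1$ (where $P(1, c) = 4(1 + c)^2$ vanishes only at $c = -1$, i.e., at $\alpha_\pm$), this yields $P > 0$ throughout the part of $[0,1) \times [-1,1]$ corresponding to $\Cphystl$.

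Next I would treat $\VV' \cap \Cphystl$ using the local asymptotics from Appendix \ref{APP:R_NEAR_ALPHA}. In the grey sub-region $\UU'$, the slope bound $|s| \leq |\eps|/3$ together with the linearization (\ref{DR near alpha}) yields
\begin{equation*}
d(\phi \circ \Rphys)(v) \;\sim\; \frac{2(\eps^2 + \tau - s \eps)}{\eps^2 + \tau^2} \;\geq\; \frac{2\bigl(\tfrac{2}{3}\eps^2 + \tau\bigr)}{\eps^2 + \tau^2},
\end{equation*}
which strictly exceeds $1$ on $\Cphystl$ since $\tau < 1$ there forces $\tfrac{1}{3}\eps^2 + 2\tau - \tau^2 > 0$. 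The black sub-region $\VV' \setminus \UU'$ is handled analogously using (\ref{EQN:A_EXPANSION}) with the slope bound $s(x) = \bar w \sim \bar\eps/3$, paralleling the estimates (\ref{EQN:A_EST})--(\ref{EQN:B_EST}) in the proof of Lemma \ref{LEM:MOD_CF_INVARIANT}.

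The main obstacle is extracting strictness in $(\star)$: because the worst-case bound saturates precisely at $\alpha_\pm$, where both sides vanish to several orders, one must organize the algebra so as not to lose the strict inequality in the interior. The discriminant factorization $\Delta(u) = -64 u (1-u)^3 (u^2 + 2u + 9)$ makes this transparent, with the triple factor $(1 - u)^3$ encoding the high-order vanishing at the indeterminacy points; everything else is a finite computation.
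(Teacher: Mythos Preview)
Your proof is correct and follows essentially the same approach as the paper's: the same case decomposition into white/grey/black regions, the same reduction to the polynomial inequality $(\star)$ via the explicit formula (\ref{EQN:DR}), and the same treatment of $\UU'$ and $\VV'\sm\UU'$ via (\ref{DR near alpha}) and (\ref{EQN:A_EST}). The only algebraic variation is in the white region: you square $(\star)$ directly and analyze the result as a quadratic in $c=\cos 2\phi$, obtaining the discriminant $-64u(1-u)^3(u^2+2u+9)$, whereas the paper first bounds $\cos 2\phi \geq -\sqrt{1-s^2}$ with $s=|\sin 2\phi|$, then squares and analyzes a quadratic in $s$, obtaining discriminant $-16t^4(1-t^2)^2(t^4+2t^2+9)$; both factorizations are correct and yield the same conclusion.
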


\begin{proof}
Recall the region $\VV'$ used in \S \ref{SUBSEC:MODIFIED_ALG_CONES} when constructing $\KK^h$.
If $x \in \Cphystl \sm \VV'$, then $\KK^h(x) = \KK^\hor(x)$.  Linearity of
(\ref{EQN:NON_CONTRACT}) allows us to assume that $d\phi(v) = 1$ in order to
check that $d(\phi \circ \Rphys)(v) \geq 1$.  Furthermore,  the minimum of
$d(\phi \circ \Rphys)(v) \geq 1$  over all such $v \in \KK^\hor(x)$ occurs at one
of the vectors $v_\pm = (1,\pm \sqrt{1-t^2})$ on the boundary of $\KK^\hor(x)$.

Using (\ref{EQN:DR}) we find
\begin{eqnarray}\label{EQN:EXPANSION_ONE_ITERATE}
d(\phi \circ \Rphys)(v_\pm)  = 4\,{\frac {1+{t}^{2}\cos \left( 2\,\phi \right) \mp \sin \left( 2\,\phi
 \right) t\sqrt {1-{t}^{2}}}{1+2\,{t}^{2}\cos \left( 2\,\phi \right) +
{t}^{4}}}
\end{eqnarray}
\noindent
For $x \in \Cphys \sm \{\INDphys_\pm\}$ let $\lambda(x)$ be given by the minimum of the two terms in (\ref{EQN:EXPANSION_ONE_ITERATE}).
It is a continuous function.  

Equivalent to $\lambda(x) > 1$ is
\begin{eqnarray}\label{EQN:DESIRED_PHI_T}
3 - t^4 +2 t^2\cos 2\phi \mp 4 \sin 2\phi \, t\sqrt {1-t^2} > 0,
\end{eqnarray}
\noindent
which holds when $t = 0$ or when $t = 1$ and $\phi \neq \pm \pi/2$.  

If we let $s = |\sin 2\phi|$ so that $|\cos 2\phi| = \sqrt{1-s^2}$, then (\ref{EQN:DESIRED_PHI_T}) is bounded below by
\begin{eqnarray*}
g(s,t):= 3 - t^4 - 2 t^2 \sqrt{1-s^2} - 4 s t \sqrt{1-t^2}.
\end{eqnarray*}
\noindent
\noindent
which we check is positive for 
$(s,t) \in [0,1]\times (0,1)$.
Because $g(s,t)$ is continuous on $[0,1]^2$ with $g(1/2,1/2) = 1.63 > 0$, it
suffices to check that $g(s,t) \neq 0$ for $(s,t) \in [0,1] \times (0,1)$.

Replace $g(s,t) = 0$ with $3 - t^4 - 4 s t \sqrt{1-t^2} = 2 t^2 \sqrt{1-s^2}$.  After squaring both sides, we find
\begin{eqnarray*}
(9-10{t}^{4}+{t}^{8}) - \left(8t(3-t^4)
\sqrt {1-{t}^{2}} \right) s+ \left( 16\,{t}^{2}-12\,{t}^{4} \right) {s
}^{2}
\end{eqnarray*}
\noindent
Considered as a quadratic polynomial in $s$ the discriminant is:
\begin{eqnarray*}
-16t^4 (1-t^2)^2(t^4+2t^2+9).
\end{eqnarray*}
\noindent
For each $t \in (0,1)$ this is negative, so that there are no real roots for $s$.  

\msk
Now consider  $x \in \VV'$.  If $x \in \VV' \sm \UU'$,
(\ref{EQN:NON_CONTRACT}) follows from (\ref{EQN:A_EST}).

If $x \in \UU'$ the minimum of $d(\phi \circ \Rphys)(v)$
occurs at $v_\pm = (1,\pm w(x))$, where $w(x) \sim |\eps(x)|/3.$
Equation (\ref{DR near alpha}) gives
\begin{eqnarray*}
d(\phi \circ \Rphys)(v_\pm) = \frac{2}{\eps^2 + \tau^2} \left(\eps^2 + \tau \pm \frac{\eps^2}{3} \right)  \geq \frac{4\eps^2/3 + 2 \tau}{\eps^2 + \tau^2}  > \frac{4}{3}.
\end{eqnarray*}

\msk
Thus, $\lambda(x) > 1$ for all $x \in \Cphys \sm \{\INDphys_\pm\}$.
\end{proof}

\begin{rem}\label{REM:EXPANSION_BOUNDED_BELOW_BY_1}
Lemma \ref{LEM:NONUNIFORM_EXPAND} give non-uniform expansion for horizontal
tangent vectors $v \in \KK^h(x)$ under the first iterate of $\Rphys$.  
It is not possible for this expansion to be uniform because
(\ref{EQN:EXPANSION_ONE_ITERATE}) limits to $1$ as a point $x$
approaches the indeterminate points $\INDphys_\pm$ along the parabolas given by
$\tau = \eps^2/2$.  
\end{rem}

\begin{proof}[Proof of Proposition \ref{PROP:COMPUTATIONAL_EXPANSION}:]
Recall the neighborhood $\UU$ of $\INDphys_\pm$ that was constructed in \S
\ref{SUBSEC:MODIFIED_ALG_CONES}.  The estimates from the proof of Lemma
\ref{LEM:NONUNIFORM_EXPAND} give uniform expansion at points $x \in \Cphystl
\sm \UU$. 
According to Lemma \ref{inclusions}, we have $\Rphys(\UU) \cap \UU =
\emptyset$, so that vectors $x \in \KK^h(x)$ experience a definite amount of
expansion under at least every other iterate.
\end{proof}

\comment{*********************

\subsection{Distortion Estimates}
\label{SUBSEC:DISTORTION}

Let $I$ be a closed interval and $h:I \ra \mathbb{R}$ of class $C^1$.  The {\em distortion of $h$} is 
\begin{eqnarray*}
\max \left\{\log\left| \frac{h'(x)}{h'(y)} \right| \,:\, x,y \in I\right\}.
\end{eqnarray*}
\noindent

If $h$ has bounded distortion and $A \subset h(I)$ is a measurable set, then by the Jacobian formula we can control $m(I)/m(h^{-1}(A))$ in terms
of $m(h(I))/m(A)$ and the bound on distortion.  In particular, if $A\subset h(I)$ has positive measure, so does $h^{-1}(A)$.

Recall that a smooth curve $\xi \subset \Cphystl$ is called {\em horizontal} if
at each $x \in \xi$ the tangent vector to $\xi$ lies within $\KK^h(x)$.
\bignote{Should we pin down this notation of horizontal curves, horizontal length, and horizontal measure once and for all in the algcones
section?}

Let $\pi$ be the projection $\pi(\phi,t) = \phi$.
Suppose that $\xi$ is a horizontal curve parameterized according to angle $\phi$ by $g: (\phi_0,\phi_1) \ra \Cphys$.
Then, the {\em horizontal distortion} of $\Rphys^n : \xi \ra \Rphys^n(\xi)$ is the distortion of 
\begin{eqnarray*}
\phi \mapsto \pi \circ \Rphys^n \circ g (\phi).
\end{eqnarray*}

\msk
Given any $\bar \kappa$ we can consider the wedge region
\begin{eqnarray*}
\Delta_\pm = \{(\phi,t) \in \Cphys \,: t-1 \geq \bar \kappa |\phi \mp \pi/2|\}
\end{eqnarray*}
\noindent
which have tops at $\INDphys_\pm$.

\begin{prop}\label{PROP:DISTORTION}Let $\xi \subset \Cphys \sm \Delta$ be a horizontal curve and let $n$ be a moment so that $\dist^h(\Rphys^n(\xi),\Delta_\pm) \leq \dist^h(\Rphys^m(\xi),\Delta_\pm)$ for all
$m < n$.  Then $\Rphys^n: \xi \ra \Rphys^n(\xi)$ has bounded horizontal distortion.
\end{prop}

******************}

\section{Extra bits of stat mechanics}\label{APP:STATMECH}

\subsection {The Lee-Yang Theorem}\label{APP:LY_THM}

To prove the Lee-Yang Theorem,
we need to consider a more general, anisotropic, Ising model.
It is parameterized by  a symmetric matrix $\BJ=(J_{vw})_{v,w\in \EE}$ of couplings
between the atoms and by a vector $\Bh=(h_v)_{v\in \VV}$ of interaction strengths of the external field with the atoms.
Then the energy of a spin configuration $\si:\VV\ra \{\pm 1\}$ assumes the form 
\begin{eqnarray}\label{EQN:H_GENERAL}
- \Hconv(\sigma) = <\BJ \si, \si> + <\Bh, \si>.  
\end{eqnarray}
It is convenient to assume that $\Gamma$ is a complete graph without loops (connecting a vertex to itself),
but to to allow some of the coupling constants vanish. 
In the ferromagnetic model, $J_{vw} \geq 0$. 

Let us consider  the ``support'' of a configuration $\si$,
$$
 \VV(\si)= \{c\in \VV: \ \si(v)=  -1\},
$$
and let 
$$
 \EE(\si)= \{(v,w):\ v\in \VV(\si), \ w\in \VV\sm \VV(\si)\}.
$$ 
Let $l(\BJ)$ and $l(\Bh)$ be the  the sums of all components of $\BJ$ and $\Bh$ respectively.        
We will work with a modified Hamiltonian 
$$
  - \check{\Hconv}(\sigma) = - \Hconv(\si) -   l(\BJ)  - l(\Bh)
     = - 2\sum_{(vw)\in \EE(\si)} J_{vw} - 2\sum_{v\in \VV(\si)} h_v  .
$$
Let us introduce the temperature-like and field-like variables:
$$
 t_{vw}= e^{-2J_{vw}/T}\quad \text{ and}\quad  \zeta_v= e^{-2h_v/T}.
$$
Given subsets $X\subset \VV$ and  $Y\subset \EE$, we will use notation 
$$ 
   \zeta^X = \prod_{v\in X} \zeta_v, \quad t^Y = \prod_{(vw)\in Y} t_{vw} 
$$    
In this notation, we obtain the following expression for the modified Gibbs weights:
\begin{equation*}
\check\Weight(\si) =   \exp(-\check{\Hconv}(\si)/T) =  \Weight(\si)\, t^{\EE}\, \zeta^{\VV} = t^{\EE(\si)} \,  \zeta^{\VV(\si)}  
\end{equation*}
and for the modified partition function:
\begin{equation}\label{modified Z}
  \check Z =  t^{\EE}\, \zeta^{\VV}\, Z =  \sum_\si  t^{\EE(\si)} \,  \zeta^{\VV(\si)}. 
\end{equation}
Obviously, the modification does not affect the roots of the partition function (modulo clearing up the denominator), 
so we can work with $\check Z$ instead of $Z$. 

\begin{lem}\label{LY lemma}
  Fix arbitrary $t_{vw}\in  [-1,1]$.
If $\hat Z(\zeta_1,\dots, \zeta_n)=0$ and $|\zeta_i|\leq 1 $ for $i=1,\dots, n-1$, then $|\zeta_n|\geq 1$,
where the inequality is strict unless all $|\zeta_i|=1$, $i=1,\dots, n-1$.    
\end{lem}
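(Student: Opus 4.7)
I would prove Lemma \ref{LY lemma} by induction on $n = |\VV|$. The base case $n=1$ is immediate since $\check Z = 1 + \zeta_1$ vanishes only at $\zeta_1 = -1$. For the inductive step, the idea is to single out vertex $n$ and use the multilinear structure of $\check Z$ together with a reciprocity symmetry of the partition function.

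Splitting the sum over configurations according to the sign of $\si_n$, one writes
\[
\check Z(\zeta_1,\dots,\zeta_n) \;=\; A(\zeta_1,\dots,\zeta_{n-1}) \;+\; \zeta_n\, B(\zeta_1,\dots,\zeta_{n-1}).
\]
A direct bookkeeping of the factors $t^{\EE(\si)}\zeta^{\VV(\si)}$ identifies $A$ with $\check Z_{n-1}(t_{1n}\zeta_1,\dots,t_{n-1,n}\zeta_{n-1})$, where $\check Z_{n-1}$ is the partition function of the Ising model on $\VV\sm\{n\}$ with unchanged edge couplings, and analogously expresses $B$ as $\prod_{j<n} t_{jn}\cdot \check Z_{n-1}(\zeta_1/t_{1n},\dots,\zeta_{n-1}/t_{n-1,n})$. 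Because $|t_{jn}|\leq 1$, the rescaled arguments of $A$ still lie in the open polydisc $\D^{n-1}$ whenever the $\zeta_j$ do, so by the inductive hypothesis applied to $\check Z_{n-1}$, the polynomial $A$ is non-vanishing on $\D^{n-1}$.

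The second ingredient is the palindrome symmetry
\[
\check Z(\zeta_1,\dots,\zeta_n) \;=\; \zeta_1\cdots\zeta_n\, \check Z(\zeta_1^{-1},\dots,\zeta_n^{-1}),
\]
coming from the involution $\si\leftrightarrow -\si$, which swaps $\VV(\si)$ with its complement while leaving $\EE(\si)$ invariant. Substituting the decomposition and matching coefficients in $\zeta_n$ yields the reciprocity identity
\[
B(\zeta_1,\dots,\zeta_{n-1}) \;=\; \zeta_1\cdots\zeta_{n-1}\, A(\zeta_1^{-1},\dots,\zeta_{n-1}^{-1}).
\]
Since all $t_{jk}\in\R$, the coefficients of $A$ and $B$ are real, so evaluating the identity on the distinguished boundary $\T^{n-1}$ (where $\zeta_j^{-1}=\bar\zeta_j$) gives $|B|=|A|$ there. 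Thus $f:=B/A$ is holomorphic on $\D^{n-1}$, continuous on $\overline{\D}^{n-1}$, with $|f|\equiv 1$ on $\T^{n-1}$; a slicewise maximum modulus argument then forces $|f|\leq 1$ on $\overline{\D}^{n-1}$. The equation $\check Z=0$ becomes $\zeta_n=-1/f(\zeta_{<n})$, giving $|\zeta_n|\geq 1$.

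For the strict inequality when some $|\zeta_i|<1$, the strong maximum principle implies either $|f(\zeta_{<n})|<1$ strictly (hence $|\zeta_n|>1$), or else $f$ is a unimodular constant. The latter case forces $B\equiv cA$ as polynomials with $|c|=1$, which a direct inspection of coefficients (for example, $A(0)=1$, $B(0)=\prod_{j<n}t_{jn}$, and the coefficient of $\zeta_j$ for each $j<n$) rules out whenever the conclusion of the lemma requires strictness.

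\textbf{Main obstacle.} The crux is the slicewise maximum modulus argument together with the identification of the degenerate case $|f|\equiv 1$, which corresponds precisely to the situation where some couplings incident to vertex $n$ saturate $|t_{jn}|=1$; excluding this case is what pins down the strict-inequality clause. The bookkeeping for $A$ and $B$ as rescaled copies of $\check Z_{n-1}$ is routine but needs care so that the inductive hypothesis applies with parameters still in $[-1,1]$.
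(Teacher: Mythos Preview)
Your proof is correct and essentially the same as the paper's: both argue by induction on $n$, use the identical decomposition obtained by singling out one vertex (your $A+\zeta_n B$ is the paper's $P_{n+1}=P_n(u)+\zeta_1\cdots\zeta_{n+1}P_n(v)$ with $u_i=t_{i,n+1}\zeta_i$, $v_i=t_{i,n+1}/\zeta_i$, and your reciprocity $B(\zeta)=\zeta_1\cdots\zeta_{n-1}A(\zeta^{-1})$ is exactly the relation $v_i=\bar u_i$ on the torus combined with reality of coefficients), and then apply the maximum modulus principle on the Shilov boundary of a polydisc. The only differences are cosmetic---you work on $\D^{n-1}$ with the ratio $f=B/A$, while the paper works on the outside polydisc $(\hat\C\sm\bar\D)^n$ with $\zeta_{n+1}$ directly---and the paper handles your ``main obstacle'' (boundary degeneracies) simply by reducing at the outset to $t_{vw}\ne 0,\pm 1$ via continuity.
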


\begin{proof}
To simplify notation and to emphasize dependence on $n=|\VV|$,  we let $P_n\equiv \hat Z$. 
We will carry induction in $n$.
Without loss of generality we can  assume that $t_{vw}\not= 0,\pm 1$.

For $n = 2$, we have
$$
   P_2(z _1,z_2) = 1 + t_{12} \zeta_1 + t_{21} \zeta_2 + \zeta_1 \zeta_2,
$$
which implies
$$
\zeta_2 = - \frac{1 + t_{12}\zeta_1}{t_{21} + \zeta_1}.
$$
Since $t_{12} = t_{21} \in (-1,1)$, this is a M\"obius map sending $\mathbb{D}$ to $\C \sm \overline{\mathbb{D}}$.

\msk
To pass from $n$ to $n+1$, observe that
$$
  P_{n+1} (\zeta_1,\dots, \zeta_{n+1}) = P_n (u_1, \dots, u_n) + \zeta_1\dots \zeta_{n+1} \, P_n (v_1, \dots, v_n), 
$$
where $u_i =  t_{i,n+1} \zeta_i$, $v_i= t_{i, n+1}/ \zeta_i$.

\begin{rem}
This formula is directly related to the Basic Symmetry of the Ising model which is ultimately responsible for the
Lee-Yang Theorem.   
\end{rem}

If $P_n=0$ then 
\begin{equation}\label{fraction}
   \zeta_{n+1} = - \frac 1{\zeta_1\dots \zeta_n}\,  \frac {P_n(u_1,\dots, u_n) }{P_n(v_1,\dots, v_n)}. 
\end{equation}
If $\zeta_i\in \hat\C\sm \bar \D$ for $i=1,\dots, n$, then $|v_i|< 1$ and by the Induction Assumption,
$P_n(v_1,\dots, v_n)\not=0$.  
Hence the right-hand side of (\ref{fraction}) is a  well-defined holomorphic function in the polydisk
$ \De^n := (\hat\C\sm \bar \D)^n$. On its Shilov boundary 
$$
   \di^s \De^n=\T^n \equiv  \{ |\zeta_i|=1,\quad i=1\dots, n\}\}
$$  
we have $v_i=\bar w_i$. Since $P_n$ has real coefficients, we conclude that $|\zeta_{n+1}|=1$ on $\T^n$. 
By the Maximum Principle, $|\zeta_{n+1}|\leq 1$ in $\De^n$, with equality only  on $\T^n$,
and we are done.
\end{proof}


\begin{LY Theorem2}[\cite{YL,LY}] \label{THM:YANG_LEE2}
Fix $J_{vw} \in [0,+\infty]$, and assume $h_v/h_w \in \R\cup\{\infty\}$. 
Then, all zeros of the partition function $Z(\zeta_1, \dots, \zeta_n)$ lie on the unit torus $\T^n$.
\end{LY Theorem2}

\begin{proof}
  Under these circumstances, all $\zeta_v= e^{-h_v/T}$ have the same modulus,
which must be equal to 1 by the previous lemma.
 (The lemma  is applicable since $t_{vw} = e^{-2J_{vw}/T}\in [0,1]$.) 
\end{proof}

To obtain the classical Lee-Yang Theorem, corresponding to Hamiltonian with
magnetic moment $\Mconv$ given by (\ref{M conventional}), one sets $h_v \equiv
h$.  To get the result for Hamiltonian with magnetic moment $M$ given by
(\ref{M}), obtained by summing over magnetic moments of edges, one sets $h_v = h \cdot
|v|/2$, where $|v|$ is the valence of the vertex $v$.

\msk

Many extensions and new proofs of this theorem have appeared since the 1950s: see
Asano~\cite{Asa}, Suzuki and Fisher \cite{SF}, Heilmann and Lieb \cite{HL},
Ruelle~\cite{Rue,Ruelle_ANN}, Newman~\cite {New}, Lieb and Sokal~\cite {LS}, Borcea and Br{\"a}nd{\'e}n~\cite{BB} and further
references therein.  The proof given above is based upon the original idea of
Lee and Yang \cite{LY}, compare \cite[Thm 5.1.2]{Ruelle_book}.

\subsection {The Lee-Yang Theorem with Boundary conditions}
Given any coupling $J_{vw} \in [0,+\infty]$, let $\Gamma' \subset \Gamma$ be the subgraph containing
only the edges with $J_{vw} > 0$.
The Lee-Yang Theorem with Boundary Conditions, stated in \S \ref{background},
is a consequence of the following more general statement:
\begin{LY Theorem2BC}
Fix $J_{vw} \in [0,+\infty]$ and assume that $\Gamma'\subset \Gamma$ is connected
and $h_v/h_w \in \R\cup\{\infty\}$.
Let $\sigma_\UU \equiv +1$ for $\UU \subset V$ is given by
$\{m+1,\ldots,n\}$ with $1 < m < n$.

Then all of the  zeros of the conditional partition function
$Z^+(\zeta_1,\ldots,\zeta_m)$ lie in the open polydisc $\De^m = (\hat \C \sm
\overline{\D})^m$.
\end{LY Theorem2BC}

\begin{proof}
Since $\Gamma'$ is connected we can assume that
$J_{1,n} > 0$ giving $|t_{1n}| < 1$.  Let $\eta_i = \zeta_i \prod_{j=m+1}^n
t_{ij}$ for $i=1\ldots,m$ so that $|\eta_i| \leq |\zeta_i|$ and $|\eta_1| <
|\zeta_1|$.

The (modified) conditional partition function 
\begin{eqnarray*}
\check Z_{\Gamma | \, \si_\UU}(\zeta_1,\ldots,\zeta_n) = t^\EE z^{\VV \sm \UU} \ Z_{\Gamma | \, \si_\UU}(\zeta_1,\ldots,\zeta_n)
\end{eqnarray*}
satisfies
\begin{eqnarray*}
\check Z_{\Gamma | \, \si_\UU}(\zeta_1,\ldots,\zeta_m) = \check Z(\eta_1,\ldots,\eta_m),
\end{eqnarray*}
\noindent
where $\check Z$ is the modified partition function corresponding to $\Gamma \sm \UU$.

Since $h_v/h_w \in \R\cup\{\infty\}$, all of the $\zeta_i$ have the same modulus.
If this common modulus is less than or equal to $1$, then $|\eta_i| \leq 1$ and $|\eta_1| < 1$. 
Lemma \ref{LY lemma} gives 
\begin{eqnarray*}
\check Z_{\Gamma|\, \si_\UU}(\zeta_1,\ldots,\zeta_m) = \check Z(\eta_1,\ldots,\eta_m) \neq 0.
\end{eqnarray*}
\end{proof}

\begin{rem}
The same statement holds if we assign $\sigma_\UU \equiv \sigma_0 > 0$ and the classical Lee-Yang Theorem can be obtained from it by taking a limit $\sigma_0 \ra 0$.
\end{rem}

\subsection{The Lee-Yang zeros in the 1D Ising model}

By means of  the well-known ``Transfer Matrix technique'' (see \cite{Baxter}),
one can find explicitly the Lee-Yang zeros of the 1D Ising model.  
Let $\Gamma_n$ be the linear chain with $n+1$ vertices $\{0,1,\dots, n\}$.  
For simplicity, we will consider periodic boundary conditions: $\sigma(n) = \sigma(0)$
(so that our graph is the circle $\Z/n\Z$).  This assumption does not affect the thermodynamical limit.

The Hamiltonian of this lattice is:
\begin{eqnarray*}
\Hconv_n (\sigma) = - \sum_{i=0}^{n-1} \{ J \sigma(i)\sigma(i+1) + \frac{h}{2} (\sigma(i)+\sigma(i+1) \}.
\end{eqnarray*}

Two neighboring spins $\{\si(i), \si(i+1)\}$ contribute the following factor to the Gibbs weight (compare (\ref{UVW-zt})):
$$
  W(++)= e^{(J+h)/T}= \frac 1{\tconv z}, \quad W(+-)=W(-+)=e^{-J/T} = \tconv, \quad W(--) =  e^{(J- h)/T} = \frac z{\tconv},
$$
which can be organized into the {\it Transfer Matrix}
\begin{equation*} 
W =
\begin{pmatrix}
  (\tconv z)^{-1}  & \tconv  \\
         \tconv  &  \tconv^{-1} z
\end{pmatrix}.
\end{equation*}
\noindent
We see that the partition function $Z_n = \sum_\sigma \exp(-\Hconv_n(\sigma)/T)$ can be expressed as
\begin{eqnarray*}
Z_n = \sum_{\sigma} W(\sigma_1,\sigma_2) \cdot W(\sigma_2,\sigma_3) \cdots W(\sigma_n,\sigma_1) = \tr\,  W^n = \tau_1^n+\tau_2^n,
\end{eqnarray*}
\noindent
where $\tau_{1,2}$ are the eigenvalues of $W$.%
\footnote{Different boundary conditions would result in $Z_n=a\tau_1^n+b\tau_2^n$ for appropriate $a$ and $b$.}

Thus, the zeros of the partition functions are solutions of
the equation 
$$
\tau_1^n+\tau_2^n=0,
$$
or
\begin{equation}\label{1d4}
\frac{\tau_2}{\tau_1}=e^{i\al_k},\quad \al_k=\frac{\pi}{n}+\frac{2\pi k}{n};\qquad k=0,1,\ldots,n-1.
\end{equation}
The eigenvalues $\tau_{1,2}$ are the roots of the quadratic equation
$$
\tau^2-p\tau +q=0,\quad {\mathrm{where}}\ p= \frac 1{\tconv}(z+\frac 1 {z}) = \frac 2 {\tconv} \cos\phi  ,\quad q= \frac 1{\tconv^2} -\tconv^2.
$$
This gives 
$$
2\cos\al_k=\frac{\tau_2}{\tau_1}+\frac{\tau_1}{\tau_2}=\frac{p^2}{q}-2=\frac{4\cos^2\phi_k}{1-\tconv^4}-2,
$$
so
\begin{equation}\label{1d8}
(1-\tconv^4)\cos^2\frac{\al_k}{2}=\cos^2\phi_k, \quad {\mathrm{where}}\ z_k=e^{i\phi_k}. 
\end{equation}
Together with (\ref{1d4}), this gives expression (\ref{in11a}) for the zeros of the partition function.

Since the angles $\alpha_k$ are equidistributed with respect to the Lebesgue measure $d\alpha/2\pi$ on the circle,
the distribution $\rho_t d\phi$ of the Lee-Yang zeros is obtained by pushing this measure to the interval $[-1,1]$
by $\cos$,%
\footnote{which gives the ``Chebyshev measure'' $dx/\sqrt{1-x^2}$ on $[-1,1]$} 
scaling it by $\sqrt{1-\tconv^4}$,
and then pulling it back to the circle by $\cos$. 
The calculation gives  expression (\ref{in11}):
\begin{eqnarray*}
\rho_t(\phi) = \frac 1{2\pi} \left| \frac {d\alpha} {d\phi}\right| = 
\frac{1}{2\pi} \left| \frac{d}{d\phi}  \arccos \left( \frac{\cos \phi}{\sqrt{1-\tconv^4}}  \right)\right|
= \frac{|\sin\phi|}{2\pi\sqrt{1-\tconv^4-\cos^2 \phi}}.
\end{eqnarray*}

\subsection{ Diamond model as anisotropic regular 2D lattice model}\label{reg lattice}
The diamond model can be viewed as the Ising model on the regular 2D lattice
with a special anisotropic choice of the interaction parameters.
Namely, let us consider a $2^n\times (2^n+1)$ rectangle
\[
\De_n=\{(i,j)\in\Z^2:\; 0\le i\le 2^n-1,\; 0\le j\le 2^n\}
\]
in $\Z^2$,
with the Hamiltonian
\begin{equation}\label{H on 2D lattice}
H_n(\sigma)=-\sum_{|x-y|=1} J(x,y)\sigma(x)\sigma(y)-\sum_{x\in \De_n} h(x)\sigma(x),
\end{equation}
where
\[
h(i,j)=
\left\{
\begin{aligned}
h\;\;&\textrm{if}\;\; 0<j<2^n,\\
\frac{h}{2}\;\;&\textrm{if}\;\; j=0,\,2^n.
\end{aligned}
\right.
\]
The interaction parameters $J(x,y)$ are defined as follows.

\begin{center}
 \begin{figure}
\begin{center}
   \input{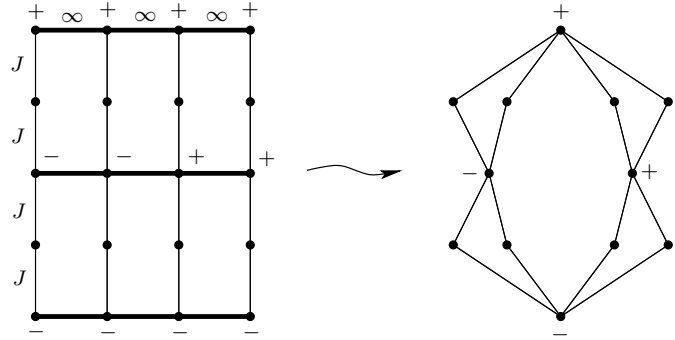}
\end{center}
        \caption{\label{fig 4x5}The Migdal interaction.}
    \end{figure}
\end{center}

Representing any $j=0,1,\ldots, 2^n$ in the dyadic arithmetics,
\[
j= \sum_{k=0}^n j_k 2^k,\quad j_k\in \{0,1\},
\]
let
\[
o(j)=\min\{k:\; j_k\not= 0\}\;\;\textrm{if}\;\; j>0;\quad o(0)=n.    
\]
\comm{
Similarly, for
\[
i= \sum_{k=0}^{n-1} i_k 2^k \in [0,2^n-1], 
\]
we can consider the ``dual'' dyadic norm and the corresponding ultra-metric:
\[
o'(i)=\min\{k:\; i_{n-1-k}\not= 0\}, \quad \|i\|'=\|-i\|' = 2^{-o'(i)}, \quad
              d'(i,\tl i) = \|i-\tl i\|'. 
\] }
Let us partition each horizontal level $\De_n(j)= \{(i,j)\in \De_n\}$
into $2^{n-o(j)}$ intervals $[s 2^{o(j)}, (s+1)2^{o(j)}) $ of length $2^{o(j)}$, $s=0,1,\dots, 2^{n-o(j)}-1$.
Let $\underset{j}\sim$ denote the corresponding equivalence relation.

Now we let
$$
J(x,y) = \left\{
\begin{aligned}
    J\quad & \textrm{if}\;\;    y-x=\pm (0,1);\\   
    \infty\quad & \textrm{if}\;\; y-x=\pm (1,0)\quad \textrm{and}\quad x\underset{j}\sim y.\\
                0 \quad & \textrm{otherwise}.
\end{aligned} \right.
$$
In other words,  within a horizontal level $\De_n(j)$,
non-equivalent sites do not interact, while
the  equivalent neighbors   interact with infinite strength.
Infinite interaction $J(x,y)$ is interpreted as  condition  $\sigma(x)=\sigma(y)$,
so that the equivalent sites $x$ and $y$ can be identified.
This leads to the diamond graph $\Gamma_n$.
(Figure \ref{fig 4x5}                       
illustrates the $4\times 5$ rectangle,
with the vertical solid lines representing interaction $J$ and horizontal dash lines representing the infinite interaction.)
Moreover, the Hamiltonian (\ref{H on 2D lattice}) takes the form of (\ref{Hamiltonian}):
$$
H_n(\sigma)=- \frac J{2} \sum_{(x,y)\in \EE_n^v} \sigma(x)\sigma(y) - \frac{h}{2} \sum_{(x,y)\in \EE^v_n}  (\sigma(x)+\si(y)),
$$
where $\EE_n^v$ is the set of vertical edges.

Thus, we have obtained the diamond model.

\comment{***********
\subsection {Application of the Lee-Yang Theorem to hierarchical lattices}\label{SUBSEC:LEE_YANG_HEIRARCHICAL}

Consider a finite graph $\Gamma$ with vertex set $\VV$ identified with $\{1,\ldots,n\}$ and edge set $\EE$ consiting of some subset
of of all possible pairs $(i,j)$ with $1 \leq i \neq j \leq n$.  One has interactions $J_{ij} \geq 0$ for $(i,j) \in \EE$ and $J_{ij} = 0$ for $(i,j) \not \in \EE$. 

A-priori, 
the Lee-Yang Theorem applies to
Hamiltonian $\Hconv(\sigma)$ of the form (\ref{EQN:H_GENERAL}).
When working with hierarchical models, it is more common \note{Is it?} use a different Hamiltonian $H(\si)$, in which the magnetic momentum is computed by summing over edges:
\begin{equation}\label{EQN:M_EDGES}
M(\si)= \frac 1{2} \sum_{(i,j)\in \EE}(\si(i)+\si(j)).
\end{equation}
When the magnetic momentum is computed in this way, it is not immediately obvious that the Lee-Yang Theorem applies, so we provide a verification below.

We set $\check H(\si) = H(\si) + \sum_{i \neq j}J_{ij}$.
Using that
\begin{equation*}
-h M(\si) = -\frac{h}{2} \sum_{(i,j)\in \EE}(\si(i)+\si(j))  = -\frac{h}{2} \sum_{i \in \VV} V(i) \sigma(i),
\end{equation*}
\noindent we obtain
\begin{equation*}
\check{H}(X) = \sum_{i \in X} \sum_{j \in \VV \sm X} 2 J_{ij} -\frac{h}{2} \left(2\sum_{i\in X} V(i) - 2|\EE| \right).
\end{equation*}
\noindent
\noindent
The corresponding Gibbs Weight is:
\begin{equation*}
\Weight(X) =  \exp(-\check{H}(X)/T) = z^{-|\EE|}\prod_{i\in X}z^{V(i)} \left(\prod_{i \in X} \prod_{j \in \VV \sm X} A_{ij} \right).
\end{equation*}
\noindent
where $A_{ij} = \exp(-2 J_{ij}/T) \in [0,1]$, as before.

The partition function is
\begin{equation*}
Z(z) =  z^{-|\EE|} \sum_{X \subset \VV} \prod_{i \in X}z^{V(i)} \left(\prod_{i \in X} \prod_{j \in \VV \sm X} A_{ij} \right) =  z^{-|\EE|} {\mathcal P}(z^{V(v_1)},\ldots,z^{V(v_n)})
\end{equation*}

Since each $V(i)$ is positive, the $z^{V(1)},\ldots,z^{V(n)}$ are all 
inside of $\T$, they are all outside of $\T$, or they are all on $\T$.  Therefore,
application of Proposition \ref{PROP:LY_POLY} gives that if $Z(z) = 0$ then
each  $z^{V(1)},\ldots,z^{V(n)} \in \T$ and hence $z \in \T$ is, as well.
This gives the Lee-Yang Theorem when computing the magnetic momentum according
to (\ref{EQN:M_EDGES}).
***************}

\subsection{Gibbs states}
The thermodynamic limit of the Gibbs distributions for $DHL$  was
studied by Griffiths and Kaufman \cite {GK} and by Bleher and  Zalys \cite{BZ2}.
 As noticed in \cite {GK}, there are uncountably many non-isomorphic injective limits of the
 hierarchical lattice $\Gamma_n$ as $n\to\infty$, which give rise to uncountably many non-isomorphic
 infinite hierarchical lattices.                                 
In \cite{BZ2}, limit Gibbs states on the infinite hierarchical lattices are constructed.
It is proven that for any non-degenerate infinite hierarchical lattice,
if $T<T_c$ and $h=0$ then there exist exactly two pure infinite Gibbs states,
 in the sense of Dobrushin-Lanford-Ruelle, while if $T\ge T_c$ or $h\not=0$
then the infinite Gibbs state is unique.

\section{Open Problems}\label{APP:PROBLEMS}

\begin{problem}[\bf Critical exponents for the low-temperature intervals]\label{PROB:STABLE_INTERVALS}

A consequence of Theorem \ref{attractors} and Corollary
\ref{COR:LYAP_LOG2} is that unstable Lyapunov exponents $\chi^u$
exist at almost every point of $\Cphystl$.  However, the union of all endpoints
of the intervals from $O_t =\WW^s(\BOTTOMphys) \cap \T_t$, taken over all $t
\in [t_c,1)$, has measure zero. Thus, we do not know that ``most endpoints''
have Lyapunov exponents.
Do Lyapunov exponents and hence, by Proposition \ref{PROP:LYAPUNOV_WEAK_CRIT}, weak critical exponents exist at the endpoints of the intervals from $O_t$?

\end{problem}

\begin{problem}[\bf Principal stable tongues]\label{PROB:STABLE_TONGUES}

Consider the principal stable tongues $\Tongue_\pm$.  Are $\Tongue_\pm$ bounded by high-temperature hairs of some positive length?

If this is the case, the discussion from Problem \ref{PROB:STABLE_INTERVALS} gives that for high enough
values of $t$, the critical exponents $\sigma^h = 1$  at the endpoints of the intervals formed by $\Tongue_\pm \cap \TT_t$.

The question can be asked for any of the stable tongues.
\end{problem}

\begin{problem}[\bf Endpoints of hairs]\label{PROB:HIGH_TEMP_ENDPOINTS}  Recall
the set $\epoints$ of endpoints to the high-temperature hairs that constructed in \S
\ref{SEC:HIGH_TEMP_ENDPOINTS}.  According to Corollary
\ref{COR:ENDPOINTS_MEASURE_ZERO}, $\epoints$ has Lebesgue measure zero.

\begin{itemize}
\item[(a)] What is the Hausdorff dimension of $\epoints$? 
\item[(b)] Do any of the high-temperature hairs contain their endpoints, i.e. is there any endpoint within $\WW^s(\TOPphys)$?
\item[(c)] Is $\WW^s(\TOPphys)$ a ``straight hairy brush'' in the sense of \cite{AARTS_OVERSTEEGEN}?  One consequence would be that the endpoints of the high-temperature hairs must accumulate from both sides to every point on every high-temperature hair.  In particular, this would give a negative solution to Problems \ref{PROB:STABLE_TONGUES} and \ref{PROB:STABLE_INTERVALS}(b).
\end{itemize}
  
These questions are partly motivated by the structure of the {\it Devaney hairs} for the exponential maps,
see \cite{DT,McM,Karpinska}. 
\end{problem}

\begin{problem}[\bf Control of expansion]\label{PROB:CONTROL_EXPANSION} Do we have 
\begin{eqnarray}\label{EQN:BOUNDS_ON_EXPANSION}
  \limsup \frac{\log d(\phi \circ \Rphys^n)(v)}{n} \leq 4
\end{eqnarray}
\noindent
for any $v \in \KK(x)$ based at any $x \in \Cphystl$?
This bound
would give continuity in $\phi$ of the density $\rho_t(\phi)$ by an estimate similar
to the proof of Propositions \ref{crit exp for periodic pts} and \ref{PROP:LYAPUNOV_WEAK_CRIT}.
Notice, however  that it does not hold for the first iterate: 
One can see from  (\ref{DR near alpha}) that if $x$ approaches $\INDphys_\pm$ at a definite slope $\tau/\eps = \bar \kappa$, 
then the horizontal expansion of vectors in $\KK^h(x)$ blows up like $1/\tau$.  
\end{problem}

\begin{problem}[\bf Critical temperatures and regularity] \label{PROB:CRITICAL_TEMPURATURES_AND_REGULARITY}
Given $\gamma \in \FF^c$, there are $0 <
t^-_c(\gamma)  \leq t^+_c(\gamma) \leq 1$ so that points on $\gamma$
below $t=t^-_c(\gamma)$ are in $\WW^s(\BOTTOMphys)$ and points above
$t=t^+_c(\gamma)$ are in $\WW^s(\TOPphys)$.  We call the points on $\gamma$
having $t^-_c(\gamma)  \leq t \leq t^+_c(\gamma)$ the {\em $\gamma$-critical
temperatures}.

\begin{itemize}
\item[(a)]
Is there a unique $\gamma$-critical temperature $t_c(\gamma) := t^-_c(\gamma) = t^+_c(\gamma)$ on each $\gamma \in \FF^c$?

\item[(b)] The union of $\gamma$-critical temperatures over all $\gamma \in \FF^c$ is invariant under $\Rphys$.  Is there a ``natural'' invariant measure $\nu_{\rm crit}$ supported on this set?  What is the entropy of this measure?

\item[(c)]
It is a consequence of Propositions \ref{PROP:STABLE_MANIFOLD} and
\ref{PROP:ETA_BASINS_LAMINATED} that each leaf $\gamma \in \FF^c$ is real
analytic below $t^-_c(\gamma)$ and $C^1$ above $t^+_c(\gamma)$.  Does $\gamma$
have only finite smoothness within the range of $\gamma$-critical temperatures?

\item[(d)] Proposition \ref{PROP:NON_ANALYTIC_LEAVES} gives a partial answer to
the previous question for periodic leaves.  A natural open question here is
whether a periodic leaf can contain a neutral periodic point?

\end{itemize}

Cylinder maps having  property (a) on almost every leaf are constructed in
\cite[\S 3]{BM}.  To ask questions (a) and (c) for almost
every leaf in our situation, one must first choose a transverse invariant
measure on $\FF^c$.  With respect to $\mu_t$, almost every leaf is in the union
of stable tongues and the result is trivial.
The question is more interesting with respect to the
transverse measure induced on $\FF^c$ by Lebesgue measure on $\TOPphys$.
\end{problem}


\section{Table of notation}\label{APP:NOTATION}

In the course of this paper
various objects appear in parallel in two coordinate systems:  
the ``physical coordinates '' $(z,t)$  and the affine coordinates
$(u,v) \mapsto [u:1:v]$.\footnote{If not to count homogeneous coordinates $(U:V:W)$ and angular
coordinates $(\phi,t)$ as systems in their own right.}  They are
related by the semi-conjugacy $\correspond$ from \S \ref{SEC:STRUCTURE}.
We have attempted (not fully consistently) to use similar  notation for
corresponding objects, roughly using calligraphic and Greek symbols in the
physical coordinates and the corresponding non-calligraphic
and Latin symbols in the  affine coordinates. 
For reader's convenience, some of the notation is collected in the following table:

\begin{center}
\begin{tabular}{|l|l|l|}
\hline
Object & Physical coordinates  & Affine coordinates  \\
       &  $(z,t)=$		   & $(u,w)$  \\
\hline
Renormalization map & $\Rphys$ & $\Rmig$ \\
Invariant cylinder & $\Cphys=\T\times [0,1]$ & $\Cmig=\{w=\bar u,\ |u|\geq 1\}$ \\
Topless cylinder  & $\Cphys^0 =\T\times [0,1)$ & $\Cmig^0 =\{w=\bar u,\ |u| > 1\}$   \\
Horizontal/vertical algebraic cone field & $\KK^{\hor/\ver}$ & $K^{\hor/\ver}$ \\   
Modified horizontal/vertical cone fields & $\KK^{h/v}$ & $K^{h/v}$ \\
Strong separatrix & $\LLzero=\{t=0\}$ & $\Lzero =\mbox{line at infinity}$\\
Weak separatrix & $\LLone=\{t=1\}$ & $\Lone =\{uw=1\}$\\
Bottom of the cylinder & $\BOTTOMphys=\T\times \{0\}$ & $\BOTTOMmig\subset \Lzero, \ |w/u|=1$ \\
Top of the cylinder & $\TOPphys=\T\times \{1\}$ & $\TOPmig=\T$ \\
Main indeterminacy pts  & $\INDphys_\pm=(\pm i, 1)$ &  $\INDmig_\pm= \pm (i,- i)$ \\
Accidental indeterminacy pts & $\gamma, \B0$ & none \\
Low temp fixed point  & $\FIXphys_0 = (1,0)$   & $\FIXmig_0 = [1:0:1] \in \Lzero$ \\
Critical temp fixed point  & $\FIXphys_c \approx (1,0.2956) $   & $\FIXmig_c \approx (3.3830,3.3830)$ \\
High temp fixed point  & $\FIXphys_1 = (1,1)$   & $\FIXmig_1= (1,1)$ \\
Attracting fixed points in $\CP^2$ & $\CFIXphys =(0,1),\CFIXphys'=(\infty,0)$ & $\CFIXmig=(\infty,0),\CFIXmig'=(0,\infty)$ \\
Principal LY locus & $\Sphys=\{z^2+2tz+1=0\}$ & $\Smig=\{u+w=-2\}$ \\
Blow-up locus & $\Icurve = \{z^2+4zt-2z+1=0\}$ & $\Imig = \{u-w = 2i\}$\\
 & \,\,\,\,  $= \{t=\sin^2 \phi/2 \}$ & \\
\hline

\end{tabular}
\end{center}

\comm{*****
\msk 

Because of the physical motivation, in some cases we prefer to work in the physical coordinates $(z,t)$ and particularly
$(\phi,t) \in \Cphys$.   We use three different cone-fields on, or in a complex neighborhood of, $\Cphys$: \note{Inconsistent notation for weak cone field: doesn't have exponent $^h$.}
 
\msk

\begin{center}
\begin{tabular}{|l|l|l|}
\hline
Name of conefield & Notation  & Initially defined in \\
\hline
Horizontal and vertical algebraic conefield & $\KK^{h,v}$ & \S \ref{sec: alg cone field} \\
Non-degenerate conefield near $\TOPphys \sm \{\INDphys_\pm\}$ & $\tl\KK^{h,v}$ & \S \ref{SUBSEC:FOLIATION_NEAR_T} \\
Weak conefield (defined in a complex neighborhood of $\TOPphys \sm \{\INDphys_\pm\}$) & $\KSQRT^c$ & \S \ref{SUBSEC:COMPLEXIFICATION_CONES} \\
\hline
\end{tabular}
\end{center}
*****************}

\msk
The following is further notation specific to $\Cphys$:

\begin{center}
\begin{tabular}{|l|l|l|}
\hline
Object & Physical coordinates  & Initially defined in \\
\hline
Topless cylinder  & $\Cphystl$ & \S \ref{SUBSEC:MAP_ON_CYL} \\
Bottomless cylinder & $\Cphysbl$ & \S \ref{SUBSEC:MAP_ON_CYL} \\
Low temperature cylinder & $\Cphyslow$ & \S \ref{SUBSEC:BASIN_BOTTOM} \\
Primary stable tongues       &  $\Tongue(\INDphys_\pm)$               &  \S \ref{stable tongues sec}  \\
Secondary stable tongues   & $\Tongue^{n}_k(\alpha)$   & \S \ref{stable tongues sec} \\
Basins of attraction for $\TOPphys$ with prescribed control & $\WW^s_\eta(\TOPphys)$, $\WW^s_0(\TOPphys)$ & \S \ref{SUBSEC:BASIN_T},  
                      \S \ref{SEC:TWO BASINS} \\
Central foliation & $\FF^c$ & \S \ref{SEC:VERTICAL_FOLIATION} \\
Horizontal critical exponent  & $\si^h$ & \S \ref{SUBSEC:CRITICAL_EXPONENTS} \\
Vertical critical exponent & $\si^v$ & \S \ref{SUBSEC:CRITICAL_EXPONENTS} \\

\hline

\end{tabular}
\end{center}

\bsk

\renewcommand\refname{References from  dynamics and complex geometry}

\msk

\renewcommand\refname{References from  mathematical physics}


\begin{thebibliography}{10}

\bibitem[AO]{AARTS_OVERSTEEGEN}
J.~M. Aarts and L.~G. Oversteegen.
\newblock The geometry of {J}ulia sets.
\newblock {\em Trans. Amer. Math. Soc.}, 338(2):897--918, 1993.



\bibitem[AYYK]{AYYK}
J.~C. Alexander, J.~A. Yorke, Z. You, and I.~Kan.
\newblock Riddled basins.
\newblock {\em Internat. J. Bifur. Chaos Appl. Sci. Engrg.}, 2(4):795--813,
  1992.


\bibitem[BD]{BD_GOLDEN}
E. Bedford and J. Diller.
\newblock Real and complex dynamics of a family of birational maps of the
  plane: the golden mean subshift.
\newblock {\em Amer. J. Math.}, 127(3):595--646, 2005.









\bibitem[BloL]{BLOKH_LYUBICH} 
A.M. Blokh and M. Lyubich. 
\newblock  Attractors of maps of the interval.
\newblock In: ``Dynamical Systems and Ergodic Theory''.
\newblock Banach Center Publications, v. 23 (1989), 427--442. 





\bibitem[BM]{BM} A. Bonifant and J. Milnor.
\newblock Schwarzian derivative and cylinder maps.
\newblock In {\em Holomorphic dynamics and renormalization},
v.~53 of {\em Fields Institute Communications},
a volume in honor of John Milnor's 75th birthday, 1--24.
\newblock Eds: M. Lyubich and M. Yampolsky.

\bibitem[CS]{FA}
G.~M. Constantine and T.~H. Savits.
\newblock A multivariate {F}a\`a di {B}runo formula with applications.
\newblock {\em Trans. Amer. Math. Soc.}, 348(2):503--520, 1996.

\bibitem[Da]{DAN}
V.~I. Danilov.
\newblock Algebraic varieties and schemes.
\newblock In {\em Algebraic geometry, {I}}, volume~23 of {\em Encyclopaedia
  Math. Sci.}, pages 167--297. Springer, Berlin, 1994.







\bibitem[De]{DEMAILLY}
J-P. Demailly
\newblock {\em Complex analytic and algebraic geometry.}\\
\newblock Available at: \url{http://www-fourier.ujf-grenoble.fr/~demailly/books.html}.

\bibitem[DT]{DT}
R.~L. Devaney and F. Tangerman.
\newblock Dynamics of entire functions near the essential singularity.
\newblock {\em Ergodic Theory Dynam. Systems}, 6(4):489--503, 1986.









\bibitem[F]{FULTON}
W. Fulton.
\newblock {\em Introduction to intersection theory in algebraic geometry},
  volume~54 of {\em CBMS Regional Conference Series in Mathematics}.
\newblock Published for the Conference Board of the Mathematical Sciences,
  Washington, DC, 1984.








\bibitem[GH]{GH}
P. Griffiths and J. Harris.
\newblock {\em Principles of algebraic geometry}.
\newblock Wiley-Interscience [John Wiley \& Sons], New York, 1978.
\newblock Pure and Applied Mathematics.




\bibitem[Ho]{Ho} L. H\"ormander. 
\newblock{\em The analysis of linear partial differential operators I.} 
Springer-Verlag, Berlin-New York 1983. 

\bibitem[HPS]{HPS}
M.~Hirsch, C.~Pugh, and M.~Shub.
\newblock {\em Invariant Manifolds}.
\newblock {Lecture notes in mathematics}, Volume 583. Springer-Verlag, Berlin-New York 1977.


\bibitem[HP]{HP_NEWTON} 
J.~H. Hubbard and P. Papadopol.
\newblock Newton's method applied to two quadratic equations in $\mathbb{C}^2$
  viewed as a global dynamical system.
\newblock {\em Memoirs of the American Mathematical Society}, 191(891), 2008.

\bibitem[IY]{IY}
Y. Ilyashenko and S. Yakovenko.
\newblock {\em Lectures on analytic differential equations}, volume~86 of {\em
  Graduate Studies in Mathematics}.
\newblock American Mathematical Society, Providence, RI, 2008.





\bibitem[Kan]{Kan}
I. Kan.
\newblock Open sets of diffeomorphisms having two attractors, each with an
  everywhere dense basin.
\newblock {\em Bull. Amer. Math. Soc. (N.S.)}, 31(1):68--74, 1994.

\bibitem[Kar]{Karpinska}
B. Karpi{\'n}ska.
\newblock Hausdorff dimension of the hairs without endpoints for {$\lambda\exp
  z$}.
\newblock {\em C. R. Acad. Sci. Paris S\'er. I Math.}, 328(11):1039--1044,
  1999.




\bibitem[Kre]{KRESS}
R. Kress.
\newblock {\em Linear integral equations}, volume~82 of {\em Applied
  Mathematical Sciences}.
\newblock Springer-Verlag, New York, second edition, 1999.




\bibitem[Ly]{LY_LAMBDA}
M.~Y. Lyubich.
\newblock Some typical properties of the dynamics of rational mappings.
\newblock {\em Uspekhi Mat. Nauk}, 38(5(233)):197--198, 1983.


\bibitem[MSS]{MSS}
R.~Ma{\~n}{\'e}, P.~Sad, and D.~Sullivan.
\newblock On the dynamics of rational maps.
\newblock {\em Ann. Sci. \'Ecole Norm. Sup. (4)}, 16(2):193--217, 1983.

\bibitem[McM]{McM}
C. McMullen.
\newblock Area and {H}ausdorff dimension of {J}ulia sets of entire functions.
\newblock {\em Trans. Amer. Math. Soc.}, 300(1):329--342, 1987.




\bibitem[PM]{PDM}
J. Palis, Jr. and W. de~Melo.
\newblock {\em Geometric theory of dynamical systems}.
\newblock Springer-Verlag, New York, 1982.
\newblock An introduction, Translated from the Portuguese by A. K. Manning.


\bibitem[Pu]{Pujals}
E.~R. Pujals.
\newblock From hyperbolicity to dominated splitting.
\newblock In {\em Partially hyperbolic dynamics, laminations, and
  {T}eichm\"uller flow}, volume~51 of {\em Fields Inst. Commun.}, pages
  89--102. Amer. Math. Soc., Providence, RI, 2007.



\bibitem[R]{ROE}
R.~K.~W. Roeder.
\newblock A degenerate {N}ewton's map in two complex variables: linking with
  currents.
\newblock {\em J. Geom. Anal.}, 17(1):107--146, 2007.




\bibitem[Shaf]{SHAF}
I.~R. Shafarevich.
\newblock {\em Basic algebraic geometry. 1}.
\newblock Springer-Verlag, Berlin, second edition, 1994.
\newblock Varieties in projective space, Translated from the 1988 Russian
  edition and with notes by Miles Reid.



\bibitem[Sh]{SHUB}
M. Shub.
\newblock {\em Global stability of dynamical systems}.
\newblock Springer-Verlag, New York, 1987.
\newblock With the collaboration of Albert Fathi and R\'emi Langevin,
  Translated from the French by Joseph Christy.


\bibitem[Si]{S_PANORAME}
N. Sibony.
\newblock Dynamique des applications rationnelles de {$\bold P\sp k$}.
\newblock In {\em Dynamique et g\'eom\'etrie complexes (Lyon, 1997)}, volume~8
  of {\em Panor. Synth\`eses}, pages ix--x, xi--xii, 97--185. Soc. Math.
  France, Paris, 1999.

\bibitem[W]{Walter}
W. Walter.
\newblock {\em Ordinary differential equations}, volume 182 of {\em Graduate
  Texts in Mathematics}.
\newblock Springer-Verlag, New York, 1998.
\newblock Translated from the sixth German (1996) edition by Russell Thompson,
  Readings in Mathematics.


\end{thebibliography}

\begin{thebibliography}{10}

\bibitem[A]{Asa}
T. Asano, The rigorous theorems for the Heisenberg ferromagnets,
{\it J. Phys. Soc. Japan} {\bf 29} (1970), 350--359.

\bibitem[BFM]{BFM}
G.~A. Baker, M.~E. Fisher, and P. Moussa,
Yang-Lee edge singularity in the hierarchical model,
{\it Phys. Rev. Lett.} {\bf 42} (1979), 615--618.

\bibitem[Ba]{Baxter} R.J. Baxter.
\newblock {\em Exactly solvable models in statistical mechanics.} 
\newblock Academic Press, London, 1982. 

\bibitem[BO]{BO}
A.~N. Berker and S. Ostlund,
Renormalization group calculations of finite systems,
{\it J. Phys. C}, {\bf 12} (1979), 4961--4975.

\bibitem[BBCKK]{BBCKK}
M. Biskup, C. Borgs, J.~T. Chayes, L.~J. Kleinwaks, and R. Koteck\'y
Partition Function Zeros at First-Order Phase Transitions: A General Analysis,
{\it Commun. Math. Phys.} {\bf 251} (2004), 79--131.

\bibitem[BBCK]{BBCK}
M. Biskup, C. Borgs, J.~T. Chayes, and R. Koteck\'y
 Partition function zeros at first-order phase transitions:
Pirogov-Sinai theory. {\it J. Statist. Phys.} {\bf 116} (2004), 97-155.


\bibitem[BL]{BL}
P. Bleher and M. Lyubich, The Julia sets and complex singularities in hierarchical Ising
models, {\it Commun. Math. Phys.} {\bf 141} (1992), 453--474.

\bibitem[BZ1]{BZ1}
P. Bleher and E. \v Zalys, Existence of long-range order in the Migdal recursion equations,
{\it Commun. Math. Phys.} {\bf 67} (1979), 17--42.

\bibitem[BZ2]{BZ2}
P. Bleher and E. \v Zalys, Limit Gibbs distributions for the Ising model on hierarchical
lattices, {\it Lithuanian Math. J.} {\bf 28} (1989), 127-139.

\bibitem[BZ3]{BZ3}
P. Bleher and E. \v Zalys,
Asymptotics of the susceptibility for the Ising model on the
hierarchical lattices, {\it Commun. Math. Phys.} {\bf 120} (1989), 409--436.

\bibitem[BB]{BB}
J. Borcea and P. Br{\"a}nd{\'e}n.
\newblock Lee-yang problems and the geometry of multivariate polynomials.
\newblock {\em Lett. Math. Phys.}, 86(1):53--61, 2008.



\bibitem[BK]{Brascam_and_Kunz}
H.~J. Brascamp and H. Kunz,
\newblock Zeros of the Partition Function for the
Ising model in the complex temperature plane,
\newblock {\em J. Math. Phys.} {\bf 15} (1974), 65-66.

\bibitem[Car]{Car}
J.~L. Cardy,
Conformal invariance and the Yang-Lee edge singularity in two dimensions,
{\it Phys. Rev. Lett.} {\bf 54} (1985), 1354--1356.

\bibitem[DDI]{DDI}
B. Derrida, L. De Seze, and C. Itzykson,
Fractal structure of zeros in hierarchical models,
{\it J. Statist. Phys.} {\bf 33} (1983), 559--569.

\bibitem[DIL]{DIL}
B. Derrida, C. Itzykson, and J.~M. Luck,
Oscillatory critical amplitudes in hierarchical models,
{\it Commun. Math. Phys.} {\bf 94} (1984), 115--132.


\bibitem[F0]{Fis0}
M.~E. Fisher,
\newblock The Nature of Critical Points,
\newblock In {\em Lectures in Theoretical Physics}, Volume~7c, 
(W. Brittin editor) pages 1-157, University of
Colorado Press, Boulder, 1965.


\bibitem[F1]{Fis1}
M.~E. Fisher,
Yang-Lee edge singularity and $\phi^3$ field-theory,
{\it Phys. Rev. Lett.} {\bf 40} (1978), 1610--1613.

\bibitem[F2]{Fis2}
M.~E. Fisher,
Yang-Lee edge behavior in one-dimensional systems,
{\it Progr. Theor. Phys.}
Supplement No. {\bf 69} (1980), 14--29.

\bibitem[GMR]{GMR} G. Gallavotti, G. Miracle-Sole, and D.W. Robinson.
Analyticity properties of a lattice gas. 
{\it Physics Letters} {\bf 25A} (1967), 493--494.   

\bibitem[Gr]{GRIFFITHS}
R.~B.~ Griffiths
\newblock Peierls proof of spontaneous magnetization in 2-dimensional {I}sing
  ferromagnet.
\newblock {\em Phys Rev A-Gen Phys}, 136(2A):A437--{\&}, Jan 1964.



\bibitem[GK]{GK}
R.~B. Griffiths and M. Kaufman, Spin systems on hierarchical lattices. Introduction and thermodynamic limit,
{\it Phys. Rev. B} {\bf 26} (1982), 5022--5032.

\bibitem[HL]{HL}
O.~J. Heilmann and E.~H. Lieb,
Theory of monomer-dimer systems,
{\it Commun. Math. Phys.} {\bf 25} (1972), 190--232.

\bibitem[Isa]{ISAKOV}
S.~N. Isakov.
\newblock Nonanalytic features of the first order phase transition in the
  {I}sing model.
\newblock {\em Comm. Math. Phys.}, 95(4):427--443, 1984.


\bibitem[Ish]{Ish}
Y. Ishii,
Ising models, Julia sets, and similarity of the maximal entropy measures,
{\it J. Statist. Phys.} {\bf 78} (1995), 815--822.

\bibitem[Ito]{Ito}
K.~R. Ito,
Renormalization group methods on hierarchical lattices and beyond,
{\it Progr. Theor. Phys.} {\bf 92} (1987), 46--71.

\bibitem[K]{Kad}
L.~P. Kadanoff,
Notes on Migdal's recursion formulae, {\it Ann. Phys.} {\bf 100} (1976), 359--394.

\bibitem[KG1]{KG1}
M. Kaufman and R.~B. Griffiths,
Exactly soluble Ising models on hierarchical lattices,
{\it Phys. Rev. B} {\bf 24} (1981), 496--498.

\bibitem[KG]{KG2}
M. Kaufman and R.~B. Griffiths,
Infinite susceptibility at high temperatures in the Migdal-Kadanoff scheme,
{\it J. Phys. A: Math. Gen.} {\bf 15} (1982) L239-L242.
%

\bibitem[KF]{KF}
D.~A. Kurtze and M.~E. Fisher,
The Yang-Lee edge singularity in spherical models,
{\it J. Statist. Phys.} {\bf 19} (1978), 205--218.



\bibitem[LY]{LY}
T.~D. Lee and C.~N. Yang, Statistical theory of equations of state and phase transitions: II. Lattice gas
and Ising model. {\it Phys. Rev.} {\bf  87} (1952), 410-419.

\bibitem[LS]{LS}
E.~H. Lieb and A.~D. Sokal,
A general Lee-Yang theorem for one-component and multicomponent ferromagnets,
{\it Commun. Math. Phys.} {\bf 80} (1981), 153--179.

\bibitem[M1]{Mig1}
A.~A. Migdal, Phase transitions in gauge and spin-lattice systems, {\it JETP} {\bf 69} (1975), 1457-1467.

\bibitem[M2]{Mig2}
A.~A. Migdal, Recurrence equations in gauge field theory, {\it JETP} {\bf 69} (1975), 810-822.

\bibitem[MS]{MS}
V.~F. M\"uller and J. Schiemann,
Convergence of Migdal-Kadanoff iterations: A simple and general proof,
{\it Lett. Math. Phys.} {\bf 15} (1988), 289--295.

\bibitem[MSh]{MSh} V. Matveev and R. Shrock. On properties of the Ising model for complex energy/temperature
  and magnetic field. {\it J. Phys. A} {\bf 41}, (2008). 

\bibitem[N]{New}
C.~M. Newman,
Zeros of the partition function for generalized Ising systems,
{\it Commun. Pure Appl.
Math.} {\bf 27} (1974), 143<96>-159.

\bibitem[P]{Peierls} R. Peierls.
On Ising's model of ferromagnetism. 
{\it Proc. Cambridge Phil. Soc.} {\bf 32} (1936), 477--481. 

\bibitem[R1]{Rue1}
D. Ruelle.
\newblock Extension of the lee-yang circle theorem.
\newblock {\em Physical Review Letters}, 26:303--304, 1971.


\bibitem[R2]{Rue}
D. Ruelle,
Some remarks on the location of zeroes of the partition function for lattice systems,
{\it Commun. Math. Phys.} {\bf 31} (1973), 265--277.


\bibitem[R3]{Ruelle_book}
D. Ruelle.
\newblock {\em Statistical mechanics}. Rigorous results.
\newblock World Scientific Publishing Co. Inc., River Edge, NJ, 1999.
\newblock Reprint of the 1989 edition.

\bibitem[R4]{Ruelle_ANN}
D. Ruelle.
\newblock Characterization of {L}ee-{Y}ang polynomials.
\newblock {\em Ann. of Math. (2)}, 171(1):589--603, 2010.


\bibitem[SF]{SF}
M. Suzuki and M.~E. Fisher,
Zeros of the Partition Function for the Heisenberg, Ferroelectric, and General Ising Models,
{\it J. Math. Phys.} {\bf 12} (1971) 235.

\bibitem[YL]{YL}
C.~N. Yang and T.~D. Lee,
Statistical theory of equations of state and phase transitions. I. Theory of condensation,
{\it Phys. Rev.} {\bf 87} (1952), 404--409.



\end{thebibliography}
\end{document}